\documentclass[12pt]{amsart}

\usepackage{amsmath,amssymb,amsthm}
\usepackage{amscd,times,fullpage}
\setlength{\headheight}{6.15pt}

\usepackage{color}

\usepackage{amscd,times}
\usepackage{amsthm,amsfonts,amsmath,amscd,amssymb,times,epsfig,verbatim}
\usepackage[all]{xy}
\usepackage{enumerate,array}

\newtheorem{thm}{Theorem}[section]
\newtheorem{prop}[thm]{Proposition}
\newtheorem{lem}[thm]{Lemma}
\newtheorem{cor}[thm]{Corollary}

\theoremstyle{remark}
\newtheorem{rem}[thm]{Remark}
\newtheorem{defn}[thm]{Definition}
\newtheorem{ex}[thm]{Example}

\newcommand{\C}{\mathbb{C}}

\newcommand{\OO}{\mathcal{ O}}

\newcommand{\R}{\mathbb{ R}}

\newcommand{\rk}{\operatorname{rk}}

\newcommand{\Z}{\mathbb{ Z}}

\DeclareMathOperator{\dummyO}{O}
\renewcommand{\O}{\dummyO}

\DeclareMathOperator{\rank}{rank}
\renewcommand{\Im}{\mathop{\mathrm{Im}}}

\title{Toric topology of the complex Grassmann manifolds}
\author{Victor M.~Buchstaber}
\address{Steklov Mathematical Institute of the Russian Academy
of Sciences, Moscow State University V.~M.~Lomonosov, Skolkovo Institute of Science and Technology, Moscow, Russia}

\email{buchstab@mi.ras.ru}
\author{Svjetlana Terzi\'c}
\address{Faculty of Science and Mathematics, University of Montenegro,
Podgorica, Montenegro}
\email{sterzic@ucg.ac.me}

\begin{document}

\keywords{Grassmann manifold, Thom spaces, torus action, orbit spaces, spaces of parameters}
\subjclass[2000]{57S25, 57N65, 53D20,  14M25, 52B11, 14B05}

\maketitle
\begin{abstract}
 The family of the complex Grassmann  manifolds $G_{n,k}$ with the  canonical action of the  torus $T^n=\mathbb{T}^{n}$ and the analogue  of the moment map $\mu : G_{n,k}\to \Delta _{n,k}$ for the  hypersimplex $\Delta _{n,k}$, is well known.  In this paper we study the structure of the orbit space $G_{n,k}/T^n$ by  developing the methods of  toric geometry and toric  topology. We use a subdivision of $G_{n,k}$  into the strata $W_{\sigma}$. Relying  on this subdivision we determine all regular and singular points of the moment map $\mu$,  introduce the notion  of
the admissible polytopes $P_\sigma$ such that $\mu (W_{\sigma}) = \stackrel{\circ}{P_{\sigma}}$ and the notion of  the spaces  of parameters $F_{\sigma}$,  which together  describe   $W_{\sigma}/T^{n}$ as the product  $\stackrel{\circ}{P_{\sigma}} \times F_{\sigma}$. To find the  appropriate topology  for the set $\cup _{\sigma} \stackrel{\circ}{P_{\sigma}} \times F_{\sigma}$ we    introduce also the notions of the universal space  of parameters $\tilde{\mathcal{F}}$ and  the virtual spaces of parameters $\tilde{F}_{\sigma}\subset \tilde{\mathcal{F}}$ such that there exist the  projections $\tilde{F}_{\sigma}\to F_{\sigma}$. Having  this  in mind, we propose a method for the description of the orbit space $G_{n,k}/T^n$. The existence  of the action of the  symmetric group $S_{n}$ on $G_{n,k}$ simplifies the application of this method.  In our previous paper we proved that the orbit space 
$G_{4,2}/T^4$, which is defined by the canonical $T^4$-action  of complexity $1$,  is homeomorphic to $\partial \Delta _{4,2}\ast \C P^1$.    We prove in this paper that the orbit space $G_{5,2}/T^5$, which is defined by the canonical $T^5$-action of complexity $2$,  is homotopy equivalent to the space which is obtained by attaching the disc $D^8$ to the space $\Sigma ^{4}\R P^2$ by the generator of the group $\pi _{7}(\Sigma ^{4}\R P^2)=\Z _{4}$. In particular, $(G_{5,2}/G_{4,2})/T^5$ is homotopy equivalent to $\partial \Delta _{5,2}\ast \C P^2$.

The methods and the results of this  paper are very important for the  construction of the  theory of $(2l,q)$-manifolds we  have been recently developing,   and which is concerned with manifolds  $M^{2l}$  with an effective action of the torus $T^{q}$, $q\leq l$ and an analogue of the moment map $\mu : M^{2l}\to P^{q}$, where $P^{q}$ is a $q$-dimensional convex polytope.
\end{abstract}

\tableofcontents

\section{Introduction}
There is a canonical action of the   algebraic torus $(\C ^{*})^{n}$ on the algebraic manifold $G_{n,k}$ of all $k$-dimensional complex vector subspaces $L$ in the  $n$-dimensional complex vector space $\C ^{n}$ and, consequently,  an action of the compact torus $T^n = \mathbb{T}^{n}\subset (\C ^{*})^{n}$. The problems  of the description of these actions are  well  known and the interest in these problems  is motivated by the classic and modern problems of algebraic geometry, algebraic and equivariant topology, symplectic geometry and enumerative combinatorics, see~\cite{GFMC},~\cite{GS},~\cite{GEL4},~\cite{K},~\cite{KeelTev}. In the focus of our study is an action of the torus $T^n$ on $G_{n,k}$. The manifolds $G_{n,k}$ and $G_{n, n-k}$ are equivariantly diffeomorphic, so we can assume that $k\leq [\frac{n}{2}]$. There is   an analogue of the moment map~\cite{GS} for $G_{n,k}$ whose image is  the hypersimplex $\Delta _{n,k}$. In the case when $k=1$ we obtain  the complex projective space $\C P^{n-1}$, which is a fundamental example of a  toric manifold and $\Delta _{n,1}$ is a simplex. For $k\geq 2$ the combinatorics of the polytope $\Delta _{n,k}$ does not determine the structure of the orbit space $G_{n,k}/T^n$. Using the classical Pl\"ucker coordinates for $k$-dimensional subspace $L$  in $\C ^{n}$, we introduce $(\C ^{*})^{n}$-equivariant subdivision  of the manifold $G_{n,k}$ by the strata $W_{\sigma}$, where $\sigma$ runs through the set of the so-called admissible subsets of the set $\{1,\ldots ,m\}$, where $m={n\choose k}$.  The disjoint union of all $W_{\sigma}/T^n$  gives a subdivision of the orbit space $G_{n,k}/T^n$. We want to emphasize that the strata we introduce coincide with the known strata introduced,  although by different methods, in the paper of Gel'fand-Serganova~\cite{GS}. The image of the moment map, when restricted to the  stratum $W_{\sigma}$ is the interior of the admissible polytope $P_{\sigma}$, which is the convex hull of some subset of vertices for  $\Delta _{n,k}$.      We prove that all points from $W_{\sigma}$ have the same stabilizer $\C ^{*}_{\sigma}$ and describe  the singular and regular points for the moment map $\mu : G_{n,k}\to \Delta_{n,k}$. The orbit space $F_{\sigma} = W_{\sigma}/(\C ^{*})^{n}$ is an algebraic manifold which we call the space of parameters for $W_{\sigma}$. We consider the set  $E$ which is defined  as a disjoint  union of the  spaces $\stackrel{\circ}{P_{\sigma}}\times F_{\sigma}$, where $\sigma$ runs  through all admissible sets. The canonical projections $W_{\sigma}\to \stackrel{\circ}{P_{\sigma}}$ and $W_{\sigma}\to F_{\sigma}$ define the homeomorphism $W_{\sigma}/T^n\to \stackrel{\circ}{P_{\sigma}}\times F_{\sigma}$ and we obtain the canonical bijection of the sets  $E\to G_{n,k}/T^n$. 

In our approach the set $E$ is the key object   for  the description of the structure of the orbit space $G_{n,k}/T^n$ and the key problem here  is to introduce the topology on $E$ in terms of $P_{\sigma}$ and $F_{\sigma}$  such that the bijection  $ E \to G_{n,k}/T^n$ becomes a homeomorphism.  In the realization of this approach  we essentially had to  develop  the methods of  toric topology~\cite{BP1},~\cite{BP}.  In the case of a  quasitoric manifold  $M^{2n}$, the orbit space $M^{2n}/T^n$ is homeomorphic to the polytope $P^{n}$,  which is the image of the moment map.  Thus,  the orbit space $M^{2n}/T^n$ is determined by the combinatorial structure of the polytope $P^{n}$.  In the case of Grassmann manifolds, the points from  the orbit space $G_{n,k}/T^n$ are determined by the  two coordinates, one from  $P_{\sigma}$ and the other from $F_{\sigma}$.

In order to equip the set $E$ with an appropriate topology  we introduce the new notions:  the universal space of parameters $\tilde{\mathcal{F}}$, which is an algebraic variety and,  for any admissible set $\sigma$,  the virtual space of parameters $\tilde{F}_{\sigma}$,  such that there is the embedding $\tilde{F}_{\sigma}\subset \tilde{\mathcal{F}}$ and the projection $\tilde{F}_{\sigma}\to F_{\sigma}$. We also introduce the set $\mathcal{P}$ which is a formal union of the admissible polytopes $P_{\sigma}$ and define the topology on $\mathcal{P}$ using the fact  that the moment map $G_{n,k}/T^n\to \Delta _{n,k}$ decomposes as $G_{n,k}/T^n\to \mathcal{P}\to \Delta _{n,k}$.  The set $\mathcal{E}$ which is a union of all spaces $\stackrel{\circ}{P_{\sigma}}\times \tilde{F}_{\sigma}$ inherits the  topology from the  canonical embedding  into  the space  $\mathcal{P}\times \tilde{\mathcal{F}}$.  This yields a surjective   mapping  $H : \mathcal{E}\to E $,  which defines a topology on $E$. 

We want to note that  Kapranov in~\cite{Kap} considered  the Chow quotient $G_{n,k}/\!\!/ (\C ^{*})^n$ which is an algebraic variety  and proved that it can be identified with  certain compactification of the space of parameters $F$ of the main stratum. Moreover, Kapranov  proved  that the Chow quotient  $G_{n,k}/\!\!/ (\C ^{*})^n$  is isomorphic to the Chow quotient
$(\C P^{k-1})^{n}/\!\!/PGL(k)$.
In the case when $k=2$ and $n=5$, combining our results with the results of Kapranov~\cite{Kap} and Keel~\cite{Keel}, we obtain here  that the Chow quotient $(\C P^{1})^{5}/\!\!/PGL(2,\C)$ coincides with our     universal space of parameters $\tilde{\mathcal{F}}$. Inspired by  our results on description of $\tilde{\mathcal{F}}$ for $G_{5,2}$  presented in this paper, Klemyatin in~\cite{KM} proved  that  the universal space of parameters for $G_{n,2}$ coincides with the Chow quotient  
$(\C P^{1})^{n}/\!\!/PGL(2,\C)$.  This result is worthy of  attention, since  it implies that the universal space of parameters $\tilde{\mathcal{F}}$  of the  Grassmann manifolds $G_{n,2}$ coincides with the well known  moduli space   $\overline{M_{0,n}}$ of stable $n$-points curves of genus zero which is a smooth compact algebraic variety, see~\cite{Kap}. 

Our approach gives a  method to prove the following  result:

\begin{thm}
The orbit space $G_{n,2}/T^n$ is a quotient space of the space $\mathcal{E}$ by an equivalence relation defined by the map $H$.
\end{thm}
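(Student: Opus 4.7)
The plan is to show that the continuous surjection $H:\mathcal{E}\to G_{n,2}/T^n$ obtained by composing the surjection $\mathcal{E}\to E$ with the canonical bijection $E\to G_{n,2}/T^n$ is a quotient map. Since by construction its fibres are exactly the equivalence classes of the relation defined by $H$, this will yield the desired homeomorphism $\mathcal{E}/\!\sim\; \cong G_{n,2}/T^n$. The route is the standard fact that every continuous surjection from a compact space onto a Hausdorff space is automatically a quotient map, so the argument reduces to verifying surjectivity, compactness of $\mathcal{E}$, and continuity of $H$.

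Surjectivity is immediate from the construction: each $\tilde{F}_\sigma$ surjects onto $F_\sigma$, and $\sqcup_\sigma \stackrel{\circ}{P_\sigma}\times F_\sigma = E$ bijects with $G_{n,2}/T^n$. Compactness of $\mathcal{E}$ crucially uses the case $k=2$: the polytope union $\mathcal{P}$ is compact as a quotient of $G_{n,2}/T^n$, and, by the Klemyatin--Kapranov--Keel identification recalled in the introduction, the universal space of parameters $\tilde{\mathcal{F}}$ for $G_{n,2}$ is isomorphic to the smooth compact projective variety $\overline{M_{0,n}}$. Hence $\mathcal{E}\subset \mathcal{P}\times \tilde{\mathcal{F}}$ sits in a compact Hausdorff space, and one checks that $\mathcal{E}$ is closed there, hence compact.

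The main obstacle is continuity of $H$. On each piece $\stackrel{\circ}{P_\sigma}\times \tilde{F}_\sigma$ this is immediate from the stratum-wise homeomorphism $W_\sigma/T^n\cong \stackrel{\circ}{P_\sigma}\times F_\sigma$ and the algebraicity of $\tilde{F}_\sigma\to F_\sigma$. The delicate part is cross-stratum continuity: given a convergent sequence $(p_i,\tilde f_i)\in \stackrel{\circ}{P_{\sigma_i}}\times \tilde{F}_{\sigma_i}$ with limit $(p,\tilde f)\in \stackrel{\circ}{P_\tau}\times \tilde{F}_\tau$ in $\mathcal{P}\times \tilde{\mathcal{F}}$, one must verify that the corresponding $T^n$-orbits in $G_{n,2}$ converge to the orbit assigned by $H$ to $(p,\tilde f)$. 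I would attack this by lifting to Pl\"ucker representatives $L_i\in W_{\sigma_i}$, extracting a convergent subsequence in the compact manifold $G_{n,2}$, and using the explicit boundary description of $\overline{M_{0,n}}$ via $n$ marked points on nodal rational curves to force the limit subspace into $W_\tau$ with parameter point the image of $\tilde f$ under $\tilde{F}_\tau\to F_\tau$.

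The $S_n$-action on $G_{n,2}$ reduces the combinatorial bookkeeping by acting transitively on admissible sets of the same type, so only one representative configuration per combinatorial type must be analyzed. The coherence between polytope and parameter convergence is enforced by the fact that both components of the limit in $\mathcal{P}\times \tilde{\mathcal{F}}$ arise from the single underlying limit $T^n$-orbit, which pins down both the moment map image and the associated point of $\overline{M_{0,n}}$. Once continuity is secured, the compact-to-Hausdorff quotient criterion closes the argument.
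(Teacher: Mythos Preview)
Your high-level strategy coincides with the paper's: establish that $H$ is a continuous surjection from a compact space to a Hausdorff space, hence a closed quotient map. The paper asserts compactness of $\mathcal{E}$ from its construction inside $\mathcal{P}\times\tilde{\mathcal{F}}$ and then concentrates, as you correctly anticipate, on continuity of $H$. Where the two diverge is in how continuity is verified. The paper does not give a uniform argument for general $n$; it works out $n=5$ only, by explicit sequence computations in the Pl\"ucker chart $M_{12}$. Concretely, it takes a sequence $(x_n,c_n)$ in $\stackrel{\circ}{\Delta}_{5,2}\times F_{12}$ converging to a point of $\stackrel{\circ}{P}_\sigma\times\tilde{F}_{\sigma,12}$ for each of the stratum types $K_{13}(9)$, $K_{13}(7)$, $O_3$, writes the preimage $h^{-1}(x_n,c_n)$ in coordinates, and checks by hand that the limiting $T^5$-orbit lands in $W_\sigma$ with the correct parameter. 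The $S_5$-action is then invoked to cover the remaining admissible polytopes.

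Your proposed route via the boundary stratification of $\overline{M_{0,n}}$ by nodal curves is a natural and more conceptual replacement for these chart computations, but as written it is a plan rather than a proof. The sentence ``force the limit subspace into $W_\tau$ with parameter point the image of $\tilde f$'' is exactly the content the paper labors over: one must check that the moduli-theoretic limit in $\overline{M_{0,n}}$ agrees with the limit of $(\mathbb{C}^*)^n$-orbits in $G_{n,2}$, i.e.\ that the virtual spaces $\tilde{F}_\sigma$ (defined in the paper via limits of main-stratum parameters) coincide with the boundary strata of $\overline{M_{0,n}}$ in a way compatible with the projections $\tilde{F}_\sigma\to F_\sigma$. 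This is plausible and is in the spirit of Kapranov's Chow-quotient picture, but it is not automatic and is not supplied by your sketch. Also note a technical point: in the paper's setup $f:\mathcal{E}\to\mathcal{P}\times\tilde{\mathcal{F}}$ is declared to be a \emph{surjection} and the topology on $\mathcal{E}$ is defined by ``$U$ open iff $f(U)$ open'', so your phrase ``$\mathcal{E}$ is closed in $\mathcal{P}\times\tilde{\mathcal{F}}$'' does not quite match the paper's conventions; compactness of $\mathcal{E}$ is asserted directly rather than deduced from closedness of an embedded image.
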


The proof is technically complicated and to overcome difficulties we essentially use the fact that the symmetric group $S_n$  acts on the space $G_{n,2}/T^n$   and that all our constructions are compatible with this action. In our previous paper~\cite{MMJ} we proved this result for $n=4$ and reprove it here  in Example~\ref{42}  . In this paper, we demonstrate  the proof with all the details for $n=5$. 

 In~\cite{MMJ}  it was also   proved that the space $G_{4,2}/T^4$ is homeomorphic  to $\partial \Delta _{4,2}\ast \C P^1$.

The main consequences of the results of this paper are  the following:

\begin{thm}
There exists a continuous map $(G_{5,2}/T^5)/(G_{4,2}/T^4)\to \partial \Delta _{5,2}\ast \C P^2$ which induces an isomorphism in the  integral homology groups and, thus, define a homotopy equivalence between these spaces.
\end{thm}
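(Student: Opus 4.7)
The plan is to reduce the statement to a homology computation via the long exact sequence of the pair $(G_{5,2}/T^{5},\,G_{4,2}/T^{4})$, construct the continuous map from the moment and parameter data furnished by Theorem~1.1, and finally apply Whitehead's theorem, since both spaces are simply connected CW complexes.

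First I would verify that both sides have identical integral homology. By the main result on $G_{5,2}/T^{5}$ recalled in the introduction, $G_{5,2}/T^{5}$ admits the CW model $\Sigma^{4}\RP^{2}\cup_{\alpha}D^{8}=(S^{5}\cup_{2}D^{6})\cup_{\alpha}D^{8}$, so $H_{0}=\Z$, $H_{5}=\Z_{2}$, $H_{8}=\Z$, and zero in all other degrees. From our earlier work (reproven in Example~\ref{42}), $G_{4,2}/T^{4}\simeq \partial\Delta_{4,2}\ast \cp = S^{2}\ast S^{2}=S^{5}$. A cellular comparison identifies the inclusion $G_{4,2}/T^{4}\hookrightarrow G_{5,2}/T^{5}$ with the bottom $5$-cell of $\Sigma^{4}\RP^{2}$, so that the induced map $H_{5}(G_{4,2}/T^{4})=\Z\to H_{5}(G_{5,2}/T^{5})=\Z_{2}$ is reduction modulo~$2$. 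The long exact sequence of the pair then yields
\[
\tilde H_{n}\bigl((G_{5,2}/T^{5})/(G_{4,2}/T^{4})\bigr)=\begin{cases}\Z,& n=6,\,8,\\ 0,& \text{otherwise}.\end{cases}
\]
On the other side, $\partial\Delta_{5,2}$ is the boundary of a convex $4$-polytope, hence homeomorphic to $S^{3}$, so $\partial\Delta_{5,2}\ast \cp\simeq S^{3}\ast \cp=\Sigma^{4}\cp$, with exactly the same integral homology.

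For the explicit map I would apply Theorem~1.1 to present $G_{5,2}/T^{5}$ as a quotient of $\mathcal{E}\subset\mathcal{P}\times\tilde{\mathcal{F}}$. Using Klemyatin's identification $\tilde{\mathcal{F}}=\overline{M}_{0,5}$ together with the blow-down $\overline{M}_{0,5}\to \cp$ (which realises the del Pezzo surface of degree $5$ as a blow-up of $\cp$ at four general points), combine the moment map $\mu\colon G_{5,2}/T^{5}\to \Delta_{5,2}$ with the parameter projection composed with the blow-down to obtain a map $G_{5,2}/T^{5}\to\Delta_{5,2}\times \cp$. Post-composing with a suitable collapse $\Delta_{5,2}\times \cp\to \partial\Delta_{5,2}\ast \cp$ that crushes the facet $\Delta_{4,2}^{(i)}\times \cp$ of the chosen copy of $G_{4,2}^{(i)}$ to a basepoint in the $\partial\Delta_{5,2}$-end of the join, the composite sends $G_{4,2}^{(i)}/T^{4}$ to a single point and hence descends to the quotient. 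Continuity is verified stratum by stratum using the topology on $\mathcal{E}$ from Theorem~1.1, and on top cells the map realises the canonical identification of $\stackrel{\circ}{\Delta_{5,2}}\times \cp$ with the interior of the join. Combined with the homology comparison above, this produces an integral-homology isomorphism, and Whitehead's theorem then delivers the homotopy equivalence.

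The principal obstacle is the identification of the subspace $G_{4,2}/T^{4}\subset G_{5,2}/T^{5}$ with the bottom $5$-cell of the CW model $\Sigma^{4}\RP^{2}\cup_{\alpha}D^{8}$ -- this is the step that makes the map $H_{5}(G_{4,2}/T^{4})\to H_{5}(G_{5,2}/T^{5})$ surjective, which is essential for the long exact sequence computation above to collapse as claimed -- together with the continuity of the constructed map across all boundary strata, where the virtual parameter spaces $\tilde F_{\sigma}$ merge into $\tilde{\mathcal{F}}$. Both rely on a careful bookkeeping of the stratification of Theorem~1.1, and on the $S_{5}$-equivariance of all constructions to reduce such checks to a single coordinate.
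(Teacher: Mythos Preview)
Your outline follows the paper's strategy closely: compute the homology of $X=(G_{5,2}/T^5)/(G_{4,2}/T^4)$ via the long exact sequence of the pair, build a map using the blow-down $\tilde{\mathcal{F}}\to\C P^2$ followed by the collapse $\Delta_{5,2}\times\C P^2\to\partial\Delta_{5,2}\ast\C P^2$, and invoke Whitehead. The paper's construction differs in one technical simplification: rather than trying to define the map on all of $G_{5,2}/T^5$ and then checking it factors through the quotient, the paper first observes that the $5$-skeleton $C$ of $X$ is contractible (since $\tilde H_i(X)=0$ for $i\le 5$), so $X\simeq X/C$, and then defines the map only on the cells of dimension $\ge 6$, which come exclusively from the main stratum and the nine-vertex strata $K_{ij}(9)$. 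This sidesteps the stratum-by-stratum continuity check you flag as an obstacle.

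There is, however, a genuine gap in your argument at the step ``combined with the homology comparison above, this produces an integral-homology isomorphism.'' Knowing that both spaces have $H_6\cong\Z$ and that a continuous map exists does not show that map is an isomorphism on $H_6$; one must check that a generator goes to a generator. This is not automatic: the paper notes separately (in the discussion of the two candidate homotopy types $S^3\ast\C P^2$ versus $S^6\vee S^8$) that exactly this degree-$6$ behaviour is what distinguishes the two possibilities. The paper carries this out by identifying an explicit generator of $H_6(X;\Z_2)$, namely the cycle $c_{12}=e_{12}+e_{15}+e_{25}$ built from the cells $e_{ij}=\stackrel{\circ}{K}_{ij}(9)\times(S^2\setminus\{\ast\})$, and then tracking it through the blow-down. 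The virtual parameter spaces $\tilde F_{12,12}$, $\tilde F_{25,12}$ (and two others) sit over the four blown-up points of $\C P^2$ and hence collapse to points, while $\tilde F_{15,12}$ maps to a projective line $(c_1:0:c_1')\subset\C P^2$; thus the image of $c_{12}$ is $\stackrel{\circ}{\Delta}_{5,2}\times\{(c_1:0:c_1')\}$, which represents the generator of $H_6(\partial\Delta_{5,2}\ast\C P^2)$. Your ``top-cell'' remark handles only $H_8$; you need an analogous explicit check in degree $6$, and it is precisely here that the detailed description of the virtual spaces $\tilde F_{ij,12}\subset\tilde{\mathcal{F}}$ is used.
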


\begin{thm}\label{prvamain}
The orbit space $G_{5,2}/T^5$ is homotopy equivalent  to the  space  obtained by attaching the disc $D^8$  to the  space   $\Sigma ^{4}\R P^2$ by the  generator of the group $\pi _{7}(\Sigma ^{4} \R P^2)=\Z _{4}$.
\end{thm}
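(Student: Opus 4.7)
The plan is to assemble $G_{5,2}/T^5$ from the cofibration
\[
G_{4,2}/T^4 \hra G_{5,2}/T^5 \lra (G_{5,2}/T^5)/(G_{4,2}/T^4),
\]
arising from the $T^5$-equivariant embedding $G_{4,2}\hra G_{5,2}$ as the subspaces contained in $\{z_5=0\}$. By the preceding theorem and the earlier result of~\cite{MMJ}, this takes the form
\[
S^5 \lra G_{5,2}/T^5 \lra \Sigma^{4}\C P^2,
\]
because $\partial\Delta_{4,2}\cong S^2$ gives $\partial\Delta_{4,2}\ast \C P^1=S^2\ast S^2=S^5$, while $\partial\Delta_{5,2}\cong S^3$ (boundary of a $4$-dimensional polytope) gives $\partial\Delta_{5,2}\ast \C P^2\simeq \Sigma^4\C P^2$. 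The canonical CW structure on $\Sigma^4\C P^2$ is $\ast\cup e^6\cup_{\Sigma^4\eta}e^8$, with $\eta$ the Hopf map.

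First I would use the long exact homology sequence of this cofibration to compute $H_*(G_{5,2}/T^5)$. The crucial input is that the connecting homomorphism
\[
\partial\colon H_6(\Sigma^4\C P^2)=\Z \lra H_5(S^5)=\Z
\]
is multiplication by $\pm 2$; geometrically, the closure in $G_{5,2}/T^5$ of the stratum contributing $e^6$ wraps twice around $G_{4,2}/T^4\simeq S^5$. Establishing this is the main technical step and uses the explicit stratification by $W_\sigma$, the admissible polytopes $P_\sigma$, and the spaces of parameters $F_\sigma$, together with the $S_5$-action on $G_{5,2}/T^5$. Granting it, the long exact sequence yields
\[
H_*(G_{5,2}/T^5)=(\Z,0,0,0,0,\Z/2,0,0,\Z),
\]
and the $6$-skeleton of $G_{5,2}/T^5$ is $S^5\cup_{2}D^6=\Sigma^4\R P^2$.

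Next I would identify the attaching map $\phi\colon S^7\to \Sigma^4\R P^2$ of the top cell. Applying $\pi_7$ to the cofibration $S^5\xrightarrow{\cdot 2}S^5\to \Sigma^4\R P^2\to S^6\xrightarrow{\cdot 2}S^6$ and using the Freudenthal-stable values $\pi_7(S^5)=\pi_7(S^6)=\Z/2$, both with multiplication by $2$ being zero, yields a short exact sequence
\[
0\to\Z/2\to \pi_7(\Sigma^4\R P^2)\to\Z/2\to 0.
\]
The extension is the non-trivial one, so $\pi_7(\Sigma^4\R P^2)=\Z/4$; this is the classical stable computation $\pi_3^s(\R P^2)=\Z/4$. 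By construction, the composite $S^7\xrightarrow{\phi}\Sigma^4\R P^2\to S^6$ equals the $8$-cell attaching map $\Sigma^4\eta$ of $\Sigma^4\C P^2$, i.e.\ the non-zero element of $\pi_7(S^6)=\Z/2$; since the surjection $\Z/4\twoheadrightarrow\Z/2$ sends any generator to the generator, $\phi$ itself must be a generator of $\Z/4$. The resulting cell-by-cell comparison map $\Sigma^4\R P^2\cup_\phi D^8\to G_{5,2}/T^5$ is then an integral homology isomorphism between simply connected CW complexes, hence a homotopy equivalence by Whitehead's theorem.

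The principal obstacle is the geometric step establishing $\partial=\pm 2$: one must track how the boundary of the $6$-dimensional stratum responsible for the $6$-cell of $\Sigma^4\C P^2$ meets $G_{4,2}/T^4\subset G_{5,2}/T^5$ and count multiplicities. This is where the paper's apparatus of admissible polytopes and the universal space of parameters $\tilde{\mathcal{F}}$ is decisively used, with the $S_5$-symmetry essential for reducing the case analysis.
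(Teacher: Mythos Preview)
Your proposal is correct and its homotopy-theoretic endgame is essentially the paper's: once the homology is known and the quotient by $S^5$ is identified with $\Sigma^4\C P^2$, both you and the paper argue that the collapse $\Sigma^4\R P^2\to S^6$ sends the attaching map $\phi$ to $\Sigma^4\eta\neq 0$, whence $\phi$ is a generator of $\pi_7(\Sigma^4\R P^2)=\Z_4$ rather than the order-$2$ element. The paper phrases this contrapositively (if $\phi$ factored through $S^5$ the quotient would be $S^6\vee S^8$), but it is the same argument.

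Where you diverge is in the homology computation. You propose to read $H_*(G_{5,2}/T^5)$ off the single cofibration $S^5\to G_{5,2}/T^5\to\Sigma^4\C P^2$, reducing everything to the claim $\partial=\pm 2$ on $H_6\to H_5$. The paper does \emph{not} verify $\partial=\pm 2$ directly; instead it computes $H_*(G_{5,2}/T^5)$ via an elaborate filtration $V_1\subset V_2\subset V_3$ built from \emph{all} strata (in particular $V_1$ already contains all five copies of $G_{4,2}/T^4$ over the octahedral facets, giving $H_5(V_1)=\Z^5$), and the torsion $H_5=\Z_2$ emerges from a delicate analysis of how the ten $6$-cells $e_{ij}$ over $K_{ij}(9)$ bound pairs $S_i+S_j$. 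In fact, in the paper's logical order the identification of the quotient as $\Sigma^4\C P^2$ \emph{uses} the homology computation (to know the $5$-skeleton is contractible and to match generators), so your cleaner cofibration route would require an independent geometric verification of $\partial=\pm 2$ that the paper never isolates. Your outline is sound, but the step you flag as the ``principal obstacle'' is genuinely where all the work lives, and the paper's machinery does not package it the way you suggest.
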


Note that the following  fact  has been used for  the proof of   Theorem~\ref{prvamain}.  Any of the  coordinate  embeddings $\C ^{n-1}\to \C ^{n}$ gives  the $T^n$-equivariant embedding $G_{n-1,k}\to G_{n,k}$ such that the complement $G_{n,k}\setminus G_{n-1,k}$ can be identified with the space of the canonical vector bundle $\eta _{n-1,n-k}$ over the Grassmann manifold $G_{n-1, k-1}$.  In this way we obtain the $T^n$-pair $ (G_{n,k}, G_{n-1, k})$  which defines  $T^n$-action on the  Thom space  $T(\eta _{n-1,n-k})$.   Furthermore,  the corresponding orbit space $T(\eta _{n-1, n-k})/T^n$ is homeomorphic to the  space obtained from   $G_{n,k}/T^n$  by collapsing $G_{n-1,k}/T^{n-1}$ into the point. In particular,
$\partial \Delta _{5,2}\ast \C P^{2} \cong T(\eta _{4,3})/T^5$.

We show that there exists a $T^n$-equivariant smooth closed submanifold $S_{n,k}$ in $G_{n,k}$ of codimension $1$ and $T^n$-equivariant projections $\pi _{0} : S_{n,k}\to G_{n-1,k}$ and $\pi _{1} : S_{n,k}\to G_{n-1, k-1}$ such that the following result holds.

 \begin{thm}
There exist  projections $\hat{\pi} _{0} : S_{n,k}/T^n \to G_{n-1, k}/T^{n-1}$ and $\hat{\pi} _{1} : S_{n,k}/T^n \to G_{n-1, k-1}/T^{n-1}$ such that 
\[
G_{n,k}/T^n = (S_{n,k}/T^n\times [0,1])\cup _{\hat{\pi} _{0}}G_{n-1,k}/T^{n-1}\cup _{\hat{\pi} _{1}}G_{n-1,k-1}/T^{n-1},
\]
where $[(s, 0)]\backsimeq \hat{\pi} _{0}[s]$ and $[(s,1)]\backsimeq \hat{\pi} _{1}[s]$.
\end{thm}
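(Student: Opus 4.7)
The plan is to realize $S_{n,k}$ as a regular level set of a $T^n$-invariant Morse--Bott function on $G_{n,k}$ and to build the decomposition from the associated gradient flow.

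First, I split $T^n=T^{n-1}\times S^1_n$, where $S^1_n$ denotes the last circle factor, acting on $\C^n$ by rescaling the coordinate $e_n$. The induced $S^1_n$-action on $G_{n,k}$ fixes exactly those subspaces $L$ which decompose as $L=(L\cap\C^{n-1})\oplus(L\cap\C e_n)$; these form two connected components, namely $G_{n-1,k}=\{L\subset\C^{n-1}\}$ and $G_{n-1,k-1}=\{L\ni e_n\}$. Endow $G_{n,k}$ with the $T^n$-invariant Fubini--Study metric coming from the Pl\"ucker embedding. The moment map of $S^1_n$ then takes the explicit form
\[
f(L)=\frac{\sum_{n\in I}|p_I(L)|^2}{\sum_{I}|p_I(L)|^2},
\]
and is a $T^n$-invariant Morse--Bott function whose minimum locus is $G_{n-1,k}$ at $f=0$ and whose maximum locus is $G_{n-1,k-1}$ at $f=1$.

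Second, I pick any regular value $c\in(0,1)$ and put $S_{n,k}:=f^{-1}(c)$. It is a $T^n$-invariant smooth closed codimension-$1$ submanifold. The negative-gradient flow of $f$ with respect to the invariant K\"ahler metric is $T^n$-equivariant and coincides with the real direction of the complexified $\C^*_n$-action. On $S_{n,k}$ its limits at $+\infty$ and $-\infty$ define maps $\pi_0\colon S_{n,k}\to G_{n-1,k}$ (downward) and $\pi_1\colon S_{n,k}\to G_{n-1,k-1}$ (upward), both $T^n$-equivariant; since $S^1_n$ acts trivially on the two targets, they descend to $\hat\pi_0$ and $\hat\pi_1$ on the orbit spaces, with the target $T^n$-action factoring through $T^{n-1}$.

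Third, standard Morse--Bott theory gives a $T^n$-equivariant diffeomorphism $f^{-1}([\varepsilon,1-\varepsilon])\cong S_{n,k}\times[0,1]$ for any sufficiently small $\varepsilon>0$, while $f^{-1}([0,\varepsilon])$ and $f^{-1}([1-\varepsilon,1])$ are $T^n$-equivariant mapping cylinders of $\pi_0$ and $\pi_1$, respectively. Gluing the three pieces yields
\[
G_{n,k}=\bigl(S_{n,k}\times[0,1]\bigr)\cup_{\pi_0}G_{n-1,k}\cup_{\pi_1}G_{n-1,k-1},
\]
with the identifications $(s,0)\sim\pi_0(s)$ and $(s,1)\sim\pi_1(s)$. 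Because $T^n$ is compact and acts compatibly on each piece, the pushout commutes with passage to the orbit space, which is exactly the statement of the theorem.

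The main technical obstacle is to verify that $\pi_0$ and $\pi_1$ are continuous on all of $S_{n,k}$, and not only on the open dense set where $L\not\subset\C^{n-1}$ and $e_n\notin L$. I would handle this via the Bialynicki--Birula description of the $\C^*_n$-flow in Pl\"ucker coordinates: $t\in\C^*_n$ acts on $p_I$ by multiplication by $t$ when $n\in I$ and trivially otherwise, so the projective limits of $t\cdot p(L)$ as $t\to 0$ and $t\to\infty$ are the truncations of $p(L)$ to the index sets $\{I:n\notin I\}$ and $\{I:n\in I\}$, and these compute $\pi_0(L)$ and $\pi_1(L)$ respectively. Both truncations are nonzero precisely because $0<f(L)=c<1$ rules out vanishing of either block, so the continuity and $T^n$-equivariance of $\pi_0$ and $\pi_1$ follow at once, and the matching of the smooth structure across the gluing is automatic.
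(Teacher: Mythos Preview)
Your proof is correct and follows essentially the same approach as the paper: the function $f$ you introduce is precisely the $n$-th coordinate $\mu_n$ of the moment map used there, and both arguments identify $S_{n,k}$ as a regular level set, recognize $G_{n-1,k}$ and $G_{n-1,k-1}$ as the critical fibers, and pass to the quotient. Your treatment is in fact more detailed than the paper's, which simply asserts the existence of the equivariant projections $\pi_0,\pi_1$ and the product structure on the complement; your explicit construction via the gradient/Bia\l{}ynicki--Birula flow and the Pl\"ucker-coordinate verification of continuity fills in exactly what the paper leaves implicit.
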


We want to emphasize that this paper is important for  our work on developing  the theory of $(2l, q)$-manifolds, ~\cite{OB},~\cite{MS}. These are $2l$-dimensional manifolds with an effective action of the compact $q$-dimensional torus and an analogue of the moment map $M^{2l}\to P^{q}$, where $P^{q}$ is a $q$-dimensional polytope. Our aim is to develop the methods of  toric topology which enabled us  to describe the action of the torus $T^{q}$ on $M^{2l}$ in terms of the combinatorics of  $P^{q}$ and the structure of the universal space of parameters $\tilde{\mathcal{F}}$ of the dimension $2(l-q)$. 

An important invariant of $(2l,q)$-manifolds is the complexity  $d(M^{2l}, q) = l-q$, which is equal to zero for quasitoric manifolds. 
This notion is used  in symplectic  geometry, where  the  Hamiltonian torus action of complexity $1$ on a symplectic manifold is studied~\cite{KT},~\cite{KT-1},  in algebraic geometry, where the complexity is defined as a codimension of the principal orbit for the action of algebraic torus~\cite{T1},~\cite{T2} and in equivariant topology~\cite{AZ},~\cite{AZ1},~\cite{SS}.

The manifolds $G_{n,k}$ represent a key family  of $(2l, q)$-manifolds, where $l=k(n-k)$ and $q=n-1$. The complexity  for $G_{n,k}$ is $(k-1)(n-k-1)$ and, in this context, the spaces $G_{4,2}$ and $G_{5,2}$ are of special  importance, since they provide  nontrivial examples of the  manifolds whose complexity is $1$ and $2$, respectively. 

In our theory of $(2l, q)$-manifolds there are also other two  important families. The first one is the family of the complex flag manifolds $Fl(n) = U(n)/T^n$ with the canonical action of the torus $T^n$  and an analogue of the moment map $\mu : Fl(n)\to {Pe}^{n-1}$, where ${Pe}^{n-1}$ is  a permutahedron. For these manifolds  $l={n\choose 2}$, $q=n-1$ and $d={n-1\choose 2}$. The second one consists of the manifolds $\C P^{N-1}$, $N={n\choose 2}$ with the action of the torus $T^n$  which is  given as a composition of the second exterior power representation $T^n\to T^{N}$ and the standard action of $T^{N}$ on $\C P^{N-1}$, and  an analogue  of the moment map $\mu : \C P^{N-1}\to \Delta _{n,2}$. For these manifolds $q=N-1$, $l=n-1$ and $d=N-n$. We  proved that for $(6,2)$-manifold $Fl(3)$  with $d=1$ there is a homeomorphism $Fl(3)/T^3\cong S^4$ and for $(10, 3)$-manifold $\C P^5$ with $d=2$  there is a homeomorphism $\C P^5/T^4\cong \partial \Delta _{4,2}\ast \C P^2$, ~\cite{MMJ}.

The canonical action of  the algebraic torus $(\C ^{*})^{n}$ on the manifold $G_{n,k}$ enables us  to assign  to each point from $G_{n,k}$ a toric variety, that is the closure of $(\C ^{*})^{n}$-orbit of this point. It naturally arises the  problem: classify, up to an algebraic equivalence, all such toric varieties. In~\cite{MMJ} this problem was solved for $G_{4,2}$, while in the current paper it is  solved for the manifold $G_{5,2}$. In~\cite{NO} in the frame of this problem, all  such smooth  toric subvarieties in the Grassmann manifod $G_{n,k}$ are described for any $n$ and $k$. The next natural step in this direction would be the question: classify all such  toric subvarieties in $G_{n,k}$ whose   singularities are only  the   fixed points. An answer to this question   for the case $G_{4,2 }$ and $G_{5,2}$ is given in~\cite{MMJ} and in current paper, respectively. 

\vspace*{0.5cm}

{\it Acknowledgment.} We are grateful  to  Anton Ayzenberg, Alexander Gaifullin, Nikita Klemyatin, Yael Karshon, Mikiya Masuda,   Taras Panov and  Hendrik S\" uss     for useful and fruitful discussions on the results of this paper.

\section{The canonical action of $T^n$ on $G_{n,k}$}

The complex  Grassmann manifolds $G_{n,k} =G_{n,k}(\mathbb{C})$ consist of all $k$-dimensional complex subspaces in $\mathbb{C}^{n}$.  The manifolds $G_{n,k}$ and $G_{n, n-k}$ are diffeomorphic.   For the standard scalar product  in $\C ^{n}$  there is the canonical   diffeomorphism  $c_{nk} : G_{n,k}\to G_{n, n-k}$  which sends any $k$-dimensional subspace of $\C ^{n}$ sends to its orthogonal complement.
The coordinate-wise action of the compact torus $T^n$ on $\mathbb{C}^n$ induces the canonical action
 of $T^{n}$  on $G_{n,k}$. The canonical diffeomorphism $c_{nk}$ is equivariant for this action.
One of  the well known and important problems is the  description of  the combinatorial structure and  algebraic topology of the orbit space $G_{n,k}/T^n\cong G_{n,n-k}/T^n$.

\begin{ex}
The manifold $G_{n,1} = \mathbb{C}P^{n-1}$ is  a toric manifold and  its orbit space $G_{n,1}/T^n$ is homeomorphic to the standard simplex $\Delta ^{n-1}$    in $\mathbb{R}^{n}$.
\end{ex}

\subsection{Pl\"ucker coordinates}
Let us fix a basis in  $\mathbb{C}^{n}$ and  fix  a   basis  in   $L\in G_{n,k}$. In these base, $L$  can be be represented by the  $(n\times k)$ - matrix $A_{L}$. Denote by  $P^{I}(A_{L})$  a  $(k\times k)$ -  minor of $A_{L}$, which is  determined by the rows indexed by the elements of $I$,  where $I\subset \{1,\ldots , n\}$ such that    $|I| = k$. The  complex numbers $(P^{I}(A_{L}))$, where $I$ runs through all subsets  of $\{1,\ldots n\}$ such that $|I|=k$,  are known as  the Pl\"ucker coordinates of the subspace  $L\subset \mathbb{C}^{n}$. The Pl\"ucker coordinates $(P^{I}(L))$ are defined uniquely, up to common scalar,  and they give  
the   Pl\"ucker embedding $G_{n,k}\to \mathbb{C}P^{N-1}$, $N= {n\choose k}$ which is  defined by
\begin{equation}
L\rightarrow P(L) = (P^{I}(A_{L})),\;\;  I\subset  \{1,\ldots n\}, \; |I|=k
\end{equation}
The Pl\"ucker embedding provides $G_{n,k}$ with the  structure of a complex $k(n-k)$-dimensional algebraic projective  manifold.

\begin{ex}
The image of the Pl\"ucker embedding of  $G_{4,2}$ in  $\mathbb{C}P^5$ is the hypersurface
\[
z_{12}z_{34} + z_{14}z_{23} = z_{13}z_{24}.
\]
\end{ex}
\begin{ex}\label{Pl2}
The image of the Pl\"ucker embedding of  $G_{5,2}$ in  $\mathbb{C}P^9$  is the intersection  of the five    hypersurfaces:
\[
z_{12}z_{34} + z_{14}z_{23} = z_{13}z_{24}, \;\;
z_{12}z_{35} + z_{15}z_{23} = z_{13}z_{25},
\]
\[
z_{12}z_{45} + z_{15}z_{24} = z_{14}z_{25},\;\;
z_{13}z_{45} + z_{15}z_{34} = z_{14}z_{35},\;\;
z_{23}z_{45} + z_{25}z_{34} = z_{24}z_{35}.
\]
\end{ex}

\begin{ex}
The image of the Pl\"ucker embedding of  $G_{n,2}$, $n\geq 4$  in  $\mathbb{C}P^{N-1}$, $N = {n\choose 2}$   is the intersection of the ${n\choose 4}$ quadratic    hypersurfaces:
\[
z_{ij}z_{kl} + z_{jk}z_{il} = z_{ik}z_{jl}, \;\; 1\leq i< j < k <l\leq n.
\]
\end{ex}
\begin{rem}
Note that the normal bundle for $G_{n,2}$ in $\mathbb{C}P^{N-1}$ is a  complex vector bundle  of  the  dimension ${n-2\choose 2}$. Thus, for $n>4$  we have the  algebraic manifold $G_{n,2}\subset \mathbb{C}P^{N-1}$ without singularities, which is not a complete intersection.
\end{rem}

Let  $\rho _{n, k} : T^{n}\to \mathbb{T}^{N}$ , $N = {n\choose k}$ be a representation  given by the $k$-th exterior power
\[
(t_1,\ldots, t_n) \to (t_1\cdots t_k,\ldots , t_{n-k+1}\cdots t_n).
\]
Consider the action  of $T^{n}$ on $\C P^{N-1}$, $N= {n\choose k}$, which is given as a composition of the standard action of $T^{N}$ on $\C P^{N-1}$ and the representation $\rho _{n,k}$.  It directly follows:

\begin{lem}\label{Plembed}
The Pl\"ucker embedding
\[
T^{n}\;  \curvearrowright G_{n,k}\;  \stackrel{\rho _{n,k}}{\to}\;  \mathbb{C} P^{N-1}\;  \curvearrowleft \; T^{n}
\]
is equivariant for the canonical action of $T ^n$ on $G_{n,k}$ and the $k$-th exterior  power action of $T^{n}$ on $\C P^{N-1}$.
\end{lem}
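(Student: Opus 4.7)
The plan is to verify equivariance at the level of Pl\"ucker coordinates by a direct computation of how the diagonal torus acts on the minors of a matrix representative.

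First I would fix $L\in G_{n,k}$ together with a basis, and let $A_L$ be the corresponding $(n\times k)$-matrix whose columns span $L$. The canonical action of $T^n$ on $\C^n$ is the coordinate-wise diagonal action, so for $t=(t_1,\ldots,t_n)\in T^n$ the subspace $t\cdot L$ is spanned by the columns of $D(t)\cdot A_L$, where $D(t)=\mathrm{diag}(t_1,\ldots,t_n)$. Note that a change of basis of $L$ (i.e.\ multiplication of $A_L$ on the right by an element of $\GL(k,\C)$) rescales all Pl\"ucker coordinates by the same common factor $\det$, so it does not affect the image in $\C P^{N-1}$; in particular the argument below is independent of the chosen representative matrix.

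Next I would compute $P^I(D(t)\cdot A_L)$ for an arbitrary $k$-subset $I=\{i_1<\cdots<i_k\}\subset\{1,\ldots,n\}$. The submatrix of $D(t)\cdot A_L$ indexed by the rows $I$ is obtained from the corresponding submatrix of $A_L$ by multiplying the $j$-th row by $t_{i_j}$. By multilinearity of the determinant in its rows one has
\[
P^I(D(t)\cdot A_L) \;=\; t_{i_1}\cdots t_{i_k}\cdot P^I(A_L).
\]
This is precisely the scalar by which the coordinate of $\C^N$ indexed by $I$ is multiplied under the composition of $\rho_{n,k}$ and the standard $T^N$-action on $\C P^{N-1}$: by definition $\rho_{n,k}(t)$ acts on the $I$-th coordinate by the monomial $t_{i_1}\cdots t_{i_k}$. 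Hence the Pl\"ucker embedding $P\colon G_{n,k}\to\C P^{N-1}$ satisfies $P(t\cdot L)=\rho_{n,k}(t)\cdot P(L)$, which is the desired equivariance.

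There is no real obstacle here beyond bookkeeping; the only step that requires care is checking that the scalar factor $t_{i_1}\cdots t_{i_k}$ in $P^I(D(t)\cdot A_L)$ really matches the definition of $\rho_{n,k}$ on the basis of $\Lambda^k\C^n$ indexed by the $k$-subsets $I$. This is immediate from the fact that $\rho_{n,k}$ is defined as the $k$-th exterior power of the standard representation of $T^n$, so the weight of the basis vector $e_{i_1}\wedge\cdots\wedge e_{i_k}$ is exactly $(t_1,\ldots,t_n)\mapsto t_{i_1}\cdots t_{i_k}$.
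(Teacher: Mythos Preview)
Your proof is correct. The paper does not actually give a proof of this lemma; it simply writes ``It directly follows:'' before the statement, and your computation with $P^I(D(t)\cdot A_L)=t_{i_1}\cdots t_{i_k}\,P^I(A_L)$ is exactly the direct verification the authors have in mind.
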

The  standard moment map 
$\mu: \mathbb{C} P^{N-1} \to \Delta^{N-1}\subset \mathbb{R}^N$  for $T^{N}$-action on $\mathbb{C} P^{N}$  is given by
\[
\mu({\bf z}) = \frac{1}{|{\bf z}|^2}(|z_1|^2,\ldots , |z_{N}|^2),
\]
where ${\bf z} = (z_1:\ldots :z_{N})$, $|{\bf z}|^2 = \sum\limits_{i=1}^{N}|z_i|^2$ and $\Delta ^{N-1}$ is the standard simplex.
In this way it is defined the map 
\[
\mu \circ \rho _{n,k} : G_{n,k}\to \Delta ^{N-1}.
\]
\subsection{Moment map}
Let $\mathbb{R}^{n}$  be  $n$-dimensional real  vector space with a fixed basis. The weight vectors  of the representation $\rho _{n,k}$ are:
\[
 \Lambda _{I}\in \Z ^{n}\subset \mathbb{R}^n,\quad (\Lambda _{I})_{j} = 1\; \text{for}\;  j\in I,\quad (\Lambda _{I})_{j} =0 \; \text{for}\; j\notin I,
\]
where $I \subset \{1,\ldots ,n\}$, $|I|=k$. In other words,  $\Lambda_{I}$ has $1$ at $k$ places\, and\, it has $0$ at the other $(n-k)$ places.

The moment map $\mu : G_{n,k} \to \mathbb{R}^n$, see~\cite{K},~\cite{GS}  is defined  by
\begin{equation}\label{momentmap}
\mu (L) = \frac{1}{|P(L)|^2}\sum |P^{I}(A_{L})|^2\Lambda _{I},\;\;\;
 |P(L)|^2 =
 \sum |P^{I}(A_{L})|^2,
\end{equation}
where 
$\Lambda _{I}\in R^n$,\quad $(\Lambda _{I})_{j} = 1$ for $j\in I$,\quad $(\Lambda _{I})_{j} =0$ for $j\notin I$ and 
the sum goes  over all subsets $I \subset \{1,\ldots ,n\}$, $|I|=k$.

\subsection{Hypersimplex}
 The image of $\mu$ is the polytope which is obtained as the  convex hull of the vectors $\Lambda _{I}$. This  polytope  is known as the hypersimplex and it is denoted by  $\Delta _{n,k}$. More precisely,
\[
\Delta _{n,k} = I^n\cap \{(x_1,\ldots ,x_n)\in \R^{n},\; \sum\limits _{i=1}^{n}x_i =k\}.
\]
  It follows that  $\Delta _{n,k}$ belongs to the hyperplane $x_1+\cdots +x_n=k$ in $\mathbb{R}^{n}$ which  implies  that 
$\Delta _{n,k}$ is a  $(n-1)$-dimensional polytope.

\begin{ex}
$\Delta _{n,1}$ is the standard simplex  $\Delta ^{n-1}$, while $\Delta _{4,2}$ is the octahedron.
\end{ex}

 The hypersimplex $\Delta _{n,k}$ has ${n\choose k}$ vertices,  $2n$ facets and $k(n-k)$ edges meet at every vertex. The set of facets for $\Delta _{n,k}$  can be expressed as a  union of  $n$ copies of  $\Delta _{n-1, k}$ and $n$ copies of  $\Delta _{n-1, k-1}$, ~\cite{Ziegler}.  We shortly describe how these facets arise from the Grassmannian $G_{n,k}$.   

The   embeddings
$
i_{q} : \mathbb{C}^{n-1}\to \mathbb{C}^{n}$,  $ i_{q}(z_1,\ldots, z_{n-1}) = (z_1, \ldots z_{q-1}, 0, z_{q},\ldots, z_{n-1}),  1\leq q\leq n-1$ are equivariant for the coordinate-wise torus action and they   induce the embeddings
$\tilde{i_{g}} : G_{n-1,k}\to G_{n,k}$ and $\hat{i_{q}} : G_{n-1, k-1}\to G_{n,k}$ for any $n\in \mathbb{N}$, whose images we denote by $G_{n-1,k}(q)$ and $G_{n-1,k-1}(q)$, respectively. These embeddings are equivariant for the corresponding embedding of the standard tori which  implies that they are compatible with the moment map.  Therefore they induce the embeddings
\[
\Delta _{n,k-1}(q)\to \Delta _{n+1, k} \;\; \text{and}\;\; \Delta _{n,k}(q)\to \Delta _{n+1,k}, \;\; 1\leqslant q\leqslant n+1.
\]
 It is straightforward to see that    $\Delta _{n-1,k}(q)\subset I^{n}_{x_q=0}$,    $\Delta _{n-1,k-1}(q)\subset I^{n}_{x_q=1}$,  and
 $\Delta _{n-1,k}(q)$,  $\Delta _{n-1,k-1}(q)$  belong to the boundary of $\Delta _{n, k}$ for all  $1\leq q\leq n$.

\begin{rem}
We note that the  set of vertices for $\Delta _{n,k}$ decomposes into two subsets according to the formula ${n\choose k} = {n-1\choose k} + {n-1\choose k-1}$. There are $n$ such decompositions  given by the pairs $(\Delta _{n-1, k}(q), \Delta _{n-1,k-1}(q)),$  $1\leqslant q\leqslant n$. In this way we obtain the combinatorial structure on the sphere  $S^{n-2}$. The symmetric  group $S_{n}$ is a symmetry group for this structure. More precisely this combinatorial structure is given by  the orbit  of  the pair $\big(\Delta _{n-1, k}(1), \Delta _{n-1, k-1}(1)\big)$ by an  action of the group $S_{n}$ .
\end{rem}

\subsection{Admissible polytopes and strata}\label{aps}
Using Pl\"ucker coordinates it can be defined a smooth atlas on $G_{n,k}$. The charts are given by $M_{I} = \{L\in G_{n,k} : P^{I}(L)\neq 0\}$, $I\subset \{1,\ldots ,n\}$, $|I|=k$ and the homeomorphisms $u_{I} : M_{I} \to \C ^{k(n-k))}$ are constructed as follows. Any $L\in M_{I}$ can be uniquely represented by the $(n\times k)$-matrix $A_L$ whose submatrix, determined by the rows indexed by $I$ is an identity matrix.  In this way,  the matrix $A_{L}$ has $k(n-k)$-variables $a_{ij}(L)$ and the  homeomorphism $u_{I} : M_{I}\to \C ^{k(n-k)}$ is given by $u_{I}(L) = (a_{ij}(L))$, where $i\notin I$.  It can be easily seen  that any chart $M_{I}$ contains exactly one fixed point given by the element $L$ such that $u _{i}(L) = 0 \in \C ^{k(n-k)}$.
The number of charts is  $m ={n\choose k}$ and it coincides with the number  of  fixed points for the canonical $T^{n}$-action on $G_{n,k}$ .  The charts $M_{I}$ are open,  $T^n$ invariant and  dense sets  in $G_{n,k}$.  It follows  that the sets   $Y_{I} = G_{n,k}\setminus  M_{I}$ are closed and $T^n$-invariant.

Let us enumerate the charts as  $(M_{I_1},u_{I_1}),\ldots ,(M_{I_{m}},u _{I_m})$. Define the spaces $W_{\sigma}$, where $\sigma = \{I_1,\ldots ,I_l\}$ and   $I_i\subset \{1,\ldots ,n\}$ such that  $|I_i| = k$,  $1\leq i\leq l$ and $1\leq l\leq m$,   by 
\begin{equation}\label{st}
W_{\sigma} = M_{I_1}\cap \cdots M_{I_l}\cap Y_{I_{l+1}}\cap \cdots Y_{I_{m}},
\end{equation}
where $\{I_{l+1},\ldots ,I_{m}\} = \{I_1, \ldots , I_{m}\} \setminus \{I_1,\ldots ,I_l\}$.  More precisely,
\begin{equation}\label{str}
W_{\sigma} = \{ L \in G_{n,k} : P^{I}(L)\neq 0 \; \text{for}\; I\in \sigma\;  \text{and}\; P^{I}(L)=0 \; \text{for}\; I\notin \sigma\},
\end{equation}

\begin{defn}
A non-empty space $W_{\sigma}$  is said to be the  stratum . The  index set $\sigma$  of a stratum  $W_{\sigma}$ is   said to be an  admissible set.   
\end{defn}
\begin{defn}
The stratum $W_{\sigma}=M_{I_1}\cap\cdots \cap M_{I_{m}}$, where $\sigma =\{I_1, \ldots , I_m\}$  we call the main stratum and denote by $W$. 
\end{defn}
We denote by $\mathcal{A}$  the set of all admissible sets.
\begin{lem}
For $\sigma =\{I\}$, $W_{\sigma}$ is a fixed point.
\end{lem}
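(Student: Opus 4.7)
The plan is to show that the only subspace $L \in G_{n,k}$ with a single nonzero Pl\"ucker coordinate $P^I(L)$ is the coordinate subspace $L_I = \operatorname{span}\{e_i : i \in I\}$, and then to observe that $L_I$ is fixed by the canonical $T^n$-action. Since $W_{\{I\}}$ is precisely the set of such subspaces by definition~\eqref{str}, this identifies $W_{\{I\}} = \{L_I\}$ as a fixed point.

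First I would pass to the chart $M_I$ (which is permitted since $P^I(L) \neq 0$ on $W_{\{I\}}$) and represent $L$ uniquely by the $(n\times k)$-matrix $A_L$ whose submatrix on the rows indexed by $I = \{i_1 < \cdots < i_k\}$ is the identity. The main computational step is the standard observation that for $r \notin I$ and $j \in \{1,\ldots,k\}$, the Pl\"ucker coordinate
\[
P^{I'}(A_L) = \pm\, a_{r,j}(L), \qquad I' = (I \setminus \{i_j\}) \cup \{r\},
\]
as one sees by Laplace expansion along the row $r$ of the $(k\times k)$ minor indexed by $I'$. Since all such $I'$ lie outside $\sigma = \{I\}$, the condition $P^{I'}(L) = 0$ forces every off-identity entry $a_{r,j}$ of $A_L$ to vanish. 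Hence $L$ is uniquely determined, namely $L = L_I$.

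Next I would check that $L_I$ is a fixed point of the canonical $T^n$-action: the torus acts on $\C^n$ by coordinatewise scaling, so each coordinate subspace is preserved setwise, and thus fixed as a point of $G_{n,k}$. This also agrees with the well-known fact recorded in Section~\ref{aps} that each chart $M_I$ contains exactly one fixed point, namely $u_I^{-1}(0) = L_I$, and that there are $m = \binom{n}{k}$ such fixed points in total.

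The only nontrivial step is the minor identity $P^{I'}(A_L) = \pm a_{r,j}(L)$, which is routine but should be stated explicitly since the entire argument hinges on it; everything else is bookkeeping with the definition of $W_\sigma$ and the equivariance of the chart $M_I$.
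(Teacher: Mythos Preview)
Your proof is correct and follows essentially the same approach as the paper's own argument: pass to the chart $M_I$, represent $L$ by the matrix with identity block on the rows indexed by $I$, and use the vanishing of the other Pl\"ucker minors to force all remaining entries to be zero. The paper compresses your explicit minor identity $P^{I'}(A_L) = \pm a_{r,j}(L)$ into the single sentence ``This implies that all elements of $A$ apart from $A_I$ are zero,'' so your version simply makes that step transparent.
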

\begin{proof}
In this case,  any element $L\in W_{\sigma}$ can be represented by the  matrix $A$ such that $A_{I}=I_{d}$ and any  $k\times k$-minor of $A$ different from that one given by $A_{I}$ is zero. This  implies that all elements of $A$ apart from $A_{I}$ are zero,  which  implies  that $W_{\sigma}$ consists of one element that is  a fixed point.  
\end{proof}

\begin{lem}
The strata $W_{\sigma}$ for $G_{n,k}$ are disjoint subspaces which  are  $T^n$-invariant  and give the equivariant subdivision $G_{n,k} = \cup _{\sigma} W_{\sigma}$.
\end{lem}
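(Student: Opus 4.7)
The plan is to verify the three assertions directly from the defining description (\ref{str}) of the strata, using only the elementary transformation law of the Pl\"ucker coordinates under the canonical torus action.

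First, I would show disjointness. Suppose $L \in W_{\sigma} \cap W_{\tau}$ for two index sets $\sigma, \tau \in \mathcal{A}$. By (\ref{str}) the collection $\{I : P^{I}(L) \neq 0\}$ is simultaneously equal to $\sigma$ and to $\tau$, so $\sigma = \tau$. Next, for the covering $G_{n,k} = \cup_{\sigma} W_{\sigma}$, I would associate to each $L \in G_{n,k}$ the set
\[
\sigma(L) = \{ I \subset \{1,\ldots,n\} \,:\, |I|=k,\; P^{I}(L) \neq 0 \}.
\]
This set is non-empty because the Pl\"ucker coordinates of a point in projective space cannot all vanish, so the Pl\"ucker embedding forces at least one $P^{I}(L)$ to be non-zero. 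By construction $L \in W_{\sigma(L)}$, hence $\sigma(L) \in \mathcal{A}$ and the $W_\sigma$ cover $G_{n,k}$.

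The key step is $T^n$-invariance, which amounts to the observation that the canonical action of $T^n$ on $G_{n,k}$ rescales each Pl\"ucker coordinate by a character of $T^n$. Concretely, using the representation $\rho_{n,k}$ from Lemma~\ref{Plembed}, for $t = (t_1,\ldots,t_n) \in T^n$ and any $k$-subset $I \subset \{1,\ldots,n\}$,
\[
P^{I}(A_{tL}) = \Bigl(\prod_{i \in I} t_i\Bigr) \cdot P^{I}(A_{L}).
\]
Since $|t_i|=1$, the scalar $\prod_{i\in I} t_i$ is non-zero, so $P^{I}(A_{tL}) = 0$ if and only if $P^{I}(A_L) = 0$. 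Therefore $\sigma(tL) = \sigma(L)$, which means $tL \in W_\sigma$ whenever $L \in W_\sigma$. This establishes $T^n$-invariance of each $W_\sigma$ and hence equivariance of the subdivision.

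I do not expect any serious obstacle: the whole statement is essentially a book-keeping consequence of the Pl\"ucker transformation law together with the set-theoretic definition (\ref{str}). The only mildly subtle point worth stating explicitly is that the set $\sigma(L)$ attached to $L$ always lies in $\mathcal{A}$, which is automatic because $W_{\sigma(L)}$ is non-empty (it contains $L$), and non-emptiness is precisely the admissibility condition introduced in the definition just above the lemma.
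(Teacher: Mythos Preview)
Your proof is correct and complete. The paper itself does not provide a proof for this lemma, treating it as an immediate consequence of the definition~(\ref{str}) and the equivariance of the Pl\"ucker embedding (Lemma~\ref{Plembed}); your write-up simply makes these implicit steps explicit.
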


From the definition of the moment map it follows  that $\mu (W_{\sigma}) = \stackrel{\circ}{P_{\sigma}}$, where   $P_{\sigma}$ is the convex hull of the points  $\Lambda _{I_1},\ldots ,\Lambda _{I_l}$ and  $\sigma =\{I_1, \ldots, I_l\}$. In particular for $l=1$ it holds  $\mu (W_{\sigma}) =\Lambda _{I}$. 

\begin{defn}
A polytope $P_{\sigma}$  in $\Delta _{n,k}$, whose interior is  the image  of the stratum $W_{\sigma}$ by the moment map that is $\mu (W_{\sigma}) = \stackrel{\circ}{P}_{\sigma}$   is called  the  admissible polytope.
\end{defn}

\subsection{$(\C ^{*})^{n}$-action on $G_{n,k}$}
The canonical action of $T^{n}$ on $G_{n,k}$ extends to a canonical action of the algebraic torus $(\C ^{*})^{n}$ on $G_{n,k}$. The strata we $W_{\sigma}$ are invariant for  the action of the algebraic torus $(\C ^{*})^{n}$ as well. 

\begin{defn}
The space $F_{\sigma} = W_{\sigma}/(\C ^{*})^{n}$ is called  the space of parameters of the stratum $W_{\sigma}$.
\end{defn}

The space of parameters of the main stratum $W$ is  denoted  by $F=F_{n,k}$.  We will omit the indices  $n,k$ when it is  clear from the context that the word is about the main stratum.

The moment map $\mu : G_{n,k}\to \Delta _{n,k}$ relates the $(\C ^{*})^{n}$-orbits on $G_{n,k}$ and some polytopes in $\Delta _{n,k}$. More precisely, the classical convexity theorem of~\cite{AT},~\cite{GUST} states:
\begin{thm}\label{moment-map-polytope}
Let  $\OO _{\C}(L)$ be an orbit of an element $L\in G_{n,k}$ under the canonical action of $(\C ^{*})^{n}$. Then
$\mu (\overline{\OO _{\C}(L)})$ is a convex polytope in $\R ^{n}$ whose vertex set is given by $\{ \Lambda _{I} | P^{I}(L)\neq 0 \}.$ The mapping $\mu$ gives a bijection between $p$-dimensional orbits of the group $(\C ^{*})^{n}$ in 
$\overline{\OO _{\C}(L)}$ and $p$-dimensional open faces of the polytope $\mu (\overline{\OO _{\C}(L)})$.
\end{thm}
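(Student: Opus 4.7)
The strategy is to reduce the statement to the classical Atiyah--Guillemin--Sternberg convexity theorem for projective toric varieties by exploiting the Pl\"ucker embedding. By Lemma~\ref{Plembed}, the Pl\"ucker map $P : G_{n,k} \hookrightarrow \C P^{N-1}$ is $T^n$-equivariant, where $T^n$ acts on $\C P^{N-1}$ through the representation $\rho_{n,k}$, and the moment map $\mu$ from~(\ref{momentmap}) factors as $\mu = \mu_{N-1} \circ P$, where $\mu_{N-1}$ is the standard moment map for the $\rho_{n,k}$-twisted $T^n$-action on $\C P^{N-1}$ with weights $\{\Lambda_I\}_{|I|=k}$. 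Since $G_{n,k}$ is closed in $\C P^{N-1}$, the closure $\overline{\OO_{\C}(L)}$ inside $G_{n,k}$ coincides with the closure of the $(\C^*)^n$-orbit of $P(L)$ inside $\C P^{N-1}$. This orbit closure is a (possibly singular) projective subvariety of $\C P^{N-1}$ that is invariant under $(\C^*)^n$ and contains a dense orbit, hence a toric variety in the broad sense.

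Next I would identify the $T^n$-fixed points lying in $\overline{\OO_{\C}(L)}$. The fixed points of $T^n$ in $\C P^{N-1}$ are the coordinate points $e_I$, and for $e_I$ to lie in the closure the corresponding coordinate must have been nonzero in $P(L)$, i.e.\ $I \in S := \{I : P^I(L) \neq 0\}$. Conversely, for every $I \in S$ I would produce a one-parameter subgroup $\lambda : \C^* \to (\C^*)^n$ such that $\lim_{t\to 0}\lambda(t)\cdot P(L) = e_I$, by choosing $\lambda$ so that the pairing $\langle \lambda, \Lambda_I\rangle$ is strictly smaller than $\langle \lambda, \Lambda_J\rangle$ for all $J \in S\setminus\{I\}$. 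Such a $\lambda$ exists because each $\Lambda_I$ is a distinct vertex of the hypersimplex $\Delta_{n,k}$, so it can be strictly separated from any finite set of other vertices by a linear functional. Thus the set of $T^n$-fixed points of $\overline{\OO_{\C}(L)}$ is exactly $\{e_I : I \in S\}$, and their images under $\mu$ are exactly $\{\Lambda_I : I \in S\}$.

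At this point the convexity theorem of Atiyah and Guillemin--Sternberg, applied to the compact $T^n$-invariant subspace $\overline{\OO_{\C}(L)} \subset \C P^{N-1}$, yields that $\mu(\overline{\OO_{\C}(L)})$ is the convex hull of $\mu$ evaluated on the $T^n$-fixed subset, namely $\mathrm{conv}\{\Lambda_I : I \in S\}$, and that its vertices are exactly the $\Lambda_I$ for $I \in S$ (each lies in a distinct vertex of $\Delta_{n,k}$, hence cannot be written as a nontrivial convex combination of the others). For the second assertion---the bijection between $p$-dimensional $(\C^*)^n$-orbits in $\overline{\OO_{\C}(L)}$ and $p$-dimensional open faces of the polytope---I would invoke the classical orbit--face correspondence for projective toric varieties: faces of the moment polytope correspond to $T^n$-invariant subvarieties, obtained as closures of coordinate subspaces spanned by $\{e_I : I \in F\cap S\}$, and each such subvariety is itself a toric variety whose unique open $(\C^*)^n$-orbit maps homeomorphically onto the interior of the corresponding face under $\mu$.

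The main obstacle is the limit-point step of the second paragraph: one must verify that the degeneration $\lim_{t\to 0}\lambda(t)\cdot P(L) = e_I$ takes place inside the subvariety $G_{n,k}\subset \C P^{N-1}$, so that the toric orbit closure in $\C P^{N-1}$ actually agrees with the closure inside $G_{n,k}$. This follows from the closedness of $G_{n,k}$ in $\C P^{N-1}$ and the fact that $P(L)$ already lies in $G_{n,k}$, but one must also check that no spurious limit point $e_J$ with $J\notin S$ can enter the closure, which is immediate since $(\C^*)^n$ preserves the support $\{I : z_I \neq 0\}$ of any Pl\"ucker vector. With these two facts in hand, the classical machinery of projective toric varieties supplies both the convexity and the orbit--face bijection.
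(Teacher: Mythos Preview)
The paper does not supply a proof of this theorem at all: it is stated with the preamble ``the classical convexity theorem of~\cite{AT},~\cite{GUST} states'' and is then used as a black box. Your proposal is a correct and standard reconstruction of how this specific statement follows from those cited results via the equivariant Pl\"ucker embedding, so in spirit it matches the paper exactly---both appeal to Atiyah and Guillemin--Sternberg.

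One small caveat worth noting in your write-up: the Atiyah--Guillemin--Sternberg theorem in its original form is stated for compact connected \emph{symplectic manifolds}, whereas $\overline{\OO_{\C}(L)}$ is typically a singular subvariety of $\C P^{N-1}$. You are implicitly relying on the version for projective toric varieties (orbit closures in projective space), which is indeed covered in Atiyah's paper and is the form the paper's citation~\cite{AT} intends; just be explicit about this so the reader does not think you are applying the smooth-manifold statement to a singular space.
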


The closure $\overline{\OO _{\C}(L)}$ is a toric variety for any $L\in G_{n,k}$. This toric variety  is a smooth manifold depending if the corresponding admissible polytope is a simple polytope or not. A natural question is to describe the singularities of these toric varieties.  It is done in~\cite{MMJ} for the case of $G_{4,2}$,  while in this paper we do it for $G_{5,2}$ by Theorem~\ref{toric}.

\begin{rem}
A notion of the stratum  in  $G_{n,k}$ is defined  in~\cite{GS},~\cite{GEL4} by  different methods in three equivalent ways. 
By the one of  these  definitions  two  points $L_1,L_2\in G_{n,k}$ are said to belong to the same  stratum in $G_{n,k}$ if
\[
\mu (\overline{\OO _{\C}(L_1)}) = \mu (\overline{\OO _{\C}(L_2)}).
\]
In other words, by~\cite{GS} the stratum  in  $G_{n,k}$  consists of all $(\C ^{*})^{n}$-orbits which are mapped by the moment map  to the same  polytope in $\Delta _{n,k}$.   It is straightforward to verify  that our definition of strata is equivalent to this definition.
\end{rem}

\begin{rem}
We want to note that our approach to the  notion of  strata given by~\eqref{st} is more general since it does not use the existence of $(\C ^{*})^n$-action on $G_{n,k}$,  which extends $T^n$-action.  This approach is  fundamental  for developing the theory of $(2l,q)$-manifolds. 
\end{rem}

\subsection{The  space of parameters of the main stratum}
Let us consider the main stratum $W\subset G_{n,k}$ and a point $L\in W$.  The subspace $L$ can be described  by a matrix and this matrix can be considered as $n$- tuple of $k$-dimensional  vectors. Any  $k$ vectors from  this $n$-tuple define the Pl\"ucker coordinate, which is non-zero, and,  therefore, these vectors  are linearly independent.  In this way,  we obtain  the map $W\to (\C ^{k}\setminus {\bf 0})^{n}/GL(k, \C )$.  This map is invariant for $(\C ^{*})^{n}$-action and it  induces an  embedding 
\begin{equation}\label{embF}
F = W/(\C ^{*})^{n} \to (\C P^{k-1})^{n}/PGL(k, \C).
\end{equation}
Note that this map plays an  important role in~\cite{Kap}.

Let $k=2$ and $\mathcal{W}_{n,2} = \{((z_1:z_1^{'}),\ldots ,(z_n:z_{n}^{'}))\in  (\C P^{1})^{n}\; | \; (z_i:z_i^{'})\neq (z_j:z_{j}^{'})\}$.  
\begin{lem}\label{Wn}
The projective group $PGL(2, \C)$ acts freely on $\mathcal{W}_{n,2}$ and $F\cong \mathcal{W}_{n,2}/PGL(2,\C)$.
\end{lem}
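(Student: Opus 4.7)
The strategy is to make the embedding~\eqref{embF} fully explicit in the case $k=2$ and to verify that its image is precisely $\mathcal{W}_{n,2}/PGL(2,\C)$, with the action of $PGL(2,\C)$ being free on $\mathcal{W}_{n,2}$.

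First I would fix $L\in W$ and choose any basis, giving an $n\times 2$ matrix $A_L$. Since every $2\times 2$ minor of $A_L$ is a nonzero Pl\"ucker coordinate, each row $r_i\in\C^2$ is nonzero and any two rows are linearly independent. Hence the assignment $L\mapsto([r_1],\dots,[r_n])$ yields a map $\phi\colon W\to \mathcal{W}_{n,2}$ depending on the choice of basis. A change of basis of $L$ multiplies $A_L$ on the right by $g\in GL(2,\C)$ and induces on each $[r_i]\in\C P^1$ the corresponding M\"obius transformation, while the $(\C^*)^n$-action on $A_L$ scales the $i$-th row by $t_i$ and hence fixes $[r_i]$. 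Passing to the quotients I obtain a well-defined map
\[
\bar\phi\colon F=W/(\C^*)^n\longrightarrow \mathcal{W}_{n,2}/PGL(2,\C),
\]
which coincides with the map~\eqref{embF} in the case $k=2$.

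The next step is to prove that $\bar\phi$ is a bijection. For surjectivity, given a tuple $([z_i:z_i'])$ of pairwise distinct points in $\C P^1$, I lift them to vectors $r_i=(z_i,z_i')\in\C^2\setminus\{0\}$ and form the matrix with these rows; pairwise distinctness of the points forces pairwise linear independence of the rows, so every $2\times 2$ minor is nonzero and the column span is a subspace $L\in W$ mapping to the prescribed tuple. For injectivity, if $\bar\phi(L_1)=\bar\phi(L_2)$, after right-multiplying the matrix of $L_1$ by a suitable element of $GL(2,\C)$ I may assume the rows of $A_1$ and $A_2$ are proportional, $r_i^{(1)}=t_i r_i^{(2)}$; then $A_1=\operatorname{diag}(t_1,\dots,t_n)\cdot A_2$, so $L_1,L_2$ lie in the same $(\C^*)^n$-orbit and define the same element of $F$. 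The freeness of $PGL(2,\C)$ on $\mathcal{W}_{n,2}$ is then the classical fact that a M\"obius transformation fixing three or more distinct points of $\C P^1$ is the identity, which applies since in the main-stratum situation $n\geq 3$.

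The point I expect to require the most care is the injectivity step, where one has to ensure that the $PGL(2,\C)$-equivalence on the target really collapses onto a single $(\C^*)^n$-orbit on the source rather than a coarser relation: the argument hinges on the observation that after the $GL(2,\C)$ adjustment the projective equalities $[r_i^{(1)}]=[r_i^{(2)}]$ upgrade to genuine linear proportionalities row by row. A secondary technical point is to upgrade $\bar\phi$ from a bijection to a homeomorphism, which follows because $\bar\phi$ and its inverse are given by polynomial data in affine Pl\"ucker charts and so descend to morphisms of algebraic varieties.
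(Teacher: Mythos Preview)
Your proof is correct and follows essentially the same approach as the paper: both identify the main stratum condition (all $2\times 2$ minors nonzero) with pairwise non-collinearity of rows, and then pass to $\mathcal{W}_{n,2}/PGL(2,\C)$ via the map~\eqref{embF}. The paper's proof is a one-paragraph sketch of exactly this idea, while you have filled in the bijectivity and freeness details explicitly.
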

\begin{proof}
The   matrices   which represent the elements from  the main stratum in $G_{n,2}$ are characterized  by the condition that all their $(2\times 2)$-minors are non-zero. This condition is  equivalent to the condition that the image of the map  $W\to (\C ^{2}\setminus {\bf 0})^{n}/GL(2,\C)$ consists of $n$-tuples of pairs, which are pairwise non-collinear.   It implies that $F\cong \mathcal{W}_{n,2}/PGL(2,\C)$.
\end{proof}
It is a  classical result that   $\mathcal{W}_{3,2}/PGL(2, \C)$ is a point. For  arbitrary $n$ we prove:

\begin{prop}\label{em}
The space of parameters $F$ is homeomorphic to   
\[
(\C P^{1})^{n-3}\setminus G_{n},\;\;  G_{n}= (\cup_{j=1}^{n} (A\times _{j} (\C P^1)^{n-1}))\cup (\cup _{1\leq i<j\leq n}\Delta _{i,j}(\C P^1)),
\]
where $A= \{(1:0), (0:1), (1:1)\}$, $A\times _{j} (\C P^1)^{n-1} = \C P^1\times \cdots \times \C P^1\times  \stackrel{j}{A}\times \C P^1\times \cdots \times\C P^{1}$   and $\Delta _{i,j}(\C P^1) \subset (\C P^{1})^{n-3}$ is given by the diagonal on $i$-th and $j$-th factor.
\end{prop}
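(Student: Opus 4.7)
The plan is to exploit the classical $3$-transitivity of $PGL(2,\C)$ on $\C P^1$ to normalize the first three entries of any $n$-tuple in $\mathcal{W}_{n,2}$ and read off the remaining $n-3$ points as a system of moduli.

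By Lemma~\ref{Wn}, $F \cong \mathcal{W}_{n,2}/PGL(2,\C)$, with the action free. I would begin by recalling that the evaluation map
\[
\mathrm{ev}\colon PGL(2,\C) \to (\C P^1)^{3}_{\text{dist}}, \qquad g\mapsto \bigl(g(1{:}0),\, g(0{:}1),\, g(1{:}1)\bigr),
\]
is a biholomorphism onto the configuration space of ordered triples of pairwise distinct points in $\C P^1$. Consequently, for every $(p_1,\ldots,p_n)\in \mathcal{W}_{n,2}$ there is a unique $g=g(p_1,p_2,p_3)\in PGL(2,\C)$ sending $(p_1,p_2,p_3)$ to $((1{:}0),(0{:}1),(1{:}1))$, and this element depends continuously on $(p_1,p_2,p_3)$.

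Next I would define the continuous map
\[
\Phi\colon \mathcal{W}_{n,2} \to (\C P^1)^{n-3}, \qquad \Phi(p_1,\ldots,p_n) = \bigl(g\cdot p_4,\ldots, g\cdot p_n\bigr).
\]
It is $PGL(2,\C)$-invariant, because replacing $(p_i)$ by $(h\cdot p_i)$ replaces $g$ by $g h^{-1}$, and the two factors cancel on $p_4,\ldots,p_n$. Hence $\Phi$ descends to a continuous $\bar\Phi\colon F \to (\C P^1)^{n-3}$. A two-sided inverse is produced by the continuous map
\[
\Psi(q_4,\ldots,q_n) = \bigl[(1{:}0),(0{:}1),(1{:}1),q_4,\ldots,q_n\bigr],
\]
defined on the locus where the $q_i$ are pairwise distinct and each avoids $A=\{(1{:}0),(0{:}1),(1{:}1)\}$; that $\Psi$ and $\bar\Phi$ are mutually inverse is immediate from the uniqueness of the normalizing $g$.

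Finally, I would identify the image of $\bar\Phi$. Since $(p_1,\ldots,p_n)\in \mathcal{W}_{n,2}$ means pairwise distinctness, and $g$ is a bijection of $\C P^1$, the image consists of exactly those tuples $(q_4,\ldots,q_n)$ in which all $q_i$ are pairwise distinct and none lies in $A$. Its complement is precisely $G_n$ as written: the sets of the form $A\times_j (\C P^1)^{\bullet}$ record the condition $q_j\in A$, and the partial diagonals $\Delta_{i,j}(\C P^1)$ record $q_i=q_j$. The only mildly technical point, and the nearest thing to an obstacle, is verifying that $\bar\Phi$ is genuinely a homeomorphism of quotient spaces and not merely a continuous bijection; but since the action of $PGL(2,\C)$ on $\mathcal{W}_{n,2}$ is free with a continuous global section (furnished by $(p_1,p_2,p_3)\mapsto g^{-1}$), the quotient topology on $F$ agrees with the topology on $(\C P^1)^{n-3}\setminus G_n$ pulled back via $\Psi$, and the argument concludes.
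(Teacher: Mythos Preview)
Your argument is correct and follows essentially the same route as the paper: both invoke Lemma~\ref{Wn}, use the sharp $3$-transitivity of $PGL(2,\C)$ on $\C P^1$ to normalize $(p_1,p_2,p_3)$ to $((1{:}0),(0{:}1),(1{:}1))$, and identify the remaining $n-3$ coordinates as moduli (the paper phrases this via the cross-ratio $[z_1,z_2,z_3,z_i]$, which is exactly your $g\cdot p_i$). You add the extra care of exhibiting a continuous global section to certify that the continuous bijection is a homeomorphism, a point the paper leaves implicit.
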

\begin{proof}
It follows from Lemma~\ref{Wn} that $F\cong \mathcal{W}_{n,2}/PGL(2,\C)$.  Let $(z_1,\ldots, z_n)\in \mathcal{W}_{n,2}$. Since any three different points $z_1, z_2, z_3$  in  $\C P^1$ produce a     projective basis  for $\C P^1$, there is a unique  homography   defined by  $z_1\to (1:0)$,   $z_2\to (0:1)$ and $z_3\to (1:1)$. Then  the mapping $\mathcal{W}_{n,2}\to (\C P^{1})^{n-3}$ is given by the cross-ratio   $z_i\to [z_1,z_2,z_3,z_i]$ of the points $z_1,z_2,z_3, z_i$. It implies that $\mathcal{W}_{n,2}/PGL(2,\C) \cong \{(z_4,\ldots , z_{n})\in (\C P^{1})^{n-3}, \; z_i\notin A, \; z_i\neq z_j\}$, which  proves the statement.
\end{proof}
\begin{cor}
$F_{4,2}$ is homeomorphic to $\C P^{1}\setminus A$ and $F_{5,2}$ is homeomorphic to $(\C P^1\times \C P^1)\setminus G_5$, 
where $G_5 = (A\times \C P^1)\cup (\C P^1\times A)\cup \Delta_{1,2}(\C P^1)$.
\end{cor}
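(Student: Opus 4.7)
The plan is to derive the corollary directly from Proposition~\ref{em} by specializing to $n=4$ and $n=5$, reading off the form of $G_n$ in each case. The content of Proposition~\ref{em} is that $F_{n,2}$ is homeomorphic to the subset of $(\C P^1)^{n-3}$ consisting of those tuples $(z_4,\ldots,z_n)$ with $z_i \notin A = \{(1:0),(0:1),(1:1)\}$ and $z_i \neq z_j$ for $i \neq j$. So the first step is simply to instantiate these constraints in each case.

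For $n=4$, the ambient space is $(\C P^1)^{4-3} = \C P^1$, parameterized by a single point $z_4$. The only constraint is $z_4 \notin A$; the pairwise-distinctness conditions are vacuous since there is only one free coordinate. Hence $F_{4,2} \cong \C P^1 \setminus A$, matching the first assertion.

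For $n=5$, the ambient space is $(\C P^1)^{5-3} = \C P^1 \times \C P^1$, parameterized by $(z_4, z_5)$. The constraint $z_4 \notin A$ removes the set $A \times \C P^1$; the constraint $z_5 \notin A$ removes $\C P^1 \times A$; and the single pairwise constraint $z_4 \neq z_5$ removes the diagonal $\Delta_{1,2}(\C P^1) \subset \C P^1 \times \C P^1$. Taking the union yields precisely $G_5 = (A \times \C P^1) \cup (\C P^1 \times A) \cup \Delta_{1,2}(\C P^1)$, so that $F_{5,2} \cong (\C P^1 \times \C P^1) \setminus G_5$.

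The statement is therefore an immediate corollary whose proof amounts to unpacking notation, so there is essentially no obstacle beyond bookkeeping. The only point that deserves a sentence of comment is the $n=4$ case, where one must observe that the diagonal type constraints impose no condition on a single factor; the rest is direct specialization of the description produced in the proof of Proposition~\ref{em}.
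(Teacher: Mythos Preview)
Your proof is correct and is exactly the approach the paper intends: the corollary is stated without proof immediately after Proposition~\ref{em}, and the intended argument is precisely the specialization to $n=4$ and $n=5$ that you carry out. There is nothing to add.
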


\begin{rem}
The space $F_{5,2}$ is a two-dimensional open algebraic manifold. In order to describe  the orbit space $G_{5,2}/T^5$ it is necessary to find a  compactification of the space $F_{5,2}$ that  corresponds  to  the compactification of the orbit space of the main stratum $W/T^5$ in $G_{5,2}/T^5$.
\end{rem} 
\begin{prop}\label{enm}
The non-point space of parameters $F_{\sigma}$  of a  stratum $W_{\sigma }$ in $G_{n,2}$,   which is different from the main stratum,  can be embedded in $(\C P^{1})^{l}$ for $1\leq l \leq n-4$.
\end{prop}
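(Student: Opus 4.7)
The plan is to mimic the cross-ratio argument of Proposition~\ref{em}, replacing the main stratum by an arbitrary non-principal $W_\sigma$ and tracking the combinatorial data recorded by the admissible set $\sigma$. First I would represent every $L\in W_\sigma\subset G_{n,2}$ by its $(n\times 2)$-matrix $A_L$ viewed as an ordered tuple of rows $v_1,\ldots,v_n\in\C^2$; for $k=2$ the vanishing $P^{ij}(L)=0$ is equivalent to $v_i$ and $v_j$ being linearly dependent, so $\sigma$ records exactly the subset $Z\subseteq\{1,\ldots,n\}$ of indices with $v_i=0$ together with the collinearity equivalence relation on $\{1,\ldots,n\}\setminus Z$.

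Second, let $r=r(\sigma)$ be the number of equivalence classes of non-zero rows; since the rows must span $\C^2$ the inequality $r\geq 2$ holds. Modding out by $(\C^*)^n$ turns each non-zero row into a well-defined point of $\C P^1$ while fixing zero rows, so $W_\sigma/(\C^*)^n$ is identified with the configurations of $r$ distinct points in $\C P^1$ with prescribed multiplicities and a prescribed set $Z$ of zero rows. The residual $GL(2,\C)/\C^*=PGL(2,\C)$ then acts diagonally on these $r$ points, and $F_\sigma$ is exactly this quotient.

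Third, assuming $r\geq 3$, I would invoke the classical fact that $PGL(2,\C)$ acts simply transitively on ordered triples of distinct points of $\C P^1$: fixing representatives from three distinct equivalence classes and sending them to $(1:0)$, $(0:1)$, $(1:1)$ via the associated homography produces an embedding $F_\sigma\hookrightarrow(\C P^1)^{r-3}$, exactly as in the proof of Proposition~\ref{em}. To conclude, I would bound $r$: since $W_\sigma$ is not the main stratum, either some $P^{ij}$ with both rows non-zero vanishes or some row is zero, and in either case some collapse occurs, giving $r\leq n-1$. On the other hand, $F_\sigma$ is a single point whenever $r\in\{2,3\}$, because $PGL(2,\C)$ is transitive on pairs and on ordered triples of distinct points, so the non-point hypothesis forces $r\geq 4$. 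Setting $l=r-3$ yields $1\leq l\leq n-4$.

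The main obstacle I anticipate is purely bookkeeping: one must check that the ``configuration'' picture is genuinely compatible with the quotient, i.e.\ that the map from $W_\sigma/(\C^*)^n$ to ordered configurations of $r$ distinct points of $\C P^1$ (with prescribed multiplicities and zero locus) is a bijection, and that the cross-ratio embedding is well-defined on the actual image. This reduces to the observations that $(\C^*)^n$ acts by independent scaling on each row while $GL(2,\C)$ acts by a common base change, and the two actions fit together to give the $PGL(2,\C)$-action on $\C P^1$ after projectivization.
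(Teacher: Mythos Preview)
Your proof is correct and follows essentially the same approach as the paper: both arguments use that a non-main stratum forces at least one collinearity/zero row (so at most $n-1$ distinct directions) while a non-point $F_\sigma$ forces at least four pairwise non-collinear rows, and then apply the cross-ratio embedding from Proposition~\ref{em} to land in $(\C P^1)^{r-3}$. Your version is simply more explicit in tracking the zero-row set $Z$ and the number $r$ of collinearity classes, whereas the paper states these bounds more tersely.
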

\begin{proof}
Since $W_{\sigma}$ is not the main stratum its points have at least one common  zero Pl\"ucker coordinate. It implies that at least two rows in the matrix representing a point of this stratum are collinear. Note that the condition that $F_{\sigma}$ is not a point  implies  that the matrices which represent  $W_{\sigma}$  must have at least four rows such that  any two of them  are non-collinear.   Therefore,  $W_{\sigma}/(\C ^{*})^{n}$  can be embedded  in $(\C P^{1})^{s}/PGL(2,\C )$, $4\leq s\leq n-1$,  which  implies that $F_{\sigma}$ can be embedded  in $(\C P^{1})^{s-3}$.
\end{proof}

\subsection{Action of the symmetric group $S_n$}
The symmetric group $S_n$ acts on  $\C ^{n}$ by permuting  coordinates. This action  induces the action of $S_n$ on $G_{n,k}$.

The  $S_n$-action on $G_{n,k}$ can be  interpreted as a  $S_n$-action on the corresponding $(n\times k)$- matrices $A_L$, which is given by the permutation of  the rows. It implies  that  $L_1,L_2\in G_{n,k}$ has the same non-zero Pl\"ucker coordinates if and only if  $s(L_1), s( L_2)$  has the same non-zero Pl\"ucker coordinates for any $s\in S_n$. In other words $L_1, L_2$ belong to the same stratum if and only if $s(L_1), s(L_2)$ belong to the same stratum, for any $s\in S_5$. Therefore the group $S_n$ acts on the set of all strata for $G_{n,k}$.   Since this action is equivariant relative to the action of the torus $(\C ^{*})^{n}$, we deduce:
\begin{lem}\label{Sn-action} 
The canonical action of  $S_n$ on $G_{n,k}$ induces an $S_n$- action  on the set of  admissible polytopes and the set of spaces of parameters of the strata by $s(P_{\sigma}) = \mu (s(W_{\sigma}))$ and $s( F_{\sigma}) = s( W_{\sigma})/(\C ^{*})^{n}$.
\end{lem}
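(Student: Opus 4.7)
The statement has two parts: that $S_n$ acts on the set of admissible polytopes via $s(P_\sigma)=\mu(s(W_\sigma))$, and that it acts on the set of spaces of parameters via $s(F_\sigma)=s(W_\sigma)/(\C^*)^n$. Both will follow once one checks the relevant equivariance properties; the text already established that $S_n$ permutes the strata, so the stratum $s(W_\sigma)$ is again a stratum and thus equals $W_{s(\sigma)}$ for some admissible set $s(\sigma)$. The first step of the plan is to make this book-keeping explicit: for $s\in S_n$ and $I\subset\{1,\ldots,n\}$ with $|I|=k$, put $s(I)=\{s(i):i\in I\}$, and verify directly from~\eqref{str} that if $\sigma=\{I_1,\ldots,I_l\}$ then $s(W_\sigma)=W_{\{s(I_1),\ldots,s(I_l)\}}$. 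This uses only that permuting the rows of $A_L$ permutes the Plücker minors up to sign, so the pattern of zero/nonzero Plücker coordinates is carried by $I\mapsto s(I)$.

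Next I would establish equivariance of the moment map with respect to the $S_n$-action on $\R^n$ that permutes coordinates. Writing $s\cdot x$ for this permutation action, the key identity is $s\cdot\Lambda_I=\Lambda_{s(I)}$, which is immediate from the definition of $\Lambda_I$ as the indicator vector of $I$. Combined with $|P^{s(I)}(A_{s(L)})|^2=|P^I(A_L)|^2$ (the moment map uses only absolute values of Plücker coordinates, so the sign in the permutation of minors is irrelevant), the definition~\eqref{momentmap} gives
\[
\mu(s(L))=\frac{1}{|P(L)|^2}\sum_I|P^I(A_L)|^2\Lambda_{s(I)}=s\cdot\mu(L).
\]
Applying this to the whole stratum yields
\[
s\bigl(\mu(W_\sigma)\bigr)=\mu(s(W_\sigma))=\mu(W_{s(\sigma)})=\stackrel{\circ}{P}_{s(\sigma)},
\]
and since $s$ is an affine isomorphism of $\R^n$ it maps the closure $P_\sigma$ to $P_{s(\sigma)}$. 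This shows the formula $s(P_\sigma)=\mu(s(W_\sigma))$ is well defined and gives an $S_n$-action on the set of admissible polytopes.

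For the induced action on spaces of parameters, I would verify the compatibility of the $S_n$- and $(\C^*)^n$-actions on $G_{n,k}$. For $s\in S_n$, $t=(t_1,\ldots,t_n)\in(\C^*)^n$ and $L\in G_{n,k}$ one checks directly on $\C^n$ (and hence on matrices $A_L$ whose rows are permuted by $s$) that
\[
s(t\cdot L)=s(t)\cdot s(L),\qquad s(t):=(t_{s^{-1}(1)},\ldots,t_{s^{-1}(n)}).
\]
Thus $s$ sends $(\C^*)^n$-orbits to $(\C^*)^n$-orbits, and in particular maps the orbit space $W_\sigma/(\C^*)^n$ homeomorphically onto $W_{s(\sigma)}/(\C^*)^n$. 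This gives the well-defined formula $s(F_\sigma)=s(W_\sigma)/(\C^*)^n=F_{s(\sigma)}$ and the required $S_n$-action on the set of spaces of parameters. Finally, one verifies that $s_1\circ s_2$ on either set coincides with the action of $s_1 s_2$, which is immediate from functoriality of all the constructions used.

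The only potentially tricky point is the sign in $P^{s(I)}(A_{s(L)})=\pm P^I(A_L)$ arising from reordering rows of a minor; the main obstacle is merely to note that this sign disappears in $|P^I|^2$, so no additional argument is needed to get equivariance of $\mu$. Everything else is a direct verification from the definitions.
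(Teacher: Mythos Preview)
Your proof is correct and follows the same approach as the paper, which in fact does not give a separate proof of this lemma but simply deduces it from the discussion preceding the statement (that $S_n$ permutes the rows of $A_L$, hence permutes the strata, and that this action is equivariant relative to the $(\C^*)^n$-action). Your write-up makes explicit the equivariance of $\mu$ with respect to the permutation action on $\R^n$ and the compatibility $s(t\cdot L)=s(t)\cdot s(L)$, which are exactly the points the paper leaves implicit; the sign observation for Pl\"ucker minors is a nice touch but, as you note, immaterial for $|P^I|^2$.
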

\begin{lem}\label{Sn-action-charts}
The strata $W_{\sigma}$ and $s(W_{\sigma})$ belongs to  the same number of charts for any stratum $W_{\sigma}$ and any $s\in S_n$. Consequently, the admissible polytopes $P_{\sigma}$ and $s(P_{\sigma})$ have the same number of vertices.
\end{lem}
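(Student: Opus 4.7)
The plan is to track how an element $s\in S_n$ permutes both the Plücker coordinates and the charts, and then to observe that the statements reduce to the obvious fact that a permutation is a bijection on the index set.

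First I would make explicit the action of $S_n$ on Plücker coordinates. If $L\in G_{n,k}$ is represented by the matrix $A_L$ of rank $k$ and $s\in S_n$ permutes the rows of $A_L$, then by the expansion of determinants with respect to row permutations one has
\[
P^{I}(A_{s(L)}) \;=\; \varepsilon(s,I)\, P^{s^{-1}(I)}(A_L)
\]
for some sign $\varepsilon(s,I)=\pm 1$. Since we only care whether a Plücker coordinate vanishes, this gives the clean statement that $P^{I}(s(L))\neq 0$ if and only if $P^{s^{-1}(I)}(L)\neq 0$. Using the definition \eqref{str} of strata, this immediately yields $s(W_\sigma)=W_{s(\sigma)}$, where $s(\sigma)=\{s(I)\mid I\in\sigma\}\subset\binom{\{1,\ldots,n\}}{k}$.

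Next I would address the count of charts. Recall that by the definition of the atlas, a stratum $W_\tau$ is contained in the chart $M_I$ exactly when $I\in\tau$, so the number of charts containing $W_\tau$ equals $|\tau|$. Applying this to $\tau=\sigma$ and $\tau=s(\sigma)$, and using the trivial fact that $s$ is a bijection on $\binom{\{1,\ldots,n\}}{k}$ so that $|s(\sigma)|=|\sigma|$, the first assertion follows.

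For the assertion about admissible polytopes, observe that every $\Lambda_I$ with $|I|=k$ is a vertex of the hypersimplex $\Delta_{n,k}$, hence an extreme point. Consequently, whenever $\Lambda_I$ lies in the convex hull of a subset $\{\Lambda_J\mid J\in\sigma\}\subset\Delta_{n,k}$ that contains it, it is still an extreme point of that hull. Thus the vertex set of $P_\sigma$ is exactly $\{\Lambda_I\mid I\in\sigma\}$, of cardinality $|\sigma|$. Since $s$ acts on $\R^n$ by permuting coordinates, it sends $\Lambda_I$ to $\Lambda_{s(I)}$; combining this with Lemma~\ref{Sn-action} and $s(W_\sigma)=W_{s(\sigma)}$ gives $s(P_\sigma)=P_{s(\sigma)}$, whose vertex set $\{\Lambda_{s(I)}\mid I\in\sigma\}$ again has cardinality $|\sigma|$. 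There is no real obstacle here; the only subtle point that needs to be verified carefully is the identification $s(W_\sigma)=W_{s(\sigma)}$ via the sign-twisted permutation of Plücker coordinates, after which both statements are formal.
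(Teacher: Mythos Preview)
Your proof is correct and follows essentially the same approach as the paper's own argument, which simply notes that $W_\sigma$ and $s(W_\sigma)$ have the same number of non-zero Pl\"ucker coordinates and that these determine both the charts containing a stratum and the vertices of its admissible polytope. You have made explicit two points the paper leaves implicit: the sign-twisted formula $P^{I}(s(L))=\pm P^{s^{-1}(I)}(L)$ showing $s(W_\sigma)=W_{s(\sigma)}$, and the extreme-point argument ensuring that every $\Lambda_I$ with $I\in\sigma$ is actually a vertex of $P_\sigma$; both are welcome clarifications but do not change the underlying strategy.
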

\begin{proof}
 The statement  directly follows from the fact that the strata $W_{\sigma}$ and $s(W_{\sigma})$ have the same number of non-zero Pl\"ucker coordinates and that the vertices of an admissible polytope are determined by the non-zero Pl\"ucker coordinates of the corresponding stratum.
\end{proof}
We elaborate this action in more detail.
Let $S[n]$ be the power set of $\{1,\ldots, n\}$. Then $S_n$ acts on  $S[n]$  and $|\sigma| = |s(\sigma)|$ for $s\in S_{n}$, $\sigma  \in S[n]$. It implies that $S_n$ acts on the set $W(p) = \{ W_{\sigma}, \; |\sigma| = p\}$ for any fixed  $p$, $1\leq p\leq N$. Put $m_{p} = |W(p)|$
and let $S_{\sigma}(p)$ be a subgroup of $S_n$, which is a stabilizer for  $W_{\sigma}$, where   $W_{\sigma}\in W(p)$.

If the orbit $S_{n}(W_{\sigma})$ coincides with  $W(p)$, then  $m_{p} = \frac{n!}{|S_{\sigma}(p)|}$ and there is one $S_{n}$-generator $W_{\sigma_{1_{p}}}$ for $W(p)$.
Otherwise,  $S_n$ acts on $W(p)\backslash S_{n}(W_{\sigma_{1_{p}}})$,  we repeat the argument and obtain the next generator $W_{\sigma_{2_{p}}}$. In this way we obtain  the set  of generators $W_{\sigma _{1_{p}}}, \ldots ,W_{\sigma _{q_{p}}}$.

\begin{defn}
The strata  $W_{\sigma _{i_{p}}}$, $1\leq i\leq q$   are said to be the fundamental strata for a  given $p$, where $ q=q_p$.
\end{defn}

From the description of the fundamental strata we obtain:
\[
m_{p}= \sum _{i=1}^{q}\frac{n!}{|S_{\sigma_{i_{p}}}(p)|},
\]

\begin{cor}
There exist  integer functions   $m_{p}, q_{p} : [1, N]\to \mathbb{Z}$, $N={n\choose k}$ which are invariants  for $T^{n}$-action on $G_{n,k}$.
\end{cor}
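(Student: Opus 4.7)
The plan is to observe that both functions are well-defined by the preceding constructions and to verify that they depend only on intrinsic features of the $T^n$-action together with its canonical $S_n$-compatibility. The key invariance is of the integer $p = |\sigma|$ itself: for $L\in W_\sigma$, $p$ equals the number of non-vanishing Pl\"ucker coordinates of $L$, and by Lemma~\ref{Plembed} the torus $T^n$ acts on the Pl\"ucker coordinates through the characters $\Lambda_I$, hence cannot convert a vanishing coordinate into a non-vanishing one or vice versa. Thus the function $\sigma \mapsto |\sigma|$ is constant on $T^n$-orbits and on whole strata.

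First I would note that the decomposition $G_{n,k} = \cup_\sigma W_\sigma$ is $T^n$-equivariant, so the level collection $W(p)=\{W_\sigma : |\sigma|=p\}$ is a well-defined family of $T^n$-invariant subsets. The cardinality $m_p = |W(p)|$ is therefore a combinatorial invariant of the $T^n$-action, not depending on any enumeration $(M_{I_1},\ldots,M_{I_m})$ of the charts used in~\eqref{st}: a different enumeration merely permutes the labels of the strata but does not alter their number within each $W(p)$.

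Next, for $q_p$ I would invoke the compatibility of the $S_n$-action with the $T^n$-action: the permutation of coordinates on $\C^n$ normalizes the coordinate-wise $T^n$-action, so $S_n$ acts by automorphisms of the $T^n$-equivariant structure on $G_{n,k}$. By Lemma~\ref{Sn-action} and Lemma~\ref{Sn-action-charts}, $S_n$ permutes the strata within each $W(p)$. Hence the partition of $W(p)$ into $S_n$-orbits is intrinsic, and $q_p$ — the number of orbits — is an invariant. The inductive procedure used to pick the fundamental strata $W_{\sigma_{1_p}},\ldots,W_{\sigma_{q_p}}$ depends on choices, but the number of orbits produced does not. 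The displayed formula $m_p = \sum_{i=1}^{q_p} n!/|S_{\sigma_{i_p}}(p)|$ then follows immediately from the orbit-stabilizer theorem.

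There is essentially no technical obstacle here: all ingredients have been established earlier. The only point requiring care is to articulate that $m_p$ and $q_p$ are genuine invariants rather than artifacts of the chart enumeration or the inductive selection of fundamental strata; this is handled by the observation that both numbers count $T^n$- and $S_n$-canonical objects (strata in $W(p)$, resp.\ $S_n$-orbits on $W(p)$) whose definitions are independent of those choices.
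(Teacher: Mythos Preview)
Your proposal is correct and aligns with the paper's approach: the paper states this corollary without proof, treating it as an immediate consequence of the preceding construction of the strata $W_\sigma$, the $S_n$-action on $W(p)$, and the definitions of $m_p$ and $q_p$. Your write-up simply makes explicit the invariance checks that the paper leaves implicit.
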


\begin{defn}
Admissible polytopes which correspond to the fundamental strata are called the  fundamental polytopes.
\end{defn}

Thus,    any admissible polytope  can be obtained by the action of the group $S_n$ on some of the fundamental polytopes.

\subsection{The $T^n$-action on  the Thom spaces}
We show that the canonical torus action on the complex Grassmann manifolds induces a torus action on the corresponding Thom spaces. Denote by $\xi_{n,k}$ and $\eta_{n,n-k}$ the canonical complex vector bundles  over $G_{n,k}$, where  $\dim _{\mathbb{C}}\,\xi_{n,k}=k$,\, $\dim _{\mathbb{C}}\,\eta_{n,n-k}=n-k$. Moreover,  it holds  $\xi_{n,k}\oplus \eta_{n,n-k}=n(1)$.

Put $E_{1,q}=G_{n,k}\backslash G_{n-1,k-1}(q)$  and $E_{2,q}=G_{n,k}\backslash G_{n-1,k}(q).$ An $T^{n}$-equivariant projection $\pi _{1,q} : E_{1,q}\to G_{n-1, k}$ is defined using the following observation. If  $L\in E_{1,q}$ then $L$ does not contain the coordinate vector $e_{q}$ which  implies that the retraction $r_{q} : \mathbb{C}^{n}\to \mathbb{C}^{n-1}_{z_q=0}$ maps  $L$ to a $k$-dimensional subspace in $\mathbb{C} ^{n-1}$. Thus,  $\pi _{1,q}(L)\in G_{n,k}$ is defined by $r_{q}(L)$.

We define  a $T^{n}$-equivariant projection $\pi _{2,q} : E_{2,q}\to G_{n-1, k-1}$ as the following composition:
\[
E_{2,q} \stackrel{c_{n,k}}{\longrightarrow} G_{n, n-k}\backslash G_{n-1, n-k-1} \stackrel{\pi _{1,q}}{\longrightarrow}G_{n-1, n-k}\stackrel{c_{n-1, n-k}}{\longrightarrow}G_{n-1, k-1}.
\]
We obtain the  following,( cf~\cite{Hog}):
\begin{prop}
The equivariant projection $\pi_{1,q}: E_{1,q} \rightarrow G_{n-1,k}$ can be identified with the  $T^{n}$-vector bundle $\xi_{n-1,k}\rightarrow G_{n-1,k}$, while the equivariant projection $\pi_{2,q}: E_{2,q} \rightarrow G_{n-1,k-1}$  can be identified with  the $T^{n}$-vector bundle $\eta_{n-1,n-k}\rightarrow G_{n-1,k-1}$.
\end{prop}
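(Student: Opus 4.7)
The plan is to prove the identification for $\pi_{1,q}$ directly by exhibiting a concrete $T^n$-equivariant vector bundle isomorphism, then deduce the statement for $\pi_{2,q}$ from this via the complement diffeomorphism $c_{n,k}$.

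For $\pi_{1,q}$, I would first pin down the fiber structure. For any $L\in E_{1,q}$, the hypothesis $e_q\notin L$ means $L\cap \mathbb{C}\cdot e_q = 0$, so $r_q|_L\colon L\to L':=r_q(L)$ is a complex linear isomorphism. Consequently every $L$ in $\pi_{1,q}^{-1}(L')$ is the graph of a unique linear map $\phi\colon L'\to \mathbb{C}\cdot e_q$, namely $L=\{v+\phi(v)e_q : v\in L'\}$. This gives a natural bijection $\pi_{1,q}^{-1}(L')\leftrightarrow \mathrm{Hom}(L',\mathbb{C}\cdot e_q)$, which is a $k$-dimensional complex vector space.

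Next, I would establish local triviality using the Plücker atlas. Covering $G_{n-1,k}$ by the charts $M_I$ with $I\subset\{1,\ldots,n\}\setminus\{q\}$, $|I|=k$, each $L'\in M_I$ has a canonical normalized frame (with $I$-rows forming the identity), so parametrizing $\phi$ by its values on that frame gives an explicit trivialization of $\pi_{1,q}^{-1}(M_I)$ over $M_I$. A direct computation of the transition between two such charts $M_I, M_J$ shows that it coincides with the transition of $\xi_{n-1,k}$ in the Plücker atlas, yielding the bundle isomorphism. Equivariance is then immediate: the subtorus with $t_q=1$ acts on $\mathbb{C}^{n-1}\subset \mathbb{C}^n$ and thereby on the base $G_{n-1,k}$ in the canonical way, while for $t=(1,\ldots,t_q,\ldots,1)$ one computes $t\cdot(v+\phi(v)e_q)=v+t_q\phi(v)e_q$ for $v\in L'\subset\mathbb{C}^{n-1}$, so $t_q$ fixes $L'$ and scales the fiber by $t_q$, matching the natural $T^n$-structure induced on $\xi_{n-1,k}$ from the ambient representation.

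For $\pi_{2,q}$, I would apply the first part to $G_{n,n-k}$ and transport along the $T^n$-equivariant diffeomorphisms $c_{n,k}\colon G_{n,k}\to G_{n,n-k}$ and $c_{n-1,n-k}\colon G_{n-1,k-1}\to G_{n-1,n-k}$. By definition of $\pi_{2,q}$ as the composition $c_{n-1,n-k}\circ\pi_{1,q}^{(n,n-k)}\circ c_{n,k}$, the first part identifies the middle factor with $\xi_{n-1,n-k}\to G_{n-1,n-k}$, and pulling back by $c_{n-1,n-k}$ together with the relation $\xi\oplus\eta=n(1)$ (which yields $c_{n-1,n-k}^{*}\xi_{n-1,n-k}\cong \eta_{n-1,n-k}$ via the orthogonal complement) completes the proof.

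The main obstacle is the bundle identification in the first part: one has to check that the fiber $\mathrm{Hom}(L',\mathbb{C}\cdot e_q)\cong (L')^{*}$ matches $\xi_{n-1,k}$ as a complex $T^n$-bundle, respecting the paper's convention for the canonical bundle, and that the transition functions agree in Plücker coordinates. Once this is settled, local triviality, equivariance, and the deduction for $\pi_{2,q}$ via $c_{n,k}$ are all routine.
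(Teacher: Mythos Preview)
Your approach via the graph identification is sound and in fact more complete than the paper's own argument. The paper gives only a one-line direct construction for $\pi_{1,q}$: given $(x,L)\in\xi_{n-1,k}$, write $x=A_L A_x$ in the column basis of a representing matrix $A_L$ and insert the $k$-vector $A_x$ as the $q$-th row of $A_L$; the column span of the resulting $n\times k$ matrix lies in $E_{1,q}$. This is your graph construction in matrix form, but the paper neither checks independence of the choice of $A_L$ nor says anything about $\pi_{2,q}$; your chart-by-chart verification and your reduction of $\pi_{2,q}$ to $\pi_{1,q}$ via $c_{n,k}$ fill exactly those gaps.

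The obstacle you flag is a genuine subtlety, not a routine check. The natural fiber is $\mathrm{Hom}(L',\mathbb{C}\cdot e_q)\cong(L')^{*}$, and the transition functions you compute will match those of $\xi_{n-1,k}^{*}$, not $\xi_{n-1,k}$: already for $n=3$, $k=1$ one has $E_{1,q}\cong\mathbb{C}P^{2}\setminus\{\mathrm{pt}\}\cong\mathcal{O}(1)$, not $\mathcal{O}(-1)$. The paper is being loose on this point. For its downstream use (a $T^{n}$-equivariant homeomorphism of total spaces, and the associated Thom spaces) the distinction is immaterial, since a $T^{n}$-invariant Hermitian metric yields a real $T^{n}$-bundle isomorphism $\xi\cong\overline{\xi^{*}}$ and hence homeomorphic equivariant Thom spaces. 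So do not expect your transitions to agree with $\xi_{n-1,k}$ as a complex bundle; agreement with $\xi_{n-1,k}^{*}$ is what you will find, and that is enough for the proposition as it is used.
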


\begin{proof} 
 The homeomorphism $\xi _{n-1,k}\to E_{1,q}$ is defined by
\[
(x\in L, L) \to A_{q}(x,L)= {A_{x}\choose A_{L}}
\]
 where $A_{L}$ is an $((n-1)\times k)$-matrix of the   subspace $L$  in a fixed basis  and $A_{x}$ is  an $k$-vector which represents the  vector $x$ in this basis. The notation $A_{q}(x,L)$ means that the vector $A_{x}$ is inserted as an $q$-th row of the matrix consisting of the matrices  $A_{L}$ and $A_{x}$. Then the subspace $L\subset \C ^{n}$ determined by the matrix $A_{q}(x,L)$ does not contain the coordinate vector $e_{q}$.
 
\end{proof}

\begin{cor}
For any $q$,  $1\leqslant q \leqslant n$ there are two  $T^{n}$-pairs $(G_{n,k}, G_{n-1, k-1}(q))$\\  and $(G_{n, k}, G_{n-1, k}(q))$ which  define  $T^{n}$-spaces $X_{n,k}(q) = T(\xi _{n-1, k})$ and $Y_{n, k}(q) = T(\eta _{n-1,n-k})$  that is  the Thom spaces for $\xi _{n-1,k}$ and  $\eta _{n-1, n-k}$.
\end{cor}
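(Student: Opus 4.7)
The plan is to deduce the corollary directly from the preceding proposition, using the standard fact that for a complex vector bundle $\xi\to B$ over a compact Hausdorff base, the Thom space $T(\xi)$ is canonically homeomorphic to the one-point compactification of the total space of $\xi$, and moreover to any quotient $M/A$ whenever the total space of $\xi$ is identified with the complement $M\setminus A$ of a closed $T^n$-invariant subspace $A$ in a compact $T^n$-space $M$.

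First I would check that $G_{n-1,k-1}(q)$ and $G_{n-1,k}(q)$ are closed, smooth, $T^n$-invariant submanifolds of the compact manifold $G_{n,k}$. Their $T^n$-invariance is clear because they are images of the $T^n$-equivariant embeddings $\hat{i}_q$ and $\tilde{i}_q$ respectively; closedness follows from the Pl\"ucker description of $G_{n,k}$, since $G_{n-1,k-1}(q)$ is cut out inside $G_{n,k}$ by the vanishing of all Pl\"ucker coordinates $P^I$ with $q\notin I$, while $G_{n-1,k}(q)$ is cut out by the vanishing of all $P^I$ with $q\in I$. Consequently the complements $E_{1,q}=G_{n,k}\setminus G_{n-1,k-1}(q)$ and $E_{2,q}=G_{n,k}\setminus G_{n-1,k}(q)$ are open $T^n$-invariant subspaces of $G_{n,k}$.

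Next, since the inclusion of a closed smooth submanifold into a smooth manifold is a closed cofibration, and since $G_{n,k}$ is compact Hausdorff, the collapse map induces a $T^n$-equivariant homeomorphism
\[
G_{n,k}/G_{n-1,k-1}(q)\;\cong\;(E_{1,q})^{+},
\]
where $(-)^{+}$ denotes the one-point compactification with the added point taken as the $T^n$-fixed basepoint; the analogous identification holds for the pair $(G_{n,k},G_{n-1,k}(q))$.

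Finally, I would apply the preceding proposition to identify $E_{1,q}$ $T^n$-equivariantly with the total space of $\xi_{n-1,k}\to G_{n-1,k}$, and $E_{2,q}$ with the total space of $\eta_{n-1,n-k}\to G_{n-1,k-1}$. Because the base spaces $G_{n-1,k}$ and $G_{n-1,k-1}$ are compact, the Thom spaces $T(\xi_{n-1,k})$ and $T(\eta_{n-1,n-k})$ are the respective one-point compactifications of these total spaces. Combining this with the cofibration identifications above yields the required $T^n$-equivariant homeomorphisms
\[
X_{n,k}(q)=G_{n,k}/G_{n-1,k-1}(q)\;\cong\;T(\xi_{n-1,k}),\qquad Y_{n,k}(q)=G_{n,k}/G_{n-1,k}(q)\;\cong\;T(\eta_{n-1,n-k}).
\]
There is no real obstacle here, as the result is a formal consequence of the preceding proposition; the only point requiring care is tracking $T^n$-equivariance throughout, which however is automatic because the basepoint produced by collapsing or compactifying is torus-fixed, and every identification used along the way is already $T^n$-equivariant by construction.
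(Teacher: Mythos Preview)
Your proposal is correct and follows essentially the same approach as the paper: both deduce the result from the preceding proposition by identifying $G_{n,k}/G_{n-1,k-1}(q)$ with the one-point compactification of $E_{1,q}\cong\xi_{n-1,k}$, hence with $T(\xi_{n-1,k})$, and similarly for the other pair. The paper's proof is terser (it omits the cofibration justification you supply) but adds an explicit formula for the $T^n$-action on $T(\xi_{n-1,k})$ in terms of the matrix description $A_q(x,L)$; your version is slightly more careful about why the collapse map yields the one-point compactification, while the paper's version is slightly more concrete about the equivariance.
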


\begin{proof}
Since $E_{1,q} = G_{n,k}\backslash G_{n-1,k-1}(q)$ is homeomorphic to $\xi _{n-1,k}$ it follows   that
$G_{n,k}/ G_{n-1,k-1}(q)$ is homeomorphic to the Thom space  $T(\xi _{n-1,k})$. 
An action of $T^{n}$ on $T(\xi _{n-1,k})$ is given by
\[
T^{n} \cdot (x, L) = T^{n} \cdot A_{q}(x,L)= A_{q}(T^{1}x, T^{n-1}L) = {T^{1}\cdot A_{x}\choose T^{n-1}\cdot A_{L}}
\]
where $T^1= T^1(q)$ is a subgroup of $T^{n}$  given by $z\to (z,\ldots,z,1_q,z,\ldots ,z)$ and $T^{n-1}=T^n/T^1$.
\end{proof}

Note that a continuous  action of a group $G$ on a topological space $X$ induces the $G$-action on the quotient space  $X/Y$ for any $G$-invariant subspace $Y\subset X$.  Moreover, $(X/Y)/G\cong (X/G)/(Y/G)$. Therefore, the previous Corollary implies:

\begin{cor}\label{orbit-Thom}
The orbit spaces of the Thom spaces $X_{n,k}(q) = T(\xi _{n-1,k})$ and $Y_{n, k}(q) = T(\eta _{n-1,n-k})$ by the $T^n$-action are homeomorhic to the quotient spaces
$(G_{n,k}/T^n)/(G_{n-1, k-1}/T^{n-1})$ and $(G_{n,k}/T^n)/(G_{n-1,k}/T^{n-1})$ respectively.
\end{cor}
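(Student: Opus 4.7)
The plan is to derive this Corollary directly from the preceding Corollary together with the general fact $(X/Y)/G \cong (X/G)/(Y/G)$ noted in the paragraph just before the statement.

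First, the preceding Corollary provides $T^n$-equivariant homeomorphisms
\[
T(\xi_{n-1,k}) \cong G_{n,k}/G_{n-1,k-1}(q), \qquad T(\eta_{n-1,n-k}) \cong G_{n,k}/G_{n-1,k}(q),
\]
where the collapsed subspaces are $T^n$-invariant since they are images of the $T^n$-equivariant embeddings $\hat{i}_q$ and $\tilde{i}_q$ from \S2.3. Applying the identity $(X/Y)/G \cong (X/G)/(Y/G)$ with $G = T^n$ yields
\[
T(\xi_{n-1,k})/T^n \cong (G_{n,k}/T^n)/(G_{n-1,k-1}(q)/T^n),
\]
and analogously for $T(\eta_{n-1,n-k})/T^n$. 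Thus it remains only to identify the $T^n$-orbit spaces of the embedded Grassmannians with the quotients $G_{n-1,k-1}/T^{n-1}$ and $G_{n-1,k}/T^{n-1}$ appearing in the statement.

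Second, I would verify the identifications $G_{n-1,k}(q)/T^n \cong G_{n-1,k}/T^{n-1}$ and $G_{n-1,k-1}(q)/T^n \cong G_{n-1,k-1}/T^{n-1}$. The $q$-th circle $T^1(q) = \{(1,\ldots,1,z,1,\ldots,1)\} \subset T^n$ acts trivially on every $L \in G_{n-1,k}(q)$ because every vector in such $L$ has vanishing $q$-th coordinate, and it acts trivially on every $L' \oplus \C e_q \in G_{n-1,k-1}(q)$ because it fixes $L'$ pointwise and fixes the line $\C e_q$ setwise. Consequently, the $T^n$-action on each embedded Grassmannian factors through $T^n/T^1(q) \cong T^{n-1}$, and under $\tilde{i}_q$ (resp.\ $\hat{i}_q$) this $T^{n-1}$-action coincides with the canonical one on $G_{n-1,k}$ (resp.\ $G_{n-1,k-1}$) by the coordinate-wise equivariance of the embedding $i_q$. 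This gives the required homeomorphisms of orbit spaces and completes the identification.

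There is no real obstacle beyond this bookkeeping: the key inputs are already in place, namely the $T^n$-equivariant identifications of the Thom spaces as cofibers of $T^n$-equivariant embeddings, the coordinate-wise character of the $T^n$-action which forces the $q$-th circle factor to act trivially on subspaces supported away from (or collinear with) the $q$-th coordinate axis, and the general commutation of quotients by an invariant subspace and a group action.
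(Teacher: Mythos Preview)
Your proof is correct and follows the same approach as the paper: apply the general identity $(X/Y)/G \cong (X/G)/(Y/G)$ to the $T^n$-equivariant identifications of the Thom spaces as cofibers from the preceding Corollary. You additionally spell out the identification $G_{n-1,k-1}(q)/T^n \cong G_{n-1,k-1}/T^{n-1}$ (and similarly for $G_{n-1,k}(q)$) via the trivial action of the $q$-th circle factor, which the paper leaves implicit.
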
 

Since $G_{n-1, 1} = \C P^{n-2}$ and $\C P^{n-2}/T^{n-2}\cong \Delta _{n-2}$,  one has:

\begin{cor}\label{orbX}
The orbit space $X_{n,2}(q)/T^n$ is homotopy equivalent to $G_{n,2}/T^n$.
\end{cor}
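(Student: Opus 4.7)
The plan is to specialize Corollary~\ref{orbit-Thom} to $k=2$ and then apply the standard fact that collapsing a contractible subspace of a good pair yields a homotopy equivalence. First, I would invoke Corollary~\ref{orbit-Thom} with $k=2$, which gives the homeomorphism
\[
X_{n,2}(q)/T^n \;\cong\; (G_{n,2}/T^n)/(G_{n-1,1}(q)/T^{n-1}).
\]

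Next, I would identify the subspace being collapsed: since $G_{n-1,1}(q) = \mathbb{C}P^{n-2}$ is a toric manifold, its orbit space $G_{n-1,1}(q)/T^{n-1}$ is homeomorphic to the standard simplex $\Delta^{n-2}$ (as noted in the example at the start of Section~2), and hence is contractible.

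The main technical step is to verify that the pair $(G_{n,2}/T^n,\, G_{n-1,1}(q)/T^{n-1})$ is a good pair, i.e.\ that the closed subspace admits a neighborhood deformation retract. I would derive this from the proposition preceding the corollary: the complement $E_{1,q} = G_{n,2}\setminus G_{n-1,1}(q)$ can be identified with the total space of the $T^n$-equivariant complex vector bundle $\xi_{n-1,2} \to G_{n-1,1}(q)$. Consequently, a disk-bundle neighborhood of the zero section gives a $T^n$-invariant tubular neighborhood of $G_{n-1,1}(q)$ in $G_{n,2}$ which deformation retracts equivariantly onto $G_{n-1,1}(q)$. Since the retraction is $T^n$-equivariant, it descends to a deformation retraction of a neighborhood of $G_{n-1,1}(q)/T^{n-1}$ onto $G_{n-1,1}(q)/T^{n-1}$ inside $G_{n,2}/T^n$, which is precisely the NDR condition.

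Finally, I would invoke the standard result that if $(X,A)$ is a good pair with $A$ contractible, then the quotient map $X \to X/A$ is a homotopy equivalence. Combined with the homeomorphism from Corollary~\ref{orbit-Thom}, this produces
\[
G_{n,2}/T^n \;\simeq\; (G_{n,2}/T^n)/(G_{n-1,1}(q)/T^{n-1}) \;\cong\; X_{n,2}(q)/T^n,
\]
completing the argument. The principal obstacle is the descent of the NDR (equivalently cofibration) property from $G_{n,2}$ to the non-manifold orbit space $G_{n,2}/T^n$; this is precisely why the vector bundle identification in the preceding proposition is essential, as it supplies a retraction which is manifestly $T^n$-equivariant and therefore passes to the quotient without difficulty.
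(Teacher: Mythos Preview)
Your proposal is correct and follows essentially the same approach as the paper: invoke Corollary~\ref{orbit-Thom} with $k=2$, identify $G_{n-1,1}(q)/T^{n-1}\cong\Delta^{n-2}$ as contractible, and conclude that the quotient map is a homotopy equivalence. The paper's justification is the single line ``Since $G_{n-1,1}=\mathbb{C}P^{n-2}$ and $\mathbb{C}P^{n-2}/T^{n-2}\cong\Delta_{n-2}$, one has\ldots'', leaving the good-pair condition implicit; your explicit verification of the NDR property via the equivariant tubular neighborhood supplied by the vector-bundle identification is a welcome elaboration of a point the paper takes for granted.
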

  
In particular, since~\cite{MMJ} gives that $G_{4,2}/T^4\cong S^5$ , one has
\begin{cor}
The orbit spaces $X_{4,2}(q)/T^4$ and $Y_{4,2}(q)/T^4$ are homotopy equivalent to $S^5$.
\end{cor}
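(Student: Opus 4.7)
The statement follows from two ingredients already in place in the paper: Corollary \ref{orbit-Thom}, which identifies the orbit spaces of the Thom spaces with quotients of $G_{4,2}/T^4$ by certain subspaces, and the homeomorphism $G_{4,2}/T^4 \cong S^5$ from \cite{MMJ}. My plan is to show in each case that the relevant subspace being collapsed is contractible, hence the quotient is homotopy equivalent to $S^5$.

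For $X_{4,2}(q)/T^4$, I would appeal directly to Corollary \ref{orbX} with $n=4$: this already asserts $X_{4,2}(q)/T^4$ is homotopy equivalent to $G_{4,2}/T^4$, and \cite{MMJ} gives $G_{4,2}/T^4 \cong S^5$. For completeness I would unwind why Corollary \ref{orbX} holds in this case: by Corollary \ref{orbit-Thom}, $X_{4,2}(q)/T^4 \cong (G_{4,2}/T^4)/(G_{3,1}/T^3)$, and since $G_{3,1} = \C P^2$ with $\C P^2/T^3 \cong \Delta^2$, one is collapsing a contractible $2$-simplex inside $S^5$.

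For $Y_{4,2}(q)/T^4$, by Corollary \ref{orbit-Thom} one has $Y_{4,2}(q)/T^4 \cong (G_{4,2}/T^4)/(G_{3,2}/T^3)$. Using the canonical equivariant diffeomorphism $c_{3,2}\colon G_{3,2}\to G_{3,1}$ described at the start of Section 2, $G_{3,2}/T^3 \cong G_{3,1}/T^3 \cong \C P^2/T^3 \cong \Delta^2$, which is again contractible. Combined with $G_{4,2}/T^4 \cong S^5$, we are collapsing a contractible subspace inside $S^5$.

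The remaining point, which is the only nontrivial step, is to verify that the inclusion $G_{n-1,k}/T^{n-1} \hookrightarrow G_{n,k}/T^n$ (and similarly $G_{n-1,k-1}/T^{n-1} \hookrightarrow G_{n,k}/T^n$) is a closed cofibration, so that quotienting by a contractible subspace yields a homotopy equivalence. This is where I would expect to spend a little care: the embeddings $\tilde{i}_q$ and $\hat{i}_q$ of Section 2 are smooth closed $T^n$-equivariant embeddings of compact manifolds into a compact manifold, and passing to orbit spaces of a compact group action preserves closedness; the resulting inclusion of orbit spaces is a closed neighborhood deformation retract pair (one can use a $T^n$-equivariant tubular neighborhood of $G_{n-1,k}(q)$ or $G_{n-1,k-1}(q)$ in $G_{n,k}$ and pass to orbit spaces). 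Once this cofibration property is recorded, the standard fact that $Z/A\simeq Z$ whenever $A\hookrightarrow Z$ is a cofibration with $A$ contractible finishes the argument for both $X_{4,2}(q)/T^4$ and $Y_{4,2}(q)/T^4$.
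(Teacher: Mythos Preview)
Your proposal is correct and follows essentially the same route as the paper, which presents the corollary as immediate from Corollary~\ref{orbX} and the homeomorphism $G_{4,2}/T^4\cong S^5$: for $X_{4,2}(q)$ one collapses $G_{3,1}/T^3\cong\Delta^2$, and for $Y_{4,2}(q)$ one collapses $G_{3,2}/T^3\cong G_{3,1}/T^3\cong\Delta^2$. Your added verification of the cofibration property is a worthwhile technical point that the paper leaves implicit.
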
  

\section{Orbit spaces of the strata}
It is known  that   $G_{n,k}/(\C ^{*})^{n}$ is  a non-Hausdorff topological space~\cite{GFMC}  for the topology induced from $G_{n,k}$.  We prove that the spaces of parameters $F_{\sigma}$ as well as the orbit spaces $W_{\sigma}/T^n$ are much better behaved.

The charts $M_{I}$ are invariant under the action of $(\C ^{*})^n$  and this action via homeomorphism $u_{I}: M_{I} \to \C ^{k(n-k)}$  induces an  action of $(\C ^{*})^n$ on $\C ^{k(n-k)}$.  For $I=\{1,\ldots ,k\}$ this induced action is given as follows:
\[
\left(\begin{array}{cccc}
t_1 & \ldots & &  0\\
0 & t_2 & \ldots & 0 \\
\vdots &\ddots & & \vdots \\
0 & \ldots & & t_k\\
t_{k+1}a_{k+11} &\ldots & & t_{k+1}a_{k+1k}\\
\vdots & & &\vdots \\
t_{n}a_{n1} & \ldots & & t_{n}a_{nk}
\end{array}\right) =  
\left(\begin{array}{cccc}
1 & \ldots & &  0\\
0 & 1 & \ldots & 0 \\
\vdots &\ddots & & \vdots \\
0 & \ldots & & 1\\
\frac{t_{k+1}}{t_1}a_{k+11} &\ldots & & \frac{t_{k+1}}{t_k}a_{k+1k}\\
\vdots & & &\vdots \\
\frac{t_{n}}{t_1}a_{n1} & \ldots & & \frac{t_{n}}{t_k}a_{nk}
\end{array}\right).
\]

Let us consider a representation $(\C^{*})^{n}\to (\C ^{*})^{k(n-k)}$ defined by 
\[
(t_1,\ldots ,t_{n})\to (\frac{t_{k+1}}{t_1},\ldots ,\frac{t_{k+1}}{t_k},\ldots ,\frac{t_{n}}{t_1},\ldots ,\frac{t_{n}}{t_k}).
\]
Put $\tau _{i} = \frac{t_{k+1}}{t_i}$ for $1\leq i\leq k$ and $\tau _{k+i} = \frac{t_{k+i+1}}{t_1}$ for $1\leq i \leq n-k-1$. It holds   $\frac{t_p}{t_s} = \frac{\tau _{p-1}\tau _{s}}{\tau _{1}}$ for any $k+2\leq p\leq n$ and $1\leq s\leq k$. In this way, the  representation of the torus $(\C ^{*})^{n}$ in $(\C ^{*})^{k(n-k)}$ is given by:
\begin{equation}\label{repr}
(\tau _{1},\ldots ,\tau _{n}) \to (\tau _{1},\ldots ,\tau _{k}, \tau _{k+1},\frac{\tau _{k+1}\tau _{2}}{\tau _{1}},\ldots , \frac{\tau _{k+1} \tau _{k}}{\tau _{1}},\tau _{k+2},\frac{\tau_{k+2}\tau_{2}}{\tau_{1}}
, \ldots \frac{ \tau_{k+2}\tau _{k}}{\tau _{1}}, \ldots, \tau _{n}, \frac{\tau _{2}\tau _{n}
}{\tau _1}, \ldots \frac{\tau _{n}\tau _{k}}{\tau _{1}}).
\end{equation}   
It follows that the action of the torus  $(\C ^{*})^{n}$ on $\C ^{k(n-k)}$ in a local chart for $G_{n,k}$ is given as a composition of the representation~\eqref{repr} and the standard  action of the torus $(\C ^{*})^{k(n-k)}$ on
$\C ^{k(n-k)}$.

\subsection{Orbits of $(\C ^{*})^{k}$-action  on   $\C ^{n}$}
In order to describe the action of the algebraic torus $(\C ^{*})^{n}$ locally in the charts for $G_{n,k}$ we consider more general situation.

Assume it is given an action of the torus $(\C ^{*})^{k}$ on $\C ^{n}$ as a composition of a representation
$\rho ^{*}: (\C ^{*})^{k}\to (\C ^{*})^{n}$ and the standard action of $(\C ^{*})^{n}$ on $\C ^{n}$. The representation $\rho ^{*}$ induces the representation $\rho : T^k\to T^{n}$  which can be written as  $\rho = (\rho _1,\ldots , \rho _{n})$, where $\rho _{j} : T^{k}\to S^1$, $1\leq j\leq n$ are homomorphisms. The characters $\rho _{j}$ can be further written as $\rho _{j}(t_1,\ldots ,t_k) = \rho _{j}(e^{2\pi \sqrt{-1}x_1},\ldots ,e^{2\pi \sqrt{-1}x_k}) = e^{2\pi \sqrt{-1}\sum _{i=1}^{k}\alpha _{j}^{i}x_i}$ for some  $(\alpha_{j}^{1},\ldots,\alpha _{j}^{k})\in \Z ^{k}$. The vectors $\Lambda ^{j} = (\alpha _{j}^{1},\ldots ,\alpha _{j}^{k})$, $1\leq j\leq n$  are  known to be the weight vectors of the  representations $\rho$ and $\rho ^{*}$.

Let  us fix some subset $J =\{j_1,\ldots ,j_l\}\subseteq \{1,\ldots ,n\}$ and put 
\[
\C ^{J} = \{ (z_1,\ldots ,z_n)\in \C ^n | z_j\neq 0,\; j\in J,\;\; z_j =0,\;j\notin J \}.
\]
The coordinate subspaces $\C ^{J}$ are $(\C ^{*})^k$-invariant and $\C ^{n} = \cup _{J} \C ^{J}$. 

Denote by  $V^{J}$ a matrix given by the weight vectors $\Lambda ^{j}$, $j\in J$. This matrix defines   a linear map $f_{J} : \R ^k\to \R ^{|J|}$ such that  $f_{J}(\Z ^k)$ is a direct summand in $\Z ^{|J|}$.  Let $q = \rank V^{J}$. 
\begin{lem}\label{free}
The points from $\C ^{J}$ have the same stabilizer $(\C ^{*})^{k}_{J}\subseteq (\C ^{*})^k$, where $(\C ^{*})^{k}_{J} = (\C ^{*})^{k-q}$,  that is $(\C ^{*})^q$ acts freely on $\C ^{J}$. 
\end{lem}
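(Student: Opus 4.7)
The plan is to unpack the definition of the stabilizer, identify it with the kernel of an explicit homomorphism of algebraic tori, and then read off its structure from the matrix $V^J$.

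First I would observe that an element $t = (t_1,\dots,t_k) \in (\C^*)^k$ fixes a point $z = (z_1,\dots,z_n) \in \C^J$ if and only if $\rho_j(t)\, z_j = z_j$ for each $1\leq j\leq n$. For $j\notin J$ we have $z_j=0$, so the condition is automatic; for $j\in J$ we have $z_j\neq 0$, and the equation is equivalent to $\rho_j(t)=1$. In particular, the set of such $t$ depends only on the index set $J$, not on the actual values of the nonzero coordinates, proving that all points of $\C^J$ share the same stabilizer
\[
(\C^*)^k_J \;=\; \bigcap_{j\in J} \ker \rho_j \;=\; \ker \rho_J,
\]
where $\rho_J := (\rho_j)_{j\in J} : (\C^*)^k \to (\C^*)^{|J|}$ is the homomorphism assembled from the characters indexed by $J$.

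Next I would identify $\rho_J$ with the datum already provided. On cocharacter lattices $\Hom(\C^*,-)$, the homomorphism $\rho_J$ induces precisely the linear map $f_J : \Z^k \to \Z^{|J|}$ whose matrix in the standard bases is $V^J$, since the $j$-th coordinate of $\rho_J \circ \mu$ for the cocharacter $\mu(s) = (s^{x_1},\ldots,s^{x_k})$ equals $s^{\langle \Lambda^j, x\rangle}$. Consequently the dimension of the image of $\rho_J$ as an algebraic subtorus of $(\C^*)^{|J|}$ equals $\rank f_J = q$, and hence the identity component of $\ker\rho_J$ has dimension $k-q$.

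Finally I would use the hypothesis that $f_J(\Z^k)$ is a direct summand of $\Z^{|J|}$ to promote this identity component to the full kernel. Under that hypothesis, one can pick $\Z$-bases of $\Z^k$ and $\Z^{|J|}$ (equivalently, apply Smith normal form and observe that the elementary divisors are all $1$) in which $f_J$ becomes the inclusion $(x_1,\ldots,x_k)\mapsto (x_1,\ldots,x_q,0,\ldots,0)$. Dually, $\rho_J$ becomes $(t_1,\ldots,t_k)\mapsto(t_1,\ldots,t_q,1,\ldots,1)$, so its kernel is literally the connected subtorus $\{1\}^q \times (\C^*)^{k-q} \cong (\C^*)^{k-q}$. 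This yields $(\C^*)^k_J \cong (\C^*)^{k-q}$, and the quotient $(\C^*)^k/(\C^*)^k_J \cong (\C^*)^q$ then acts freely on $\C^J$ by construction.

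I do not expect a serious obstacle here; the only substantive point is the direct-summand hypothesis, which is exactly what is needed to guarantee that $\ker \rho_J$ is connected (rather than an extension of a torus by a finite group of elementary divisors) and thus isomorphic to $(\C^*)^{k-q}$ on the nose.
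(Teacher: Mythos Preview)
Your proposal is correct and follows essentially the same approach as the paper: both identify the stabilizer as $\bigcap_{j\in J}\ker\rho_j$ and read off its dimension from $\rank V^J=q$. Your version is considerably more careful, in particular in making explicit the role of the direct-summand hypothesis on $f_J(\Z^k)$ to ensure $\ker\rho_J$ is connected (hence a torus rather than a torus times a finite group); the paper's proof simply asserts the conclusion from the rank.
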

\begin{proof}
The stabilizer of a point $z\in \C ^{J}$ is given by the intersection of the kernels of the characters $\rho _{j}$, $j\in J$. It is a subgroup $(\C ^{*})^{k}_{J}\subseteq (\C ^{*})^k$, and since we assume that $q =\rank V^{J}$,  we obtain that   
$(\C ^{*})^{k}_{J} = (\C ^{*})^{k-q}$.
\end{proof}
\begin{cor}
If $l=|J| = q$, then $\C ^{J}$ is an $\C ^{k}$-orbit.
\end{cor}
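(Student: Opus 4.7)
The plan is to derive the corollary from Lemma~\ref{free} by a dimension and connectedness argument.

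First, I observe that $\C^{J}$ is $(\C^{*})^{n}$-invariant (the action is coordinatewise multiplication, so it preserves the vanishing pattern of coordinates), hence also $(\C^{*})^{k}$-invariant via $\rho^{*}$. Moreover, $\C^{J}$ is a connected complex manifold biholomorphic to $(\C^{*})^{|J|}$, in particular of complex dimension $|J|=q$.

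Next, by Lemma~\ref{free} the stabilizer of every point $z\in\C^{J}$ equals $(\C^{*})^{k-q}$. Consequently, the $(\C^{*})^{k}$-orbit through $z$ is biholomorphic to $(\C^{*})^{k}/(\C^{*})^{k-q}\cong(\C^{*})^{q}$, so it is a locally closed complex submanifold of $\C^{J}$ of complex dimension $q$. Since $\dim_{\C}\C^{J}=q$ as well, every such orbit is open in $\C^{J}$.

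Finally, the $(\C^{*})^{k}$-orbits partition $\C^{J}$ into pairwise disjoint nonempty open subsets; by connectedness of $\C^{J}$ this partition must consist of a single block, so $\C^{J}$ is a single $(\C^{*})^{k}$-orbit, as claimed. I do not foresee a serious obstacle here: the only step that needs a brief justification is the identification of each orbit as an open complex submanifold of the expected dimension, and this is the standard orbit-stabilizer statement for algebraic torus actions on smooth varieties, directly supplied by Lemma~\ref{free}. If one wished to avoid the connectedness step, the same conclusion follows by noting that the stated property ``$f_{J}(\Z^{k})$ is a direct summand in $\Z^{|J|}$'' together with $q=|J|$ forces $f_{J}(\Z^{k})=\Z^{|J|}$, hence the induced character map $(\C^{*})^{k}\to(\C^{*})^{|J|}$ is surjective, which exhibits $\C^{J}\cong(\C^{*})^{|J|}$ as a transitive $(\C^{*})^{k}$-space directly.
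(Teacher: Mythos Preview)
Your proof is correct. The paper states this corollary without proof, treating it as immediate from Lemma~\ref{free}; your second argument (that the direct summand condition together with $q=|J|$ forces $f_{J}(\Z^{k})=\Z^{|J|}$, hence the induced homomorphism $(\C^{*})^{k}\to(\C^{*})^{|J|}$ is surjective and the action on $\C^{J}\cong(\C^{*})^{|J|}$ is transitive) is exactly the one-line justification the authors have in mind, while your first argument via dimension and connectedness is an equally valid, slightly longer route to the same conclusion.
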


Let us consider now the case when  $q < l=|J|$. We apply  standard procedure for the  description of the   $(\C ^{*})^{k}$-orbit of a point from $\C ^{J}$. 

\begin{itemize}
\item Let  $L$ be a lattice in $\Z ^{l}$ spanned by $f_{J}(\Lambda ^{j})$, $j\in J$.   
\item There exists a  unique integral lattice $\hat{L}$ in $\Z ^{l}$  which is  orthogonal to $L$. 
\item By fixing a basis in $\hat{L}$ we obtain the matrix $\hat{V}^{J}$  of  the dimension $l\times (l-q)$. Denote its elements by $\omega _{i}^{j}\in \Z$, $1\leq i\leq l$, $1\leq j\leq l-q$. 
\end{itemize}

Let us consider  an  algebraic   map $F_{J} : (\C ^{*})^{J}\to (\C ^{*})^{l-q}$ given by
\begin{equation}\label{c-orbit}
(z_{j_1},\ldots ,z_{j_l}) \to (z_{j_1}^{\omega _{1}^{1}}\cdots z_{j_l}^{\omega _{l}^{1}},\ldots ,z_{j_1}^{\omega _{1}^{l-q}}\cdots z_{j_l}^{\omega _{l}^{l-q}}).
\end{equation}

Since the lattice  $\hat{L}$ is orthogonal to $L$ we obtain:

\begin{lem}
An algebraic map $F_{J} : (\C ^{*})^{J}\to (\C ^{*})^{l-q}$ is $(\C ^{*})^k$-invariant, where  $\C ^{l-q}$ is considered  with  trivial  $(\C ^{*})^k$-action.
\end{lem}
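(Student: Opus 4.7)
The plan is a direct calculation: substitute $t\cdot z$ into the Laurent monomial expression for $F_J$, collect the powers of $t$ into a single character of $(\C^*)^k$, and observe that this character is trivial precisely because the exponent vectors of $F_J$ were chosen to form a basis of the lattice $\hat{L}$ orthogonal to $L$.

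First I would spell out the induced $(\C^*)^k$-action on $\C^J$ in coordinates. For $t=(t_1,\ldots,t_k)\in(\C^*)^k$ and $z\in \C^J$, the $j$-th coordinate (with $j\in J$) of $t\cdot z$ equals $\rho_j(t)\,z_j = t_1^{\alpha_j^1}\cdots t_k^{\alpha_j^k}\,z_j = t^{\Lambda^j}z_j$. Second, I would substitute this into the defining formula \eqref{c-orbit} for $F_J$; the $i$-th component, whose exponent vector is the $i$-th column $\omega^i=(\omega^i_{j})_{j\in J}\in\Z^l$ of $\hat{V}^J$, becomes
\[
F_J(t\cdot z)_i \;=\; \prod_{j\in J}\bigl(t^{\Lambda^j}z_j\bigr)^{\omega^i_j} \;=\; \Bigl(\prod_{s=1}^{k} t_s^{\sum_{j\in J}\omega^i_j\alpha^s_j}\Bigr)\,F_J(z)_i.
\]
Invariance of the $i$-th component is therefore equivalent to the integer equalities $\sum_{j\in J}\omega^i_j\alpha^s_j=0$ for all $1\leq s\leq k$, i.e.\ to $\sum_{j\in J}\omega^i_j\Lambda^j=0$ in $\Z^k$.

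Finally, I would interpret these equalities as the defining orthogonality $\hat{L}\perp L$. The lattice $L\subset\Z^l$ is generated by the vectors $f_J(\Lambda^j)$, $j\in J$, so the relation $\sum_{j\in J}\omega^i_j\Lambda^j=0$ in $\Z^k$ is precisely the statement that the vector $\omega^i\in\Z^l$ pairs trivially with every generator of $L$. But by construction $\omega^i$ is the $i$-th column of $\hat{V}^J$, hence lies in $\hat{L}$, which is the integral orthogonal complement of $L$; the required orthogonality therefore holds automatically. Applying this to each $i=1,\ldots,l-q$ gives $F_J(t\cdot z)=F_J(z)$ for all $t\in(\C^*)^k$ and $z\in\C^J$, as required. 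The only real subtlety is notational bookkeeping between the three lattices involved (the weight lattice $\Z^k$, the character lattice $\Z^l$ of the standard action on $\C^J$, and the kernel lattice $\hat{L}\subset\Z^l$); once indices are aligned the invariance is an immediate consequence of the definition of $\hat{L}$, and no deeper obstacle arises.
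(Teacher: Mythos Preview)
Your proof is correct and follows exactly the approach the paper indicates: the paper's entire argument is the single clause ``Since the lattice $\hat{L}$ is orthogonal to $L$ we obtain'' preceding the lemma, and you have simply written out the direct monomial computation that makes this orthogonality manifest. There is nothing to add or correct.
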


\begin{cor} 
The set of  preimages  $F_{J}^{-1}(c)$, where $c=(c_1,\ldots ,c_{l-q})\in (\C ^{*})^{l-q}$ gives the set of all $(\C ^{*})^{k}$-orbits in $\C ^{J}$. 
\end{cor}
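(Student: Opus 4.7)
The plan is to reduce the corollary to a computation with subtori. The key observation is that $\C^J = (\C^*)^l$ with $l=|J|$ is itself an algebraic torus, and the $(\C^*)^k$-action on $\C^J$ factors as the composition of the group homomorphism
\[
\phi : (\C^*)^k \to (\C^*)^l, \qquad (t_1,\ldots,t_k) \mapsto \bigl(t^{\Lambda^{j_1}},\ldots,t^{\Lambda^{j_l}}\bigr),
\]
followed by translation of $(\C^*)^l$ on itself. Consequently every $(\C^*)^k$-orbit in $\C^J$ is a coset $z\cdot \Im(\phi)$, so it suffices to prove that $\ker(F_J) = \Im(\phi)$ as subtori of $(\C^*)^l$.

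Both $\ker(F_J)$ and $\Im(\phi)$ are connected subtori of $(\C^*)^l$ and are therefore determined by their cocharacter sublattices of $\Z^l$. The cocharacter lattice of $\Im(\phi)$ is $L = f_J(\Z^k)$, a direct summand of $\Z^l$ of rank $q$ by the hypothesis preceding the corollary. On the other hand, $F_J$ is, by its definition in~\eqref{c-orbit}, a homomorphism of tori whose character lattice is precisely $\hat L$, so $\ker(F_J)$ is the subtorus whose cocharacter lattice is the orthogonal complement of $\hat L$ in $\Z^l$. Because $L$ is saturated and $\hat L = L^\perp$, the double orthogonal $(L^\perp)^\perp$ returns $L$, so $\ker(F_J) = \Im(\phi)$.

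Finally, $F_J$ is a surjective homomorphism of tori, so its fibers $F_J^{-1}(c)$, $c \in (\C^*)^{l-q}$, are exactly the cosets of $\ker(F_J)$; combined with the first paragraph, these cosets are precisely the $(\C^*)^k$-orbits in $\C^J$, which is the assertion of the corollary. The step I expect to require the most care is the lattice-duality identity $(L^\perp)^\perp = L$, which depends crucially on $L$ being a direct summand rather than merely a sublattice of rank $q$: without saturation one would have $\ker(F_J) \supsetneq \Im(\phi)$ with finite quotient, and each fiber of $F_J$ would decompose into several $(\C^*)^k$-orbits instead of a single one.
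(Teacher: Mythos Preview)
Your argument is correct and supplies what the paper omits: the corollary is stated there without proof, as an immediate consequence of the preceding invariance lemma, and your identification $\ker(F_J)=\Im(\phi)$ via cocharacter lattices and the duality $(L^\perp)^\perp=L$ is exactly the missing step.

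One small correction to your closing caveat. Over $\C$ the group $(\C^*)^k$ is divisible, so the set-theoretic image $\Im(\phi)$ is always the full subtorus with cocharacter lattice $\operatorname{sat}(L)$, not $L$ itself; combined with the general identity $(L^\perp)^\perp=\operatorname{sat}(L)$, the equality $\ker(F_J)=\Im(\phi)$ therefore holds even when $L$ is not saturated, and fibers of $F_J$ do not split into several $(\C^*)^k$-orbits. The phenomenon you warn about---$\ker(F_J)\supsetneq\Im(\phi)$ with finite quotient---occurs for the compact torus $T^k$ but not for the algebraic torus. This does not affect your proof: under the stated hypothesis $L$ is saturated, so your intermediate identification of the cocharacter lattice of $\Im(\phi)$ with $L$ is valid as written.
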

 
The characters of  the  representation $\rho: T^{n}\to T^{k(n-k)}$ which is given by~\eqref{repr}, are:
\begin{equation}\label{reprgr}
\rho _{k+1,i}(\tau_1,\ldots ,\tau_n) = \tau_i, \;\; 1\leq i\leq k, 
\end{equation}
\[ \rho _{i,1}(\tau_1,\ldots ,\tau _n) =  \tau_i,\;\;  k+1\leq i\leq n,
\]
while   for $\{p,s\}\neq \{k+1,i\}, \{i,1\}$, we obtain
\[
\rho _{p,s} = \frac{\tau_{p-1}\tau_s}{\tau_1}, \; \; 2\leq s\leq k,\; k+2\leq p\leq n. 
\]
It follows that the weight vectors for the representation $\rho  : T^{n}\to T^{k(n-k)}$ are given by
\[
\Lambda _{k+1,i} = (0,\ldots 0, \underbrace{1}_{i} ,0,\ldots ,0),\;\;  1\leq i\leq k,
\]
\[
 \Lambda _{i,1} = (0,\ldots ,0, \underbrace{1}_{i}, 0,\ldots ,0),\;\;   k+1\leq i\leq n,
\]
\[
\Lambda _{p,s} = (-1,0\ldots ,0,\underbrace{1}_{p-1},0,\ldots ,0,\underbrace{1}_{s},0,\ldots,0),\;\; \{p,s\}\neq \{k+1,l\}, \{l,1\}.
\]

Let us  consider a stratum  $W_{\sigma}$ and  assume that $W_{\sigma}\subset  M_{I}$, where without loss of generality we can take  that $I=\{1,\ldots ,k\}$.  We obtain  the coordinate map $u_{I}: W_{\sigma}\to \C ^{k(n-k)}$ given by $u_{I}(X) = (z_1,\ldots ,z_{k(n-k)})$. Note first the following: if $z_i=0$ for some $X_{0}\in W_{\sigma}$ then $z_i=0$  for all $X\in W_{\sigma}$. Namely, in the notation of Subsection~\ref{aps},   let  $z_i = a_{ps}$ for some $k+1\leq p\leq n$ and $1\leq s\leq k$. Put  $\hat{I} = (I - \{s\})\cup \{p\}$. Then   $P^{\hat{I}}(X_{0})=0$, so  the definition of $W_{\sigma}$ implies that $P^{\hat{I}}(X)=0$ for all $X\in W_{\sigma}$. This further implies that $z_i=0$ for all $X\in W_{\sigma}$.  

Therefore, we may assume that  
\[
u _{I}(W_{\sigma})\subseteq \C ^{J} = \{ (z_1,\ldots ,z_{k(n-k)}) \in \C ^{k(n-k)},\;   z_{i}\neq 0,\; i\in J,\; z_i=0,\; i\notin J\}.
\]
Lemma~\ref{free}  implies:
\begin{prop}\label{station}
All  points of  a  stratum $W_{\sigma}$ in $G_{n,k}$ have the same stabilizer $\C ^{*}_{\sigma}\subset (\C ^{*})^{n}$.
\end{prop}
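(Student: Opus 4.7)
The plan is to reduce Proposition~\ref{station} directly to Lemma~\ref{free} by performing the analysis in one fixed chart and identifying which coordinates of the image are forced to vanish on the stratum. Concretely, I would fix an index $I\in\sigma$ and work in the chart $(M_I,u_I)$; since $W_\sigma\subseteq M_{I'}$ for every $I'\in\sigma$, no generality is lost in singling out one such $I$. By the discussion preceding the proposition, the canonical $(\C^*)^n$-action on $M_I$, transported via $u_I$, factors as the representation $\rho$ of~\eqref{repr} followed by the standard coordinate action of $(\C^*)^{k(n-k)}$ on $\C^{k(n-k)}$. In particular, the stabilizer in $(\C^*)^n$ of any $L\in W_\sigma$ coincides with the stabilizer of $u_I(L)$ in $\C^{k(n-k)}$ under this composite action.

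Next I would use the identification already spelled out above: each affine coordinate $a_{ps}$, for $p\notin I$ and $s\in I$, equals $\pm P^{\hat I}$ with $\hat I=(I\setminus\{s\})\cup\{p\}$, because the $k\times k$ minor of $A_L$ on rows $\hat I$ collapses to $\pm a_{ps}$ once $P^I(A_L)=1$ has been normalized. Consequently the locus $\{a_{ps}=0\}$ either contains all of $W_\sigma$ or is disjoint from it, according to whether $\hat I\notin\sigma$ or $\hat I\in\sigma$. Setting
\[
J_\sigma=\{(p,s):p\notin I,\ s\in I,\ (I\setminus\{s\})\cup\{p\}\in\sigma\},
\]
one obtains $u_I(W_\sigma)\subseteq \C^{J_\sigma}$, exactly the type of coordinate subspace to which Lemma~\ref{free} applies.

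Lemma~\ref{free}, applied to the representation $\rho$ of~\eqref{repr} and the subset $J=J_\sigma$, then yields that every point of $\C^{J_\sigma}$, and hence every point of $W_\sigma$, has the same stabilizer subgroup of $(\C^*)^n$; this is the desired $\C^*_\sigma$. The main, if modest, obstacle is that $J_\sigma$ depends a priori on the choice of chart $I\in\sigma$, so one must check that different choices yield the same subgroup of $(\C^*)^n$. This is automatic from intrinsicality: the stabilizer of $L$ under the globally defined $(\C^*)^n$-action on $G_{n,k}$ is an invariant of $L$ and independent of any chart, so the sublattice of $\Z^n$ spanned by the weight vectors of $\rho$ indexed by $J_\sigma$ is an invariant of $\sigma$ alone; a direct combinatorial verification amounts to tracking how those weight vectors are relabeled under a change of pivot columns $I\mapsto I'$.
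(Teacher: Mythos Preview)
Your proposal is correct and follows essentially the same approach as the paper: the paragraph immediately preceding the proposition already establishes that $u_I(W_\sigma)\subseteq \C^J$ for a fixed coordinate subspace (via the $\hat I=(I\setminus\{s\})\cup\{p\}$ argument you reproduce), and the paper then simply invokes Lemma~\ref{free}. Your write-up is somewhat more explicit---in particular your closing remark on chart-independence of $\C^*_\sigma$ is a point the paper leaves implicit---but the underlying argument is the same.
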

 A  stratum $W_{\sigma}$ is by  definition invariant for  the action of the algebraic torus $(\C ^{*})^{k}$. Denote by  $I_{j_1},\ldots ,I_{j_d}\subset \{1,\ldots ,k+1\}$, $|I_{j_i}|=k$ such subsets that $P^{I_{j_i}}(X) = 0$ for all $X\in W_{\sigma}$. Note that the conditions  $P^{I_{j_i}}(X) = 0$ impose   relations on $(z_1,\ldots ,z_{k(n-k)}) \in u_{I}(W_{\sigma})$,  which we denote by $P_{J}^{I_i}(z_1,\ldots ,z_{k(n-k)}) =0$.

It follows  that  $u _{I}(W_{\sigma})$ is:
\begin{itemize}
\item  either the whole $\C ^{J}$,
\item  or  the intersection of  the set of all $(\C ^{*})^{k}$ -orbits in $\C ^{J}$  with the surfaces which are defined by the equations imposed   by those    Pl\" ucker coordinates which are zero for  the points of $W_{\sigma}$:
\[
u_{I}(W_{\sigma}) = \left\{
\begin{array}{c}
F_{J}^{-1}(c),\;\; c=(c_1,\ldots ,c_{l-q})\in (\C ^{*})^{l-q}\\
P^{I_i}_{J}(z_1,\ldots ,z_{k(n-k)}) = 0,\; i=1,\ldots ,d.  
\end{array} \right.
\]
Here $l$ and $q$ are as in the  previous paragraph. 
\end{itemize}

As a result we obtain a   family of algebraic surfaces which are parametrized by   $F_{\sigma, I}\subseteq(\C ^{*})^{l-q}$.  Here 
$F_{\sigma, I}= \{ c\in (\C ^{*})^{l-1} | F_{J}^{-1}(c) \cap\{ (z_1,\ldots ,z_{k(n-k)}) | P^{I_i}_{J}(z_1,\ldots, z_{k(n-k)})=0, i=1,\ldots , d\}\neq \emptyset\}$ is an open algebraic manifold.  Any  of these surfaces is,  by Lemma~\ref{free},  an $(\C ^{*})^{q}$-orbit.  According to~\cite{AT},  this orbit maps  by the moment map to $\stackrel{\circ}{P_{\sigma}}$. In this way,  we obtain  that $W_{\sigma}/(\C ^{*})^{n}=F_{\sigma}$ is homeomorphic to $F_{\sigma, I}$ and that the map $\widehat{\mu} : W_{\sigma}/T^n \to \stackrel{\circ}{P_{\sigma}}$ is a fiber bundle with the  fiber either a point either $F_{\sigma, I}\subseteq(\C ^{*})^{l-q}$.

Thus,  we proved:
\begin{thm}~\label{triv}
The map $\hat{\mu} : W_{\sigma}/T^n\to \stackrel{\circ}{P}_{\sigma}$ is a locally trivial fiber bundle whose fiber is an  open algebraic manifold  $F_{\sigma}$. Thus, $W_{\sigma}/T^n \cong \stackrel{\circ}{P}_{\sigma}\times F_{\sigma}$.
\end{thm}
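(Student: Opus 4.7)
The plan is to construct a global bijection
\[
\Phi : W_{\sigma}/T^n \longrightarrow \stackrel{\circ}{P_{\sigma}} \times F_{\sigma}, \qquad \Phi([L]_{T^n}) = \bigl(\mu(L),\, [L]_{(\C^*)^n}\bigr),
\]
out of the two canonical quotient maps, and then to promote it to a homeomorphism using the chart-local description developed just before the statement. The map $\Phi$ is well defined since $\mu$ is $T^n$-invariant and the $(\C^*)^n$-quotient factors through $T^n$.

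Bijectivity of $\Phi$ is where the convexity theorem \thmref{moment-map-polytope} enters. For each $L\in W_\sigma$ the closure of the orbit $\OO_\C(L)$ has moment image $P_\sigma$, and the classical statement gives that $\mu$ induces a bijection from $\OO_\C(L)/T^n$ onto the open face $\stackrel{\circ}{P_{\sigma}}$. This yields injectivity of $\Phi$ (two $T^n$-classes with the same $(\C^*)^n$-class and the same moment image must coincide) and surjectivity (every point of $\stackrel{\circ}{P_{\sigma}}$ is realized on every $(\C^*)^n$-orbit).

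For continuity and local triviality I would work inside a chart $M_I$ containing $W_\sigma$. The analysis preceding the statement already shows that $u_I(W_\sigma)\subseteq \C^J$ is cut out by the polynomial relations $P^{I_i}_J=0$, that the algebraic map $F_J$ realizes the $(\C^*)^n$-quotient onto the open manifold $F_{\sigma,I}\subseteq (\C^*)^{l-q}$, and that this target is homeomorphic to $F_\sigma$. Moreover, each fiber of $F_J$ meeting $u_I(W_\sigma)$ is a single free $(\C^*)^q$-orbit, and applying \thmref{moment-map-polytope} to such an orbit shows that its $T^n$-quotient is carried homeomorphically onto $\stackrel{\circ}{P_{\sigma}}$ by $\mu$. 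Combining the two quotient maps exhibits a continuous inverse of $\Phi$ over $M_I$, so $\hat{\mu}$ is locally trivial with fiber $F_\sigma$.

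The main obstacle I anticipate is not a single computation but the bookkeeping needed to make sure that these chart-local inverses glue coherently into a global homeomorphism, rather than merely a global bijection. This should ultimately be painless because both $\mu$ and the $(\C^*)^n$-quotient are intrinsic to $G_{n,k}$ and chart-independent, so the local inverses constructed in different charts automatically agree on overlaps. Once local triviality has been established, the contractibility of the convex open set $\stackrel{\circ}{P_{\sigma}}$ promotes the bundle to a trivial one and yields the stated product decomposition $W_\sigma/T^n \cong \stackrel{\circ}{P_{\sigma}}\times F_\sigma$.
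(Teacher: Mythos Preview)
Your proposal is correct and follows essentially the same route as the paper: the map $\Phi=(\hat\mu,p_\sigma)$ you write down is exactly the canonical trivialization the paper records in Proposition~\ref{canon-trivial}, and your chart-local analysis via $F_J$ and the convexity theorem mirrors the paragraphs preceding the statement. One small redundancy: since $\Phi$ is already a global continuous bijection, you do not need the contractibility-of-the-base argument at the end, nor any chart-gluing for the inverse---checking continuity of $\Phi^{-1}$ in a single chart suffices, which is precisely what the paper does.
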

    
We want to emphasize  that the trivialization stated in Theorem~\ref{triv} is  canonical:
\begin{prop}\label{canon-trivial}
There is the canonical  trivialization $h_{\sigma} : W_{\sigma}/T^{n} \to \stackrel{\circ}{P_{\sigma}}\times F_{\sigma}$.
\end{prop}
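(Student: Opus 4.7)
The plan is to build $h_{\sigma}$ out of the two canonical $T^n$-invariant projections that $W_{\sigma}$ carries, and then verify it is a homeomorphism. The moment map restricts to a $T^n$-invariant continuous map $\mu : W_{\sigma} \to \stackrel{\circ}{P_{\sigma}}$, which descends to $\hat{\mu} : W_{\sigma}/T^n \to \stackrel{\circ}{P_{\sigma}}$. The quotient by the bigger group gives a $T^n$-invariant continuous map $p_{\sigma} : W_{\sigma} \to W_{\sigma}/(\C^{*})^{n} = F_{\sigma}$, which descends to $\hat{p}_{\sigma} : W_{\sigma}/T^n \to F_{\sigma}$. I would then \emph{define}
\[
h_{\sigma}([L]) \;=\; \bigl(\hat{\mu}([L]),\; \hat{p}_{\sigma}([L])\bigr) \in \stackrel{\circ}{P_{\sigma}} \times F_{\sigma}.
\]
Both components are intrinsic to $W_{\sigma}$ (no choice of chart or of local section is involved), so canonicity is automatic once $h_{\sigma}$ is shown to be a homeomorphism.

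The key input for bijectivity is \thmref{moment-map-polytope}. Given $L \in W_{\sigma}$, the orbit closure $\overline{\OO_{\C}(L)}$ is a toric variety whose top-dimensional $(\C^{*})^{n}$-orbit is $\OO_{\C}(L)$ itself, and Theorem~\ref{moment-map-polytope} asserts that $\mu$ maps this top orbit onto $\stackrel{\circ}{P_{\sigma}}$ with fibers exactly the $T^n$-orbits inside $\OO_{\C}(L)$. For injectivity of $h_{\sigma}$: if $h_{\sigma}([L_1]) = h_{\sigma}([L_2])$ then $L_2 \in \OO_{\C}(L_1)$ and $\mu(L_1) = \mu(L_2)$, whence $L_1$ and $L_2$ lie in the same $T^n$-orbit by the fiber description above. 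For surjectivity: given $(x, [L]) \in \stackrel{\circ}{P_{\sigma}} \times F_{\sigma}$, the same statement produces $L' \in \OO_{\C}(L)$ with $\mu(L') = x$, and then $h_{\sigma}([L']) = (x, [L])$.

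Continuity of $h_{\sigma}$ follows from continuity of $\mu$ and $p_{\sigma}$. For the inverse I would invoke \thmref{triv}, which already provides a local trivialization of $\hat{\mu} : W_{\sigma}/T^n \to \stackrel{\circ}{P_{\sigma}}$ with fiber $F_{\sigma}$; in any such local trivialization the second component $\hat{p}_{\sigma}$ is, up to the canonical identification between the abstract fiber and $F_{\sigma}$, the standard projection. Hence $h_{\sigma}$ agrees locally with a product-coordinate chart of the bundle, so $h_{\sigma}^{-1}$ is continuous.

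The main obstacle is the bijectivity step, and specifically the fact that the fibers of $\hat\mu$ on a single $(\C^{*})^{n}$-orbit coincide with its $T^n$-orbits. This is precisely the content of the convexity/bijection statement in \thmref{moment-map-polytope} applied to the top stratum of the toric variety $\overline{\OO_{\C}(L)}$; equivalently, the residual $(\C^{*})^{n}/T^n \cong (\R_{+})^{n}$-action on $W_{\sigma}/T^n$ is free along each orbit and identified by $\mu$ with translation inside $\stackrel{\circ}{P_{\sigma}}$. Once this identification is made the canonicity of $h_{\sigma}$ is forced, since no auxiliary choices have entered its definition.
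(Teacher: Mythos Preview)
Your proposal is correct and takes essentially the same approach as the paper: both define $h_{\sigma} = (\hat{\mu}, p_{\sigma})$ using the induced moment map and the canonical projection to $F_{\sigma} = W_{\sigma}/(\C^{*})^{n}$. The paper's proof is in fact much terser---it simply defines the map and declares it a canonical homeomorphism, implicitly relying on the preceding Theorem~\ref{triv}---whereas you supply the bijectivity argument via Theorem~\ref{moment-map-polytope} and the continuity of the inverse explicitly.
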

\begin{proof}
The space of parameters $F_{\sigma}$ is defined by $F_{\sigma} = W_{\sigma}/(\C ^{*})^{n}$, so denote by $p_{\sigma} : W_{\sigma}/T^{n}\to F_{\sigma}$ the canonical projection. It follows that  the map $h_{\sigma} : W_{\sigma}/T^{n}\to \stackrel{\circ}{P_{\sigma}}\times F_{\sigma}$ defined by $h_{\sigma} = (\hat{\mu}, p_{\sigma})$ is a canonical homeomorphism.
\end{proof}

We use the notation $h : W/T^n \to \stackrel{\circ}{\Delta _{n,k}}\times F$ for the canonical trivialization of the main stratum.

\subsection{On singular and regular values of the moment map}. Let us  consider  an  effective action of the algebraic torus $(\C ^{*})^{n-1}$ on $G_{n,k}$.  We want to  prove that a  regular  point of the  moment map $\mu : G_{n,k}\to \Delta _{n,k}$ is determined by the  non-triviality of its stationary subgroup. In order to do that we consider more general situation.

 Let us consider the complex  projective space $\C P^{N}$, where $N={n\choose k}-1$ endowed with an action of the torus $T^{n}$ which is given by a composition of the $k$-th exterior power representation $\rho : T^{n}\to T^{N+1}$ and the canonical action of $T^{N+1}$ on $\C P^{N}$. This action extends to an action of the algebraic torus $(\C ^{*})^{n}$. Let $\Lambda _{I}\in \Z ^{n}$  denote the weight vectors of the representation $\rho$, where $I\in {n \choose k}$. More precisely $\Lambda _{I}^{i}=1$ for $i\in I$,  while $\Lambda _{I}^{i} =0$ for $i\notin I$. Then the moment map $\mu : \C P^{N} \to \R ^{n}$ for the considered $T^n$ action on $\C P^{N}$ is   given by
\begin{equation}\label{moment}
\mu ([z_{I}, \; I\in {n\choose k}]) = \frac{1}{\sum_{I\in {n\choose k}}|z_{I}|^{2}}\sum _{I\in {n\choose k}}|z_{I}|^{2}\Lambda _{I}.
\end{equation}
We want to determine the rank of the differential $d\mu$ at an arbitrary point ${\bf z} = [z_{I}]\in \C P^{N}$. Let $P_{\bf{z}}$ denote a   polytope  which is the convex span of those vectors $\Lambda _{I}$ such that $z_{I}\neq 0$. 
\begin{thm}\label{rankcp}
The rank of the differential  of the moment map $\mu :  \C P^{N} \to \R^{n}$ given  by~\eqref{moment}  is
\begin{equation}
\rk d\mu ({\bf z}) = \dim P_{\bf{z}}
\end{equation}
for  an arbitrary point ${\bf z}\in \C P^{N}$.
\end{thm}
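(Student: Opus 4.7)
The plan is to compute $d\mu(\mathbf{z})$ in an affine chart around $\mathbf{z}$ and identify its image with the linear direction of $\operatorname{aff}\{\Lambda_{I}:I\in S\}$, where $S=\{I:z_{I}\neq 0\}$ is the support of $\mathbf{z}$.

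First I would fix some $I_{0}\in S$ and work in the affine chart $\{z_{I_{0}}\neq 0\}\cong \C^{N}$ with coordinates $w_{I}=z_{I}/z_{I_{0}}=a_{I}+\sqrt{-1}\,b_{I}$ for $I\neq I_{0}$. In these coordinates
\[
\mu=\Lambda_{I_{0}}+\frac{1}{D}\sum_{I\neq I_{0}}|w_{I}|^{2}(\Lambda_{I}-\Lambda_{I_{0}}),\qquad D=1+\sum_{I\neq I_{0}}|w_{I}|^{2},
\]
so $\mu$ is a rational function of the real-valued $|w_{I}|^{2}$, and a direct computation (using $\partial|w_{J}|^{2}/\partial a_{J}=2a_{J}$ and the quotient rule) gives
\[
\frac{\partial \mu}{\partial a_{J}}\bigg|_{\mathbf{z}}=\frac{2a_{J}}{D}\bigl(\Lambda_{J}-\mu(\mathbf{z})\bigr),\qquad \frac{\partial \mu}{\partial b_{J}}\bigg|_{\mathbf{z}}=\frac{2b_{J}}{D}\bigl(\Lambda_{J}-\mu(\mathbf{z})\bigr).
\]
These partials vanish for $J\notin S$ (since $w_{J}=0$ at $\mathbf{z}$), while for every $J\in S\setminus\{I_{0}\}$ at least one of $a_{J},b_{J}$ is non-zero, so $\Lambda_{J}-\mu(\mathbf{z})$ lies in the image. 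Therefore
\[
\operatorname{Im} d\mu(\mathbf{z})=\operatorname{span}_{\R}\{\Lambda_{J}-\mu(\mathbf{z}) : J\in S\setminus\{I_{0}\}\}.
\]

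The next step is to identify the dimension of this span with $\dim P_{\mathbf{z}}$. Writing $\mu(\mathbf{z})=\sum_{I\in S}u_{I}\Lambda_{I}$ with $u_{I}=|z_{I}|^{2}/\sum_{J}|z_{J}|^{2}>0$, the vectors $W_{J}:=\Lambda_{J}-\mu(\mathbf{z})$ and $V_{J}:=\Lambda_{J}-\Lambda_{I_{0}}$ (for $J\in S\setminus\{I_{0}\}$) satisfy $W_{J}=V_{J}-\sum_{K\in S\setminus\{I_{0}\}}u_{K}V_{K}$, so placing them as columns of matrices $\mathbf{W},\mathbf{V}$ one has $\mathbf{W}=\mathbf{V}(I-u\mathbf{e}^{T})$, where $u=(u_{K})$ and $\mathbf{e}$ is the all-ones column. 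By the matrix determinant lemma, $\det(I-u\mathbf{e}^{T})=1-\sum_{K\neq I_{0}}u_{K}=u_{I_{0}}>0$, so the transition is invertible, hence $\operatorname{span}\{W_{J}\}=\operatorname{span}\{V_{J}\}$. The right-hand side is precisely the linear direction of $\operatorname{aff}\{\Lambda_{I}:I\in S\}$, whose real dimension equals $\dim P_{\mathbf{z}}$. Combining these, $\rk d\mu(\mathbf{z})=\dim P_{\mathbf{z}}$, as claimed.

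The only real work is the derivative computation and the small linear-algebra check at the end; neither involves any subtle geometry. The key observation that trivializes the problem is that $\mu$ depends on $\mathbf{z}$ only through the moduli $|w_{I}|^{2}$, which causes every partial derivative to be an explicit scalar multiple of $\Lambda_{J}-\mu(\mathbf{z})$; positivity of the barycentric coefficient $u_{I_{0}}$ then takes care of the rank identification.
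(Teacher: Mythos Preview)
Your proof is correct. The derivative computation is right, and the matrix-determinant-lemma step cleanly shows that replacing the basepoint $\mu(\mathbf{z})$ by the vertex $\Lambda_{I_0}$ does not change the span.

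Your route is genuinely different from the paper's. The paper argues in two stages: it first invokes the orbit structure (the map $\mu:\mathcal{O}(\mathbf{z})\to\stackrel{\circ}{P}_{\mathbf{z}}$ is a smooth fiber bundle, coming from the Atiyah--Guillemin--Sternberg convexity theorem) to obtain the lower bound $\rk d\mu(\mathbf{z})\geq\dim P_{\mathbf{z}}$, and then computes the partials to see that those with respect to the vanishing coordinates are zero, thereby reducing to the restriction of $\mu$ to the coordinate subspace $\C P^{N-l}$ on which all remaining coordinates are nonzero; on that subspace the lower bound matches the trivial upper bound $n-1$ only in the full-support case, and in general one appeals again to the orbit argument. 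By contrast, you compute $\operatorname{Im}d\mu(\mathbf{z})$ explicitly and identify it with the linear direction of $\operatorname{aff}\{\Lambda_I:I\in S\}$ via the invertibility of $I-u\mathbf{e}^{T}$. Your argument is therefore self-contained and does not require the convexity theorem; the paper's argument is shorter on the linear algebra but imports the orbit--polytope correspondence as a black box.
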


\begin{proof}

We first note that the map $\mu$ is $T^{n}$-invariant and  that the image of $\mu$ is the hypersimplex $\Delta _{n,k}$.  This implies that $\rk d\mu \leq n-1$  for any $z\in \C P^{N}$.  We also note  that  the map $\mu  : \O ({\bf z}) \to \stackrel{\circ}{P}_{\bf{z}}$ is a smooth fiber bundle for  the $(\C ^{*})^{n}$-orbit $\O (\bf{z})$ of a point $\bf{z}$, where $\stackrel{\circ}{P_{\bf{z}}}$ is an interior of the polytope $P_{\bf z}$. It implies that the rank of the differential $d\mu$ at a point ${\bf z}$ is $\geq \dim   P_{\bf{z}}$.   Therefore,
we differentiate the following cases for the point ${\bf z}$:
\begin{itemize}
\item If $z_{I}\neq 0$ for all $I\in {n\choose k}$ then   $\stackrel{\circ}{P_{\bf{z}}} = \stackrel{\circ}{\Delta} _{n,k}$ and $\rk d\mu ({\bf z}) =  n-1$.
\item Let $ \{I_1, \ldots , I_{l}\}$ denote the set of those  $I$ for which $z_{I_{j}}=0$, while $z_{I}\neq 0$ for $I\notin \{I_1, \ldots I_{l}\}$. Fix  a chart in $\C P^{N}$ determined by the condition that the coordinate indexed by a fixed $I_{p}\notin\{I_1, \ldots I_{l}\}$ is not zero. Then the point  ${\bf z}$ belongs to this chart and the  moment map $\mu $, in  local coordinates of this chart, is given by
\[
 \mu ([z_{I}, \; I\in {n\choose k}\setminus I_{p}]) = \frac{1}{1+\sum_{I\neq I_{p}}|z_{I}|^{2}}(\Lambda _{I_{p}}+\sum _{I\neq  I_{p}}|z_{I}|^{2}\Lambda _{I}).
\]
Let $z_{I}= x_{I} +iy_{I}$.
It follows that 
\[
\frac{\partial \mu}{\partial x_{I_{0}}} = \frac{1}{(1+\sum _{I \neq I_p} |z_{I}|^{2})^{2}}\big ( 2x_{I_{0}}(1+\sum _{I_\neq I_{p}}|z_{I}|^{2}) \Lambda _{I_{0}}-2x_{I_0}(\Lambda _{I_{p}}+\sum _{I\neq I_{p}}|z_{I}|^{2}\Lambda _{I})\big )
\]
and
\[
\frac{\partial \mu}{\partial y_{I_{0}}} = \frac{1}{(1+\sum _{I \neq I_p} |z_{I}|^{2})^{2}}\big ( 2y_{I_{0}}(1+\sum _{I_\neq I_{p}}|z_{I}|^{2}) \Lambda _{I_{0}}-2y_{I_0}(\Lambda _{I_{p}}+\sum _{I\neq I_{p}}|z_{I}|^{2}\Lambda _{I})\big )
\]
Therefore,
$\frac{\partial \mu}{\partial x_{I_{0}}} = \frac{\partial \mu}{\partial _{I_{0}}}=0$ at a point $z_{I_{0}}=0$. It implies that
$\rk d\mu$ at a point $z_{I_{0}}=0$ is equal to the $\rk d\mu |_{z_{I_{0}}=0}$, where $\mu |_{z_{I_{0}}=0}$ is the  restriction of $\mu$  to the coordinate subspace $z_{I_{0}}=0$, that is
\[
(rkd\mu)(z_{I_{0}}=0) = (\rk d(\mu |{z_{I_{0}}=0}))(z_{I, I\neq I_{0}}),
\]
Altogether we obtain that 
\begin{equation}\label{ukupan}
(\text{rank}d\mu)({\bf{z}}) = \text{rank}d(\mu |_{z_{I_{1}}=\ldots =z_{I_{l}}=0})(z_{I, I\neq I_1, \ldots I_l}).
\end{equation}
Denote by $\C P^{N-l}_{I_1, \ldots I_{l}}$ the complex projective space defined   by the coordinate subspace $\C ^{N-l+1}_{I_1, \ldots .I_{l}}$ in $\C ^{N+1}$ whose axis are  indexed by $I\notin \{I_{1}, \ldots I_{l}\}$ and denote by $ P_{I_{1}, \ldots ,I_{l}}$  the  convex polytope which is obtained as the convex hull of the vectors $\Lambda _{I}$, $I\notin \{I_1, \ldots , I_{l}\}$. Let us consider the map
\[\mu |_{z_{I_{1}}=\ldots =z_{I_{l}}=0} : \C P^{N-l}_{I_1,\ldots , I_l}\to P_{I_{1}, \ldots I_{l}}.
\]
The point $\bf{z}$ belongs to $\C P^{N-l}_{I_1, \ldots I_{l}}$ as well as its $(\C ^{*})^{N-l+1}_{I_1, \ldots ,I_l}$-orbit. Since all coordinates for  ${\bf z}\in \C P^{N-l}_{I_1, \ldots I_{l}}$ are non zero, it follows that
\[
\mu |_{\C P^{N-l}_{I_1, \ldots I_{l}}} (  (\C ^{*})^{N-l+1}_{I_1, \ldots ,I_l}\cdot {\bf z} )) = \stackrel{\circ}{P}_{I_{1}, \ldots I_{l}}.
\]
Thus, $P_{\bf{z}} = P_{I_{1}, \ldots I_{l}}$ and 
\[
\rk d(\mu |_{\C P^{N-l}_{I_1, \ldots I_{l}}})({\bf z}) = \dim  P_{I_{1}, \ldots ,I_{l}}.
\]
Together with~\eqref{ukupan} this proves that $(\rk d\mu)({\bf z}) = \dim P_{\bf z}$. 
\end{itemize}
\end{proof}

By Lemma~\ref{Plembed}  the Pl\"ucker embedding $G_{n,k}\to \C P^{N}$ is  $T^n$-equivariant. Therefore, by Theorem~\ref{rankcp} for the  moment map $\mu : G_{n,k}\to \R ^{n}$  defined by~\eqref{momentmap}  it holds:
\begin{cor}\label{rkmu}
The rank of the differential of the moment map $\mu : G_{n,k} \to \R^{n}$  is given by
\[
\rk d\mu (L) = \dim P_{L}
\]
at an arbitrary point $L$, where $P_{L}$ is an admissible polytope for the point $L\in G_{n,k}$, that is $P_{L} = \mu (\overline{(\C ^{*})^{n}\cdot L})$.
\end{cor}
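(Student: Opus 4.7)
The plan is to reduce the statement to Theorem~\ref{rankcp} by means of the Plücker embedding. By Lemma~\ref{Plembed}, the Plücker embedding $\iota : G_{n,k} \hookrightarrow \C P^{N-1}$ is $T^n$-equivariant for the canonical $T^n$-action on $G_{n,k}$ and the $k$-th exterior power action on $\C P^{N-1}$. Consequently the moment map on $G_{n,k}$ factors as $\mu = \tilde{\mu} \circ \iota$, where $\tilde{\mu}$ is the moment map on $\C P^{N-1}$ of Theorem~\ref{rankcp}. Since $\iota$ is a smooth embedding, $d\iota(L)$ is injective, and the chain rule gives the upper bound
\[
\rk d\mu(L) \;\leq\; \rk d\tilde{\mu}(\iota(L)) \;=\; \dim P_{\iota(L)},
\]
where the equality on the right is Theorem~\ref{rankcp}. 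Now $P_{\iota(L)}$ is by construction the convex hull of those weight vectors $\Lambda_I$ for which the $I$-th homogeneous coordinate of $\iota(L)$ is nonzero, i.e.\ for which $P^I(L)\neq 0$. By the very definition of the admissible polytope this is $P_L$, so $\rk d\mu(L) \leq \dim P_L$.

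For the matching lower bound I would invoke the convexity Theorem~\ref{moment-map-polytope}: the closure $\overline{(\C^*)^n\cdot L}$ is a toric subvariety whose moment image is exactly $P_L$, and its top-dimensional orbit $(\C^*)^n\cdot L$ maps onto the open interior $\stackrel{\circ}{P_L}$. Let $W_\sigma$ be the stratum containing $L$, so that $P_\sigma = P_L$. By the canonical trivialization of Proposition~\ref{canon-trivial}, the induced quotient map $\hat{\mu} : W_\sigma/T^n \to \stackrel{\circ}{P_\sigma}$ is the first projection of the homeomorphism $h_\sigma : W_\sigma/T^n \to \stackrel{\circ}{P_\sigma} \times F_\sigma$, hence surjective onto a manifold of real dimension $\dim P_L$. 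Lifting to $W_\sigma$, one concludes $\rk d\mu(L) \geq \dim P_L$. Combined with the previous paragraph this yields the claimed equality.

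The main obstacle is making the lower bound genuinely pointwise: Theorem~\ref{triv} and Proposition~\ref{canon-trivial} assert a homeomorphism, and one needs a smooth version at $L$ to read off the rank of $d\mu$. This is most cleanly handled at the Lie-algebra level: the real tangent vectors generated by the $T^n$-action at $L$ together with the radial directions of the $(\C^*)^n$-orbit produce a subspace of $T_L G_{n,k}$ on which $d\mu(L)$ has image equal to the affine span (translated to the origin) of $\{\Lambda_I : P^I(L)\neq 0\}$. This span has dimension exactly $\dim P_L$, which, combined with the upper bound, completes the proof without relying on the smoothness of $h_\sigma$.
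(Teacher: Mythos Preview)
Your proposal is correct and follows essentially the paper's approach: the corollary is deduced from Theorem~\ref{rankcp} via the $T^n$-equivariant Pl\"ucker embedding, and the paper says nothing more than this. Your upper bound via the chain rule is exactly right, and your concern about the lower bound is well placed, but note that the paper's own lower-bound argument inside the proof of Theorem~\ref{rankcp} (namely that $\mu : \mathcal{O}({\bf z}) \to \stackrel{\circ}{P}_{\bf z}$ is a smooth fiber bundle, hence a submersion) transfers verbatim to $G_{n,k}$ because the $(\C^*)^n$-orbit of $L$ already lies inside $G_{n,k}$; your Lie-algebra argument is a valid alternative, though the $T^n$-directions you mention lie in $\ker d\mu$ and only the radial directions contribute to the image.
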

Together with Theorem~\ref{moment-map-polytope} this further gives:
\begin{cor}\label{reg-str}
A point $L\in G_{n,k}$ is a regular point for the moment map $\mu : G_{n,k}\to \R ^{n}$  if and only if   its  stationary subgroup $\C ^{*}_{L}\subset (\C ^{*})^{n+1} $ is trivial.
\end{cor}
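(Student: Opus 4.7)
The plan is to deduce this corollary directly from the two previously established results Corollary~\ref{rkmu} and Theorem~\ref{moment-map-polytope}, so the argument should be short; the work is really in chaining the statements correctly and keeping track of dimensions.

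First I would use Corollary~\ref{rkmu}, which says $\rk d\mu(L) = \dim P_L$ with $P_L = \mu(\overline{(\C^*)^n\cdot L})$. Since the image of $\mu$ is contained in the hypersimplex $\Delta_{n,k}$, a polytope of dimension $n-1$, the point $L$ is a regular point of $\mu$ if and only if $\rk d\mu(L) = n-1$, that is, if and only if the admissible polytope $P_L$ attains maximal dimension $n-1$.

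Next I would translate $\dim P_L$ into information about the stabilizer by invoking Theorem~\ref{moment-map-polytope}. That theorem gives a bijection between the $p$-dimensional $(\C^*)^n$-orbits contained in $\overline{\mathcal{O}_\C(L)}$ and the $p$-dimensional open faces of $P_L$. Applied to the top-dimensional face, this yields $\dim_\C \mathcal{O}_\C(L) = \dim P_L$. Thus $L$ is regular iff $\dim_\C \mathcal{O}_\C(L) = n-1$.

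Finally I would relate the orbit dimension to the stabilizer. The diagonal $\C^*\hookrightarrow(\C^*)^n$ acts trivially on $G_{n,k}$, so the effective torus is $(n-1)$-dimensional and $\dim_\C \mathcal{O}_\C(L) \leq n-1$ always, with equality iff the stationary subgroup $\C^*_L$ in the effective torus is trivial (equivalently, $\C^*_L$ coincides with the ineffective diagonal inside $(\C^*)^n$). Combining the three equivalences gives the statement.

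The argument presents no real obstacle, as each equivalence has been prepared by a previous result; the only care needed is to distinguish real vs complex dimensions when applying Theorem~\ref{moment-map-polytope}, and to interpret the notation $\C^*_L$ consistently, that is, as the stabilizer in the effective $(n{-}1)$-dimensional algebraic torus so that ``trivial'' really means the generic-orbit condition.
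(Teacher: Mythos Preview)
Your proposal is correct and follows essentially the same route the paper indicates: the paper deduces the corollary directly from Corollary~\ref{rkmu} together with Theorem~\ref{moment-map-polytope}, exactly by equating $\rk d\mu(L)=\dim P_L=\dim_{\C}\mathcal{O}_{\C}(L)$ and then comparing with the maximal possible value $n-1$ coming from the effective torus. Your closing remark about interpreting $\C^{*}_{L}$ as the stabilizer in the effective $(n-1)$-dimensional torus is also the right way to read the statement (the ``$(\C^{*})^{n+1}$'' in the text is a misprint).
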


It follows that the singular points in $G_{n,k}$  are given by those strata $W_{\sigma}$  whose stationary subgroups $\C ^{*}_{\sigma}$ are non-trivial.

As for the regular values of the moment map we deduce:
\begin{thm}\label{reg}
A point $x\in \stackrel{\circ}{\Delta} _{n,k}$  is a regular value for the moment map $\mu : G_{n,k}\to \Delta _{n,k}$ if and only if 
the preimage $\mu ^{-1}(x)$ does not intersect any stratum which has non-trivial stationary subgroup.
\end{thm}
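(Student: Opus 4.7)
The plan is to combine Corollary~\ref{reg-str}, which characterises regular points via triviality of the stationary subgroup, with Proposition~\ref{station}, which says that the stationary subgroup is constant on each stratum. Since $\Delta_{n,k}$ is the $(n-1)$-dimensional polytope lying in the hyperplane $\sum x_i = k$ of $\R^n$, a point $x\in \stackrel{\circ}{\Delta}_{n,k}$ is by definition a regular value of $\mu : G_{n,k}\to \Delta_{n,k}$ precisely when $d\mu(L)$ has maximal rank $n-1$ for every $L\in\mu^{-1}(x)$, equivalently, when every $L\in\mu^{-1}(x)$ is a regular point of $\mu$.

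First I would reformulate regularity in stratum-theoretic language. By Corollary~\ref{reg-str} a point $L\in G_{n,k}$ is regular for $\mu$ if and only if its stationary subgroup $\C^*_L$ is trivial. By Proposition~\ref{station} all points in a stratum $W_\sigma$ share the same stationary subgroup $\C^*_\sigma$, so the property of being a regular point is constant along each stratum: every point of $W_\sigma$ is regular if $\C^*_\sigma$ is trivial, and every point of $W_\sigma$ is singular if $\C^*_\sigma$ is non-trivial.

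Combining these two observations with the stratum decomposition $G_{n,k}=\bigsqcup_{\sigma\in\mathcal{A}} W_\sigma$ from Section~\ref{aps}, the condition that every $L\in\mu^{-1}(x)$ be regular is equivalent to the condition that every stratum meeting $\mu^{-1}(x)$ has trivial stationary subgroup, i.e.\ $\mu^{-1}(x)\cap W_\sigma = \emptyset$ whenever $\C^*_\sigma$ is non-trivial. This gives both directions of the equivalence simultaneously. Conversely, if $\mu^{-1}(x)$ meets some $W_\sigma$ with $\C^*_\sigma$ non-trivial, then any $L$ in this intersection is a singular point, so $x$ fails to be a regular value.

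There is no essential obstacle: the theorem is a clean corollary of the two cited facts once one notes that regularity of a preimage point is a stratum-invariant property. The only bookkeeping point to keep explicit is that $\stackrel{\circ}{\Delta}_{n,k}$ sits in the interior of the target hyperplane, so the rank condition for regularity is $\rk d\mu = n-1$, which by Corollary~\ref{rkmu} equals $\dim P_L$, and $\dim P_\sigma = n-1$ is equivalent (via the correspondence between faces of $\mu(\overline{\O_\C(L)})$ and orbits in $\overline{\O_\C(L)}$ from Theorem~\ref{moment-map-polytope}) to $\C^*_\sigma$ being trivial.
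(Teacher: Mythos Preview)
Your proposal is correct and follows essentially the same approach as the paper: the paper does not write out an explicit proof for this theorem, but states it as an immediate deduction from Corollary~\ref{reg-str} (characterising regular points via triviality of the stabiliser) together with the observation that singular points are exactly those lying in strata with non-trivial stationary subgroup, which is precisely the combination of Corollary~\ref{reg-str} and Proposition~\ref{station} that you spell out. Your added remark clarifying that the relevant rank condition is $n-1$ (since $\Delta_{n,k}$ lies in a hyperplane) is a useful piece of bookkeeping that the paper leaves implicit.
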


\begin{rem} 
According to  classical results of  mathematical analysis, $\mu ^{-1}(x)$ is a closed smooth submanifold in $G_{n,k}$  of dimension $q+n-1$ for a regular value $x\in \stackrel{\circ}{\Delta}_{n,k}$. Here  $q=2(k-1)(n-k-1)$ is the dimension of the space of parameters of the main stratum.
\end{rem} 

We define  the {\it defect} of a point $L\in G_{n,k}$ to be an integer $q$ such that  the $T^{n}$- orbit of $L$ has the dimension $n-1-q$.  Then Theorem~\ref{reg} can be reformulated as:
\begin{cor}\label{reform}
A point $x\in \stackrel{\circ}{\Delta} _{n,k}$  is a regular value for the moment map $\mu : G_{n,k}\to \Delta _{n,k}$ if and only if 
any point from  the preimage $\mu ^{-1}(x)$ has the defect equal to zero.
\end{cor}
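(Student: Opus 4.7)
The plan is to reinterpret Theorem~\ref{reg} by translating the triviality of the stationary subgroup of a stratum into a statement about the dimension of the $T^n$-orbit, which by definition is what controls the defect.

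First I would observe that the diagonal circle $T^1\subset T^n$ acts trivially on $G_{n,k}$ (elements of $G_{n,k}$ are subspaces, unchanged by scalar multiplication), so for any $L\in G_{n,k}$ the $T^n$-orbit has dimension at most $n-1$ and $\dim T^n\cdot L = n-\dim \mathrm{Stab}_{T^n}(L)$. Hence the defect $q(L)$, defined by $\dim T^n\cdot L = n-1-q(L)$, equals $\dim \mathrm{Stab}_{T^n}(L)-1$. Consequently $q(L)=0$ precisely when $\mathrm{Stab}_{T^n}(L)$ coincides with the scalar circle, equivalently when the stabilizer of $L$ in the effective torus $T^{n-1}=T^n/T^1$ is trivial.

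Next I would pass from the compact to the complex setting via Proposition~\ref{station}, which guarantees that all points of a stratum $W_\sigma$ share the same complex stabilizer $\C^*_\sigma\subset(\C^*)^n$. By the standard correspondence between compact and algebraic tori and their stabilizers, the defect of any point of $W_\sigma$ vanishes if and only if $\C^*_\sigma$ equals the diagonal $\C^*$; this is precisely what it means for the stratum to have \emph{trivial} stationary subgroup in the effective sense employed in Corollary~\ref{reg-str} and Theorem~\ref{reg}.

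Finally I would feed this equivalence into Theorem~\ref{reg}: stratum by stratum, the condition ``every point of $\mu^{-1}(x)$ has defect zero'' is identical to ``$\mu^{-1}(x)$ does not meet any stratum with non-trivial stationary subgroup'', and the latter is exactly the characterization of regular values $x\in \stackrel{\circ}{\Delta}_{n,k}$ already established. The only delicate bookkeeping point I expect is the one-dimensional slack between $T^n$ and its effective quotient $T^{n-1}$: one must verify explicitly that the diagonal $\C^*$ is always contained in $\C^*_\sigma$, so that ``trivial stationary subgroup'' in the paper's convention corresponds to $\C^*_\sigma=\C^*$ rather than $\C^*_\sigma=\{1\}$. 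Once this identification is made, Corollary~\ref{reform} is an immediate reformulation of Theorem~\ref{reg}.
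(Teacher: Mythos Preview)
Your proposal is correct and matches the paper's approach: the paper presents Corollary~\ref{reform} simply as a reformulation of Theorem~\ref{reg} with no separate proof, and your argument supplies exactly the routine translation (defect zero $\Leftrightarrow$ trivial stationary subgroup in the effective torus) that justifies calling it a reformulation.
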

 
We provide  geometrical description of   some submanifolds in $G_{n,k}$, which consist of singular points for the moment map $\mu$. Let us consider the decomposition $\C ^n = \C ^{n_1}\times \C^{n_2}$, $n=n_1+n_2$,  and consider the submanifold  $G_{n_1,k_1, n_2,  k_2}$, $k=k_1+k_2$, in $G_{n,k}$,   which consists of all $k$-dimensional subspaces $L\subset \C ^{n}$ such that $L=\mathcal{L}(L_1, L_2)$, where $L_1$ is a $k_1$-dimensional subspace in $\C ^{n_1}$ and $L_2$ is a $k_2$-dimensional subspace in $\C ^{n_2}$. Then $G_{n_1, k_1, n_2, k_1} \cong G_{n_1, k_1}\times G_{n_2, k_2}$ and the action of $(\C ^{*})^{n}$ on $G_{n_1,k_1, n_2, k_2}$ is induced  by the canonical action of $(\C ^{*})^{n_1}$ on $G_{n_1, k_1}$ and the canonical action of $(\C ^{*})^{n_2}$ on $G_{n_2, k_2}$. Therefore, the defect of a point   $L\in G_{n_1,k_1, n_2, k_2}$ is $q \geq 1$. Then Corollary~\ref{reform} implies:
\begin{prop}\label{G-sing}
The points which belong to the submanifolds $G_{n_1,k_1, n_2,k_2}\cong G_{n_1, k_1}\times G_{n_2, k_2}$  in $ G_{n,k}$  are  singular points for the moment map $\mu : G_{n,k}\to \Delta _{n,k}$.
\end{prop}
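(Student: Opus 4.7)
The plan is to apply Corollary~\ref{rkmu}: a point $L$ is a singular (critical) point of the moment map precisely when $\dim P_{L} < n-1$, and by Theorem~\ref{moment-map-polytope} the latter is equivalent to the complex orbit $\overline{\OO_{\C}(L)}$ having dimension strictly less than $n-1$, i.e.\ to the stabilizer $(\C^{*})^{n}_{L}$ being strictly larger than the global diagonal $\C^{*}_{\mathrm{diag}} \subset (\C^{*})^{n}$ (which always acts trivially on $G_{n,k}$). So it suffices to exhibit, for every $L \in G_{n_{1},k_{1},n_{2},k_{2}}$, a $2$-dimensional subtorus of $(\C^{*})^{n}$ fixing $L$.

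First I would write $(\C^{*})^{n} = (\C^{*})^{n_{1}} \times (\C^{*})^{n_{2}}$ according to the splitting $\C^{n} = \C^{n_{1}} \times \C^{n_{2}}$ and define
\[
H = \{\, (\underbrace{t_{1},\dots,t_{1}}_{n_{1}},\underbrace{t_{2},\dots,t_{2}}_{n_{2}}) : t_{1}, t_{2} \in \C^{*}\,\} \;\subset\; (\C^{*})^{n}.
\]
An element of $H$ acts on $\C^{n}$ by $(\mathbf{z}_{1},\mathbf{z}_{2}) \mapsto (t_{1}\mathbf{z}_{1},t_{2}\mathbf{z}_{2})$. For $L = \mathcal{L}(L_{1},L_{2})$ with $L_{i} \subset \C^{n_{i}}$ this sends $L_{1}$ to $t_{1}L_{1} = L_{1}$ and $L_{2}$ to $t_{2}L_{2} = L_{2}$, since a nonzero scalar multiple of a linear subspace is the subspace itself; hence $H$ stabilizes $L$ pointwise. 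Thus $H \subseteq (\C^{*})^{n}_{L}$.

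Second, since $H$ is a $2$-dimensional subtorus of $(\C^{*})^{n}$ that strictly contains $\C^{*}_{\mathrm{diag}}$, the orbit $\OO_{\C}(L)$ has complex dimension at most $n-2$. Applying Theorem~\ref{moment-map-polytope} gives $\dim P_{L} \leq n-2$, and Corollary~\ref{rkmu} then yields $\rk d\mu(L) \leq n-2 < n-1$, so $L$ is a critical point of $\mu$. Equivalently, the $T^{n}$-orbit of $L$ has real dimension at most $n-2$, so the defect satisfies $q \geq 1$ and the conclusion also matches the remark preceding the statement, and can equally well be phrased via Corollary~\ref{reg-str} once the stabilizer is shown to be nontrivial modulo the diagonal.

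There is no substantive obstacle here; the only step that needs any care is the verification that the block-diagonal one-parameter subgroup $(1,\dots,1,t_{2},\dots,t_{2})$ (independent of the global diagonal) genuinely lies in the stabilizer, which is immediate from the product structure $L = L_{1} \oplus L_{2}$ compatible with the coordinate splitting $\C^{n} = \C^{n_{1}} \times \C^{n_{2}}$.
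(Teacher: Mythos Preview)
Your argument is correct and is essentially the same as the paper's: the paper observes (in the paragraph immediately preceding the proposition) that the $(\C^{*})^{n}$-action on $G_{n_{1},k_{1},n_{2},k_{2}}$ factors through the product of the canonical actions on the two factors, so the defect satisfies $q \geq 1$, and then invokes the corollary linking nontrivial stabilizer to singularity. You have simply made explicit the $2$-torus $H$ (the product of the two block-diagonals) that the paper leaves implicit, and traced the chain Theorem~\ref{moment-map-polytope} $\Rightarrow$ Corollary~\ref{rkmu} $\Rightarrow$ Corollary~\ref{reg-str} rather than citing Corollary~\ref{reform}; the content is the same.
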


Consider a function $f _{j}: \Delta _{n,k}\to \R$ defined by the projection on the $j$-th coordinate, that is $f(x_1,\ldots ,x_n)=x_j$, where $1\leq j\leq n$ and the  function $\mu_j : G_{n,k}\to \R$, $1\leq j\leq n$ defined by the composition $\mu_{j}=f_{j}\circ \mu$.

\begin{lem}
The set of critical values of the function  $\mu_{j}$ consists of two points  $0$ and $1$, and the set of its critical points is a  disjoint union of the smooth manifolds $\mu _{j}^{-1}(0) = G_{n-1, k}$ and $\mu _{j}^{-1}(1) = G_{n-1, k-1}$.
\end{lem}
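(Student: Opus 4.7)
The plan is to use the chain rule $d\mu_j(L) = df_j|_{\mu(L)}\circ d\mu(L)$ together with Corollary~\ref{rkmu} and a geometric description of $\Im d\mu(L)$ in terms of the admissible polytope $P_L$.

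From~\eqref{momentmap} one has explicitly
\[
\mu_j(L) = \frac{\sum_{I\ni j} |P^I(A_L)|^2}{\sum_I |P^I(A_L)|^2}\in [0,1],
\]
so $\mu_j(L)=0$ iff $P^I(A_L)=0$ for every $I\ni j$, equivalently $L\subset \C^{n-1}_{z_j=0}$, i.e.\ $L\in G_{n-1,k}(j)$; and $\mu_j(L)=1$ iff $P^I(A_L)=0$ for every $I\not\ni j$, equivalently $e_j\in L$, i.e.\ $L\in G_{n-1,k-1}(j)$. Compactness of $G_{n,k}$ together with the fact that $\mu_j$ achieves its minimum $0$ and maximum $1$ on these positive-dimensional submanifolds shows that $0$ and $1$ are critical values and that every point of $\mu_j^{-1}\{0,1\}$ is a critical point of $\mu_j$.

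For the converse I would fix $L$ with $0<\mu_j(L)<1$ and show that $d\mu_j(L)\neq 0$. By the chain rule $d\mu_j(L)=df_j\circ d\mu(L)$, where $f_j:\R^n\to \R$ is the linear projection on the $j$-th coordinate. By Corollary~\ref{rkmu}, $\rk d\mu(L)=\dim P_L$. The key geometric identification is $\Im d\mu(L) = T_{\mu(L)}(\mathrm{aff}\,P_L)$: by Theorem~\ref{moment-map-polytope} the restriction of $\mu$ to the orbit $\OO_\C(L)\subset G_{n,k}$ is a smooth surjection onto $\stackrel{\circ}{P_L}$, so $\Im d\mu(L)\supseteq T_{\mu(L)}(\mathrm{aff}\,P_L)$, and the rank equality from Corollary~\ref{rkmu} forces equality by a dimension count. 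Given this, if $d\mu_j(L)=0$ then $f_j$ is constant on $P_L$. Since the vertices of $P_L$ are the $0/1$-vectors $\Lambda_I$ with $P^I(A_L)\neq 0$, and $f_j(\Lambda_I)$ equals $1$ or $0$ according to whether $j\in I$ or not, this constancy forces either all such $I$ to contain $j$ (so $\mu_j(L)=1$) or none of them to contain $j$ (so $\mu_j(L)=0$), contradicting $0<\mu_j(L)<1$. Hence the set of critical points is exactly $G_{n-1,k}(j)\sqcup G_{n-1,k-1}(j)$ and the set of critical values is $\{0,1\}$.

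The main obstacle is the identification $\Im d\mu(L)=T_{\mu(L)}(\mathrm{aff}\,P_L)$; the inclusion $\supseteq$ comes from Theorem~\ref{moment-map-polytope} applied to $\OO_\C(L)$ and the reverse inclusion is forced by Corollary~\ref{rkmu}. Once this is in hand, the rest of the argument reduces to the straightforward observation that a linear functional taking only the values $0$ and $1$ on the vertices of a polytope and being constant must be identically $0$ or identically $1$ there.
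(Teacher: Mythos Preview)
Your proof is correct and follows essentially the same route as the paper: both rely on Corollary~\ref{rkmu} to control $d\mu(L)$ and then argue that the $j$-th coordinate cannot be killed unless $\mu_j(L)\in\{0,1\}$. The paper's proof is terse (``it is easy to deduce from the proof of Corollary~\ref{rkmu}''), pointing back to the explicit partial-derivative computation in Theorem~\ref{rankcp}; your version instead makes the geometric identification $\Im d\mu(L)=T_{\mu(L)}(\mathrm{aff}\,P_L)$ explicit, obtaining the inclusion from Theorem~\ref{moment-map-polytope} and closing it via the rank equality of Corollary~\ref{rkmu}. This is a cleaner, more conceptual packaging of the same content, and your final combinatorial step (if $f_j$ is constant on $P_L$ then all vertices $\Lambda_I$ with $P^I(L)\neq 0$ lie on the same level, forcing $\mu_j(L)\in\{0,1\}$) spells out precisely what the paper leaves implicit.
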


\begin{proof}
  Note that $\mu _{j}(G_{n,k})=[0, 1]$.    By Corollary~\ref{rkmu} we have that   any fixed point $L$ for  the considered  $T^n$-action is a critical point for the map $\mu _{j}$.  Since $\mu _{j}(L)$ is equal to $0$ or $1$ for any fixed point $L$, it follows that the points $0$ and $1$ are critical values for $\mu _j$. It is easy to deduce from the proof of Corollary~\ref{rkmu} that no $t\in (0,1)$ can be a critical value for $\mu _j$. Moreover,  $\mu _{j}^{-1}(0) \cong G_{n-1, k}$ while $\mu _{j}^{-1}(1)\cong G_{n-1, k-1}$. 
\end{proof}

Note that since  the differential of the map $\mu _{j} : G_{n, k}\setminus (G_{n-1, k}\cup  G_{n-1, k-1})\to (0,1)$ is an epimorphism it follows that there exists a smooth, closed $T^n$ - invariant  submanifold $S_{n,k}$ in $G_{n,k}$ such that $G_{n, k}\setminus (G_{n-1, k}\cup  G_{n-1, k-1}) \cong S_{n,k}\times (0,1)$.
 Altogether the following result holds.

\begin{prop}
There exist $T^n$-equivariant  projections  $\pi _{0} : S_{n,k} \to G_{n-1, k}$ and $\pi _{1} : S_{n,k} \to G_{n-1, k-1}$ such that 
\[
G_{n,k} =( S_{n,k}\times [0,1])\cup _{\pi _{0}}G_{n-1,k}\cup _{\pi _{1}}G_{n-1,k-1},
\]
where $(s, 0)\backsimeq \pi _{0}(s)$ and $(s,1)\backsimeq \pi _{1}(s)$.
\end{prop}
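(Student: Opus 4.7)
The plan is to use the smooth function $\mu_j$ of the preceding lemma as a Morse-type function whose critical fibers are exactly the two submanifolds that need to be glued. Since $\mu_j\colon G_{n,k}\to [0,1]$ has no critical values in $(0,1)$, every level set $\mu_j^{-1}(c)$ with $c\in(0,1)$ is a smooth closed $T^n$-invariant codimension-one submanifold, and I would set $S_{n,k}:=\mu_j^{-1}(\tfrac{1}{2})$ to be the submanifold whose existence was asserted just above the proposition.

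To construct $\pi_0$ and $\pi_1$ I would exploit the one-parameter real positive subgroup $\lambda_j\colon\R_{>0}\to(\C^*)^n$, $t\mapsto(1,\ldots,1,t,1,\ldots,1)$ with $t$ in position $j$. Because $\R_{>0}$ commutes with $T^n$ inside $(\C^*)^n$, the resulting flow $\phi_t$ on $G_{n,k}$ is $T^n$-equivariant, and in Pl\"ucker coordinates it acts by $P^I(L)\mapsto t\cdot P^I(L)$ for $I\ni j$ and by the identity on the remaining Pl\"ucker coordinates. Consequently $\mu_j(\phi_t(L))$ is strictly monotone in $t$ with limits $0$ as $t\to 0^+$ and $1$ as $t\to+\infty$, so $\phi_t$ is transverse to $S_{n,k}$. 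I would then define
\[
\pi_0(s):=\lim_{t\to 0^+}\phi_t(s),\qquad \pi_1(s):=\lim_{t\to+\infty}\phi_t(s),
\]
and identify $\pi_0(s)\in G_{n-1,k}(j)\cong G_{n-1,k}$ as the projective limit obtained by setting all Pl\"ucker coordinates of $s$ with $j\in I$ to zero, and $\pi_1(s)\in G_{n-1,k-1}(j)\cong G_{n-1,k-1}$ by rescaling the flow by $1/t$ and passing to the limit, which amounts to setting all Pl\"ucker coordinates with $j\notin I$ to zero. The $T^n$-equivariance of $\pi_0,\pi_1$ follows from that of $\phi_t$.

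Reparametrizing the flow so that $\mu_j(\phi_t(s))=t$ identifies $G_{n,k}\setminus(G_{n-1,k}(j)\cup G_{n-1,k-1}(j))$ with $S_{n,k}\times(0,1)$ via $(s,t)\mapsto\phi_{t-1/2}(s)$, which is precisely the diffeomorphism stated just before the proposition. Extending this identification by $(s,0)\mapsto\pi_0(s)$ and $(s,1)\mapsto\pi_1(s)$ on the closed boundary, and gluing in $G_{n-1,k}$ and $G_{n-1,k-1}$ via the natural inclusions, produces a continuous surjection from $S_{n,k}\times[0,1]\sqcup G_{n-1,k}\sqcup G_{n-1,k-1}$ onto $G_{n,k}$ that descends to a bijection modulo the relations $(s,0)\backsimeq\pi_0(s)$ and $(s,1)\backsimeq\pi_1(s)$. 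Compactness of the source and Hausdorffness of $G_{n,k}$ then promote this bijection to a homeomorphism, yielding the claimed decomposition.

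The main obstacle will be establishing that the limits defining $\pi_0$ and $\pi_1$ exist and depend continuously on $s$. I would settle this by direct computation in Pl\"ucker coordinates: at any $s\in S_{n,k}$ one has $\mu_j(s)=1/2\in(0,1)$, so at least one Pl\"ucker coordinate with $j\in I$ and at least one with $j\notin I$ are nonzero, which is exactly what is needed for projective convergence of $\phi_t(s)$ at both ends of the orbit and for continuity of the limit in $s$. A secondary, more routine technical point is checking that the glued map hits $G_{n,k}$ bijectively, which follows because every $T^n$-orbit in $G_{n,k}\setminus(G_{n-1,k}\cup G_{n-1,k-1})$ meets $S_{n,k}$ transversally along a single $\lambda_j$-orbit.
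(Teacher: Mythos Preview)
Your proof is correct and follows the same underlying strategy as the paper: use the regularity of $\mu_j$ on $(0,1)$ to produce $S_{n,k}$ as a level set and obtain the product structure on the complement of the two critical fibers, then glue via projections onto those fibers. The paper does not spell out a proof of this proposition; it treats it as an immediate consequence of the preceding submersion observation and implicitly takes $\pi_0,\pi_1$ to be the restrictions to $S_{n,k}$ of the earlier-defined bundle projections $\pi_{1,q}\colon E_{1,q}\to G_{n-1,k}$ and $\pi_{2,q}\colon E_{2,q}\to G_{n-1,k-1}$.

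Your route differs only in that you make the trivialization explicit via the real positive one-parameter subgroup $\lambda_j\subset(\C^*)^n$, rather than invoking an abstract Ehresmann-type argument. This buys you a concrete $T^n$-equivariant flow and a direct description of $\pi_0,\pi_1$ as orbit limits in Pl\"ucker coordinates; a quick check shows your $\pi_0$ coincides with the paper's $\pi_{1,j}$ (both kill the Pl\"ucker coordinates $P^I$ with $j\in I$) and likewise $\pi_1$ with $\pi_{2,j}$. One small notational slip: after reparametrizing so that $\mu_j(\phi_\tau(s))=\tau$, the identification should be written $(s,\tau)\mapsto\phi_\tau(s)$ for $\tau\in(0,1)$ rather than $\phi_{t-1/2}(s)$, but this is cosmetic and does not affect the argument.
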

It immediately follows: 
\begin{thm}\label{Snk}
There exist  projections $\hat{\pi} _{0} : S_{n,k}/T^n \to G_{n-1, k}/T^{n-1}$ and $\hat{\pi} _{1} : S_{n,k}/T^n \to G_{n-1, k-1}/T^{n-1}$ such that 
\[
G_{n,k}/T^n = (S_{n,k}/T^n\times [0,1])\cup _{\hat{\pi} _{0}}G_{n-1,k}/T^{n-1}\cup _{\hat{\pi} _{1}}G_{n-1,k-1}/T^{n-1},
\]
where $[(s, 0)]\backsimeq \hat{\pi} _{0}[s]$ and $[(s,1)]\backsimeq \hat{\pi} _{1}[s]$.
\end{thm}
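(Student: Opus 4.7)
The plan is to take the $T^{n}$-equivariant decomposition of $G_{n,k}$ stated in the preceding proposition and descend it to the orbit space, verifying that each piece quotients in the expected way.

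First, I would invoke the proposition immediately preceding the theorem, which asserts the existence of $T^{n}$-equivariant projections $\pi_{0}:S_{n,k}\to G_{n-1,k}$ and $\pi_{1}:S_{n,k}\to G_{n-1,k-1}$ together with the equivariant gluing
\[
G_{n,k}=(S_{n,k}\times[0,1])\cup_{\pi_{0}}G_{n-1,k}\cup_{\pi_{1}}G_{n-1,k-1}.
\]
Since $\pi_{0}$ and $\pi_{1}$ are $T^{n}$-equivariant, they factor through the orbit projections and yield well-defined continuous maps $\hat{\pi}_{0}:S_{n,k}/T^{n}\to G_{n-1,k}/T^{n}$ and $\hat{\pi}_{1}:S_{n,k}/T^{n}\to G_{n-1,k-1}/T^{n}$. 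This part is formal: continuous equivariant maps always descend to continuous maps of orbit spaces.

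Next I would identify $G_{n-1,k}/T^{n}$ with $G_{n-1,k}/T^{n-1}$, and similarly for $G_{n-1,k-1}$. These submanifolds arise from the coordinate embedding $\C^{n-1}\hookrightarrow\C^{n}$ which omits the $j$-th coordinate; consequently the circle $T^{1}(j)\subset T^{n}$ acting on the $j$-th coordinate acts trivially on both $G_{n-1,k}(j)$ and $G_{n-1,k-1}(j)$, while the quotient $T^{n}/T^{1}(j)\cong T^{n-1}$ acts by the canonical action. Hence the $T^{n}$-orbit space coincides with the $T^{n-1}$-orbit space in each case, giving the targets $G_{n-1,k}/T^{n-1}$ and $G_{n-1,k-1}/T^{n-1}$ as in the statement.

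For the cylindrical piece, since $T^{n}$ acts trivially on the interval factor in $S_{n,k}\times[0,1]$, the quotient is canonically $(S_{n,k}/T^{n})\times[0,1]$. It remains to check that the gluing in the proposition passes to the quotient: the identifications $(s,0)\backsimeq\pi_{0}(s)$ and $(s,1)\backsimeq\pi_{1}(s)$ are $T^{n}$-equivariant, so they descend to $[(s,0)]\backsimeq\hat{\pi}_{0}[s]$ and $[(s,1)]\backsimeq\hat{\pi}_{1}[s]$. Combining this with the general fact $(X/G)/(A/G)\cong X/A$ for a quotient presentation, and with the compactness of $[0,1]$ together with properness of the $T^{n}$-action (so that orbit-space formation commutes with the pushout), we obtain the stated decomposition. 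The mildly technical step is this last commutation of orbit space with the adjunction along $\pi_{0}$ and $\pi_{1}$; it follows from the fact that for a compact group action the orbit map is closed and the pushout is over the closed equivariant subspaces $S_{n,k}\times\{0,1\}$, which is a standard point-set verification but the only place where anything beyond formal manipulation is needed.
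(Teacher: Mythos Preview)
Your proposal is correct and follows exactly the approach the paper intends: the paper simply writes ``It immediately follows'' after the equivariant decomposition in the preceding proposition, and your argument spells out precisely the routine verification that equivariant maps and equivariant gluings descend to orbit spaces, together with the identification $G_{n-1,k}/T^{n}\cong G_{n-1,k}/T^{n-1}$ via the trivially acting circle factor. If anything, you have been more careful than the paper itself.
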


For $k=2$ this yields to the following result.
\begin{cor}
The orbit space $G_{n,2}/T^n$ is homotopy equivalent to the space $C(S_{n,2}/T^n)\cup _{\hat{\pi} _{0}} G_{n-1,2}/T^{n-1}$ that is obtained by attaching the conus  $C(S_{n,2}/T^n)$ to the space $G_{n-1,2}/T^{n-1}$ by the attaching map $\hat{\pi} _{0} : S_{n,2}/T^n\cong  \partial C(S_{n,2}/T^n) \to G_{n-1,2}/T^{n-1}$.
\end{cor}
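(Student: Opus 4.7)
The plan is to start from the decomposition given by Theorem~\ref{Snk}, specialize to $k=2$, and exploit the fact that one of the two pieces being glued on is contractible. Explicitly, for $k=2$ we have $G_{n-1,k-1} = G_{n-1,1} = \C P^{n-2}$, and since $\C P^{n-2}$ is a toric manifold whose moment image is the standard simplex, the orbit space satisfies
\[
G_{n-1,1}/T^{n-1} \cong \Delta^{n-2},
\]
which is contractible. Thus by Theorem~\ref{Snk} we have
\[
G_{n,2}/T^n \;=\; \bigl(S_{n,2}/T^n \times [0,1]\bigr) \cup_{\hat{\pi}_0} G_{n-1,2}/T^{n-1} \cup_{\hat{\pi}_1} \Delta^{n-2},
\]
where the gluings identify $(s,0)$ with $\hat{\pi}_0[s]$ and $(s,1)$ with $\hat{\pi}_1[s]$.

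Next I would collapse the contractible subspace $\Delta^{n-2}$ to a point. The attaching of the $t=1$ end of the cylinder $S_{n,2}/T^n \times [0,1]$ to $\Delta^{n-2}$ via $\hat{\pi}_1$ is a closed inclusion of a CW-type subspace, so the pair is a good pair (in particular a cofibration) and collapsing $\Delta^{n-2}$ to a point produces a homotopy equivalence. After this collapse, the piece $S_{n,2}/T^n \times [0,1]$ with its $t=1$ end identified to a single point becomes, by definition, the cone
\[
C(S_{n,2}/T^n) \;=\; \bigl(S_{n,2}/T^n \times [0,1]\bigr)\big/\bigl(S_{n,2}/T^n \times \{1\}\bigr),
\]
whose boundary $\partial C(S_{n,2}/T^n) = S_{n,2}/T^n \times \{0\}$ is still glued to $G_{n-1,2}/T^{n-1}$ via $\hat{\pi}_0$. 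This yields exactly the space $C(S_{n,2}/T^n) \cup_{\hat{\pi}_0} G_{n-1,2}/T^{n-1}$ claimed in the statement, and the homotopy equivalence with $G_{n,2}/T^n$ follows.

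The step that requires the most care is verifying that the inclusion $\Delta^{n-2} \hookrightarrow G_{n,2}/T^n$ is a cofibration, so that the quotient map collapsing $\Delta^{n-2}$ is a genuine homotopy equivalence rather than merely a weak one. This should follow from the construction: $\hat{\pi}_1$ is the orbit-space projection of the smooth $T^n$-equivariant map $\pi_1 \colon S_{n,2} \to G_{n-1,1}$ of Theorem~\ref{Snk}, so the attaching produces a CW-like structure on $G_{n,2}/T^n$ in which $\Delta^{n-2}$ is a subcomplex, and the result is standard. Everything else in the argument is a formal manipulation of mapping cones and cylinders.
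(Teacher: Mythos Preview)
Your proof is correct and follows essentially the same approach as the paper: the paper states the Corollary immediately after Theorem~\ref{Snk} with only the remark ``For $k=2$ this yields to the following result,'' leaving implicit exactly the reasoning you spell out --- that $G_{n-1,1}/T^{n-1}\cong\Delta^{n-2}$ is contractible, so collapsing it turns the cylinder with one end attached to $\Delta^{n-2}$ into a cone. Your added care about the cofibration condition is a welcome clarification of a point the paper does not address.
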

In this way we obtain an  inductive description of the orbit space $G_{n,2}/T^n$ in terms of the orbit spaces $S_{l, 2}/T^l$, where $l\leq n$.
\begin{ex}\label{S52}
 For $n=4$ and $k=2$ we obtain that $\Sigma (S_{4,2}/T^4)$ is homotopy equivalent  to  $G_{4,2}/T^4\cong S^5$. For $n=5$ and $k=2$ we obtain that $G_{5,2}/T^5$ is homotopy equivalent  to $C(S_{5,2}/T^5)\cup _{\hat{\pi} _{0}} S^5$. 
\end{ex}
\begin{rem}
It  arises a problem to describe  the orbit space $S_{n,k}/T^n$. This will be a topic of our forthcoming paper.
\end{rem}

\begin{rem}
Note that the manifolds $G_{n_1, k_1, n_2, k_2}$ are the closures of the strata in $G_{n,k}$ which are mapped, by the moment map, to the polytopes  $\Delta _{n_1,k_1}\times \Delta _{n_2, k_2}\subset \Delta _{n,k}$. The polytopes $\Delta _{n_1,k_1}\times \Delta _{n_2, k_2}$ can be perceived  as  the intersection of $\Delta _{n,k}$ with the planes $x_1+\cdots +x_{n_1}=k_1$ in $\R ^n$ according  to the decomposition $\R ^{n} = \R ^{n_1}\times \R^{n_2}$, where $1\leq k_1\leq n_1$, $1\leq n_1\leq n$.
\end{rem}
\section{The canonical action of $(\C ^{*})^{5}$ on $G_{5,2}$}
 We consider the canonical action of $T^5$ on the complex Grassmann manifold $G_{5,2}$. Note that the diagonal $\Delta$ in $T^5$ acts trivially  on $G_{5,2}$, so  the torus $T^4/\Delta$ acts  effectively on $G_{5,2}$. In the sequel we 	 equivalently consider both of these actions. The image of  $G_{5,2}$ by  the moment map~\eqref{momentmap} is the convex hull of the points $\Lambda_{ij} = (\delta _{ij}^{l})\in \R ^5$, $1\leq l\leq 5$ and $1\leq i<j\leq 5$ , such that $\delta_{ij}^{i}=\delta_{ij}^{j}=1$ and $\delta_{ij}^{l}=0$ for $l\neq i,j$. This polytope  is known as the hypersimplex $\Delta _{5,2}$. The hypersimplex $\Delta_{5,2}$ is a four-dimensional polytope which belongs to the hyperplane $x_1+\ldots +x_5=2$.  The atlas for $G_{5,2}$, defined by the 
Pl\"ucker coordinates,  consists of ten charts  $M_{I} = \{ L\in G_{5,2} | P^{I}(L)\neq 0 \}$, where $I= \{ i_1, i_2 \}\subset \{ 1,\ldots ,5\}$.   We     describe  the strata on $G_{5,2}$ following the pattern given in~\cite{MMJ}.

\subsection{ $(\C ^{*})^{5}$ -orbits and their admissible polytopes}
 We first explicitly  describe  the orbits of the  $(\C ^{*})^{5}$-action  on $\C ^{6}$  in a fixed chart.
\begin{prop}\label{Orbits}
The orbit of a point  ${\bf a} = (a_1,\ldots ,a_6)\in \C ^{6}$ for the induced  $(\C ^{*})^5$-action on $\C ^6$ is one of the following: 
\begin{enumerate}
\item it is   the $8$-dimensional manifold given by the equations
\[
c_1^{'}z_1z_5 = c_1z_2z_4,\;\; c_2^{'}z_1z_6=c_{2}z_3z_4,\;\; c_3^{'}z_2z_6=c_{3}z_3z_5,\;\; \text{for}\;\; c_i,c_{i}^{'}\neq 0\;\; \text{and}\;\; c_{1}c_2^{'}c_3=c_{1}^{'}c_{2}c_{3}^{'},
\]
if $a_i\neq 0$, $1\leq i\leq 6$; 
\item it is  the $8$-dimensional manifold given by  one of the following  six equations:
\begin{itemize}
 \item [(a)]  $c_{1}^{'}z_1z_5 = c_1z_2z_4,\; z_3=0\;\; \text{or}\;\; z_6=0\;text{for}\; c_1, c_1^{'}\neq 0$,\;\;  $\text{if}\;\; a_3=0\;\; \text{or}\;\; a_6=0$,
\item[(b)] 
$c_{2}^{'}z_1z_6 = c_2z_3z_4,\; z_2=0\;\; \text{or}\;\; z_5=0\;\text{for}\; c_2, c_2^{'}\neq 0$,\;\; $\text{if}\;\; a_2=0\;\; \text{or}\;\; a_5=0$,
\item[(c)]
 $c_{3}^{'}z_2z_6 = c_{3}z_3z_5,\; z_1=0\;\; \text{or}\;\; z_4=0\;\text{for}\; c_3, c_3^{'}\neq 0$, \;\; $\text{if}\;\; a_1=0\;\; \text{or}\;\; a_4=0$;
\end{itemize}
\item it is the $6$-dimensional manifold given by  one of the following three  equations:
\begin{itemize}
 \item[(a)]
$c_{1}^{'}z_1z_5 = c_{1}z_2z_4,\;  z_3=z_6=0\;\; \text{for}\;\; c, c^{'}\neq 0,$\;\; $\text{if}\;\; a_3=a_6=0$,
\item[(b)]
$c_{2}^{'}z_1z_6 = c_{2}z_3z_4,\;  z_2=z_5=0\;\; \text{for}\;\; c, c^{'}\neq 0,$ \;\; $ \text{if}\;\; a_2=a_5=0$,
\item[(c)]
 $c_{3}z_2z_6 = c_{3}^{'}z_3z_5 ,\;  z_1=z_4=0\;\; \text{for}\;\; c, c^{'}\neq 0$, \;\; $\text{if}\;\; a_1=a_4=0$;
\end{itemize}
\item 
 it is $\C _{I}^{*}\subset (\C ^{*})^6$, where $I\subset \{1,\ldots ,6\}$, $|I| =4$ and $I\neq \{2,3,5,6\}, \{1,3,4,6 \}, \{1,2,4,5 \}$,
if $a_i=a_j=0,\;\; \{i,j\}\neq \{1,4\}, \{2,5\}, \{3,6\}$;
\item
 it is $\C _{I}^{*}\subset (\C ^{*})^6$, where $I\subset \{1,\ldots ,6\}$, $|I|\leq 3$, if three of more coordinates of ${\bf a}$ are zero.
\end{enumerate}
\end{prop}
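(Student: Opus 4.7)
My plan is to apply the general analysis of $(\C^*)^k$-orbits on $\C^n$ developed earlier in this section (in particular Lemma~\ref{free} and the construction of invariants via~\eqref{c-orbit}) to the concrete chart $M_{\{1,2\}}$ of $G_{5,2}$, where the six local coordinates $(z_1,\ldots,z_6)$ are the entries of the lower $3\times 2$ block of $A_L$. The first step is to specialize~\eqref{repr} to $n=5$, $k=2$ and write out the six weight vectors of the induced $T^5$-representation on $\C^6$. I order the coordinates so that the three row pairs $(z_1,z_4),(z_2,z_5),(z_3,z_6)$ correspond to the three rows outside the identity block, which is the feature used in the statement of the proposition. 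A direct computation then shows that the diagonal of $T^5$ acts trivially, the image of $T^5$ in $T^6$ has rank $4$, and the three monomial invariants $I_1=z_1z_5/(z_2z_4)$, $I_2=z_1z_6/(z_3z_4)$, $I_3=z_2z_6/(z_3z_5)$ are subject to the single multiplicative relation $I_2=I_1I_3$ (coming from a $\Z$-linear relation among the corresponding characters).

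I then run a case analysis on which coordinates of $\mathbf{a}$ vanish, exploiting the fact that each coordinate stratum $\C^J\subset\C^6$ is $(\C^*)^5$-invariant. For case~(1), with all $a_i\neq 0$, the values $c_i/c_i'$ are prescribed by $I_1(\mathbf{a}),I_2(\mathbf{a}),I_3(\mathbf{a})$, so the three binomial equations cut out the fibre $F_J^{-1}(c)$ in the sense of~\eqref{c-orbit}; the consistency $c_1c_2'c_3=c_1'c_2c_3'$ is automatic from $I_2=I_1I_3$. To show that this fibre is actually a single orbit and not just an invariant set, I would solve explicitly for $(\tau_1,\tau_2,\tau_3,\tau_4)$ sending $\mathbf{a}$ to a prescribed target $(w_1,\ldots,w_6)$, by reading off $\tau_i$ from the four coordinates that carry independent weights; the invariance of $I_1$ and $I_3$ then forces the remaining two coordinates to match, giving a transitive action.

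The degenerate cases follow the same template with fewer surviving invariants. Cases (2a,b,c) correspond to exactly one of the row-pair coordinates $z_3,z_6$ (resp.\ $z_2,z_5$; $z_1,z_4$) being zero, and precisely one of the three binomials survives as a non-trivial invariant while the other two become ill-defined; Lemma~\ref{free} gives the complex orbit dimension $4$. Cases (3a,b,c) correspond to an entire row vanishing (so $L$ lies in a coordinate copy of $G_{4,2}$ inside $G_{5,2}$) and exactly one invariant survives, giving $3$-complex-dimensional orbits. For cases~(4) and~(5), the index set $J=\{\,i:a_i\neq 0\,\}$ has the property that the restricted weight matrix $V^J$ has full rank $|J|\leq 4$, so Lemma~\ref{free} yields that $(\C^*)^5$ acts transitively on $\C^*_J$; the three sets $\{2,3,5,6\},\{1,3,4,6\},\{1,2,4,5\}$ excluded from case~(4) are precisely the row-pair complements covered by case~(3), for which a non-trivial invariant persists.

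The principal obstacle is the rank and lattice computation underlying case~(1): one must verify that the three differences of weight vectors attached to the binomials span the full rank-$2$ orthogonal lattice $\hat{L}\subset\Z^6$, rather than a proper finite-index sublattice, so that~\eqref{c-orbit} with $l-q=2$ produces precisely these binomials up to rescaling. This reduces to checking that the relevant $2\times 2$ minors of the character matrix are unimodular, which is a short direct computation. Once case~(1) is in place, the analogous rank computations in cases~(2)--(5) are combinatorially organized but routine.
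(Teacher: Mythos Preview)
Your proposal is correct and follows essentially the same route as the paper: a case-by-case analysis on the vanishing pattern of the coordinates of $\mathbf{a}$, organized by the three ``row pairs'' $\{1,4\},\{2,5\},\{3,6\}$, with the three monomial invariants $z_1z_5/z_2z_4$, $z_1z_6/z_3z_4$, $z_2z_6/z_3z_5$ (subject to one multiplicative relation) as the key data. The only difference is in presentation: the paper bypasses the abstract framework of Lemma~\ref{free} and~\eqref{c-orbit} and instead writes the orbit of $\mathbf{a}$ explicitly as $(t_1a_1,t_2a_2,t_3a_3,t_4a_4,\tfrac{t_2t_4}{t_1}a_5,\tfrac{t_3t_4}{t_1}a_6)$, reading off the defining equations and transitivity directly from this parametrization (for instance, in case~(4) with $a_1=a_5=0$ it simply observes that $\tfrac{t_3t_4}{t_1}$ can be treated as an independent parameter, so the orbit is all of $\C^*_{2346}$). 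Your lattice-primitivity check is a valid concern in principle but is immediate here since the weight matrix has only $0,\pm 1$ entries with obvious unimodular minors; the paper's explicit parametrization sidesteps it entirely.
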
   
\begin{proof}
Without loss of generality we consider the chart $M_{12}$. It follows from~\eqref{repr} that the    $(\C ^{*})^5$ - orbit of a  point $L\in M_{12}$  is given by 
$(t_1a_1,t_2a_2,t_3a_3,t_4a_4,\frac{t_2t_4}{t_1}a_5,\frac{t_3t_4}{t_1}a_6)$. Thus,  if $a_i\neq 0$ for all $1\leq i\leq 6$ then an element $(z_1,\ldots ,z_6)$ belongs to this orbit if and only if it satisfies the system of equations
\[
c_1^{'}z_1z_5 = c_1z_2z_4,\;\; c_2^{'}z_1z_6 = c_{2}z_3z_4,
\]
where $c_1=a_1a_5$, $c_{1}^{'} = a_2a_4$, $c_2=a_1a_6$, $c_{2}^{'} = a_3a_4$, $c_{3} = a_2a_6$, $c_{3}^{'} = a_3a_5$.
Since  $c_i,c_{i}^{'}\neq 0$ we obtain  in this way  the orbits of the type $(1)$.

If exactly one of the $a_i$'s is  equal to zero we obtain the orbits of the type $(2)$.  For example if  $a_1$ or $a_4$  is equal to zero we obtain  the orbits  $(c)$  in $(2)$.  The orbits of the type $(3)$ are obtained by elements $(a_1,\ldots ,a_6)$ such that   two of  the $a_i$'s  which belong to the same row are equal to zero.  For all other choices of  an element $(a_1,\ldots ,a_6)\in \C ^6$, its  orbit will be a coordinate  subspace as stated in the Proposition.  

If two of the  $a_i$'s  which do not belong to the same row are equal to zero, say $a_1=a_5=0$, the orbit of a such element will be $(0,t_2a_2,t_3a_3,t_4a_4,0, \frac{t_3t_4}{t_1}a_6)$. This orbits writes as  $(0,t_2a_2,t_3a_3,t_4a_4,0,t_1a_6)$. It implies that the orbit of this element is coordinate subspace $\C _{2346}^{*}$.  Similarly,  the complex four-dimensional coordinate subspaces $\C ^{*}_{ijkl}$, $\{i,j,k,l\}\neq \{1,3,4,6\},  \{2,3,5,6\}, \{1,2,4,5\}$ are $8$-dimensional orbits of the algebraic torus $(\C ^{*})^{5}$.

If three or more $a_i$'s are equal to zero then it can be  directly checked that the orbit of a such  element will be 
$\C _{I}^{*}$, where $I =\{i | a_i\neq 0\}$. In this way, we obtain all possible $\C _{I}^{*}$, where $I\subset \{ 1,\ldots ,6\}$ and $| I | \leq 3$. 
\end{proof}

\subsection{Admissible polytopes in the chart $M_{12}$.}
The image  by the moment map of an arbitrary $(\C ^{*})^{5}$-orbit is the  interior of a convex polytope spanned by some subset of vertices for  $\Delta _{5,2}$. These  polytopes, as we already said,  are called the   admissible polytopes.  We  describe here  admissible polytopes for  $(\C ^{*})^{5}$ -orbits,  which belong  to the chart $M_{12}$,  following their description  given in Proposition~\ref{Orbits}.
\begin{prop}\label{Polytopes}
Admissible polytopes for  $(\C ^{*})^{5}$ - orbits in $G_{5,2}$ in the chart   $M_{12}$  are, according  to Proposition~\ref{Orbits}, as  follows:
\begin{enumerate}
\item  For the orbits (1)  --  one of the following  four-dimensional polytopes:
\begin{itemize}  
\item the hypersimplex $\Delta _{5,2}$ if  $c_i,c_i^{'}\neq 0$, $c_i\neq c_i^{'}$; 
\item   spanned by   $9$ vertices different from   $\Lambda_{45}$ if $c_1\neq c_{1}^{'}$,  $c_3=c_3^{'}$; 
\item  spanned  by $9$ vertices different from  $\Lambda_{35}$ if $c_1\neq c_1^{'}$, $c_2=c_{2}^{'}$;
\item  spanned  by  $9$ vertices different  $\Lambda_{34}$ if $c_1=c_{1}^{'}$, $c_2\neq c_{2}^{'}$;  
\item spanned by  $7$ vertices different from   $\Lambda_{34}$, $\Lambda_{35}$, $\Lambda_{45}$ if $c_1=c_1^{'}$; $c_2=c_2^{'}$, $c_3=c_{3}^{'}$ . 
\end{itemize}
\item For the orbits  (2)  -- one of the following four-dimensional polytopes:
\begin{enumerate}
\item spanned by  $9$ vertices different from:   
\begin{itemize}
\item $\Lambda_{25}$ or $\Lambda_{15}$ for the orbits (a), 
\item $\Lambda_{24}$ or $\Lambda_{14}$ for the orbits (b),
\item $\Lambda_{23}$ or $\Lambda_{13}$ for the  orbits (c)
\end{itemize}
if  $c_i\neq c_{i}^{'}$;
\item  spanned  by $8$ vertices different from:
\begin{itemize}
\item  $\Lambda_{34}, \Lambda_{25}$ or $\Lambda_{34}, \Lambda_{15}$ for the orbits (a), 
\item $\Lambda_{35}, \Lambda_{24}$ or $\Lambda_{35}, \Lambda_{14}$ for the  orbits (b),
\item $\Lambda_{45}, \Lambda_{23}$ or $\Lambda_{45}, \Lambda_{13}$ for the  orbits (c)
\end{itemize}
if  $c_{i}=c_{i}^{'}$.
\end{enumerate}
\item For the orbits  (3)  -- one of the following   three-dimensional polytopes:
\begin{enumerate}
\item  an   octahedron spanned by   the vertices:
\[
\Lambda_{12}, \Lambda_{13},  \Lambda_{14}, \Lambda_{23}, \Lambda_{24}, \Lambda_{34};
\]
\[
\Lambda_{12}, \Lambda_{13}, \Lambda_{15}, \Lambda_{23}, \Lambda_{25}, \Lambda_{35};
\]
\[
\Lambda_{12}, \Lambda_{14}, \Lambda_{15}, \Lambda_{24}, \Lambda_{25}, \Lambda_{45}
\]
if  $c_i\neq c_{i}^{'}$;
\item spanned by the vertices:
\begin{itemize}
\item $\Lambda_{12}, \Lambda_{13}, \Lambda_{14}, \Lambda_{23}, \Lambda_{24}$ for the orbits (a),
\item $\Lambda_{12}, \Lambda_{13}, \Lambda_{15}, \Lambda_{23}, \Lambda_{25}$ for the orbits (b),
\item $\Lambda_{12}, \Lambda_{14}, \Lambda_{15}, \Lambda_{24}, \Lambda_{25}$ for the orbits (c)
\end{itemize}
if  $c_i=c_{i}^{'}$.
\end{enumerate}
\item For the orbits  $(4)$  -- one of the following   four-dimensional polytopes: 
\begin{itemize}
\item 
 spanned by  $8$ vertices, which do not contain the   pairs  $\Lambda_{1i}, \Lambda_{2j}$, where $i\neq j$ and $i,j\geq 3$;
\item 
spanned by  $7$ vertices,  which do not contain   the triples  $\Lambda_{1i}, \Lambda_{1j}$, $\Lambda_{ij}$ or the pairs $\Lambda_{2i}, \Lambda_{2j}$ where  $i\neq j$ and $i,j\geq 3$; 
\end{itemize}
\item For the orbits $(5)$  --  one of the following one, two or three-dimensional  polytopes:  
\begin{itemize}
\item  spanned by $\Lambda_{12}$, $\Lambda_{1i}$ or $\Lambda_{12}$, $\Lambda_{2i}$, where $3\leq i\leq 5$ for the   one-dimensional polytopes;
\item spanned by $\Lambda_{12}$,  $\Lambda_{1i}$, $\Lambda_{1j}$ or $\Lambda_{12}$, $\Lambda_{2i}$, $\Lambda_{2j}$ or $\Lambda_{12}$,  $\Lambda_{1i}$, $\Lambda_{2i}$
or $\Lambda_{12}$, $\Lambda_{1i}, \Lambda_{2j}, \Lambda_{ij}$,  where $3\leq i,j\leq 5$ and $i\neq j$ for the   two-dimensional polytopes;
\item spanned by $\Lambda_{12}$, $\Lambda_{13}$, $\Lambda_{14}$, $\Lambda_{15}$ or $\Lambda_{12}$,  $\Lambda_{23}$, $\Lambda_{24}$ $\Lambda_{25}$ or $\Lambda_{12}$,  $\Lambda_{1i}$, $\Lambda_{1j}$, $\Lambda_{2i}$, $\Lambda_{ij}$ or $\Lambda_{12}$,  $\Lambda_{2i}$, $\Lambda_{2j}$, $\Lambda_{1i}$, $\Lambda_{ij}$ or  $\Lambda_{12}$,   $\Lambda_{1i}$, $\Lambda_{1j}$, $\Lambda_{2k}, \Lambda_{ik}, \Lambda _{jk}$ or   $\Lambda _{12}$, $\Lambda_{2i}$, $\Lambda_{2j}$, $\Lambda _{1k}$, $\Lambda _{ik}, \Lambda _{jk}$ where $i,j,k\geq 3$, $i\neq j, k$ and  $k\neq j$,
for the  three-dimensional polytopes. 
\end{itemize}
\end{enumerate}   
\end{prop}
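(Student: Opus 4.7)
The plan is to combine Proposition~\ref{Orbits}, which classifies the $(\C^*)^5$-orbits in the chart $M_{12}$, with Theorem~\ref{moment-map-polytope}, which identifies the admissible polytope $\mu(\overline{\mathcal{O}_{\C}(L)})$ with the convex hull of $\{\Lambda_{I} : P^{I}(L)\neq 0\}$. Thus the entire proof reduces to determining, for each orbit type in Proposition~\ref{Orbits}, exactly which Pl\"ucker coordinates are non-zero on (the closure of) that orbit.

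First I would fix the dictionary between local coordinates and Pl\"ucker coordinates in the chart $M_{12}$. Representing $L\in M_{12}$ by the $5\times 2$ matrix whose first two rows form the identity, the six remaining entries give the local coordinates $z_{1},\ldots,z_{6}$, and one has $P^{12}=1$ together with $P^{1j},P^{2j}$ ($j=3,4,5$) equal to $\pm z_{i}$ for an appropriate index $i$, while $P^{34},P^{35},P^{45}$ are the three bilinear expressions of the form $z_{a}z_{b}-z_{c}z_{d}$. The three equations $c_{i}^{'}z_{a}z_{b}=c_{i}z_{c}z_{d}$ in Proposition~\ref{Orbits} are then precisely the orbit-defining relations $P^{34}/P^{12}$, $P^{35}/P^{12}$, $P^{45}/P^{12}$ evaluated on $L$, so that the condition $c_{i}=c_{i}^{'}$ is equivalent to the vanishing of the corresponding complex Pl\"ucker coordinate.

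The next step is a case-by-case reading of Proposition~\ref{Orbits}. For the generic type $(1)$ all six simple Pl\"ucker coordinates are non-zero, so $\Lambda_{12}$ and the six $\Lambda_{1j},\Lambda_{2j}$ are always vertices, and the only freedom is which of $\Lambda_{34},\Lambda_{35},\Lambda_{45}$ appear; here the multiplicative constraint $c_{1}c_{2}^{'}c_{3}=c_{1}^{'}c_{2}c_{3}^{'}$ forbids exactly two of the three complex coordinates from vanishing simultaneously, which produces exactly the five subcases stated (all three present, exactly one absent in three ways, all three absent). For type $(2)$ the hypothesis that one of $a_{1},\ldots,a_{6}$ vanishes removes a single vertex $\Lambda_{1j}$ or $\Lambda_{2j}$; substituting the corresponding $z_{i}=0$ into the three bilinear expressions kills two of the complex Pl\"ucker coordinates automatically, leaving only one whose vanishing is governed by the remaining $c_{i}=c_{i}^{'}$ equation, and this yields precisely the two $9$- and $8$-vertex subcases. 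For types $(3)$, $(4)$, $(5)$ the same substitution procedure is repeated: with more $z_{i}$'s equal to zero, more simple Pl\"ucker coordinates vanish and more of the bilinear expressions degenerate, and one reads off the allowed vertex sets directly from the pattern of zeros imposed by the orbit description.

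The main obstacle is purely combinatorial bookkeeping: there are many subcases and the complex Pl\"ucker coordinates $P^{34},P^{35},P^{45}$ react in a non-obvious way to each pattern of vanishing $z_{i}$'s. To manage this I would organize the argument around the three Pl\"ucker quadrics as master relations, and for each orbit type first list the forced zero simple Pl\"ucker coordinates, then substitute into the three quadrics to detect which complex Pl\"ucker coordinates are automatically zero versus controlled by a $c_{i}/c_{i}^{'}$ ratio. Once this table is assembled, Theorem~\ref{moment-map-polytope} converts each list of non-vanishing $P^{I}$ directly into the vertex set of the admissible polytope, matching the statement verbatim; the dimension claims in each item follow because $\Lambda_{12}$ together with the listed neighbouring vertices already span the ambient affine subspace of the specified dimension inside $\Delta_{5,2}$.
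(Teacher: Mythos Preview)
Your proposal is correct and follows essentially the same approach as the paper: for each orbit type in Proposition~\ref{Orbits}, determine which Pl\"ucker minors $P^{I}$ vanish on a representative point, then invoke the convexity theorem to identify the admissible polytope with the convex hull of the corresponding $\Lambda_{I}$. The paper's proof is terser---it works out the type $(1)$ case explicitly and then declares the remaining cases similar---but your more systematic organization around the dictionary $z_i \leftrightarrow P^{1j},P^{2j}$ and the three bilinear minors $P^{34},P^{35},P^{45}$ is exactly the bookkeeping the paper is suppressing.
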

\begin{proof}
The proof is similar to  the proof of  Proposition~\ref{Orbits}. For the orbits of type (1) such that  $c_1\neq c_2$ and $c_1,c_2\neq 1$, we see that all minors in the corresponding  matrix $A_{L}$ of  an element $L$  of a such  orbit, are non-zero. Therefore,  the image of this orbit by the moment map will be the  convex hull over  all vertices $\Lambda_{ij}$, $1\leq i<j\leq 6$,  that  is the hypersimplex $\Delta _{5,2}$. If $c_1=c_2\neq 1$, then  $P^{45}(A_{L})=0$, while all  other minors are non-zero, which  implies that the image of the orbit of a such element $L$   will be a four-dimensional polytope,  described as the  convex hull of the vertices $\Lambda_{ij}$ apart from  $\Lambda_{45}$. The proof for the other  cases is similar. We take into account the description of  orbits given in the proof of Proposition~\ref{Orbits}  and from the set of all vertices remove  those corresponding  to the minors that are equal to zero.
\end{proof} 

More geometric description of the polytopes from  Proposition~\ref{Polytopes}  would be as follows.

\begin{cor}\label{polytopes_chart}
The admissible polytopes for  $(\C ^{*})^{5}$ - orbits in  the chart $M_{12}$ are:
\begin{enumerate}
\item four-dimensional:
\begin{itemize}
\item the  hypersimplex $\Delta _{5,2}$;
\item any of $9$ polytopes  with $9$ vertices that contains the vertex $\Lambda_{12}$;
\item any of $12$ polytopes with   $8$ vertices that   contains  the vertex $\Lambda _{12}$ and does not contain    
any  of the pairs $\Lambda_{1i}, \Lambda_{2j}$ or $\Lambda_{1i}, \Lambda_{jk}$ or $\Lambda_{2i}, \Lambda_{jk}$ where $i,j,k\geq 3$ and $i\neq j,k$, $j\neq k$;  
\item any of $7$ pyramids with $7$ vertices whose  top vertex is  one of the following:  $\Lambda_{12}$, $\Lambda_{13}$, $\Lambda _{14}$, $\Lambda_{15}$, $\Lambda_{23}$, $\Lambda_{24}$ or $\Lambda_{25}$;
\end{itemize}
\item three-dimensional:
\begin{itemize}
\item any of $3$ octahedra that contains  the pair of  vertices  $\Lambda_{12}, \Lambda_{34}$ or $\Lambda_{12}, \Lambda_{35}$ or $\Lambda_{12}, \Lambda_{45}$;
\item any of $3$ pyramids with $5$ vertices whose apex is $\Lambda_{12}$;
\item any of $6$ prisms with vertices $\Lambda_{12}, \Lambda_{1i}, \Lambda_{1j}, \Lambda_{2k}, \Lambda_{ik}, \Lambda_{jk}$ or $\Lambda_{12}, \Lambda_{2i}, \Lambda_{2j}, \Lambda_{1k}, \Lambda_{ik}, \Lambda_{jk}$, where $i,j,k\geq 3$, $i,j\neq k$, $i\neq j$;
\item any of $12$ pyramids with $5$ vertices that contains the vertex $\Lambda _{12}$ and whose apex is one of the following:  $\Lambda_{13}, \Lambda_{14}, \Lambda_{15}, \Lambda_{23}, \Lambda_{24}$ or $\Lambda_{25}$.
\item any of $2$ tetrahedra that contain the vertex $\Lambda_{12}$;
\end{itemize}
\item two-dimensional:
\begin{itemize}
\item any of $9$ triangles that contain the vertex $\Lambda_{12}$;
\item any of $6$ rectangles that  contain the vertex $\Lambda_{12}$;
\end{itemize}
\item one-dimensional: any of $6$ edges that  contains the vertex $\Lambda_{12}$;
\item zero-dimensional: the vertex $\Lambda_{12}$.       
\end{enumerate}
\end{cor}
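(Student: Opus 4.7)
The plan is to derive Corollary~\ref{polytopes_chart} directly from Proposition~\ref{Polytopes} by re-sorting the admissible polytopes first by their dimension and then by their combinatorial/geometric type, using throughout that every polytope under consideration contains the vertex $\Lambda_{12}$ (because the orbit lies in $M_{12}$, so the Pl\"ucker coordinate $P^{12}$ is nonzero).

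First, I would run through the five families of orbits listed in Proposition~\ref{Orbits} and match them to dimensions. The generic orbit of type~(1) is $8$-dimensional and maps onto the interior of a $4$-dimensional polytope (the hypersimplex $\Delta_{5,2}$, and, according to which of the relations $c_i=c_i'$ are in force, its four specializations with $9$ or $7$ vertices). The orbits of type~(2) are also $8$-dimensional but satisfy an extra vanishing condition on one Pl\"ucker coordinate, yielding $4$-dimensional polytopes missing one or two vertices; these must be reassembled into the $9$-vertex and $8$-vertex families and the $7$-vertex pyramids. Orbits of type~(3) are $6$-dimensional and produce the $3$-dimensional polytopes (octahedra and one family of pyramids). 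Orbits of type~(4) give $4$-dimensional coordinate polytopes with $8$ or $7$ vertices (these overlap the $8$-vertex families from~(2) and give the remaining $7$-vertex pyramids). Finally orbits of type~(5) give all low-dimensional strata: the tetrahedra, prisms, further pyramids, rectangles, triangles, edges, and the vertex $\Lambda_{12}$ itself.

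Once the correspondence orbit~$\leftrightarrow$~polytope is laid out, I would carry out the count dimension by dimension. In each case I would identify the geometric shape by inspection of the listed vertices: for example $\{\Lambda_{12},\Lambda_{13},\Lambda_{14},\Lambda_{23},\Lambda_{24},\Lambda_{34}\}$ is the octahedron $\Delta_{4,2}$ sitting in the face $x_5=0$ of $\Delta_{5,2}$, the $6$-tuples of the form $\{\Lambda_{12},\Lambda_{1i},\Lambda_{1j},\Lambda_{2k},\Lambda_{ik},\Lambda_{jk}\}$ give triangular prisms (a triangular $\Lambda_{1*}$-face parallel to a triangular $*k$-face), and the $5$-tuples with one distinguished apex give pyramids over a square base. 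The multiplicities $9,12,7,3,3,6,12,2,9,6,6$ follow by an elementary combinatorial count: e.g.\ the twelve $3$-dimensional square-pyramids come from choosing an apex in $\{\Lambda_{13},\ldots,\Lambda_{25}\}$ ($6$ choices) and a compatible square-base ($2$ choices), while the six edges at $\Lambda_{12}$ correspond to the six other vertices joined to $\Lambda_{12}$ in the $1$-skeleton of $\Delta_{5,2}$.

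The main obstacle I expect is the bookkeeping: several geometric families (notably the $8$-vertex $4$-polytopes and the $5$-vertex pyramids) arise from more than one orbit type, so I have to be careful not to double-count, and I have to verify that the forbidden patterns of vertices given in Proposition~\ref{Polytopes}~(4)--(5) (e.g.\ the excluded pairs $\Lambda_{1i},\Lambda_{2j}$) exactly account for the allowed geometric configurations listed in the corollary. The cleanest way to verify the counts is to check, for each shape, that the stabilizer inside $S_3$ (the symmetric group permuting the indices $\{3,4,5\}$, which preserves the chart $M_{12}$) has the right order and that the $S_3$-orbits exhaust the claimed families; this symmetry argument gives the numbers $3,3,6,12,\ldots$ essentially for free, leaving only the unique hypersimplex and the unique vertex $\Lambda_{12}$ as fixed points.
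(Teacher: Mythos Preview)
Your approach is essentially the same as the paper's: the corollary is presented there without proof, as a direct geometric re-reading of Proposition~\ref{Polytopes}, and your plan to resort the polytopes of that proposition by dimension and combinatorial type, with careful bookkeeping to merge the overlapping families, is exactly what is needed. One small refinement: the symmetry group preserving the chart $M_{12}$ is $S_2\times S_3$ (permuting $\{1,2\}$ as well as $\{3,4,5\}$), not just $S_3$; using the full $S_2\times S_3$ makes the counts for the $12$ eight-vertex polytopes and the $12$ five-vertex pyramids with apex $\Lambda_{1i}$ or $\Lambda_{2i}$ come out more cleanly.
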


\subsection{The admissible polytopes and the strata for $G_{5,2}$.}
It follows from Lemma~\ref{Sn-action-charts} that  any chart contains the same number of admissible polytopes and, in addition, it contains the same number of admissible polytopes of the same type, as  given by Proposition~\ref{polytopes_chart}.  Moreover,  the number of vertices of an admissible polytope $P_{\sigma}$ coincides with the number of charts which  contain $W_{\sigma}$. In addition, the number of all charts is equal to the Euler characteristic $\chi (G_{5,2})$. Using this we determine the admissible polytopes for $G_{5,2}$ and their numbers.

\begin{prop}
The admissible polytopes for  $G_{5,2}/T^5$  are:
\begin{enumerate}
\item four-dimensional:
\begin{itemize}
\item hypersimplex $\Delta _{5,2}$;
\item $10$ polytopes spanned by the $9$ vertices;
\item $15$ polytopes spanned by $8$ vertices;
\item $10$ pyramids spanned by $7$ vertices;  
\end{itemize}
\item three-dimensional:
\begin{itemize}
\item $5$ octahedra;
\item $30$ pyramids with $5$ vertices;
\item $10$ prisms with $6$ vertices;
\item $5$ tetrahedra;
\end{itemize}
\item two-dimensional:
\begin{itemize}
\item $30$ triangles;
\item $15$ squares;
\end{itemize}
\item $30$ edges;
\item $10$ vertices.
\end{enumerate} 
\end{prop}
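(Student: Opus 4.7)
The plan is to combine the chart-local enumeration of Corollary~\ref{polytopes_chart} with the $S_5$-symmetry of Lemma~\ref{Sn-action-charts} and a double-counting argument over the set of charts. By Lemma~\ref{Sn-action-charts} the $S_5$-action permutes the charts $M_I$ transitively via $s\cdot M_I=M_{s(I)}$ and sends admissible polytopes to admissible polytopes, preserving their number of vertices. Consequently the chart-local enumeration given in Corollary~\ref{polytopes_chart} for $M_{12}$ is valid, up to relabeling of indices, in every one of the $\binom{5}{2}=10$ charts of $G_{5,2}$.

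The second ingredient is the defining property of charts: a stratum $W_\sigma$ meets the chart $M_I$ if and only if $I\in\sigma$, equivalently if and only if $\Lambda_I$ is a vertex of the admissible polytope $P_\sigma$. Hence an admissible polytope with $v$ vertices appears in the chart-local list of exactly $v$ distinct charts. Let $N_v$ denote the number of admissible polytopes with $v$ vertices of a prescribed combinatorial type appearing in $M_{12}$ (read off from Corollary~\ref{polytopes_chart}). Then the total number of admissible polytopes of this type in $G_{5,2}$ is
\[
\frac{10\,N_v}{v}.
\]

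Plugging in the entries of Corollary~\ref{polytopes_chart} delivers the proposition in one pass. The hypersimplex contributes $10\cdot 1/10=1$; the nine-vertex four-polytopes give $10\cdot 9/9=10$; the eight-vertex four-polytopes give $10\cdot 12/8=15$; the seven-vertex pyramids give $10\cdot 7/7=10$; octahedra give $10\cdot 3/6=5$; five-vertex three-pyramids give $10\cdot(3+12)/5=30$; prisms give $10\cdot 6/6=10$; tetrahedra give $10\cdot 2/4=5$; triangles give $10\cdot 9/3=30$; squares give $10\cdot 6/4=15$; edges give $10\cdot 6/2=30$; and the unique vertex in $M_{12}$ yields $10\cdot 1/1=10$. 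That every quotient is an integer is a consistency check built into the argument.

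The only genuine subtlety is to ensure that the chart-local list of Corollary~\ref{polytopes_chart} is simultaneously exhaustive and nonredundant: every admissible polytope whose stratum meets $M_{12}$ must appear exactly once, and no two types in the list can collapse to the same stratum via the Pl\"ucker relations of Example~\ref{Pl2}. This is precisely what the proofs of Propositions~\ref{Orbits} and~\ref{Polytopes} establish (they classify all $(\C^{*})^5$-orbits in a chart and their images under $\mu$), so the present proposition is obtained from those results by the $S_5$-equivariant counting argument above with no further geometric input.
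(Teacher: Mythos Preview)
Your proof is correct and follows essentially the same approach as the paper: the paragraph immediately preceding the proposition explains that every chart contains the same number of admissible polytopes of each type (by $S_5$-symmetry), that the number of vertices of $P_\sigma$ equals the number of charts containing $W_\sigma$, and that one determines the global counts from this; your double-counting formula $10N_v/v$ is precisely the implementation of that outline, and your arithmetic matches the proposition.
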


{\bf Notations. } We set the following notations for some admissible polytopes. The admissible  polytope with nine vertices which does not contain the vertex $\Lambda _{ij}$ we denote by $K_{ij}(9)$, $1\leq i<j\leq 10$. The admissible polytope with eight vertices which does not contain the vertices $\Lambda_{ij}$ and $\Lambda_{kl}$ we denote by $K_{ij, kl}$, $1\leq i<j\leq 10$, $1\leq k<l\leq10$, $\{i,j\}\neq \{k,l\}$. The admissible  pyramid with seven vertices and  with the apex  $\Lambda_{ij}$  we denote by $K_{ij}(7)$, $1\leq i<j\leq 10$. All polytopes $K_{ij}(9), K_{ij, kl}, K_{ij}(7)$ are four-dimensional.  The prisms with six  vertices  we denote by $P_{ij}$, $1\leq i<j\leq 10$ and it is   easy to see they represent  base for the pyramids $K_{ij}(7)$.  By $O_{i}$, $1\leq i\leq 5$,  we denote the admissible octahedron which does not contain the vertices $A_{ik}$ where $1\leq k\leq 5$, $k\neq i$. 
By $T_{i}$, $1\leq i\leq 5$  we denote the  admissible tetrahedron whose vertices are $\Lambda_{ik}$, $1\leq k\leq 5$, $k\neq i$. The $30$ pyramids with five vertices belong to the $5$ octahedra and give their polytopal decomposition. The triangles are the facets of the octahedra and tetrahedra,  the $15$  squares are the diagonal squares for the  octahedra, three for each octahedron. 

\subsection{The combinatorial approach to admissible polytopes.}
We can approach the description of  admissible polytopes in purely combinatorial way.  An  admissible polytope is the convex hull  
$\sum _{1\leq i< j\leq 5}\alpha _{ij}\Lambda_{ij}$, where $0\leq \alpha _{ij}\leq 1$, $\sum _{1\leq i<j\leq 5} \alpha _{ij} =1$ and $\Lambda_{ij}$ are the vertices for $\Delta _{5,2}$.  The numbers $\alpha _{ij}$ can be calculated  using the  Pl\"ucker coordinates as follows:
\[
\alpha _{ij} = \frac{|P^{ij}|^{2}}{\sum \limits_{1\leq i< j\leq 5} |P^{ij}|^{2}}, \;\; 1\leq i<j\leq 5.
\]
The Pl\"ucker coordinates satisfies the relations
\begin{equation}\label{Plrel}
P^{i_1j_1} P^{i_2j_2} = P^{i_2j_1} P^{i_1j_2} - P^{i_1i_2} P^{j_1j_2},
\end{equation}
which impose the relations on $\alpha _{ij}$. Therefore the admissible polytopes can be described  as all possible  convex combinations of $\Lambda_{ij}$ where the coefficients of the combinations satisfy the relations imposed from~\eqref{Plrel}.
For example,
\begin{itemize}
\item if all $\alpha _{ij}\neq 0$ we obtain the admissible polytope $\Delta _{5,2}$;
\item if  $\alpha _{i_{1}j_{1}} =0$ and $\alpha _{ij}\neq 0$ for $\{i,j\}\neq \{i_{1},j_{1}\}$ we obtain the admissible polytope   $K_{i_{1}j_{1}}(9)$;
\item if $\alpha _{i_1j_1} =\alpha _{i_2j_2} = 0$ and $\{i_1,j_1\}\cap \{i_2, j_2\} = \emptyset$ and $\alpha _{ij} \neq 0$ for $\{i,j\}\neq \{i_1,j_1\}, \{i_2, j_2\}$ we obtain the admissible polytope 
$K_{i_1j_1,i_2j_2}$.
\item if   $\alpha _{i_1j_{1}} =\alpha _{i_1j_2}=0$  and $\alpha _{i_1j_3}\neq 0$  for $j_3\neq j_1,j_2$ it follows from~\eqref{Plrel} that $\alpha _{j_1j_2}=0$. In this way, we obtain the admissible polytope $K_{i_3j_3}(7)$, where $i_3 = \{1,\ldots ,5\} \setminus \{i_1,j_1,j_2,j_3\}$.  
\end{itemize}

One can also in a purely combinatorial way verify if some point from $\Delta _{5,2}$ belongs to the interior  of a fixed admissible polytope.
\begin{lem}
A point $x\in \Delta _{5,2}$ belongs to the interior of an admissible polytope $K$ with the vertices $\Lambda_{i_1j_1},\ldots ,\Lambda_{i_sj_s}$ if and only of $x = \sum _{l=1}^{s}\alpha _{i_lj_l}\Lambda_{i_lj_l}$ where $\sum _{l=1}^{s}\alpha _{i_1j_l} = 1$ and $\alpha _{i_lj_l} >0$ for all $1\leq l\leq s$.
\end{lem}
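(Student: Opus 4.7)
The lemma is the standard convex-geometric characterization of the relative interior of a convex polytope as the set of strict convex combinations of its vertices. My plan is to prove both implications directly, working in the affine hull of $K$, where ``interior'' is understood as relative interior (consistent with the formula $\mu(W_\sigma)=\stackrel{\circ}{P_\sigma}$ used throughout the paper).

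For the ``$\Leftarrow$'' direction I would argue via supporting hyperplanes. Assume $x=\sum_{l=1}^{s}\alpha_{i_lj_l}\Lambda_{i_lj_l}$ with all $\alpha_{i_lj_l}>0$ and $\sum_{l=1}^{s}\alpha_{i_lj_l}=1$. Let $H=\{z:\langle n,z\rangle=\langle n,x\rangle\}$ be any supporting hyperplane of $K$ at $x$, so that $\langle n,\Lambda_{i_lj_l}\rangle\le\langle n,x\rangle$ for every $l$. Averaging with weights $\alpha_{i_lj_l}$ gives
\[
\langle n,x\rangle=\sum_{l=1}^{s}\alpha_{i_lj_l}\langle n,\Lambda_{i_lj_l}\rangle\le\sum_{l=1}^{s}\alpha_{i_lj_l}\langle n,x\rangle=\langle n,x\rangle,
\]
and strict positivity of each $\alpha_{i_lj_l}$ forces equality in every summand. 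Consequently $H$ contains every vertex of $K$, hence all of $K$, so $H$ is not a proper supporting hyperplane. Therefore $x$ lies on no facet of $K$, i.e., $x\in\stackrel{\circ}{K}$.

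For the ``$\Rightarrow$'' direction I would use the centroid trick. Set $c=\frac{1}{s}\sum_{l=1}^{s}\Lambda_{i_lj_l}$, which is manifestly a strict convex combination. Since $x$ lies in the interior of $K$, the ray from $c$ through $x$ can be extended slightly past $x$ while remaining in $K$: there exists $\epsilon>0$ with $y:=x+\epsilon(x-c)\in K$. Because $K$ is the convex hull of its vertices (see Theorem~\ref{moment-map-polytope}), we may write $y=\sum_{l=1}^{s}\beta_{i_lj_l}\Lambda_{i_lj_l}$ with $\beta_{i_lj_l}\ge 0$ and $\sum_{l=1}^{s}\beta_{i_lj_l}=1$. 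The identity $x=\frac{1}{1+\epsilon}y+\frac{\epsilon}{1+\epsilon}c$ then yields
\[
x=\sum_{l=1}^{s}\Bigl(\frac{\beta_{i_lj_l}}{1+\epsilon}+\frac{\epsilon}{s(1+\epsilon)}\Bigr)\Lambda_{i_lj_l},
\]
where each coefficient is at least $\frac{\epsilon}{s(1+\epsilon)}>0$, and a direct check confirms that the coefficients sum to $1$.

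The main (though minor) obstacle is keeping the notion of ``interior'' straight: some admissible polytopes $K$ are lower-dimensional inside $\Delta_{5,2}\subset\R^5$, and ``interior'' must be interpreted relative to the affine hull of $K$. The classification summarised in Corollary~\ref{polytopes_chart} together with the formula $\mu(W_\sigma)=\stackrel{\circ}{P_\sigma}$ shows that this is the convention in force throughout, and both arguments above take place entirely within the affine hull of $K$, so no complication arises.
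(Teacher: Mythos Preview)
Your proof is correct. The paper states this lemma without proof, treating it as the standard convex-geometric fact that the relative interior of a polytope consists exactly of the strict convex combinations of its vertices; your supporting-hyperplane argument for ``$\Leftarrow$'' and centroid-extension argument for ``$\Rightarrow$'' are the usual clean justifications, and your remark that ``interior'' must be read as relative interior is exactly the point needed to make the statement precise.
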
 
The symmetric group $S_5$ acts on the set  of all admissible polytopes for $G_{5,2}$. We determine here the corresponding stationary subgroups and the fundamental strata.

\begin{lem}
The stationary subgroups for  $S_5$-action on the set of all admissible polytopes are isomorphic to:
\[
\Delta _{5,2}: \; S_5,\;\; K_{ij}(9):\; S_{2}\times S_{3}, \;\; K_{ij,kl}:\; S_2\times S_2\times S_2;\;\; 
K_{ij}(7):\; S_2\times S_3,\;\; P_{ij}:\;  S_2\times S_3,\]
\[ O_i :\; S_4,\;\; T_{i}:\; S_{4},\;\;
\text{five-vertex pyramids}:\; S_2\times S_2,\]
\[ \text{squares}:\; S_2\times S_2\times S_2 ,\;\; 
\text{triangles}:\; S_3\;\text{or}\; S_2\times S_3\]
\[ \text{edges}:\; S_2\times S_2,\; \text{vertices}:\; S_2\times S_3.
\]
\end{lem}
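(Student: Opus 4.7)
The approach rests on the description of the $S_5$-action already given: a permutation $s\in S_5$ acts on $G_{5,2}$ by permuting rows of matrix representatives, hence on the Plücker coordinates by $P^{\{i,j\}}(sL)=\pm P^{\{s^{-1}(i),s^{-1}(j)\}}(L)$, and therefore on the vertex set $\{\Lambda_{ij}\}\subset\Delta_{5,2}$ by $s\cdot\Lambda_{ij}=\Lambda_{s(i)s(j)}$. By Lemma~\ref{Sn-action-charts}, each admissible polytope is determined by its vertex set, so the stabilizer of $P_\sigma$ under the $S_5$-action coincides with the stabilizer, in the induced action on unordered pairs of $\{1,\ldots,5\}$, of the subset $V(P_\sigma)\subset \binom{\{1,\ldots,5\}}{2}$ of its indexing pairs. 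The whole problem thus reduces to a finite combinatorial computation: for each polytope from the preceding proposition, identify $V(P_\sigma)$ (or its complement, which is usually smaller) and compute its set-wise stabilizer.

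The plan is to organize the computation by the partition of $\{1,\ldots,5\}$ that is distinguished by $V(P_\sigma)$. For $\Delta_{5,2}$ the vertex set is everything, so every $s\in S_5$ works. For $K_{ij}(9)$ the missing vertex $\Lambda_{ij}$ isolates the block $\{i,j\}$ from its complement, giving $S_{\{i,j\}}\times S_{\{1,\ldots,5\}\setminus\{i,j\}}\cong S_2\times S_3$. For the pyramid $K_{ij}(7)$, the apex $\Lambda_{ij}$ is a combinatorially distinguished vertex (it is adjacent to all other vertices of the polytope), and the remaining six vertices are exactly the pairs meeting $\{i,j\}$ in one element; hence the stabilizer again preserves $\{i,j\}$ and is $S_2\times S_3$. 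The prism $P_{ij}$ is the base of this pyramid and carries the same combinatorial data, so the same stabilizer appears. For $O_i$ the vertex set is $\{\Lambda_{jk}: j,k\ne i\}$ and for $T_i$ it is $\{\Lambda_{ij}: j\ne i\}$; in each case the single index $i$ is determined by the polytope, giving $S_4$.

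For the two-missing-vertex polytope $K_{ij,kl}$, the constraint (read off from Corollary~\ref{polytopes_chart} via $S_5$-action) is that $\{i,j\}$ and $\{k,l\}$ are disjoint, so $V(P)^c=\{\{i,j\},\{k,l\}\}$ distinguishes the partition $\{\{i,j\},\{k,l\},\{m\}\}$ with $m$ the remaining element. The stabilizer is generated by the transpositions inside each of the two pairs together with the involution swapping the pairs; this is an order-$8$ subgroup of $S_5$. For the remaining low-dimensional polytopes (squares, five-vertex pyramids, triangles, edges, vertices), the strategy is the same: locate the combinatorially distinguished substructure — the apex of a pyramid, the two antipodal axes of a diagonal square, the common element of a triangle's index pairs, and so on — read off the induced partition of $\{1,\ldots,5\}$, and take the set-wise stabilizer in $S_5$. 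The two cases for triangles reflect whether all three pairs share a common index (partition $1+4$, stabilizer $S_3$ acting on the three chosen pairs via the three indices that vary, i.e.\ an $S_3$ inside an $S_2\times S_3$) or whether they form a triangle-in-$K_5$ configuration (partition $3+2$, stabilizer $S_3\times S_2$).

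The main obstacle is not any single calculation but the bookkeeping: one must verify, for each polytope type, that the combinatorial feature singled out (apex, axis pair, distinguished index, block partition) is truly canonical, so that the stabilizer cannot be larger through an accidental geometric symmetry, and conversely that every permutation preserving the partition extends to a symmetry of $V(P_\sigma)$. In every case above this follows from noting that the polytope's $f$-vector together with vertex-adjacency on $\Delta_{5,2}$ recovers the combinatorial data intrinsically, which is a short verification once the polytopes have been enumerated as in the preceding propositions.
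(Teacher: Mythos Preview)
Your approach is the same as the paper's: identify each admissible polytope by its vertex set as a subset of $\binom{\{1,\ldots,5\}}{2}$ and compute the set-wise stabilizer in $S_5$. The paper's proof is in fact far shorter than yours---it demonstrates only the triangle case explicitly and leaves the remaining verifications to the reader---so your systematic treatment is an expansion of the intended argument rather than a different one.

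One remark worth recording. For $K_{ij,kl}$ (and likewise for the diagonal squares) your description of the stabilizer as generated by the two internal transpositions together with the involution swapping the two pairs is correct, and the order is indeed $8$; however, this subgroup of $S_5$ is the dihedral group $D_4$ (it contains the $4$-cycle $(ik)(jl)\cdot(ij)$), not $S_2\times S_2\times S_2$ as the statement asserts. You were right to stop at ``an order-$8$ subgroup'' without committing to the stated isomorphism type. Since only the orders of the stabilizers are used in the subsequent orbit-counting (Lemma~\ref{fund-5}), nothing downstream is affected by this imprecision. A small slip in your write-up: for the triangle $\{\Lambda_{ij},\Lambda_{ik},\Lambda_{il}\}$ the distinguished partition of $\{1,\ldots,5\}$ is $\{i\}\cup\{j,k,l\}\cup\{m\}$, not $1+4$ as you wrote; your conclusion $S_3$ is nonetheless correct.
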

\begin{proof}
We demonstrate the proof for the admissible triangles. The vertices of an admissible triangle are $\{\Lambda_{ij},\Lambda_{ik},\Lambda_{il}\}$ or $\{\Lambda_{ij},\Lambda_{ik},\Lambda_{jk}\}$, where $i,j,k,l$ are pairwise distinct.  The stationary subgroup for  the triangles of the first form is $S_3$ and for the triangles of the second form it is $S_2\times S_3$.
\end{proof}

\begin{lem}\label{fund-5}
The fundamental polytopes for $G_{5,2}$ for admissible polytopes which:
\begin{itemize}
\item  belong to the interior of $\Delta _{5,2}$ are: $\Delta _{5,2}$, one  $K_{ij}(9)$, one $K_{ij, kl}$, one  $K_{ij}(7)$ and one   $P_{ij}$;
\item belong to the boundary of $\Delta _{5,2}$ are: one  $O_{i}$, one $T_i$, one square, two triangles, one edge  and  one vertex.
\end{itemize}
 \end{lem}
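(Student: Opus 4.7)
The plan is to determine the $S_5$-orbits on each family of admissible polytopes by combining the stabilizer computations of the preceding lemma with the orbit--stabilizer theorem. Since $|S_5|=120$, an admissible polytope with stabilizer $H$ lies in an $S_5$-orbit of size $120/|H|$, and the number of fundamental polytopes of a given type equals the total count (from the earlier enumeration) divided by this orbit size. The interior/boundary dichotomy is then read off from whether the polytope is contained in one of the facet hyperplanes $x_i=0$ or $x_i=1$ of $\Delta_{5,2}$.

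First I would handle the four-dimensional interior polytopes. The hypersimplex $\Delta_{5,2}$ is $S_5$-fixed and contributes one fundamental polytope. For the families $K_{ij}(9)$, $K_{ij,kl}$, $K_{ij}(7)$ and $P_{ij}$, the stabilizers have orders $12, 8, 12, 12$, yielding orbits of sizes $10, 15, 10, 10$, which match the totals $10, 15, 10, 10$ in the enumeration. Hence each family is a single orbit with one fundamental representative. To verify that these polytopes have interior meeting $\stackrel{\circ}{\Delta}_{5,2}$, I would use that every octahedral facet contains only $6$ vertices and every tetrahedral facet only $4$ vertices of $\Delta_{5,2}$, whereas $K_{ij}(9)$, $K_{ij,kl}$, $K_{ij}(7)$ each have at least $7$ vertices, and the $6$ vertices of $P_{ij}$ are directly checked not to lie in any single hyperplane $x_l\in\{0,1\}$.

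For the boundary polytopes, the five octahedral facets $O_i\subset\{x_i=0\}$ and the five tetrahedral facets $T_i\subset\{x_i=1\}$ have stabilizer $S_4$, giving orbits of size $5$, so one fundamental $O_i$ and one fundamental $T_i$. The $15$ diagonal squares (stabilizer of order $8$), the $30$ edges (stabilizer of order $4$), and the $10$ vertices (stabilizer of order $12$) each form a single orbit, giving one fundamental polytope in each case. The triangles are the subtle case: admissible triangles split into two geometric types, namely $\{\Lambda_{ij},\Lambda_{ik},\Lambda_{il}\}$, which are facets of the tetrahedra $T_m$ and have stabilizer $S_3$ of order $6$, and $\{\Lambda_{ij},\Lambda_{ik},\Lambda_{jk}\}$, which are facets of the octahedra and have stabilizer $S_2\times S_3$ of order $12$. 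The resulting orbits have sizes $20$ and $10$ and sum to the total count $30$; this yields the two fundamental triangles asserted in the lemma.

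The main obstacle is to justify transitivity of $S_5$ on each family, since matching orbit sizes alone does not prove that the family is a \emph{single} orbit. This is handled by noting that each family carries a canonical $S_5$-equivariant bijection to a manifestly transitive $S_5$-set: indices $i\in\{1,\ldots,5\}$ for $O_i, T_i$ and the top-apex pyramids, unordered pairs $\{i,j\}$ for $K_{ij}(9), K_{ij}(7), P_{ij}$ and edges, unordered pairs of disjoint pairs for $K_{ij,kl}$ and the squares, and the two combinatorial triangle types for the triangles. The labelling of polytopes by their combinatorial data is $S_5$-equivariant, so transitivity follows at once, and the orbit--stabilizer computation then identifies the number of orbits in each family. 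Finally the interior/boundary dichotomy is read off directly from the explicit vertex sets, completing the list in the lemma.
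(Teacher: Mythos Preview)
Your proof is correct and follows essentially the same approach as the paper: compute stabilizer orders from the preceding lemma, apply orbit--stabilizer to obtain orbit sizes, and compare with the total counts to conclude each family is a single $S_5$-orbit (two for the triangles). Your extra step justifying transitivity via equivariant bijections is harmless but unnecessary: once the orbit of a single element has size equal to the family's total count, that orbit \emph{is} the whole family, so the counting argument already settles it.
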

\begin{proof}
  The stationary subgroup for $K_{ij}(9)$  is isomorphic to $S_2\times S_3$ which has the  order $12$. Since the order of $S_5$ is $120$ we obtain that the $S_5$-orbit of $K_{ij}(9)$ has $10$ elements.  This coincides with the number of the admissible polytopes with $9$ vertices.  The proof for the other admissible polytopes, apart from triangles and edges, is analogous.  The orbit of a triangle, whose stationary subgroup is $S_3$ has $20$ elements, while the orbit of a triangle, whose stationary subgroup is $S_2\times S_3$ has $10$ elements. Altogether this gives $30$ triangles.
\end{proof}

\begin{cor}
The number of the strata for $T^{5}$-action on $G_{5,2}$ is $125+46=171$. The number of the  fundamental strata is $13$, where $q_3=q_4=q_6=2$ and $q_p=1$ for $p\neq 3,4,6$.
\end{cor}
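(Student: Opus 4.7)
My plan is to deduce the Corollary directly from the enumeration of admissible polytopes given just before this statement together with the computation of stationary subgroups in the preceding lemma. Since by Lemma~\ref{Sn-action} the $S_5$-action on strata corresponds bijectively to its action on admissible polytopes, I can work entirely with polytopes.

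First I would set up the bookkeeping. For each of the eleven types of admissible polytope listed (the hypersimplex; $K_{ij}(9)$; $K_{ij,kl}$; $K_{ij}(7)$; $P_{ij}$; $O_i$; five-vertex pyramids; $T_i$; squares; triangles of the two kinds; edges; vertices), I would read off from the preceding lemma the orders of the stationary subgroups in $S_5$ and apply the orbit-stabilizer formula $m = |S_5|/|S_\sigma| = 120/|S_\sigma|$ to recover the number of polytopes of each type. The totals $1,10,15,10,10,5,30,5,15,30,30,10$ match the enumeration in the preceding proposition, and the arithmetic $1+10+15+10+10+5+30+5+15+30+30+10$ gives $171$. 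The split $125+46$ then comes from grouping the $46$ strata whose image under $\mu$ meets the interior of $\Delta_{5,2}$ (hypersimplex, $K_{ij}(9)$, $K_{ij,kl}$, $K_{ij}(7)$, $P_{ij}$) separately from the $125$ whose image lies in $\partial\Delta_{5,2}$ (everything else).

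For the fundamental strata count, I would organize admissible polytopes by $p=|\sigma|$, i.e.\ by number of vertices, and record, for each value of $p$, how many $S_5$-orbits appear. Reading off Lemma~\ref{fund-5} (supplemented by the fact that the five-vertex pyramids form one additional $S_5$-orbit of interior refinements of the octahedra, contributing to $p=5$), I obtain: $q_1=q_2=q_5=q_7=q_8=q_9=q_{10}=1$ (the vertex; the edge; the five-vertex pyramid; $K_{ij}(7)$; $K_{ij,kl}$; $K_{ij}(9)$; and $\Delta_{5,2}$), while $q_3=2$ (the two types of triangle), $q_4=2$ (square and tetrahedron $T_i$), and $q_6=2$ (octahedron $O_i$ and prism $P_{ij}$). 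Summing gives $7\cdot 1 + 3\cdot 2 = 13$ fundamental strata, as claimed.

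There is essentially no hard step here: the Corollary is a bookkeeping consequence of the two preceding lemmas. The only subtle point to check carefully is that for each admissible polytope type the stationary subgroup recorded in the preceding lemma is genuinely the stabilizer of one representative (not of the whole orbit), so that the orbit-stabilizer count is valid; in particular, for the two types of triangle one must verify that they are indeed inequivalent under $S_5$ by observing that their stabilizers $S_3$ and $S_2\times S_3$ have different orders, so they give two distinct orbits of sizes $20$ and $10$ summing to the required $30$.
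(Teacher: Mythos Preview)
Your proof is correct and follows exactly the approach the paper intends: the Corollary is stated without proof in the paper, as it is meant to be read off directly from the enumeration of admissible polytopes and the orbit--stabilizer data in the two preceding lemmas, which is precisely what you do. You correctly supplement Lemma~\ref{fund-5} with the missing five-vertex pyramid orbit (which the paper's lemma appears to omit from its list but is needed to reach $13$ fundamental strata and to account for $p=5$), and your identification of the $125+46$ split with the boundary/interior dichotomy of admissible polytopes is the right reading.
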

\subsection{The closure of $(\C ^{*})^{5}$-orbits.}
The closure of a $(\C ^{*})^{5}$-orbit in $G_{5,2}$ is a toric variety. Following the arguments from~\cite{MMJ} we provide the description of these toric varieties.

\begin{thm}\label{toric}
The closure of an orbit for $(\C ^{*})^{5}$-action on $G_{5,2}$ is a toric variety:
\begin{enumerate}
\item of dimension eight (complex dimension $4$) and with:
\begin{itemize}
\item $10$ singular points if its admissible polytope is $\Delta _{5,2}$;
\item $9$ singular points if its admissible polytope  is $K_{ij}(9)$, $1\leq i<j\leq 5$;
\item $4$ singular points if its admissible polytope is $K_{ij,kl}$, $1\leq i,j,k,l\leq 5$, $i\neq j\neq k\neq l$;
\item $1$ singular point if its admissible polytope is $K_{ij}(7)$, $1\leq i<j\leq 5$;
\end{itemize}
\item of dimension six (complex dimension $3$): 
\begin{itemize}
\item with $6$ singular points if its admissible polytope is an octahedra $O_{i}$, $1\leq i\leq 5$;
\item with $1$ singular point if its admissible polytope is  a pyramid over square;
\item $\C P^2\times \C P^1$ if its admissible polytope is a prism $P_{ij}$, $1\leq i<l\leq 5$;; 
\item $\C P^3$  if its admissible polytope is a tetrahedra $T_{i}$, $1\leq i\leq 5$;
\end{itemize}
\item of dimension four:
\begin{itemize}
\item $\C P ^2$ if its admissible polytope is a triangle;
\item $\C P^1\times \C P^1$ if its admissible polytope is a rectangle;
\end{itemize}
\item of dimension two, i.~e.~ $\C P^1$.
\item of dimension zero vertices, i.~e.~ fixed points. 
\end{enumerate}
All singular points of these toric varieties map, by the moment map,  to the vertices of $\Delta _{5,2}$.
\end{thm}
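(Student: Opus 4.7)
\medskip

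\noindent\textbf{Proof proposal.} The plan is to use the standard dictionary of toric geometry: for any lattice polytope $P$, the associated toric variety $X_P$ has torus-fixed points in bijection with the vertices of $P$, and $X_P$ is smooth at the fixed point corresponding to a vertex $v$ if and only if $P$ is simple at $v$ (exactly $\dim P$ edges meet at $v$) and the primitive integer vectors along those edges form a $\Z$-basis of the lattice in the affine span of $P$. Moreover, singularities along a positive-dimensional orbit occur only if the cone of the corresponding face is non-smooth, and a face-cone is smooth whenever the vertex-cone at any vertex of that face is smooth. Thus the whole analysis reduces to a vertex-by-vertex check on each admissible polytope, and this will also yield the concluding claim that all singularities lie over vertices of $\Delta_{5,2}$. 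By Theorem~\ref{moment-map-polytope} the polytope governing $\overline{\OO_{\C}(L)}$ is exactly the admissible polytope $P_{\sigma}$, so the classification follows once each $P_{\sigma}$ from the list in Section~\ref{aps} is treated.

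The first step is to record the combinatorial input. Recall that the edges of $\Delta_{n,k}$ join vertices $\Lambda_{I},\Lambda_{J}$ with $|I\triangle J|=2$, so the primitive direction vector along such an edge is $e_{a}-e_{b}$ where $\{a,b\}=I\triangle J$. At each vertex $\Lambda_{ij}$ of $\Delta_{5,2}$ precisely six edges meet, with primitive directions $e_{k}-e_{i}$ and $e_{k}-e_{j}$ for $k\notin\{i,j\}$. Since $\dim\Delta_{5,2}=4<6$, no vertex of $\Delta_{5,2}$ is simple, which immediately gives the ten singular points in case $P_{\sigma}=\Delta_{5,2}$. For each sub-polytope $P_{\sigma}$ of $\Delta_{5,2}$ obtained by deleting vertices (the polytopes $K_{ij}(9)$, $K_{ij,kl}$, $K_{ij}(7)$, $O_{i}$, $T_{i}$, prisms, pyramids, squares, triangles, edges), we determine the edge-graph at each surviving vertex by retaining those edges of $\Delta_{5,2}$ both of whose endpoints survive, and then adjoining the ``new'' edges produced by the deletion. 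Counting yields which vertices are simple, and the count of non-simple vertices is exactly the claimed number of singular points.

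The next step is to verify unimodularity at each simple vertex. Because every edge vector is of the form $e_{a}-e_{b}$, this amounts to checking that a small integer matrix with entries in $\{-1,0,1\}$ has determinant $\pm1$; a direct computation works in every case. For the polytopes appearing with their familiar names this immediately identifies the toric variety: the tetrahedron $T_{i}$ is a $3$-simplex with three smooth edges at every vertex spanning the rank-$3$ sublattice $\{y\in\Z^{5}:y_{i}=0,\,\sum y=0\}$, hence $X_{T_{i}}=\C P^{3}$; the prism $P_{ij}$ is combinatorially $\Delta^{2}\times\Delta^{1}$ and at each vertex the three edges have directions of the form $e_{a}-e_{b}$, $e_{c}-e_{b}$, $e_{p}-e_{q}$ with $\{a,b,c\}\cap\{p,q\}=\emptyset$, hence $X_{P_{ij}}=\C P^{2}\times\C P^{1}$; triangles give $\C P^{2}$ and squares give $\C P^{1}\times\C P^{1}$ by the same argument. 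For $K_{ij}(7)$ the apex is the only non-simple vertex (six edges in a $4$-polytope), while at each of the six base vertices one checks that the four edge vectors form a lattice basis; this gives the single apex singularity. Analogous vertex counts produce $1$, $4$, $9$ singular points for $K_{ij}(7)$, $K_{ij,kl}$, $K_{ij}(9)$, and six for each octahedron $O_{i}$ (every vertex of an octahedron has four incident edges in a $3$-polytope).

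The main obstacle is bookkeeping rather than mathematics: for each polytope $P_{\sigma}$ obtained by deleting a small subset of vertices of $\Delta_{5,2}$, one must identify the actual edge graph of $P_{\sigma}$, because new edges may appear (for instance the diagonals of the square cross-sections that arise when the vertices of an octahedral face are separated). The $S_{5}$-symmetry established in Lemma~\ref{fund-5} reduces the work to the list of fundamental polytopes, so in practice one carries out the vertex analysis only once per $S_{5}$-orbit. Once this is done, aggregating the vertex counts proves the singular-point statements, the unimodularity computations identify $T_{i}$, $P_{ij}$, triangles and squares with the claimed smooth toric varieties, and the absence of singularities along positive-dimensional orbits follows because every face of $P_{\sigma}$ has a vertex at which the face-cone coincides with a face of the (verified smooth) vertex cone.
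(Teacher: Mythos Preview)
The paper does not give a proof of this theorem in the text; it only writes ``Following the arguments from~\cite{MMJ} we provide the description of these toric varieties'' and then states the result. Your overall strategy---reduce to the combinatorics of the admissible polytopes via the toric dictionary, check simplicity and unimodularity at vertices, and exploit the $S_5$-symmetry of Lemma~\ref{fund-5} to cut down to fundamental polytopes---is exactly the right one and is presumably what the cited paper does for $G_{4,2}$.

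There is, however, a genuine gap in your argument for the \emph{last} assertion, that all singular points map to vertices of $\Delta_{5,2}$ (equivalently, that the toric varieties are smooth along every positive-dimensional orbit). You write that this ``follows because every face of $P_\sigma$ has a vertex at which the face-cone coincides with a face of the (verified smooth) vertex cone.'' But for $P_\sigma=\Delta_{5,2}$, $K_{ij}(9)$, and $O_i$ \emph{every} vertex is non-simple, so there is no smooth vertex cone available and the argument collapses. Knowing that a face $N_F$ of a \emph{singular} cone $N_v$ is smooth requires a separate check: you must verify directly that for each edge (and each $2$-face) of $P_\sigma$ the primitive inward normals of the facets through it form part of a $\Z$-basis of the cocharacter lattice $N\cong\Z^5/\Z(1,\dots,1)$. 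For instance, the edge $[\Lambda_{ij},\Lambda_{ik}]$ of $\Delta_{5,2}$ lies on exactly the three facets $T_i$, $O_l$, $O_m$ with $\{l,m\}=\{1,\dots,5\}\setminus\{i,j,k\}$, and one computes that $[-e_i],[e_l],[e_m]$ extend to a $\Z$-basis of $N$; hence the edge cone is smooth even though both endpoint cones are not. This kind of edge-by-edge (and $2$-face) verification is what is actually needed; once you add it, the proof is complete.
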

As for the regular point of the moment map, Corollary~\ref{reg-str} implies:
\begin{cor}
All points of  the  eight-dimensional orbits  for $(\C ^{*})^{5}$-action on $G_{5,2}$ are the regular points for the moment map $\mu : G_{5,2}\to \Delta _{5,2}$.
\end{cor}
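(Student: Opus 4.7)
The plan is to reduce immediately to the already-established Corollary~\ref{rkmu}, which identifies $\rank d\mu(L)$ with $\dim P_L$. The only real book-keeping is to check that the admissible polytope attached to an eight-real-dimensional $(\C^*)^5$-orbit is four-dimensional.

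First I would invoke Theorem~\ref{moment-map-polytope}: if $L\in G_{5,2}$ has $\dim_{\R}\mathcal{O}_{\C}(L)=8$, equivalently $\dim_{\C}\mathcal{O}_{\C}(L)=4$, the convexity theorem provides a bijection between the $p$-dimensional $(\C^*)^5$-orbits inside $\overline{\mathcal{O}_{\C}(L)}$ and the $p$-dimensional open faces of the admissible polytope $P_L=\mu(\overline{\mathcal{O}_{\C}(L)})$. Taking $p=4$ forces $P_L$ to contain a four-dimensional open face, so $\dim_{\R}P_L=4$. One can also read this off directly from Proposition~\ref{Orbits} together with Proposition~\ref{Polytopes}: every eight-real-dimensional orbit in the chart $M_{12}$ (cases $(1)$, $(2)$, $(4)$) has a four-dimensional admissible polytope, and by $S_5$-symmetry the same holds in every chart.

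Next, applying Corollary~\ref{rkmu} gives
\[
\rank d\mu(L)=\dim P_L=4.
\]
Since $\mu(G_{5,2})\subset\Delta_{5,2}$ lies in the hyperplane $x_1+\cdots+x_5=2$ and $\dim\Delta_{5,2}=4$, this is the maximum possible rank of $d\mu$, so $L$ is a regular point of $\mu$. An entirely equivalent route passes through Corollary~\ref{reg-str}: the stabilizer $\C^*_L\subset(\C^*)^5$ has complex dimension $5-4=1$, hence coincides with the diagonal $\C^*$, which acts trivially on $G_{5,2}$; after removing the ineffective kernel the stabilizer is trivial and regularity follows.

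I do not expect a genuine obstacle here; the corollary is a direct accounting exercise once Corollaries~\ref{rkmu} and~\ref{reg-str} are in hand. The only care needed is to match real versus complex dimensions of orbits and to observe that a top-dimensional $(\C^*)^5$-orbit produces a top-dimensional admissible polytope inside $\Delta_{5,2}$.
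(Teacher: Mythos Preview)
Your argument is correct and matches the paper's approach: the paper simply notes that this corollary follows from Corollary~\ref{reg-str}, and your proof spells out exactly the dimension count (an eight-real-dimensional orbit has trivial effective stabilizer, equivalently $\dim P_L=4$) that makes this implication work. Your alternative route via Corollary~\ref{rkmu} is equivalent, since \ref{reg-str} is itself deduced from \ref{rkmu} in the paper.
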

The only  $l$- dimensional admissible polytopes, $l\leq 3$,  which intersect  $\stackrel{\circ}{\Delta} _{5,2}$ are the prisms $P_{ij}$. More precisley,  $\stackrel{\circ}{P}_{ij}\subset \stackrel{\circ}{\Delta} _{5,2}$. Thus,  Theorem~\ref{reg} implies:
\begin{cor}
The set of singular values in  $\stackrel{\circ}{\Delta}_{5,2}$  for the moment map $\mu $ is  given by  $\cup _{1\leq i<j\leq 5}\stackrel{\circ}{P}_{ij}$. The set of singular points in $G_{5,2}$,  which are  mapped  by the moment map to $\stackrel{\circ}{\Delta}_{5,2}$,   is given by  the principal orbits of the toric varieties $W_{ij}\cong \C P^2\times \C P^1$.   The toric varieties $W_{ij}$ are  mapped, by the moment map,  to $P_{ij}$, where $1\leq i<j\leq 5$.  
\end{cor}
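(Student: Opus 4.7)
The plan is to combine Corollary~\ref{rkmu}, Theorem~\ref{reg}, and Theorem~\ref{toric} with a combinatorial check of where the interiors of the low-dimensional admissible polytopes are located inside $\Delta_{5,2}$. Since $n-1=4$, Corollary~\ref{rkmu} says that a point $L\in G_{5,2}$ is singular for $\mu$ if and only if its admissible polytope $P_L$ has dimension strictly less than $4$. Combining this with the characterization in Theorem~\ref{reg}, a point $x\in \stackrel{\circ}{\Delta}_{5,2}$ is a singular value of $\mu$ if and only if $x$ lies in $\stackrel{\circ}{P_\sigma}$ for some admissible polytope $P_\sigma$ with $\dim P_\sigma \leq 3$ whose interior meets $\stackrel{\circ}{\Delta}_{5,2}$.

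The main step is therefore a case analysis over the list of admissible polytopes of dimension $\leq 3$ given in the enumeration preceding the corollary: the octahedra $O_i$, the tetrahedra $T_i$, the five-vertex pyramids, the prisms $P_{ij}$, the triangles, the squares, the edges, and the vertices. I would verify one by one that, with the sole exception of the prisms, each such polytope lies in the boundary of $\Delta_{5,2}$. Namely, the vertices of $O_i$ are the $\Lambda_{jk}$ with $j,k\neq i$, so $O_i\subset \{x_i=0\}$; the vertices of $T_i$ are the $\Lambda_{ij}$ with $j\neq i$, so $T_i\subset \{x_i=1\}$; the five-vertex pyramids inside octahedra and the triangles/squares/edges/vertices are all subpolytopes of the $O_i$ or $T_i$ and hence contained in a facet of $\Delta_{5,2}$. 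This was already stated in the paragraph immediately before the corollary, and it is simply a matter of recording the vertex lists of the fundamental polytopes from Lemma~\ref{fund-5} and applying the $S_5$-action.

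For the prisms, one checks that the interior $\stackrel{\circ}{P}_{ij}$ lies in $\stackrel{\circ}{\Delta}_{5,2}$, which follows from the explicit list of vertices of $P_{ij}$ (used already as the base of the pyramid $K_{ij}(7)$): these six vertices do not all share a common coordinate hyperplane $x_l=0$ or $x_l=1$, hence the centroid has all coordinates strictly between $0$ and $1$. This identifies the singular values in $\stackrel{\circ}{\Delta}_{5,2}$ precisely with $\cup_{1\leq i<j\leq 5}\stackrel{\circ}{P}_{ij}$.

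Finally, to identify the singular points in $G_{5,2}$ mapping into $\stackrel{\circ}{\Delta}_{5,2}$, I invoke Theorem~\ref{toric}: the stratum $W_{ij}$ whose admissible polytope is the prism $P_{ij}$ has closure the toric variety $\mathbb{C}P^2\times \mathbb{C}P^1$, and $W_{ij}$ is exactly the principal $(\mathbb{C}^*)^5$-orbit of this toric variety, mapped by $\mu$ onto $\stackrel{\circ}{P}_{ij}$ by Theorem~\ref{moment-map-polytope}. The only real obstacle is the bookkeeping in the case analysis above, ensuring that among the ten fundamental lower-dimensional polytopes, the prisms are the unique ones whose interior escapes every facet $\{x_l=0\}$ or $\{x_l=1\}$ of $\Delta_{5,2}$; once that is done, the three assertions of the corollary follow immediately.
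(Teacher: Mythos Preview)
Your proposal is correct and follows essentially the same route as the paper: the sentence immediately preceding the corollary records exactly your key combinatorial observation, that among admissible polytopes of dimension $\leq 3$ only the prisms $P_{ij}$ have interior contained in $\stackrel{\circ}{\Delta}_{5,2}$, and then invokes Theorem~\ref{reg}. Your write-up is a faithful expansion of that one-line argument, with the minor variation that you appeal directly to Corollary~\ref{rkmu} (rank of $d\mu$ equals $\dim P_L$) rather than to the stabilizer criterion of Corollary~\ref{reg-str}; the two are equivalent here.
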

By Proposition~\ref{G-sing},  the toric varieties $W_{ij}\cong \C P^2\times \C P^1\subset G_{5,2}$ have a nice geometric description: 
\begin{cor}
The ten toric varieties $W_{ij}$, $1\leq i<j\leq 5$ correspond to the ten decomposition $\C ^5 = \C ^3\times \C ^2$. More precisely,   $W_{ij}$ consist of two-dimensional subspaces $L\subset \C ^5$ such that $L=\mathcal{L}(L_1, L_2)$, where $L_1, L_2$ are the lines in $\C ^3, \C ^2$ respectively,  for the corresponding decomposition of $\C ^5=\C ^3\times \C ^2$.
\end{cor}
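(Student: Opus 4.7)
The strategy is to realize each $W_{ij}$ as the image of the natural embedding
\[
\iota_{ij}\colon G_{3,1}\times G_{2,1}\hookrightarrow G_{5,2},\qquad (L_1,L_2)\mapsto L_1\oplus L_2,
\]
associated with the decomposition $\C^5=\C^3_{ij}\oplus \C^2_{ij}$, where $\C^2_{ij}$ is the coordinate subspace spanned by $e_i,e_j$ and $\C^3_{ij}$ is spanned by the remaining coordinate vectors $\{e_a,e_b,e_c\}=\{e_1,\ldots ,e_5\}\setminus\{e_i,e_j\}$. With $T^5=T^3\times T^2$ acting on the source factor-wise (via the coordinates in $\{a,b,c\}$ and $\{i,j\}$ respectively), the map $\iota_{ij}$ is $T^5$-equivariant, and its image is precisely the submanifold $G_{3,1,2,1}$ of Proposition~\ref{G-sing}, which is closed, irreducible, and of complex dimension $3$.

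The first step is to compute the Pl\"ucker coordinates of $\iota_{ij}(L_1,L_2)$. Writing $L_1=\C(x_ae_a+x_be_b+x_ce_c)$ and $L_2=\C(y_ie_i+y_je_j)$, a direct expansion of the $2\times 2$ minors of the associated $(5\times 2)$-matrix shows that $P^{kl}(L_1\oplus L_2)=0$ unless $\{k,l\}$ contains exactly one element from $\{a,b,c\}$ and one from $\{i,j\}$, in which case the minor equals $\pm x_k y_l$. Hence on a dense open subset of $\iota_{ij}(G_{3,1}\times G_{2,1})$ the nonzero Pl\"ucker coordinates are indexed precisely by the six pairs
\[
\sigma_{ij}=\big\{\{a,i\},\{a,j\},\{b,i\},\{b,j\},\{c,i\},\{c,j\}\big\}.
\]
The convex hull of the corresponding weights $\Lambda_{kl}$ is the prism $P_{ij}$, being the join of the two opposite triangular faces $\{\Lambda_{ai},\Lambda_{bi},\Lambda_{ci}\}$ and $\{\Lambda_{aj},\Lambda_{bj},\Lambda_{cj}\}$. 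By Theorem~\ref{moment-map-polytope} the closure of a generic $(\C^*)^5$-orbit inside $\iota_{ij}(G_{3,1}\times G_{2,1})$ has moment image equal to $P_{ij}$, so this orbit lies in the stratum $W_{\sigma_{ij}}$ whose admissible polytope is $P_{ij}$.

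The final step matches this with $W_{ij}$. By Theorem~\ref{toric}, $W_{ij}\cong \C P^2\times \C P^1$ is the closure of a $(\C^*)^5$-orbit with admissible polytope $P_{ij}$ and complex dimension $3$. Since $\iota_{ij}(G_{3,1}\times G_{2,1})$ is an irreducible, closed, $(\C^*)^5$-invariant subvariety of $G_{5,2}$ of complex dimension $3$ containing a dense orbit with the same admissible polytope $P_{ij}$, the two subvarieties coincide. The only real point to verify with care---and what I expect to be the main, though still minor, obstacle---is the combinatorial identification of the convex hull of $\{\Lambda_{kl}:\{k,l\}\in\sigma_{ij}\}$ with the specific prism $P_{ij}$ arising as the base of the pyramid $K_{ij}(7)$; this is immediate once one observes that $\sigma_{ij}$ consists of exactly those $\{k,l\}$ whose support meets both $\{i,j\}$ and its complement, giving the prism complementary to the apex $\Lambda_{ij}$ of $K_{ij}(7)$.
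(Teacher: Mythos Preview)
Your proof is correct and follows essentially the same approach as the paper, which simply invokes Proposition~\ref{G-sing} and the identification $G_{3,1,2,1}\cong G_{3,1}\times G_{2,1}\cong \C P^2\times \C P^1$ without further detail. Your explicit computation of the Pl\"ucker coordinates and the resulting identification of the admissible polytope with the prism $P_{ij}$ makes the argument fully self-contained and confirms precisely why the closure of the generic orbit in $\iota_{ij}(G_{3,1}\times G_{2,1})$ is the toric variety $W_{ij}$.
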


\subsection{The strata in the chart $M_{12}$, their  spaces  of parameters and orbit spaces}
We analyze now the spaces of parameters of the strata as well as the $T^{5}$- orbit spaces of the strata. Since the action of $S_5$ on $G_{5,2}$ permutes the charts, the strata and, consequently, the admissible polytopes and the spaces of parameters, it is enough to consider these objects just in an one chart.

From Proposition~\ref{Orbits} and Proposition~\ref{Polytopes} it directly follows:  

\begin{cor}
Any  stratum $W_{\sigma}$   for which  $P_{\sigma}\neq  \Delta _{5,2}, K_{ij}(9), O_{l}$   consists of one orbit. Consequently,  $F_{\sigma}$ is a point.
\end{cor}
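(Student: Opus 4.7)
Since the $S_5$-action permutes the ten coordinate charts and their strata (Lemma~\ref{Sn-action}), it suffices to analyze the strata meeting a single chart, say $M_{12}$. In this chart a point is described by coordinates $(a_1,\ldots,a_6)\in \C^6$; the ten Pl\"ucker coordinates split into the trivial one $P^{12}=1$, six ``coordinate'' ones (individual $a_i$'s up to sign), and three ``compound'' ones of the form $c_s-c'_s$, where $(c_1,c'_1)=(a_1a_5,a_2a_4)$, $(c_2,c'_2)=(a_1a_6,a_3a_4)$, $(c_3,c'_3)=(a_2a_6,a_3a_5)$. Proposition~\ref{Orbits} describes the $(\C^*)^5$-orbits in $M_{12}$ in terms of the vanishing pattern of the $a_i$ together with the invariant ratios $c_s/c'_s$ (subject to the identity $c_1c'_2c_3=c'_1c_2c'_3$), and Proposition~\ref{Polytopes} reads off the admissible polytope from the resulting Pl\"ucker zero pattern.

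The key observation is that the dimension of $F_\sigma = W_\sigma/(\C^*)^5$ equals the number of ratios $c_s/c'_s$ that remain free after imposing the Pl\"ucker syzygy and the stratum conditions. Each missing compound vertex $\Lambda_{34}$, $\Lambda_{35}$, or $\Lambda_{45}$ in $P_\sigma$ pins down the corresponding ratio via $c_s=c'_s$; each vanishing coordinate $a_i$ forces one of $c_s,c'_s$ to be zero, in which case the associated ratio becomes vacuous rather than a new free parameter. The plan is to check case by case that the only polytopes admitting at least one genuinely free ratio are exactly the three listed in the statement.

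For $\Delta_{5,2}$ all three compound vertices are present (orbit type~(1) of Proposition~\ref{Orbits}) and two independent ratios survive; for $K_{ij}(9)$ one compound vertex is absent (orbit type~(1) with one rigidity) and a single ratio survives; for the octahedra $O_l$ the stratum falls into orbit type~(3), whose defining equation $c'_sz_iz_j=c_sz_kz_l$ carries one free parameter. I will then verify that in the remaining cases of Corollary~\ref{polytopes_chart} no free ratio is left: the four-dimensional polytopes $K_{ij,kl}$ and $K_{ij}(7)$ arise from orbit type~(2) combined with the rigidity $c_s=c'_s$, which pins down its only free ratio; the three-dimensional five-vertex pyramids arise from orbit type~(3) combined with $c_s=c'_s$; the prisms $P_{ij}$, the tetrahedra $T_i$, and all lower-dimensional polytopes (triangles, rectangles, edges, vertices) correspond to items~(4)--(5) of Proposition~\ref{Orbits}, where enough $a_i$'s vanish that the orbit is a coordinate subspace $\C^*_I$ and all ratios become vacuous.

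The main obstacle is pure bookkeeping: each polytope type of Corollary~\ref{polytopes_chart} must be matched with the correct orbit type of Proposition~\ref{Orbits}, and the identifications $c_s=c'_s$ or $a_i=0$ must be read off to confirm that no modulus remains. Once this combinatorial audit is carried out, each such stratum $W_\sigma$ consists of a single $(\C^*)^5$-orbit, and therefore $F_\sigma = W_\sigma/(\C^*)^5$ is a point.
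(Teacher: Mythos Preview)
Your approach is correct and is exactly the paper's: the corollary is stated as following ``directly'' from Proposition~\ref{Orbits} and Proposition~\ref{Polytopes}, and your case-by-case reading of those two propositions is precisely the intended verification.

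One bookkeeping correction that does not affect the conclusion: your assignment of orbit types to polytopes is off in a few places. Only three of the nine $K_{ij}(9)$ in $M_{12}$ arise from orbit type~(1) with a single rigidity; the other six come from type~(2) with $c_s\neq c'_s$ (Proposition~\ref{Polytopes}, item~(2)(a)). The seven $K_{ij}(7)$ in $M_{12}$ never come from type~(2): $K_{12}(7)$ comes from type~(1) with all three rigidities $c_s=c'_s$, and the remaining six come from type~(4). Likewise, half of the twelve $K_{ij,kl}$ in $M_{12}$ come from type~(4) rather than type~(2). In every one of these reassigned cases the stratum is still a single orbit (either all ratios are pinned, or the orbit is a coordinate subspace $\C^*_I$), so your final conclusion stands.
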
  

The  main stratum $W$   consists  of the points whose all Pl\"ucker coordinates are non-zero. The main stratum belongs to all chart for $G_{5,2}$, so Proposition ~\ref{Orbits} implies:

\begin{cor}
The main stratum is, in a fixed chart,   given by the following system of equations:
\begin{equation}\label{main}
c_1^{'}z_1z_5 = c_1z_2z_4,\;\; c_2^{'}z_1z_6=c_{2}z_3z_4,\;\; c_3^{'}z_2z_6=c_{3}z_3z_5,
\end{equation}
where 
\begin{equation}\label{paramain}
c_1c_{2}^{'}c_3=c_{1}^{'}c_2c_3^{'} \;\; \text{and}\;\; c_i, c_i^{'}\neq 0, \;  c_i\neq c_{i}^{'}, \; c_i, c_{i}^{'}\in \C.
\end{equation}
\end{cor}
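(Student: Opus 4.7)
The plan is to deduce this Corollary directly from Proposition~\ref{Orbits} by specializing to the case of all non-zero Pl\"ucker coordinates. By definition the main stratum $W$ lies in every chart $M_I$, so a point $L \in M_{12}$ belongs to $W$ if and only if all ten of its Pl\"ucker coordinates are non-zero. First I would record the explicit form of the Pl\"ucker coordinates in this chart: representing $L$ by the $5\times 2$ matrix whose first two rows are the identity and with local coordinates $z_1,z_2,z_3$ in the first column and $z_4,z_5,z_6$ in the second column of the bottom three rows, a direct computation yields $P^{12}=1$, the six coordinates $P^{1j},P^{2j}$ for $j=3,4,5$ coincide up to sign with $z_4,z_5,z_6,z_1,z_2,z_3$ respectively, and the three remaining coordinates have the quadratic form $P^{34}=z_1z_5-z_2z_4$, $P^{35}=z_1z_6-z_3z_4$, $P^{45}=z_2z_6-z_3z_5$.

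Next, I would observe that the six linear conditions $P^{1j},P^{2j}\neq 0$ amount to requiring $z_i\neq 0$ for all $i$. This places $L$ into case~(1) of Proposition~\ref{Orbits}, so the $(\C^{*})^{5}$-orbit of $L$ is cut out precisely by the three equations~\eqref{main} with non-zero constants $c_i,c_i^{'}$ obeying the consistency relation $c_1c_2^{'}c_3=c_1^{'}c_2c_3^{'}$. This relation itself is forced by the three equations together with $z_i\neq 0$: multiplying the first and third equations and dividing by the second, and cancelling the non-vanishing monomials in $z_i$, yields exactly this identity.

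Finally, I would encode the three remaining inequalities $P^{34},P^{35},P^{45}\neq 0$ as conditions on the constants. Substituting $c_1^{'}z_1z_5=c_1z_2z_4$ into $P^{34}$ gives $P^{34}=z_2z_4(c_1-c_1^{'})/c_1^{'}$, so $P^{34}\neq 0$ is equivalent to $c_1\neq c_1^{'}$ (since $z_2,z_4\neq 0$); analogously one obtains $P^{35}\neq 0 \Leftrightarrow c_2\neq c_2^{'}$ and $P^{45}\neq 0 \Leftrightarrow c_3\neq c_3^{'}$. Combining these three inequalities with the conditions from Proposition~\ref{Orbits} produces exactly~\eqref{paramain}, completing the proof.

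No serious obstacle arises: the entire Corollary is a chart-level transcription of Proposition~\ref{Orbits} (equivalently Proposition~\ref{Polytopes}) combined with the bookkeeping of how the quadratic minors $P^{jk}$, $j,k\geq 3$, relate to the defining equations of the orbit. The only mild care needed is signing the Pl\"ucker minors correctly in the chart so that the correspondence between $P^{34},P^{35},P^{45}$ and the three equations~\eqref{main} is unambiguous.
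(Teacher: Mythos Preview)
Your proposal is correct and follows essentially the same approach as the paper, which simply states that the Corollary follows from Proposition~\ref{Orbits} together with the observation that the main stratum consists of the points whose Pl\"ucker coordinates are all non-zero. You have supplied more explicit detail than the paper does---in particular, the verification that $P^{34},P^{35},P^{45}\neq 0$ translate into $c_i\neq c_i'$---which is exactly the content implicit in case~(1) of Proposition~\ref{Polytopes}.
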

\begin{cor}
The space  of parameters $F$ of the main stratum $W$ is homeomorphic to
\[
\{((c_1:c_1^{'}), (c_2: c_2^{'}), (c_3: c_3^{'}))\in \C P^1 \times \C P^1\times \C P^1,\;\; c_{1}c_2^{'}c_3=c_{1}^{'}c_{2}c_{3}^{'},\;\; c_i,c_i^{'}\neq 0,\;\; c_i\neq c_i^{'}\}.
\] 
\end{cor}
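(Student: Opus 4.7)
The plan is to read the homeomorphism directly off the preceding corollary, which already parametrizes the $(\C^*)^5$-orbits in the main stratum by exactly the triples in $(\C P^1)^3$ appearing in the statement, and then to package this bijection as a homeomorphism via explicit invariant rational functions in the local coordinates of the chart $M_{12}$.

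First I would observe that $W\subset M_{12}$, because $P^{12}(L)\neq 0$ for every $L\in W$, so the local coordinates $(z_1,\ldots ,z_6)$ are available throughout $W$. Next I would define on $W$ the three rational functions
\[
f_1=\frac{z_1 z_5}{z_2 z_4},\qquad f_2=\frac{z_1 z_6}{z_3 z_4},\qquad f_3=\frac{z_2 z_6}{z_3 z_5},
\]
each of which is regular (all $z_i$ are nonzero on $W$) and $(\C^*)^5$-invariant, by a direct weight check against the local action formula written before~\eqref{repr}. They package into a continuous map $\phi\colon F\to (\C P^1)^3$ by $[L]\mapsto \bigl((f_1(L):1),(f_2(L):1),(f_3(L):1)\bigr)$. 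The multiplicative identity $f_1 f_3=f_2$ translates into the constraint $c_1 c_2' c_3=c_1' c_2 c_3'$; the nonvanishing of the $z_i$ gives $c_i,c_i'\neq 0$; and by expressing $P^{34},P^{35},P^{45}$ in local coordinates via the Pl\"ucker relations~\eqref{Plrel}, one checks that the equalities $c_i=c_i'$ correspond precisely to the vanishing of these three Pl\"ucker coordinates, hence are excluded on $W$.

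For the inverse, given any triple in the prescribed subset I would use the $(\C^*)^5$-action to normalize a representative to $z_1=z_2=z_3=z_4=1$, $z_5=c_1/c_1'$, $z_6=c_2/c_2'$; the third orbit equation is then automatic from the constraint, and an explicit computation of the ten Pl\"ucker coordinates of the resulting matrix shows that they are all nonzero exactly when $c_i,c_i'\neq 0$ and $c_i\neq c_i'$, placing the point in $W$. Continuity in both directions is manifest from the explicit formulas, and the composition $\phi\circ(\text{inverse})$ is the identity by the uniqueness statement of the preceding corollary, so $\phi$ is the desired homeomorphism. The only step that goes beyond mechanical bookkeeping is the identification of the inequalities $c_i\neq c_i'$ with the nonvanishing of $P^{34},P^{35},P^{45}$ via the Pl\"ucker relations; everything else follows directly from the description already in hand and the torus-action formula.
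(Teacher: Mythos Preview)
Your proposal is correct and follows essentially the same approach as the paper: the corollary in the paper is stated without a separate proof because it is immediate from the preceding corollary (which, via Propositions~\ref{Orbits} and~\ref{Polytopes}, already exhibits each $(\C^*)^5$-orbit in the main stratum as the fiber over a unique triple $((c_1:c_1'),(c_2:c_2'),(c_3:c_3'))$ with the stated constraints). You have simply unpacked this by writing down the invariant ratios $f_i$ explicitly, constructing the inverse by normalization, and spelling out the bijection between the inequalities $c_i\neq c_i'$ and the nonvanishing of $P^{34},P^{35},P^{45}$, which the paper records in Proposition~\ref{Polytopes}(1).
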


The other non-orbit strata  are, in  the chart $M_{12}$,   given by the family of surfaces (1), (2) and (3) from Proposition~\ref{Orbits}.
Therefore we have: 
\begin{cor}
The space  of parameters for a  non-orbit stratum, which is  different from the main stratum, that is for a   stratum  whose admissible polytope is  $K_{ij}(9)$ or $O_{i}$,  is homeomorphic to  $\C P^{1}_{A} = \C P^{1}\setminus A$, where $A=\{(0:1), (1:0), (1:1)\}$.
\end{cor}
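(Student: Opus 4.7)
The plan is to use the explicit classification of $(\C ^{*})^{5}$-orbits from Proposition~\ref{Orbits} and of admissible polytopes from Proposition~\ref{Polytopes}, combined with the $S_5$-equivariance of Lemma~\ref{Sn-action}, to reduce the statement to a case analysis in a single chart. Since $S_5$ permutes charts and the strata of each combinatorial type, it suffices to exhibit one representative stratum of each of the two types ($K_{ij}(9)$ and $O_i$) in $M_{12}$ and compute its space of parameters explicitly.

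First I would pick a stratum $W_{\sigma}\subset M_{12}$ with $P_{\sigma}=K_{ij}(9)$. By Proposition~\ref{Polytopes}(2) its points fall into one of the families of case (2) of Proposition~\ref{Orbits}, and each such family is a one-parameter collection of $8$-dimensional $(\C ^{*})^{5}$-orbits indexed by $(c:c^{'})\in \C P^{1}$. Since $(c:c^{'})$ is a $(\C ^{*})^{5}$-invariant of the orbit, the projection $W_{\sigma}\to \C P^{1}$, $L\mapsto (c:c^{'})$, descends to an injection of $F_{\sigma}=W_{\sigma}/(\C ^{*})^{5}$ into $\C P^{1}$. I would then argue that the image is exactly $\C P^{1}\setminus A$: each of the three forbidden values $(1:0)$, $(0:1)$, $(1:1)$ forces an additional Pl\"ucker coordinate to vanish (directly from the defining equation in the first two cases, and through a Pl\"ucker relation~\eqref{Plrel} in the third), and this extra vanishing shrinks the admissible polytope below $K_{ij}(9)$.

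An entirely parallel analysis handles the strata with $P_{\sigma}=O_{i}$. A representative in $M_{12}$ is described by case (3) of Proposition~\ref{Orbits}; the family of $6$-dimensional orbits is again indexed by a single $(c:c^{'})\in \C P^{1}$, and the same three forbidden values force the admissible polytope to collapse onto a proper face of $O_{i}$ (a triangle) rather than the full octahedron. In both cases one reads off $F_{\sigma}\cong \C P^{1}\setminus A$ directly from the parametrization.

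The main technical obstacle is the bookkeeping verification that the three forbidden parameter values are in bijection with precisely the three extra Pl\"ucker coordinates whose vanishing would degenerate $P_{\sigma}$ to a proper subpolytope. This uses the explicit shape of the orbit equations in Proposition~\ref{Orbits} together with the Pl\"ucker relations~\eqref{Plrel}; once this correspondence is established, the homeomorphism $F_{\sigma}\cong \C P^{1}\setminus A$ follows immediately, and the $S_5$-equivariance then propagates it to every stratum of the two prescribed combinatorial types.
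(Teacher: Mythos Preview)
Your proposal is correct and follows the same route as the paper: the paper's ``proof'' is essentially the single sentence preceding the Corollary, which observes that the non-orbit strata in $M_{12}$ are exactly the one-parameter families (1), (2), (3) of Proposition~\ref{Orbits}, and then reads off the parameter constraints from Proposition~\ref{Polytopes}. Your write-up simply unpacks this more explicitly and adds the $S_5$-reduction to a single representative, which is implicit in the paper. One cosmetic point: for the two values $(1:0)$ and $(0:1)$ you do not actually need to invoke an extra Pl\"ucker vanishing---these are already excluded in the statement of Proposition~\ref{Orbits}(2),(3) by the condition $c,c'\neq 0$; only the exclusion of $(1:1)$ genuinely requires the polytope-degeneration argument (Proposition~\ref{Polytopes}(2b),(3b)).
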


From Theorem~\ref{triv} it follows:

\begin{cor}
The orbit space  $W_{\sigma}/T^5$  of    a stratum $W_{\sigma}$ whose admissible polytope is:
\begin{enumerate}
\item $\Delta _{5,2}$ is homeomorphic to $\stackrel{\circ}{\Delta}_{5,2}\times F$;
\item $K_{ij}(9)$  is homeomorphic to $\stackrel{\circ}{K}_{ij}(9)\times \C P^1_{A} $;
\item  $O_{i}$ is homeomorphic to $\stackrel{\circ}{O_{i}}\times \C P^1_{A}$.
\end{enumerate} 
\end{cor}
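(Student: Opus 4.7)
The plan is to apply Theorem~\ref{triv} directly. That theorem provides, for every stratum $W_\sigma$, the canonical homeomorphism $W_\sigma/T^5 \cong \stackrel{\circ}{P}_\sigma \times F_\sigma$. Hence the corollary reduces to identifying the space of parameters $F_\sigma$ in each of the three listed cases, by matching the admissible polytope with the spaces of parameters already computed in the preceding corollaries.

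For case (1), if $P_\sigma = \Delta_{5,2}$, then all ten vertices $\Lambda_{ij}$ of $\Delta_{5,2}$ occur as vertices of $P_\sigma$. By the definition of admissible polytope together with Theorem~\ref{moment-map-polytope}, this forces $P^{ij}(L) \neq 0$ for every $L \in W_\sigma$ and every pair $(i,j)$. So $W_\sigma$ is exactly the main stratum $W$, and therefore $F_\sigma = F$, giving $W/T^5 \cong \stackrel{\circ}{\Delta}_{5,2}\times F$.

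For case (2), if $P_\sigma = K_{ij}(9)$, then the vertex $\Lambda_{ij}$ is the unique missing vertex, so $W_\sigma$ is characterized by $P^{ij} \equiv 0$ and all other Plücker coordinates non-zero. By Proposition~\ref{Orbits} (applied in any chart $M_I$ with $I \neq \{i,j\}$ containing $W_\sigma$), the family of $(\C^*)^5$-orbits inside $W_\sigma$ is cut out by one equation of the form $c'z_az_b = cz_cz_d$ with $c, c' \in \C^*$, $c \neq c'$. Hence $F_\sigma$ is parametrized by $(c:c')\in \C P^1$ with the three forbidden values $(0:1), (1:0), (1:1)$, i.e.~$F_\sigma \cong \C P^1_A$, as already stated in the corollary on non-orbit spaces of parameters. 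Analogously, for case (3) with $P_\sigma = O_i$, three Plücker coordinates vanish simultaneously and Proposition~\ref{Orbits}(3) shows the orbits of $W_\sigma$ are parametrized by a single projective ratio $(c:c')$ subject to the same three exclusions, so $F_\sigma \cong \C P^1_A$.

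The only point requiring a little care is the well-definedness of the identification $F_\sigma \cong \C P^1_A$ independently of the chart $M_I$ used to describe the local equations of $W_\sigma$. This is supplied by Proposition~\ref{canon-trivial}, which yields a canonical (chart-free) trivialization $h_\sigma = (\hat\mu, p_\sigma)$. So the main obstacle is merely bookkeeping: one reads off $F_\sigma$ in a convenient chart, and then invokes the canonical trivialization to conclude that the product decomposition is global. No further argument is needed.
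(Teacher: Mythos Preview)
Your proof is correct and follows essentially the same approach as the paper: the corollary is stated immediately after the computation of the spaces of parameters and is deduced directly from Theorem~\ref{triv} combined with the preceding identifications of $F_\sigma$ as $F$ for $\Delta_{5,2}$ and $\C P^1_A$ for $K_{ij}(9)$ and $O_i$. Your final paragraph on chart-independence is slightly superfluous, since $F_\sigma = W_\sigma/(\C^*)^5$ is defined intrinsically and Theorem~\ref{triv} already delivers the global homeomorphism; the chart only enters to \emph{compute} $F_\sigma$, not to define it.
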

\begin{cor}
The orbit space for $W_{\sigma}/T^5$ of a stratum $W_{\sigma}$ whose admissible polytope $P_{\sigma}$ is different from $\Delta _{5,2}$, $K_{ij}(9)$ and $O_{i}$ is homeomorphic to $\stackrel{\circ}{P_{\sigma}}$.
\end{cor}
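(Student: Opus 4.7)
The plan is to derive this as an immediate consequence of the canonical trivialization established in Proposition~\ref{canon-trivial} together with the preceding corollary classifying the one-orbit strata. First, I would invoke the corollary stating that any stratum $W_\sigma$ whose admissible polytope is different from $\Delta_{5,2}$, $K_{ij}(9)$, and $O_l$ consists of a single $(\C^*)^5$-orbit. By the very definition $F_\sigma = W_\sigma/(\C^*)^n$, this immediately gives that $F_\sigma$ is a point.

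Next, I would apply Theorem~\ref{triv} (or equivalently Proposition~\ref{canon-trivial}), which provides the canonical homeomorphism $h_\sigma\colon W_\sigma/T^n \to \stackrel{\circ}{P_\sigma} \times F_\sigma$ given by $h_\sigma = (\hat{\mu}, p_\sigma)$. Substituting $F_\sigma = \{\mathrm{pt}\}$ collapses the product, yielding the homeomorphism $W_\sigma/T^5 \cong \stackrel{\circ}{P_\sigma}$ induced by $\hat{\mu}$ alone.

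There is essentially no obstacle here, since both ingredients have already been established: the one-orbit classification comes from the explicit orbit description in Proposition~\ref{Orbits} and the polytope description in Proposition~\ref{Polytopes}, while the product decomposition comes from the canonical trivialization of $W_\sigma/T^n$ as a locally trivial fiber bundle over $\stackrel{\circ}{P_\sigma}$ with fiber $F_\sigma$. The only thing worth making explicit is that $\hat{\mu}\colon W_\sigma/T^5 \to \stackrel{\circ}{P_\sigma}$ is then a bijection by the convexity theorem (Theorem~\ref{moment-map-polytope}) — since a single $(\C^*)^5$-orbit maps bijectively onto $\stackrel{\circ}{P_\sigma}$ at the level of orbit strata — and a continuous bijection from a compact Hausdorff-type quotient onto the open polytope that is in fact the canonical projection, hence a homeomorphism.
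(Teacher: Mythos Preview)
Your proposal is correct and follows exactly the paper's (implicit) argument: combine the preceding corollary that $F_\sigma$ is a point for these strata with Theorem~\ref{triv} (or Proposition~\ref{canon-trivial}) to get $W_\sigma/T^5 \cong \stackrel{\circ}{P_\sigma}\times\{\mathrm{pt}\}\cong \stackrel{\circ}{P_\sigma}$. The final paragraph invoking Theorem~\ref{moment-map-polytope} and compactness is unnecessary, since the homeomorphism is already furnished directly by $h_\sigma$.
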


\begin{rem}\label{param-chart}
Let us fix a chart $M_{ij}$ and let $W_{\sigma}\subset M_{ij}$ be a stratum such that $F_{\sigma}$ is not a point. Then, as Proposition~\ref{Orbits} shows, $W_{\sigma}$ is defined by the  equations whose  coefficients belong to the space of   parameters $F_{\sigma}$.  We denote by $F_{\sigma, ij}$ a coordinate record of the space of parameters $F_{\sigma}$ in the chart $M_{ij}$.  By $f_{\sigma, ij} : F_{\sigma, ij}\to F_{\sigma}$ we denote the induced  canonical homeomorphism.  We want to emphasize that if $F_{\sigma}$ is a point then  one can not assign to the point $F_{\sigma}$ the parameter coordinates in the chart $M_{ij}$,  since in this case the stratum $W_{\sigma}$ consists of one $(\C ^{*})^{5}$-orbit and by Proposition~\ref{Orbits}  no equations are imposed on the points of $W_{\sigma}$.
\end{rem}

Summing up all we obtain:

\begin{prop}
The strata for  $(\C ^{*})^5$-action on $G_{5,2}/T^5$, their number in each dimension and corresponding polytopes are given as follows:
\begin{equation}
\left[\begin{array}{cccccccccccc}
{\bf 12} & {\bf 10} & 8 & 8 & {\bf 8} & 6 & 6 & 6 & 4 & 4 & 2 & 0\\
{\bf 1} & {\bf 10} & 15 & 10 & {\bf 5} & 10 & 30 & 5& 15 & 30 & 30 & 10\\
{\bf \Delta _{5,2}} & {\bf K_{ij}(9)} & K_{ij,kl}(8) & K_{ij}(7) & {\bf O_{i}}(6) & P_{ij}(6) & P(6) & T_{i}(4) & P (4) & P(3) & P(2) & Ver
\end{array}\right].
\end{equation}
Apart from the bolded strata, all other strata consist of one orbit.
\end{prop}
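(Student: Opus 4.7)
The plan is to verify the proposition as an organized consolidation of results already established. The key inputs are the classification of admissible polytopes for $G_{5,2}$ given in the proposition immediately above, the formula for $W_\sigma/T^5$ from Theorem~\ref{triv}, the description of spaces of parameters in the preceding corollaries, and the $S_5$-action on strata.

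First I would invoke Theorem~\ref{triv}, which gives $W_{\sigma}/T^{5} \cong \stackrel{\circ}{P}_{\sigma} \times F_{\sigma}$, together with Theorem~\ref{moment-map-polytope}, according to which the principal $T^{5}$-orbit in $W_{\sigma}$ has real dimension $\dim P_{\sigma}$. These combine to give the dimension formula
\[
\dim_{\R} W_{\sigma} \;=\; 2\dim P_{\sigma} + \dim_{\R} F_{\sigma}.
\]
By the corollaries preceding this proposition, $F_{\sigma}$ is $F$ (real dimension $4$) when $P_{\sigma}=\Delta_{5,2}$, is $\C P^{1}_{A}$ (real dimension $2$) when $P_{\sigma}=K_{ij}(9)$ or $P_{\sigma}=O_{i}$, and is a single point otherwise. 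Substituting reproduces the top row of the table: $2\cdot 4+4=12$ for $\Delta_{5,2}$, $2\cdot 4+2=10$ for $K_{ij}(9)$, $2\cdot 4=8$ for $K_{ij,kl}$ and $K_{ij}(7)$, $2\cdot 3+2=8$ for $O_{i}$, $2\cdot 3=6$ for the prisms $P_{ij}$, the five-vertex pyramids and the tetrahedra $T_{i}$, $2\cdot 2=4$ for triangles and squares, $2\cdot 1=2$ for edges, and $0$ for vertices.

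For the multiplicities I would apply the orbit-stabilizer relation $|S_{5}\cdot P_{\sigma}|=120/|\mathrm{Stab}(P_{\sigma})|$ to the stabilizers computed in the lemma preceding Lemma~\ref{fund-5}, obtaining $1$ copy of $\Delta_{5,2}$, $10$ polytopes $K_{ij}(9)$, $15$ of $K_{ij,kl}$, $10$ each of $K_{ij}(7)$ and $P_{ij}$, $5$ each of $O_{i}$ and $T_{i}$, $30$ pyramids over squares, $15$ squares, $30$ triangles (two $S_{5}$-orbits, of sizes $20$ and $10$, corresponding to stabilizers $S_{3}$ and $S_{2}\times S_{3}$), $30$ edges and $10$ vertices. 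The statement that every non-bolded stratum consists of a single $(\C^{*})^{5}$-orbit is the content of the corollary just after Proposition~\ref{Polytopes}, and the three bolded rows are exactly the cases in which $F_{\sigma}$ is not a point.

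The main obstacle is really just bookkeeping: ensuring that the local description in the single chart $M_{12}$ given by Corollary~\ref{polytopes_chart} assembles consistently under the $S_{5}$-action into a complete list of strata for $G_{5,2}$, with no admissible polytope type omitted or double-counted. This compatibility is guaranteed by Lemma~\ref{Sn-action} and Lemma~\ref{Sn-action-charts}, which imply that $S_{5}$ preserves the dimension of a stratum, the isomorphism type of its space of parameters, and the number of charts that contain it, so enumerating fundamental strata in one chart and multiplying by orbit sizes is sufficient.
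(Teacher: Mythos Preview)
Your proposal is correct and matches the paper's approach: the proposition is introduced in the paper with ``Summing up all we obtain'', i.e.\ it is stated precisely as a consolidation of the preceding results on admissible polytopes, stabilizers under the $S_5$-action, and spaces of parameters, which is exactly what you do. Your dimension formula $\dim_{\R}W_{\sigma}=2\dim P_{\sigma}+\dim_{\R}F_{\sigma}$ and the orbit-stabilizer counts are the natural way to make this summary explicit.
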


\subsection{The facets of the admissible polytopes}
We describe here the facets of the four-dimensional admissible polytopes.  Using the description of the main stratum in the chart $M_{12}$ we obtain:

\begin{prop}
The facets of the  hypersimplex $\Delta _{5,2}$ are the octahedra $O_i$ and the  tetrahedra $T_i$, where $1\leq i\leq 5$.
\end{prop}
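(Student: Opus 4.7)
The plan is to proceed by direct combinatorial/geometric verification, using the description of facets of $\Delta_{n,k}$ already recalled in the paper (the decomposition of the facet set of $\Delta_{n,k}$ into $n$ copies of $\Delta_{n-1,k}$ and $n$ copies of $\Delta_{n-1,k-1}$ coming from the slices $\{x_q=0\}$ and $\{x_q=1\}$ of the cube $I^n$). Since $\Delta_{5,2}$ is the intersection of $[0,1]^5$ with the hyperplane $H=\{x_1+\cdots+x_5=2\}$, its facets are precisely the nonempty intersections of $\Delta_{5,2}$ with the supporting hyperplanes of $[0,1]^5$, namely $\{x_q=0\}$ and $\{x_q=1\}$ for $q=1,\ldots,5$.

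Next, I would identify the vertices lying on each of these ten hyperplanes. For $\{x_q=0\}$, the vertex $\Lambda_{ij}$ satisfies $(\Lambda_{ij})_q=0$ iff $q\notin\{i,j\}$; this gives exactly $\binom{4}{2}=6$ vertices, namely $\{\Lambda_{jk}:j,k\neq q\}$. Spanning a $3$-dimensional convex polytope in $H$ with six vertices whose pairwise coordinate pattern is that of $\Delta_{4,2}$, one recognizes the cross-polytope (octahedron); matching with the notation fixed earlier, this is precisely $O_q$. Symmetrically, for $\{x_q=1\}$ the vertex $\Lambda_{ij}$ lies on the slice iff $q\in\{i,j\}$, giving the four vertices $\{\Lambda_{qk}:k\neq q\}$. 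These four points are affinely independent in $H\cap\{x_q=1\}\cong\mathbb{R}^3$, so their convex hull is a tetrahedron, which by the fixed notation is $T_q$.

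Finally, I would confirm that the $O_q$ and $T_q$ are honest facets (not just boundary subpolytopes of lower codimension) by checking dimension: each of $O_q$ and $T_q$ is a $3$-dimensional polytope in the $4$-dimensional polytope $\Delta_{5,2}$. Counting gives $5+5=10$ facets, matching the general formula. There is no significant obstacle here; the only thing to be slightly careful about is to verify that no two of the listed facets coincide (they manifestly do not, as they have disjoint defining inequalities among the pairs $\{x_q=0\}$ and $\{x_q=1\}$, and distinct vertex sets) and that no other supporting hyperplane of $\Delta_{5,2}$ produces an additional facet, which is ruled out by the general hypersimplex description invoked at the start.
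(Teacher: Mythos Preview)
Your argument is correct and is essentially the approach the paper relies on: the general facet description of $\Delta_{n,k}$ from the hypersimplex subsection (the $2n$ facets arising as the slices $\{x_q=0\}\cong\Delta_{n-1,k}$ and $\{x_q=1\}\cong\Delta_{n-1,k-1}$) specialized to $n=5$, $k=2$, giving the five octahedra $O_q\cong\Delta_{4,2}$ and the five tetrahedra $T_q\cong\Delta_{4,1}=\Delta^3$. The paper states the proposition without a written-out proof (the lead-in sentence refers to the main stratum in $M_{12}$, but the content is exactly the combinatorial fact you verified), so your explicit verification of the vertex sets and dimensions is a faithful elaboration.
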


We describe first the facets of the admissible polytopes  $K_{ij}(9)$.  For the clearness of the exposition we demonstrate it for the polytope $ K_{13}(9)$. We denote by $P_{i}^{kl}$, $k,l\neq i$, the four sided prism spanned by the vertices of the octahedron $O_i$ excluding the 
vertex $\Lambda _{kl}$.  
\begin{prop}
The boundary of $K_{13}(9)$ consists of the octahedra $O_{1}$ and $O_{3}$, the  tetrahedra $T_2$, $T_4$ and $T_5$, the pyramids
$P_{2}^{13}$, $P_{4}^{13}$ and $P_{5}^{13}$ and the prism $P_{13}$ with the based triangles spanned by $\{\Lambda_{12},\Lambda_{14},\Lambda_{15}\}$ and $\{\Lambda_{23},\Lambda_{34},\Lambda_{35}\}$.
\end{prop}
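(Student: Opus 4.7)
The plan is to realize $K_{13}(9)$ as a $4$-dimensional polytope in the affine hyperplane $\sum x_i=2$ in $\R^5$, namely the convex hull of the nine vertices $\Lambda_{ij}$ of $\Delta_{5,2}$ with $\{i,j\}\neq\{1,3\}$, and to enumerate its facet-defining hyperplanes directly. Since $K_{13}(9)\subset \Delta_{5,2}$ and the two polytopes share nine of ten vertices, every supporting hyperplane of $K_{13}(9)$ either already supports $\Delta_{5,2}$ (coming from one of the ten inequalities $x_q\ge 0$ or $x_q\le 1$, $1\le q\le 5$) or strictly separates the missing vertex $\Lambda_{13}$ from all others.

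First I would go through the ten facet-defining hyperplanes of $\Delta_{5,2}$. The octahedron $O_q=\{x_q=0\}\cap\Delta_{5,2}$ contains $\Lambda_{13}$ exactly when $q\notin\{1,3\}$. Thus $O_1$ and $O_3$ survive unchanged as octahedral facets of $K_{13}(9)$, while for $q=2,4,5$ the trace $\{x_q=0\}\cap K_{13}(9)$ consists of the five remaining vertices of $O_q$, whose convex hull is the square pyramid $P_q^{13}$ with apex the octahedral antipode of $\Lambda_{13}$ in $O_q$ and square base the four vertices of $O_q$ other than $\Lambda_{13}$ and its antipode. Similarly, the tetrahedron $T_q=\{x_q=1\}\cap\Delta_{5,2}$ contains $\Lambda_{13}$ exactly when $q\in\{1,3\}$; hence $T_2,T_4,T_5$ remain tetrahedral facets, while $T_1$ and $T_3$ degenerate to triangles of dimension $2$ and are no longer facets.

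Next I would exhibit the single new hyperplane created by deleting $\Lambda_{13}$. On every vertex of $\Delta_{5,2}$ one has $x_1+x_3\in\{0,1,2\}$, with the value $2$ attained uniquely at $\Lambda_{13}$ and the value $1$ attained exactly at the six neighbors of $\Lambda_{13}$ in the edge graph of $\Delta_{5,2}$, namely $\Lambda_{12},\Lambda_{14},\Lambda_{15},\Lambda_{23},\Lambda_{34},\Lambda_{35}$. Therefore $x_1+x_3\le 1$ is a valid inequality on $K_{13}(9)$ that is tight on exactly these six vertices, so the facet it cuts out is their convex hull. These six vertices split as the triangle $\{\Lambda_{12},\Lambda_{14},\Lambda_{15}\}$ lying in $\{x_1=1\}$ and the triangle $\{\Lambda_{23},\Lambda_{34},\Lambda_{35}\}$ lying in $\{x_3=1\}$, and the three hypersimplex edges $\Lambda_{1a}\Lambda_{3a}$ for $a\in\{2,4,5\}$ connect them laterally, identifying the intersection $K_{13}(9)\cap\{x_1+x_3=1\}$ with the triangular prism $P_{13}$.

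Finally, completeness: any facet of $K_{13}(9)$ lies on a supporting hyperplane, which is either one of the $\Delta_{5,2}$ inequalities (exhaustively handled above, yielding $O_1,O_3,T_2,T_4,T_5,P_2^{13},P_4^{13},P_5^{13}$) or strictly separates $\Lambda_{13}$, and a single separating hyperplane $x_1+x_3=1$ already passes through all six neighbors of $\Lambda_{13}$, so no further separation can produce an additional facet. The main obstacle is precisely this last claim, since $\Delta_{5,2}$ is not simple at $\Lambda_{13}$ (six edges emanate at a $4$-dimensional vertex): a priori the vertex figure could decompose into several new facets, but the coplanarity of the six neighbors on $\{x_1+x_3=1\}$ — easily checked on the coordinate list — rules this out, and the new facet is the single prism $P_{13}$. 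This accounts for all nine facets claimed in the proposition.
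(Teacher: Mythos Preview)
Your argument is correct and complete; the key identity $K_{13}(9)=\Delta_{5,2}\cap\{x_1+x_3\le 1\}$ (which you effectively establish by checking that all six neighbours of $\Lambda_{13}$ already lie on the cutting hyperplane, so no new vertices are created) cleanly settles the completeness issue you flag at the end. The paper takes a rather different route: it works in the chart $M_{12}$, writes the stratum over $K_{13}(9)$ explicitly as $\{c'z_2z_6=cz_3z_5,\ z_4=0\}$, and reads off facets as the admissible polytopes of the codimension-one boundary $(\C^{*})^{5}$-orbits obtained by setting further coordinates to zero. That procedure only produces the facets containing the vertex $\Lambda_{12}$, so the paper then invokes the stabiliser $S_2\{1,3\}\times S_3\{2,4,5\}\subset S_5$ of $K_{13}(9)$ to recover the rest. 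Your approach is more elementary and self-contained---pure polytope combinatorics with no appeal to torus orbits---whereas the paper's argument stays inside the stratum/moment-map framework developed earlier and illustrates how the bijection between boundary orbits and faces is used in practice, at the cost of needing the symmetry step to finish.
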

\begin{proof}
The stratum $W_{\sigma}$, $\sigma =\{I\subset\{1,\ldots , 5\}, |I|=2\}\setminus \{1,3\}$,  which maps to $K_{13}(9)$ is, in the chart $M_{12}$, given by the following  system of equations $c^{'}z_2z_6=cz_3z_5$ and $z_4=0$, where $(c:c^{'})\in \C P^{1}_{A}$. The boundary of any $(\C ^{*})^{5}$-orbit from $W_{\sigma}$ consists of the $(\C ^{*})^{5}$-orbits of the  smaller dimensions and there is a bijection between these orbits and  the faces of $K_{13}(9)$. The orbits of  complex dimension $3$,   belonging to  the boundary of an  orbit from  $W_{\sigma}$ are given by the following conditions: $z_1=0$ ( its polytope is   $O_{3}$), $z_2=z_3=0$ (its polytope is  $P_{13}$),  $z_2=z_5=0$ ( its polytope is  $P_{4}^{13}$),  $z_3=z_6=0$ (its polytope is the pyramid $P_{5}^{13}$(, $z_5=z_6=0$ ( its polytope is $T_2$). The  stationary subgroup for $K_{13}(9)$ regarded to $S_5$-action  is $S_{13} = S_{2}\{1,3\}\times S_{3}\{2,4,5\}$. By the action of $S_{13}$ on those facets of $K_{13}(9)$, which belong  to  the chart $M_{12}$ we obtain  all  facets for $K_{13}(9)$. In this way in addition  we obtain  $O_1$, $P_{2}^{13}$, $T_4$ and $T_5$ to be the facets for $K_{13}(9)$. 
\end{proof}  
More generally:
\begin{prop}\label{bound-9}
The boundary of $K_{ij}(9)$ consists of the octahedra $O_{i}$ and $O_{j}$, the  tetrahedra $T_{k}$, $T_{l}$, $T_{m}$, the  pyramids $P_{k}^{ij}$, $P_{l}^{ij}$ and $P_{m}^{ij}$ and  the prism $P_{ij}$ with the based triangles spanned by $\{\Lambda_{ik},\Lambda_{il},\Lambda_{im}\}$ and $\{\Lambda_{jk},\Lambda_{jl},\Lambda_{jm}\}$, where $k,l,m\neq i,j$.
\end{prop}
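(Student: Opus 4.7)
The plan is to reduce this statement to the previous proposition on $K_{13}(9)$ by exploiting the $S_5$-symmetry of the construction. By Lemma~\ref{Sn-action}, the canonical $S_5$-action on $G_{5,2}$ descends to an action on the set of admissible polytopes given by $s(P_\sigma) = \mu(s(W_\sigma))$; on the vertex set of $\Delta_{5,2}$ this action is simply $s(\Lambda_{ab}) = \Lambda_{s(a)s(b)}$, the restriction of the coordinate-permutation action of $S_5$ on $\R^5$. Thus $s$ is a linear isomorphism of $\R^5$ that permutes the vertices of $\Delta_{5,2}$, and so it restricts to polytope isomorphisms between admissible polytopes and between their facets.

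First I would fix, for each pair $\{i,j\}\subset \{1,\ldots,5\}$, a permutation $s\in S_5$ with $s(\{1,3\}) = \{i,j\}$ and set $\{k,l,m\} = s(\{2,4,5\})$. Since $K_{13}(9)$ is by definition the convex hull of all vertices $\Lambda_{ab}$ with $\{a,b\}\neq \{1,3\}$, the image $s(K_{13}(9))$ is the convex hull of all $\Lambda_{ab}$ with $\{a,b\}\neq \{i,j\}$, i.e.\ $K_{ij}(9)$. Since $s$ is an affine bijection, it carries the facets of $K_{13}(9)$ bijectively onto those of $K_{ij}(9)$.

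Next I would simply track how each facet from the previous proposition transforms under $s$. By the labelling conventions of the notation block, the octahedron $O_a = \{x_a = 0\}\cap \Delta_{5,2}$ maps to $O_{s(a)}$; the tetrahedron $T_a$ (vertices $\Lambda_{ab}$, $b\neq a$) maps to $T_{s(a)}$; the prism $P_{ab}$ maps to $P_{s(a)s(b)}$; and the pyramid $P_a^{bc}$ maps to $P_{s(a)}^{s(b)s(c)}$. Applying this to the facet list $\{O_1, O_3,\ T_2, T_4, T_5,\ P_2^{13}, P_4^{13}, P_5^{13},\ P_{13}\}$ of $K_{13}(9)$ yields exactly $\{O_i, O_j,\ T_k, T_l, T_m,\ P_k^{ij}, P_l^{ij}, P_m^{ij},\ P_{ij}\}$ with $\{k,l,m\}$ as above; in particular the prism $P_{ij}$ has the two triangular bases $\{\Lambda_{ik},\Lambda_{il},\Lambda_{im}\}$ and $\{\Lambda_{jk},\Lambda_{jl},\Lambda_{jm}\}$ because $s$ sends the bases $\{\Lambda_{12},\Lambda_{14},\Lambda_{15}\}$ and $\{\Lambda_{23},\Lambda_{34},\Lambda_{35}\}$ of $P_{13}$ to precisely these triangles.

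No serious obstacle is expected: all of the combinatorial-geometric content, namely the identification of facets by setting individual Pl\"ucker coordinates to zero in a chart, was already carried out in the proof of the $K_{13}(9)$ case, and this proposition only requires a bookkeeping argument using the symmetry. The only point meriting care is to verify, for an arbitrary choice of $s$ with $s(\{1,3\}) = \{i,j\}$, that the resulting description is independent of the choice (which holds because the stationary subgroup of $K_{ij}(9)$ in $S_5$ is $S_2\{i,j\}\times S_3\{k,l,m\}$, and this stabilizer acts on the listed facets by permuting them within each symmetry class).
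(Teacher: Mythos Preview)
Your proposal is correct and takes essentially the same approach as the paper: the paper states this proposition immediately after the $K_{13}(9)$ case with the phrase ``More generally:'' and no separate proof, leaving the $S_5$-symmetry argument implicit. Your write-up makes explicit precisely the bookkeeping the paper omits, including the observation that the stabilizer $S_2\{i,j\}\times S_3\{k,l,m\}$ permutes the facets within each type so that the description is independent of the choice of $s$.
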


In an analogous way we describe   the facets for the polytopes $K_{ij,kl}$ and $K_{ij}(7)$.

\begin{prop}\label{bound-8}
The boundary of the polytope $K_{ij,kl}$ consists of two prisms $P_{ij}$ and $P_{kl}$ with the based triangles spanned by $\{\Lambda_{is},\Lambda_{ip},\Lambda_{iq}\}$, $\{\Lambda_{js},\Lambda_{jp},\Lambda_{jq}\}$ and $\{\Lambda_{kr},\Lambda_{ku},\Lambda_{kv}\}$, $\{\Lambda_{lr},\Lambda_{lu},\Lambda_{lv}\}$, the  tetrahedron $T_{m}$ and the pyramids $P_{i}^{kl}$, $P_{j}^{kl}$, $P_{k}^{ij}$, $P_{l}^{ij}$, where $\{s,p,q\} =\{1,2,3,4,5\}\setminus \{i,j\}$, $\{r,u,v\}=\{1,2,3,4,5\}\setminus \{k,l\}$ and $m\neq i,j,k,l$ while $i,j\neq k,l$.
\end{prop}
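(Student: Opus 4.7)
The plan is to adapt the argument of Proposition~\ref{bound-9}. By Lemma~\ref{Sn-action} the fifteen polytopes $K_{ij,kl}$ form a single $S_5$-orbit, so it suffices to enumerate the facets of one representative, say $K_{13,24}$; the stabilizer of $K_{13,24}$ in $S_5$ (of order $8$) contains the transpositions $(13)$, $(24)$ and the element $(12)(34)$, and these will be used to transport facets obtained in a single coordinate chart to the remaining equivalent ones.

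In the chart $M_{12}$, the stratum $W_{K_{13,24}}$ is cut out by $P^{13}=P^{24}=0$, which in the local coordinates of Proposition~\ref{Orbits} becomes $z_2=z_4=0$ with $z_1,z_3,z_5,z_6\ne 0$; this is a single $(\C^*)^5$-orbit. By Theorem~\ref{moment-map-polytope} the facets of $K_{13,24}$ correspond bijectively to the complex three-dimensional $(\C^*)^5$-orbits in the boundary of the closure of this orbit. Sending in turn each of $z_1, z_3, z_5, z_6$ to zero and reading off the surviving non-vanishing Pl\"ucker coordinates yields four facets: the pyramid $P_3^{24}$ (apex $\Lambda_{15}$), the prism $P_{13}$, the pyramid $P_4^{13}$ (apex $\Lambda_{25}$), and the prism $P_{24}$. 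Applying $(13)$ then carries $P_3^{24}$ to $P_1^{24}$ and $(24)$ carries $P_4^{13}$ to $P_2^{13}$, while $(12)(34)$ exchanges the two prisms, accounting for six of the seven claimed facets.

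The remaining facet $T_5$ is the main obstacle: it lies at infinity in $M_{12}$ (it corresponds to the degeneration $P^{12}\to 0$) and is fixed by the entire stabilizer, so it cannot be produced by symmetry from the previous six. Its existence is immediate from the observation that $T_5$ is itself a facet of $\Delta_{5,2}$ and all four of its vertices $\Lambda_{15}, \Lambda_{25}, \Lambda_{35}, \Lambda_{45}$ belong to $K_{13,24}$, so $T_5\subset K_{13,24}$ is a three-dimensional face of the four-dimensional polytope $K_{13,24}$, and hence a facet. Completeness of the list can then be verified combinatorially by inspecting how each facet of $\Delta _{5,2}$ meets $K_{13,24}$: the octahedra $O_i$ with $i\neq 5$ contribute the pyramids $P_i^{kl}$ obtained by removing the unique vertex $\Lambda_{13}$ or $\Lambda_{24}$ they contain; the octahedron $O_5$ contributes only a two-dimensional square, since $\Lambda_{13}$ and $\Lambda_{24}$ are opposite vertices of $O_5$; each tetrahedron $T_i$ with $i\in\{1,2,3,4\}$ drops to a triangle which is absorbed as a face of one of the prisms $P_{13}, P_{24}$; and $T_5$ itself contributes directly, giving exactly the seven facets listed.

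The hard part, in contrast with Proposition~\ref{bound-9}, is therefore the facet $T_5$, which is both invisible in the chart $M_{12}$ and invariant under the stabilizer of $K_{ij,kl}$, so the local $(\C^*)^5$-orbit analysis must be supplemented by the global combinatorial argument above. An alternative would be to repeat the local analysis in a chart of the form $M_{i5}$ to exhibit $T_5$ as a codimension-one boundary orbit explicitly, but the combinatorial check is the shorter route.
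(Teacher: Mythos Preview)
Your proof is correct and follows the paper's intended approach: the paper gives no separate argument for Proposition~\ref{bound-8} beyond ``in an analogous way'' to Proposition~\ref{bound-9}, and your implementation of that analogy is accurate. You are also right to flag the genuine extra wrinkle for $K_{ij,kl}$: the stabilizer $\langle (13),(24),(12)(34)\rangle$ has two orbits on the vertex set of $K_{13,24}$, namely $\{\Lambda_{12},\Lambda_{14},\Lambda_{23},\Lambda_{34}\}$ and $\{\Lambda_{15},\Lambda_{25},\Lambda_{35},\Lambda_{45}\}$, so the four facets seen in $M_{12}$ together with their stabilizer images produce only six facets, and $T_5$ must be supplied separately. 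That observation is not visible in the paper's one-line proof sketch.

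Two small remarks. First, your completeness argument via ``how each facet of $\Delta_{5,2}$ meets $K_{13,24}$'' only controls the facets lying on $\partial\Delta_{5,2}$; it does not by itself rule out further interior facets beyond $P_{13},P_{24}$. The clean way to close this is already implicit in what you wrote: the chart analysis in $M_{12}$ exhibits \emph{all} facets through $\Lambda_{12}$, the stabilizer then gives all facets through any vertex in the first orbit, and any remaining facet must have its vertices entirely in $\{\Lambda_{15},\Lambda_{25},\Lambda_{35},\Lambda_{45}\}$, which forces it to be $T_5$. Second, your ``alternative'' of working in a chart $M_{i5}$ is in fact the shorter route, not the longer one: in $M_{15}$ the four visible facets are $P_{13},P_3^{24},P_2^{13},T_5$, and now the stabilizer orbit already yields all seven, exactly parallel to the proof of Proposition~\ref{bound-9}.
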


\begin{rem}
An admissible polytope $K_{ij, kl}$ has eight vertices, among them  four vertices  are  simple, while the other four vertices have  five edges.
\end{rem}

\begin{prop}\label{bound-7}
The boundary of the polytope $K_{ij}(7)$ consists of the  prism $P_{ij}$ with the based triangles spanned by  $\{\Lambda_{ip},\Lambda_{iq},\Lambda_{ir}\}$, $\{\Lambda_{jp},\Lambda_{jq},\Lambda_{jr}\}$, the  tetrahedra $T_{i}$, $T_{j}$ and the pyramids $P_{p}^{qr}$, $P_{q}^{pr}$, $P_{r}^{pq}$,
where $\{p,q,r\}=\{1,2,3,4,5\}\setminus \{i,j\}$.
\end{prop}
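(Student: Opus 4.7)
The plan is to mirror the proof of Proposition~\ref{bound-9}. By definition, $K_{ij}(7)$ is the convex hull of seven vertices: the apex $\Lambda_{ij}$ together with $\{\Lambda_{ip},\Lambda_{iq},\Lambda_{ir},\Lambda_{jp},\Lambda_{jq},\Lambda_{jr}\}$, where $\{p,q,r\}=\{1,\ldots,5\}\setminus\{i,j\}$. The six base vertices all satisfy $x_i+x_j=1$ in $\R^5$, while the apex satisfies $x_i+x_j=2$, so the base vertices lie in a three-dimensional affine hyperplane containing $P_{ij}$ and the apex lies strictly off it. Consequently $K_{ij}(7)$ is genuinely a $4$-dimensional pyramid with apex $\Lambda_{ij}$ and base $P_{ij}$, and therefore has exactly $1+5=6$ facets: the base $P_{ij}$ itself, together with one pyramid over each of the $5$ facets of the triangular prism $P_{ij}$.

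Next I would read off each of the five lateral facets. The triangular facet $\{\Lambda_{ip},\Lambda_{iq},\Lambda_{ir}\}$ of $P_{ij}$, together with the apex, yields the tetrahedron $\{\Lambda_{ij},\Lambda_{ip},\Lambda_{iq},\Lambda_{ir}\}=T_i$; similarly the opposite triangular facet yields $T_j$. The rectangular facet of $P_{ij}$ opposite to the index $p$ has vertex set $\{\Lambda_{iq},\Lambda_{ir},\Lambda_{jq},\Lambda_{jr}\}$; adjoining the apex produces the five-vertex pyramid $\{\Lambda_{ij},\Lambda_{iq},\Lambda_{ir},\Lambda_{jq},\Lambda_{jr}\}$, which is precisely the octahedron $O_p$ with the vertex $\Lambda_{qr}$ removed, i.e.\ $P_p^{qr}$. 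The remaining two rectangular facets yield $P_q^{pr}$ and $P_r^{pq}$ cyclically.

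Finally, I would verify that each of these six polytopes is indeed the admissible polytope of a boundary $(\C^{*})^{5}$-orbit of the stratum $W_{\sigma}$ mapping to $\stackrel{\circ}{K}_{ij}(7)$. As in the proof of Proposition~\ref{bound-9}, one fixes a chart $M_I$ containing $W_{\sigma}$ (for example $M_{ij}$), identifies $W_{\sigma}$ by Proposition~\ref{Orbits}, reads off the three-complex-dimensional boundary orbits of a typical orbit in $W_{\sigma}$, and matches their admissible polytopes using Proposition~\ref{Polytopes}. Those facets visible in the chosen chart are obtained directly; the rest are produced by the action of the stabilizer subgroup $S_2\{i,j\}\times S_3\{p,q,r\}\subset S_5$, which permutes the three pyramids $P_p^{qr},P_q^{pr},P_r^{pq}$ transitively and swaps $T_i$ with $T_j$. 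The one delicate step, as in Proposition~\ref{bound-9}, is the bookkeeping translation of the vanishing Pl\"ucker coordinates into local variables; here it is straightforward, since the three missing vertices of $K_{ij}(7)$ are exactly $\Lambda_{pq},\Lambda_{pr},\Lambda_{qr}$, and these three simultaneous vanishings are consistent with, and forced pairwise by, the Pl\"ucker relations~\eqref{Plrel}.
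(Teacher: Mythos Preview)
Your proposal is correct. The paper does not spell out a proof for this proposition; it simply says, just before Proposition~\ref{bound-8}, that the facets of $K_{ij,kl}$ and $K_{ij}(7)$ are obtained ``in an analogous way'' to Proposition~\ref{bound-9}. Your third paragraph carries out exactly that analogy (chart computation of boundary orbits plus the $S_2\{i,j\}\times S_3\{p,q,r\}$ symmetry), so it matches the paper's intended argument.

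Your first two paragraphs, however, give a cleaner and purely combinatorial argument that the paper does not make explicit: once you observe that the six base vertices lie in the hyperplane $x_i+x_j=1$ while the apex $\Lambda_{ij}$ does not, $K_{ij}(7)$ is visibly a $4$-pyramid over the triangular prism $P_{ij}$, and the six facets can be read off directly as the base together with the cones over the five facets of the prism. This bypasses the orbit analysis entirely for the polytope statement itself, and is both shorter and more transparent than the paper's route; the orbit verification in your third paragraph then becomes a consistency check rather than the main engine.
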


\begin{cor}\label{bound-p6}
The prism $P_{ij}$ with the based triangles spanned by $\{\Lambda_{ip},\Lambda_{iq},\Lambda_{ir}\}$ and  $\{\Lambda_{jp},\Lambda_{jq},\Lambda_{jr}\}$ is the common interior facet for $K_{ij}(9)$, polytopes $K_{ij,kl}$ and pyramid $K_{ij}(7)$. It is the only facet for $K_{ij}(9)$ and $K_{ij}(7)$
that  belongs to the interior of $\stackrel{\circ}{\Delta}_{5,2}$. 
\end{cor}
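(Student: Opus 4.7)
The plan is to deduce this corollary directly from the three preceding boundary descriptions (Propositions~\ref{bound-9}, \ref{bound-8}, \ref{bound-7}) together with an elementary coordinate check in $\R^5$. The first assertion is immediate: inspecting the lists of facets in those three propositions shows that the prism $P_{ij}$ (with triangular bases $\{\Lambda_{ip},\Lambda_{iq},\Lambda_{ir}\}$ and $\{\Lambda_{jp},\Lambda_{jq},\Lambda_{jr}\}$, where $\{p,q,r\}=\{1,\ldots,5\}\setminus\{i,j\}$) appears as a facet of $K_{ij}(9)$, of $K_{ij,kl}$ (for any choice of $\{k,l\}$ disjoint from $\{i,j\}$), and of $K_{ij}(7)$.

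Next I would verify that $\stackrel{\circ}{P_{ij}}\subset \stackrel{\circ}{\Delta}_{5,2}$. Writing an interior point of $P_{ij}$ as a convex combination
\[
x=\alpha_{ip}\Lambda_{ip}+\alpha_{iq}\Lambda_{iq}+\alpha_{ir}\Lambda_{ir}+\alpha_{jp}\Lambda_{jp}+\alpha_{jq}\Lambda_{jq}+\alpha_{jr}\Lambda_{jr}
\]
with all coefficients strictly positive and summing to $1$, one computes the coordinates $x_i=\alpha_{ip}+\alpha_{iq}+\alpha_{ir}$, $x_j=\alpha_{jp}+\alpha_{jq}+\alpha_{jr}$, $x_s=\alpha_{is}+\alpha_{js}$ for $s\in\{p,q,r\}$. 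Each of these lies strictly in $(0,1)$, so $x$ avoids every facet $\{x_s=0\}$ and $\{x_s=1\}$ of $\Delta_{5,2}$, and therefore belongs to $\stackrel{\circ}{\Delta}_{5,2}$.

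Finally, for the uniqueness assertion I would check the remaining facets enumerated in Propositions~\ref{bound-9} and \ref{bound-7}. Using the notation from the paper, each octahedron $O_s$ has vertices $\Lambda_{ab}$ with $a,b\neq s$, hence $O_s\subset\{x_s=0\}$; each tetrahedron $T_s$ has vertices $\Lambda_{sk}$ for $k\neq s$, hence $T_s\subset\{x_s=1\}$; and each four-sided pyramid $P_s^{ab}$ is a subpolytope of the octahedron $O_s$, so $P_s^{ab}\subset\{x_s=0\}$. All of the remaining facets of $K_{ij}(9)$, namely $O_i,O_j,T_k,T_l,T_m,P_k^{ij},P_l^{ij},P_m^{ij}$, and all of the remaining facets of $K_{ij}(7)$, namely $T_i,T_j,P_p^{qr},P_q^{pr},P_r^{pq}$, are therefore contained in the boundary of $\Delta_{5,2}$. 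This leaves $P_{ij}$ as the unique facet of $K_{ij}(9)$ and of $K_{ij}(7)$ that meets $\stackrel{\circ}{\Delta}_{5,2}$.

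No serious obstacle is expected: the argument is purely combinatorial once the three boundary descriptions are in place, and the only computation needed is the coordinate verification for points of $\stackrel{\circ}{P_{ij}}$. The main point to be careful about is recognising each auxiliary polytope ($O_s$, $T_s$, $P_s^{ab}$) as lying in a specific coordinate hyperplane of $\R^5$, which is a direct reading of their vertex sets.
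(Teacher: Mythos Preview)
Your proposal is correct and follows exactly the approach the paper intends: the corollary is stated without proof immediately after Propositions~\ref{bound-9}, \ref{bound-8}, \ref{bound-7}, and your argument simply makes explicit the routine inspection of those facet lists together with the obvious coordinate check placing $O_s,T_s,P_s^{ab}$ on $\partial\Delta_{5,2}$ and $\stackrel{\circ}{P_{ij}}$ in $\stackrel{\circ}{\Delta}_{5,2}$.
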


Note that the polytopes $K_{il,kl}$, $K_{ij,pq}$ ,  $K_{ij,rs}$ , $K_{kl}(7)$, $K_{pq}(7)$,  $K_{rs}(7)$ are sub-polytopes of $K_{ij}(9)$ and that  any of the  pairs $(K_{ij,kl}, K_{kl}(7))$,  $(K_{ij,pq}, K_{pq}(7))$,  $(K_{ij,rs}, K_{rs}(7))$
 gives the polytopal decomposition of $K_{ij}(9)$. On the other hand,  the  pairs $(K_{ij}(9), K_{ij}(7))$ give the polytopal decomposition of $\Delta_{5,2}$, where $K_{ij}(9)$ and $K_{ij}(7)$ are glued together along the prism $P_{ij}$, which is their common interior  facet.

\section{The  idea of the universal space of parameters}
The  main stratum is an everywhere dense set in $G_{5,2}$ meaning that any other stratum is in the boundary of the main stratum. Using that fact our goal is to prove the following theorem.

\begin{thm}\label{allcharts}
There exists a topological space   $\tilde{\mathcal{F}}$ such that:
\begin{enumerate}
\item for  any chart $M_{ij}$ there is  a map $H_{ij} : \mathcal{A} \to \tilde{\mathcal{F}}$, $\sigma \to \tilde{F}_{\sigma, ij}$, where $\tilde{F}_{\sigma, ij}\subset \tilde{\mathcal{F}}$ is defined by: $c\in \tilde{F}_{\sigma, ij}$ if and only if  there exists a sequence $ (x_n, c_n)\subset \stackrel{\circ}{\Delta} _{5,2}\times F_{ij}$ such that $c_n$ converges to $c$ and  $h^{-1}(x_n, f_{ij}(c_n))$ converges to a  point from $W_{\sigma}/T^5$.
\item $\tilde{F}_{\sigma, ij}$ is homeomorphic to $\tilde{F}_{\sigma, kl}$  for any charts $M_{ij}, M_{kl}$.
\end{enumerate} 
\end{thm}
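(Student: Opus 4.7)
The plan is to construct $\tilde{\mathcal{F}}$ as an explicit compactification of the space of parameters $F$ of the main stratum $W$, and then verify both claims of the theorem.

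\textbf{Construction of $\tilde{\mathcal{F}}$.} In the chart $M_{12}$, by equations~\eqref{main}--\eqref{paramain}, the space $F$ is realized as the locus of triples $((c_1:c_1'),(c_2:c_2'),(c_3:c_3')) \in (\C P^1)^3$ satisfying $c_1c_2'c_3 = c_1'c_2c_3'$ with all $c_i, c_i' \neq 0$ and $c_i \neq c_i'$. I would take $\tilde{\mathcal{F}}$ to be the projective closure of $F$ in $(\C P^1)^3$, cut out by the single equation $c_1c_2'c_3 = c_1'c_2c_3'$. As noted in the introduction, by the results of Kapranov and Klemyatin this coincides with the Chow quotient $(\C P^1)^5 /\!\!/ PGL(2,\C) \cong \overline{M_{0,5}}$, a smooth compact algebraic variety in which $F$ sits as a Zariski-open dense subset.

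\textbf{Proof of part (1).} For a fixed chart $M_{ij}$ the set $F_{ij}$ is a coordinate record of $F$ via the canonical homeomorphism $f_{ij} \colon F_{ij} \to F$, which we compose with the embedding $F \hookrightarrow \tilde{\mathcal{F}}$. Given a sequence $(x_n, c_n) \in \stackrel{\circ}{\Delta}_{5,2} \times F_{ij}$, Proposition~\ref{canon-trivial} ensures that $p_n := h^{-1}(x_n, f_{ij}(c_n)) \in W/T^5$ is well-defined; since $G_{5,2}/T^5$ is compact, $p_n$ has a convergent subsequence whose limit $p_\infty$ lies in a unique stratum $W_\tau/T^5$, and since $\tilde{\mathcal{F}}$ is compact, $c_n$ has a convergent subsequence with limit in $\tilde{\mathcal{F}}$. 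The definition of $\tilde{F}_{\sigma, ij}$ is thus meaningful; the explicit Pl\"ucker equations~\eqref{main} show that the target stratum $\tau$ is determined by which Pl\"ucker coordinates vanish at the limit, which in turn is controlled by the limit value $c \in \tilde{\mathcal{F}}$ together with the direction of approach $x_n \to \partial P_\sigma$ in the moment polytope.

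\textbf{Proof of part (2).} Chart-independence follows from the canonical action of the symmetric group $S_5$ on $G_{5,2}$ elaborated in Lemmas~\ref{Sn-action} and~\ref{Sn-action-charts}. For any two charts $M_{ij}$ and $M_{kl}$, pick $s \in S_5$ with $s(\{i,j\}) = \{k,l\}$; such $s$ exists by transitivity of the $S_5$-action on two-element subsets of $\{1,\ldots,5\}$. The action of $s$ is compatible with the $(\C^*)^5$-action, the canonical trivialization $h$, and the stratification, so it induces a homeomorphism $\tilde{\varphi}_s \colon \tilde{\mathcal{F}} \to \tilde{\mathcal{F}}$ sending limit sets defined through the chart $M_{ij}$ to limit sets defined through $M_{kl}$, giving $\tilde{\varphi}_s(\tilde{F}_{\sigma, ij}) = \tilde{F}_{s(\sigma), kl}$. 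Composing with the further $S_5$-symmetry that permutes strata with homeomorphic spaces of parameters yields the required homeomorphism $\tilde{F}_{\sigma, ij} \cong \tilde{F}_{\sigma, kl}$.

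\textbf{Main obstacle.} The principal technical challenge is the explicit computation in part (1): for each admissible $\sigma$, one must identify $\tilde{F}_{\sigma, ij}$ as an explicit subvariety of $\tilde{\mathcal{F}}$ and verify that the set of achievable limits depends only on $\sigma$ (and not on the particular sequence). For strata whose space of parameters $F_\sigma$ is a single point, $\tilde{F}_{\sigma, ij}$ is typically a positive-dimensional subvariety of $\tilde{\mathcal{F}}$, reflecting the many degeneration directions from $W$ to $W_\sigma$. This forces a case-by-case analysis over the strata enumerated in Proposition~\ref{Polytopes}, which is where the genuinely nontrivial work lies.
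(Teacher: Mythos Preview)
Your construction of $\tilde{\mathcal{F}}$ as the cubic hypersurface $\mathcal{F} = \{c_1c_2'c_3 = c_1'c_2c_3'\} \subset (\C P^1)^3$ does not work, and the paper explicitly demonstrates why. Consider the stratum $W_\sigma$ with admissible polytope $K_{12}(7)$, i.e.\ the stratum defined in the chart $M_{12}$ by $z_2z_6 = z_3z_5$, $z_1z_5=z_2z_4$, $z_1z_6=z_3z_4$. A sequence in the main stratum converging to a point of this stratum forces $(c_1:c_1')\to(1:1)$, $(c_2:c_2')\to(1:1)$, $(c_3:c_3')\to(1:1)$, so in the chart $M_{12}$ the limit set $\tilde{F}_{\sigma,12}$ is the single point $((1:1),(1:1),(1:1))$. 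But this same stratum, viewed in the chart $M_{13}$, is defined by the vanishing of two Pl\"ucker coordinates with the remaining one free, and the corresponding limit set $\tilde{F}_{\sigma,13}$ is a full copy of $\C P^1$ (see Remark~\ref{strata137}). Thus with your choice of $\tilde{\mathcal{F}}$ part~(2) fails outright: a point is not homeomorphic to $\C P^1$. Relatedly, your identification of $\mathcal{F}$ with $\overline{M_{0,5}}$ is incorrect: projecting $\mathcal{F}$ onto the first two $\C P^1$-factors exhibits $\mathcal{F}$ as the blowup of $\C P^1\times\C P^1$ at only \emph{two} points ($((1{:}0),(1{:}0))$ and $((0{:}1),(0{:}1))$), whereas $\overline{M_{0,5}}$ is the blowup at three.

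The paper repairs this by taking $\tilde{\mathcal{F}}$ to be the blowup of $\mathcal{F}$ at $((1:1),(1:1),(1:1))$ (Theorem~\ref{universal}), which restores the missing $\C P^1$ as an exceptional divisor. The heart of the proof of part~(2) is then not the $S_5$-action but rather the explicit verification (Theorem~\ref{mainb}) that the coordinate transition maps $f_{ij,kl}\colon F_{ij}\to F_{kl}$ extend to homeomorphisms $\tilde{f}_{ij,kl}\colon\tilde{\mathcal{F}}_{ij}\to\tilde{\mathcal{F}}_{kl}$ of the blown-up space, with $\tilde{f}_{ij,kl}(\tilde{F}_{\sigma,ij})=\tilde{F}_{\sigma,kl}$ for the \emph{same} $\sigma$. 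Your $S_5$-argument only yields $\tilde{F}_{\sigma,ij}\cong\tilde{F}_{s(\sigma),kl}$; getting back from $s(\sigma)$ to $\sigma$ requires precisely the transition homeomorphism whose existence is at issue, so that step is circular. The $S_5$-action is used in the paper only to reduce the number of chart-pairs one must check, not to replace the transition analysis.
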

Here $\mathcal{A}$ is the set of all admissible sets, $h: W/T^5\to \stackrel{\circ}{\Delta _{5,2}}\times F$ is the canonical trivialization of the main stratum and $f_{ij} : F_{ij} \to F$ is the canonical  homeomorphism for the chart $M_{ij}$.

Let us explain the meaning of this theorem a little more.  The orbit space $G_{5,2}/T^5$ is a compactification of $W/T^5$  and the boundary of $W/T^5$ in $G_{5,2}/T^5$ is given by $ \cup _{W_{\sigma}\neq W} W_{\sigma}/T^{5}$.  Theorem~\ref{triv} states that for any $\sigma \in \mathcal{A}$,  
$W_{\sigma}/T^n $ is homeomorphic to  $\stackrel{\circ}{P}_{\sigma}\times F_{\sigma, ij}$.   Theorem~\ref{universal} claims that there exists a  corresponding compactification for $\stackrel{\circ}{\Delta}_{5,2}\times F_{ij}$  which is given by the subspaces $\tilde{F}_{\sigma, ij}\subset \tilde{\mathcal{F}}$,  that is a  compactification for $\stackrel{\circ}{\Delta}_{5,2}\times F_{ij} $ of the form $\cup _{\sigma}\stackrel{\circ}{P}_{\sigma}\times  \tilde{F}_{\sigma, ij}$.

\begin{defn}
The space $\tilde{\mathcal{F}}$ is called  the universal space of parameters for $T^{5}$-action on $G_{5,2}$.
\end{defn}
\begin{defn}
The spaces $\tilde{F}_{\sigma, ij}$ are said to be the virtual spaces of parameters for the strata $W_{\sigma}$ in the chart $M_{ij}$.
\end{defn}

Note that Theorem~\ref{universal} directly implies that $F_{ij}\subset \tilde{\mathcal{F}}$ for any chart $M_{ij}$. We know that $F_{ij}\subset \C P^1\times \C P^1\times \C P^1$ and we will prove  that the closure $\bar{F}_{ij}\subset \C P^1\times \C P^1 \times \C P^1$ has to belong to $\tilde{\mathcal{F}}$ as well. But,  as the consideration that follows will  show this closure is not enough, the  universal space of parameters turns out to be wider. 

We proceed with the proof of Theorem~\ref{universal} though  few steps. First, we describe the  subsets $\bar{F}_{\sigma, 12}\subset \bar{F}_{12}$  for the strata $W_{\sigma}$,   which belong to the chart $M_{12}$. The subsets $\bar{F}_{\sigma, 12}$ are   obtained  using  the fact that the main stratum is a  dense set in this chart . This description will show that in finding $\tilde{\mathcal{F}}$ we should start with  $\bar{F}\subset \C P^{1}\times \C P^1\times \C P^1$. Then we describe  the automorphisms of $F$ given by the  transition homeomorphisms  between $F_{ij}$ and $F_{kl}$ in  different charts  $M_{ij},  M_{kl}$. Finally we find the compactification $\tilde{\mathcal{F}}$ for $F$ such that any of these automorphism extends to an automorphism of $\tilde{\mathcal{F}}$. This compactification will satisfy the conditions of Theorem~\ref{universal}.

\section{The spaces  $\bar{F}_{\sigma, 12}$ for  the strata $W_{\sigma}$  in the chart $M_{12}$} 
We describe the spaces  $\bar{F}_{\sigma, 12}\subset \bar{F}_{12}$  for the strata which belong to the chart $M_{12}$.
\begin{prop}\label{parametrization-9}
The spaces $\bar{F}_{ij, 12}$ for the strata whose admissible polytopes are $K_{ij}(9)$ and which belong to the chart $M_{12}$,   are   $\bar{F}_{ij, 12}\subset \bar{F}_{12}\subset \C P^1\times \C P^1\times \C P^1$ given as  follows:
\begin{enumerate}
\item  $\bar{F}_{23, 12}\to  ((0:1), (0:1),  (c:c^{'})) $,
\item  $\bar{F}_{24, 12}\to  ((1:0), (c:c^{'}) , (0:1))$,
\item  $\bar{F}_{25, 12}\to ((c:c^{'}) , (1:0), (1:0))$,
\item $\bar{F}_{13, 12}\to ((1:0), (1:0), (c:c^{'}))$,
\item $\bar{F}_{14, 12}\to  ((0:1),  (c:c^{'}), (1:0))$,
\item $\bar{F}_{15, 12}\to  ((c:c^{'}),  (0:1), (0:1))$,
\item $\bar{F}_{34, 12}\to  ((1:1),(c:c^{'}), (c:c^{'}))$,
\item $\bar{F}_{35, 12}\to  ((c:c^{'}), (1:1), (c^{'}:c))$,
\item $\bar{F}_{45, 12}\to    ((c:c^{'}), (c:c^{'}), (1:1))$,
\end{enumerate}
where $(c:c^{'})\in \C P^{1}_{A}$.
\end{prop}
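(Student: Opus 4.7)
The plan is to work entirely inside the chart $M_{12}$ and to express the three parameter ratios $(c_i:c_i')$ in terms of Pl\"ucker coordinates, after which every case becomes a direct limit computation. First I would identify the local variables $z_1,\ldots,z_6$ appearing in Proposition~\ref{Orbits} with the Pl\"ucker coordinates of a representative matrix of $L\in M_{12}$ normalized so that $P^{12}=1$: one reads off $z_1=-P^{23}$, $z_2=-P^{24}$, $z_3=-P^{25}$, $z_4=P^{13}$, $z_5=P^{14}$, $z_6=P^{15}$, and then the Pl\"ucker relations give $P^{34}=z_1z_5-z_2z_4$, $P^{35}=z_1z_6-z_3z_4$, $P^{45}=z_2z_6-z_3z_5$. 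Substituting in the defining equations~\eqref{main} of the main stratum, the parameters of a point in $W\cap M_{12}$ are therefore
$$(c_1:c_1')=(P^{14}P^{23}:P^{13}P^{24}),\quad (c_2:c_2')=(P^{15}P^{23}:P^{13}P^{25}),\quad (c_3:c_3')=(P^{15}P^{24}:P^{14}P^{25}).$$

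Next I would treat the nine cases by splitting them into two families. For the six linear strata $W_{ij}$ with $ij\in\{13,14,15,23,24,25\}$, the vanishing Pl\"ucker coordinate is one of $z_1,\ldots,z_6$, and substituting $P^{ij}=0$ in the three ratios above immediately pins down exactly two of them to elements of $\{(0:1),(1:0)\}$ while leaving the third equal to a ratio of two nonzero Pl\"ucker coordinates, free in $\C P^1_A$. A one-line case inspection for each of the six choices produces statements $(1)$--$(6)$. For the three quadratic strata $W_{34}$, $W_{35}$, $W_{45}$, the vanishing of $P^{34}$, $P^{35}$, $P^{45}$ translates into $z_1z_5=z_2z_4$, $z_1z_6=z_3z_4$, $z_2z_6=z_3z_5$ respectively, so that one of the three ratios collapses to $(1:1)$. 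The remaining two ratios are then coupled by the identity $c_1c_2'c_3=c_1'c_2c_3'$ from~\eqref{paramain}, which in the Pl\"ucker dictionary is simply the tautology $z_1z_2z_3z_4z_5z_6=z_1z_2z_3z_4z_5z_6$. Setting the appropriate ratio equal to $(1:1)$ in this coupling then forces $(c_2:c_2')=(c_3:c_3')$ in case~$(7)$, $(c_1:c_1')=(c_3':c_3)$ in case~$(8)$, and $(c_1:c_1')=(c_2:c_2')$ in case~$(9)$.

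Finally I would verify the surjectivity half, namely that every point of the listed subset is really attained as a limit from the main stratum. For any prescribed $(c:c')\in\C P^1_A$, an approximating sequence is built by fixing five of the $z_i$ so that the free ratio equals $(c:c')$ and sending the sixth to zero in the linear cases, and by perturbing off the quadric $P^{ij}=0$ while adjusting one variable to keep the other two ratios at their prescribed limits in the quadratic cases. The only point needing care is that along the sequence the approximants must lie in the main stratum, i.e.\ no other Pl\"ucker coordinate may accidentally vanish and no $c_i$ may equal $c_i'$; this is a density statement on the complement of a codimension-one subvariety in $(\C P^1)^3$ cut out by the coupling $c_1c_2'c_3=c_1'c_2c_3'$, which is easy to meet by generic choice. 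I expect the main obstacle to be verifying the quadratic cases $(7)$--$(9)$ cleanly, since there one must combine a Pl\"ucker relation with the coupling identity to propagate the vanishing to the remaining ratios, but the computation is straightforward once the Pl\"ucker dictionary of the first paragraph is in place.
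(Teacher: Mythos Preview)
Your approach is correct and essentially identical to the paper's, just rephrased via the Pl\"ucker dictionary rather than the local $z_i$-coordinates. The paper argues directly in the $z_i$: for each stratum it identifies which $z_i$ (or which quadratic combination) vanishes, takes a sequence in $W$ converging into that stratum, and reads off the limits of the three ratios $c_i^n=\frac{z_j^n z_k^n}{z_l^n z_m^n}$, exactly as you do after translation. Your Pl\"ucker formulation makes cases $(7)$--$(9)$ a touch cleaner, since the vanishing of $P^{34},P^{35},P^{45}$ becomes a Pl\"ucker relation forcing one ratio to $(1:1)$; the paper simply says ``in the same way'' for these.
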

\begin{proof}
The  strata $(1)-(9)$ belong to the chart $M_{12}$ and the given list of their spaces of   parameters  follows from the description of these strata given by Proposition~\ref{Orbits}.The stratum $(1)$ is,  in the chart $M_{12}$,  given by the condition  $z_1=0$, $z_i\neq 0$, $i\neq 1$ and $\frac{z_2z_6}{z_3z_5}\neq 1$.  So, if  a sequence of points $(z_1^n,\ldots ,z_6^n)$ from the main stratum  converges to a given point from the stratum $(1)$,  it follows that $z_1^n\rightarrow 0$, while $z_i^n\rightarrow z_i\neq 0$. Thus,  $c_1^n=\frac{z_1^nz_5^n}{z_2^nz_4^n}\rightarrow 0$, $c_2^n=\frac{z_1^nz_6^n}{z_3^nz_4^n}\rightarrow 0$, while $\frac{c_1^n(c_1^n-1)}{c_2^n(c_2^n-1)}\rightarrow c$, where $c=\frac{z_2z_6}{z_3z_5}$.  Therefore the orbits from the stratum $(1)$ are continuously  parametrized by triples $(0,0,c)$, where $c\in \C - \{0,1\}$. In the same way, we deduce the parametrization for the orbits from the strata $(2)-(9)$. 
\end{proof} 

\begin{prop}\label{parametrization-8}
The  spaces  $\bar{F}_{(ij, kl), 12}$  for the strata whose admissible polytopes are $K_{ij, kl}$ and which belong to the chart $M_{12}$, are $\bar{F}_{(ij,kl), 12} \subset \bar{F}_{12}\subset \C P^{1}\times \C P^{1}\times \C P^{1}$  given  as follows:
\begin{itemize}
\item $\bar{F}_{(14, 23), 12} \to   ((0:1), (0:1), (1:0))$,\;\;  $\bar{F}_{(13,24), 12} \to  ((1:0), (1:0), (0:1))$, 
\item $\bar{F}_{(15,24), 12} \to   ((1:0), (0:1), (0:1))$,\;\; $\bar{F}_{(23,45), 12}\to  ((0:1), (0:1), (1:1))$,
\item $\bar{F}_{(24,35), 12}\to  ((1:0), (1:1), (0:1))$,\;\; $\bar{F}_{(25,34), 12}\to ((1:1), (1:0), (1:0))$, 
\item $\bar{F}_{(15, 23), 12}\to ((0:1), (0:1), (0:1))$,\;\;  $\bar{F}_{(13, 25), 12}\to ((1:0), (1:0), (1:0))$,
\item $\bar{F}_{(14,25), 12}\to ((0:1), (1:0), (1:0))$,\;\;  $\bar{F}_{(13,45), 12}\to ((1:0), (1:0), (1:1))$;
\item  $\bar{F}_{(14,35), 12}\to ((0:1), (1:1), (1:0))$,\;\;  $\bar{F}_{(15,34), 12}\to ((1:1), (0:1), (0:1))$.
\end{itemize}
\end{prop}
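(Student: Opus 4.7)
The approach mimics the proof of Proposition~\ref{parametrization-9}, but with the extra twist that several of the target coordinates are $(1:1)$ rather than $(0:1)$ or $(1:0)$, and recovering these requires use of the Pl\"ucker relations~\eqref{Plrel}. First I would set up a uniform formula for the three parameters $(c_1:c_1')$, $(c_2:c_2')$, $(c_3:c_3')$ of the main stratum in the chart $M_{12}$ in terms of Pl\"ucker coordinates. Representing $L\in M_{12}$ by a matrix with identity in the first two rows and entries $(z_1,\ldots ,z_6)$ in the bottom $3\times 2$ block, one has $P^{13}=z_4$, $P^{14}=z_5$, $P^{15}=z_6$, $P^{23}=-z_1$, $P^{24}=-z_2$, $P^{25}=-z_3$, so the defining equations of Proposition~\ref{Orbits}(1) rewrite the parameters as
\[
(c_1:c_1')=(P^{23}P^{14}:P^{24}P^{13}),\quad (c_2:c_2')=(P^{23}P^{15}:P^{25}P^{13}),\quad (c_3:c_3')=(P^{24}P^{15}:P^{25}P^{14}).
\]

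Each stratum $W_{(ij,kl)}\subset M_{12}$ with admissible polytope $K_{ij,kl}$ is characterized by the two vanishings $P^{ij}=P^{kl}=0$, while all other Pl\"ucker coordinates remain nonzero. A limit in $\bar F_{(ij,kl),12}$ is therefore obtained by picking a sequence from $W$ tending to a point of $W_{(ij,kl)}$ and reading off the limits of the three ratios above. For the cases in which the pair $\{ij,kl\}$ enters each of the three ratios asymmetrically---i.e.~in exactly one of numerator or denominator---the substitution is immediate and produces the listed values $(0:1)$ or $(1:0)$. This handles the six entries $\bar F_{(14,23),12},\bar F_{(13,24),12},\bar F_{(15,24),12},\bar F_{(15,23),12},\bar F_{(13,25),12},\bar F_{(14,25),12}$ verbatim.

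The remaining six cases are precisely those in which the two vanishing Pl\"ucker coordinates appear symmetrically in one of the three ratios, forcing both numerator and denominator to vanish and producing an indeterminate $0/0$. This is where the $(1:1)$ entries come from and where the main technical point lies. The Pl\"ucker relations~\eqref{Plrel} resolve the indeterminacy. For example, for $W_{(23,45)}$ the relation $P^{14}P^{25}=P^{24}P^{15}-P^{12}P^{45}$ with $P^{45}=0$ and $P^{12}=1$ gives $P^{24}P^{15}=P^{14}P^{25}$, whence $(c_3:c_3')=(P^{24}P^{15}:P^{25}P^{14})=(1:1)$, matching the statement; the other two coordinates are read off directly since $P^{23}\to 0$ makes the numerators of $(c_1:c_1')$ and $(c_2:c_2')$ vanish. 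I would carry out the analogous calculation for $W_{(24,35)}, W_{(25,34)}, W_{(13,45)}, W_{(14,35)}, W_{(15,34)}$, in each instance invoking the single Pl\"ucker quadric that relates the two vanishing coordinates to the pair appearing in the indeterminate ratio; the fact that $P^{12}=1$ in $M_{12}$ ensures no further cancellation and pins the limit to $(1:1)$ on the nose. The main obstacle is bookkeeping: for each of the six indeterminate cases one must identify the correct quadric from~\eqref{Plrel} whose $P^{12}$-term is nonzero and whose other two terms involve only nonvanishing Pl\"ucker coordinates, thereby guaranteeing a well-defined nontrivial limit.
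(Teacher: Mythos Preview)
Your approach is correct and matches the paper's implicit method (the paper states this proposition without proof, relying on the template from Proposition~\ref{parametrization-9}). Rewriting the three parameters as ratios of Pl\"ucker coordinates and then reading off limits is exactly the right move, and your worked example for $W_{(23,45)}$ is correct.

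There is one expository inaccuracy worth fixing. In the six cases $(23,45),(24,35),(25,34),(13,45),(14,35),(15,34)$ you do \emph{not} get a $0/0$ indeterminacy. In each of these, one of the two vanishing minors lies in $\{P^{34},P^{35},P^{45}\}$ and hence does not appear in any of the three ratios $(c_i:c_i')$ at all; the other vanishing minor lies in $\{P^{13},\ldots,P^{25}\}$ and appears in exactly two of the three ratios, forcing those two to $(0:1)$ or $(1:0)$. The third ratio has both numerator and denominator nonzero at the limit point, so it is perfectly well defined there---the question is only why it is the \emph{same} for every point of the one-orbit stratum $W_{(ij,kl)}$. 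That is what the Pl\"ucker relation does: the constraint $P^{kl}=0$ (with $\{k,l\}\subset\{3,4,5\}$) rewrites as an equality between the numerator and denominator of the remaining ratio, pinning it to $(1:1)$. Your computation for $(23,45)$ shows exactly this mechanism; just replace the phrase ``forcing both numerator and denominator to vanish and producing an indeterminate $0/0$'' with an accurate description.
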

\begin{prop}\label{parametrization-7}
The spaces $\bar{F}_{ij, 12}(7)$  for the strata  whose admissible polytopes are $K_{ij}(7)$ and which belong to the chart $M_{12}$,  are   $\bar{F}_{ij, 12}(7)\subset \bar{F}_{12}\subset \C P^{1}\times \C P^{1}\times \C P^{1}$   given  as follows:
\begin{itemize}
\item $\bar{F}_{23, 12}(7)\to  (0,1)\times (0:1)\times \C P^1$,\;\; $\bar{F}_{24, 12}(7)\to  (1:0)\times \C P^{1}\times (0:1)$,
\item $\bar{F}_{25, 12}(7)\to \C P^{1}\times (1:0)\times (1:0)$,\;\; $\bar{F}_{13, 12}(7)\to  (1:0)\times (1:0)\times \C P^1$,
\item $\bar{F}_{14, 12}(7)\to  (0:1)\times \C P^1\times (1:0)$,\;\;  $\bar{F}_{15, 12}(7)\to \C P^1\times (0:1)\times (0:1)$,
\item $\bar{F}_{12, 12}(7)\to ((1:1), (1:1), (1:1))$.
\end{itemize}
\end{prop}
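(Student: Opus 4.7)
\emph{Proof proposal.} The plan is to adapt the limiting-parameter computation used in the proof of Proposition~\ref{parametrization-9}, where a single vanishing $z$-coordinate was treated, to the present case where each pyramid $K_{ij}(7)$ imposes the vanishing of two local coordinates. First I would identify, for each of the seven strata $W_\sigma \subset M_{12}$ with $P_\sigma = K_{ij}(7)$, the vanishing Pl\"ucker coordinates as those $P^{kl}$ with $\{k,l\}\cap\{i,j\}=\emptyset$, and then translate them into vanishing conditions on the local chart coordinates $(z_1,\ldots,z_6)$ on $M_{12}$ via the standard Pl\"ucker formulas
\[
P^{13}=z_4,\ P^{14}=z_5,\ P^{15}=z_6,\ P^{23}=-z_1,\ P^{24}=-z_2,\ P^{25}=-z_3,
\]
\[
P^{34}=z_1z_5-z_2z_4,\ P^{35}=z_1z_6-z_3z_4,\ P^{45}=z_2z_6-z_3z_5.
\]
This will show that $K_{13}(7), K_{14}(7), K_{15}(7), K_{23}(7), K_{24}(7), K_{25}(7)$ correspond, respectively, to the orbits of type $(4)$ in Proposition~\ref{Orbits} defined by $z_2=z_3=0$, $z_1=z_3=0$, $z_1=z_2=0$, $z_5=z_6=0$, $z_4=z_6=0$, $z_4=z_5=0$, whereas $K_{12}(7)$ is the unique orbit of type $(1)$ with $c_i=c_i'$ for $i=1,2,3$.

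Next, exactly as in Proposition~\ref{parametrization-9}, I take a sequence in the main stratum converging to a point of $W_\sigma$ and pass to the limit in the three parameters
\[
c_1=\frac{z_1z_5}{z_2z_4},\quad c_2=\frac{z_1z_6}{z_3z_4},\quad c_3=\frac{z_2z_6}{z_3z_5}.
\]
For each of the six non-apex cases two of the $z$'s vanish; each ratio $c_s$ then either has a vanishing denominator but nonzero numerator (limit $(1:0)$), or a vanishing numerator but nonzero denominator (limit $(0:1)$), or becomes a $0/0$ indeterminacy that sweeps out the whole $\C P^1$ as the two vanishing $z$'s are allowed to approach zero at arbitrary relative rates. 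For example, for $K_{23}(7)$ with $z_5, z_6 \to 0$ one gets $c_1, c_2 \to 0$ and $c_3$ indeterminate, so $\bar F_{23,12}(7)=(0:1)\times(0:1)\times\C P^1$, matching the claim; the other five non-apex pyramids are analogous. For the apex case $K_{12}(7)$, the main-stratum relations $c_1'z_1z_5=c_1z_2z_4$, $c_2'z_1z_6=c_2z_3z_4$, $c_3'z_2z_6=c_3z_3z_5$ collapse to identities with $c_s=c_s'$, yielding the single point $((1:1),(1:1),(1:1))$.

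The bookkeeping is routine; the only point requiring a separate observation is that the $0/0$ indeterminacy really attains every point of $\C P^1$, but this is clear because the two $z$-coordinates going to zero can be rescaled independently inside the dense main stratum, so any ratio is realized as a sequential limit. This also explains, geometrically, why the six non-apex pyramids $K_{ij}(7)$ each contribute a $\C P^1$-family to the compactification of $F$ inside $(\C P^1)^3$, even though each such stratum consists of a single $(\C^*)^5$-orbit.
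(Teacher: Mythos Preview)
Your proposal is correct and follows essentially the same approach as the paper, which does not give a separate proof of this proposition but relies on the limiting-parameter argument carried out in detail for Proposition~\ref{parametrization-9}. Your identification of the vanishing Pl\"ucker coordinates for each $K_{ij}(7)$ as those $P^{kl}$ with $\{k,l\}\cap\{i,j\}=\emptyset$, the translation into pairs of vanishing chart coordinates, and the resulting trichotomy (numerator vanishes, denominator vanishes, or $0/0$ sweeping out $\C P^1$) are all accurate and reproduce exactly the seven listed outcomes.
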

\begin{prop}\label{parametrization-6}
The  spaces $\bar{F}_{i,12}$ for the strata  whose admissible polytopes are $O_{i}$ and which belong to the chart $M_{12}$, are the subsets  $\bar{F}_{i, 12}\subset \bar{F}_{12}\subset \C P^{1}\times \C P^{1}\times \C P^{1}$  given   as follows:
\begin{itemize}
\item $\bar{F}_{3, 12}\to \bar{F}_{12}\cap ( \C P^{1}\times \C P^{1}\times \C P^{1}_{A})$,
\item $\bar{F}_{4, 12}\to \bar{F}_{12}\cap (\C P^{1}\times  \C P^{1}_{A}\times \C P^1)$,
\item $\bar{F}_{5, 12}\to \bar{F}_{12}\cap (\C P^{1}_{A}\times \C P^1\times \C P^1)$.
\end{itemize}
\end{prop}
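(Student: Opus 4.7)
The plan is to prove the statement for $\bar{F}_{5,12}$ in detail; the cases of $\bar{F}_{3,12}$ and $\bar{F}_{4,12}$ then follow by analogous arguments, obtained by permuting the roles of the three parameter pairs $(c_1:c_1')$, $(c_2:c_2')$, $(c_3:c_3')$ in the local description~\eqref{main} of the main stratum, or equivalently via the $S_n$-action of Lemma~\ref{Sn-action}.

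In the chart $M_{12}$, combining Proposition~\ref{Polytopes} with the orbit description of Proposition~\ref{Orbits}, the stratum $W_{O_5}\cap M_{12}$ is cut out by $z_3=z_6=0$, $z_1 z_2 z_4 z_5\neq 0$ and $z_1 z_5\neq z_2 z_4$, and carries the single projective parameter $(z_1 z_5:z_2 z_4)\in \C P^{1}_A$. To prove the inclusion $\bar{F}_{5,12}\subseteq \bar{F}_{12}\cap(\C P^{1}_A\times \C P^{1}\times \C P^{1})$, I take a sequence of $T^5$-orbits in the main stratum whose images in $G_{5,2}/T^5$ converge to an orbit of $W_{O_5}/T^5$, lift to representatives $(z_1^n,\ldots,z_6^n)$ with $z_3^n,z_6^n\to 0$ and $z_1^n,z_2^n,z_4^n,z_5^n$ converging to nonzero limits, and observe that $(c_1^n:c_1'^n)=(z_1^n z_5^n:z_2^n z_4^n)$ tends to the parameter of the limit orbit, lying in $\C P^{1}_A$, while the relation $c_1 c_2' c_3 = c_1' c_2 c_3'$ persists in the limit, so the whole triple lies in $\bar{F}_{12}$.

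For the reverse inclusion I construct an explicit approximating sequence. Given $((a:b),(\alpha:\beta),(\gamma:\delta))\in \bar{F}_{12}$ with $a,b\neq 0$, $a\neq b$ and $a\beta\gamma=b\alpha\delta$, I normalize by $z_1=z_2=1$, $z_4=b$, $z_5=a$, and for $\alpha,\beta\neq 0$ set $z_3^n=u_n$, $z_6^n=(\alpha b/\beta)u_n$ with $u_n\to 0$. A direct computation in the local coordinates gives $(c_2^n:c_2'^n)\to (\alpha:\beta)$ and $(c_3^n:c_3'^n)\to (\alpha b:a\beta)=(\gamma:\delta)$, the last equality encoded in the defining relation of $\bar{F}_{12}$. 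Since each orbit lies in $W$, its moment-map image lies in $\stackrel{\circ}{\Delta}_{5,2}$, and by Theorem~\ref{moment-map-polytope} the limit of these images must lie in $\stackrel{\circ}{O_5}$, so the $T^5$-orbits converge in $G_{5,2}/T^5$ to an orbit of $W_{O_5}/T^5$, as required.

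The main obstacle is the boundary situation $(\alpha:\beta)\in A$ or $(\gamma:\delta)\in A$: the naive constant-ratio choice $z_6^n/z_3^n = \alpha b/\beta$ can reduce to one of $\{0,\infty,a,b\}$, any of which forces either $z_3^n$, $z_6^n$, or one of the Pl\"ucker minors $P^{35}=z_6^n-b z_3^n$, $P^{45}=z_6^n-a z_3^n$ to vanish identically, pushing the sequence out of the main stratum. To handle this I replace the constant ratio by non-constant choices: $z_6^n=(z_3^n)^2$ when the ratio must tend to $0$, $z_3^n=(z_6^n)^2$ when it must tend to $\infty$, and $z_6^n=\rho z_3^n+\eta_n$ with $\eta_n\to 0$ slower than $z_3^n$ for the two critical finite values $\rho=a,b$. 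A case-by-case verification that all Pl\"ucker minors remain nonzero along these perturbed sequences, while the prescribed projective limits of $(c_2:c_2')$ and $(c_3:c_3')$ are still attained, completes the argument.
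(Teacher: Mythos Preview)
Your approach is correct and is essentially the same direct sequence argument that the paper uses as its model (the proof of Proposition~\ref{parametrization-9}, together with the decomposition recorded in Remark~\ref{oct}); you are in fact more careful than the paper in treating the reverse inclusion and the degenerate parameter values. One small slip: in the perturbation $z_6^n=\rho z_3^n+\eta_n$ you need $\eta_n\to 0$ \emph{faster} than $z_3^n$, not slower, so that $(c_2^n:c_2'^{\,n})=(z_6^n:bz_3^n)\to(1:1)$ (respectively $(c_3^n:c_3'^{\,n})\to(1:1)$) while $P^{35}=\eta_n$ or $P^{45}=\eta_n$ stays nonzero.
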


\begin{prop}\label{parametrization-5}
The spaces $\bar{F}_{ij,12}(6)$  for the strata  whose admissible polytopes are $P_{ij}$ and which belong to the chart $M_{12}$,  are   $\bar{F}_{ij, 12}(6)\subset \bar{F}_{12}\subset \C P^{1}\times \C P^{1}\times \C P^{1}$   given as  as follows:
\begin{itemize}
\item $\bar{F}_{23, 12}(6)\to  (0,1)\times (0:1)\times \C P^1$,\;\; $\bar{F}_{24, 12}(6)\to  (1:0)\times \C P^{1}\times (0:1)$,
\item $\bar{F}_{25, 12}(6)\to \C P^{1}\times (1:0)\times (1:0)$,\;\; $\bar{F}_{13, 12}(6)\to  (1:0)\times (1:0)\times \C P^1$,
\item $\bar{F}_{14, 12}(6)\to  (0:1)\times \C P^1\times (1:0)$,\;\;  $\bar{F}_{15, 12}(6)\to \C P^1\times (0:1)\times (0:1)$,
\end{itemize}
\end{prop}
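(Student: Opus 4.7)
The plan is to follow the same pattern as the proofs of Propositions~\ref{parametrization-9} and~\ref{parametrization-7}. First, I observe that a prism $P_{ij}$ has $\Lambda_{12}$ as a vertex if and only if $1$ and $2$ lie in different blocks of the partition $\{i,j\}\sqcup\{p,q,r\}$ of $\{1,\ldots,5\}$, which singles out exactly the six prisms $P_{1k}, P_{2k}$ for $k\in\{3,4,5\}$ listed in the statement. For each such prism I will read off its four vanishing Pl\"ucker coordinates from the vertex list; three of these translate into the vanishing of three local coordinates $z_\ell$ (with $\ell$ in some three-element subset $L \subset \{1,\ldots,6\}$), while the fourth (one of $P^{34}, P^{35}, P^{45}$) is automatic from the first three. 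Thus $W_\sigma \cap M_{12}$ is the coordinate subspace $\{z_\ell = 0 \text{ for } \ell \in L,\; z_\ell \neq 0 \text{ for } \ell \notin L\}$, which is a single $(\C ^{*})^{5}$-orbit; so $F_\sigma$ is a point, and the content of $\bar{F}_{ij,12}(6)$ consists in describing the limits in $\C P^1\times\C P^1\times\C P^1$ of the main-stratum parameters along sequences converging to $W_\sigma$.

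The second step is the actual limit computation. For a sequence $L_n \in W$ with $L_n \to L \in W_\sigma$, one has $z_\ell(L_n) \to 0$ for $\ell \in L$ and $z_\ell(L_n) \to z_\ell \neq 0$ for $\ell \notin L$. Substituting into the parametrization
\[
(c_1:c_1')=(z_1z_5:z_2z_4),\quad (c_2:c_2')=(z_1z_6:z_3z_4),\quad (c_3:c_3')=(z_2z_6:z_3z_5)
\]
of the main stratum coming from Proposition~\ref{Orbits}, a routine case-by-case inspection will show that, for each of the six index sets $L$ obtained above, exactly one of the three ratios has both numerator and denominator tending to zero, while the other two have exactly one side tending to zero and so acquire determined limits in $\{(0:1),(1:0)\}$. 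These forced limits are dictated by the combinatorics of $L$ and match the two fixed coordinates listed in each of the six cases. This also explains the coincidence with Proposition~\ref{parametrization-7}: the one extra vanishing coordinate in the prism case compared with the pyramid $K_{ij}(7)$ enters only in products that are already zero in the limit, so it does not affect the parameter limits.

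Finally, I will show that the remaining, indeterminate parameter sweeps out the entire $\C P^1$, thereby producing the $\C P^1$-factors in the statement. Writing $z_\ell^n = \lambda_\ell\, \varepsilon_n$ for $\ell \in L$ with $\varepsilon_n \to 0$ and $\lambda_\ell \in \C ^{*}$ fixed, the indeterminate ratio becomes independent of $\varepsilon_n$ and equals a fixed rational expression in the $\lambda_\ell$'s and the nonzero limits $z_\ell$, $\ell\notin L$; by varying the $\lambda_\ell$ this expression sweeps out every element of $\C P^1\setminus A$. The three excluded points of $A=\{(0:1),(1:0),(1:1)\}$ are realized by allowing $\varepsilon_n$-dependent perturbations of the $\lambda_\ell$, chosen so that the one Pl\"ucker coordinate that would otherwise vanish in the limit remains nonzero along the sequence (keeping $L_n$ inside the main stratum $W$). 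The main-stratum constraint $c_1c_2'c_3 = c_1'c_2c_3'$ passes trivially to each limit, because at least one factor on each side has zero limit. The hardest step is precisely this last one: verifying that the three special points of $A$ are attained by sequences inside the main stratum. It reduces to checking that the offending Pl\"ucker coordinate is a difference of two terms of order $\varepsilon_n$ and so can be kept nonzero by a generic order-$\varepsilon_n$ perturbation of the $\lambda_\ell$.
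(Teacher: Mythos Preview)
Your proposal is correct and follows essentially the same approach as the paper. The paper does not give an explicit proof for this proposition; it is treated as following by the same argument as Proposition~\ref{parametrization-9} (the paper remarks after Proposition~\ref{parametrization-6} that all remaining $\bar F_{\sigma,12}$ can be described ``in an analogous way''). Your write-up is in fact more careful than the paper's sketch: you correctly identify that the prism stratum is a single $(\C^{*})^{5}$-orbit given by the vanishing of three local coordinates, you compute the two determined limits directly, and you address the point the paper leaves implicit, namely that the indeterminate factor sweeps out all of $\C P^{1}$, including the three points of $A$, by allowing higher-order perturbations that keep the one potentially vanishing Pl\"ucker minor ($P^{34}$, $P^{35}$, or $P^{45}$, depending on the case) nonzero along the approximating sequence.
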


\begin{rem}\label{oct}
It holds $\bar{F}_{3,12} =  \bar{F}_{12}\cap  (\C P^{1}\times \C P^{1}\times \C P^{1}_{A}) = F_{12}\cup \bar{F}_{13,12}\cup \bar{F}_{23,12}\cup \bar{F}_{34,12}\cup \bar{F}_{35,12}$ and this  corresponds to the fact that $O_3$ is a facet for each of $\Delta _{5,2}$, $K_{13}(9)$, $K_{23}(9)$, $K_{34}(9)$ and  $K_{35}(9)$. In other words orbits from the  stratum  whose admissible polytope is $O_3$ are  in the boundary of the orbits of the  strata  with the admissible polytopes  $\Delta _{5,2}$, $K_{13}(9)$, $K_{23}(9)$, $K_{34}(9)$ and $K_{35}(9)$. Then using  the action of the symmetric group $S_5$ we immediately obtain  the description for $\bar{F}_{4,12}$ and $\bar{F}_{5,12}$.
\end{rem} 
In an analogous way we can describe the  spaces of parameters $\bar{F}_{\sigma, 12}\subset \bar{F}_{12}$ for any stratum $W_{\sigma}\subset M_{12}$. We do not find necessary to list all of them here since  the admissible polytopes for all strata,  which are not listed in the previous propositions,  are  faces of $\Delta _{5,2}$ and their spaces of parameters are homeomorphic to a point. 
 
\begin{rem}
The previous propositions suggest  that in  order to find an universal space of parameters we should start with  the closure of the space of parameters $F$ in $\C P^1\times \C P^1\times \C P^1$.
\end{rem} 
\begin{rem}\label{strata137}
 The stratum  whose admissible polytope is $K_{13}(7)$ is, in the chart  $M_{12}$, parametrized by $((1:1), (1:1), (1:1))\in \bar{F}_{12}$ according to  Proposition~\ref{parametrization-7}. This stratum belongs to the chart $M_{13}$ as well and, using the same argument as above,  it can be  seen  that,  in that chart $M_{13}$, this stratum  is parametrized by $(1:0)\times (1:0)\times \C P^1\subset \bar{F}_{13}$. Since  the virtual spaces of parameters $\tilde{F}_{\sigma, ij}$ of a  stratum $W_{\sigma}$  in different charts should  be homeomorphic, this shows  that  the closure  $\bar {F}$ of $F$  in $\C P^1\times \C P^1\times \C P^1$  can not be taken as an  universal space  of parameters.
\end{rem}

\section{The universal space of parameters $\tilde{\mathcal{F}}$}
\subsection{The closure $\mathcal{F}$  of $F$ in $\C P^1\times \C P^1\times \C P^1$}
The closure of the space $F_{kl}\subset  \C P^1\times \C P^1\times \C P^1$,  which represent the space of  parameters of the main stratum in the chart $M_{kl}$, should  not depend on the chart $M_{kl}$, since all our constructions are invariant under the action of the symmetric group $S_5$. The closure of $F_{kl}$ in $\C P^1\times \C P^1\times \C P^1$ is:
\[
\bar{F}_{kl} = \{((c_{1,kl}:c_{1,kl}^{'}), (c_{2,kl}:c_{2,kl}^{'}),( c_{3,kl}:c_{3,kl}^{'})) \; | \; c_{1,kl}c_{2,kl}^{'}c_{3,kl} = c_{1,kl}^{'}c_{2,kl}c_{3,kl}^{'}\}.
\]
Because of our further purposes we describe this closure in more detail. We obtain the  points from the boundary of  $F_{kl}$    if we put in the cubic equation $c_{1,kl}c_{2,kl}^{'}c_{3,kl}=c_{1,kl}^{'}c_{2,kl}c_{3,kl}^{'}$ that some $c_{i,kl}=0$, $c_{i,kl}^{'}=0$ or $c_{i,kl}=c_{i,kl}^{'}$,
but keeping that $c_{i, kl}, c_{i,kl}^{'}$ may not be both zero, where $1\leq i\leq 3$.   The explicit description of all such points  is: 
\begin{lem}
The boundary of $F_{kl}\subset \C P^{1}\times \C P^{1}\times \C P^{1}$ consists of the following sets:
\[
\bar{F}_{kl}^{12}= (0:1)\times (0:1)\times \C P^1,\;\;\bar{F}_{kl}^{13^{'}}=
 (0:1)\times \C P^1\times (1:0),
\]
\[ \bar{F}_{kl}^{1^{'}2^{'}}= (1:0)\times (1:0)\times \C P^1,\;\;
\bar{F}_{kl}^{1^{'}3}= (1:0)\times \C P^1\times (0:1),
\]
\[ \bar{F}_{kl}^{23}=  \C P^{1}\times  (0:1)\times (0:1),\;\; 
 \bar{F}_{kl}^{2^{'}3^{'}}= \C P^{1}\times (1:0)\times (1:0),
\]
\[
 \bar{F}_{kl}^{11^{'}}=\{( (1:1), (c_{2,kl}:c_{2,kl}^{'}), (c_{2,kl}: c_{2,kl}^{'}))\},\;\; 
\bar{F}_{kl}^{22^{'}}=\{((c_{1,kl}:c_{1,kl}^{'}), (1:1), (c_{1,kl}^{'}:c_{1,kl}^{'}))\},
\]
 \[
\bar{F}_{kl}^{33^{'}}= \{((c_{1,kl}:c_{1,kl}^{'}), (c_{1,kl}:c_{1,kl}^{'}), (1:1))\}.
\]
\end{lem}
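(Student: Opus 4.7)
The approach is a direct cut-out computation on the trihomogeneous cubic. First, I would observe that the closure $\bar F_{kl}$ in $\C P^1\times \C P^1\times \C P^1$ is precisely the zero locus of the form $c_1 c_2' c_3 - c_1' c_2 c_3'$, since this form is a well-defined section of $\mathcal{O}(1,1,1)$ on the product and $F_{kl}$ is by definition its vanishing locus intersected with the open part where $c_i,c_i'\neq 0$ and $c_i\neq c_i'$. The boundary $\partial F_{kl}=\bar F_{kl}\setminus F_{kl}$ is therefore the intersection of this cubic surface with the nine divisors $\{c_i=0\}$, $\{c_i'=0\}$, $\{c_i=c_i'\}$ for $i=1,2,3$, and it suffices to compute each such intersection.

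The plan is to enumerate the nine divisors. For the six vanishing divisors the argument is uniform: for instance, setting $c_1=0$ reduces the cubic to $c_1' c_2 c_3'=0$, and since $c_1'\neq 0$ (projective coordinates cannot vanish simultaneously) we must have $c_2=0$ or $c_3'=0$. The first subcase gives $(c_1{:}c_1')=(c_2{:}c_2')=(0{:}1)$ with the third factor free, hence $\bar F_{kl}^{12}=(0{:}1)\times(0{:}1)\times \C P^1$; the second gives $(c_1{:}c_1')=(0{:}1)$, $(c_3{:}c_3')=(1{:}0)$ and middle factor free, hence $\bar F_{kl}^{13'}$. The symmetric treatment of $c_1'=0$, $c_2=0$, $c_2'=0$, $c_3=0$, $c_3'=0$ yields the remaining four coordinate components $\bar F_{kl}^{1'2'}, \bar F_{kl}^{1'3}, \bar F_{kl}^{23}, \bar F_{kl}^{2'3'}$.

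For the three diagonal divisors the cubic factors. When $c_1=c_1'$ the equation becomes $c_1(c_2'c_3-c_2c_3')=0$; the branch $c_1=0$ is already accounted for by the vanishing divisors, so the new locus is cut out by $(c_2{:}c_2')=(c_3{:}c_3')$, which is exactly $\bar F_{kl}^{11'}$. The cases $c_2=c_2'$ and $c_3=c_3'$ produce $\bar F_{kl}^{22'}$ and $\bar F_{kl}^{33'}$ by the same factorization argument, with the expected reversal of roles of primed and unprimed entries.

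Finally I would check exhaustiveness and disjointness of cases: every point of $\bar F_{kl}\setminus F_{kl}$ satisfies at least one of the nine failure conditions by definition, and each such condition has been fully intersected with the cubic in the preceding steps, so the union of the nine listed subsets is exactly the boundary. The main (and only modest) obstacle is the bookkeeping — making sure that points where several divisors meet simultaneously do not produce additional components beyond those already listed — but since every such overlap lies in the intersection of two of the nine described pieces, no new stratum appears and the proof is complete.
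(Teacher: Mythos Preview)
Your proposal is correct and follows essentially the same route as the paper's proof: both intersect the cubic $c_1c_2'c_3=c_1'c_2c_3'$ with each of the nine ``failure'' conditions $c_i=0$, $c_i'=0$, $c_i=c_i'$ and read off the resulting loci. Your write-up is in fact more careful than the paper's terse version, since you start from a single divisor and factor to deduce the second constraint, thereby making exhaustiveness explicit; one small cosmetic point is that in the diagonal case $c_1=c_1'$ the branch $c_1=0$ is actually empty (it would force $c_1'=0$ too), so ``already accounted for'' could simply read ``impossible,'' but this does not affect the argument.
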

\begin{proof}
We consider the  cubic equation in~\eqref{mainparamar} and   directly deduce the following: 
 $c_{1,kl}=c_{2,kl}=0$ gives  $\bar{F}_{kl}^{12}$, 
 $c_{1,kl}=c_{3,kl}^{'}=0$ gives $\bar{F}_{kl}^{13^{'}}$,
$c_{1,kl}^{'}=c_{2,kl}^{'}=0$ gives $\bar{F}_{kl}^{1^{'}2^{'}}$,
$c_{1,kl}^{'}=c_{3,kl}=0$ gives $\bar{F}_{kl}^{1^{'}3}$,
$c_{2,kl}=c_{3,kl}=0$ gives $\bar{F}_{kl}^{23}$ and 
$c_{2,kl}^{'}=c_{3,kl}^{'}=0$ gives $\bar{F}_{kl}^{2^{'}3^{'}}$.
 For 
$c_{1,kl}=c_{1,kl}^{'} = 1$, the cubic equation gives that $c_{2,kl}^{'}c_{3,kl} = c_{2,kl}c_{3,kl}^{'}$,   which implies that $(c_{3,kl}:c_{3,kl}^{'}) = (c_{3,kl}: \frac{c_{2,kl}^{'}c_{3,kl}}{c_{2,kl}})= (c_{2,kl}:c_{2,kl}^{'})$, so we obtain  $\bar{F}_{kl}^{11^{'}}$. In the same way 
$c_{2,kl}=c_{2,kl}^{'}=1$ gives $ \bar{F}_{kl}^{22^{'}}$ and 
 $c_{3,kl}=c_{3,kl}^{1}  =1$ gives  $ \bar{F}_{kl}^{33^{'}}$.
\end{proof}

The  non-trivial  intersections of these  boundary sets are as follows: 
\[
\bar{F}_{kl}^{12} \cap  \bar{F}_{kl}^{13^{'}} = ((0:1), (0:1), (1:0)),\;\;  \bar{F}_{kl}^{12}\cap \bar{F}_{kl}^{23} = ((0:1), (0:1),  (0:1)),
\]
\[
 \bar{F}_{kl}^{13^{'}}\cap  \bar{F}_{kl}^{2^{'}3^{'}} = ((0:1), (1:0), (1:0)) , \;\;
 \bar{F}_{kl}^{1^{'}2^{'}}\cap  \bar{F}_{kl}^{1^{'}3} = ((1:0), (1:0), (0:1)) ,
\]
\[
 \bar{F}_{kl}^{1^{'}2^{'}}\cap  \bar{F}_{kl}^{2^{'}3^{'}}=  ((1:0), (1:0), (1:0)) ,\;\; \bar{F}_{kl}^{1^{'}3}\cap \bar{F}_{kl}^{23} = ((1:0), (0:1), (0:1)),
\]
\[
 \bar{F}_{kl}^{1^{'}2^{'}}\cap  \bar{F}_{kl}^{33^{'}}= ( (1:0), (1:0), (1:1)),\;\;   \bar{F}_{kl}^{23}\cap \bar{F}_{kl}^{11^{'}} = ((1:1), (0:1), (0:1)),
\]
\[
 \bar{F}_{kl}^{2^{'}3^{'}}\cap \bar{F}_{kl}^{11^{'}} = ((1:1), (1:0), (1:0)),\;\; \bar{F}_{kl}^{11^{'}}\cap \bar{F}_{kl}^{22^{'}}\cap \bar{F}_{kl}^{33^{'}} = ((1:1),  (1:1), (1:1)) .
\]

\subsection{The space $\tilde{\mathcal{F}}$ as the blowup of $\mathcal{F}$}
In order to resolve the problem indicated in  Remark~\ref{strata137}  we will blowup the cubic surface  $\bar{F}_{ij}$ at the point $((1:1), (1:1), (1:1))$. We  denote by  $\mathcal{F}$  the  hypersurface in $\C P^1 \times \C P^1\times \C P^1$ defined by
\[
\mathcal{F} = \{( (c_1:c_1^{'}), (c_{2}:c_{2}^{'}), (c_3:c_{3}^{'})) | \;  c_1c_{2}^{'}c_{3} = c_{1}^{'}c_{2}c_{3}^{'}\}.
\]
Since the gradient of the cubic equation $c_1c_{2}^{'}c_{3} = c_{1}^{'}c_{2}c_{3}^{'}$, which defines this hypersurface, is non-zero,  it holds:
\begin{lem}
$\mathcal{F}$ is a smooth manifold.
\end{lem}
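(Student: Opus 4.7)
The plan is to verify smoothness locally in each of the eight standard affine charts that cover $(\C P^1)^3$. Label a chart by a triple $(\epsilon_1,\epsilon_2,\epsilon_3)\in\{0,1\}^3$, where $\epsilon_i=0$ means the chart requires $c_i'\neq 0$ (with affine coordinate $x_i=c_i/c_i'$) and $\epsilon_i=1$ means it requires $c_i\neq 0$ (with $x_i=c_i'/c_i$). Every point of $(\C P^1)^3$ lies in at least one such chart. In each chart I would divide the defining equation $c_1c_2'c_3-c_1'c_2c_3'=0$ by the product of the three distinguished homogeneous coordinates to obtain a single polynomial equation $f_{\epsilon}(x_1,x_2,x_3)=0$ locally cutting out $\mathcal{F}$.

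A direct computation (eight short cases) shows that the resulting polynomial $f_{\epsilon}$ always belongs to one of two shapes: either a trinomial of the form $x_ax_b-x_c$ (equivalently $x_c-x_ax_b$), or the form $x_1x_2x_3-1$. For example, in the chart $(0,0,0)$ one gets $f=x_1x_3-x_2$; in $(0,1,0)$ one gets $f=x_1x_2x_3-1$; the remaining six charts are obtained by swapping primes and follow the same dichotomy. I would then check the gradient:
\begin{itemize}
\item In every chart with equation of the form $x_ax_b-x_c=0$, the partial derivative $\partial f/\partial x_c=\pm 1$, so $\nabla f$ is nowhere zero on the chart, and in particular on $\mathcal{F}$.
\item In the two charts with equation $x_1x_2x_3-1=0$, we have $\partial f/\partial x_i=x_jx_k$; the defining relation forces $x_1x_2x_3=1$, hence no $x_i$ vanishes on $\mathcal{F}$, so $\nabla f\neq 0$ there as well.
\end{itemize}

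In both cases the (holomorphic) implicit function theorem applies, producing a local description of $\mathcal{F}$ as the graph of a holomorphic function of two variables. Gluing these local descriptions shows that $\mathcal{F}$ is a smooth complex submanifold of $(\C P^1)^3$ of complex dimension $2$.

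The only potential obstacle is organizational: one must be sure that the eight affine charts genuinely cover $(\C P^1)^3$ (which they do, since in each $\C P^1$-factor at least one homogeneous coordinate is nonzero) and that no case has been overlooked where both $f$ and $\nabla f$ vanish simultaneously. The case with equation $x_1x_2x_3-1$ is the only one in which non-vanishing of the gradient is not immediate from inspection, and there the constraint $x_1x_2x_3=1$ precisely excludes the bad locus $\{x_jx_k=0\}$. I expect no further difficulty, and this matches the informal justification already sketched in the paper (the gradient of the cubic never vanishes on the zero locus).
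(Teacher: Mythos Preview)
Your proof is correct and follows essentially the same approach as the paper, which simply asserts that the gradient of the defining cubic is nonzero. Your chart-by-chart verification makes this rigorous by giving the gradient computation a precise meaning on the product of projective lines; the paper leaves this implicit.
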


We prove the following:
\begin{thm}\label{universal}
An universal space of parameters $\tilde{\mathcal{F}}$ for $G_{5,2}$ can be obtained as the blowup of $\mathcal{F}$ at the point $((1:1), (1:1), (1:1))$.
\end{thm}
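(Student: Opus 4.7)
The plan is to verify the two defining properties of Theorem~\ref{allcharts} for the candidate $\tilde{\mathcal{F}} := \mathrm{Bl}_{p_0}\mathcal{F}$, where $p_0 = ((1:1),(1:1),(1:1))$. Let $\pi : \tilde{\mathcal{F}} \to \mathcal{F}$ denote the blowup, whose exceptional divisor $E = \pi^{-1}(p_0) \cong \C P^1$ since $\mathcal{F}$ is a smooth complex surface.

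First I would construct $H_{ij}$ in each chart $M_{ij}$. The main stratum embeds $F_{ij}$ as an open subset of $\mathcal{F}\setminus \{p_0\}$, which lifts canonically into $\tilde{\mathcal{F}}$. For strata whose closures $\bar{F}_{\sigma,ij}$ in $\mathcal{F}$ (given by Propositions~\ref{parametrization-9}--\ref{parametrization-5}) avoid $p_0$, I simply set $\tilde{F}_{\sigma,ij} = \pi^{-1}(\bar{F}_{\sigma,ij})$. The essential case is when $p_0\in \bar{F}_{\sigma,ij}$: the sequential-limit definition in Theorem~\ref{allcharts} then forces $\tilde{F}_{\sigma,ij}$ to include a subset of $E$ determined by the directions of approach.

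Second, I would describe $\tilde{F}_{\sigma,ij}\cap E$ explicitly. In affine coordinates $u = c_1/c_1'-1$, $v = c_2/c_2'-1$, $w = c_3/c_3'-1$ on $\mathcal{F}$ near $p_0$, the cubic relation linearizes to $u - v + w = 0$ modulo higher order terms, so $E$ is naturally the $\C P^1$ cut out in the projectivized tangent space by this linear form. Using the explicit orbit equations from Proposition~\ref{Orbits} in the chart $M_{ij}$, I would match each point of $E$ with a limiting orbit in the appropriate stratum, and check combinatorial consistency with parametrizations such as $\bar{F}_{12,12}(7)=\{p_0\}$ in Proposition~\ref{parametrization-7} and with the re-expanded $\C P^1$ parametrizations appearing in other charts (Remark~\ref{strata137}).

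The hardest step is condition~(2): proving $\tilde{F}_{\sigma,ij}\cong \tilde{F}_{\sigma,kl}$ across charts. By Lemma~\ref{Sn-action} the $S_5$-action permutes charts and strata, so it suffices to show that the birational transitions between $F_{ij}$ and $F_{kl}$ extend to biregular automorphisms of $\tilde{\mathcal{F}}$. Each such transition is a Cremona-type involution on $\mathcal{F}$ with indeterminacy only at $p_0$; I would verify that the single blowup resolves it completely, i.e.\ its strict transform is everywhere regular and restricts to a holomorphic self-map of $E$. This is the key delicacy: ruling out the need for a further tower of blowups. The Kapranov--Keel identification of $\tilde{\mathcal{F}}$ with the smooth moduli space $\overline{M_{0,5}}$ mentioned in the Introduction serves as motivation and an independent consistency check. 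Once $S_5$-equivariant regularity is established, condition~(2) follows automatically from the equivariance of all constructions.
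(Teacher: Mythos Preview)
Your plan is essentially the same as the paper's. The paper also reduces, via the $S_5$-action, to the single transition $f_{12,13}$, analyzes its extendability to the boundary of $F_{12}$ in $\mathcal{F}$ by explicit limit computations (Lemmas~\ref{prva}--\ref{prvi}), shows the only genuine indeterminacy is at $p_0=((1:1),(1:1),(1:1))$, and then exhibits the explicit homeomorphism $\tilde f_{12,13}$ on the blowup (Proposition~\ref{tildef}), which swaps the exceptional curve $E$ with the contracted line $\bar F^{1'2'}$ in the other chart; Proposition~\ref{onechart} gives your condition~(2). The only cosmetic difference is presentation: you phrase the key step as ``the strict transform of the Cremona-type map is regular on $\mathrm{Bl}_{p_0}\mathcal F$,'' while the paper verifies this by direct sequential-limit calculations in coordinates.
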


Before to proceed with the proof of Theorem~\ref{universal} let us discuss  the space $\tilde{\mathcal{F}}$ in more detail.

\begin{cor}
$\tilde{\mathcal{F}}$ is a smooth compact four-manifold.
\end{cor}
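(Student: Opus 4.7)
My plan is to deduce the three properties (smoothness, compactness, real dimension four) separately, invoking the preceding lemma for smoothness of $\mathcal{F}$ itself and then applying the standard behaviour of the blowup construction at a smooth point of a complex surface.

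First I would establish the properties of $\mathcal{F}$ before blowing up. The preceding lemma already gives that $\mathcal{F}$ is a smooth submanifold of $\C P^1\times\C P^1\times\C P^1$, since the gradient of the defining cubic $c_1c_2'c_3-c_1'c_2c_3'$ does not vanish on $\mathcal{F}$. As a hypersurface cut out by one non-degenerate equation in a complex $3$-dimensional manifold, $\mathcal{F}$ has complex dimension $2$, i.e.\ real dimension $4$. Moreover, $\mathcal{F}$ is the zero set of a continuous function on the compact space $(\C P^1)^3$, hence closed, and a closed subspace of a compact space is compact. So $\mathcal{F}$ is already a smooth compact real four-manifold.

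Next I would invoke the standard behaviour of the blowup. The point $p=((1:1),(1:1),(1:1))$ lies on $\mathcal{F}$ and is a smooth point of $\mathcal{F}$ by the previous step. Recall that for a smooth complex surface $X$ and a smooth point $p\in X$, the blowup $\mathrm{Bl}_p X$ is obtained by replacing $p$ with the projectivised tangent space $\mathbb{P}(T_pX)\cong\C P^1$; the resulting variety is again a smooth complex surface, and the blowup morphism $\mathrm{Bl}_p X\to X$ is an isomorphism away from the exceptional curve. Applied to $X=\mathcal{F}$ at $p$, this yields that $\tilde{\mathcal{F}}=\mathrm{Bl}_p\mathcal{F}$ is a smooth complex surface, hence a smooth real four-manifold.

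Finally I would check compactness of $\tilde{\mathcal{F}}$. The blowup map $\pi:\tilde{\mathcal{F}}\to\mathcal{F}$ is a proper surjection: it is a biholomorphism outside the exceptional divisor $E=\pi^{-1}(p)\cong\C P^1$, and the preimage of any point of $\mathcal{F}$ is either a single point or the compact set $E$. Since $\mathcal{F}$ is compact and $\pi$ is a continuous surjection from a Hausdorff space whose only non-trivial fibre is compact, a direct covering argument (or, equivalently, the fact that $\tilde{\mathcal{F}}$ is the union of the compact set $E$ and the preimage of any closed neighbourhood of $\mathcal{F}\setminus\{p\}$ under a proper map) shows that $\tilde{\mathcal{F}}$ is compact. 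Assembling these three facts yields the corollary.

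There is no real obstacle here; the only subtle point is that one must verify smoothness of $\mathcal{F}$ at the blowup centre $p$ (so that the local model of the blowup is the standard one), and this is already contained in the preceding lemma. The dimension count is forced by $\mathcal{F}$ being a hypersurface and by blowups preserving dimension.
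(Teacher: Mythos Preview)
Your proof is correct and follows exactly the implicit reasoning the paper relies on: the corollary is stated without proof, as an immediate consequence of the preceding lemma (smoothness of $\mathcal{F}$) together with the standard fact that blowing up a smooth compact complex surface at a point yields another smooth compact complex surface. One minor stylistic point: for compactness it is cleaner simply to say that $\tilde{\mathcal{F}}$ is realized (as in Lemma~\ref{blowlem}) as a closed subset of $\mathcal{F}\times\C P^1$, hence compact, rather than arguing via fibres of $\pi$.
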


We describe this blowup in an neighborhood of the point $((1:1), (1:1), (1:1))$.

\begin{lem}\label{blowlem}
The blowup of the complex surface $\mathcal{F} = \{ ((c_1:c_1^{'}), (c_2: c_{2}^{'}), (c_3:c_3^{'})), \; c_1c_2^{'}c_3=c_1^{'}c_2c_{3}^{'}\}$ in an neighborhood of  the point $((1:1), (1:1), (1:1))$ is the surface $\tilde {\mathcal{F}}\subset \mathcal{F}\times \C P^1$  defined by the equation: 
\begin{equation}\label{blow}
 (1-c_1^{'})x_2 = (1-c_{2}^{'})x_1,
\end{equation}
where $(x_1:x_2)\in \C P^1$.
\end{lem}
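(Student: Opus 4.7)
The plan is to reduce the claim to the standard description of the blowup of a smooth point on a complex surface, by exhibiting $(1-c_1^{'},\, 1-c_2^{'})$ as local coordinates on $\mathcal{F}$ centered at the distinguished point $p_{0}=((1:1),(1:1),(1:1))$.

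First, I would pass to the affine chart of $\C P^1\times\C P^1\times\C P^1$ given by $c_1=c_2=c_3=1$, in which $p_0$ corresponds to $(c_1^{'},c_2^{'},c_3^{'})=(1,1,1)$ and the defining cubic $c_1c_2^{'}c_3=c_1^{'}c_2c_3^{'}$ of $\mathcal{F}$ reduces to the smooth affine surface $c_2^{'}=c_1^{'}c_3^{'}\subset\C^3$. Consequently $(c_1^{'},c_3^{'})$ form global coordinates on this affine piece of $\mathcal{F}$, which contains $p_{0}$.

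Next, I would verify that $(a,b):=(1-c_1^{'},\,1-c_2^{'})=(1-c_1^{'},\,1-c_1^{'}c_3^{'})$ furnishes a second local coordinate system on $\mathcal{F}$ around $p_{0}$. The Jacobian of the change of coordinates $(c_1^{'},c_3^{'})\mapsto(1-c_1^{'},\,1-c_1^{'}c_3^{'})$ equals
\[
\det\begin{pmatrix} -1 & 0 \\ -c_3^{'} & -c_1^{'} \end{pmatrix} = c_1^{'},
\]
which takes the value $1$ at $p_{0}$. By the inverse function theorem, $(a,b)$ is a holomorphic chart on $\mathcal{F}$ in which $p_{0}$ corresponds to the origin.

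Finally, I would invoke the textbook model of the blowup of $\C^2$ at the origin, namely the smooth incidence variety $\{((a,b),(x_1:x_2))\in\C^2\times\C P^1 : a x_2 = b x_1\}$. Pulled back through the chart $(a,b)=(1-c_1^{'},\,1-c_2^{'})$, this realizes the blowup of $\mathcal{F}$ at $p_{0}$ as the subvariety of $\mathcal{F}\times\C P^1$ cut out by $(1-c_1^{'})x_2=(1-c_2^{'})x_1$, as claimed. The only nontrivial step is the Jacobian computation identifying the local chart; since smoothness of $\mathcal{F}$ at $p_{0}$ has already been established and the statement is explicitly local, no global gluing issue arises.
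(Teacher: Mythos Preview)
Your proof is correct and follows essentially the same approach as the paper: both pass to the affine chart $c_1=c_2=c_3=1$, use the relation $c_2^{'}=c_1^{'}c_3^{'}$ to parametrize $\mathcal{F}$ near $p_0$, and then invoke the standard model of the blowup of a smooth surface point. The paper takes $(c_1^{'},c_2^{'})$ directly as local coordinates (valid since $c_1^{'}\neq 0$ near $p_0$), whereas you route through $(c_1^{'},c_3^{'})$ and justify the change to $(1-c_1^{'},1-c_2^{'})$ via the Jacobian; this is a minor and equally valid variation.
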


\begin{proof}
We proceed in a standard way for doing the blowup at the point
$((1:1), (1:1), (1:1))$. Consider a  neighborhood $U$  of this point of the form $((1:c_1^{'}), (1:c_2^{'}), (1:c_3^{'}))$, where $c_1^{'}\neq 0$.  The cubic equation  implies that $c_2^{'} = c_{1}^{'}c_{3}^{'}$, so, since we assume $c_{1}^{'}\neq 0$  in this neighborhood,  we can take    $(c_1^{'}, c_{2}^{'})$ as coordinates in $U$.  Then the preimage  of $U\setminus\{((1:1), (1:1), (1:1))\}$  in $\tilde{\mathcal{F}}\setminus \C P^{1}$ is an open submanifold of $\mathcal{F}\times \C P^1$ given by the equation  $(1-c_1^{'})x_2 = (1-c_{2}^{'})x_1$. This proves~\eqref{blow}.
\end{proof}

\begin{cor}
The open manifold $\tilde{\mathcal{F}}\setminus \{((0:1), (0:1), (c_3:c_3^{'}))\}$ is given by the equation
\[
(c_1-c_1^{'})c_2c^{'} = c_1(c_2-c_2^{'})c.
\]
\end{cor}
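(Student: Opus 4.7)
The plan is to take the local equation $(1-c_1^{'})x_2 = (1-c_2^{'})x_1$ of Lemma~\ref{blowlem}, which is valid only in the affine chart on $\mathcal{F}$ where $(c_1:c_1^{'}) = (1:c_1^{'})$ and $(c_2:c_2^{'}) = (1:c_2^{'})$, and homogenize it to a trilinear equation on $\C P^{1}\times \C P^{1}\times \C P^{1}$ in the coordinates $(c_1:c_1^{'})$, $(c_2:c_2^{'})$, $(c:c^{'})$. Setting $(c:c^{'}) = (x_1:x_2)$ and using $1-c_i^{'} = (c_i-c_i^{'})/c_i$ for $i=1,2$, then clearing denominators, produces exactly the claimed equation $(c_1-c_1^{'})c_2\,c^{'} = c_1(c_2-c_2^{'})c$. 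A direct check recovers the equation of Lemma~\ref{blowlem} in the distinguished chart, so the claimed equation is consistent with the local blowup description.

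The interpretation is that this equation cuts out the graph closure in $\mathcal{F}\times \C P^{1}$ of the rational map
\[
\phi\colon \mathcal{F}\dashrightarrow \C P^{1},\qquad \phi\bigl((c_1:c_1^{'}),(c_2:c_2^{'}),(c_3:c_3^{'})\bigr) = \bigl((c_1-c_1^{'})c_2 : c_1(c_2-c_2^{'})\bigr),
\]
and by Lemma~\ref{blowlem} this graph closure is precisely $\tilde{\mathcal{F}}$. Wherever at least one of the two expressions $(c_1-c_1^{'})c_2$, $c_1(c_2-c_2^{'})$ is nonzero, the equation has a unique solution $(c:c^{'})$ and thus agrees pointwise with $\tilde{\mathcal{F}}$; so everything reduces to analyzing the locus where both sides vanish simultaneously.

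Combining the two vanishing conditions with the cubic relation $c_1c_2^{'}c_3=c_1^{'}c_2c_3^{'}$ cutting out $\mathcal{F}$, a short case analysis (discarding the impossible cases $c_1=c_1^{'}=0$ and $c_2=c_2^{'}=0$) leaves exactly two loci: first, $(c_1:c_1^{'})=(c_2:c_2^{'})=(1:1)$, which by the cubic forces $(c_3:c_3^{'})=(1:1)$ and corresponds to the blowup center (here every $(c:c^{'})$ solves the equation, which correctly reproduces the exceptional $\C P^{1}$); and second, $(c_1:c_1^{'})=(c_2:c_2^{'})=(0:1)$ with $(c_3:c_3^{'})\in \C P^{1}$ arbitrary, where again every $(c:c^{'})$ solves the equation, producing a spurious $\C P^{1}\times \C P^{1}$ lying over the $\C P^{1}$ locus $\{((0:1),(0:1),(c_3:c_3^{'}))\}$.

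To conclude, I will verify by a limit computation along curves in the main stratum that the rational map $\phi$ extends \emph{regularly} over this $\C P^{1}$ locus with value $\phi = (c_3:c_3^{'})$, so that the fiber of $\tilde{\mathcal{F}}$ over $((0:1),(0:1),(c_3:c_3^{'}))$ is the single point $((0:1),(0:1),(c_3:c_3^{'}),(c_3:c_3^{'}))$ rather than an entire $\C P^{1}$. Removing the resulting $\C P^{1}$ from $\tilde{\mathcal{F}}$ on the left precisely cancels the spurious $\C P^{1}\times \C P^{1}$ from the vanishing locus of the equation on the right, giving the stated equality. The main obstacle is the last step: showing that the graph closure $\tilde{\mathcal{F}}$ meets the spurious surface $\{((0:1),(0:1),(c_3:c_3^{'}))\}\times \C P^{1}$ exactly along the diagonal-type $\C P^{1}$ and not in a larger set, which requires the limit of $\phi$ along every approach direction (parametrized via the cubic relation) to yield the same value $(c_3:c_3^{'})$.
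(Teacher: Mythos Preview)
Your proof is correct. The paper gives no proof of this corollary, treating it as an immediate homogenization of the local equation from Lemma~\ref{blowlem}; your argument supplies exactly the justification that the paper omits, namely the case analysis locating the two loci where both coefficients $(c_1-c_1')c_2$ and $c_1(c_2-c_2')$ vanish simultaneously and the identification of $\{((0:1),(0:1),(c_3:c_3'))\}$ as the spurious one.

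One minor remark: the final limit computation showing that $\phi$ extends regularly over the removed locus with value $(c_3:c_3')$ is illuminating but not strictly required for the statement. Since the corollary only describes $\tilde{\mathcal{F}}$ on the complement of that locus, it suffices to know that away from both the blowup center and the spurious locus the equation has a unique solution in $(c:c')$ and hence agrees with $\tilde{\mathcal{F}}$ (which is isomorphic to $\mathcal{F}$ there via $\pi$), while over the blowup center the equation reproduces the exceptional $\C P^1$. What $\tilde{\mathcal{F}}$ looks like over the removed locus is irrelevant to the equality. Your computation does, however, explain \emph{why} this particular $\C P^1$ must be excised: the homogenized equation degenerates there while $\tilde{\mathcal{F}}$ does not acquire an extra fiber.
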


\begin{rem}\label{blowgen}
The  blowup construction implies  that the projection map  on the first coordinate $\pi : \tilde{\mathcal{F}} \to \mathcal{F}$ is an isomorphism between $\tilde{\mathcal{F}}\setminus \{((1:1), (1:1), (1:1))\}\times \C P^1$ and $\mathcal{F}\setminus \{((1:1), (1:1), (1:1))\}$, and $\pi ^{-1}((1:1), (1:1), (1:1)) = \C P^1$.
\end{rem}

\begin{rem}
In the sequel, we write the points from $\tilde{\mathcal{F}}\setminus \C P^1$ in  coordinates of the manifold $\mathcal{F}$ and the points from the divisor  $\C P^1\subset \tilde{\mathcal{F}}$ in the form $(((1:1), (1:1), (1:1)), (c_3:c_3^{'}))$.
\end{rem}

\begin{thm}\label{bl3}
The universal space of parameters $\tilde{\mathcal{F}}$  is homeomorphic to the space $Y$ which is the  blowup of $\C P^1\times \C P^1$ at the points $B_1= ((1:0), (1:0))$, $B_2 = ((0:1), (0:1))$ and $B_3 = ((1:1), (1:1))$.
\end{thm}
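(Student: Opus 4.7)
The plan is to exhibit $\tilde{\mathcal{F}}$ as a successive blowup of $\C P^1\times \C P^1$ at the three points $B_1,B_2,B_3$ by first identifying $\mathcal{F}$ itself with the blowup at $B_1$ and $B_2$, and then using the naturality of blowup under the extra blowup at the smooth point $((1:1),(1:1),(1:1))$.

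First I would define the projection
\[
p\colon \mathcal{F}\longrightarrow \C P^1\times \C P^1,\qquad ((c_1:c_1'),(c_2:c_2'),(c_3:c_3'))\longmapsto ((c_1:c_1'),(c_2:c_2'))
\]
onto the first two factors, and analyze its fibers. Over a point $((c_1:c_1'),(c_2:c_2'))$, the defining cubic $c_1c_2'c_3=c_1'c_2c_3'$ is linear in $(c_3:c_3')$ and determines it uniquely unless both $c_1c_2'=0$ and $c_1'c_2=0$. A case check shows this vanishing can only occur at the two points $B_1=((1:0),(1:0))$ and $B_2=((0:1),(0:1))$, where the fiber of $p$ is the whole $\C P^1$. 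Hence $p$ is a bijection on $\mathcal{F}\setminus p^{-1}\{B_1,B_2\}$ onto $\C P^1\times \C P^1\setminus\{B_1,B_2\}$, and in fact an isomorphism there, since in any chart the equation solves for one of $c_3,c_3'$ as a rational function of the other coordinates.

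Next I would verify in local coordinates that $p$ is the blowup at $B_1$ and $B_2$. Near $B_1$, set $c_1=1$, $c_2=1$, so $B_1$ corresponds to $a:=c_1'=0$, $b:=c_2'=0$; the defining equation becomes
\[
b\,c_3 \;=\; a\,c_3',
\]
which is exactly the standard equation of the blowup of $\C^2_{(a,b)}$ at the origin, with exceptional fiber $\{0\}\times \C P^1_{(c_3:c_3')}$. The analogous computation at $B_2$ (using $c_1'=1$, $c_2'=1$) gives the same conclusion. Combined with the preceding step, this identifies
\[
\mathcal{F}\;\cong\;\mathrm{Bl}_{B_1,B_2}(\C P^1\times \C P^1).
\]

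Finally, the point $P_0:=((1:1),(1:1),(1:1))\in \mathcal{F}$ lies outside $p^{-1}\{B_1,B_2\}$, so $p$ is a local isomorphism at $P_0$ and maps $P_0$ to $B_3=((1:1),(1:1))$. By functoriality of blowing up a smooth point under a local isomorphism, the blowup $\tilde{\mathcal{F}}$ of $\mathcal{F}$ at $P_0$ is canonically homeomorphic to the blowup of $\C P^1\times \C P^1$ at $\{B_1,B_2,B_3\}$, namely $Y$. I would make this last step explicit by checking that in the chart used in Lemma~\ref{blowlem}, equation~\eqref{blow} together with $(c_3:c_3')$ being determined by $(c_1',c_2')$ matches the standard blowup equation for $\C P^1\times \C P^1$ at $B_3$ in coordinates $(c_1',c_2')$. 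The only mild obstacle is bookkeeping the three charts covering $B_1,B_2,B_3$ and making sure the identifications $\mathcal{F}\setminus p^{-1}\{B_1,B_2\}\to \C P^1\times \C P^1\setminus\{B_1,B_2\}$ glue with the local blowup charts; everything else is a direct coordinate computation.
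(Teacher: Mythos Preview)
Your proposal is correct and follows essentially the same approach as the paper: both use the projection $((c_1:c_1'),(c_2:c_2'),(c_3:c_3'))\mapsto ((c_1:c_1'),(c_2:c_2'))$ onto the first two factors as the underlying map, sending the fibers over $B_1,B_2$ and the exceptional $\C P^1$ over $((1:1),(1:1),(1:1))$ to the three exceptional divisors of $Y$. The paper's proof simply writes down this map and its values on the special fibers, while you supply the missing verification (fiber analysis of the cubic as a linear equation in $(c_3:c_3')$, and the local blowup check $b\,c_3=a\,c_3'$ near $B_1$, $B_2$) that actually shows the map is a homeomorphism; so your argument is a fleshed-out version of the same idea rather than a different route.
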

\begin{proof}
The homeomorphism $\tilde{\mathcal{F}}\to Y$ is given by
$((c_1:c_1^{'}), (c_2:c_2^{'}), (c_3:c_3^{'})) \to ((c_1:c_1^{'}), (c_2:c_2^{'}))$  at the points  $((c_1:c_1^{'}), (c_2:c_2^{'}), (c_3:c_3^{'}))$ for which $((c_1:c_1^{'}), (c_2:c_2^{'}))\neq B_1, B_2, B_3$. For the other  points,  this homeomorphism  is given by
$((1:0), (1:0), (c_3:c_3^{'}))\to (c_3:c_3^{'})\in \text{blowup}(B_1)\cong \C P^1$,  $((0:1), (0:1), (c_3:c_3^{'}))\to (c_3:c_3^{'})\in \text{blowup}(B_2)\cong \C P^1$ and $((1:1), (1:1), (1:1),(c_3:c_3^{'}))\to (c_3:c_3^{'})\in \text{blowup}(B_3)\cong \C P^1$.
\end{proof}

We provide an  interpretation of the universal space of parameters in one more way. Let 
\[
U=(\mathbb{C}P^1\times \mathbb{C}P^1)\backslash\{ ( (c_1:c_1^{'}),(1:0) ), ( (1:0),(c_2:c_2^{'}) ) \}, \quad V=\mathbb{C}P^2\backslash \{(x:y:0)\}.
\]
The following obviously holds:
\begin{lem}\label{3-4}
The map
\begin{equation}\label{map}
f \colon \mathbb{C}P^1\times \mathbb{C}P^1\longrightarrow \mathbb{C}P^2, \;\; f ((c_1:c_1^{'}),(c_2:c_2^{'}))  = (c_1c_2^{'}:c_1^{'}c_2 : c_1^{'}c_2^{'})
\end{equation}
has the following properties:
\begin{enumerate}
\item  $f$ is defined everywhere except and the point  $((1:0),(1:0))$.
\item The image of  $f$ is $V$.
\item The space of parameters   $F$ of the main stratum  belongs to $U$.
\item  $f$ defines a homeomorphism $f \colon U \to V$.
\end{enumerate}
\end{lem}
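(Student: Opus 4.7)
The strategy is a direct four-part verification, with the key structural observation being that on $U$ one may normalize $c_1' = c_2' = 1$ and on $V$ one may normalize $z = 1$, after which the map $f$ becomes tautological. For (1), I would analyze when the three homogeneous coordinates $c_1 c_2'$, $c_1' c_2$, $c_1' c_2'$ vanish simultaneously. Since each projective point requires $(c_i, c_i') \neq (0,0)$, the third coordinate vanishes iff $c_1' = 0$ or $c_2' = 0$; a short case check then shows that the vanishing of the remaining two coordinates forces both $c_1' = 0$ and $c_2' = 0$ with $c_1, c_2 \neq 0$, yielding the unique exceptional point $((1{:}0), (1{:}0))$.

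For (2) and (4) together, I would exhibit the inverse explicitly as $g \colon V \to U$, $g(x{:}y{:}z) = ((x{:}z), (y{:}z))$. This is well-defined and takes values in $U$ because $z \neq 0$ ensures $(x{:}z)$ and $(y{:}z)$ are legitimate projective points and neither equals $(1{:}0)$. The identities $f \circ g = \mathrm{id}_V$ and $g \circ f|_U = \mathrm{id}_U$ are immediate after normalizing representatives, and both maps are continuous as rational maps with nowhere-vanishing denominators on the relevant domains. This yields (4), and (2), read as a statement about the image of $f|_U$, follows from the surjectivity portion of the inverse check.

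For (3), I would invoke the description of the main stratum's space of parameters from Proposition~\ref{em} and its corollary: for $n = 5$, $F \cong (\mathbb{C}P^1 \times \mathbb{C}P^1) \setminus G_5$ with $G_5 \supset (A \times \mathbb{C}P^1) \cup (\mathbb{C}P^1 \times A)$ and $A = \{(1{:}0), (0{:}1), (1{:}1)\}$. In particular, neither component of a point of $F$ equals $(1{:}0)$, so $F$ avoids both projective lines removed from $\mathbb{C}P^1 \times \mathbb{C}P^1$ to form $U$, whence $F \subset U$. The only point requiring caution is the reading of (2): extending $f$ beyond $U$ (but still to the locus where it is defined) sends the two removed lines of $U$ onto the extra points $(1{:}0{:}0)$ and $(0{:}1{:}0)$ of $\mathbb{C}P^2 \setminus V$, so (2) is to be understood in conjunction with (4) as describing $f|_U$, which is the substantive content of the lemma.
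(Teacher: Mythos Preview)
Your proof is correct and complete. The paper gives no proof at all, stating only that the lemma ``obviously holds''; your explicit inverse $g(x{:}y{:}z) = ((x{:}z),(y{:}z))$ and the case analysis for (1) are exactly the kind of routine verification the authors left to the reader, and your careful remark about how (2) must be read (as the image of $f|_U$, since on the two deleted lines $f$ takes the values $(1{:}0{:}0)$ and $(0{:}1{:}0)$) is a useful clarification of a point the paper glosses over.
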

 
Lemma~\ref{3-4} implies that the map $f$ induces a homeomorphism between the blowup of $\C P^1\times \C P^1$ at three points and the blow up of $\C P^2$ at four points:

\begin{cor}\label{CP2}
The universal space of parameters $\tilde{\mathcal{F}}$  is homeomorphic to the space $Y$ obtained as the   blowup of $\C P^{2}$ at four points $(1:0:0),(0:1:0), (0:0:1), (1:1:1)$.
\end{cor}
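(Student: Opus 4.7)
The plan is to use Theorem~\ref{bl3} to replace $\tilde{\mathcal{F}}$ by the blowup $Y'$ of $\C P^1\times \C P^1$ at the three points $B_1=((1{:}0),(1{:}0))$, $B_2=((0{:}1),(0{:}1))$, $B_3=((1{:}1),(1{:}1))$, and then resolve the map $f$ of Lemma~\ref{3-4} to produce the required identification with the blowup of $\C P^2$ at four points. First I would observe that $f$, given by $((c_1{:}c_1'),(c_2{:}c_2'))\mapsto (c_1c_2':c_1'c_2:c_1'c_2')$, is undefined exactly at $B_1$ (both factors in each homogeneous coordinate vanish there), while at the two lines $L_1=\{c_1'=0\}$ and $L_2=\{c_2'=0\}$ it is defined but collapses them: $f(L_1)=(1{:}0{:}0)$ and $f(L_2)=(0{:}1{:}0)$.

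The key step is to observe that blowing up $\C P^1\times\C P^1$ at $B_1$ resolves the indeterminacy of $f$. One checks this directly in affine coordinates $(u,v)=(c_1/c_1',c_2/c_2')$ near $B_1$, where after introducing the blowup coordinate $t=u/v$ (or its inverse) the map becomes regular; this is the standard local picture of a Cremona-like transformation. The resulting holomorphic extension $\tilde f\colon \mathrm{Bl}_{B_1}(\C P^1\times\C P^1)\to \C P^2$ is a morphism that is an isomorphism on the complement of the exceptional divisor $E$ and the strict transforms $\widetilde L_1$ and $\widetilde L_2$, and that contracts $\widetilde L_1$ and $\widetilde L_2$ to $(1{:}0{:}0)$ and $(0{:}1{:}0)$, while $E$ maps isomorphically onto the line $\{z=0\}\subset \C P^2$. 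This gives the classical biholomorphism $\mathrm{Bl}_{B_1}(\C P^1\times \C P^1)\cong \mathrm{Bl}_{\{(1:0:0),(0:1:0)\}}\C P^2$.

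Next I would transport the remaining two blowups through this isomorphism. Since $B_2, B_3\notin L_1\cup L_2\cup\{B_1\}$, Lemma~\ref{3-4}(4) shows that $f$ is a local biholomorphism at $B_2$ and $B_3$, sending them to $f(B_2)=(0{:}0{:}1)$ and $f(B_3)=(1{:}1{:}1)$. Hence blowing up $Y'$ at the preimages of $B_2$ and $B_3$ corresponds, under $\tilde f$, to blowing up $\C P^2$ at these two image points. Combining this with the previous step identifies $Y'$ with the blowup of $\C P^2$ at the four points $(1{:}0{:}0)$, $(0{:}1{:}0)$, $(0{:}0{:}1)$ and $(1{:}1{:}1)$, which is $Y$ as stated. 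Composing with the homeomorphism $\tilde{\mathcal{F}}\cong Y'$ from Theorem~\ref{bl3} completes the proof.

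The main obstacle is the verification that $\tilde f$ is a well-defined biholomorphism onto $\mathrm{Bl}_2\C P^2$ and contracts exactly $\widetilde L_1$ and $\widetilde L_2$; this is the only nontrivial computation, and it amounts to a careful chart-by-chart check of the Cremona-type resolution. Once that classical fact is in hand, the rest of the argument is a direct bookkeeping of where $B_2$ and $B_3$ land under $f$.
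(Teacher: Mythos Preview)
Your proposal is correct and follows essentially the same approach as the paper: both use Theorem~\ref{bl3} to identify $\tilde{\mathcal{F}}$ with $\mathrm{Bl}_{\{B_1,B_2,B_3\}}(\C P^1\times\C P^1)$, resolve the rational map $f$ of Lemma~\ref{3-4} by blowing up at $B_1$ (sending the exceptional divisor to the line $\{z=0\}$ and contracting the two rulings through $B_1$ to $(1{:}0{:}0)$ and $(0{:}1{:}0)$), and then transport the blowups at $B_2,B_3$ to the points $f(B_2)=(0{:}0{:}1)$ and $f(B_3)=(1{:}1{:}1)$. Your write-up is slightly more detailed in the algebro-geometric language (strict transforms, Cremona-type resolution), but the argument is the same.
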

\begin{proof}
Let us consider the blowup $\C P^1\times \C P^1$ at the point $((1:0), (1:0))$. We extend the  map~\eqref{map} to the map $f$ from this blowup  to $\C P^2$ by mapping  the blowup$((1:0), (1:0))\cong \C P^1$    to $(x:y:0)\subset \C P^2$. Then we consider the blowups  of $\C P^2$ at the points $(1:0:0)$ and $(0:1:0)$ and  map to these blowups  the sets $\{( (1:0),(c_2:c_2^{'}))\}$and $ \{((c_1:c_1^{'}),(1:0))\}$,  respectively. We obtain a homeomorphism between the blowup of $\C P^1\times \C P^1$ at one  point and the blowup of $\C P^{2}$ at two points. Now,  in addition, we do the blowup of $\C P^1\times \C P^1$ at the points $((0:1), (0:1))$ and $((1:1), (1:1))$ and obtain the universal space of parameters. We accordingly do the blowup of $\C P^2$ at the points $(0:0:1)$ and $(1:1:1)$ and  map to them these two new blowups of $\C P^1\times \C P^1$. As a result we obtain the homeomorphism between the blowup of $\C P^1\times\C P^1$at three points and the blowup of $\C P^2$ at four points.    
\end{proof}

\begin{rem}
The product  $\C P^1\times \C P^1$ can be embedded into $\C P^3$ by the Serge map. It follows from~\eqref{map} that there exists a mapping from $\C P^1\times \C P^1$ to $\C P^2$ which is not defined  at one point. The proof of Corollary~\ref{CP2} shows that, after blowing up of  $\C P^1\times \C P^1$ at that undefined  point and blowing up $\C P^2$ at two points,  we obtain a homeomorphism of these new spaces.    
We want to point that this is a special case of  the result from  toric topology~\cite{Fulton},  which states that the blowup of $\C P^1\times \C P^1$ at $k$ points is homeomorphic to the blowup of  $\C P^2$ at $k+1$ points for $k>1$.  The proof of Corollary~\ref{CP2} provides an explicit description of this  homeomorphism for $k=2$.   
\end{rem}

\begin{rem}
In his seminal paper Kapranov~\cite{Kap} defined  and studied the properties of the Chow quotient $G_{n,k}/\!\!/(\C ^{*})^{n}$. It is   established an isomorphism between  $G_{n, 2}/\!\!/(\C ^{*})^{n}$ and the Grothendieck-Knudsen  moduli space $\overline{M_{0,n}}$.  Moreover,  it is proved (Theorem 4.3.3,~\cite{Kap})  that the variety $\overline{M_{0,n}}$  can be obtained from $\C P^{n-3}$  by  series of blowups.  In the first non-trivial case when $n=5$,  it implies that   $\overline{M_{0,5}}$  is the  blowup of $\C P^2$ at four points. In addition,  as it is pointed in~\cite{Kap}, in the paper~\cite{Keel} one can find  a representation of $\overline{M_{0,n}}$ as an iterated blowup of $(\C P^{1})^{n-3}$.  From this representation (see~\cite{Keel}, page 555), it directly follows  that $\overline{M_{0,5}}$ is isomorphic to a blow up of $\C P^1\times \C P^1$ at three points. Thus, by Theorem~\ref{bl3} and Corollary~\ref{CP2}  we see in both ways  that the universal space of parameters $\tilde{\mathcal{F}}$ for $G_{5,2}$ is isomorphic to the Chow quotient $G_{5,2}/\!\!/(\C ^{*})^{5}$.
\end{rem}

\section{The transition automorphisms for  the spaces of parameters   of the  strata}

Consider the chart $M_{kl}$ and let $z_{i}^{kl}$, $1\leq i\leq 6$  be the coordinates in this chart.  The main stratum  in this  chart is given by the system of equations
\begin{equation}\label{mainarbitrary}
c_{1,kl}^{'}z_{1}^{kl}z_{5}^{kl} = c_{1,kl}z_{2}^{kl}z_{4}^{kl},\;\; c_{2,kl}^{'}z_{1}^{kl}z_{6}^{kl} = c_{2,kl}z_{3}^{kl}z_{4}^{kl},\;\; c_{3,kl}^{'}z_{2}^{kl}z_{6}^{kl} = c_{3.kl}z_{3}^{kl}z_{5}^{kl}.
\end{equation}
The set of parameters  $F_{kl}\cong F$  of the main stratum in the coordinates of the chart $M_{kl}$ is given by
\begin{equation}\label{mainparamar}
F_{kl} = \{((c_{1, kl}:c^{'}_{1,kl}), (c_{2,kl}:c_{2, kl}^{'}), (c_{3,kl}:c_{3,kl}^{'}))\in \C P^1\times\C P^{1}\times \C P^{1}\},
\end{equation}
\[
c_{i, kl}, c_{i,kl}^{'}\neq 0, \;\; c_{i, kl}\neq c_{i, kl}^{'},\;\; c_{1,kl}c_{2,kl}^{'}c_{3,kl}=c_{1,kl}^{'}c_{2,kl}c_{3,kl}^{'}.
\]
We have that
\[
c_{3,kl}^{'} =  \frac{c_{1,kl}c_{2,kl}^{'}}{c_{1,kl}^{'}c_{2,kl}}c_{3, kl},
\]
which implies 
\[
(c_{3,kl}:c_{3,kl}^{'}) =  (c_{3,kl}:\frac{c_{1,kl}c_{2,kl}^{'}}{c_{1,kl}^{'}c_{2,kl}}c_{3, kl}) = (c_{1,kl}^{'}c_{2, kl} : c_{1,kl}c_{2,kl}^{'}).
\]
Thus,  we can take
\begin{equation}\label{tri}
c_{3,kl} = c_{1,kl}^{'}c_{2,kl},\;\; c_{3,kl}^{'} = c_{1,kl}c_{2,kl}^{'}.
\end{equation}

\subsection{The transtion automorphisms for  the space of  parameters $F$ of the main stratum}\label{mainstratum}
Consider now the charts $M_{12}$ and $M_{13}$. 
\begin{prop}
The homeomorphism $f_{12,13} : F_{12} \to F_{13}$ is given by
\begin{equation}\label{1213}
((c_{1, 12}:c^{'}_{1,12}), (c_{2,12}:c_{2, 12}^{'}), (c_{3,12}:c_{3,12}^{'})) \to
\end{equation}
\[
\to ((c_{1, 12}:c_{1,12}-c^{'}_{1,12}), (c_{2,12}:c_{2,12}-c_{2, 12}^{'}), ( (c_{1,12}-c_{1,12}^{'})c_{2,12}^{'}c_{3,12}:c_{1,12}^{'}(c_{2,12}-c_{2,12}^{'})c_{3,12}^{'})  ).
\] 
\end{prop}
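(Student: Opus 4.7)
The plan is to derive formula~\eqref{1213} by a direct change-of-basis computation. For a point $L$ in the main stratum lying in both $M_{12}$ and $M_{13}$, its two normalized matrix representatives $A_{12}$ (with rows $1,2$ forming $I_2$) and $A_{13}$ (with rows $1,3$ forming $I_2$) are related by $A_{13}=A_{12}M$, where $M$ is the unique invertible $2\times 2$ matrix that sends row~$3$ of $A_{12}$ to $(0,1)$ while fixing row~$1$. An elementary computation yields
\[
M=\begin{pmatrix} 1 & 0 \\ -z_1^{12}/z_4^{12} & 1/z_4^{12}\end{pmatrix},
\]
which is well defined because $z_4^{12}\neq 0$ for any $L$ in the main stratum (it corresponds, up to sign, to a non-vanishing Pl\"ucker coordinate).

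Applying $M$ to the three non-identity rows of $A_{12}$ gives the explicit formulas
\[
z_1^{13}=-\frac{z_1^{12}}{z_4^{12}},\quad z_4^{13}=\frac{1}{z_4^{12}},\quad z_2^{13}=\frac{z_2^{12}z_4^{12}-z_1^{12}z_5^{12}}{z_4^{12}},\quad z_5^{13}=\frac{z_5^{12}}{z_4^{12}},
\]
together with the analogous expressions for $z_3^{13}$ and $z_6^{13}$. Substituting these into $c_{i,13}/c_{i,13}'$ as defined by~\eqref{mainarbitrary} and simplifying by means of the $M_{12}$-identities $z_1^{12}z_5^{12}=(c_{1,12}/c_{1,12}')z_2^{12}z_4^{12}$ and $z_1^{12}z_6^{12}=(c_{2,12}/c_{2,12}')z_3^{12}z_4^{12}$ collapses the first two projective factors directly to $c_{1,12}/(c_{1,12}-c_{1,12}')$ and $c_{2,12}/(c_{2,12}-c_{2,12}')$. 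For the third factor, rewriting the same identities as $z_2^{12}z_4^{12}-z_1^{12}z_5^{12}=z_2^{12}z_4^{12}(c_{1,12}'-c_{1,12})/c_{1,12}'$ (and its analogue with indices $3,6$) lets the remaining $z_i^{12}$'s cancel out of the ratio $z_2^{13}z_6^{13}/(z_3^{13}z_5^{13})$ except for the $M_{12}$-invariant $z_2^{12}z_6^{12}/(z_3^{12}z_5^{12})=c_{3,12}/c_{3,12}'$, leaving exactly the expression asserted in~\eqref{1213} after clearing signs.

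The whole computation is routine; the main subtlety is maintaining consistent conventions for the indexing of the coordinates $z_i^{kl}$ across charts so that the main-stratum equations~\eqref{mainarbitrary} take the same form in both. It remains to check that the formula defines a homeomorphism $F_{12}\to F_{13}$, i.e.\ that the image triple satisfies the cubic relation~\eqref{mainparamar} and that each component avoids $\{(0:1),(1:0),(1:1)\}$; both properties follow automatically from the corresponding constraints on $(c_{i,12}:c_{i,12}')$ and the main-stratum inequalities $c_{i,12},c_{i,12}'\neq 0$ and $c_{i,12}\neq c_{i,12}'$.
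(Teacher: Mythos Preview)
Your proposal is correct and follows essentially the same approach as the paper: both derive the coordinate-change formulas $z_i^{13}$ in terms of $z_i^{12}$ (you via the explicit transition matrix $M$, the paper by writing them out directly as in~\eqref{coordinatechange}), then substitute into the defining ratios $c_{i,13}/c_{i,13}'$ from~\eqref{mainarbitrary} and simplify. Your presentation is slightly more conceptual in explaining where $M$ comes from and in noting that the image indeed lands in $F_{13}$, but the computation is the same.
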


\begin{proof}
 The coordinates $z_{i}^{12}$ and $z_{i}^{13}$, $1\leq i\leq 6$ are, on $M_{12}\cap M_{13}$,  related by
\begin{equation}\label{coordinatechange}
z_{1}^{13} = -\frac{z_{1}^{12}}{z_{4}^{12}},\;\; z_{2}^{13}=z_{2}^{12}-\frac{z_{1}^{12}}{z_{4}^{12}}z_{5}^{12},\;\; z_{3}^{13}=z_{3}^{12}-\frac{z_{1}^{12}}{z_{4}^{12}}z_{6}^{12}. 
\end{equation}
\[
z_{4}^{13}=\frac{1}{z_{4}^{12}},\;\; z_{5}^{13}=\frac{z_{5}^{12}}{z_{4}^{12}},\;\; z_{6}^{13}=\frac{z_{6}^{12}}{z_{4}^{12}}.
\]
It follows   that 
\begin{equation}\label{formulas}
z_{1}^{13}z_{5}^{13} = -\frac{z_1^{12}z_{5}^{12}}{(z_{4}^{12})^{2}},\;\; z_{2}^{13}z_{4}^{13} = \frac{z_{2}^{12}z_{4}^{12} -z_1^{12}z_{5}^{12}}{(z_{4}^{12})^{2}},\;\; z_{1}^{13}z_{6}^{13} = -\frac{z_1^{12}z_{6}^{12}}{(z_{4}^{12})^{2}},
\end{equation}
\[z_{3}^{13}z_{4}^{13} = \frac{z_{3}^{12}z_{4}^{12} -z_1^{12}z_{6}^{12}}{(z_{4}^{12})^{2}},\;\;\; z_{2}^{13}z_{6}^{13} = z_{6}^{12}\frac{z_{2}^{12}z_{4}^{12} -z_1^{12}z_{5}^{12}}{(z_{4}^{12})^{2}},\;\; \; z_{3}^{13}z_{5}^{13} = z_5^{12}\frac{z_{3}^{12}z_{4}^{12} -z_1^{12}z_{6}^{12}}{(z_{4}^{12})^{2}}.
\]
Therefore,  the relation $c_{1,13}^{'}z_{1}^{13}z_{5}^{13} = c_{1,13}z_{2}^{13}z_{4}^{13}$ can be written as
\[
c_{1,13}^{'}z_{1}^{12}z_{5}^{12} = c_{1,13}(z_{1}^{12}z_{5}^{12}-z_{2}^{12}z_{5}^{12}) \Rightarrow (c_{1,13}-c_{1,13}^{'})z_{1}^{12}z_{5}^{12} = c_{1,13}z_{2}^{12}z_{4}^{12}.
\]
It follows from~\eqref{mainarbitrary}  that 
\[  c_{1,13} = c_{1,12},\;\;  c_{1,13}^{'} =    c_{1,12} -  c_{1,12}^{'}.
\]
In the same way we deduce that 
\[
c_{2,13}= c_{2,12},\;\; c_{2,13}^{'} = c_{2,12}-c_{2,12}^{'}.
\]

From~\eqref{mainarbitrary} we also deduce that
\[
\frac{c_{1,kl}}{c_{1,kl}^{'}} = \frac{z_1^{kl}z_{5}^{kl}}{z_{2}^{kl}z_{4}^{kl}},\;\;\frac{c_{2,kl}}{c_{2,kl}^{'}} = \frac{z_1^{kl}z_{6}^{kl}}{z_3^{kl}z_{4}^{kl}},\;\; \frac{c_{3,kl}}{c_{3,kl}^{'}} = \frac{z_2^{kl}z_{6}^{kl}}{z_3^{kl}z_{5}^{kl}}. 
\]
Taking into account~\eqref{formulas}, it follows that 
\[
c_{3,13}^{'} = \frac{z_{5}^{12}}{z_{6}^{12}}\frac{z_3^{12}z_{4}^{12}-z_{1}^{12}z_{6}^{12}}{z_{2}^{12}z_{4}^{12}-z_{1}^{12}z_{5}^{12}}c_{3,13} =
\]
\[
\frac{z_{5}^{12}}{z_{6}^{12}}\frac{z_{3}^{12}z_{4}^{12}}{z_{2}^{12}z_{4}^{12}}\frac{\frac{c_{2,12}}{c_{2,12}^{'}}-1}{\frac{c_{1,12}}{c_{2,12}^{'}}-1} c_{3,13} = \frac{c_{1,12}^{'}(c_{2,12}-c_{2,12}^{'})c_{3,12}^{'}}{(c_{1,12}-c_{1,12}^{'}
)c_{2,12}^{'}c_{3,12}}c_{3,13}.
\]
Thus,  
\[
(c_{3,13} : c_{3,13}^{'}) = ( (c_{1,12}-c_{1,12}^{'})c_{2,12}^{'}c_{3,12}:c_{1,12}^{'}(c_{2,12}-c_{2,12}^{'})c_{3,12}^{'}).
\]
\end{proof}

Due to the fact that the group $S_5$ permutes the charts it can be similarly   explicitly constructed a homeomorphism $f_{ij, kl} : F_{ij} \to F_{kl}$ between the sets of parameters  $F_{ij}$ and $F_{kl}$  of the main stratum for an arbitrary two charts $M_{ij}$ and $M_{kl}$.

Since  the homeomorphism $f_{ij, kl} : F_{ij}\to F_{kl}$  is induced by the coordinate transition    map for  the charts $M_{ij}$ and $M_{kl}$, we deduce:

\begin{lem}\label{tricharts}
For any three charts $M_{ij}, M_{kl}$ and $M_{mn}$  it holds
\[
f_{ij, kl} =  f_{mn, kl}\circ  f_{ij, mn}.
\]
\end{lem}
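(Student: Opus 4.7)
The plan is to reduce the cocycle identity to a tautology by observing that each $f_{ij,kl}$ factors through the intrinsic space of parameters $F$ of the main stratum. Recall from Remark~\ref{param-chart} that for the main stratum $W$ there is a canonical homeomorphism $f_{ij}\colon F_{ij}\to F$ identifying the coordinate record $F_{ij}$, written in the Pl\"ucker coordinates of $M_{ij}$, with the abstract quotient $F=W/(\C^{*})^{5}$. The transition homeomorphism $f_{ij,kl}\colon F_{ij}\to F_{kl}$ is, by construction, the unique map making the triangle $f_{kl}\circ f_{ij,kl}=f_{ij}$ commute, i.e.\ $f_{ij,kl}=f_{kl}^{-1}\circ f_{ij}$. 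Granted this, the identity is immediate:
\[
f_{mn,kl}\circ f_{ij,mn}=(f_{kl}^{-1}\circ f_{mn})\circ(f_{mn}^{-1}\circ f_{ij})=f_{kl}^{-1}\circ f_{ij}=f_{ij,kl}.
\]

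The first step I would carry out is therefore to make precise the identification $f_{ij,kl}=f_{kl}^{-1}\circ f_{ij}$. Concretely, I would check this in the model case already computed in Subsection~\ref{mainstratum}: the explicit formulas~\eqref{1213} for $f_{12,13}$ were obtained by substituting the Pl\"ucker coordinate transition~\eqref{coordinatechange} into the defining equations~\eqref{mainarbitrary} of the main stratum in the two charts and comparing the resulting parameters of the \emph{same} $(\C^{*})^{5}$-orbit. This comparison is exactly the statement that for any $L\in W\cap M_{ij}\cap M_{kl}$, the parameter of the orbit of $L$ computed in the chart $M_{kl}$ equals $f_{ij,kl}$ applied to the parameter of the orbit of $L$ computed in $M_{ij}$; equivalently, $f_{ij}$ and $f_{kl}\circ f_{ij,kl}$ assign to $L$ the same element of $F$.

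The second step is to invoke the compatibility on triple overlaps. Since the Pl\"ucker charts $(M_{ij},u_{ij})$ form a smooth atlas on $G_{5,2}$, the coordinate transitions $u_{kl}\circ u_{ij}^{-1}$ on $M_{ij}\cap M_{kl}\cap M_{mn}$ satisfy the classical cocycle identity. Passing to the orbit space under $(\C^{*})^{5}$ on the dense main stratum $W$, this cocycle descends to $F$: for every $L\in W\cap M_{ij}\cap M_{kl}\cap M_{mn}$, all three canonical maps $f_{ij}$, $f_{kl}$, $f_{mn}$ produce the same point of $F$. Combined with the factorization from the first step, this yields the claimed identity on $F_{ij}$.

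The only genuine subtlety, and what I expect to be the main obstacle to write cleanly, is verifying that the relation $f_{ij,kl}=f_{kl}^{-1}\circ f_{ij}$ really holds at the level of the explicit rational formulas produced by the Pl\"ucker atlas (rather than only up to the implicit identifications); once this is nailed down in the representative case $(ij,kl)=(12,13)$, symmetry under the $S_{5}$-action on the charts (Lemma~\ref{Sn-action}) extends the verification to all pairs and triples of charts without further computation.
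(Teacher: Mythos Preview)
Your argument is correct and is essentially the same as the paper's: the paper simply observes that since each $f_{ij,kl}$ is induced by the coordinate transition map between the charts $M_{ij}$ and $M_{kl}$, the cocycle identity for the Pl\"ucker atlas descends to the parameter spaces. Your factorization $f_{ij,kl}=f_{kl}^{-1}\circ f_{ij}$ through the intrinsic quotient $F$ is a clean way of making this one-line justification explicit, but there is no substantive difference in approach.
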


\begin{cor}
The automorphisms $f_{ij,kl}$ of the space of parameters $F$  of the main stratum  for $G_{5,2}$, which are  induced by the coordinate transition  maps  between all  charts,  
form a group. The  generators for  this group  are  given by the set $\{ f_{i_{0}j_{0}, kl}, i_0j_0\neq kl\}$ for any fixed $i_0j_0$.
\end{cor}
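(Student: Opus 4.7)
The plan is to extract the group structure of $\{f_{ij,kl}\}$ directly from the cocycle identity
\[
f_{ij,kl} = f_{mn,kl}\circ f_{ij,mn}
\]
of Lemma~\ref{tricharts}, valid for every choice of charts $M_{ij}$, $M_{mn}$, $M_{kl}$. All group axioms will be obtained from this single identity by specialization, after fixing an identification of all the sets $F_{ij}$ with the common model $F$ via the canonical homeomorphisms of Remark~\ref{param-chart}.

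First I would derive the identity element and the inverses. Setting $mn=kl$ in the cocycle gives $f_{ij,kl} = f_{kl,kl}\circ f_{ij,kl}$, which forces $f_{kl,kl}=\mathrm{id}_F$ for every chart. Setting $kl=ij$ yields $\mathrm{id}_F = f_{ij,ij} = f_{mn,ij}\circ f_{ij,mn}$, so that $(f_{ij,mn})^{-1} = f_{mn,ij}$, which is itself of the prescribed form. Closure under composition is precisely Lemma~\ref{tricharts}: the composite $f_{mn,kl}\circ f_{ij,mn}$ equals $f_{ij,kl}$ and therefore belongs to the set, with the middle chart $mn$ playing the role of matching index. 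These three observations together establish that the $f_{ij,kl}$ form a group under composition inside $\mathrm{Aut}(F)$.

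For the generation claim I would fix a chart $i_0 j_0$ and apply the cocycle identity with intermediate chart $i_0 j_0$:
\[
f_{ij,kl} = f_{i_0 j_0,kl}\circ f_{ij,i_0 j_0} = f_{i_0 j_0,kl}\circ (f_{i_0 j_0,ij})^{-1}.
\]
In the degenerate cases $ij=i_0 j_0$ or $kl=i_0 j_0$ the automorphism $f_{ij,kl}$ is already a generator or the inverse of one, and when $ij=kl$ it is the identity. Hence every element $f_{ij,kl}$ is a product of at most two of the proposed generators (or their inverses), so the nine maps $\{f_{i_0 j_0,kl}\mid kl\neq i_0 j_0\}$ generate the group. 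The only nontrivial obstacle is bookkeeping: one must verify that the chart-wise identifications $F_{ij}\cong F$ are compatible with the cocycle so that the identity of Lemma~\ref{tricharts} descends literally to $\mathrm{Aut}(F)$. This follows immediately from the definition of the canonical homeomorphism together with the cocycle itself, so no new computation beyond Lemma~\ref{tricharts} is required.
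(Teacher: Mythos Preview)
Your derivation of identity, inverses, and the generation claim from the cocycle identity of Lemma~\ref{tricharts} is exactly the argument the paper has in mind (the corollary is stated without proof as an immediate consequence of that lemma). Two points, however, warrant care.

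Your closure argument only handles composites $f_{mn,kl}\circ f_{ij,mn}$ with \emph{matching} middle index; the cocycle says nothing about $f_{ab,cd}\circ f_{ij,kl}$ when $kl\neq ab$, and as self-maps of a common set such composites need not a priori be of the form $f_{pq,rs}$. The phrase ``form a group'' should be read as ``generate a subgroup of $\mathrm{Aut}(F)$''; the substantive content is then the generation statement $f_{ij,kl}=f_{i_0j_0,kl}\circ(f_{i_0j_0,ij})^{-1}$, which you prove correctly. Separately, identifying each $F_{ij}$ with $F$ via the canonical homeomorphisms $f_{ij}\colon F_{ij}\to F$ of Remark~\ref{param-chart} would collapse every $f_{ij,kl}$ to the identity: by construction $f_{ij,kl}=f_{kl}^{-1}\circ f_{ij}$ (both sides send a chart-$ij$ parameter to the same underlying orbit), so $f_{kl}\circ f_{ij,kl}\circ f_{ij}^{-1}=\mathrm{id}_F$. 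The identification the paper intends is the literal equality of all $F_{ij}$ as one and the same explicit subset of $(\C P^1)^3$ (equations~\eqref{mainparamar}); under that identification each $f_{ij,kl}$ is a genuine self-map of this set and the resulting group is nontrivial.
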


\subsection{The transition automorphisms for  the spaces of  parameters of  other  non one-orbit strata}\label{non-orbit}
Let  $W_{\sigma }$ be a non-orbit stratum,  which is different from the main stratum. The stratum $W_{\sigma}$  does not belong to all charts. We determine here the relation between the coordinate records of  the set of parameters $F_{\sigma}$ for $W_{\sigma }$ in  different charts  which contain $W_{\sigma}$.
We demonstrate  this  for  one stratum,  which belongs to the charts $M_{12}$ and $M_{13}$. Due to an action of $S_5$,  the  similar relation   will hold for an arbitrary stratum and arbitrary   charts.  The intersections of the charts $M_{12}$ and $M_{13}$ is, in the coordinates of these charts,  given by the condition  $z_{4}^{12}\neq 0$ and $z_{4}^{13}\neq 0$.
Consider the  stratum whose admissible polytope is $P_{\sigma}=K_{34}(9)$,  which  belongs to the both of these charts.  The stratum  $W_{\sigma}$ is, in the chart $M_{12}$, given by 
\[
z_1^{12}z_{5}^{12} = z_{2}^{12}z_{4}^{12},\;\; c_{2, 12}^{'} z_{1}^{12}z_{6}^{12} = c_{2,12}z_{3}^{12}z_{4}^{12},\;\; c_{3,12}^{'}z_{2}^{12}z_{6}^{12} = c_{3,12}z_{3}^{12}z_{5}^{12},
\]
\[
c_{2, 12},c_{2,12}^{'}\neq 0,\;\; c_{2,12}\neq c_{2,12}^{'},\;\;  c_{3, 12},c_{3,12}^{'}\neq 0,\;\; c_{3,12}\neq c_{3,12}^{'}.\;\; c_{2,12}^{'}c_{3,12} = c_{2,12}c_{3,12}^{'}.
\]
It follows that 
\[
c_{3,12}^{'} = \frac{c_{2,12}^{'}c_{3,12}}{c_{2,12}},
\]
which  implies 
\[
(c_{3,12}:c_{3,12}^{'}) = (c_{2,12}:c_{2,12}^{'}) = (c_{12} : c_{12}^{'}).
\]
Thus,  the set of parameters for the stratum $W_{\sigma}$ is, in the coordinates for $M_{12}$,  given by
\[
F_{\sigma, 12}= \{(c_{12}: c_{12}^{'}), \;\; c_{12},c_{12}^{'}\neq 0,\;\; c_{12}\neq c_{12}^{'}\}.
\]

The stratum $W_{\sigma}$  is,   in the chart $M_{13}$,   given by the  equations:
\begin{equation}\label{chart13}
z_{2}^{13}=0, \;\; c_{2,13}^{'}z_{1}^{13}z_{6}^{13} = c_{2,13}z_{3}^{13}z_{4}^{13},\;\; c_{2,13}, c_{2,13}^{'}\neq 0,\;\; c_{2,13}\neq c_{2,13}^{'}.
\end{equation}
Thus,  the  set of parameters for $W_{\sigma}$ is,  in the chart $M_{13}$,  given by
\[
F_{\sigma, 13}= \{(c_{13}: c_{13}^{'}), \;\; c_{13},c_{13}^{'}\neq 0,\;\; c_{13}\neq c_{13}^{'}\}.
\]
 Substituting the formulas~\eqref{coordinatechange} into~\eqref{chart13} we obtain that a homeomorphism between $F_{\sigma, 12}$ and $F_{\sigma, 13}$  is given by:
\begin{equation}\label{35}
(c_{12}:c_{12}^{'}) \to (c_{12}: c_{12}-c_{12}^{'}).
\end{equation}
Due to an  action of the  group $S_5$,  a homeomorphism $f_{\sigma, ij, kl} : F_{\sigma, ij} \to F_{\sigma, kl}$ between the sets of parameters $F_{\sigma, ij}$ and $F_{\sigma, kl}$  of a stratum $W_{\sigma}$    can  be  explicitly constructed  for an arbitrary two charts $M_{ij}$ and $M_{kl}$ such that $W_{\sigma}\subset M_{ij}, M_{kl}$.

\begin{lem}
For any three charts $M_{ij}, M_{kl}$ and $M_{mn}$ such that $W_{\sigma}\subset M_{ij}, M_{kl}, M_{mn}$,  it holds
\[
f_{\sigma, ij, kl} = f_{\sigma, ij, mn}\circ f_{\sigma, mn, kl}.
\]
\end{lem}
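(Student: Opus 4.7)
The plan is to run the same argument as for the main-stratum cocycle in Lemma~\ref{tricharts}, but restricted to the stratum $W_\sigma$. By Remark~\ref{param-chart}, for every chart $M_{ab}$ that contains $W_\sigma$ there is a canonical homeomorphism $f_{\sigma,ab}\colon F_{\sigma,ab}\to F_\sigma$ identifying the coordinate record with the intrinsic quotient $W_\sigma/(\C^*)^5$. The chart-to-chart identification $f_{\sigma,ab,cd}\colon F_{\sigma,ab}\to F_{\sigma,cd}$ is, by its very construction --- illustrated at~\eqref{35} by substituting the coordinate change~\eqref{coordinatechange} into the defining equations of $W_\sigma$ in the target chart --- the unique homeomorphism satisfying
\[
f_{\sigma,cd}\circ f_{\sigma,ab,cd}=f_{\sigma,ab},
\]
i.e., $f_{\sigma,ab,cd}=f_{\sigma,cd}^{-1}\circ f_{\sigma,ab}$.

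With this formal expression in hand the cocycle relation is immediate. For any three charts all containing $W_\sigma$, the intersection $W_\sigma\cap M_{ij}\cap M_{mn}\cap M_{kl}$ is nonempty: each chart condition $P^{I}\neq 0$ is simply the requirement that a fixed Pl\"ucker coordinate does not vanish on the stratum, and by the definition of a stratum those Pl\"ucker coordinates are simultaneously nonzero on every point of $W_\sigma$ that lies in any one of the three charts. Hence all three pairwise transition maps descend from genuine coordinate transitions of the Pl\"ucker atlas restricted to $W_\sigma$, and one computes
\[
(f_{\sigma,kl}^{-1}\circ f_{\sigma,mn})\circ(f_{\sigma,mn}^{-1}\circ f_{\sigma,ij})=f_{\sigma,kl}^{-1}\circ f_{\sigma,ij},
\]
which reads $f_{\sigma,mn,kl}\circ f_{\sigma,ij,mn}=f_{\sigma,ij,kl}$, the desired cocycle identity in the same form as Lemma~\ref{tricharts}.

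The single non-formal point that must be checked is that the Pl\"ucker coordinate transition map is $(\C^*)^5$-equivariant on $W_\sigma$, so that it truly descends to a well-defined map between spaces of parameters compatible with the canonical trivializations $f_{\sigma,ab}$. This follows from the fact that the $(\C^*)^5$-action is diagonal in each Pl\"ucker chart (recorded in~\eqref{repr}), so that any transition between two charts commutes with the torus action; the explicit computation carried out between $M_{12}$ and $M_{13}$ in~\eqref{coordinatechange}--\eqref{35} is the prototype, and the general case is obtained from it by the induced action of the symmetric group $S_5$ that permutes the charts of $G_{5,2}$. Once this equivariance is in place, there is no further obstacle: the lemma reduces to the trivial cocycle $\mathrm{id}=\mathrm{id}\circ\mathrm{id}$ on the single space $F_\sigma$, transported back to the coordinate records via the canonical homeomorphisms $f_{\sigma,ij}$, $f_{\sigma,mn}$, $f_{\sigma,kl}$.
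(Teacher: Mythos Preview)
Your proof is correct and follows the same reasoning the paper relies on: the transition maps $f_{\sigma,ij,kl}$ are induced by the Pl\"ucker coordinate transitions between charts, and those satisfy the cocycle relation, so the induced maps on parameter spaces do as well. The paper in fact states this lemma without proof, treating it as the direct analogue of Lemma~\ref{tricharts}; your write-up simply makes explicit the identification $f_{\sigma,ab,cd}=f_{\sigma,cd}^{-1}\circ f_{\sigma,ab}$ and the equivariance check that justify the descent to $F_\sigma$.
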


\begin{cor}
The automorphisms $f_{\sigma, ij,kl}$ of the space of parameters $F_{\sigma}$  of a stratum $W_{\sigma}$, which are   induced by the coordinate transition  maps  between the charts which contain the stratum $W_{\sigma}$, 
form a group. 
\end{cor}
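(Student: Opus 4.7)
The plan is to reduce the corollary to the cocycle identity of the preceding Lemma, following the same pattern used above for the main stratum. First, I would use the canonical homeomorphisms $f_{\sigma, ij}\colon F_{\sigma, ij}\to F_{\sigma}$ from Remark~\ref{param-chart} to transport each transition $f_{\sigma, ij, kl}\colon F_{\sigma, ij}\to F_{\sigma, kl}$ into a single self-homeomorphism
\[
\phi_{ij, kl}\ :=\ f_{\sigma, kl}\circ f_{\sigma, ij, kl}\circ f_{\sigma, ij}^{-1}\colon\ F_{\sigma}\longrightarrow F_{\sigma},
\]
and take $G_\sigma$ to be the set of all such $\phi_{ij,kl}$ together with all finite compositions among them, ranging over pairs of charts $M_{ij},M_{kl}$ with $W_\sigma\subset M_{ij}\cap M_{kl}$. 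Since each $f_{\sigma, ij, kl}$ is a homeomorphism, every element of $G_\sigma$ is a homeomorphism of $F_\sigma$, and composition of homeomorphisms is automatically associative.

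Next, I would derive identity and inverses from the cocycle relation. Specializing all three indices to the same chart $ij=mn=kl$ in the Lemma forces $f_{\sigma, ij, ij}=\mathrm{id}_{F_{\sigma, ij}}$, hence $\phi_{ij, ij}=\mathrm{id}_{F_\sigma}$ lies in $G_\sigma$. Applying the cocycle with the auxiliary chart $M_{mn}$ equal to $M_{ij}$ and with the endpoint $kl$ replaced by $ij$ gives $\phi_{kl, ij}\circ\phi_{ij, kl}=\phi_{ij, ij}=\mathrm{id}_{F_\sigma}$, so $\phi_{kl, ij}=\phi_{ij, kl}^{-1}\in G_\sigma$ for every admissible pair. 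Closure under composition is just the general form of the cocycle, namely $\phi_{mn, kl}\circ\phi_{ij, mn}=\phi_{ij, kl}$, which simultaneously shows that any composition of generators is again of the form $\phi_{ab, cd}$ and that $G_\sigma$ is generated, as in the main-stratum corollary, by the subfamily $\{\phi_{i_0 j_0, kl}\}$ for any fixed chart $M_{i_0 j_0}\supset W_\sigma$.

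The only bookkeeping point to watch is that the canonical homeomorphisms $f_{\sigma, ij}$ inserted in the transport step cancel in pairs when one forms $\phi_{mn, kl}\circ\phi_{ij, mn}$; once this cancellation is recorded, the cocycle identity for the $f_{\sigma, ij, kl}$'s passes directly over to the $\phi_{ij, kl}$'s and the three group axioms all follow. I do not anticipate any genuine obstacle beyond this routine check, since the full mathematical content is already concentrated in the preceding Lemma, and the argument for the main stratum carried out in Subsection~\ref{mainstratum} serves as a template to be copied verbatim.
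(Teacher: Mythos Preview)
Your proposal is correct and mirrors the paper's approach: the paper states this corollary without proof, as an immediate consequence of the cocycle lemma just above, following exactly the template established for the main stratum in Subsection~\ref{mainstratum}. Your careful transport via the canonical homeomorphisms $f_{\sigma,ij}$ and the verification of identity, inverses, and closure from the cocycle relation make explicit what the paper leaves implicit.
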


\section{The proof of Theorem~\ref{allcharts}}
Taking into account Remark~\ref{strata137},  we first prove that the set $\tilde{\mathcal{F}}$  given by  Theorem~\ref{universal} is the universal set of parameters in the following sense. Denote by $\tilde{\mathcal{F}}_{ij}$ the coordinate record of $\tilde{\mathcal{F}}$ in a chart $M_{ij}$. If $W_{\sigma}\subset M_{ij}$, denote by $\tilde{F}_{\sigma, ij}\subset \tilde{\mathcal{F}}_{ij}$ 
a subset defined by   $c\in \tilde{F}_{\sigma, ij}$ if and only if there exists a sequence $(x_n, c_{n})\in \stackrel{\circ}{\Delta}_{5,2}\times F_{ij}$ such that $c_n\to c$ in $\tilde{\mathcal{F}}_{ij}$ and $h^{-1}(x_n, f_{ij}(c_n))$ converges to a point from $W_{\sigma}/T^5$, 
 for the earlier  defined  homeomoprhisms $f_{ij} : F_{ij}\to F$ and $h: W/T^n\to \stackrel{\circ}{\Delta}_{5,2}$.
\begin{thm}\label{mainb}
For an arbitrary charts $M_{ij}$ and $M_{kl}$, there is a  homeomorphism $\tilde{f}_{ij,kl} : \tilde{\mathcal{F}}_{ij} \to \tilde{\mathcal{F}}_{kl}$ such that
\begin{enumerate}
\item  $\tilde{f}_{ij,kl} = f_{ij,kl}$ on $F_{ij}$,
\item   for any stratum $W_{\sigma}\subset M_{ij}\cap M_{kl}$ it holds $\tilde{f}_{ij,kl}(\tilde{F}_{\sigma, ij}) =  \tilde{F}_{\sigma, kl}$.
\end{enumerate}
\end{thm}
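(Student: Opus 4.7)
\textbf{Proof plan for Theorem~\ref{mainb}.} The plan is to construct $\tilde{f}_{ij,kl}$ as an explicit extension of $f_{ij,kl}$, first to the cubic surface $\mathcal{F}$ and then to the blowup $\tilde{\mathcal{F}}$. Under the $S_5$-action, the ordered pairs of distinct charts fall into two orbits, represented by $(M_{12},M_{13})$ (sharing an index) and $(M_{12},M_{34})$ (disjoint), and the cocycle identity  $\tilde{f}_{ij,kl}=\tilde{f}_{mn,kl}\circ \tilde{f}_{ij,mn}$ (the analogue of Lemma~\ref{tricharts} lifted to the blowups) reduces the second type to two applications of the first. Thus it suffices to construct and analyze $\tilde{f}_{12,13}$.

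First I would take the explicit formula~\eqref{1213} and view it as a rational map $\mathcal{F}_{12}\dashrightarrow \mathcal{F}_{13}$. A direct inspection of the boundary components $\bar{F}_{12}^{ab}$ listed in Section~7.1 shows that the map extends continuously to all of $\mathcal{F}_{12}$ except at the single point $((1:1),(1:1),(1:1))$, where the first two factors have the limit $((1:0),(1:0))$ while the third becomes of the form $(0:0)$. Using the local coordinates from Lemma~\ref{blowlem} around this indeterminacy point, I would show that approaching it along the line $(1-c_1^{'})x_2=(1-c_2^{'})x_1$ produces the limit $((1:0),(1:0),\varphi(x_1:x_2))$ for an explicit homography $\varphi\colon \C P^1\to \C P^1$. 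Since $(1:0)\times(1:0)\times \C P^1$ is already a fiber of the projection $\mathcal{F}_{13}\to \C P^1\times \C P^1$ at the blowup point $B_1$ of Theorem~\ref{bl3}, the map lifts to a continuous bijection $\tilde{f}_{12,13}\colon\tilde{\mathcal{F}}_{12}\to \tilde{\mathcal{F}}_{13}$. The same analysis applied to the inverse transition (whose explicit form is obtained by symmetrically interchanging the roles of the charts) shows $\tilde{f}_{12,13}$ is a homeomorphism and establishes property~(1).

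To verify property~(2), for every stratum $W_{\sigma}\subset M_{12}\cap M_{13}$ one must check that $\tilde{f}_{12,13}(\tilde{F}_{\sigma,12})=\tilde{F}_{\sigma,13}$. The coordinate records of these virtual spaces are listed in Propositions~\ref{parametrization-9}, \ref{parametrization-8}, \ref{parametrization-7}, \ref{parametrization-6}, \ref{parametrization-5}, and for most $\sigma$ (those whose parameters lie inside $F_{12}\cap F_{13}$ or on the ordinary boundary) the equality is a direct substitution into~\eqref{1213}. The decisive case is $\sigma$ with $P_{\sigma}=K_{13}(7)$, described in Remark~\ref{strata137}: here $\tilde{F}_{\sigma,12}$ is the exceptional divisor over $((1:1),(1:1),(1:1))$ in $\tilde{\mathcal{F}}_{12}$, while $\tilde{F}_{\sigma,13}=(1:0)\times(1:0)\times \C P^1$ in $\tilde{\mathcal{F}}_{13}$, and the homography $\varphi$ computed above is precisely the required identification. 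The remaining strata in both charts are exhausted by combining the $S_5$-stabilizer of $M_{12}\cap M_{13}$ with the tables of Section~7.

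The main obstacle is the blowup analysis at the indeterminacy point: one must verify that a single blowup resolves the extension and that the resulting identification of exceptional $\C P^1$'s is simultaneously compatible with the global topology on $\tilde{\mathcal{F}}$ described by Theorem~\ref{bl3} and Corollary~\ref{CP2} and with the parameter records of every stratum touching the indeterminacy. Once the homography $\varphi$ is pinned down by the limiting behavior of the main stratum, the compatibility for the stratum with polytope $K_{13}(7)$ is automatic, and the remaining cocycle and $S_5$-equivariance checks amount to routine bookkeeping.
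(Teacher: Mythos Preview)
Your strategy matches the paper's: reduce by $S_5$ to the pair $(M_{12},M_{13})$, extend $f_{12,13}$ to the boundary of $\mathcal{F}_{12}$, and then lift to the blowup. However, the step where you claim the map ``lifts to a continuous bijection $\tilde{f}_{12,13}\colon\tilde{\mathcal{F}}_{12}\to\tilde{\mathcal{F}}_{13}$'' is too fast. Blowing up the \emph{source} at $((1:1),(1:1),(1:1))$ resolves the indeterminacy and sends the exceptional $\C P^1$ to $(1{:}0)\times(1{:}0)\times\C P^1$ in $\mathcal{F}_{13}$, but the resulting map is still not injective: on the boundary component $\bar{F}_{12}^{1'2'}=(1{:}0)\times(1{:}0)\times\C P^1$ the continuous extension of $f_{12,13}$ is \emph{constant}, collapsing that whole $\C P^1$ to the single point $((1:1),(1:1),(1:1))$ in $\mathcal{F}_{13}$ (this is the content of the paper's Lemma~\ref{prvi}). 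To obtain a bijection you must also use the blowup on the \emph{target} side and show that the collapsed $\C P^1$ separates along the exceptional divisor of $\tilde{\mathcal{F}}_{13}$; the paper does this explicitly in Proposition~\ref{tildef}, formula~\eqref{gran}. Your appeal to ``the same analysis applied to the inverse transition'' would eventually produce the missing piece, but as written the argument asserts bijectivity before either direction has been fully established.

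For property~(2) the paper takes a different and shorter route than your case-by-case check. By definition, $\tilde{F}_{\sigma,ij}$ consists of the limits in $\tilde{\mathcal{F}}_{ij}$ of parameter sequences $c_n^{ij}$ arising from main-stratum sequences converging into $W_\sigma$. Since $c_n^{kl}=f_{ij,kl}(c_n^{ij})$ and $\tilde{f}_{ij,kl}$ is continuous and extends $f_{ij,kl}$, one gets $\tilde{f}_{ij,kl}(\lim c_n^{ij})=\lim c_n^{kl}$ immediately, which is Proposition~\ref{onechart}. This handles all strata $W_\sigma\subset M_{ij}\cap M_{kl}$ at once, without consulting the tables; your proposed substitution into the lists of Propositions~\ref{parametrization-9}--\ref{parametrization-5} works but is unnecessary labor.
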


Since $\tilde{\mathcal{F}}_{ij}\cong \tilde{\mathcal{F}}$ for any $\{i,j\}\subset \{1,\ldots , 5\}$,  Theorem~\ref{mainb} implies:
\begin{cor}
The automorphisms $\tilde{f}_{ij,kl}$ of the universal space of parameters $\tilde{\mathcal{F}}$,  which are induced by the coordinate transition  maps  between the charts, form a group.
\end{cor}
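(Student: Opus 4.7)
The plan is to reduce to a single pair of charts using the cocycle identity and $S_5$-equivariance, then to extend the explicit rational formula~\eqref{1213} across the blowup $\tilde{\mathcal{F}}\to\mathcal{F}$, and finally to derive property~(2) from the limit-based definition of $\tilde{F}_{\sigma,ij}$ by a naturality argument.

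By Lemma~\ref{tricharts} the maps $f_{ij,kl}$ satisfy $f_{ij,kl}=f_{mn,kl}\circ f_{ij,mn}$, so extensions which compose correctly need only be produced on a generating family of pairs; moreover the action of $S_5$ intertwines every ingredient of our construction, so it is enough to exhibit $\tilde{f}_{12,13}$ and obtain the rest by transport of structure. Property~(1) then holds by construction, and the remaining issues are continuity of the extension and the coincidence $\tilde{f}_{12,13}(\tilde{F}_{\sigma,12})=\tilde{F}_{\sigma,13}$.

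To extend the rational formula, observe that on $\mathcal{F}\setminus\{((1:1),(1:1),(1:1))\}$ the three output slots of~\eqref{1213} are all well defined: in the first two, one of the two homogeneous entries is always nonzero, and in the third the cubic relation $c_1c_2'c_3=c_1'c_2c_3'$ prevents the simultaneous vanishing of $(c_1-c_1')c_2'c_3$ and $c_1'(c_2-c_2')c_3'$ except at the distinguished point. At that point the third slot degenerates to $(0:0)$, which is precisely the indeterminacy resolved by the blowup of Lemma~\ref{blowlem}. Using the local description $(1-c_1')x_2=(1-c_2')x_1$ on the divisor, and noting that the cubic forces $c_3=c_3'$ when $c_1=c_1'=c_2=c_2'=1$, a short computation gives
\[
\bigl((c_1-c_1')c_2'c_3:c_1'(c_2-c_2')c_3'\bigr)\longrightarrow (x_1:x_2),
\]
while the first two slots extend by continuity to $(1:0)$. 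Hence $\tilde{f}_{12,13}$ is well-defined and continuous on $\tilde{\mathcal{F}}_{12}$, carrying the exceptional $\C P^1$ bijectively onto the line $(1:0)\times(1:0)\times\C P^1\subset\mathcal{F}_{13}\subset\tilde{\mathcal{F}}_{13}$. This line is exactly the parameter record, in chart $M_{13}$, of the stratum $K_{13}(7)$ singled out in Remark~\ref{strata137}.

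The inverse is built identically from $f_{13,12}$, which exhibits $\tilde{f}_{12,13}$ as a continuous bijection of compact Hausdorff spaces (using Corollary~\ref{CP2}) and hence a homeomorphism. For property~(2), if $c\in\tilde{F}_{\sigma,12}$ is the limit in $\tilde{\mathcal{F}}_{12}$ of a sequence $c_n\in F_{12}$ for which $h^{-1}(x_n,f_{12}(c_n))$ converges into $W_\sigma/T^5$, then the points $h^{-1}(x_n,f_{13}(f_{12,13}(c_n)))$ of $G_{5,2}/T^5$ coincide with the previous ones through the change of charts on $M_{12}\cap M_{13}$, so they also converge into $W_\sigma/T^5$; by continuity $f_{12,13}(c_n)\to\tilde{f}_{12,13}(c)$ in $\tilde{\mathcal{F}}_{13}$, whence $\tilde{f}_{12,13}(c)\in\tilde{F}_{\sigma,13}$, and symmetrically in the other direction. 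The main obstacle is the divisor analysis: one must verify that the blowup coordinate $(x_1:x_2)$ of $\tilde{\mathcal{F}}_{12}$ matches the intrinsic blowup coordinate on the $M_{13}$ side, and symmetrically that the line in $\mathcal{F}_{12}$ contracted by $f_{12,13}$ lifts correctly to the exceptional divisor of $\tilde{\mathcal{F}}_{13}$. The remaining strata, which either lie already in $F_{12}$ or do not meet the exceptional set, are handled by routine continuity together with the classification of non-orbit strata in Section~\ref{non-orbit}.
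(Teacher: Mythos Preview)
Your overall strategy matches the paper's—reduce via $S_5$ to the pair $(M_{12},M_{13})$, extend~\eqref{1213} across the blowup, and deduce property~(2) from the limit definition—but there is a concrete error in the indeterminacy analysis that leaves a real gap.

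You assert that the cubic relation prevents simultaneous vanishing of the two entries in the third output slot everywhere on $\mathcal{F}$ except at $((1:1),(1:1),(1:1))$. This is false: along the entire line $\bar F_{12}^{1'2'}=\{((1:0),(1:0),(c_3:c_3'))\}$ both entries $(c_1-c_1')c_2'c_3$ and $c_1'(c_2-c_2')c_3'$ vanish (set $c_1'=c_2'=0$), while the cubic reduces to $0=0$ and imposes nothing. The same occurs along $\bar F_{12}^{1'3}$ and $\bar F_{12}^{2'3'}$. The paper isolates exactly these four loci and handles them one by one: Lemmas~\ref{prva} and~\ref{druga} show $f_{12,13}$ extends continuously and injectively along $\bar F_{12}^{1'3}$ and $\bar F_{12}^{2'3'}$, but Lemma~\ref{prvi} shows that on $\bar F_{12}^{1'2'}$ the continuous extension is the \emph{constant} map to $((1:1),(1:1),(1:1))$.

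That constant extension is the crux you miss. The line $\bar F_{12}^{1'2'}\cong\C P^1$ is contracted by $\bar f_{12,13}$ to the distinguished point of $\mathcal{F}_{13}$, and it is precisely the blowup on the $M_{13}$ side that reinflates it to the exceptional $\C P^1$. Proposition~\ref{tildef} records both halves explicitly: the exceptional divisor in $\tilde{\mathcal F}_{12}$ maps to the line $\bar F_{13}^{1'2'}$, and the line $\bar F_{12}^{1'2'}$ maps to the exceptional divisor in $\tilde{\mathcal F}_{13}$. You mention this second direction only as an unresolved ``remaining obstacle,'' but because your extension as written is constant on a whole $\C P^1$, it is not injective and hence not a homeomorphism until that lift is actually performed. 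Once both directions are established as in Proposition~\ref{tildef}, your argument for property~(2) is essentially the paper's Proposition~\ref{onechart} and is fine.
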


It is enough to prove Theorem~\ref{mainb} for the charts $M_{12}$ and $M_{13}$, since all our arguments are   compatible with an action of the group $S_5$. We need to prove that the  homeomorphism $f_{12,13} : F_{12}\to F_{13}$ given by
\[
((c_{1, 12}:c^{'}_{1,12}), (c_{2,12}:c_{2, 12}^{'}), (c_{3,12}:c_{3,12}^{'})) \to
\]
\[
\to ((c_{1, 12}:c_{1,12}-c^{'}_{1,12}), (c_{2,12}:c_{2,12}-c_{2, 12}^{'}), ( (c_{1,12}-c_{1,12}^{'})c_{2,12}^{'}c_{3,12}:c_{1,12}^{'}(c_{2,12}-c_{2,12}^{'})c_{3,12}^{'})  ),
\] 
can be extended to a   homeomorphism $\tilde{f}_{12,13} : \tilde{\mathcal{F}}_{12} \to \tilde{\mathcal{F}}_{13}$.
We proceed with the proof through the following lemmas. 
\begin{lem}
The homeomorphism $f_{12,13}$ is  defined on $\bar{F}_{12,13}$ except on the sets $\bar{F}_{12}^{1^{'}2^{'}},\;\;  \bar{F}_{12}^{2^{'}3^{'}},\;\;  \bar{F}_{12}^{1^{'}3},\;\; ((1:1), (1:1), (1:1))$.
\end{lem}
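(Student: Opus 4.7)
The plan is to analyze the formula for $f_{12,13}$ one homogeneous pair at a time, identify when each pair degenerates to the forbidden symbol $(0:0)$, and then intersect that degeneration locus with the cubic surface $\bar{F}_{12}\subset\C P^1\times\C P^1\times\C P^1$ cut out by $c_{1,12}c_{2,12}^{'}c_{3,12}=c_{1,12}^{'}c_{2,12}c_{3,12}^{'}$. A first reduction shows that the first two pairs $(c_{1,12}:c_{1,12}-c_{1,12}^{'})$ and $(c_{2,12}:c_{2,12}-c_{2,12}^{'})$ are always legitimate points of $\C P^1$, since simultaneous vanishing of their entries would force $c_{1,12}=c_{1,12}^{'}=0$ or $c_{2,12}=c_{2,12}^{'}=0$, which is ruled out. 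Thus all obstructions must come from the third pair
\[
\bigl((c_{1,12}-c_{1,12}^{'})\,c_{2,12}^{'}\,c_{3,12}\;:\;c_{1,12}^{'}(c_{2,12}-c_{2,12}^{'})\,c_{3,12}^{'}\bigr).
\]

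I will then enumerate by cases when both entries of this pair vanish. The first entry is zero iff $c_{1,12}=c_{1,12}^{'}$, $c_{2,12}^{'}=0$, or $c_{3,12}=0$; the second iff $c_{1,12}^{'}=0$, $c_{2,12}=c_{2,12}^{'}$, or $c_{3,12}^{'}=0$. Pairing these possibilities and imposing the cubic relation together with the Plücker-like nondegeneracy conditions will give exactly four solution loci. The pair $(c_{1,12}^{'}=0,\;c_{2,12}^{'}=0)$ produces $\bar{F}_{12}^{1^{'}2^{'}}=(1:0)\times(1:0)\times\C P^1$, the pair $(c_{1,12}^{'}=0,\;c_{3,12}=0)$ yields $\bar{F}_{12}^{1^{'}3}=(1:0)\times\C P^1\times(0:1)$, the pair $(c_{2,12}^{'}=0,\;c_{3,12}^{'}=0)$ gives $\bar{F}_{12}^{2^{'}3^{'}}=\C P^1\times(1:0)\times(1:0)$, and the pair $(c_{1,12}=c_{1,12}^{'},\;c_{2,12}=c_{2,12}^{'})$ together with the cubic forces $c_{3,12}=c_{3,12}^{'}$, singling out the point $((1:1),(1:1),(1:1))$. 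All other pairings of the six conditions either collapse onto one of these four loci or contradict the cubic equation (once the impossibility of both $c_{3,12}$ and $c_{3,12}^{'}$ vanishing is invoked).

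Finally, I will substantiate the converse: on each listed set, substituting into the formula directly produces $(0:0)$ in the third slot, so $f_{12,13}$ genuinely fails to extend there. A short verification on the remaining boundary components $\bar{F}_{12}^{12}$, $\bar{F}_{12}^{13^{'}}$, $\bar{F}_{12}^{1^{'}2^{'}}$'s complement, $\bar{F}_{12}^{23}$, $\bar{F}_{12}^{11^{'}}$, $\bar{F}_{12}^{22^{'}}$ and $\bar{F}_{12}^{33^{'}}$ confirms that the formula does yield a well-defined point outside the excluded locus: whenever a ``latent'' degeneracy appears on such a boundary piece, it occurs at a point that already lies in $\bar{F}_{12}^{1^{'}2^{'}}\cup\bar{F}_{12}^{1^{'}3}\cup\bar{F}_{12}^{2^{'}3^{'}}$ or at $((1:1),(1:1),(1:1))$. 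The only real work is bookkeeping in the case analysis, and the main point requiring attention is making sure each time a factor is set to zero that the cubic relation is satisfied and that neither coordinate pair in $\C P^1\times\C P^1\times\C P^1$ has both entries equal to zero; no conceptual obstacle arises, only careful case enumeration.
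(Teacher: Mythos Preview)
Your proposal is correct and follows essentially the same approach as the paper: both reduce the question to analyzing when the third coordinate pair $((c_{1,12}-c_{1,12}')c_{2,12}'c_{3,12}:c_{1,12}'(c_{2,12}-c_{2,12}')c_{3,12}')$ degenerates to $(0:0)$, enumerate the $3\times 3$ possible pairings of vanishing factors, and use the cubic relation $c_1c_2'c_3=c_1'c_2c_3'$ together with the nondegeneracy of each $(c_i:c_i')$ to collapse these to the four listed loci. Your write-up is somewhat more explicit than the paper's (you separately argue that the first two coordinate pairs never degenerate and you state the converse direction), but the substance is identical.
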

\begin{proof}.
It follows directly from  the  definition of  $f_{12,13}$ that $f_{12,13}$  is not defined in the following cases: $c_1=c_1^{'}, c_{2}=c_{2}^{'}$; $c_{1}=c_{1}^{'}, c_{3}^{'} =0$; $c_1^{'}=c_{2}^{'} =0$; $c_{2}^{'} = c_{3}^{'}=0$; $c_{1}^{'} = c_{3}=0$; $c_{2}=c_{2}^{'}, c_{3}=0$. Note that the relation $c_1c_{2}^{'}c_3 = c_1^{'}c_2c_{3}^{'}$ gives that the sixth and the fifth case  are the same,  as well as  the second and the forth case. Therefore, $f_{12,13}$ is not defined on the sets
$\bar{F}_{kl}^{1^{'}2^{'}},\;\;  \bar{F}_{kl}^{2^{'}3^{'}},\;\;  \bar{F}_{kl}^{1^{'}3},\;\; ((1:1), (1:1), (1:1)).$
\end{proof}

Let us consider now the set  $\bar{F}_{12}^{1^{'}3} = (1:0)\times \C P^{1}\times (0:1) $.  
We prove that  $f_{12,13}$ can be continuously extended  to this set.
\begin{lem}\label{prva}
The homeomorphism $f_{12,13} : F_{12}\to F_{13}$ can be continuously extended to a  homeomorphism $\bar{f}_{12,13}^{1^{'}3} : \bar{F}_{12}^{1^{'}3} \to \bar{F}_{13}^{11^{'}}$ by $\bar{f}_{12,13}^{1^{'}3} ((1:0), (c_2 : c_{2}^{'}), (0:1)) = 
((1:1), (c_2: c_2 -  c_{2}^{'}), (c_{2}: c_2-c_{2}^{'}))$.
\end{lem}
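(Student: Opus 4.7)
The plan is to eliminate the $0/0$ indeterminacy in the third projective coordinate of $f_{12,13}$ by exploiting the cubic relation $c_{1}c_{2}'c_{3} = c_{1}'c_{2}c_{3}'$ defining the hypersurface $\mathcal{F}$, then read off the extension and identify it with the claimed formula.

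First, on $\bar{F}_{12}^{1'3} = (1:0) \times \C P^1 \times (0:1)$ one has $c_{1,12}' = 0$ and $c_{3,12} = 0$. The first and second factors of $f_{12,13}$ limit without difficulty to $(c_{1,12} : c_{1,12}-c_{1,12}') = (1:1)$ and to $(c_{2,12} : c_{2,12}-c_{2,12}')$ respectively. For the third factor I would rewrite the numerator using the defining cubic: since $c_{1,12}c_{2,12}'c_{3,12} = c_{1,12}'c_{2,12}c_{3,12}'$, one has
\[
(c_{1,12}-c_{1,12}')\,c_{2,12}'\,c_{3,12} \;=\; c_{1,12}c_{2,12}'c_{3,12} - c_{1,12}'c_{2,12}'c_{3,12} \;=\; c_{1,12}'\bigl(c_{2,12}c_{3,12}' - c_{2,12}'c_{3,12}\bigr),
\]
so on $F_{12}$ (and hence by continuity on its closure in $\mathcal{F}$) the third projective coordinate of $f_{12,13}$ equals
\[
\bigl(\,c_{2,12}c_{3,12}' - c_{2,12}'c_{3,12} \,:\, (c_{2,12}-c_{2,12}')\,c_{3,12}'\,\bigr).
\]
This rewritten expression is regular at $c_{1,12}' = c_{3,12} = 0$ and, specialised to $(c_{3,12}:c_{3,12}') = (0:1)$, evaluates to $(c_{2,12} : c_{2,12}-c_{2,12}')$. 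Combined with the limits of the first two factors, this confirms the claimed formula.

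Second, the resulting image $((1:1), (c_2 : c_2-c_2'), (c_2 : c_2-c_2'))$ has the shape $((1:1), (c_{2,13} : c_{2,13}'), (c_{2,13} : c_{2,13}'))$ with $c_{2,13} = c_2$ and $c_{2,13}' = c_2 - c_2'$, so it does lie in $\bar{F}_{13}^{11'}$. Identifying both $\bar{F}_{12}^{1'3}$ and $\bar{F}_{13}^{11'}$ with $\C P^1$ via their respective free parameters, the extension $\bar{f}_{12,13}^{1'3}$ becomes the M\"obius transformation $(c_2:c_2')\mapsto (c_2:c_2-c_2')$, which is an automorphism of $\C P^1$ and hence a homeomorphism of the two boundary components.

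The main (mild) obstacle is to recognise that the cubic relation defining $\mathcal{F}$ is precisely what is required to cancel the vanishing factor $c_{1,12}'$ in the third coordinate of $f_{12,13}$; once this cancellation is executed, continuity of $\bar{f}_{12,13}^{1'3}$ at points of $\bar{F}_{12}^{1'3}$ follows because the rewritten formula is given by homogeneous polynomials that do not vanish simultaneously on an open neighbourhood of $\bar{F}_{12}^{1'3}$ inside $\mathcal{F}$, and the homeomorphism statement is then immediate from the M\"obius form of the induced map.
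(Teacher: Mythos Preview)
Your proof is correct and takes a genuinely different route from the paper's. The paper argues by sequences: it fixes a point $c_0 \in \bar{F}_{12}^{1'3}$, takes an arbitrary sequence $c(n) \in F_{12}$ converging to $c_0$, and computes $\lim f_{12,13}(c(n))$ coordinate by coordinate, splitting into the cases $c_2 \neq 0$ and $c_2 = 0$ because the quotients it manipulates behave differently in each case. Your approach is algebraic: you use the cubic relation $c_1 c_2' c_3 = c_1' c_2 c_3'$ to factor $c_1'$ out of both entries of the third projective coordinate, obtaining the representative $(c_2 c_3' - c_2' c_3 : (c_2 - c_2') c_3')$, which is manifestly regular along $\bar{F}_{12}^{1'3}$ and specialises uniformly to $(c_2 : c_2 - c_2')$ there. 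This buys you a cleaner argument with no case split, and it makes the regularity of the extension transparent (it is the standard trick of passing to a second affine representative of a rational map). The paper's sequential method, while more laborious, has the minor expository advantage of making explicit that \emph{every} approach direction yields the same limit, which is perhaps reassuring to a reader less comfortable with the algebraic-geometric viewpoint. You also make explicit the M\"obius form of the restricted map and hence its bijectivity, which the paper leaves implicit.
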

\begin{proof}
Let us  consider a  point $c_{0}= ((1:0), (c_2:c_{2}^{'}), (0:1))$, where $c_{2}\neq 0$ and let 
$c(n) = ((c_{1,12}(n):c_{1,12}^{'}(n)), (c_{2,12}(n):c_{2,12}^{'}(n)), (c_{3,12}(n):c_{3,12}^{'}(n)))$ be a sequence of  points from $F_{12}$ which converges to $c_{0} \in \C P^1\times \C P^1\times \C P^1$. It implies that
\[
(c_{1,12}(n):c_{1,12}^{'}(n))\to (1:0) \Rightarrow (1: \frac{c_{1,12}^{'}(n)}{c_{1,12}(n)})\to (1:0) \Rightarrow  \frac{c_{1,12}^{'}(n)}{c_{1,12}(n)} \to 0,
\]
\[
(c_{3,12}(n):c_{3,12}^{'}(n))\to (0:1) \Rightarrow (\frac{c_{3,12}(n)}{c_{3,12}^{'}(n)}:1)\to (0:1) \Rightarrow  \frac{c_{3,12}(n)}{c_{3,12}^{'}(n)} \to 0, 
\]
\[
(c_{2,12}(n):c_{2,12}^{'}(n))\to (c_2:c_2^{'}) \Rightarrow (1: \frac{c_{2,12}^{'}(n)}{c_{2,12}(n)})\to (1:\frac{c_{2}^{'}}{c_2}) \Rightarrow  \frac{c_{2,12}^{'}(n)}{c_{2,12}(n)} \to \frac{c_{2}^{'}}{c_2}.
\]
Since $c(n)\in F_{12}$ we see that  $f_{12,13}(c(n))$ is well defined  and 

\[
(c_{1, 12}(n):c_{1,12}(n)-c^{'}_{1,12}(n)) = (1: 1-\frac{c^{'}_{1,12}(n)}{c_{1,12}(n)})\to (1:1),
\]
\[
 (c_{2,12}(n):c_{2,12}(n)-c_{2, 12}(n)^{'})= (1: 1-\frac{c^{'}_{2,12}(n)}{c_{2,12}(n)})\to (1: 1- \frac{c_{2}^{'}}{c_{2}}).
\]
Since $c(n)\in F_{12}$, it follows that $c(n)$   satisfies the cubic equation, so we obtain 
\[
c_{1,12}^{'}(n) c_{3,12}^{'}(n) = \frac{c_{1,12}(n)c_{2,12}^{'}(n)c_{3,12}(n)}{c_{2,12}(n)}.
\]
It implies that
 \[
( (c_{1,12}^{'}(n)-c_{1,12}(n))c_{2,12}^{'}(n)c_{3,12}(n):c_{1,12}^{'}(n)(c_{2,12}^{'}(n)-c_{2,12}(n))c_{3,12}^{'}(n)) =
\] 
\[( (c_{1,12}^{'}(n)-c_{1,12}(n))c_{2,12}^{'}(n)c_{3,12}(n): \frac{c_{1,12}(n)c_{2,12}^{'}(n)c_{3,12}(n)}{c_{2,12}(n)})(c_{2,12}^{'}(n)-c_{2,12}(n))=
\] 
\[
((c_{1,12}^{'}(n)-c_{1,12}(n): \frac{c_{1,12}(n)}{c_{2,12}(n)}(c_{2,12}^{'}(n)-c_{2,12}(n))) = (1: \frac{1-\frac{c_{2,12}^{'}(n)}{c_{2,12}(n)}}{1-\frac{c_{1,12}^{'}(n)}{c_{1,12}(n)}})\to (1: 1-\frac{c_{2}^{'}}{c_{2}}).
\]
In this way we conclude that 
\[
h_{12,13}(c(n)) \to ((1:1),(1: 1- \frac{c_{2}^{'}}{c_{2}}), (1: 1- \frac{c_{2}^{'}}{c_{2}})).
 \]
Therefore,  we set 
\[
\bar{f}_{12,13}^{1^{'}3}((1:0), (c_2:c_{2}^{'}), (0:1)) = ((1:1),(1: 1- \frac{c_{2}^{'}}{c_{2}}), (1: 1- \frac{c_{2}^{'}}{c_{2}})),
\]
where $c_{2}\neq 0$. 

For $c_{2}=0$ we obtain the point $((1:0), (0:1), (0:1))$ and,  for a sequence $c(n)$ which converges to this point,  it holds
\[
  \frac{c_{1,12}^{'}(n)}{c_{1,12}(n)} \to 0,\;\; \frac{c_{2,12}(n)}{c_{2,12}^{'}(n)} \to 0,\;\;  \frac{c_{3,12}(n)}{c_{3,12}^{'}(n)} \to 0.
\]
If we consider the sequence $f_{12,13}(c(n))$  we obtain 
\[
(c_{1, 12}(n):c_{1,12}(n)-c^{'}_{1,12}(n)) = (1: 1-\frac{c^{'}_{1,12}(n)}{c_{1,12}(n)})\to (1:1),
\]
\[
 (c_{2,12}(n):c_{2,12}(n)-c_{2, 12}^{'}(n))= (\frac{c_{2,12}(n)}{c_{2,12}^{'}(n)}: \frac{c_{2,12}(n)}{c_{2,12}^{'}(n)}-1)\to (0:1),
\]
Since $c(n)\in F_{12}$,  it follows that $c(n)$ satisfies  the cubic equation  which  implies
\[
c_{2,12}^{'}(n) c_{3,12}(n) = \frac{c_{1,12}^{'}(n)c_{2,12}^{'}(n)c_{3,12}^{'}(n)}{c_{1,12}(n)}.
\]
Substituting this into the third coordinate for  $f_{12,13}(c(n))$ we obtain
\[
( (c_{1,12}^{'}(n)-c_{1,12}(n))c_{2,12}^{'}(n)c_{3,12}(n):c_{1,12}^{'}(n)(c_{2,12}^{'}(n)-c_{2,12}(n))c_{3,12}^{'}(n)) =
\] 
\[
( (c_{1,12}^{'}(n)-c_{1,12}(n)) \frac{c_{1,12}^{'}(n)c_{2,12}(n)c_{3,12}^{'}(n)}{c_{1,12}(n)}: c_{1,12}(n)c_{3,12}^{'}(n)(c_{2,12}^{'}(n)-c_{2,12}(n)))=
\]
\[
( (c_{1,12}^{'}(n)-c_{1,12}(n)) \frac{c_{1,12}^{'}(n)c_{2,12}(n)}{c_{1,12}(n)}: c_{1,12}(n)(c_{2,12}^{'}(n)-c_{2,12}(n)))=
\]
\[
( (\frac{c_{1,12}^{'}(n)}{c_{1,12}(n)}-1) \frac{c_{1,12}^{'}(n)}{c_{1,12}(n)}c_{2,12}(n): c_{2,12}^{'}(n)-c_{2,12}(n))=
\]
\[
( (\frac{c_{1,12}^{'}(n)}{c_{1,12}(n)}-1) \frac{c_{1,12}^{'}(n)}{c_{1,12}(n)}\frac{c_{2,12}(n)}{c_{2,12}^{'}(n)}: 1-\frac{c_{2,12}(n)}{c_{2,12}^{'}(n)})=
\]
\[
 (\frac{(\frac{c_{1,12}(n)^{'}}{c_{1,12}(n)}-1) \frac{c_{1,12}^{'}(n)}{c_{1,12}(n)}\frac{c_{2,12}(n)}{c_{2,12}^{'}(n)}}{ 1-\frac{c_{2,12}(n)}{c_{2,12}^{'}(n)}}: 1) \to (0:1).
\]
Therefore,  we define
\[
\bar{f}_{12,13}^{1^{'}3}((1:0), (1:0), (0:1)) = ((1:1),(0: 1), (0: 1)).
\]
\end{proof}

In a  similar way it can be proved:
\begin{lem}\label{druga}
The homeomorphism $f_{12,13} : F_{12}\to F_{13}$ can be continuously extended to a homeomorphism $\bar{f}_{12,13}^{2^{'}3^{'}} : \bar{F}_{12}^{2^{'}3^{'}} \to \bar{F}_{13}^{22^{'}}$ by $\bar{f}_{12,13}^{2^{'}3^{'}} ((c_1:c_{1}^{'}), (1 : 0), (1:0)) = ((c_1:c_1 - c_{1}^{'}), (1:1), (c_1-c_{1}^{'}:c_{1}))$.
\end{lem}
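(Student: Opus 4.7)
The plan is to mirror step by step the proof of Lemma~\ref{prva}, with the roles of the first and second factor interchanged. Given a point $c_0 = ((c_1:c_1'), (1:0), (1:0)) \in \bar{F}_{12}^{2'3'}$, I would pick a sequence $c(n) = ((c_{1,12}(n):c_{1,12}'(n)), (c_{2,12}(n):c_{2,12}'(n)), (c_{3,12}(n):c_{3,12}'(n)))$ in $F_{12}$ converging to $c_0$ in $\C P^1 \times \C P^1 \times \C P^1$. The convergence in the second and third factors gives $c_{2,12}'(n)/c_{2,12}(n) \to 0$ and $c_{3,12}'(n)/c_{3,12}(n) \to 0$, and each $c(n)$ satisfies the cubic relation $c_{1,12}c_{2,12}'c_{3,12} = c_{1,12}'c_{2,12}c_{3,12}'$.

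On the affine open $c_1 \neq 0$, the first two coordinates of $f_{12,13}(c(n))$ tend immediately to $(c_1 : c_1-c_1')$ and $(1: 1 - c_{2,12}'(n)/c_{2,12}(n)) \to (1:1)$, placing the limit in $\bar{F}_{13}^{22'}$. For the third coordinate $((c_{1,12}(n)-c_{1,12}'(n))c_{2,12}'(n)c_{3,12}(n) : c_{1,12}'(n)(c_{2,12}(n)-c_{2,12}'(n))c_{3,12}'(n))$, I would use the cubic identity to substitute $c_{2,12}'(n)c_{3,12}(n) = c_{1,12}'(n)c_{2,12}(n)c_{3,12}'(n)/c_{1,12}(n)$ in the numerator, then cancel the common factor $c_{1,12}'(n)c_{2,12}(n)c_{3,12}'(n)$ from both entries. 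The third coordinate then reduces to $(1 - c_{1,12}'(n)/c_{1,12}(n) : 1 - c_{2,12}'(n)/c_{2,12}(n))$, which tends to $(1 - c_1'/c_1 : 1) = (c_1 - c_1' : c_1)$, in agreement with the formula.

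For the exceptional point $(c_1:c_1') = (0:1)$, the generic cancellation fails because $c_{1,12}(n) \to 0$. I would handle this in analogy with the treatment of $c_2 = 0$ in Lemma~\ref{prva}: divide numerator and denominator of the third coordinate by $c_{1,12}'(n)c_{2,12}(n)c_{3,12}'(n)$, use the cubic relation in the form $c_{1,12}(n)/c_{1,12}'(n) = c_{2,12}(n)c_{3,12}'(n)/(c_{2,12}'(n)c_{3,12}(n))$, and conclude that the ratio tends to the asserted value $(c_1 - c_1' : c_1) = (1:0)$. This shows the extension is well defined and continuous at every point of $\bar{F}_{12}^{2'3'}$. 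Bijectivity onto $\bar{F}_{13}^{22'}$ is visible from the explicit parametrization of $\bar{F}_{13}^{22'}$ by $(c_{1,13}:c_{1,13}') \in \C P^1$, and continuity of the inverse follows by applying the same argument to the inverse coordinate-change homeomorphism $f_{13,12}$ on a neighbourhood of $\bar{F}_{13}^{22'}$.

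The main obstacle, exactly as in Lemma~\ref{prva}, is the case split at the degenerate boundary point: the cubic constraint binds the three factors, so sequences in $F_{12}$ can approach $c_0$ from different directions with various rates, and one must verify each configuration yields the same limit. Once this routine but delicate book-keeping is done, the rest of the argument is formal, and the homeomorphism onto $\bar{F}_{13}^{22'}$ is a direct consequence of the explicit formula.
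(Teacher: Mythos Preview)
Your proof is correct and follows essentially the same approach as the paper's: take a sequence in $F_{12}$ converging to the boundary point, use the cubic relation to simplify the third coordinate of $f_{12,13}(c(n))$, and read off the limit. The paper's proof is slightly terser---it does not separately treat the exceptional point $(c_1:c_1')=(0:1)$ (the generic computation already yields the correct limit $(1:0)$ there) and omits your remarks on bijectivity and the inverse---but the method is identical.
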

\begin{proof}
Let $c_{0} = ((c_1:c_1^{'}), (1:0), (1:0))$ and let  a sequence $c(n)\in F_{12}$ converges to $c_{0}$.
It implies that
\[
\frac{c_{1}^{'}(n)}{c_{1}(n)}\to \frac{c_{1}^{'}}{c_{1}},\;\; \frac{c_{2}^{'}(n)}{c_{2}(n)}\to 0,\;\; \frac{c_{3}^{'}(n)}{c_{3}(n)}\to 0.
\]
Therefore, 
\[
(c_{1}(n): c_{1}(n) - c_{1}^{'}(n))\to (1: 1-\frac{c_1^{'}}{c_1}),\;\; (c_{2}(n): c_{2}(n)-c_{2}^{'}(n)) \to (1:1),
\]
\[
((c_{1,12}(n)-c_{1,12}^{'}(n))c_{2,12}^{'}(n)c_{3,12}(n) : c_{1,12}^{'}(n)(c_{2,12}(n)-c_{2,12}^{'}(n))c_{3,12}^{'}(n)) =
\]
\[
((c_{1,12}(n)-c_{1,12}^{'}(n))c_{2,12}^{'}(n)c_{3,12}(n) : \frac{c_{1,12}(n)c_{2,12}^{'}(n)c_{3,12}(n)}{c_{2,12}(n)}(c_{2,12}(n)-c_{2,12}^{'}(n))) =\]
\[
(c_{1,12}(n)-c_{1,12}^{'}(n) : \frac{c_{1,12}(n)}{c_{2,12}(n)}(c_{2,12}(n) - c_{2,12}^{'}(n))) = 
\]
\[(1: \frac{1-\frac{c_{2,12}^{'}(n)}{c_{2,12}(n)}}{1-\frac{c_{1,12}^{'}(n)}{c_{1,12}(n)}})\to (c_{1}-c_{1}^{'} : c_{1}).
\]
\end{proof}

\begin{lem}\label{prvi}
The homeomorphism $f_{12,13} : F_{12}\to F_{13}$ continuously extends to a constant map $\bar{f}_{12,13}^{1^{'}2^{'}} : \bar{F}_{12}^{1^{'}2^{'}} \to \bar{F}_{13}^{11^{'}}\cap \bar{F}_{13}^{22^{'}}\cap F_{13}^{33^{'}}$ by $\bar{f}_{12,13}^{1^{'}2^{'}} ((1:0), (1 : 0), (c_3:c_{3}^{'})) = ((1:1), (1:1), (1:1))$.
\end{lem}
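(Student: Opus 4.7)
The approach will parallel the proofs of Lemma~\ref{prva} and Lemma~\ref{druga}: I will take an arbitrary sequence $c(n) = ((c_{1,12}(n):c_{1,12}^{'}(n)),(c_{2,12}(n):c_{2,12}^{'}(n)),(c_{3,12}(n):c_{3,12}^{'}(n))) \in F_{12}$ converging to an arbitrary point $c_0 = ((1:0),(1:0),(c_3:c_3^{'})) \in \bar{F}_{12}^{1^{'}2^{'}}$, and verify that $f_{12,13}(c(n))$ converges to $((1:1),(1:1),(1:1))$ irrespective of $(c_3:c_3^{'})$.

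First I will record the convergence conditions in affine form. Since $(c_{1,12}(n):c_{1,12}^{'}(n)) \to (1:0)$ and $(c_{2,12}(n):c_{2,12}^{'}(n)) \to (1:0)$, we have $\tfrac{c_{1,12}^{'}(n)}{c_{1,12}(n)} \to 0$ and $\tfrac{c_{2,12}^{'}(n)}{c_{2,12}(n)} \to 0$. The first two coordinates of $f_{12,13}(c(n))$ are then
\[
(c_{1,12}(n):c_{1,12}(n)-c_{1,12}^{'}(n)) = (1:1-\tfrac{c_{1,12}^{'}(n)}{c_{1,12}(n)}) \to (1:1),
\]
\[
(c_{2,12}(n):c_{2,12}(n)-c_{2,12}^{'}(n)) = (1:1-\tfrac{c_{2,12}^{'}(n)}{c_{2,12}(n)}) \to (1:1),
\]
which is immediate.

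The substantive step is the third coordinate, which is naively of indeterminate form $0/0$. The key input is the cubic relation $c_{1,12}c_{2,12}^{'}c_{3,12} = c_{1,12}^{'}c_{2,12}c_{3,12}^{'}$ satisfied on $F_{12}$ (see~\eqref{mainparamar}), which rewrites as
\[
\tfrac{c_{2,12}^{'}(n)}{c_{2,12}(n)} = \tfrac{c_{1,12}^{'}(n)}{c_{1,12}(n)}\cdot \tfrac{c_{3,12}^{'}(n)}{c_{3,12}(n)}.
\]
Substituting this into the third projective coordinate of $f_{12,13}(c(n))$ and dividing both entries by the common nonzero factor $c_{1,12}(n)\,c_{2,12}(n)\,c_{3,12}(n)$, I obtain, after cancelling $\tfrac{c_{1,12}^{'}(n)}{c_{1,12}(n)}\cdot \tfrac{c_{3,12}^{'}(n)}{c_{3,12}(n)}$ from both sides, the ratio
\[
\bigl(1-\tfrac{c_{1,12}^{'}(n)}{c_{1,12}(n)}\;:\; 1-\tfrac{c_{2,12}^{'}(n)}{c_{2,12}(n)}\bigr) \to (1:1).
\]

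The only potential obstacle is uniformity in $(c_3:c_3^{'})$, in particular when this point is $(1:0)$ or $(0:1)$ so that $\tfrac{c_{3,12}^{'}(n)}{c_{3,12}(n)}$ tends to $0$ or $\infty$. However, since the cubic relation permits the cancellation of the full factor $\tfrac{c_{3,12}^{'}(n)}{c_{3,12}(n)}$ \emph{before} passing to the limit, the final ratio depends only on $\tfrac{c_{1,12}^{'}(n)}{c_{1,12}(n)}$ and $\tfrac{c_{2,12}^{'}(n)}{c_{2,12}(n)}$, both of which tend to $0$. Thus the limit $(1:1)$ is attained uniformly in $(c_3:c_3^{'})$, so the extension $\bar{f}_{12,13}^{1^{'}2^{'}}$ is well defined and constant with value $((1:1),(1:1),(1:1))$. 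Finally, the description of the boundary strata of $\bar{F}_{13}$ gives $((1:1),(1:1),(1:1)) \in \bar{F}_{13}^{11^{'}}\cap\bar{F}_{13}^{22^{'}}\cap\bar{F}_{13}^{33^{'}}$, completing the claim.
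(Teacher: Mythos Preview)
Your proof is correct and follows essentially the same approach as the paper: take a sequence $c(n)\in F_{12}$ converging to a point of $\bar{F}_{12}^{1'2'}$, use the cubic relation $c_1c_2'c_3=c_1'c_2c_3'$ to cancel the apparent indeterminacy in the third coordinate, and reduce it to the ratio $\bigl(1-\tfrac{c_{1,12}'(n)}{c_{1,12}(n)}:1-\tfrac{c_{2,12}'(n)}{c_{2,12}(n)}\bigr)\to(1:1)$. The paper performs the same substitution via $c_1'c_3'=\tfrac{c_1c_2'c_3}{c_2}$ and arrives at the identical final ratio; your added remark on uniformity in $(c_3:c_3')$ is a minor clarification but not a departure from the paper's argument.
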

\begin{proof}
Let $c_{0} = ((1:0), (1:0), (c_3 :c_{3}^{'}))$ and let  $c(n)$  be  a  sequence of points from $F_{12}$ which converges to $c_{0}$. It holds:
\[
(c_{1,12}(n) : c_{1,12}^{'}(n))\to (1:0) \Rightarrow \frac{c_{1.12}^{'}(n)}{c_{1,12}(n)}\to 0,
\]
\[
(c_{2,12}(n): c_{2,12}^{'}(n))\to (1:0)\Rightarrow \frac{c_{2,12}^{'}(n)}{c_{2,12}(n)}\to 0,
\]
\[
(c_{3,12}(n):c_{3,12}^{'}(n)) \to (c_3:c_3^{'}) \Rightarrow \frac{c_{3,12}^{'}(n)}{c_{3,12}(n)}\to \frac{c_{3}^{'}}{c_{3}}.
\]
It implies that
\[
(c_{1,12}(n):c_{1,12}(n)-c_{1,12}^{'}(n)) = (1:1-\frac{c_{1,12}^{'}(n)}{c_{1,12}(n)})\to (1:1),
\]
\[
(c_{2,12}(n):c_{2,12}(n)-c_{2,12}^{'}(n)) = (1:1-\frac{c_{2,12}^{'}(n)}{c_{2,12}(n)})\to (1:1),
\]
\[
((c_{1,12}(n)-c_{1,12}^{'}(n))c_{2,12}^{'}(n)c_{3,12}(n) : c_{1,12}^{'}(n)(c_{2,12}(n)-c_{2,12}^{'}(n))c_{3,12}^{'}(n)) =
\]
\[
((c_{1,12}(n)-c_{1,12}^{'}(n))c_{2,12}^{'}(n)c_{3,12}(n) : \frac{c_{1,12}(n)c_{2,12}^{'}(n)c_{3,12}(n)}{c_{2,12}(n)}(c_{2,12}(n)-c_{2,12}^{'}(n))) =\]
\[
(c_{1,12}(n)-c_{1,12}^{'}(n) : \frac{c_{1,12}(n)}{c_{2,12}(n)}(c_{2,12}(n)-c_{2,12}^{'}(n))) = (1: \frac{1-\frac{c_{2,12}^{'}(n)}{c_{2,12}(n)}}{1-\frac{c_{1,12}^{'}(n)}{c_{1,12}(n)}}) \to (1:1).\]
Therefore,  the homeomorphism $f_{12,13}$  continuously extends to $\bar{f}_{12,13}^{1^{'}2^{'}} : \bar{F}_{12}^{1^{'}2^{'}} \to ((1:1),(1:1),(1:1))=\bar{F}_{13}^{11^{'}}\cap \bar{F}_{13}^{22^{'}}\cap \bar{F}_{13}^{33^{'}}$.

\end{proof}

\begin{lem}
The homeomorphism $f_{12,13} : F_{12} \to F_{13}$ can not be continuously extended to the point  $((1:1), (1:1), (1:1))\in \bar{F}_{12}$.
\end{lem}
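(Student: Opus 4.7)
The idea is to exhibit two sequences in $F_{12}$, both converging to $((1{:}1),(1{:}1),(1{:}1))$, whose images under $f_{12,13}$ converge to distinct points in $\tilde{\mathcal{F}}_{13}$. The obstruction will come entirely from the third coordinate of $f_{12,13}$.

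Near the point $((1{:}1),(1{:}1),(1{:}1))$ on the cubic surface $\mathcal{F}$ we choose affine representatives $c_{i,12}=1$ for $i=1,2,3$ and set $c'_{i,12}=1+\epsilon_i$. The Pl\"ucker relation $c_{1,12}c'_{2,12}c_{3,12}=c'_{1,12}c_{2,12}c'_{3,12}$ then becomes
\[
1+\epsilon_2 \;=\; (1+\epsilon_1)(1+\epsilon_3), \qquad \text{i.e.} \qquad \epsilon_2=\epsilon_1+\epsilon_3+\epsilon_1\epsilon_3,
\]
so $(\epsilon_1,\epsilon_3)$ are local coordinates on $\mathcal{F}$ near the point. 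Plugging into the explicit formula~\eqref{1213} for $f_{12,13}$ with $c_{i,12}=1$, the first two coordinates reduce to $(1{:}-\epsilon_1)$ and $(1{:}-\epsilon_2)$, both tending to $(1{:}0)$; these present no obstruction. The third coordinate becomes
\[
\bigl((-\epsilon_1)(1+\epsilon_2)\cdot 1 \,:\, (1+\epsilon_1)(-\epsilon_2)(1+\epsilon_3)\bigr)
\;=\;\bigl(\epsilon_1(1+\epsilon_2) \,:\, \epsilon_2(1+\epsilon_1)(1+\epsilon_3)\bigr),
\]
which as $(\epsilon_1,\epsilon_3)\to(0,0)$ is asymptotic to the ratio $(\epsilon_1:\epsilon_2)=(\epsilon_1:\epsilon_1+\epsilon_3+\epsilon_1\epsilon_3)$.

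The plan now is to approach $(0,0)$ along two different real-analytic arcs inside the surface $\mathcal{F}$ and observe that this ratio has two different limits. First, take $\epsilon_3\equiv 0$ and $\epsilon_1=1/n$; then $\epsilon_2=\epsilon_1$ and the third coordinate tends to $(1{:}1)$. Second, take $\epsilon_3=\epsilon_1=1/n$; then $\epsilon_2=2/n+1/n^2$ and the third coordinate tends to $(1{:}2)$. In both cases the corresponding sequences lie in $F_{12}$ for all sufficiently large $n$ (the coordinates $c'_{i,12}$ are nonzero and not equal to $1$) and both converge to $((1{:}1),(1{:}1),(1{:}1))$ in $\mathcal{F}$. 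Their images under $f_{12,13}$ have the same first two coordinates but differ in the third coordinate in the limit, so no continuous extension of $f_{12,13}$ to this point in $\mathcal{F}_{12}$ can exist.

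No real obstacle is expected beyond bookkeeping; the mechanism is exactly the one that forces blowing up $\mathcal{F}$ at $((1{:}1),(1{:}1),(1{:}1))$ in order to obtain the universal space of parameters $\tilde{\mathcal{F}}$, as in Theorem~\ref{universal}. In fact the computation shows that the direction of approach, encoded by the limit of $(\epsilon_1:\epsilon_2)$, is exactly the coordinate on the exceptional divisor $\C P^1\subset\tilde{\mathcal{F}}$ produced by the blowup in Lemma~\ref{blowlem}, which indicates where the continuous extension $\tilde{f}_{12,13}$ does live and sets up the next lemma.
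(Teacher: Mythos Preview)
Your approach is essentially the same as the paper's: both identify that the third coordinate of $f_{12,13}$ degenerates to an indeterminate $0/0$-type ratio near $((1{:}1),(1{:}1),(1{:}1))$. You go further by exhibiting two explicit sequences with different limits rather than simply noting the indeterminacy, and you correctly relate this to the exceptional divisor of the blowup.

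There is, however, a bookkeeping slip. Your first sequence takes $\epsilon_3\equiv 0$, so $c'_{3,12}=1=c_{3,12}$, and hence this sequence does \emph{not} lie in $F_{12}$ (recall the defining conditions~\eqref{paramain} require $c_i\neq c'_i$ for every $i$); your parenthetical claim that ``the coordinates $c'_{i,12}$ are nonzero and not equal to $1$'' is false for $i=3$ along this arc. The fix is immediate: replace $\epsilon_3=0$ by, say, $\epsilon_3=1/n^2$. Then $\epsilon_2=1/n+1/n^2+1/n^3$, all three $\epsilon_i$ are nonzero and tend to $0$, and the third coordinate still tends to $(1{:}1)$, while your second sequence gives $(1{:}2)$ as before. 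With this correction the argument is complete.
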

\begin{proof}
Let us consider the point $((1:1), (1:1), (1:1))$  and a sequence $c(n)\in F_{12}$ which converges to this point. Then

\[
(c_{1,12}(n):c_{1,12}(n)-c_{1,12}^{'}(n)) = (1:1-\frac{c_{1,12}^{'}(n)}{c_{1,12}(n)})\to (1:0),
\]
\[
(c_{2,12}(n):c_{2,12}(n)-c_{2,12}^{'}(n)) = (1:1-\frac{c_{2,12}^{'}(n)}{c_{2,12}(n)})\to (1:0),
\]
\[
((c_{1,12}(n)-c_{1,12}^{'}(n))c_{2,12}^{'}(n)c_{3,12}(n) : c_{1,12}^{'}(n)(c_{2,12}(n)-c_{2,12}^{'}(n))c_{3,12}^{'}(n)) =
\]
\[
(c_{1,12}(n)-c_{1,12}^{'}(n) : \frac{c_{1,12}(n)}{c_{2,12}(n)}(c_{2,12}(n)-c_{2,12}^{'}(n))) = (1: \frac{1-\frac{c_{2,12}^{'}(n)}{c_{2,12}(n)}}{1-\frac{c_{1,12}^{'}(n)}{c_{1,12}(n)}}). \]
Since $\frac{c_{1,12}^{'}(n)}{c_{1,12}(n)}\to 1$ and $\frac{c_{2,12}^{'}(n)}{c_{2,12}(n)}\to 1$ we deduce  that the above limit is not defined, so  $f_{12,13}$ can not be continuously extended to  the point $((1:1), (1;1), (1:1))$ .
\end{proof}

\begin{rem}\label{problem}
As Lemma~\ref{prva} and Lemma~\ref{druga} show the problem with an extension of    the homeomorphism $f_{12,13} : F_{12}\to F_{13}$ to the boundary of $F_{12}$ in $\C P ^1\times \C P^1\times \C P^1$,   arises at the points from $\bar{F}_{12}^{1^{'}2^{'}}$ and at the point $((1:1), (1:1), (1:1))$. That is the reason for considering the blowup of $\mathcal{F}$ at the point $((1:1), (1:1), (1:1))$ as an universal set of parameters.
\end{rem}

Denote by $\bar{f}_{12,13}$ an extension of the  homeomorphism $f_{12,13} : F_{12}\to F_{13}$ to $\mathcal{F} \setminus \{\bar{F}_{12}^{1^{'}2^{'}}\cup ((1:1),(1:1),(1:1))\}$.

We now  prove the first statement of Theorem~\ref{mainb}.
\begin{prop}\label{tildef}
The map  $\tilde{f}_{12,13} : \tilde{\mathcal{F}}_{12} \to \tilde {\mathcal{F}}_{13}$ defined by
\begin{equation}\label{int}
\tilde{f}_{12,13} = \bar{f}_{12,13}\circ \pi \;\; \text{on}\;\;  \tilde{\mathcal{F}}_{12}\setminus G_{12},
\end{equation}
\begin{equation}\label{gran}
\tilde{f}_{12,13}(((1:0), (1:0), (c_3 : c_{3}^{'})), (1:1) )= (((1:1),(1:1),(1:1)),  (c_3:c_3^{'})),
\end{equation}
\begin{equation}\label{divisor}
\tilde{f}_{12,13}(((1:1), (1:1), (1:1)), (c:c^{'})) = (((1:0), (1:0), (c:c^{'})), (1:1)),
\end{equation}
where $G_{12} =  (((1:1), (1:1), (1:1)), (c_3:c_3^{'})) \cup ( ((1:0),  (1:0), (c_3:c_3^{'})),  (1:1)),\; (c_3: c_3^{'})\in \C P^1$ and $\pi :\tilde{\mathcal{F}}\to \mathcal{F}$ is a projection, 
is a  homeomorphism.
\end{prop}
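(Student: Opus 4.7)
My plan is to prove that $\tilde{f}_{12,13}$ is a bijection with continuous inverse by verifying the three cases (\ref{int}), (\ref{gran}), (\ref{divisor}) fit together continuously, and then producing the inverse as the analogously defined $\tilde{f}_{13,12}$. The set $G_{12}$ consists of two disjoint copies of $\C P^{1}$ in $\tilde{\mathcal{F}}_{12}$: the exceptional divisor $\pi^{-1}((1:1),(1:1),(1:1))$ and the lift of $\bar{F}_{12}^{1^{'}2^{'}}$ (which by Lemma~\ref{blowlem} lies at blowup coordinate $(1:1)$). The target $\tilde{\mathcal{F}}_{13}$ contains an analogous $G_{13}$, and the definition (\ref{gran})--(\ref{divisor}) is designed precisely to exchange these two $\C P^{1}$'s. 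I would first observe that the three cases have disjoint domains partitioning $\tilde{\mathcal{F}}_{12}$ and disjoint images partitioning $\tilde{\mathcal{F}}_{13}$; bijectivity on the open dense complement $\tilde{\mathcal{F}}_{12}\setminus G_{12}$ is inherited from $\bar{f}_{12,13}$ together with the fact that $\pi$ is an isomorphism off the exceptional divisor, while (\ref{gran}) and (\ref{divisor}) are evidently bijections of $\C P^{1}$'s.

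Continuity on the open set $\tilde{\mathcal{F}}_{12}\setminus G_{12}$ is immediate from Lemmas~\ref{prva}, \ref{druga} and from Remark~\ref{blowgen}. The real content is continuity at points of $G_{12}$, which I would establish by taking sequences in the main stratum (dense in $\tilde{\mathcal{F}}_{12}$) and computing limits. For continuity at a point $(((1:0),(1:0),(c_3:c_3')),(1:1))$ of the lifted $\bar{F}_{12}^{1^{'}2^{'}}$: the proof of Lemma~\ref{prvi} already shows that all three coordinates of $f_{12,13}(c(n))$ tend to $(1:1)$, so the image approaches the blowup center in $\mathcal{F}_{13}$; what remains is to identify the direction of approach in the divisor. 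Using Lemma~\ref{blowlem}, the blowup coordinate in $\tilde{\mathcal{F}}_{13}$ equals $(1-c_{1,13}^{'}:1-c_{2,13}^{'})$, and substituting the transition formulas $c_{1,13}^{'}=c_{1,12}-c_{1,12}^{'}$, $c_{2,13}^{'}=c_{2,12}-c_{2,12}^{'}$ together with the cubic relation and the hypothesis $(c_{1,12}:c_{1,12}^{'}), (c_{2,12}:c_{2,12}^{'})\to (1:0)$ converts this ratio into a limit that evaluates to $(c_3:c_3')$, matching (\ref{gran}).

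For continuity at a point $(((1:1),(1:1),(1:1)),(c:c'))$ on the exceptional divisor of $\tilde{\mathcal{F}}_{12}$: the final unnumbered lemma of the previous section already shows that the first two coordinates of $f_{12,13}(c(n))$ tend to $(1:0)$, while the third equals $(1:(1-c_{2,12}^{'}/c_{2,12})/(1-c_{1,12}^{'}/c_{1,12}))$; this indeterminate limit is resolved by the blowup relation $(1-c_{1,12}^{'})x_2=(1-c_{2,12}^{'})x_1$ in the source, forcing the ratio to converge to $(x_1:x_2)=(c:c')$ and producing precisely the point $(((1:0),(1:0),(c:c')),(1:1))$ prescribed by (\ref{divisor}). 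The inverse is then obtained by defining $\tilde{f}_{13,12}$ via the same three-case prescription (with the roles of the two charts swapped), and the composition $\tilde{f}_{13,12}\circ \tilde{f}_{12,13}$ restricted to $F_{12}$ is the identity by the cocycle identity of Lemma~\ref{tricharts}, hence it is the identity everywhere by density of $F_{12}$ and continuity; symmetrically for the other composition, giving the homeomorphism. The main obstacle is the careful bookkeeping of the blowup coordinate under the transition — essentially a verification that the indeterminacy in the last lemma is resolved exactly by the blowup parameter — but once the direction-tracking computation above is carried out, the rest is routine.
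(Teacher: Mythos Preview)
Your proposal is correct and follows essentially the same approach as the paper's proof: both establish continuity at the two exceptional $\C P^1$'s by taking sequences in $F_{12}$, invoking Lemma~\ref{prvi} for the collapse of the first three coordinates to $(1:1)$ in case~(\ref{gran}), and resolving the indeterminate third coordinate in case~(\ref{divisor}) via the blowup relation of Lemma~\ref{blowlem}. Your treatment is in fact more complete than the paper's, which verifies continuity but does not spell out bijectivity or the inverse; your use of the symmetric definition of $\tilde{f}_{13,12}$ together with Lemma~\ref{tricharts} and density of $F_{12}$ is a clean way to fill that gap.
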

\begin{proof}
 It follows from Lemma~\ref{prva} and Lemma~\ref{druga} that the map $f_{12,13}$  continuously extends to a
homeomorphism $f_{12,13}^{1^{'}3} : \bar{F}_{12}^{1^{'}3} \to \bar{F}_{13}^{11^{'}}$ defined by $f_{12,13}^{1^{'}3} ((1:0), (c_2:c_{2}^{'}), (0:1))  = ((1:1), (c_2:c_2-c_2^{'}), (c_2:c_2-c_2^{'}))$ and a  homeomorphism $f_{12,13}^{2^{'}3^{'}} : \bar{F}_{12}^{2^{'}3^{'}} \to \bar{F}_{13}^{22^{'}}$ defined by
$f_{12,13}^{2^{'}3^{'}}((c_1 : c_1^{'}), (1:0), (1:0)) = ((c_1: c_1-c_1^{'}), (1:1), (c_1-c_1^{'}:c_1))$.   Together with Remark~\ref{blowgen} this  provides the proof for~\eqref{int}.  In order to define an  extension of $f_{12, 13}$ on $\tilde{F}_{12}$ we need to define this extension  on the set $\bar{F}_{12}^{1^{'}2^{'}}$ and 
the divisor $((1:1), (1:1), (1:1)), (c:c^{'}))$. The set $\bar{F}_{12}^{1^{'}2^{'}}$ ca be embedded  into $\tilde{F}_{12}$ as $(((1:0), (1:0), (c_3 : c_{3}^{'})), (1:1))$.   Let us consider a sequence $y(n) = (((1:c_{1}^{'}(n)),(1:c_{2}^{'}(n)), (c_{3}(n):c_{3}^{'}(n))), (1-c_{1}^{'}(n): 1-c_{2}^{'}(n)))$   from $\pi ^{-1}(F_{12})\subset \tilde{\mathcal{F}}_{12}$  which converges to a point $(((1:0), (1:0), (c_3:c_3^{'})), (1:1))$. It holds
\[
\tilde{f}_{12,13}(y_{n})  = f_{12,13}(\pi (y_{n}))   = ((1:1-c_{1}^{'}(n)),(1:1-c_{1}^{'}(n)),((1-c_1^{'}(n))c_{2}^{'}(n)c_3(n): c_1^{'}(n)(1-c_{2}^{'}(n))c_3^{'}(n))).
\]
The fourth coordinate of the point $\tilde{f}_{12,13}(y_{n})$ in  $\tilde{F}_{12,13}$   is  $(c_3(n):c_3^{'}(n))$. Namely, the fourth coordinate of this point is $(x_1(n):x_2(n))$ such that  that $c_1^{'}(n)x_2(n) = c_{2}^{'}(n)x_1(n)$, which implies that $(x_{1}(n):x_{2}(n)) = (1: \frac{c_{2}^{'}(n)}{c_{1}^{'}(n)})$. The cubic equation  implies that   $(x_{1}(n) : x_{2}(n))=(c_{3}(n): c_{3}^{'}(n))$. Since by  Lemma~\ref{prvi}  the continuous extension of $f_{12,13}$ on $\bar{F}_{12}^{2^{'}3^{'}}$ maps  $\bar{F}_{12}^{2^{'}3^{'}}$  to $((1:1), (1:1), (1:1))$ we obtain that the sequence $\tilde{f}_{12,13}(y_{n})\subset \tilde{\mathcal{F}}_{13}$ converges to the point $(((1:1), (1:1), (1:1)), (c_3:c_{3}^{'}))$. This  proves the formula~\eqref{gran}. The formula~\eqref{divisor}  follows in a similar way from~\eqref{blow} in Lemma~\ref{blowlem}.
\end{proof}

We prove the second statement of Theorem~\ref{mainb}, which states that  a homeomorphic type of $\tilde{F}_{\sigma,ij}$ does not depend on a chart $M_{ij}$:
\begin{prop}\label{onechart}
Let $W_{\sigma}\subset M_{ij}, M_{kl}$. Then $\tilde{f}_{ij, kl}(\tilde{F}_{\sigma, ij}) = \tilde{F}_{\sigma, kl}$ for the homeomorphism  $\tilde{f}_{ij, kl}: \tilde{\mathcal{F}}_{ij}\to \tilde{\mathcal{F}}_{kl}$.
\end{prop}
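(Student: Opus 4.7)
The plan is to leverage the sequence-based definition of the virtual parameter spaces together with Proposition~\ref{tildef}. Since $\tilde{f}_{ij,kl}$ is a homeomorphism whose inverse coincides with $\tilde{f}_{kl,ij}$ (obtained by extending the cocycle identity of Lemma~\ref{tricharts} from the main-stratum piece $F_{ij}$ to the whole $\tilde{\mathcal{F}}_{ij}$ by continuity), it suffices to establish the forward inclusion $\tilde{f}_{ij,kl}(\tilde{F}_{\sigma,ij})\subseteq \tilde{F}_{\sigma,kl}$; the reverse inclusion is then obtained by applying the same argument with the roles of $(ij)$ and $(kl)$ exchanged. The $S_5$-equivariance of all the constructions further reduces the task to the model case $(ij,kl)=(12,13)$ already analyzed in Proposition~\ref{tildef}.

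For the forward inclusion, I fix $c\in \tilde{F}_{\sigma,ij}$ and, by the definition preceding Theorem~\ref{allcharts}, choose a sequence $(x_n,c_n)\in \stackrel{\circ}{\Delta}_{5,2}\times F_{ij}$ with $c_n\to c$ in $\tilde{\mathcal{F}}_{ij}$ and $h^{-1}(x_n,f_{ij}(c_n))\to w$ for some $w\in W_\sigma/T^5$. Set $c'_n = f_{ij,kl}(c_n)\in F_{kl}$ and $c'=\tilde{f}_{ij,kl}(c)\in \tilde{\mathcal{F}}_{kl}$. Continuity of $\tilde{f}_{ij,kl}$, together with property (1) of Theorem~\ref{mainb} asserting its agreement with $f_{ij,kl}$ on $F_{ij}$, gives $c'_n\to c'$. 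The definition $f_{ij,kl}=f_{kl}^{-1}\circ f_{ij}$ forces $f_{kl}(c'_n)=f_{ij}(c_n)$, so
\[
h^{-1}(x_n,f_{kl}(c'_n)) = h^{-1}(x_n,f_{ij}(c_n))\to w\in W_\sigma/T^5,
\]
and therefore the sequence $(x_n,c'_n)\subset \stackrel{\circ}{\Delta}_{5,2}\times F_{kl}$ exhibits $c'$ as an element of $\tilde{F}_{\sigma,kl}$, as required.

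The expected main obstacle is checking that the above convergence $c'_n\to c'$ remains valid, and that the limit $c'$ is the \emph{correct} point of $\tilde{\mathcal{F}}_{kl}$, when $c$ lies on the boundary locus where $f_{ij,kl}$ fails to extend continuously to $\mathcal{F}_{ij}$ as a map, namely on the exceptional divisor of the blowup and on the sets $\bar{F}_{ij}^{1'2'}, \bar{F}_{ij}^{2'3'}, \bar{F}_{ij}^{1'3}$ identified in Section~8. Although $f_{ij,kl}$ itself does not extend continuously there, the homeomorphism $\tilde{f}_{ij,kl}$ of Proposition~\ref{tildef} does, via the explicit formulas~\eqref{int}, \eqref{gran}, \eqref{divisor}; for any approximating sequence $(c_n)\subset F_{ij}$ the values $f_{ij,kl}(c_n)\in F_{kl}$ are well defined, and so the continuity of $\tilde{f}_{ij,kl}$ yields the required limit automatically. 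What still needs verification in this boundary case is a compatibility check, done chart-by-chart using the parametrization lists from Propositions~\ref{parametrization-9}--\ref{parametrization-5}: namely that each exceptional piece of $\tilde{\mathcal{F}}_{ij}$ witnessing a boundary stratum $W_\sigma$ via~\eqref{gran}--\eqref{divisor} is matched by $\tilde{f}_{ij,kl}$ exactly with the piece of $\tilde{\mathcal{F}}_{kl}$ witnessing the same stratum, the matching being forced by the $S_5$-equivariance and the explicit Pl\"ucker-coordinate description of the transition~\eqref{1213}.
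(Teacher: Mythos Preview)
Your proof is correct and follows essentially the same approach as the paper's: both argue that a witnessing sequence $c_n\in F_{ij}$ for $c\in\tilde{F}_{\sigma,ij}$ is carried by $f_{ij,kl}$ to a witnessing sequence for $\tilde{f}_{ij,kl}(c)\in\tilde{F}_{\sigma,kl}$, invoking only the continuity of $\tilde{f}_{ij,kl}$ and its agreement with $f_{ij,kl}$ on $F_{ij}$. Your final paragraph is overcautious: no additional chart-by-chart compatibility check is needed, since the continuity of $\tilde{f}_{ij,kl}$ established in Proposition~\ref{tildef} already guarantees that $c'_n\to c'$ at every boundary point, including those on the exceptional divisor, so the argument in your first two paragraphs is complete as it stands.
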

\begin{proof}
The proof immediately follows: a sequence $z_n$ from the main stratum,  which converges to a  point from  $W_{\sigma}$ defines, when considering  it in the charts $M_{ij}$ and $M_{kl}$,  the sequences of parameters $c_{n}^{ij}$ and $c_{n}^{kl}$. We have that
$c_{n}^{kl}= f_{ij, kl}(c_{n}^{ij})$, which  implies that $\tilde{f} _{ij, kl}(\lim\limits_{n\to \infty} c_{n}^{ij}) = \lim\limits_{n\to 
\infty} c_{n}^{kl}$, which  proves the statement.
\end{proof}   

In this way the proof of Theorem~\ref{mainb} is completed.

Now we prove Theorem~\ref{allcharts} which states that  any stratum in $G_{5,2}$ can be parametrized by considering   the universal set of parameters just in one chart. Fix a chart $M_{ij}$ and consider an arbitrary stratum $W_{\sigma}$. Let $M_{kl}$ be an arbitrary chart such that $W_{\sigma}\subset M_{kl}$ and $\tilde{F}_{\sigma, kl}$ be the virtual space  of parameters for $W_{\sigma}$ in  the chart $M_{kl}$.
 Let 
\[
\tilde{F}_{\sigma, ij, kl} = \tilde{f}_{kl, ij}(\tilde{F}_{\sigma, kl})\subset \tilde{\mathcal{F}}_{ij},
\]
for the homeomorphism $\tilde {f}_{kl, ij} : \tilde{\mathcal{F}}_{kl}\to \tilde{\mathcal{F}}_{ij}$.
\begin{prop}\label{any}
It holds
\[
\tilde{F}_{\sigma, ij, kl} = \tilde{F}_{\sigma, ij, mn},
\]
for any charts $M_{kl}, M_{mn}$ such that $W_{\sigma}\subset M_{kl},M_{mn}$.
\end{prop}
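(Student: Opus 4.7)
The plan is to reduce the statement to an application of Theorem~\ref{mainb} together with a cocycle identity for the transition homeomorphisms $\tilde{f}_{ab,cd}$. If $W_\sigma \subset M_{ij}$, then Theorem~\ref{mainb} gives $\tilde{f}_{kl,ij}(\tilde{F}_{\sigma, kl}) = \tilde{F}_{\sigma, ij}$ and $\tilde{f}_{mn,ij}(\tilde{F}_{\sigma, mn}) = \tilde{F}_{\sigma, ij}$, so both $\tilde{F}_{\sigma, ij, kl}$ and $\tilde{F}_{\sigma, ij, mn}$ equal $\tilde{F}_{\sigma, ij}$ and the statement is immediate. So the substantive case is when $W_\sigma$ does not belong to $M_{ij}$, and here I will argue via the intermediate chart $M_{kl}$ (or $M_{mn}$) in which $W_\sigma$ does sit.

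The key step is the cocycle identity
\[
\tilde{f}_{mn, ij} \;=\; \tilde{f}_{kl, ij}\circ \tilde{f}_{mn, kl}
\]
on $\tilde{\mathcal{F}}_{mn}$. On the open dense subset $F_{mn}\subset \tilde{\mathcal{F}}_{mn}$, this is exactly Lemma~\ref{tricharts}, which is a consequence of the fact that the homeomorphisms $f_{ab,cd}$ are induced by actual coordinate transitions on $M_{ab}\cap M_{cd}$ (so they compose as coordinate changes do). Both sides of the asserted identity are continuous maps between the compact Hausdorff spaces $\tilde{\mathcal{F}}_{mn}$ and $\tilde{\mathcal{F}}_{ij}$ (by Proposition~\ref{tildef} and its $S_5$-translates), and they coincide on the dense subset $F_{mn}$, so they must coincide everywhere. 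This gives the cocycle condition for the extended homeomorphisms.

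With the cocycle identity in hand, and using that $W_\sigma \subset M_{kl}\cap M_{mn}$, Theorem~\ref{mainb}~(2) applied to the pair $(M_{mn}, M_{kl})$ yields $\tilde{f}_{mn, kl}(\tilde{F}_{\sigma, mn}) = \tilde{F}_{\sigma, kl}$. Composing with $\tilde{f}_{kl, ij}$ gives
\[
\tilde{F}_{\sigma, ij, mn} \;=\; \tilde{f}_{mn, ij}(\tilde{F}_{\sigma, mn}) \;=\; \tilde{f}_{kl, ij}\bigl(\tilde{f}_{mn, kl}(\tilde{F}_{\sigma, mn})\bigr) \;=\; \tilde{f}_{kl, ij}(\tilde{F}_{\sigma, kl}) \;=\; \tilde{F}_{\sigma, ij, kl},
\]
which is the desired equality.

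The main obstacle is verifying the cocycle identity for the extensions $\tilde{f}_{ab,cd}$ at the points where the extension was nontrivial, namely on the exceptional divisor over $((1{:}1),(1{:}1),(1{:}1))$ and on the closure strata such as $\bar{F}^{1'2'}$ treated in Lemma~\ref{prvi}, Lemma~\ref{prva}, and Proposition~\ref{tildef}. However, because these extensions were defined precisely by taking limits of sequences from the main stratum, the cocycle property propagates from $F$ to $\tilde{\mathcal{F}}$ by a density-and-continuity argument, so no separate boundary computation is required; the $S_5$-symmetry of the construction guarantees that analogous extensions exist and compose consistently for any triple of charts.
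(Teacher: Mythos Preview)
Your proposal is correct and follows essentially the same route as the paper: establish the cocycle identity $\tilde{f}_{mn,ij}=\tilde{f}_{kl,ij}\circ\tilde{f}_{mn,kl}$ (the paper simply asserts that Lemma~\ref{tricharts} ``holds after extension to blowup''), then apply Proposition~\ref{onechart} (your Theorem~\ref{mainb}(2)) to conclude. Your density-and-continuity justification for the cocycle identity is a welcome elaboration of what the paper leaves as ``clearly,'' and your separate treatment of the case $W_\sigma\subset M_{ij}$ is harmless but not needed, since the main argument already covers it.
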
 
\begin{proof}
Consider the homeomorphisms $\tilde{f}_{ij, kl}: \tilde{\mathcal{F}}_{ij}\to \tilde{\mathcal{F}}_{kl}$, $\tilde{f}_{ij, mn} ; \tilde{\mathcal{F}}_{ij}\to \tilde{\mathcal{F}}_{mn}$ and $\tilde{f}_{kl, mn} : \tilde{\mathcal{F}} _{kl}\to \tilde{\mathcal{F}} _{mn}$. Clearly  Lemma~\ref{tricharts} holds  after extension to blowup, that is $ \tilde{f}_ {kl, ij}  = \tilde{f}_{mn, ij}\circ \tilde{f}_{kl, mn}$. By Proposition~\ref{onechart} we have that $\tilde{f}_{kl, mn}(\tilde{F}_{\sigma, kl})= \tilde{F}_{\sigma, mn}$, which  implies that  $\tilde{f}_{kl, ij}(\tilde{F}_{\sigma, kl})= \tilde{f}_{mn, ij}(\tilde{F}_{\sigma, mn})$,  that is  $\tilde{F}_{\sigma, ij, kl} = \tilde{F}_{\sigma, ij, mn}$.
\end{proof}

\begin{rem}
The set $\tilde{F}_{\sigma,ij, kl}$ we denote by $\tilde{F}_{\sigma, ij}$. Note that in this way we defined the sets $\tilde{F}_{\sigma, ij}$ for an arbitrary chart $M_{ij}$ and an arbitrary admissible set  $\sigma$.
\end{rem}

Proposition~\ref{any} can be reformulated as follows:
\begin{prop}
Any stratum $W_{\sigma}$ in $G_{5,2}$ can be parametrized by $\tilde{F}_{\sigma, ij}\subset \tilde{\mathcal{F}}_{ij}$  such that for any two chart $M_{kl}$, $W_{\sigma}\subset M_{kl}$ it holds  $\tilde{f}_{ij,kl}(\tilde{F}_{\sigma, ij}) = \tilde{F}_{\sigma, kl}$. 
\end{prop}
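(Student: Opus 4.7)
The plan is to observe that this proposition is essentially a repackaging of Proposition~\ref{any} together with the cocycle behaviour of the transition maps $\tilde{f}_{ij,kl}$, and the content needed is already available from Theorem~\ref{mainb}. Recall that by the definition introduced just before the statement, one sets
\[
\tilde{F}_{\sigma, ij} \;:=\; \tilde{F}_{\sigma, ij, mn} \;=\; \tilde{f}_{mn, ij}(\tilde{F}_{\sigma, mn})
\]
for any chart $M_{mn}$ with $W_{\sigma}\subset M_{mn}$, and the independence of this definition on the auxiliary chart $M_{mn}$ is precisely Proposition~\ref{any}. Hence every stratum $W_{\sigma}$ does have a well-defined parametrizing subset $\tilde{F}_{\sigma, ij}\subset \tilde{\mathcal{F}}_{ij}$ in every chart $M_{ij}$, whether or not $W_{\sigma}\subset M_{ij}$.

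The first step I would carry out is to upgrade Lemma~\ref{tricharts} from $F$ to $\tilde{\mathcal{F}}$, i.e.\ verify the cocycle identity
\[
\tilde{f}_{ij, kl} \;=\; \tilde{f}_{mn, kl}\circ \tilde{f}_{ij, mn}
\]
for any three charts. On the dense open subset $F_{ij}\subset \tilde{\mathcal{F}}_{ij}$ this is exactly Lemma~\ref{tricharts} for the unextended maps $f_{ij,kl}$. The extensions $\tilde{f}_{ij,kl}$ constructed in Proposition~\ref{tildef} are the unique continuous extensions to $\tilde{\mathcal{F}}_{ij}$, so the relation propagates to all of $\tilde{\mathcal{F}}_{ij}$ by continuity. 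In particular, taking $mn=ij$ yields $\tilde{f}_{ij,kl}\circ \tilde{f}_{kl,ij}=\operatorname{id}$, so $\tilde{f}_{ij,kl}$ and $\tilde{f}_{kl,ij}$ are mutually inverse homeomorphisms.

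The second step is then a direct computation. Pick a chart $M_{kl}$ with $W_{\sigma}\subset M_{kl}$. By definition,
\[
\tilde{F}_{\sigma, ij} \;=\; \tilde{f}_{kl,\, ij}\!\left(\tilde{F}_{\sigma, kl}\right),
\]
so applying $\tilde{f}_{ij,kl}$ and invoking $\tilde{f}_{ij,kl}\circ \tilde{f}_{kl,ij}=\operatorname{id}$ gives
\[
\tilde{f}_{ij,kl}\!\left(\tilde{F}_{\sigma, ij}\right) \;=\; \tilde{F}_{\sigma, kl},
\]
which is exactly the claim.

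The only nontrivial input is the chart-independence statement Proposition~\ref{any}, and beyond that the argument is purely formal, amounting to transporting the identity $\tilde{f}_{ij,kl}(\tilde{F}_{\sigma, ij}) = \tilde{F}_{\sigma, kl}$ of Theorem~\ref{mainb} (valid when both $M_{ij}, M_{kl}$ contain $W_{\sigma}$) to charts $M_{ij}$ that do not contain $W_{\sigma}$ via the cocycle. I do not expect any real obstacle: the construction of $\tilde{F}_{\sigma, ij}$ was designed precisely so that this reformulation would hold, and all the analytic work — the continuous extension of $f_{12,13}$ across the exceptional divisor in Proposition~\ref{tildef}, and the verification that limits of approaching sequences match on overlaps (Proposition~\ref{onechart}) — has already been carried out.
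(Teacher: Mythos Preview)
Your proposal is correct and matches the paper's approach: the paper presents this proposition explicitly as a reformulation of Proposition~\ref{any}, giving no separate proof, and your argument spells out exactly why that reformulation is valid via the cocycle identity for the extended maps $\tilde{f}_{ij,kl}$ (which the paper also invokes in the proof of Proposition~\ref{any}). There is nothing to add.
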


\subsection{The virtual spaces of parameters in the chart $M_{12}$ }
Using Theorem~\ref{mainb} we describe here the virtual spaces of parameters $\tilde{F}_{\sigma, 12}\subset \tilde{\mathcal{F}}_{12}$ for the  strata $W_{\sigma}$ whose admissible polytopes are $K_{ij}(9), K_{ij, kl}, K_{ij}(7), P_{ij}, O_i$.  

\begin{thm}\label{bound-univ}
The virtual space of parameters $\tilde{F}_{\sigma, 12}$  for a stratum  $W_{\sigma}$ whose admissible polytope is  $ K_{ij}(9), K_{ij, kl}, K_{ij}(7), P_{ij}, O_i$ is as follows:
\begin{enumerate}
\item $K_{12}(9) \to (((1:1), (1:1), (1:1)), (c:c^{'}))$,\;\; $K_{ij}(9) \to \tilde{F}_{ij, 12}$,\; $ij\neq 12$;
\item $K_{12, 34}\to  (((1:1), (1:1), (1:1)), (0: 1))$,\;\;   $K_{12, 35}\to  (((1:1), (1:1), (1:1)), (1: 0))$;
\item $K_{12, 45}\to  (((1:1), (1:1), (1:1)), (1: 1))$,\;\;  $K_{ij, kl} \to \tilde{F}_{(ij, kl),12}$, \; $ij\neq 12$; 
\item $K_{34}(7)\to ((1:1), (c:c^{'}), (c:c^{'}))$, \;\;  $K_{35}(7)\to ((c:c^{'}), (1:1), (c^{'}:c))$;
\item $K_{45}(7)\to ((c:c^{'}), (c:c^{'}), (1:1))$,\;\;  $K_{12}(7) \to  (((1:1), (1:1), (1:1)), (c: c^{'}))$;
\item $O_1\to F_{12}\cup \tilde{F}_{12,12}\cup \tilde{F}_{13,12}\cup \tilde{F}_{14, 12}\cup \tilde{F}_{15,12}$;
\item $O_2\to F_{12}\cup \tilde{F}_{12, 12}\cup \tilde{F}_{23, 12}\cup \tilde{F}_{24, 12}\cup \tilde{F}_{25, 12}$;
\item $P_{34}\to ((1:1), (c:c^{'}), (c:c^{'}))$, \;\;  $P_{35}(7)\to ((c:c^{'}), (1:1), (c^{'}:c))$;
\item  $P_{45}(7)\to ((c:c^{'}), (c:c^{'}), (1:1))$,\;\;  $P_{12}(7) \to  (((1:1), (1:1), (1:1)), (c: c^{'}))$;
\item  $K_{ij}(7)\to \tilde{F}_{ij, 12}(7)$,\;\;  $P_{ij}\to \tilde{F}_{ij, 12}(6)$, \; $ij\neq 34, 34, 45, 12$,\;\;  $O_{i}\to \tilde{F}_{i,12}$, $i\neq 1,2$.
\end{enumerate}
where $(c:c^{'})\in \C P^{1}$.
\end{thm}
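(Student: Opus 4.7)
The plan is to compute $\tilde{F}_{\sigma, 12}$ case by case from its definition in Theorem~\ref{allcharts}: $c \in \tilde{F}_{\sigma, 12}$ iff there is a sequence $(x_n, c_n) \in \stackrel{\circ}{\Delta}_{5,2} \times F_{12}$ with $c_n \to c$ in $\tilde{\mathcal{F}}_{12}$ and $h^{-1}(x_n, f_{12}(c_n))$ converging to a point of $W_\sigma/T^5$. First I would handle the strata $W_\sigma \subset M_{12}$ whose closures $\bar{F}_{\sigma, 12}$ are already described in Propositions~\ref{parametrization-9}--\ref{parametrization-5} and which avoid the blowup point $((1:1),(1:1),(1:1))$: for these, Remark~\ref{blowgen} says the projection $\pi \colon \tilde{\mathcal{F}} \to \mathcal{F}$ is a homeomorphism above $\bar{F}_{\sigma, 12}$, so $\tilde{F}_{\sigma, 12} \cong \bar{F}_{\sigma, 12}$. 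This establishes the $ij \neq 12$ part of item (1), the $12 \notin \{ij, kl\}$ part of (3), item (4), and the non-boundary part of (10).

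Next I would handle the exceptional case $W_\sigma$ with polytope $K_{12}(7)$, for which Proposition~\ref{parametrization-7} gives $\bar{F}_{K_{12}(7), 12} = \{((1:1),(1:1),(1:1))\}$, precisely the blowup point. Since $W_{K_{12}(7)}$ has one-complex-dimensional parameter space $\C P^1_A$, and $K_{12}(7)$ shares the prism $P_{12}$ with $K_{12}(9)$ by Corollary~\ref{bound-p6}, I would analyze limits of sequences in $W$ approaching $W_{K_{12}(7)}$ and show that varying the ratio $(c_{1,12}^{(n)}-c_{1,12}^{'(n)}):(c_{2,12}^{(n)}-c_{2,12}^{'(n)})$ parametrized in Lemma~\ref{blowlem} sweeps out the full exceptional divisor and matches the target parameter bijectively. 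This gives $\tilde{F}_{K_{12}(7), 12} = \{(((1:1),(1:1),(1:1)),(c:c'))\}$, the entire divisor.

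For the remaining strata --- those whose polytope does not contain $\Lambda_{12}$, namely $K_{12}(9)$, $K_{12, kl}$ for $kl \in \{34, 35, 45\}$, $K_{34}(7)$, $K_{35}(7)$, $K_{45}(7)$, $P_{12}$, $P_{34}$, $P_{35}$, $P_{45}$, and $O_1$, $O_2$ --- I would apply Proposition~\ref{any}. For each such $\sigma$ one picks a chart $M_{kl}$ containing $W_\sigma$ via an explicit $S_5$-element, realizes $\tilde{F}_{\sigma, kl}$ as the $S_5$-image of an already known virtual space in chart $M_{12}$, and pulls back via $\tilde{f}_{kl, 12}$ using formulas~\eqref{int}--\eqref{divisor} of Proposition~\ref{tildef}. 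Formula~\eqref{divisor} is the key: it states that the exceptional divisor at $((1:1),(1:1),(1:1))$ in $\tilde{\mathcal{F}}_{12}$ maps onto $\{((1:0),(1:0),(c:c'))\}$ in $\tilde{\mathcal{F}}_{13}$, and inverting this sends $\bar{F}_{K_{13}(9), 12}$-type loci onto the exceptional divisor in the target chart, yielding the divisor formulas of items (1)--(3) and (9). For $K_{34}(7)$, $P_{34}$ and their $S_5$-conjugates, a parallel computation shows the virtual space coincides with $\bar{F}_{K_{34}(9), 12} = \{((1:1),(c:c'),(c:c'))\}$, reflecting that the main stratum approaches $W_{K_{34}(7)}$ through the closure of $K_{34}(9)$ along the common facet $P_{34}$ (cf.\ Corollary~\ref{bound-p6}). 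Finally, for the octahedra $O_1$ and $O_2$, I would follow the pattern of Remark~\ref{oct} by listing the strata whose closures contain $W_{O_i}$, using Proposition~\ref{bound-9} to identify the relevant $K_{1j}(9)$'s, and expressing $\tilde{F}_{O_i, 12}$ as the asserted union.

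The main obstacle is the computation in the third step: making the transition homeomorphism $\tilde{f}_{kl, 12}$ explicit on parameter closures that lie over the blowup point is delicate. One must carefully track the direction of approach in the blowup chart and verify that the third-coordinate transformation in~\eqref{1213}, combined with the blowup relation~\eqref{blow}, sweeps out the correct sub-loci of the exceptional divisor in a manner compatible with the $S_5$-equivariance. Ensuring coherent bookkeeping across roughly thirty cases, without coordinate confusion between the nine distinguished parameters $c_{i, kl}, c_{i, kl}^{'}$ in different charts, is the principal technical hurdle.
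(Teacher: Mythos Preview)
Your proposal is essentially the same strategy the paper uses: for strata contained in $M_{12}$ whose $\bar{F}_{\sigma,12}$ avoids the blowup point one reads off $\tilde{F}_{\sigma,12}$ from Propositions~\ref{parametrization-9}--\ref{parametrization-5}, and for the rest one passes to a chart $M_{kl}$ containing the stratum and pulls back by the explicit homeomorphism $\tilde{f}_{kl,12}$ of Proposition~\ref{tildef} (the paper demonstrates this for $K_{12}(9)$ via $M_{13}$ and formula~\eqref{divisor}, and handles $O_1,O_2$ by Remark~\ref{oct}). One slip to correct: the actual space of parameters $F_{\sigma}$ for $K_{12}(7)$ is a point, not $\mathbb{C}P^{1}_{A}$, so there is no ``target parameter'' to match bijectively; the correct claim is simply that every direction of approach to $((1{:}1),(1{:}1),(1{:}1))$ in $\tilde{\mathcal{F}}_{12}$ is realized by some sequence in the main stratum converging into $W_{K_{12}(7)}$, which gives the full exceptional $\mathbb{C}P^{1}$ as the virtual space.
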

\begin{proof}
We demonstrate  the proof for the stratum whose admissible polytope is $K_{12}(9)$. This stratum  belongs to the chart $M_{13}$ and it can be directly checked  that $\tilde{F}_{12, 13} = ((1:0), (1:0), (c_3:c_3^{'}))$, $(c_3: c_3^{'})\in \C P^{1}_{A}$. It follows from Proposition~\ref{tildef}  that $\tilde{F}_{12, 12} = \tilde{f}^{-1}_{12,13}(\tilde{F}_{12,13}) = (((1:1), (1:1), (1:1)), (c_3:c_3^{'}))$, which  proves the first statement. Note that for the strata with the admissible polytopes $O_1$ and $O_2$, the statement follows from Remark~\ref{oct}.
\end{proof}

\subsection{The projection from $\tilde{F}_{\sigma, ij}$ to $F_{\sigma}$.} We prove that, using this construction,  we can define   a  projection from $\tilde{F}_{\sigma, ij}$ to $F_{\sigma}$ for any non-orbit stratum $W_{\sigma}$ and any chart $M_{ij}$,   .

Assume first that $F_{\sigma}$ is not a point. If $W_{\sigma}\subset M_{ij}$,   from the construction  of $\tilde{F}_{\sigma, ij}$ it follows that there is  a canonical projection $\tilde{g}_{\sigma ,ij}:  \tilde{F}_{\sigma, ij} \to F_{\sigma, ij}$.   If $W_{\sigma}\not \subset M_{ij}$, let $M_{kl}$ be a chart such that $W_{\sigma}\subset M_{kl}$  and let $\tilde{g}_{\sigma ,kl}:  \tilde{F}_{\sigma, kl} \to F_{\sigma, kl}$  be a canonical projection. Let $\tilde{f}_{ ij, kl}:  \tilde{F}_{\sigma, ij}\to \tilde{F}_{\sigma, kl}$  be the homeomorphism given by Theorem~\ref{mainb}.  We obtain  a  projection  $ \tilde{g} _{\sigma, ij, kl}:\tilde{F}_{\sigma, ij} \to F_{\sigma, kl}$ defined by the composition
\begin{equation}
 \tilde{F}_{\sigma, ij} \stackrel{\tilde{f}_{ij, kl}}{\to} \tilde{F}_{\sigma, kl}\stackrel{\tilde{g} _{\sigma, kl}}{\to}F_{\sigma, kl}.
\end{equation}

Let $f_{\sigma, kl, mn} : F_{\sigma, kl}\to F_{\sigma, mn}$ be the  homeomorphism   defined in~\ref{mainstratum} and~\ref{non-orbit}.  From the construction of $\tilde{F}_{\sigma, ij}$ it directly follows:
\begin{lem}\label{com}
If  $W_{\sigma}\subset M_{kl}, M_{mn}$ then
\[
f_{\sigma, kl, mn} \circ \tilde{g}_{\sigma, kl} = \tilde{g}_{\sigma, mn}\circ \tilde{f}_{kl, mn}.
\]
\end{lem}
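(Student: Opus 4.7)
The plan is to trace through the definitions of all four maps involved and observe that each one ultimately arises from the same underlying coordinate transition $M_{kl} \to M_{mn}$, so the claimed identity is a naturality statement rather than a computational one. First I would unpack a point $\tilde{c} \in \tilde{F}_{\sigma, kl}$ using Theorem~\ref{allcharts}: by definition there is a sequence $c_n \in F_{kl}$ with $c_n \to \tilde{c}$ in $\tilde{\mathcal{F}}_{kl}$ such that the orbits of the main stratum corresponding to $c_n$ converge in $G_{5,2}/T^5$ to an orbit of $W_{\sigma}/T^5$. The projection $\tilde{g}_{\sigma, kl}(\tilde{c})$ is, by construction, the $M_{kl}$-parameter of this limit orbit of $W_{\sigma}$.

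Next I would apply $\tilde{f}_{kl, mn}$ to both sequences and the limit. By Theorem~\ref{mainb}(1), $\tilde{f}_{kl, mn}$ restricts to $f_{kl, mn}$ on $F_{kl}$ and is continuous, so $\tilde{f}_{kl, mn}(\tilde{c}) = \lim_{n} f_{kl, mn}(c_n)$. The sequence $f_{kl, mn}(c_n)$ consists, by the construction in Section~\ref{mainstratum}, of the $M_{mn}$-coordinate records of the same main stratum orbits as $c_n$; therefore these orbits still converge to the same orbit of $W_{\sigma}/T^5$. Consequently $\tilde{g}_{\sigma, mn}(\tilde{f}_{kl, mn}(\tilde{c}))$ is the $M_{mn}$-parameter of that same limit orbit.

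On the other hand, $f_{\sigma, kl, mn}$ was defined in Section~\ref{non-orbit} as the restriction to $F_{\sigma, kl}$ of the coordinate change $M_{kl} \to M_{mn}$, so it sends the $M_{kl}$-parameter of any orbit $W_{\sigma}$-orbit (lying in $M_{kl} \cap M_{mn}$) to the $M_{mn}$-parameter of the same orbit. Hence $f_{\sigma, kl, mn}(\tilde{g}_{\sigma, kl}(\tilde{c}))$ also equals the $M_{mn}$-parameter of the same limit orbit, matching $\tilde{g}_{\sigma, mn}(\tilde{f}_{kl, mn}(\tilde{c}))$.

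The one genuine thing to verify is that the operation ``take the limit of main stratum orbits in the $M_{kl}$-coordinate record, then read off the limit $W_{\sigma}$-parameter in $M_{mn}$-coordinates'' agrees whether one first passes to the $M_{mn}$-coordinates of the approximating main stratum orbits or first extracts the limit orbit and then changes coordinates. This is the main obstacle, and it is handled by the continuity of $\tilde{f}_{kl, mn}$ (Theorem~\ref{mainb}), the continuity of $\tilde{g}_{\sigma, ij}$ that follows from its construction via limits in Theorem~\ref{allcharts}, and the fact that $f_{\sigma, kl, mn}$ is a homeomorphism induced by the same chart transition that induces $\tilde{f}_{kl, mn}$ on the main stratum parameters. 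Once these continuity and compatibility facts are in place, the two continuous maps $\tilde{F}_{\sigma, kl} \to F_{\sigma, mn}$ agree on the dense subset coming from main stratum limits and hence coincide everywhere.
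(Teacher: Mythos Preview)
Your argument is correct and is essentially a detailed unpacking of what the paper means by ``from the construction of $\tilde{F}_{\sigma, ij}$ it directly follows'': both sides of the identity record, in $M_{mn}$-coordinates, the parameter of the same limit $W_\sigma$-orbit, because all four maps are induced by the single coordinate transition $M_{kl}\to M_{mn}$. The only superfluous part is your closing density remark: since every point of $\tilde{F}_{\sigma,kl}$ arises as such a main-stratum limit by definition, your first three paragraphs already establish the identity pointwise and no further density argument is needed.
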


Thus, we   obtain a  projection $\tilde{g}_{\sigma, ij} : \tilde{F}_{\sigma, ij} \to F_{\sigma}$  defined by the map
\begin{equation}\label{pr-ch}
g_{\sigma, ij, kl} = f_{\sigma, kl}\circ \tilde{g}_{\sigma, ij, kl}.
\end{equation}

The projection $\tilde{g}_{\sigma, ij}$ does not depend on the choice  of a  chart $M_{kl}$,  such that $W_{\sigma}\subset M_{kl}$.
\begin{prop}
It holds  $g_{\sigma, ij, kl} = g_{\sigma, ij, mn}$ for any $\sigma \in \mathcal{A}$ and any charts $M_{kl}, M_{mn}$,  such that $W_{\sigma}\subset M_{kl}, M_{mn}$.
\end{prop}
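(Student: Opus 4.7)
The plan is to unwind both compositions and reduce the desired equality to the commutativity statement in Lemma~\ref{com}, together with the cocycle identity for the transition homeomorphisms extended to the blowup $\tilde{\mathcal{F}}$. By the defining formula~\eqref{pr-ch}, one has
\[
g_{\sigma, ij, kl} \;=\; f_{\sigma, kl}\circ \tilde{g}_{\sigma, kl}\circ \tilde{f}_{ij, kl}, \qquad g_{\sigma, ij, mn} \;=\; f_{\sigma, mn}\circ \tilde{g}_{\sigma, mn}\circ \tilde{f}_{ij, mn},
\]
so it is enough to produce a chain of identities transforming the first composition into the second.

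First I would establish the cocycle identity $\tilde{f}_{ij, kl} = \tilde{f}_{mn, kl}\circ \tilde{f}_{ij, mn}$ on $\tilde{\mathcal{F}}_{ij}$. On the dense open subset $F_{ij}\subset \tilde{\mathcal{F}}_{ij}$ this is exactly Lemma~\ref{tricharts} for the transition maps $f_{\cdot,\cdot}$ of the main stratum, and since both sides are homeomorphisms of the compact space $\tilde{\mathcal{F}}$ which agree on a dense set, they coincide everywhere by continuity. Substituting this into the first expression yields
\[
g_{\sigma, ij, kl} \;=\; f_{\sigma, kl}\circ \tilde{g}_{\sigma, kl}\circ \tilde{f}_{mn, kl}\circ \tilde{f}_{ij, mn}.
\]

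Next I would apply Lemma~\ref{com} with the roles of the two charts being $M_{kl}$ and $M_{mn}$, obtaining $\tilde{g}_{\sigma, kl}\circ \tilde{f}_{mn, kl} = f_{\sigma, mn, kl}\circ \tilde{g}_{\sigma, mn}$, so that
\[
g_{\sigma, ij, kl} \;=\; f_{\sigma, kl}\circ f_{\sigma, mn, kl}\circ \tilde{g}_{\sigma, mn}\circ \tilde{f}_{ij, mn}.
\]
Finally, the canonical homeomorphisms $f_{\sigma, kl}\colon F_{\sigma, kl}\to F_\sigma$ and $f_{\sigma, mn}\colon F_{\sigma, mn}\to F_\sigma$ are merely two coordinate records of the same identification $W_\sigma/(\C^*)^n = F_\sigma$, while $f_{\sigma, mn, kl}\colon F_{\sigma, mn}\to F_{\sigma, kl}$ is induced by the transition between these records; thus $f_{\sigma, kl}\circ f_{\sigma, mn, kl} = f_{\sigma, mn}$. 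Substituting in gives the desired equality $g_{\sigma, ij, kl} = g_{\sigma, ij, mn}$.

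The only nontrivial step, and the one most likely to require care, is the extension of Lemma~\ref{tricharts} from $F$ to its blowup $\tilde{\mathcal{F}}$. The argument rests on the fact, already established in Theorem~\ref{mainb} and Proposition~\ref{tildef}, that each $\tilde{f}_{ij,kl}$ is a \emph{continuous} extension of $f_{ij,kl}$ defined on the whole compact space $\tilde{\mathcal{F}}$; once this is in place the cocycle identity for the $\tilde{f}$'s is forced by density of $F_{ij}$ and uniqueness of continuous extension. Everything else is a bookkeeping composition, so the proposition is essentially a formal consequence of the structural results collected in Lemmas~\ref{tricharts} and~\ref{com} and Theorem~\ref{mainb}.
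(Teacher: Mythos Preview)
Your proof is correct and follows essentially the same approach as the paper: the paper's argument is a commutative diagram built from the cocycle identity for the $\tilde{f}_{\cdot,\cdot}$ (i.e.\ Lemma~\ref{tricharts} extended to the blowup) together with Lemma~\ref{com} and the compatibility $f_{\sigma,kl}\circ f_{\sigma,mn,kl}=f_{\sigma,mn}$, which is exactly the chain of equalities you wrote out. Your explicit justification of the cocycle identity on $\tilde{\mathcal{F}}$ via density and continuity is, if anything, more careful than the paper's brief assertion that Lemma~\ref{tricharts} ``clearly'' extends to the blowup.
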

\begin{proof}
Combining Lemma~\ref{tricharts} and Lemma~\ref{com} we obtain the following commutative diagram:

\begin{equation*}\begin{CD}
 \tilde{F}_{\sigma, ij} @>{\tilde{f}_{ij,kl}}>> {\tilde{F}}_{\sigma, kl} @>{\tilde{g} _{\sigma, kl}}>> F_{\sigma, kl}@>{f_{\sigma, kl}}>>F_{\sigma}\\
@VV {\equiv}V       @VV{\tilde{f}_{kl, mn}}V                   @VV{f_{kl, mn}}V @VV{\equiv}V \\ 
 \tilde{F}_{\sigma, ij}@>{\tilde{f}_{ij,mn}}>> \tilde{F}_{\sigma, mn} @>{\tilde{g} _{\sigma, mn}}>> F_{\sigma, mn}@>f_{\sigma, kl}>>F_{\sigma},
\end{CD}\end{equation*}
which proves the statement.
\end{proof}

If $F_{\sigma}$ is a point then $g_{\sigma, ij} : \tilde{F}_{\sigma, ij} \to F_{\sigma}$ is obviously uniquely defined.

Altogether this leads:

\begin{cor}\label{canonic-proj}
There exists a  canonical projection $g_{\sigma, ij} : \tilde{F}_{\sigma, ij}\to F_{\sigma}$ for any $\sigma\in \mathcal{A}$ and any chart $M_{ij}$.
\end{cor}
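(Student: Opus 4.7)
The plan is to simply assemble the projection $g_{\sigma,ij}$ out of the ingredients that have already been constructed in the paragraphs preceding the corollary, and then verify that the resulting map does not depend on the auxiliary choices involved. The statement splits naturally into two cases according to whether $F_\sigma$ is a point or not.

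In the first case, when $F_\sigma$ is a point, there is nothing to do: the constant map $\tilde{F}_{\sigma,ij}\to F_\sigma$ is obviously canonical, independent of any chart. So one can immediately dispose of this case in one sentence.

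In the second case, when $F_\sigma$ is not a point, the construction is already essentially given by formula~\eqref{pr-ch}. First, I would recall that when $W_\sigma\subset M_{ij}$, the construction of the virtual space of parameters via Theorem~\ref{mainb} provides directly a canonical projection $\tilde g_{\sigma,ij}\colon \tilde F_{\sigma,ij}\to F_{\sigma,ij}$, which combined with the homeomorphism $f_{\sigma,ij}\colon F_{\sigma,ij}\to F_\sigma$ of Remark~\ref{param-chart} gives the desired map. For a general chart $M_{ij}$, fix an auxiliary chart $M_{kl}$ with $W_\sigma\subset M_{kl}$ (which exists since $W_\sigma$ lies in at least one chart), and set
\[
g_{\sigma,ij,kl} \;=\; f_{\sigma,kl}\circ \tilde g_{\sigma,kl}\circ \tilde f_{ij,kl}.
\]
The main point to verify is that $g_{\sigma,ij,kl}$ is independent of the choice of $M_{kl}$: this is precisely the content of the proposition immediately preceding the corollary, whose proof rests on the commutative diagram obtained by combining the cocycle identity $\tilde f_{ij,kl}=\tilde f_{mn,kl}\circ \tilde f_{ij,mn}$ (the extension to the blowup of Lemma~\ref{tricharts}) with the compatibility $f_{\sigma,kl,mn}\circ \tilde g_{\sigma,kl}=\tilde g_{\sigma,mn}\circ \tilde f_{kl,mn}$ of Lemma~\ref{com}.

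In short, the corollary is a bookkeeping statement: all the technical work has been carried out in Theorem~\ref{mainb}, Proposition~\ref{any}, and Lemma~\ref{com}. The only real content is to observe that the three canonical ingredients $\tilde f_{ij,kl}$, $\tilde g_{\sigma,kl}$, $f_{\sigma,kl}$ glue together in a chart-independent way. I do not anticipate any genuine obstacle; the most subtle point is keeping track of which maps are defined on which chart and invoking the right compatibility at each step, but no further construction is required.
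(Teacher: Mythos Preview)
Your proposal is correct and follows essentially the same approach as the paper: the corollary is presented there without a separate proof, simply as a summary (``Altogether this leads'') of the preceding construction, which splits into the trivial case where $F_\sigma$ is a point and the case where one composes $\tilde f_{ij,kl}$, $\tilde g_{\sigma,kl}$, and $f_{\sigma,kl}$ and invokes the chart-independence proposition. Your write-up is, if anything, slightly more explicit than the paper's.
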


\begin{rem}
We pointed in Remark~\ref{param-chart} that if  $W_{\sigma}\subset M_{ij}$ and   $F_{\sigma}$ is a point, one can not  write down this point in the parameter coordinates for  the chart $M_{ij}$. Thus, in this case $F_{\sigma, ij}$ is not defined, so it is not  defined a  projection $\tilde{F}_{\sigma, ij}\to F_{ij, \sigma}$. Nevertheless the  projection $\tilde{F}_{\sigma, ij}\to F_{\sigma}$ is uniquely defined.
\end{rem}
The  space $\tilde{F}_{\sigma, ij}$ is in general    larger then the space  of parameters $F_{\sigma}$  as Proposition~\ref{parametrization-7} shows.

\section{An algebraic manifold in $(\C P^1)^5$ which realizes  $\tilde{\mathcal{F}}$}
We prove that  the space of parameters $F$ of the main stratum,  as well as its compactification $\tilde{\mathcal{F}}$ which  we have described in  previous sections  can be realized by  algebraic submanifolds in $(\C P^1)^{5}$.

\begin{prop}\label{embedd}
There is an  embedding  $I : F \to (\C P^1)^{5}$.
\end{prop}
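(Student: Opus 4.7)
\medskip

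\noindent\textbf{Proof plan.} The natural candidate for $I$ comes from the five forgetful maps, one for each point of a $5$-tuple on $\mathbb{C}P^1$. By Lemma~\ref{Wn} we have $F\cong \mathcal{W}_{5,2}/PGL(2,\mathbb{C})$, i.e.\ orbits of $5$-tuples of pairwise distinct points on $\mathbb{C}P^1$. For each $i\in\{1,\ldots ,5\}$, omitting the $i$-th entry defines a $PGL(2,\mathbb{C})$-equivariant map $\mathcal{W}_{5,2}\to \mathcal{W}_{4,2}$, and therefore descends to $\bar{\pi}_i \colon F\to \mathcal{W}_{4,2}/PGL(2,\mathbb{C})=F_{4,2}$. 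Via the cross-ratio normalization of Proposition~\ref{em} (sending the first three remaining points to $(1:0),(0:1),(1:1)$), we identify $F_{4,2}$ with the open subset $\mathbb{C}P^1\setminus A\subset \mathbb{C}P^1$. My plan is to take
\[
I=(\bar{\pi}_1,\bar{\pi}_2,\bar{\pi}_3,\bar{\pi}_4,\bar{\pi}_5)\colon F \longrightarrow (\mathbb{C}P^1)^5,
\]
so that $I([z_1,\ldots ,z_5])$ records, in the $i$-th factor, the cross-ratio of the four remaining points.

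\medskip

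\noindent Once $I$ is defined, I will verify that it is an embedding through the following steps. First, each $\bar{\pi}_i$ is continuous (in fact algebraic), so $I$ is continuous. Second, for injectivity, I will note that the two components $\bar{\pi}_4([z_1,\ldots ,z_5])$ and $\bar{\pi}_5([z_1,\ldots ,z_5])$ already suffice: normalizing $(z_1,z_2,z_3)$ to $((1:0),(0:1),(1:1))$, these two cross-ratios recover $z_5$ and $z_4$ respectively, hence recover the $PGL(2,\mathbb{C})$-orbit of $(z_1,\ldots ,z_5)$. Third, to upgrade this to a topological embedding, I will exhibit an explicit continuous left inverse of $I$ on its image, namely the map $(w_1,\ldots ,w_5)\mapsto [(1:0),(0:1),(1:1),w_5,w_4]$ composed with the projection to $F$; continuity of the normalization was already used in Proposition~\ref{em}.

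\medskip

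\noindent The main technical step is the identification of $F_{4,2}$ with $\mathbb{C}P^1\setminus A$ in a way compatible with the ambient topology of $(\mathbb{C}P^1)^5$; this is precisely the content of Proposition~\ref{em} for $n=4$, so no new analytic difficulty arises. Another point to check carefully is that $I$ is a \emph{topological} embedding and not merely an injective continuous map: because $F$ is Hausdorff and second countable and because the inverse constructed above is continuous on $I(F)$, this reduces to a direct verification rather than a compactness argument. The embedding will automatically be $S_5$-equivariant, where $S_5$ acts by permuting the five factors of $(\mathbb{C}P^1)^5$, which is consistent with Lemma~\ref{Sn-action} and will be convenient for the subsequent identification of the image with an algebraic submanifold carrying the $S_5$-symmetry of $G_{5,2}$.
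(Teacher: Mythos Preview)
Your approach is essentially identical to the paper's: both define $I$ by recording, in each factor, the cross-ratio of the four points obtained by deleting one of the five rows, the paper writing this explicitly in Pl\"ucker coordinates as $ijkl\mapsto (P^{ik}P^{jl}:P^{il}P^{jk})$ and noting that this is $(\mathbb{C}^{*})^5$-invariant, hence factors through $F$. One small inaccuracy in your closing remark: the induced $S_5$-action on $(\mathbb{C}P^1)^5$ is not the pure permutation of factors, since a permutation of the five points also reorders the four surviving points and therefore postcomposes each cross-ratio with one of the six M\"obius symmetries $\lambda\mapsto \lambda,\,1-\lambda,\,1/\lambda,\ldots$; this does not affect the proof of the embedding itself.
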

\begin{proof}
Let $L$ be a point from the main stratum $W$  of $G_{5,2}$  and $A_{L}$ an arbitrary $(5\times 2)$- matrix  representing $L$.  The rows of the matrix $L$  define the  five non-collinear  points in $\C ^2$. We enumerate them by  the rows  which define them,  that is $1,2,3,4, 5$. We choose the following $5$ lexicographically ordered variations of four of these five points: $1234$, $1235$, $1245$, $1345$, $2345$ and map them to $\C ^{*}$ by the cross-ratio from   projective geometry. More precisely, we consider the map $\{1234, 1235, 1245, 1345, 2345\}\to \C ^{*}$ defined by 
\begin{equation}~\label{jedan}
1234\to \frac{P^{13}P^{24}}{P^{14}P^{23}},\; 1235\to \frac{P^{13}P^{25}}{P^{15}P^{23}}, \; 1245\to \frac{P^{14}P^{25}}{P^{15}P^{24}},
\end{equation}
\begin{equation}\label{dva}
1345\to \frac{P^{14}P^{35}}{P^{15}P^{34}},\; 2345\to \frac{P^{24}P^{35}}{P^{25}P^{34}}.
\end{equation}
where $P^{ik}=P^{ik}(A_L)$  are  the Pl\"ucker coordinates for $L$.   

 Using this  map we obtain  a map from the main stratum $W$ to $(\C P^1)^{5}$ defined by $ijkl\to (P^{ik}P^{jl}:P^{il}P^{jk})\in \C P^1$. Note that this new map is well defined,   it  does not depend on the choice of a basis for $L$ neither on the choice of a  basis in $\C ^n$, since  Pl\"ucker coordinates are defined uniquely  up to common constant. This map is clearly equivariant for the action of the torus $(\C ^{*})^{5}$,  since  Pl\"ucker coordinates are equivariant for this action. In this we obtain an embedding of $F = W/(\C ^{*})^{5}$ in $(\C P^1)^{5}$. 
\end{proof}

Let $(c_i:c_i^{'})$, $1\leq i\leq 5$,  be  coordinates in $(\C P^{1})^{5}$.
We  describe an  algebraic manifold in $(\C P^1)^5$ which contains the image of $F$ by the above map $ijkl\to \frac{P^{ik}P^{jl}}{P^{il}P^{jk}}$.

\begin{prop}\label{image}
The  image of an  embedding $I : F\to (\C P^1)^{5}$  belongs to the intersection of the  following hypers
urfaces
\begin{equation}\label{jedanh}
c_1c_2^{'}c_3 =  c_1^{'}c_2c_3^{'},\;\;  c_{2}^{'}c_4(c_1-c_1^{'}) = c_1^{'}c_{4}^{'}(c_2-c_2^{'}),
\end{equation}
\begin{equation}\label{dvah}
 \;\; (c_1-c_1^{'})c_2c_5 = c_1(c_2-c_2^{'})c_5^{'}, \;\;  c_3c_4^{'}c_5 =  c_3^{'}c_4c_5^{'}.
\end{equation}
\end{prop}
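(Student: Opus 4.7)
\medskip

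\noindent\textit{Proposal.} The plan is to substitute the explicit Plücker-coordinate representatives of the embedding~\eqref{jedan}--\eqref{dva} into each of the four equations of~\eqref{jedanh}--\eqref{dvah} and verify them directly. Concretely, for any $L\in W$ we work with the representatives
\[
(c_1,c_1')=(P^{13}P^{24},P^{14}P^{23}),\; (c_2,c_2')=(P^{13}P^{25},P^{15}P^{23}),\; (c_3,c_3')=(P^{14}P^{25},P^{15}P^{24}),
\]
\[
(c_4,c_4')=(P^{14}P^{35},P^{15}P^{34}),\qquad (c_5,c_5')=(P^{24}P^{35},P^{25}P^{34}),
\]
where $P^{ij}=P^{ij}(A_L)$. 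Each equation in~\eqref{jedanh}--\eqref{dvah} is bihomogeneous of degree one in each involved pair $(c_i:c_i')$, hence it is well-defined on the relevant product of $\C P^1$'s and the verification may be carried out on the level of these particular representatives.

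The first and fourth equations turn out to be \emph{pure algebraic identities} that do not require the Plücker relations. Indeed, substituting shows that both sides of $c_1c_2'c_3=c_1'c_2c_3'$ equal $P^{13}P^{14}P^{15}P^{23}P^{24}P^{25}$, and both sides of $c_3c_4'c_5=c_3'c_4c_5'$ equal $P^{14}P^{15}P^{24}P^{25}P^{34}P^{35}$. These two verifications are therefore immediate by inspection.

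For the remaining two equations we invoke the Plücker relation~\eqref{Plrel} in the two forms
\[
P^{13}P^{24}-P^{14}P^{23}=-P^{12}P^{34},\qquad P^{13}P^{25}-P^{15}P^{23}=-P^{12}P^{35},
\]
which give the crucial identities $c_1-c_1'=-P^{12}P^{34}$ and $c_2-c_2'=-P^{12}P^{35}$. Inserting these into the second equation reduces both sides to $-P^{12}P^{14}P^{15}P^{23}P^{34}P^{35}$, and into the third equation reduces both sides to $-P^{12}P^{13}P^{24}P^{25}P^{34}P^{35}$; in each case the equality holds by inspection.

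The whole argument is a direct computation; the only genuine input is the Plücker relation which enters precisely to handle the differences $c_1-c_1'$ and $c_2-c_2'$. The main (mild) obstacle is bookkeeping of the six-fold monomials in the Plücker coordinates and the signs coming from~\eqref{Plrel}; all four verifications are independent and of the same character.
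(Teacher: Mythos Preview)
Your proof is correct: the direct substitution of the Pl\"ucker representatives into the four bihomogeneous equations, together with the Pl\"ucker relation to simplify $c_1-c_1'$ and $c_2-c_2'$, verifies each identity cleanly. (A minor remark: whichever sign convention one takes in the Pl\"ucker relation, the same sign appears on both sides of the second and third equations, so the verification is insensitive to it.)

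The paper's argument reaches the same conclusion but is organised differently. Instead of substituting Pl\"ucker monomials throughout, it observes that each of the remaining three equations is an instance of the \emph{same} cubic pattern $c_1c_2'c_3=c_1'c_2c_3'$ applied to a different ordered quadruple of the five points (namely $1324,1325,1345$; then $2314,2315,2345$; then $4512,4513,4523$), and then uses the classical permutation identities for cross-ratios, e.g.\ $[1324]=1-[1234]$, to rewrite these in terms of the original $c_i,c_i'$. Your route is more elementary and self-contained (one only needs the Pl\"ucker relation, not cross-ratio symmetries), while the paper's route explains \emph{why} four relations of the same shape arise: they are the single cubic identity among three cross-ratios, replayed for different choices of the fixed triple among $\{1,\dots,5\}$. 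Both approaches amount to the same short computation.
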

\begin{proof}
It immediately follows  from~\eqref{jedan}  that the first equation in~\eqref{jedanh} is satisfied.  
We now  use this conclusion and  properties of the cross-ratio. Since
\[
1324 \to 1- \frac{c_1}{c_1^{'}}, \;\; 1325 \to 1-\frac{c_2}{c_{2}^{'}}, \;\; 1345\to \frac{c_4}{c_4^{'}},
\]
 we analogously obtain that  
\[ (c_1^{'}-c_{1})c_{2}^{'}c_4=c_1^{'}(c_2^{'}-c_2)c_{4}^{'}.
\]
Since
\[
2314 \to 1-\frac{c_1^{'}}{c_{1}}, \;\;  2315 \to 1-\frac{c_2^{'}}{c_{2}}, \;\; 2345 \to \frac{c_5}{c_5^{'}},
\]
we conclude that   
\[
(c_1-c_1^{'})c_2c_5=c_1(c_2-c_2^{'})c_5^{'}.
\]
Since 
\[
4512\to \frac{c_3^{'}}{c_3}, \;\;  4513\to \frac{c_4}{c_4^{'}}, \;\; 4523\to \frac{c_5}{c_5^{'}},
\]
it follows that 
\[
c_3c_4^{'}c_5 = c_3^{'}c_4c_{5}^{'}.
\]

\end{proof}
 
Denote  by  $\mathcal{G}$ an  algebraic manifold in $(\C P^1)^{5}$ obtained as the intersection of the surfaces given by Proposition~\ref{image}. 

\begin{lem}\label{devet}
The virtual spaces of parameters  $\tilde{F}_{ ij, 12} \cong \C P^1_{A}$ of the strata whose admissible polytopes are $K_{ij}(9)$  for $T^5$-action on $G_{5,2}$,   can be embedded into $\mathcal{G}$ as follows:
\begin{enumerate}
\item $\tilde{F}_{ 13, 12}\to ((0:1), (0:1), (c:c^{'}), (1:1), (c^{'}:c))$, 
\item $\tilde{F}_{14, 12}\to ((1:0), (c:c^{'}), (0:1), (0:1), (c-c^{'}:c))$,
\item $\tilde{F}_{ 15, 12}\to ((c:c^{'}), (1:0), (1:0), (1:0), (c:c-c^{'}))$,
\item $\tilde{F}_{23, 12}\to ((1:0), (1:0), (c:c^{'}), (c:c^{'}), (1:1))$,
\item $\tilde{F}_{24, 12}\to ((0:1),(c:c^{'}), (1:0), (c^{'}-c:c^{'}), (0:1))$, 
\item 
$\tilde{F}_{25, 12}\to ((c:c^{'}), (0:1), (0:1), (c^{'}:c^{'}-c), (1:0))$,
\item $\tilde{F}_{34, 12}\to ((1:1), (c:c^{'}), (c:c^{'}), (1:0), (1:0))$, 
 \item $\tilde{F}_{ 35, 12}\to ((c:c^{'}), (1:1), (c^{'}:c), (0:1), (0:1))$,
\item $\tilde{F}_{12, 12}\to  ((1:1), (1:1), (1:1), (c:c^{'}), (c:c^{'}))$, 
\item  $\tilde{F}_{45, 12}\to ((c:c^{'}), (c:c^{'}), (1:1), (1:1), (1:1))$.
\end{enumerate}
\end{lem}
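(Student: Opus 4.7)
The plan is to extend the Pl\"ucker cross-ratio embedding $I \colon F \to \mathcal{G}$ of Proposition~\ref{embedd} to the closure of each virtual space of parameters $\tilde{F}_{ij,12}$, and to read off the ten explicit formulas by taking limits of (\ref{jedan}), (\ref{dva}) along well chosen curves in the main stratum.

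The first, purely algebraic, step is a direct verification: for each of the ten 5-tuples stated, substitute into the four defining equations (\ref{jedanh}), (\ref{dvah}) of $\mathcal{G}$ and check that both sides agree for every $(c:c')\in \C P^{1}_{A}$. This confirms that the claimed images indeed lie in $\mathcal{G}$. Injectivity in $(c:c')$ is likewise immediate from inspection of the formulas.

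The second step is to produce the embedding of each $\tilde{F}_{ij,12}$ as the limit of main-stratum parametrizations under $I$. For the nine strata $W_{ij}$ with $\{i,j\}\neq \{1,2\}$ (items (1)--(8) and (10)), the stratum meets the chart $M_{12}$ and is cut out by vanishing of a single local coordinate $z_{k}$, so the parameter $(c:c')\in\C P^{1}_{A}$ of Proposition~\ref{parametrization-9} survives in the limit. I would take a sequence $L_{n}\in W$ converging to a point of $W_{ij}$ with parameter $(c:c')$ and compute the five cross-ratios (\ref{jedan}), (\ref{dva}) using the identifications $c_{1}/c_{1}'=z_{1}z_{5}/(z_{2}z_{4})$, $c_{2}/c_{2}'=z_{1}z_{6}/(z_{3}z_{4})$, $c_{3}/c_{3}'=z_{2}z_{6}/(z_{3}z_{5})$ coming from Proposition~\ref{Orbits}. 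The limits of the first three cross-ratios reproduce the three coordinates recorded in Proposition~\ref{parametrization-9}; the limits of the fourth and fifth then follow by substituting into the polynomial expressions that yield the defining equations (\ref{jedanh}), (\ref{dvah}) of $\mathcal{G}$.

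For the exceptional stratum $W_{12}$ (item (9)), the stratum does not meet $M_{12}$ and $\tilde{F}_{12,12}$ lies on the exceptional divisor of the blowup $\tilde{\mathcal{F}} \to \mathcal{F}$ at $((1:1),(1:1),(1:1))$ described in Lemma~\ref{blowlem}. Here I would approach the divisor along the curve determined by $(1-c_{1}')x_{2} = (1-c_{2}')x_{1}$ with direction $(x_{1}:x_{2})=(c:c')$, and compute the limits of the cross-ratios along this family. The main obstacle will be showing that the resulting limits of the fourth and fifth coordinates in $\mathcal{G}$ agree with the blowup parameter $(c:c')$ of Theorem~\ref{universal} independently of the approaching family, thereby identifying the intrinsic blowup parameter of $\tilde{\mathcal{F}}$ with the one read off extrinsically from $I$. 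Once this compatibility is in place, item (9) follows directly and the proof is complete.
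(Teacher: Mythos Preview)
Your approach is valid but more elaborate than the one in the paper. The paper exploits that the embedding $I$ of Proposition~\ref{embedd} is defined \emph{globally} through Pl\"ucker coordinates: the five cross-ratios $(P^{ik}P^{jl}:P^{il}P^{jk})$ make sense on any stratum where the relevant $P^{ij}$ do not all vanish, so no limiting process is needed. One simply sets $P^{ij}=0$ (the defining condition of $W_{ij}$) in the formulas \eqref{jedan}, \eqref{dva}. For items (1)--(8) the vanishing $P^{ij}$ appears explicitly in some of the cross-ratios, which immediately fixes those coordinates; the remaining coordinates are then pinned down by the Pl\"ucker relations of Example~\ref{Pl2}. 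For items (9) and (10), neither $P^{12}$ nor $P^{45}$ appears in any of the five cross-ratios, so the Pl\"ucker relations are the sole tool --- but all five ratios are still well-defined on $W_{12}$ and $W_{45}$, and a short computation with the relations gives the stated values.

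In particular, for item (9) there is no need to invoke the blowup description of $\tilde{\mathcal{F}}$ or to worry about compatibility of blowup parameters: since $W_{12}$ lies in (say) the chart $M_{13}$, the Pl\"ucker cross-ratios are directly evaluable there, and the Pl\"ucker relation $P^{12}P^{34}+P^{14}P^{23}=P^{13}P^{24}$ (and its analogues) with $P^{12}=0$ forces the first three coordinates to $(1:1)$, while the fourth and fifth remain free and equal.

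One small inaccuracy in your write-up: the strata $W_{34}$, $W_{35}$, $W_{45}$ (items (7), (8), (10)) are \emph{not} cut out in $M_{12}$ by the vanishing of a single local coordinate $z_k$, but rather by a quadratic relation (e.g.\ $z_2z_6=z_3z_5$ for $W_{45}$). Your limit argument still goes through, but the phrasing should be corrected.
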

\begin{proof}
The stratum $W_{ij}$ whose admissible polytope  is $K_{ij}(9)$ is defined by the condition that $P^{ij}=0$. Combining this and~\eqref{jedan},~\eqref{dva} we immediately obtain  the embeddings $(1) - (8)$. In order to prove that the embeddings $(9)$ and $(10)$ hold  we use in addition  the Pl\"ucker relations presented  in Example~\ref{Pl2}.
\end{proof}
In the same way we prove the following lemmas: 
\begin{lem}
The virtual spaces of parameters  $\tilde{F}_{ (ij,kl), 12}\cong\{\ast\}$ of the strata whose admissible polytopes are $K_{ij,kl}$ can be embedded into $\mathcal{G}$ as follows
\begin{itemize}
\item $\tilde{F}_{(14, 23), 12} \to   ((1:0), (1:0),(0:1),(0:1), (1:1))$,
\item $ \tilde{F}_{(13,24), 12} \to  ((0:1), (0:1), (1:0), (1:1), (0:1))$, 
\item $\tilde{F}_{(15,24), 12} \to   ((0:1), (1:0), (1:0), (1:0), (0:1))$,
\item $\tilde{F}_{(23,45). 12}\to  ((1:0), (1:0), (1:1), (1:1), (1:1))$,
\item $\tilde{F}_{(24,35), 12}\to  ((0:1), (1:1), (1:0), (0:1), (0:1))$,
\item  $\tilde{F}_{(25,34), 12}\to ((1:1), (0:1), (0:1), (1:0), (1:0))$, 
\item $\tilde{F}_{(15, 23), 12}\to ((1:0), (1:0), (1:0), (1:0), (1:1))$,
\item  $\tilde{F}_{(13, 25), 12}\to ((0:1), (0:1), (0:1), (1:1),(1:0))$,
\item $\tilde{F}_{(14,25), 12}\to ((1:0), (0:1), (0:1), (0:1), (1:0))$,
\item   $\tilde{F}_{(13,45), 12}\to ((0:1), (0:1), (1:1), (1:1), (1:1))$,
\item  $\tilde{F}_{(14,35), 12}\to ((1:0), (1:1), (0:1), (0:1), (0:1))$,
\item $\tilde{F}_{(15,34), 12}\to ((1:1), (1:0), (1:0), (1:0), (1:0))$.
\item $\tilde{F}_{(12,34), 12}\to ((1:1), (1:1), (1:1), (1:0), (1:0))$,
\item  $\tilde{F}_{(12,35), 12 } \to  ((1:1), (1:1), (1:1), (0:1), (0:1))$,
\item $\tilde{F}_{(12,45), 12} \to  ((1:1), (1:1), (1:1), (1:1), (1:1))$. 
\end{itemize}
\end{lem}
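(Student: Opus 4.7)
The plan is to mimic the proof of Lemma~\ref{devet} and verify each of the fifteen embeddings by a direct computation using the five cross-ratio formulas defining the embedding $I : F \to (\mathbb{C}P^{1})^{5}$ from Proposition~\ref{embedd}. For a stratum whose admissible polytope is $K_{ij,kl}$ the space of parameters is a single point, so it suffices to check the image of any representative matrix $A_{L}$. By definition of the stratum, the only vanishing Pl\"ucker minors of $A_{L}$ are $P^{ij}(A_{L})$ and $P^{kl}(A_{L})$; all other Pl\"ucker minors are non-zero. Substituting these vanishing conditions into the expressions for $c_{1}, c_{2}, c_{3}, c_{4}, c_{5}$ determines the image in $(\mathbb{C}P^{1})^{5}$.

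For each pair $(ij,kl)$, I first classify the five cross-ratio formulas according to whether they contain $P^{ij}$ or $P^{kl}$. A ratio whose numerator contains a vanishing factor but whose denominator does not collapses to $(0:1)$, and symmetrically a ratio whose denominator vanishes and numerator does not collapses to $(1:0)$. For coordinates that remain genuine cross-ratios of non-vanishing Pl\"ucker minors, I invoke the five quadratic Pl\"ucker relations of Example~\ref{Pl2}: each relation, with one of its three terms forced to zero, becomes an equality between two products of Pl\"ucker minors, which is precisely the statement that the corresponding cross-ratio equals $(1:1)$. The three pairs $(12,34)$, $(12,35)$, $(12,45)$ involving $\{1,2\}$ form the extreme case where $P^{12}$ appears in none of the cross-ratios, and all five coordinates are determined by the Pl\"ucker relations alone.

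As an illustration, consider the representative pair $(14,23)$. Here $P^{14}=P^{23}=0$, so one reads off $c_{1} = (P^{13}P^{24}:P^{14}P^{23}) = (1:0)$, $c_{2} = (P^{13}P^{25}:P^{15}P^{23}) = (1:0)$, $c_{3} = (P^{14}P^{25}:P^{15}P^{24}) = (0:1)$ and $c_{4} = (P^{14}P^{35}:P^{15}P^{34}) = (0:1)$, while $c_{5} = (P^{24}P^{35}:P^{25}P^{34})$ is a genuine ratio evaluated via the Pl\"ucker relation $z_{23}z_{45} + z_{25}z_{34} = z_{24}z_{35}$, which with $P^{23}=0$ yields $P^{25}P^{34} = P^{24}P^{35}$ and hence $c_{5} = (1:1)$. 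Analogously, for $(12,45)$ one applies all five Pl\"ucker relations in turn with $P^{12} = P^{45} = 0$ to obtain $c_{1} = c_{2} = c_{3} = c_{4} = c_{5} = (1:1)$.

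The main obstacle is purely bookkeeping: all fifteen cases must be checked, and in each case one must select the specific Pl\"ucker relations that yield the claimed projective values. The fifteen pairs $K_{ij,kl}$ form a single $S_{5}$-orbit, which suggests a uniform treatment; however, the cross-ratio formulas $c_{1},\dots,c_{5}$ are not manifestly $S_{5}$-symmetric since they distinguish specific 4-element subsets of $\{1,\ldots,5\}$, so the orbit argument only partially reduces the workload, and most cases still require an independent explicit computation. A secondary check at the end is to verify that each prescribed point of $(\mathbb{C}P^{1})^{5}$ satisfies the four defining equations~\eqref{jedanh}--\eqref{dvah} of $\mathcal{G}$; this is automatic from the construction of $I$ but should be confirmed in a representative case as a sanity check.
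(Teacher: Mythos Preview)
Your proposal is correct and follows essentially the same approach as the paper: the paper introduces this lemma with ``In the same way we prove the following lemmas,'' referring to the proof of Lemma~\ref{devet}, which is precisely the direct substitution of the vanishing Pl\"ucker conditions into the cross-ratio formulas~\eqref{jedan}--\eqref{dva}, supplemented by the Pl\"ucker relations of Example~\ref{Pl2} where needed. Your worked example for $(14,23)$ and your observation that the pairs involving $\{1,2\}$ require only the Pl\"ucker relations (since $P^{12}$ appears in none of the five cross-ratios) are exactly right, and your exposition is in fact more explicit than the paper's.
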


\begin{lem}
The virtual spaces of parameters  $\tilde{F}_{ij, 12}(7)\cong \C P^1$ of the strata whose admissible polytopes are $K_{ij}(7)$ can be embedded into $\mathcal{G}$ as follows
\begin{itemize}
\item $\tilde{F}_{23, 12}(7)\to  ((1:0), (1:0), (c:c^{'}), (c:c^{'}), (1:1))$,
\item  $\tilde{F}_{24, 12}(7)\to  ((0:1),(c:c^{'}), (1:0), (c^{'}-c: c^{'}), (0:1))$,
\item $\tilde{F}_{25, 12}(7)\to ((c:c^{'}), (0:1), (0:1), (c:c^{'}-c), (1:0))$,
\item $\tilde{F}_{13, 12}(7)\to  ((0:1), (0:1), (c:c^{'}), (1:1), (c^{'}:c))$,
\item $\tilde{F}_{14, 12}(7)\to  ((1:0), (c:c^{'}), (0:1), (0:1), (c-c^{'}:c))$,
\item  $\tilde{F}_{15, 12}(7)\to ((c:c^{'}), (1:0), (1:0), (1:0), (c:c-c^{'}))$,
\item $\tilde{F}_{12, 12}(7)\to ((1:1), (1:1), (1:1), (c:c^{'}), (c:c^{'}))$,
\item $\tilde{F}_{34, 12}(7) \to ((1:1), (c:c^{'}), (c:c^{'}),(1:0), (1:0))$,
\item $\tilde{F}_{35, 12}(7)\to ((c:c^{'}), (1:1), (c^{'}:c), (0:1), (0:1))$,
\item $\tilde{F}_{45, 12}(7)\to ((c:c^{'}), (c:c^{'}), (1:1), (1:1), (1:1)) $.  
\end{itemize}
\end{lem}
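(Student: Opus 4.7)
My plan is to mimic the strategy used for Lemma~\ref{devet} and the preceding lemma on $K_{ij,kl}$ strata. For each admissible pyramid $K_{ij}(7)$ I first identify the corresponding stratum $W_\sigma$ by its defining Pl\"ucker conditions: the three coordinates $P^{pq}$, $P^{pr}$, $P^{qr}$ indexed by pairs from $\{p,q,r\}=\{1,\ldots,5\}\setminus\{i,j\}$ vanish, the remaining seven are nonzero, and one scalar parameter $(c:c')\in \C P^1$ (coming from $F_\sigma$) controls the surviving cross-ratio data. The embedding into $\mathcal{G}\subset(\C P^1)^5$ is then obtained by evaluating~\eqref{jedan} and~\eqref{dva} at such a matrix, where several ratios $P^{ik}P^{jl}/P^{il}P^{jk}$ become $0/0$ and must be resolved either by the defining parametric relation on $W_\sigma$ or by the Pl\"ucker identities of Example~\ref{Pl2}.

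I would organize the ten cases according to their relation with the chart $M_{12}$. The six pyramids $K_{ij}(7)$ with $\{i,j\}\cap\{1,2\}\neq \emptyset$ and $\{i,j\}\neq\{1,2\}$ are covered directly by Proposition~\ref{parametrization-7}: their parameter space in $M_{12}$ is an affine $\C P^1$, and the five cross-ratios are read off immediately from the normalized Pl\"ucker matrix whose rank-two structure is simplified by the vanishing rows. The three pyramids $K_{ij}(7)$ with $ij\in\{34,35,45\}$ do not lie in $M_{12}$ in the apex direction, but their virtual spaces $\tilde F_{ij,12}(7)$ have been identified in Theorem~\ref{bound-univ} with the boundary curves $\bar F^{11'}_{12}$, $\bar F^{22'}_{12}$, $\bar F^{33'}_{12}$; for these I would verify that the cross-ratio map restricted to these boundary curves matches the claimed tuples by continuity, using that on the main stratum the map coincides with the embedding of Proposition~\ref{embedd} and that the transition homeomorphisms $\tilde f_{12,kl}$ of Theorem~\ref{mainb} intertwine the five cross-ratio coordinates in a controlled way.

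The delicate case is $K_{12}(7)$, whose virtual space lies on the exceptional divisor over the point $((1:1),(1:1),(1:1))\in\mathcal{F}$ of the blowup $\tilde{\mathcal{F}}$. For this stratum the five cross-ratios cannot be evaluated at the blown-up point directly, so I would take a one-parameter family in $F_{12}$ approaching $((1:1),(1:1),(1:1))$ along a direction specified by the blowup coordinate $(c:c')$ via the equation~\eqref{blow}, and compute the limit of each cross-ratio along this family. I expect this limit calculation to be the main technical obstacle: one has to check that the first three $(\C P^1)$-factors tend to $(1:1)$ and that the last two factors tend precisely to $(c:c')$, independently of the chosen approximating family. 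This independence should follow from a careful application of the Pl\"ucker relation $z_{13}z_{45}=z_{14}z_{35}+z_{15}z_{34}$ (and of its $S_5$-conjugates from Example~\ref{Pl2}) combined with the fact that along $W_{\sigma}$ for $K_{12}(7)$ the vanishing of $P^{34},P^{35},P^{45}$ is an infinitesimal cancellation that exactly matches the direction of approach encoded by $(c:c')$ on the exceptional divisor.
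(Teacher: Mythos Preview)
Your plan has a conceptual slip that undermines the ``six easy cases''. You write that for those $K_{ij}(7)$ ``one scalar parameter $(c:c')\in\C P^1$ (coming from $F_\sigma$) controls the surviving cross-ratio data'' and that the five cross-ratios are ``read off immediately from the normalized Pl\"ucker matrix''. But for every $K_{ij}(7)$ the stratum $W_\sigma$ is a single $(\C^{*})^{5}$-orbit and $F_\sigma$ is a point; the parameter $(c:c')$ lives only in the \emph{virtual} space $\tilde F_{\sigma,12}$, not in $W_\sigma$ itself. Concretely, for $K_{23}(7)$ one has $P^{14}=P^{15}=P^{45}=0$, so both the third cross-ratio $P^{14}P^{25}/(P^{15}P^{24})$ and the fourth $P^{14}P^{35}/(P^{15}P^{34})$ are $0/0$ on the stratum, and the relevant Pl\"ucker relations from Example~\ref{Pl2} reduce to $0+0=0$, resolving nothing. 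The same obstruction occurs in every one of the ten cases: at least two of the five cross-ratios are undetermined on $W_\sigma$, and their values are recovered only through the virtual parametrisation, i.e.\ through limits from the main stratum. So the limit analysis you reserve for $K_{12}(7)$ is in fact required everywhere; the distinction you draw between the six chart cases and the remaining four does not reflect a genuine difference in difficulty.

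The paper's route (implicit in ``In the same way we prove the following lemmas'') avoids all of this. Compare Theorem~\ref{bound-univ} with Proposition~\ref{parametrization-9}: for every $ij$ the virtual space $\tilde F_{ij,12}(7)$ is exactly the closure in $\tilde{\mathcal F}$ of the virtual space $\tilde F_{ij,12}$ attached to $K_{ij}(9)$ (the latter is a $\C P^1_A$, the former the full $\C P^1$, sitting on the same curve or on the exceptional divisor). Since the embedding $\hat I:\tilde{\mathcal F}\to\mathcal G$ is continuous and its restriction to each $\tilde F_{ij,12}$ was already computed in Lemma~\ref{devet}, the image of $\tilde F_{ij,12}(7)$ is simply the closure of that curve in $(\C P^1)^5$. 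This is why the listed formulas coincide verbatim with those of Lemma~\ref{devet}. In particular your limit computation for $K_{12}(7)$, while correct in spirit, is subsumed by item~(9) of Lemma~\ref{devet} together with continuity; no new analysis along the exceptional divisor is needed.
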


\begin{lem}
The  virtual spaces of parameters  $\tilde{F}_{ i, 12}\cong \{(c_1:c_1^{'}), (c_2:c_2^{'}), (c_3:c_3^{'})) | c_1c_2^{'}c_3=c_1^{'}c_2c_3^{'}, c_3, c_3^{'}\neq 0, c_3\neq c_3^{'}\}$ of the strata whose admissible polytopes are $O_{i}$ can be embedded into $\mathcal{G}$.
\end{lem}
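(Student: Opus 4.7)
The plan is to combine the cross-ratio embedding $I : F \to (\C P^{1})^{5}$ from Proposition~\ref{embedd} with the explicit embeddings of boundary strata into $\mathcal{G}$ established in the three preceding lemmas. By Remark~\ref{oct} together with Theorem~\ref{bound-univ} items (6), (7), (10), the virtual space $\tilde{F}_{i,12}$ decomposes as a union of $F_{12}$ and the virtual spaces $\tilde{F}_{jk,12}$ for those strata $W_{jk}$ whose admissible polytope $K_{jk}(9)$ has $O_{i}$ as a facet. Since $I$ already maps $F$ into $\mathcal{G}$ and the previous lemma places each relevant $\tilde{F}_{jk,12}$ inside $\mathcal{G}$, it suffices to verify that these pieces glue together continuously as subsets of $\mathcal{G}$ and that the result matches the description in the statement.

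I would first fix $i=3$ (the remaining cases follow by the $S_{5}$-action from Lemma~\ref{Sn-action}, which permutes the five factors of $(\C P^{1})^{5}$). The main-stratum parameters satisfy the first cubic $c_{1}c_{2}^{'}c_{3}=c_{1}^{'}c_{2}c_{3}^{'}$, and the remaining four equations of Proposition~\ref{image} determine the last two coordinates $(c_{4}:c_{4}^{'})$ and $(c_{5}:c_{5}^{'})$ as homogeneous rational functions of $((c_{1}:c_{1}^{'}),(c_{2}:c_{2}^{'}),(c_{3}:c_{3}^{'}))$. Extending these rational maps over the locus $c_{3},c_{3}^{'}\neq 0$, $c_{3}\neq c_{3}^{'}$ (which is exactly the locus in the statement) produces the candidate embedding into $\mathcal{G}$.

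Next I would check that the candidate agrees, on each boundary piece, with the explicit tuples recorded in Lemma~\ref{devet} for $\tilde{F}_{13,12}$, $\tilde{F}_{23,12}$, $\tilde{F}_{34,12}$ and $\tilde{F}_{35,12}$. For the ``generic'' degenerations $(c_{1}:c_{1}^{'})\to (0:1)$, $(1:0)$ or $(1:1)$ (and similarly for $(c_{2}:c_{2}^{'})$) this is direct substitution into the rational formulas derived from \eqref{jedanh}--\eqref{dvah}. At the mixed degenerations the Pl\"ucker relations of Example~\ref{Pl2} are needed, exactly as in the proof of the embeddings (9) and (10) of Lemma~\ref{devet}. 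Because each boundary embedding is already known to be compatible with the main-stratum embedding (this compatibility is part of the construction of $\tilde{\mathcal{F}}$ in Theorem~\ref{mainb}), the union is a single well-defined injection $\tilde{F}_{3,12}\hookrightarrow \mathcal{G}$.

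The main obstacle will be the careful handling of the indeterminacy of the rational maps at points where both the numerator and denominator of some cross-ratio vanish, that is, at $(c_{1}:c_{1}^{'})=(1:1)$ or $(c_{2}:c_{2}^{'})=(1:1)$. The resolution is to view each cross-ratio as the pair (numerator, denominator) in $\C P^{1}$ and to resolve the indeterminacy using the cubic $c_{1}c_{2}^{'}c_{3}=c_{1}^{'}c_{2}c_{3}^{'}$, mirroring the blowup argument in Lemma~\ref{blowlem} and the limit computations in Lemmas~\ref{prva}--\ref{prvi}; the resulting point in $(\C P^{1})^{5}$ coincides with the image prescribed by Lemma~\ref{devet}. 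Injectivity is then immediate because the first three factors already separate points of $F_{12}\cup \bar{F}_{13,12}\cup \bar{F}_{23,12}\cup \bar{F}_{34,12}\cup \bar{F}_{35,12}$, while the last two factors are pinned down by the defining equations of $\mathcal{G}$.
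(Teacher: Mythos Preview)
Your approach is essentially the same as the paper's: both decompose $\tilde{F}_{i,12}$ as $F\cup\bigl(\bigcup_{j\neq i}\tilde{F}_{ij,12}\bigr)$ and then invoke the cross-ratio embedding $I$ (formulas~\eqref{jedan}--\eqref{dva}) on the main-stratum piece together with Lemma~\ref{devet} on the boundary pieces. The paper's proof is a two-line version of yours, omitting the continuity, indeterminacy, and injectivity checks you sketch; one minor caveat is that the $S_5$-action does not literally permute the five $\C P^1$-factors (it also modifies each cross-ratio), so for the cases $i=1,2$ you would either argue directly or appeal to Theorem~\ref{bound-univ}\,(6)--(7) as the paper implicitly does.
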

\begin{proof}
The virtual space of parameters for $O_i$ is given by $\tilde{F}_{i, 12} =  F\cup (\cup _{j\neq i}\tilde{F}_{ij, 12})$. An  embedding of $\tilde{F}_{i}$ in $\mathcal{G}$ is given  by~\eqref{jedan},~\eqref{dva} and Lemma~\ref{devet}.
\end{proof}
In an  analogous way we verify an  existence of the embedding of  virtual spaces of parameters for all other strata. 
Since the virtual spaces of parameters $\tilde{F}_{ \sigma, 12}$ produce  a compactification of $F$ that coincides with  $\tilde {\mathcal{F}}$,     previous lemmas together with Proposition~\ref{embedd} and Proposition~\ref{image} imply:
\begin{thm}
The embedding $I : F\to (\C P^1)^{5}$ extends to an  embedding $\hat{I} : \tilde{\mathcal{F}} \to (\C P^{1})^{5}$, such that $\hat{I}(\tilde{\mathcal{F}}) \subset \mathcal{G}$.
\end{thm}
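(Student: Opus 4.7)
The embedding $I : F \to (\C P^1)^5$ has already been constructed in Proposition~\ref{embedd}, and Proposition~\ref{image} shows that $I(F) \subset \mathcal{G}$. The preceding four lemmas exhibit, one stratum at a time, explicit formulas that embed each virtual space of parameters $\tilde{F}_{\sigma, 12}$ into $\mathcal{G} \subset (\C P^1)^5$. The plan is to assemble these partial maps into a single well-defined, continuous, injective map $\hat{I} : \tilde{\mathcal{F}} \to \mathcal{G}$ and then invoke compactness.

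First, I will verify that the partial maps fit together to give a set-theoretic extension of $I$. By the description of $\tilde{\mathcal{F}}$ given in Theorems~\ref{universal} and~\ref{bl3} and by Theorem~\ref{bound-univ}, the space $\tilde{\mathcal{F}}$ is the disjoint union of $F$ together with the virtual spaces $\tilde{F}_{\sigma, 12}$ for the remaining admissible sets $\sigma$. Thus the formulas of the four lemmas, taken together with the original cross-ratio formula~\eqref{jedan}--\eqref{dva} on $F$, define a single map $\hat{I}$ on $\tilde{\mathcal{F}}$ with values in $\mathcal{G}$.

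Next, I will prove continuity. For a point $c_\infty$ lying in some boundary piece $\tilde{F}_{\sigma, 12}$, I take any sequence $c_n \in F$ converging to $c_\infty$ in $\tilde{\mathcal{F}}$, and compute the limit of $I(c_n)$ in $(\C P^1)^5$ coordinate by coordinate, using the Pl\"ucker relations listed in Example~\ref{Pl2} and the explicit parametrizations in Propositions~\ref{parametrization-9}--\ref{parametrization-5}. The computations mirror those already performed for the transition homeomorphisms in Lemmas~\ref{prva}--\ref{prvi}: in each of the five cross-ratio slots one divides top and bottom of a quotient of two Pl\"ucker coordinates by a common nonvanishing factor and passes to the limit. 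This shows that the limit agrees with the value prescribed in Lemmas for $\tilde{F}_{ij,12}$, $\tilde{F}_{(ij,kl),12}$, $\tilde{F}_{ij,12}(7)$ and $\tilde{F}_{i,12}$. Continuity at points where two or more boundary pieces meet, as well as along the exceptional divisor introduced by the blowup at $((1:1),(1:1),(1:1))$, is handled by the same computations, now allowing $c_n$ to approach $c_\infty$ through a stratum other than the main one; Proposition~\ref{tildef} is then the model for how the blowup coordinate encodes the direction of approach.

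Finally, I will establish injectivity by a case analysis matching Theorem~\ref{bound-univ} against the ten-tuple of formulas in Lemmas. On each individual $\tilde{F}_{\sigma, 12}$ injectivity is visible from the parametrization by $(c:c')$, and between two distinct pieces the images are distinguished by which of the five $(\C P^1)$-coordinates take values in the subset $\{(0:1),(1:0),(1:1)\} \subset \C P^1$ and which remain free. Because $\tilde{\mathcal{F}}$ is compact Hausdorff and $(\C P^1)^5$ is Hausdorff, a continuous injection is automatically a topological embedding, and the image lies in $\mathcal{G}$ by construction. The main obstacle is the bookkeeping in the injectivity step: one must check that no two of the listed boundary tuples accidentally coincide, particularly among the twelve polytopes $K_{ij,kl}$ and the ten pyramids $K_{ij}(7)$, and that the exceptional divisor $((1:1),(1:1),(1:1),(c:c'),(c:c'))$ coming from the blowup is disjoint from all other boundary pieces except at the predicted intersection points dictated by Corollary~\ref{bound-p6}.
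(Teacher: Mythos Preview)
Your approach is essentially the paper's approach, only spelled out in more detail: the paper's proof is a single sentence invoking the preceding lemmas together with Propositions~\ref{embedd} and~\ref{image}, while you explicitly separate out well-definedness, continuity, injectivity, and the compactness argument for the embedding.

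One point of caution: your statement that ``$\tilde{\mathcal{F}}$ is the disjoint union of $F$ together with the virtual spaces $\tilde{F}_{\sigma,12}$ for the remaining admissible sets $\sigma$'' is not correct as stated. The virtual spaces are subspaces of $\tilde{\mathcal{F}}$ that genuinely overlap: for instance Theorem~\ref{bound-univ} shows that $\tilde{F}_{ij,12}$ for $K_{ij}(9)$ is the $\C P^1_A$ sitting inside the $\C P^1$ that is $\tilde{F}_{ij,12}(7)$, and the formulas in Lemma~\ref{devet} and the lemma for $K_{ij}(7)$ confirm that the two embeddings agree on this overlap. So the correct framing is not ``disjoint pieces glued together'' but rather ``$F$ is dense in $\tilde{\mathcal{F}}$, the map $I$ extends continuously to the boundary strata by the computations in the lemmas, and by density the extension is automatically unique and consistent on overlaps.'' Once phrased this way, your injectivity case analysis becomes a check that distinct points of $\tilde{\mathcal{F}}$ (not distinct strata labels) have distinct images, which is exactly what the explicit five-tuple formulas in the four lemmas allow one to read off.
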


Since $\tilde{\mathcal{F}}$ and $\mathcal{G}$ are compact manifolds of the same dimension we deduce:

\begin{cor}
The universal space of parameters $\tilde{\mathcal{F}}$   is homeomorphic to the algebraic manifold $\mathcal{G}\subset (\C P^{1})^{5}$.
\end{cor}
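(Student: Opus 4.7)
My plan is to argue that the embedding $\hat{I} : \tilde{\mathcal{F}} \to \mathcal{G}$ provided by the previous theorem is actually a surjection, and then invoke the standard fact that a continuous bijection from a compact space to a Hausdorff space is a homeomorphism. The strategy will rely on a dimension count combined with an invariance-of-domain argument; this is why the hypothesis that both spaces are manifolds of the same dimension is essential.

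First, I would verify the topological preliminaries. The space $\tilde{\mathcal{F}}$ is compact: the cubic hypersurface $\mathcal{F}\subset \C P^1\times \C P^1\times \C P^1$ is closed in a compact space, and blowing up a compact complex manifold at a point produces a compact complex manifold. By Theorem~\ref{bl3} (or Corollary~\ref{CP2}), $\tilde{\mathcal{F}}$ has complex dimension $2$, hence real dimension $4$. On the other hand, $\mathcal{G}$ is an algebraic subset of $(\C P^1)^{5}$ cut out by four equations from Proposition~\ref{image}, so it is compact and Hausdorff. A direct check of the Jacobian of these defining equations shows that $\mathcal{G}$ is smooth of complex dimension $2$ as well, so it is a compact complex manifold of real dimension~$4$. (In practice, one only needs smoothness at the points of $\hat{I}(\tilde{\mathcal{F}})$, which could alternatively be deduced from the existence of the embedding $\hat{I}$ and the explicit formulas listed in the preceding lemmas.)

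Next, I would apply invariance of domain. The map $\hat{I}$ is a continuous injection between manifolds of equal dimension, so $\hat{I}(\tilde{\mathcal{F}})$ is open in $\mathcal{G}$. Since $\tilde{\mathcal{F}}$ is compact and $\mathcal{G}$ is Hausdorff, $\hat{I}(\tilde{\mathcal{F}})$ is also closed in $\mathcal{G}$. The remaining ingredient is connectedness of $\mathcal{G}$: this follows because $F$ already maps into $\hat{I}(\tilde{\mathcal{F}})$ as an irreducible open dense subvariety (it is a Zariski open subset of the irreducible surface $\mathcal{F}$), so its Zariski closure inside $\mathcal{G}$ is an irreducible $2$-dimensional component, which, being both open and closed, must exhaust the connected $2$-dimensional ambient manifold. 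Consequently $\hat{I}(\tilde{\mathcal{F}}) = \mathcal{G}$.

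Having established that $\hat{I}$ is a continuous bijection from the compact space $\tilde{\mathcal{F}}$ onto the Hausdorff space $\mathcal{G}$, the elementary topological lemma that such a map is automatically a homeomorphism completes the proof. The main obstacle I anticipate is the verification of smoothness and connectedness of $\mathcal{G}$: although morally clear, writing out that the four quadratic equations of Proposition~\ref{image} cut out a smooth irreducible surface in $(\C P^1)^{5}$ requires a (possibly chart-by-chart) Jacobian computation, and one must check that no spurious higher-dimensional or disconnected components appear away from $\hat{I}(\tilde{\mathcal{F}})$.
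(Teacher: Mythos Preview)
Your proposal is correct and is exactly the argument the paper has in mind: the paper's entire proof is the single sentence ``Since $\tilde{\mathcal{F}}$ and $\mathcal{G}$ are compact manifolds of the same dimension we deduce,'' which is precisely your invariance-of-domain plus compact-to-Hausdorff reasoning spelled out. You have, in fact, been more careful than the paper, which asserts but does not verify that $\mathcal{G}$ is a smooth connected manifold; your identification of this as the only non-obvious ingredient is apt.
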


\section{The orbit space $G_{5,2}/T^{5}$}

 \subsection{Summary}  We summarize results from previous sections about $F_{\sigma ,ij}$ and $\tilde{F}_{\sigma, ij}$ for all 
admissible polytopes $P_{\sigma}$. 
\begin{enumerate}
\item $P_{\sigma} =\Delta _{5,2}$ then $F_{\sigma } \cong \tilde{F}_{\sigma, ij} \cong  F$,
\item $P_{\sigma} = K_{pq}(9)$ then  $F_{\sigma }\cong \tilde{F}_{\sigma, ij} \cong \C P^{1}_{A}$,
\item $P_{\sigma } = O_{l}$ then   $F_{\sigma} \cong  \C P^{1}_{A}$ and $\tilde{F}_{\sigma, ij} \cong (\C P^1_{A}\times \C P^1\times \C P^{1})\cap\tilde{\mathcal{F}} $,
\item $P_{\sigma} = K_{kl, mn}$ then $ F_{\sigma} = \tilde{F}_{\sigma, ij}$ is a point,
\item $P_{\sigma}\neq \Delta _{5,2}, K_{pq}(9), O_{l}, K_{kl, mn}$ then $ F_{\sigma}$ is a point and $ \tilde{F}_{\sigma, ij}$ is not a point.
\end{enumerate}

\subsection{Description of the  orbit space $G_{5,2}/T^5$}

Let $\mathcal{P}$ be the formal union of all admissible polytopes:
\begin{equation}\label{P}
\mathcal{P} = \cup _{\sigma \in \mathcal{A}}P_{\sigma}.
\end{equation}
Denote by   $\tilde{p} : \mathcal{P} \to \Delta_{5,2}$ the canonical projection.
There is a canonical map $\tilde{\mu} : G_{5,2}/T^5 \to \mathcal{P}$ defined by
\begin{equation}\label{tildemu}
\tilde{\mu}(X) = x\in P_{\sigma}\;\; \text{if and only if}\;\; X\in W_{\sigma}/T^5\; \text{and}\; \tilde{p}(x) = \hat{\mu}(X).
\end{equation}
In other words $\tilde{\mu}$ is defined by
\[
\tilde{p}\circ \tilde{\mu} = \hat{\mu} .
\]
We assume  $\mathcal{P}$ to be equipped with the topology  induced by the map $\tilde{\mu}$: $U\subset \mathcal{P}$ is an open set if and only if $\tilde{\mu}^{-1}(U)$ is an open in $G_{5,2}/T^5$. Note that $\mathcal{P}$ is a compact space for this topology.

Let us consider the set  
\begin{equation}\label{spaceE}
\mathcal{E} = \cup _{\sigma\in \mathcal{A}}{\stackrel{\circ}P}_\sigma\times \widetilde{F}_{\sigma, ij}.
\end{equation}

The embeddings ${\stackrel{\circ}P}_{\sigma}\hookrightarrow \mathcal{P}$\, and\, $\widetilde{F}_{\sigma, ij}\hookrightarrow
\tilde{\mathcal{F}}$ define  a canonical   map $f: \mathcal{E}\to  \mathcal{P}\times \widehat{\mathcal{F}}$. It follows from  the definition of $\tilde{\mathcal{F}}$ and the description of  admissible polytopes that the   map $f$ is a surjection. We assume that $\mathcal{E}$ is equipped with the topology induced by  the map $f$,  that is $U\subset \mathcal{E}$ is an open set if and only if $f(U)$  is an open set in $\mathcal{P}\times \tilde{\mathcal{F}}$. The space $\mathcal{E}$ is a compact space for this topology. By Theorem~\ref{mainb}, considering all charts for $G_{5,2}$, we obtain  in this way     ten homeomorphic topological spaces .

Recall that  $G_{5,2}/T^5 = \cup _{\sigma \in \mathcal{A}}W_{\sigma}/T^{\sigma}$ and 
define a mapping  
\[ 
H : \mathcal{E} \to G_{5,2}/T^5, \;  \; H( {\stackrel{\circ}P}_{\sigma}\times
\widetilde{F}_{\sigma, ij})\subset  W_{\sigma}/T^{\sigma}
\]
by
\[
H(x,c) = h_{\sigma} ^{-1}(x, g_{\sigma, ij}(c)).
\]

Here  $h_{\sigma} : W_{\sigma}/T^{\sigma}\to {\stackrel{\circ}P}_{\sigma}\times F_{\sigma}$ is a canonical homeomorphism stated by Proposition~\ref{canon-trivial} and  $g_{\sigma , ij} : \widetilde{F}_{\sigma, ij}\to F_{\sigma}$ is a canonical projection stated by Corollary~\ref{canonic-proj}.

\begin{thm}\label{orbit-main}
The orbit space $G_{5,2}/T^5$ is a quotient space of the space $\mathcal{E}$ 
by an  equivalence relation defined by the map $H$.
\end{thm}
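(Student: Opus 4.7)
The plan is to verify that the map $H \colon \mathcal{E} \to G_{5,2}/T^5$ is a continuous surjection which is simultaneously a quotient map; once this is in hand, the equivalence relation induced by $H$ will exhibit $G_{5,2}/T^5$ as the quotient $\mathcal{E}/{\sim_H}$.

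First I would verify surjectivity, which is essentially bookkeeping: every point of $G_{5,2}/T^5$ lies in some orbit-space stratum $W_\sigma/T^5$, and Proposition~\ref{canon-trivial} gives the canonical homeomorphism $h_\sigma \colon W_\sigma/T^5 \to \stackrel{\circ}{P}_\sigma \times F_\sigma$, while Corollary~\ref{canonic-proj} gives the canonical projection $g_{\sigma,ij} \colon \tilde{F}_{\sigma,ij} \to F_\sigma$. Combining these, any $[L]\in W_\sigma/T^5$ is the $H$-image of some $(x,c)\in \stackrel{\circ}{P}_\sigma \times \tilde{F}_{\sigma,ij}\subset \mathcal{E}$.

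Second, and this is the main substantive step, I would prove continuity of $H$. The topology on $\mathcal{E}$ is by definition the one induced by the embedding $f \colon \mathcal{E}\hookrightarrow \mathcal{P}\times \tilde{\mathcal{F}}$, while the topology on $\mathcal{P}$ is the quotient topology induced by $\tilde{\mu}$. Thus a sequence $(x_n,c_n)\to (x_0,c_0)$ in $\mathcal{E}$ means $x_n\to x_0$ in $\mathcal{P}$ and $c_n\to c_0$ in $\tilde{\mathcal{F}}$. Using the $S_5$-equivariance of all constructions, one reduces to analysing the sequence in a fixed chart $M_{ij}$. Because each $\tilde{F}_{\sigma,ij}$ was \emph{defined} (Theorem~\ref{allcharts}) exactly as the set of limits $c_0$ in $\tilde{\mathcal{F}}$ of sequences $c_n\in F_{ij}$ for which $h^{-1}(x_n, f_{ij}(c_n))\to H(x_0,c_0)$, the required convergence $H(x_n,c_n)\to H(x_0,c_0)$ holds whenever each $(x_n,c_n)$ lies in the main stratum. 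The general case is obtained by a diagonal-sequence argument: approximate each $H(x_n,c_n)$ by main-stratum points using that $W$ is dense, invoke the defining property of $\tilde{F}_{\sigma_n,ij}$, and pass to a diagonal sequence; the compatibility of the transition homeomorphisms $\tilde{f}_{ij,kl}$ with the stratification (Theorem~\ref{mainb}) ensures that the limit does not depend on the chart used.

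Third, I would upgrade continuity to the quotient-map property. Since $\mathcal{E}$ is a closed subspace of the compact Hausdorff space $\mathcal{P}\times \tilde{\mathcal{F}}$, it is compact; since $G_{5,2}/T^5$ is the orbit space of a continuous torus action on a compact Hausdorff manifold, it is compact Hausdorff. Any continuous surjection from a compact space onto a Hausdorff space is closed, and closed continuous surjections are quotient maps; hence $G_{5,2}/T^5$ carries exactly the quotient topology induced by $H$, and the fibers of $H$ yield the desired equivalence relation.

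The principal obstacle is the continuity step: the strata $W_\sigma/T^5$ accumulate onto one another in a delicate way, and a naive sequence in $\mathcal{E}$ can jump between strata with quite different virtual parameter spaces. The whole point of introducing the blowup $\tilde{\mathcal{F}}$ at $((1{:}1),(1{:}1),(1{:}1))$, together with the identifications in Theorem~\ref{bound-univ}, was to resolve the indeterminacy of the transition maps $f_{ij,kl}$ at exactly those boundary configurations where distinct strata meet; the diagonal argument above is essentially the verification that this resolution is sufficient.
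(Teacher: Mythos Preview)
Your proposal is correct and follows essentially the same route as the paper's proof: establish that $H$ is a continuous surjection, then invoke compactness of $\mathcal{E}$ and the Hausdorff property of $G_{5,2}/T^5$ to conclude that $H$ is a quotient map. The only real difference is presentational. For continuity, the paper simply asserts that it ``follows from the construction of the space $\mathcal{E}$'' and then verifies this directly in three representative cases (sequences from the main stratum limiting to points in the strata over $K_{13}(9)$, $K_{13}(7)$, and $O_3$, working explicitly in the chart $M_{12}$), whereas you appeal to the defining property of $\tilde{F}_{\sigma,ij}$ in Theorem~\ref{allcharts} and sketch a diagonal-sequence argument to handle sequences that do not lie in the main stratum. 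Your version is somewhat more systematic, the paper's more concrete; both rest on the same mechanism.

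One small caution: you describe $f\colon \mathcal{E}\to \mathcal{P}\times\tilde{\mathcal{F}}$ as an \emph{embedding}, while the paper describes the topology on $\mathcal{E}$ by the condition ``$U$ open iff $f(U)$ open'' and states directly that $\mathcal{E}$ is compact. Since your argument only needs compactness of $\mathcal{E}$ (which the paper asserts) and not the precise nature of $f$, this discrepancy is harmless, but it would be cleaner to quote the paper's compactness statement rather than deduce it from a claim about $f$.
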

\begin{proof}

We prove that the map $H$ is continuous and that it is a surjection. Since the space $G_{5,2}$ is Hausdorff, it  will imply that the quotient space of $\mathcal{E}$ by the map $H$ and the orbit space $G_{5,2}/T^5$ are homeomorphic.  The fact that the map $H$  is continuous follows from the construction of  the space $\mathcal{E}$. 
We verify  it in a  couple of illustrative cases. 

Assume that we fixed the  $M_{12}$ and it is given a sequence $(x_n,c_n)\in \stackrel{\circ}{\Delta}_{5,2}\times \tilde{F}_{12}$, which converges to a point $(x_0, c_0)\in \stackrel{\circ}{K}_{13}\times \tilde{F}_{13, 12}$. Since
$c_n= ((c_{1n}:c_{1n}^{'}), (c_{2n}:c_{2n}^{'}), (c_{3n}:c_{3n}^{'}))$ and $c_0 = ((1:0), (1:0), (c:c^{'}))$, we obtain  that $c_{1n}^{'}, c_{2n}^{'}\to 0$.  Let $p_{n} = h^{-1}(x_n,c_n)\in W/T^5$, then $p_n$ writes in the chart $M_{12}$
as  $p_n= [(z_{1n},\ldots ,z_{6n})]$ and it holds $c_{1n}^{'}z_{1n}z_{5n} = c_{1n}z_{2n}z_{4n}$, $c_{2n}^{'}z_{1n}z_{6n} = c_{2n}z_{3n}z_{4n}$,  $c_{3n}^{'}z_{2n}z_{6n} = c_{3n}z_{3n}z_{4n}$. The conditions that $c_{1n}^{'}, c_{2n}^{'}\to 0$ and that $\hat{\mu}(p_n) = x_n\to x_0\in \stackrel{\circ}{K}_{13}$ implies that $z_{4n}\to 0$. 
Let us consider a sequence $q_n$ in $G_{5,2}/T^5$ which is given in the chart $M_{12}$ by $q_n = [(z_{1n}, z_{2n}, z_{3n}, 0, z_{5n}, z_{6n})]$. Note that apart from the Pl\"ucker coordinate $P^{13}$ all other Pl\"ucker coordinates for $p_n$ and $q_n$ coincide and $P^{13}(p_n) \to 0 = P^{13}(q_n)$. It implies that $\lim \hat{\mu}(q_n) = \lim \hat{\mu}(p_n) = \lim x_n = x_0$. The   sequence $q_n$ belongs to the stratum whose admissible polytope is  $K_{13}$ and $h_{13,12}(q_n) = (y_n, (c_{3n}:c_{3n}^{'}))$, where $y_n = \hat{\mu}(q_n)$. Since $h_{13,12} : W_{13}/T^5\to \stackrel{\circ}{K}_{13}\times \C P^{1}_{A}$ is a homeomorphism and $ (y_n, (c_{3n}:c_{3n}^{'}) )\to (x_0, (c:c^{'})) \in \stackrel{\circ}{K}_{13}\times \C P^{1}_{A}$, it implies that  $q_n\to p_0 = h_{13,12}^{-1}(x_0, (c:c^{'}))$. Then the sequence $p_n$ is convergent as well and
$p_{n} \to h_{13,12}^{-1}(x_0, f_{13, 12}(c_0)) = H(x_0, c_0)$. 

Let us consider now a  sequence $(x_n,c_n)\in \stackrel{\circ}{\Delta}_{5,2}\times \tilde{F}_{12}$  converging to a point $(x_0, c_0)\in \stackrel{\circ}{K}_{13}(7)\times \tilde{F}_{13, 12}(7)$. Then $c_0 = ((1:0), (1:0), (c:c^{'}))$ and, as previously, it holds $c_{1n}^{'}, c_{2n}^{'}\to 0$. Now, if we consider the sequence $p_n = [(z_{1n},\ldots ,z_{6n})] = h(x_n,c_n)$  in the chart $M_{12}$,  the condition that  $\hat{\mu}(p_n) = x_n\to x_0\in \stackrel{\circ}{K}_{13}(7)$ implies that $z_{2n}, z_{3n}\to 0$. The sequence $q_n = [(z_{1n}, 0, 0, z_{4n}, z_{5n}, z_{6n})]$ belongs to the stratum whose admissible polytope is $K_{13}(7)$ and $\lim \hat{\mu}(q_n) = \lim \hat{\mu}(p_n) = \lim x_n = x_0$. Since the space of parameters $F_{13}(7)$ is a point,  it follows  that $\tilde{\mu} : W_{13}(7)/T^5 \to \stackrel{\circ}{K}_{13}(7)$ is a homeomorphism. Therefore, as  $\tilde{\mu}(q_n) = y_n \to x_0 \in \stackrel{\circ}{K}_{13}(7)$ we conclude  that $\tilde{\mu}^{-1}(y_n)\to \tilde{\mu}^{-1}(x_0) = p_0$. It implies that  the sequence $p_n$ is convergent  and $p_n\to p_0 = H(x_0, c_0)$. 

Let us consider now a  sequence $(x_n,c_n)\in \stackrel{\circ}{\Delta}_{5,2}\times \tilde{F}_{12}$ converging to a point $(x_0, c_0)\in \stackrel{\circ}{O}_{3}\times \tilde{F}_{3, 12}$. Then $c_0 = ((c_1:c_1^{'}), (c_2:c_{2}^{'}), (c_{3}:c_{3}^{'}))$, where $(c_i:c_i^{'})\in \C P^1$, $i=1,2$, and $(c_3:c_3^{'})\in \C P^1_{A}$.  Taking $p_n = [(z_{1n},\ldots ,z_{6n})] = h(x_n,c_n)$  in the chart $M_{12}$,  the condition that  $\hat{\mu}(p_n) = x_n\to x_0\in \stackrel{\circ}{O}_{3}$ and that $O_3$ is a facet of $\Delta _{5,2}$  implies $z_{1n}, z_{4n}\to 0$. The sequence $q_n = [(0, z_{2n}, z_{3n}, 0, z_{5n}, z_{6n})]$ belongs to the stratum whose admissible polytope is $O_3$ and $\lim \hat{\mu}(q_n) = \lim \hat{\mu}(p_n) = \lim x_n = x_0$. Further,  $h_{3,12}(q_n) = (y_n, (c_{3n}:c_{3n}^{'}))$, where $y_n = \hat{\mu}(q_n)$. Since $(y_n, (c_{3n}:c_{3n}^{'})) \to (x_0, (c_3:c_3^{'}))$,  we conclude that $q_{n}\to h_{3,12}^{-1}(x_0, (c_3:c_3^{'})) = p_{0}$. Therefore, $p_{n}\to p_{0} = h_{3,12}^{-1}(x_0, f_{3,12}(c_0)) = H(x_0, c_0)$.

Thus,  $H$ is continuous. Since $G_{5,2}/T^5 =\cup_{\sigma\in \mathcal{A}} W_{\sigma}/T^5$ and $H(\stackrel{\circ}{P}_{\sigma}\times \tilde{F}_{\sigma, ij}) = W_{\sigma}/T^5$, the map $H$ is onto, which  implies that it induces a homeomorphism between the quotient space of $\mathcal{E}$ defined by $H$ and the orbit space $G_{5,2}/T^5$. 
\end{proof}

 \begin{cor}
The equivalence relation on $\mathcal{E}$ defined by the map $H$  is given by  the equivalence relation on each $\stackrel{\circ}{P}_{\sigma}\times \tilde{F}_{\sigma , ij}$:
\[
(x_1,c_1)\backsimeq (x_2,c_2) \;\; \text{if and only if}\;\; x_1=x_2, \; g_{\sigma, ij}(c_1) = g_{\sigma, ij}(c_2).
\]
\end{cor}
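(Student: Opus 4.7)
The plan is to unpack the definition of $H$ and use the fact that $\mathcal{E}$ is a set-theoretic disjoint union of the pieces $\stackrel{\circ}{P}_{\sigma}\times \widetilde{F}_{\sigma,ij}$, indexed by $\sigma \in \mathcal{A}$. First I would verify that no identifications occur between different pieces. By construction, $H$ sends $\stackrel{\circ}{P}_{\sigma}\times \widetilde{F}_{\sigma,ij}$ into $W_{\sigma}/T^{5}$, and the strata $W_{\sigma}$ are pairwise disjoint in $G_{5,2}$ (Lemma in Subsection on admissible polytopes), so the orbit-space pieces $W_{\sigma}/T^{5}$ are pairwise disjoint in $G_{5,2}/T^{5}$. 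Hence if $(x_1,c_1)\in \stackrel{\circ}{P}_{\sigma_1}\times \widetilde{F}_{\sigma_1,ij}$ and $(x_2,c_2)\in \stackrel{\circ}{P}_{\sigma_2}\times \widetilde{F}_{\sigma_2,ij}$ satisfy $H(x_1,c_1)=H(x_2,c_2)$, then necessarily $\sigma_1=\sigma_2$.

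Next, within a single piece $\stackrel{\circ}{P}_{\sigma}\times \widetilde{F}_{\sigma,ij}$, I would use the explicit formula
\[
H(x,c)=h_{\sigma}^{-1}\bigl(x,\, g_{\sigma,ij}(c)\bigr),
\]
together with the fact that $h_{\sigma}\colon W_{\sigma}/T^{5}\to \stackrel{\circ}{P}_{\sigma}\times F_{\sigma}$ is a canonical homeomorphism (Proposition~\ref{canon-trivial}). Injectivity of $h_{\sigma}^{-1}$ immediately yields
\[
H(x_1,c_1)=H(x_2,c_2) \iff x_1=x_2 \text{ and } g_{\sigma,ij}(c_1)=g_{\sigma,ij}(c_2),
\]
which is exactly the stated description of the equivalence relation.

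Combining the two observations gives the corollary: the equivalence relation on $\mathcal{E}$ induced by $H$ preserves the stratum index $\sigma$, and on each stratum it identifies $(x_1,c_1)$ with $(x_2,c_2)$ precisely when $x_1=x_2$ and $g_{\sigma,ij}(c_1)=g_{\sigma,ij}(c_2)$. There is essentially no obstacle here, since the statement is a direct consequence of the definition of $H$, the disjointness of the strata, and the bijectivity of $h_{\sigma}$; the content of Theorem~\ref{orbit-main} (that this quotient map is in fact a \emph{homeomorphism}) is what required work. The only minor point to be attentive to is that although the interiors $\stackrel{\circ}{P}_{\sigma}$ may overlap inside $\Delta_{5,2}$, they are kept formally separate in $\mathcal{E}\subset \mathcal{P}\times \tilde{\mathcal{F}}$ via the formal union $\mathcal{P}$ in~\eqref{P}, so the index $\sigma$ is well defined on each point of $\mathcal{E}$ and the argument above is unambiguous.
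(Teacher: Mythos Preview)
Your proof is correct and is exactly the natural argument the paper leaves implicit: the corollary is stated without proof in the paper, as it follows immediately from the definition of $H$, the disjointness of the strata $W_{\sigma}$, and the bijectivity of $h_{\sigma}$. Your observation about $\mathcal{P}$ keeping the polytopes formally separate is the only subtle point, and you handled it correctly.
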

Note that  the equivalence  relation $\backsimeq$ is trivial for $P_{\sigma} = \Delta _{5,2}, K_{ij}(9), K_{ij,kl}$ since $\tilde{F}_{\sigma, ij} \cong F_{\sigma }$. This relation is non-trivial for the  octahedra $O_{i}$,  pyramids $K_{ij}(7)$ and prisms $P_{ij}$.  

\begin{cor}\label{embedG}
The orbit space $G_{5.2}/T^5$ is homeomorphic to  
$\mathcal{E}/\approx$ , where 
\begin{equation}\label{final}
(x_{\sigma _1}, c_{\sigma _1, ij})\backsimeq (x_{\sigma _2}, c_{\sigma _2, ij})\;\; \text{ if and only if}\;\; \sigma _1=\sigma _2, \;\; x_{\sigma _1} = x_{\sigma _2}\;\;  \text{and}\;\; g_{\sigma _1, ij}(c_{\sigma _1, ij}) = g_{\sigma _2, ij}(c_{\sigma _2, ij}).
\end{equation} 
\end{cor}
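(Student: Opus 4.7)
The plan is to deduce this corollary directly from Theorem~\ref{orbit-main} by simply unpacking the equivalence relation defined by the map $H \colon \mathcal{E} \to G_{5,2}/T^5$. By Theorem~\ref{orbit-main}, there is a homeomorphism $\mathcal{E}/\!\!\sim_{H}\, \to G_{5,2}/T^5$, where $\sim_{H}$ is the fiber relation $(x_1,c_1)\sim_{H}(x_2,c_2) \Longleftrightarrow H(x_1,c_1)=H(x_2,c_2)$. It therefore suffices to verify that this fiber relation coincides with the explicit relation $\approx$ given by~\eqref{final}, i.e.\ that $\sigma_1=\sigma_2$, $x_{\sigma_1}=x_{\sigma_2}$ and $g_{\sigma_1,ij}(c_{\sigma_1,ij})=g_{\sigma_2,ij}(c_{\sigma_2,ij})$.

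The first step is the disjointness observation. Since $G_{5,2}/T^5=\bigsqcup_{\sigma\in \mathcal{A}} W_\sigma/T^5$ is a disjoint union and, by construction, $H\bigl(\stackrel{\circ}{P}_\sigma\times \tilde{F}_{\sigma,ij}\bigr)\subset W_\sigma/T^5$, any identification $H(x_{\sigma_1},c_{\sigma_1,ij})=H(x_{\sigma_2},c_{\sigma_2,ij})$ forces both points to land in the same stratum, hence $\sigma_1=\sigma_2=:\sigma$. This reduces the question to the behavior of $H$ on a single block $\stackrel{\circ}{P}_\sigma\times \tilde{F}_{\sigma,ij}$.

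The second step is to analyze $H$ restricted to one block, where it is given by the formula
\[
H(x,c)=h_\sigma^{-1}\bigl(x,\,g_{\sigma,ij}(c)\bigr).
\]
By Proposition~\ref{canon-trivial}, $h_\sigma \colon W_\sigma/T^5\to \stackrel{\circ}{P}_\sigma\times F_\sigma$ is a (canonical) homeomorphism, so $h_\sigma^{-1}$ is injective. Consequently $H(x_1,c_1)=H(x_2,c_2)$ within a single block is equivalent to the equality of pairs $(x_1,g_{\sigma,ij}(c_1))=(x_2,g_{\sigma,ij}(c_2))$ in $\stackrel{\circ}{P}_\sigma\times F_\sigma$, which in turn is the conjunction $x_1=x_2$ and $g_{\sigma,ij}(c_1)=g_{\sigma,ij}(c_2)$. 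Combining this with the first step yields precisely the relation~\eqref{final}, so $\sim_H\,=\,\approx$ and the induced continuous surjection $\mathcal{E}/\!\approx\,\to G_{5,2}/T^5$ from Theorem~\ref{orbit-main} is a homeomorphism.

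Since both required ingredients — the disjointness of the stratification and the injectivity of the canonical trivializations $h_\sigma$ — are already established in the paper, there is no substantive obstacle here: the corollary is essentially a reformulation of Theorem~\ref{orbit-main} in which the map $H$ is replaced by the explicit description of its fibers. The only point requiring minor care is to note that the fibers of $g_{\sigma,ij}$ are nontrivial precisely for those admissible polytopes (namely $O_i$, $K_{ij}(7)$ and $P_{ij}$) for which $\tilde{F}_{\sigma,ij}\neq F_\sigma$, so the relation $\approx$ is genuinely nondiscrete only along those blocks, in agreement with the summary preceding the corollary.
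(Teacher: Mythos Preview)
Your proposal is correct and follows exactly the intended route: the paper states Corollary~\ref{embedG} without proof because it is an immediate restatement of Theorem~\ref{orbit-main} together with the preceding corollary describing the fibers of $H$, and your argument simply supplies the obvious details (disjointness of the strata forces $\sigma_1=\sigma_2$, and injectivity of the canonical trivialization $h_\sigma$ reduces the fiber condition to $x_1=x_2$ and $g_{\sigma,ij}(c_1)=g_{\sigma,ij}(c_2)$).
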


\begin{rem}
The formula~\eqref{final} is explicit. Namely, while $F_{\sigma}$ are  abstract spaces of   parameters of the strata, the spaces  $\tilde{F}_{\sigma, ij}$ are explicit  subspaces of $\tilde{\mathcal{F}}$ which makes this formula applicable. Theorem~\ref{bound-univ},  Proposition~\ref{parametrization-9}, Proposition~\ref{parametrization-8} and Proposition~\ref{parametrization-7}
provide the description of  $\tilde{F}_{\sigma , 12}$ when  $P_{\sigma}$ is an  admissible polytope which belongs to the interior of $\Delta _{n,k}$.
\end{rem}

Let us consider now the projection $\mathcal{P}\times \tilde{\mathcal{F}}\to \Delta _{5,2}\times \tilde{\mathcal{F}}$.  Define an equivalence relation on  $\Delta _{5,2}\times \tilde{\mathcal{F}}$ by
\begin{equation}\label{relP}
(x, c_{1,ij})\backsimeq (x, c_{2,ij})\; \text{ if and only if }\; c_{1,ij},c_{2,ij}\in \tilde{F}_{\sigma , ij}, \; x\in \stackrel{\circ}{P_{\sigma}} \; \text{and}\;  g_{\sigma , ij}(c_{1,ij}) = g_{\sigma ,  ij}(c_{2, ij}).
\end{equation}
It follows from~\eqref{final} and~\eqref{relP}  that there is a  canonical projection from  the space  $(\mathcal{P}\times \tilde{\mathcal{F}})/\backsimeq$ to the space $(\Delta_{5,2}\times \tilde{\mathcal{F}})/\backsimeq$. Corollary~\ref{embedG} implies
\begin{cor}
There is a canonical  continuous map  $G_{5,2}/T^5\to (\Delta _{5,2}\times \tilde{\mathcal{F}})/\backsimeq$, where $\backsimeq$ is an equivalence relation defined by~\eqref{relP}.
\end{cor}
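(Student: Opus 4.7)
The plan is to exploit the identification $G_{5,2}/T^5\cong \mathcal{E}/\approx$ established in Corollary~\ref{embedG} and factor the desired map through $\mathcal{E}$. Since $\mathcal{E}=\cup_\sigma \stackrel{\circ}{P}_\sigma \times \tilde{F}_{\sigma, ij}$ was endowed with the topology induced by the embedding $f\colon \mathcal{E}\to \mathcal{P}\times \tilde{\mathcal{F}}$, and there is the canonical projection $\tilde{p}\colon \mathcal{P}\to \Delta_{5,2}$, the natural candidate is the composition
\[
\tilde{\Phi}=(\tilde{p}\times \mathrm{id}_{\tilde{\mathcal{F}}})\circ f\colon \mathcal{E}\longrightarrow \Delta_{5,2}\times \tilde{\mathcal{F}}.
\]
Continuity of $f$ holds by definition of the topology on $\mathcal{E}$, so only the continuity of $\tilde{p}$ needs justification. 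For any open $V\subset \Delta_{5,2}$, the factorization $\hat{\mu}=\tilde{p}\circ \tilde{\mu}$ from~\eqref{tildemu} gives $\tilde{\mu}^{-1}(\tilde{p}^{-1}(V))=\hat{\mu}^{-1}(V)$, which is open in $G_{5,2}/T^5$ since the moment map is continuous; by the defining property of the topology on $\mathcal{P}$, the set $\tilde{p}^{-1}(V)$ is therefore open, so $\tilde{p}$ is continuous and hence so is $\tilde{\Phi}$.

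Next, I would verify that $\tilde{\Phi}$ respects the two equivalence relations. If $(x_1,c_1),(x_2,c_2)\in \mathcal{E}$ satisfy $(x_1,c_1)\approx (x_2,c_2)$, then by~\eqref{final} they lie in the same component $\stackrel{\circ}{P}_\sigma \times \tilde{F}_{\sigma, ij}$, with $x_1=x_2$ and $g_{\sigma, ij}(c_1)=g_{\sigma, ij}(c_2)$. Their images $\tilde{\Phi}(x_i,c_i)=(x_i, c_i)\in \Delta_{5,2}\times \tilde{\mathcal{F}}$ fulfil precisely the hypotheses of~\eqref{relP}, hence they are $\backsimeq$-equivalent. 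Consequently $\tilde{\Phi}$ descends to a continuous map $\Phi\colon \mathcal{E}/\approx\to (\Delta_{5,2}\times \tilde{\mathcal{F}})/\backsimeq$, and precomposing with the homeomorphism $G_{5,2}/T^5\to \mathcal{E}/\approx$ of Corollary~\ref{embedG} yields the desired continuous map.

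The only conceptual point, rather than a genuine obstacle, concerns canonicity with respect to the choice of chart $M_{ij}$ used to realize the virtual spaces $\tilde{F}_{\sigma, ij}$. Theorem~\ref{mainb} gives homeomorphisms $\tilde{f}_{ij,kl}\colon \tilde{\mathcal{F}}_{ij}\to \tilde{\mathcal{F}}_{kl}$ carrying $\tilde{F}_{\sigma, ij}$ to $\tilde{F}_{\sigma, kl}$ for every admissible $\sigma$; Proposition~\ref{any} together with the commutative diagram of Lemma~\ref{com} shows that these transition homeomorphisms intertwine the projections $g_{\sigma, ij}$ and $g_{\sigma, kl}$ onto the abstract space of parameters $F_\sigma$. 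Therefore the induced map on quotients does not depend on the chart, which is the sense in which the constructed map is canonical.
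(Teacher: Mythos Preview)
Your argument is correct and follows essentially the same route as the paper: compose the embedding $f\colon \mathcal{E}\to \mathcal{P}\times\tilde{\mathcal{F}}$ with the projection $\tilde{p}\times\mathrm{id}$ to $\Delta_{5,2}\times\tilde{\mathcal{F}}$, observe that the relation~\eqref{final} on $\mathcal{E}$ maps into the relation~\eqref{relP}, and invoke Corollary~\ref{embedG}. The paper states this in one sentence, while you supply the details (continuity of $\tilde{p}$, the explicit compatibility check, and the chart-independence discussion).
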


\begin{ex}\label{42}
We demonstrate the application of the formula~\eqref{final} in the case of  Grassmann manifold $G_{4,2}$ whose orbit space is described in~\cite{MMJ}. In this case, as it follows from~\cite{MMJ},  the admissible polytopes are the octahedron $\Delta _{4,2}$, the six pyramids $P_{i}$, $1\leq i\leq 6$, the three rectangles $R_{12}$, $R_{34}$, $R_{56}$ and the faces of $\Delta _{4,2}$. We assume the  numeration to be such that $P_1\cap P_{2} = R_{12}$, $P_{3}\cap P_{4} = R_{34}$ and $P_{5}\cap P_{6} = R_{56}$. It is proved in~\cite{MMJ} that $\tilde{F}_{12}=  F_{12}= \C \setminus \{0,1\}$. Denote by $\tilde{F}_{12}(P_{\sigma})$ the virtual space of parameters of an admissible polytope $P_{\sigma}$ in the chart $M_{12}$. It follows from~\cite{MMJ} that   $\tilde{F}_{12}(P_{1}) = \tilde{F}_{12}(P_{2})=0$, $\tilde{F}_{12}(P_{3}) = \tilde{F}_{12}(P_{4}) = 1$, $\tilde{F}_{12}(P_{5}) = \tilde{F}_{12}(P_{6}) =\infty$, then  $\tilde{F}_{12}(R_{12}) =0$, $\tilde{F}_{12}(R_{34}) =1$ and $\tilde{F}_{12}(R_{56})=\infty$, while $\tilde{F}_{12}(P_{I}) =\C P^1$ for the faces of $\Delta _{4,2}$. Then formula~\eqref{final} gives
\[
G_{4,2}/T^4 \cong (\cup_{I} \stackrel{\circ}{P_{I}} \times \tilde{F}_{ij}(P_{I}))/\backsimeq,
\]
where $(x_{I}, c_{I}) \backsimeq(x_{J}, c_{J})$ if and only if $I=J$ and $x_{I} =x_{J}\in \partial\Delta _{4,2}$. This can be further written as
\[
G_{4,2}.T^4\cong (\Delta _{4,2}\times \C P^1)/\backsimeq,
\]
where $(x,c)\backsimeq (x^{'}, c^{'})$ if and only if  $x=x^{'}\in \partial \Delta _{4,2}$, which is exactly the formula obtained in~\cite{MMJ}.
\end{ex}

\begin{rem}
Let us consider in $\C ^{5}$  coordinates subspaces of the dimensions $k=2,3,4$ and the corresponding embeddings of $G_{k, 2}$ in $G_{5,2}$. Since an embedding of a coordinate subspace is equivariant  for the coordinate vise action of the torus $T^5$ we obtain an equivariant embedding of $G_{k,2}$ in $G_{5,2}$. In this way the embeddings of the corresponding orbit spaces are defined as well. For $k=4$ we obtain the  embeddings of the five $5$-dimensional spheres in $G_{5,2}/T^5$. Moreover, we know how these spheres are glued together in $G_{5,2}/T^2$.  Any pair of these $4$-dimensional coordinate subspaces intersect each other  in $\C ^5$ along a coordinate subspace $\C ^{3}$. A  coordinate subspace $\C ^{3}$  produce in $G_{5,2}$ the complex projective space $\C P^{2} = G_{3,2}$ whose orbit space, by the torus action, is the $2$-dimensional simplex. In this way, using the cell decomposition of $G_{4,2}/T^4$ described in~\cite{MMJ}, we obtain that the orbit spaces $G_{4,2}/T^4$ are glued in $G_{5,2}/T^2$ along the  $2$-dimensional simplices.
\end{rem}

\section{The homology groups of $G_{5,2}/T^5$}
We compute the homology groups of the orbit space $G_{5,2}/T^5$ appealing  to Theorem~\ref{orbit-main} and  results on the description of the orbit spaces of the strata, their admissible polytopes, spaces of parameters and virtual spaces of parameters. 

We first compute the top degree homology group for $G_{n,2}/T^n$ for $n\geq 4$.
Let $V_2 = \tilde{\mu}^{-1}(\cup P_{\sigma})$, where $P_{\sigma}$ runs through all  admissible polytopes different from  $\Delta _{5,2}$.  Then $V_2$ is  a closed subset in $G_{n,2}/T^n$ and $(G_{5,2}/T^n)\setminus V_2 = W/T^n$ is  the orbit space of the main stratum. Since $W/T^n$ is a dense set in $G_{5,2}/T^5$, it follows that $(G_{n,2}/T^n)/V_2$ is the Alexandrov  one-point compactification of $W/T^n$.  Recall that $W/T^n\cong \stackrel{\circ}{\Delta}_{n,2}\times F$   and denote its  one-point compactification by  $(\stackrel{\circ}{\Delta}_{n,2}\times  F)^{\star}$. Note that $F$ is homeomorphic to $\C P^{1}_{A}\times \C P^{1}_{A}$, which  implies that  $\Delta _{5,2}\times (\C P^1\times \C P^1)$ is a  compactification of  $\stackrel{\circ}{\Delta}_{n,2}\times  F$. 
 Proposition~\ref{em} implies that    
\[(\stackrel{\circ}{\Delta}_{n,2}\times F_{n})^{\star}  \cong  (\Delta _{5,2}\times (\C P^{1})^{n-3})/((\partial \Delta _{n,2}\times (\C P^{1})^{n-3})\cup (\Delta _{5,2}\times G_{n})).
\]
 Using this we prove:
\begin{thm}
The top degree homology group $H_{3n-7}(G_{n,2}/T^n)$ is isomorphic to $\Z$.
\end{thm}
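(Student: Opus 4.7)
The plan is to exploit the density of the main-stratum orbit space and reduce the computation to Poincaré–Lefschetz duality on the compact model from the preceding paragraph.

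\textbf{Step 1 (set-up).} By Theorem~\ref{triv} and Proposition~\ref{em}, the open dense subset $U:=W/T^n\subset G_{n,2}/T^n$ is homeomorphic to $\stackrel{\circ}{\Delta}_{n,2}\times F$, which is a connected oriented topological manifold of real dimension $(n-1)+2(n-3)=3n-7$. Its closed complement is $V_2$. First I would verify the crucial dimension estimate $\dim V_2\le 3n-9$ by going through the case list: (a) if $P_\sigma\subset\partial\Delta_{n,2}$, then the stratum sits in a coordinate subgrassmannian $G_{n-1,k'}$ and the combined dimension $\dim P_\sigma+\dim_\R F_\sigma$ falls to at most $3n-10$; (b) if $P_\sigma$ is a full-dimensional admissible polytope strictly inside $\Delta_{n,2}$, then some $P^{ij}=0$ on $W_\sigma$, and the Plücker relation~\eqref{Plrel} kills one complex dimension of parameters, so $\dim_\C F_\sigma\le n-4$ and $\dim W_\sigma/T^n\le(n-1)+2(n-4)=3n-9$. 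All remaining cases are even smaller by one of the two mechanisms.

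\textbf{Step 2 (long exact sequence).} The action is smooth and, by Illman's equivariant triangulation theorem, $G_{n,2}/T^n$ carries a CW structure with $V_2$ a subcomplex of dimension $\le 3n-9$. Hence $H_k(V_2)=0$ for $k\ge 3n-8$, and the long exact sequence of the good CW pair $(G_{n,2}/T^n,V_2)$ collapses to
\[
H_{3n-7}(G_{n,2}/T^n)\;\cong\;H_{3n-7}(G_{n,2}/T^n,V_2)\;\cong\;\tilde{H}_{3n-7}\bigl((G_{n,2}/T^n)/V_2\bigr).
\]
Density of $U$ in $G_{n,2}/T^n$ identifies $(G_{n,2}/T^n)/V_2$ with the Alexandrov one-point compactification $U^\star$, which by the identification just preceding the statement is
\[
U^\star\;\cong\;\bigl(\Delta_{n,2}\times(\C P^1)^{n-3}\bigr)\big/\bigl((\partial\Delta_{n,2}\times(\C P^1)^{n-3})\cup(\Delta_{n,2}\times G_n)\bigr).
\]

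\textbf{Step 3 (duality).} Writing $X=\Delta_{n,2}\times(\C P^1)^{n-3}$ and $Y$ for the collapsed subset, $X$ is a compact oriented topological $(3n-7)$-manifold with boundary, $Y\supset\partial X$, and $X\setminus Y=U$ is the connected oriented open $(3n-7)$-manifold from Step 1. Therefore, by Poincaré–Lefschetz duality for non-compact oriented manifolds,
\[
\tilde{H}_{3n-7}(X/Y)\;\cong\;H_{3n-7}(X,Y)\;\cong\;H_{3n-7}^{\mathrm{BM}}(U)\;\cong\;H^{0}(U)\;\cong\;\Z,
\]
the last isomorphism because $U$ is connected. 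Combined with Step 2 this gives $H_{3n-7}(G_{n,2}/T^n)\cong\Z$.

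\textbf{Main obstacle.} The essential technical point is Step~1: proving $\dim V_2\le 3n-9$ uniformly in $n$, i.e.\ codimension $\ge 2$ of the ``singular'' part of the orbit space. For $n=5$ this is visible from the explicit stratification table, but for general $n$ one must argue from the Plücker quadrics~\eqref{Plrel} that the vanishing of any single Plücker coordinate forces a complex-one-dimensional drop in $F_\sigma$. Once this codimension-two statement is in hand, both flanking terms in the long exact sequence vanish and the duality computation is standard.
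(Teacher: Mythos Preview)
Your proof is correct. Steps 1 and 2 follow the paper's own argument almost verbatim: the paper obtains the bound $\dim V_2\le 3n-9$ simply by invoking Proposition~\ref{enm} (which gives $\dim_{\R}F_\sigma\le 2(n-4)$ for every non-main stratum), so the ``main obstacle'' you flag is already disposed of there and your case analysis in Step~1 could be replaced by that citation. The passage to the one-point compactification $U^\star\cong X/Y$ via the long exact sequence of the pair $(G_{n,2}/T^n,V_2)$ is identical to the paper's.

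Where you diverge is Step~3. The paper computes $\tilde H_{3n-7}(X/Y)$ through the long exact sequence of the pair $(X,Y)$: since $H_i(X)=0$ for $i>2n-6$, one needs only $H_{3n-8}(Y)\cong\Z$, and this is read off from the fact that the top-dimensional piece of $Y$ is the closed oriented manifold $\partial\Delta_{n,2}\times(\C P^1)^{n-3}\cong S^{n-2}\times(\C P^1)^{n-3}$, while $\Delta_{n,2}\times G_n$ has dimension only $3n-9$. Your route via Borel--Moore homology and Poincar\'e--Lefschetz duality, $H_{3n-7}^{\mathrm{BM}}(U)\cong H^0(U)\cong\Z$, bypasses that computation entirely: it uses only that $U=\stackrel{\circ}{\Delta}_{n,2}\times F$ is a connected oriented open $(3n-7)$-manifold. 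Both arguments are short, but yours explains \emph{why} the top group is $\Z$ (it is the fundamental class of the open main-stratum orbit space) without any inspection of the boundary divisor $G_n$.
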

\begin{proof}
The dimension of the space of  parameters $F_{\sigma}$ for any $P_{\sigma}\neq \Delta _{5,2}$ is by  Proposition~\ref{enm}  less then or equal $2n-8$. It implies that the dimension of $V_2$ is   $n-1+2n-8 = 3n-9$, which  further gives that $H_{ 3n-7}(V_2) = H_{3n-8}(V_2)=0$. Then the exact homology sequence of the pair $(G_{n,2}/T^n, V_2)$  implies that $H_{3n-7}(G_{n,2}/T^n) = H_{3n-7}((G_{n,2}/T^n)/V_2) \cong H_{3n-7}((\stackrel{\circ}{\Delta} _{n,2}\times F)^{\star})$. Let us  consider  a  homology sequence of the pair $(X, Y)$ where  $X=\Delta _{5,2}\times (\C P^{1})^{n-3}$ and $Y=(\partial \Delta _{n,2}\times (\C P^{1})^{n-3})\cup (\Delta _{5,2}\times G_{n})$. It holds  that $H_{i}(X) = 0$ for $i> 2n-6$, which  implies that $H_{3n-7}(X) =H_{3n-8}(X) =0$.  It follows from Proposition~\ref{em}  that the dimension of $Y$ is equal to $3n-8$ and  $H_{3n-8}(Y)=\Z $. Then a  homology sequence gives that $H_{3n-7}(X/Y)\cong \Z$,
which  proves the statement.
\end{proof}

 We proceed with the computation of the other homology groups for the case $n=5$, that is for the space $G_{5,2}/T^5$.
For that purpose we will consider a  filtration $V_1\subset V_2\subset V_{3}=G_{5,2}/T^5$
and compute the corresponding relative and absolute homology group. In the course of doing this  we describe the cell decomposition of $G_{5,2}/T^5$.
\subsection{$V_1$ and its homology groups}
Let $V_{1} =  \hat{\mu}^{-1}(\partial \Delta _{5,2}) =  \cup \tilde{\mu}^{-1}(P_{\sigma}) $, where the union goes over all $P_{\sigma}\in \mathcal{P}$ such that $P_{\sigma}\subset \partial \Delta _{5,2}$.  In other words,  $V_1$  is the  union of the orbit spaces of the strata whose admissible polytopes belong to $\partial \Delta _{5,2}$.   The boundary $\partial \Delta _{5,2}\cong S^3 = \cup _{i=1}^{5}(O_i\cup T_i)$,  so $T_{i}$ and $O_{i}$, $1\leq i\leq 5$ provide   the combinatorial decomposition of $S^3$.  Thus,  $V_1 \cong \cup _{i=1}^{5}\hat{\mu}^{-1}(T_{i}) \cup \cup _{i=1}^{5}\hat{\mu} ^{-1}(O_{i})$ and we recall that $\hat{\mu} ^{-1}(T_{i})\cong T_i$ and $\hat{\mu} ^{-1}(O_i)\cong S^5 \cong (O_{i}\times \C P^{1})/\backsimeq$, where $\partial O_{i}\times \C P^1 \backsimeq \partial O_{i}$.  Therefore,  there is  the  continuous map $\hat{\mu} : V_1\to S^3$, but also the section $S^3\to V_1$. On $T_i$ this section is given by $\hat{\mu}^{-1}$, while on  $O_i$ it  is given by  the composition of $\hat{\mu}^{-1}$ with  the projection  on a fixed  parameter  $\ast\in \C P^1$. In this way we obtain
\begin{equation}\label{V1p}
V_1 = S^3 \cup  \cup _{i=1}^{5}(O_i\times (\C P^{1}\setminus \{\ast\}))/\backsimeq, \;\; \partial O_i\times (\C P^{1}\setminus \{\ast\})\backsimeq \partial O_{i}.
\end{equation}
Note that $(O_i\times (\C P^{1}\setminus \{\ast\}))/\backsimeq$ is homeomorphic to  $ S^{5}\setminus {D}^{3}$, where $\backsimeq$ is a relation defined as in~\eqref{V1p} and  $\partial \stackrel{\circ}{D^{3}} = \partial O_i\subset S^3$.
It implies that  
\[V_1/S^3\cong \vee  _{5}S^5.
\]

\begin{lem}
The nontrivial homology groups for the space $V_1$ are $H_{5}(V_1) =\Z ^{5}$ and  $H_{3}(V_1) = H_{0}(V_1) = \Z$.
\end{lem}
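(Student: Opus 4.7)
The plan is to exploit the cofiber structure $S^3 \hookrightarrow V_1 \twoheadrightarrow V_1/S^3$ already identified in the excerpt, namely the homeomorphism $V_1/S^3 \cong \bigvee_{5}S^5$, and deduce the homology of $V_1$ from the long exact sequence of the pair $(V_1,S^3)$.

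First, I would check that $(V_1,S^3)$ is a good pair (so that $H_n(V_1,S^3)\cong\tilde H_n(V_1/S^3)$). This is immediate from the presentation~\eqref{V1p}: each piece $(O_i\times(\C P^1\setminus\{*\}))/\!\backsimeq{}\cong S^5\setminus \bar D^3$ is attached to $S^3$ along the $2$-sphere $\partial O_i$, so a collar neighbourhood of $S^3$ inside $V_1$ deformation retracts onto $S^3$. Consequently
\[
H_n(V_1,S^3)\;\cong\;\tilde H_n\!\left(\bigvee_{i=1}^{5}S^5\right)\;=\;\begin{cases}\Z^{5}&n=5,\\ 0&n\neq5.\end{cases}
\]

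Next, I would insert this into the long exact sequence
\[
\cdots\to H_n(S^3)\to H_n(V_1)\to H_n(V_1,S^3)\to H_{n-1}(S^3)\to\cdots
\]
and read off the groups. In degree $5$ the sequence reads $0\to H_5(V_1)\to\Z^{5}\to H_4(S^3)=0$, giving $H_5(V_1)=\Z^{5}$. In degree $4$ we have $0\to H_4(V_1)\to 0$, so $H_4(V_1)=0$. In degree $3$, since $H_3(V_1,S^3)=0$, the sequence collapses to $0\to H_3(S^3)=\Z\to H_3(V_1)\to 0$, yielding $H_3(V_1)=\Z$. Degrees $n=1,2$ and $n\geq 6$ give $H_n(V_1)=0$ directly. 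Finally $H_0(V_1)=\Z$ follows because $V_1$ is path connected: each piece $(O_i\times(\C P^1\setminus\{*\}))/\!\backsimeq$ meets the connected space $S^3$ along $\partial O_i$.

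There is no serious obstacle here; the only point that requires a line or two of justification is the goodness of the pair $(V_1,S^3)$ and the connectedness of $V_1$, both of which are transparent from the explicit gluing description~\eqref{V1p}. The rest is a purely mechanical reading of the long exact sequence with $H_\ast(S^3)$ and $\tilde H_\ast(\bigvee_5 S^5)$ inserted.
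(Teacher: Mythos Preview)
Your proof is correct and follows the route the paper sets up: it uses the cofiber presentation $S^3\hookrightarrow V_1\to V_1/S^3\cong\bigvee_5 S^5$ from~\eqref{V1p} and reads off the homology from the long exact sequence of the pair. The paper does not spell out a proof of this lemma, but the preceding paragraph (the section $S^3\to V_1$ of $\hat\mu$ and the identification $V_1/S^3\cong\bigvee_5 S^5$) is exactly the input you use; the only cosmetic difference is that the section would let one split $H_*(V_1)\cong H_*(S^3)\oplus\tilde H_*(\bigvee_5 S^5)$ directly, whereas you obtain the same conclusion from the vanishing of the connecting maps for dimension reasons.
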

The universal coefficient theorem implies:
\begin{cor}
The nontrivial homology groups for $V_1$ with $\Z _{2}$-coefficients are  $H_{5}(V_1;\Z _{2}) =\Z _{2}^{5}$ and  $H_{3}(V_1;\Z _{2}) = H_{0}(V_1;\Z _{2}) = \Z_{2}$.
\end{cor}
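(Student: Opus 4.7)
The plan is to derive the corollary as a direct consequence of the preceding lemma via the universal coefficient theorem for homology. The key observation is that all integral homology groups of $V_1$ computed in the lemma are torsion-free: they are either $\Z^5$, $\Z$, or $0$. This makes the application of the universal coefficient theorem essentially trivial, since no Tor terms will contribute.

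Specifically, I would invoke the short exact sequence
\[
0 \to H_n(V_1) \otimes \Z_2 \to H_n(V_1; \Z_2) \to \operatorname{Tor}(H_{n-1}(V_1), \Z_2) \to 0,
\]
and observe that $\operatorname{Tor}(A, \Z_2) = 0$ whenever $A$ is free abelian. Since every $H_n(V_1)$ in the lemma is free abelian (either $\Z^5$, $\Z$, or trivial), the Tor term vanishes for all $n$, and one obtains the isomorphism $H_n(V_1; \Z_2) \cong H_n(V_1) \otimes \Z_2$.

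Applying this in each relevant degree gives $H_5(V_1; \Z_2) \cong \Z^5 \otimes \Z_2 \cong \Z_2^5$, $H_3(V_1; \Z_2) \cong \Z \otimes \Z_2 \cong \Z_2$, and $H_0(V_1; \Z_2) \cong \Z \otimes \Z_2 \cong \Z_2$, while all other groups vanish because the corresponding integral homology is zero. There is no genuine obstacle here: the entire statement is a mechanical transcription of the integral computation through the universal coefficient theorem, and the only subtlety (which in this case is absent) would have been potential $\Z_2$-torsion contributions from one degree lower. The main work was already done in establishing the lemma via the decomposition \eqref{V1p} of $V_1$ and the collapse $V_1/S^3 \cong \vee_5 S^5$.
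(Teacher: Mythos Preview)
Your proof is correct and follows exactly the paper's approach: the paper simply states that the corollary follows from the universal coefficient theorem, and you have spelled out the routine verification that the Tor terms vanish because the integral homology groups from the preceding lemma are free abelian.
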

\begin{lem}
There is an  induced action of the symmetric group $S_5$ on the homology groups for  $V_1$ with one orbit in each homology group.
\end{lem}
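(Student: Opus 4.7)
\smallskip

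The plan is to exploit the compatibility of the $S_5$-action on $G_{5,2}$ with all the structures constructed earlier, reduce the claim to an observation about how $S_5$ permutes the facets of $\partial \Delta_{5,2}$, and then read off the orbits on the generators identified in the preceding paragraph.

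First I would verify that $V_1$ is $S_5$-invariant. By Lemma~\ref{Sn-action}, the canonical action of $S_5$ on $G_{5,2}$ descends to $G_{5,2}/T^5$ and sends each stratum $W_\sigma/T^5$ to $s(W_\sigma)/T^5$, while permuting admissible polytopes by $s(P_\sigma) = \mu(s(W_\sigma))$. Because $S_5$ acts on $\R^5$ by coordinate permutation, it preserves the hypersimplex $\Delta_{5,2}$ and hence its boundary $\partial \Delta_{5,2}$; concretely, the two types of facets are permuted as $s(O_i) = O_{s(i)}$ and $s(T_i) = T_{s(i)}$, since $O_i$ is cut out by $x_i = 0$ and $T_i$ by $x_i = 1$. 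Consequently $V_1 = \hat\mu^{-1}(\partial \Delta_{5,2})$ is $S_5$-invariant, and the restricted action induces an action on each $H_*(V_1)$ (and $H_*(V_1;\Z_2)$) by naturality.

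Next I would pin down how this action looks on the decomposition described just above the lemma. The description $V_1 = S^3 \cup \bigcup_{i=1}^5 \hat\mu^{-1}(O_i)$, glued along the faces of $\partial \Delta_{5,2}$, together with $\hat\mu^{-1}(O_i) \cong S^5$, yields the model $V_1/S^3 \cong \bigvee_5 S^5$. Since $s$ restricts to a homeomorphism $\hat\mu^{-1}(O_i) \to \hat\mu^{-1}(O_{s(i)})$ carrying the common boundary $\partial O_i \subset S^3$ to $\partial O_{s(i)}$, the induced map on $V_1/S^3$ permutes the five wedge summands according to $s \cdot i = s(i)$.

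From this, reading off the orbits is immediate. The group $H_5(V_1) = \Z^5$ has the natural basis $\{[\hat\mu^{-1}(O_i)]\}_{i=1}^5$ arising from the five $S^5$ summands, and $S_5$ permutes this basis transitively (up to the orientation sign induced by $s$), so this basis forms a single $S_5$-orbit of generators. The group $H_3(V_1) = \Z$ is generated by the fundamental class $[\partial \Delta_{5,2}] = [S^3]$, on which $S_5$ acts by $\pm 1$, giving one orbit on generators. The group $H_0(V_1) = \Z$ is generated by the class of a point, which is fixed since $V_1$ is connected, again one orbit. The same argument with $\Z_2$ coefficients gives the corresponding $\Z_2$ statement.

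The main (minor) obstacle is merely bookkeeping: checking that the gluing in the model~\eqref{V1p} respects the $S_5$-action so that the identification $V_1/S^3 \cong \bigvee_5 S^5$ is $S_5$-equivariant with $S_5$ permuting summands by its standard action on $\{1,\ldots,5\}$. Once that equivariance is established, transitivity of $S_5$ on $\{1,\ldots,5\}$ delivers the lemma with essentially no further work; orientation signs do not affect the orbit count on generators.
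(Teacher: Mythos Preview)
Your argument is correct and follows essentially the same line as the paper: use that $S_5$ permutes the octahedra $O_i$ and the tetrahedra $T_i$ to see that $S^3\subset V_1$ is invariant, and then read off the induced action on $H_5$, $H_3$, $H_0$ from the decomposition $V_1/S^3\cong\bigvee_5 S^5$. The paper's proof is terser (it simply asserts the action is trivial on $H_3$ and $H_0$ and permutes the octahedra on $H_5$), while you add the equivariance check for the wedge decomposition and track the orientation signs; both yield the same conclusion of one orbit on the natural set of generators in each nontrivial degree.
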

\begin{proof}
It follows from Lemma~\ref{fund-5} that the symmetric group $S_5$  permutes the tetrahedra and permutes the octahedra which means  that the sphere $S^3$ is invariant for this  action.   Therefore, there is an induced action of $S_5$ on the   homology groups for $V_1$ given as follows: it acts trivially on $H_{3}(V_1)$ and $H_{0}(V_1)$, while  its action on  $H_{5}(V_1)$  is induced by   the permutation of the octahedra.
\end{proof}
\subsection{$V_2$ and its homology groups} 
Let $V_{2} = \cup \tilde{\mu} ^{-1}(P_{\sigma})$, where the union goes over all admissible polytopes  $P_{\sigma}\in \mathcal{P}$ , which are different from $ \Delta _{5,2}$. Then  $V_1\subset V_{2}$ and set 
$V_{21} = V_{2}/V_1$. In order to compute  the homology groups for $V_{21}$ we consider its   filtration   $L_1\subset L_{2}\subset V_{21}$, where  subspaces $L_1$ and $L_2$ are given as follows.   The space $L_2$ is a projection on $V_{21}$ of the  union of the orbit spaces of the strata whose admissible polytopes are  different from $K_{ij}(9)$ and $\Delta _{5,2}$, that is 
 $L_2 = \cup _{\sigma} \tilde{\mu} ^{-1}(P_{\sigma})/(\cup  _{\sigma} \tilde{\mu} ^{-1}(P_{\sigma})\cap V_1)$, where  $P_{\sigma}$ goes through   admissible polytopes  such that  $P_{\sigma}\neq K_{ij}(9), \Delta _{5,2}$.  The space  $L_1$ is a projection on $V_{21}$ of  the union of the orbit spaces of the strata over the admissible prisms and their faces,  that is $L_1= =\cup _{i=1}^{10}\tilde{\mu}^{-1}(P_{i})/ (\cup _{i=1}^{10}\tilde{\mu} ^{-1}(P_{i})\cap V_1)$ .
Recall that there are $10$ prisms, the  boundary of any prism belongs to $\partial \Delta _{5,2}$  and the space of parameters for the  stratum over any prism is a  point. It implies that 
\[
L_1 = \vee _{10}S^3.
\]
It follows that  the nontrivial homology groups for $L_1$ are $H_{3}(L_1) = \Z ^{10}$ and $H_{0}(L_1)=\Z$.  The universal coefficient theorem implies that the nontrivial homology groups with $\Z _2$-coefficients are $H_{3}(L_1;\Z _2)=\Z _{2}^{10}$ and
$H_{0}(L_1;\Z _{2}) = \Z _{2}$. 
\begin{lem}
There is an  action of $S_5$ on the  homology groups for $L_1$ induced by the  action of $S_5$ on  the prisms which is given by the composition of the representation of $S_5$ in  $S_{10}$ and the action of $S_{10}$ which permutes the prisms.
\end{lem}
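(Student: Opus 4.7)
The plan is to realize the $S_5$-action on $H_*(L_1)$ as a restriction of an $S_5$-action that propagates down the filtration $V_1\subset V_2\subset G_{5,2}/T^5$ and through the quotient $V_{21}=V_2/V_1$. First, I would verify that the coordinate permutation action of $S_5$ on $\C^5$ normalizes the coordinate-wise $T^5$-action: for $\sigma\in S_5$ and $t\in T^5$ one has $\sigma\circ t = (\sigma\cdot t)\circ \sigma$, so $\sigma$ sends $T^5$-orbits to $T^5$-orbits and the induced $S_5$-action on $G_{5,2}$ descends to $G_{5,2}/T^5$ in a way that is compatible with $\hat\mu$ via the obvious $S_5$-action on $\Delta_{5,2}$. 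Combined with Lemma~\ref{Sn-action}, this shows that $S_5$ permutes the pieces $\tilde\mu^{-1}(P_{\sigma})$ of the stratification of $G_{5,2}/T^5$ according to the permutation action on admissible polytopes.

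Next I would check that each subspace in the filtration is $S_5$-invariant. Since $S_5$ acts on $\Delta_{5,2}$ by an orthogonal transformation, it preserves $\partial\Delta_{5,2}$, so $V_1=\hat\mu^{-1}(\partial\Delta_{5,2})$ is invariant; and because $\Delta_{5,2}$ is the unique admissible polytope of top dimension, the complement in $G_{5,2}/T^5$ of the main stratum orbit space is also invariant, so $S_5$ preserves $V_2$ and descends to $V_{21}=V_2/V_1$. Within $V_{21}$, the subspace $L_1$ is defined as the image of $\bigcup_{1\le i<j\le 5}\tilde\mu^{-1}(P_{ij})$, a set permuted by $S_5$ because $S_5$ permutes the admissible prisms $\{P_{ij}\}$. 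Labelling prisms by the index set of size ten yields the asserted homomorphism $\varphi\colon S_5\to S_{10}$.

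The geometric identification $L_1\cong \bigvee_{1\le i<j\le 5}S^3_{ij}$ is then $S_5$-equivariant by naturality. Indeed, since the space of parameters of the stratum over each prism is a single point, $\tilde\mu^{-1}(P_{ij})\cong P_{ij}$ and its topological boundary is contained in $V_1$; collapsing $V_1$ turns each such cube into a $3$-sphere, and the common basepoint is the image of $V_1$. Because $\sigma\in S_5$ maps the pair $(\tilde\mu^{-1}(P_{ij}),\tilde\mu^{-1}(P_{ij})\cap V_1)$ homeomorphically onto $(\tilde\mu^{-1}(P_{\varphi(\sigma)(ij)}),\tilde\mu^{-1}(P_{\varphi(\sigma)(ij)})\cap V_1)$, the induced self-map of $L_1$ permutes the wedge summands exactly via $\varphi(\sigma)$. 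Applying singular homology and using that $H_3(S^3)=\Z$, $H_0(S^3)=\Z$ and the other groups vanish yields the $\Z S_5$-module structure on $H_3(L_1)\cong \Z^{10}$ as the composition of $\varphi$ with the canonical permutation representation $S_{10}\to\operatorname{Aut}(\Z^{10})$, while $H_0(L_1)\cong \Z$ carries the trivial action.

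The main obstacle is the equivariance of the wedge identification, as everything else is formal. It requires checking that the three operations used—taking preimages under $\hat\mu$, collapsing $V_1$, and identifying a disk modulo its boundary with a sphere—can all be performed coherently for the full set of prisms. Naturality of these quotient constructions with respect to a group action that preserves the subspaces involved, a property we have already established for $S_5$, is what delivers this coherence, and the lemma then follows by functoriality of $H_*$.
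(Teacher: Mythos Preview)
Your proof is correct and follows essentially the same approach as the paper: the $S_5$-action permutes the ten prisms and hence the ten wedge summands of $L_1\cong\bigvee_{10}S^3$, which gives the permutation action on $H_3(L_1)\cong\Z^{10}$ and the trivial action on $H_0$. The paper's own proof is a one-line sketch of exactly this idea, whereas you carefully verify the invariance of $V_1$, $V_2$, and $L_1$ and the equivariance of the wedge identification; the only point neither you nor the paper makes fully explicit is why the induced map on each $H_3(S^3)$ is $+1$ rather than $-1$, but this does not affect the permutation-module description modulo signs (and is immaterial over $\Z_2$).
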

\begin{proof}
The symmetric group $S_5$ permutes the prisms and the corresponding strata, which  implies that $S_5$  acts by  permutations on $H_{3}(L_1)$ and it acts  trivially on $H_{0}(L_1)$.
\end{proof}

It holds  $L_1\subset L_2$. Moreover,  admissible polytopes for the strata from $L_2$ are the prisms $P_l$, the pyramids $K_{pq}(7)$ and  the polytopes with eight vertices $K_{ij,kl}$. By Proposition~\ref{bound-8} and Proposition~\ref{bound-7},  a  facet of any of polytopes $K_{pq}(7)$ or $K_{ij,kl}$ is either $P_l$ either it belongs to $\partial \Delta _{5,2}$. There are altogether $25$  polytopes of the types  $K_{pq}(7)$ or  $K_{ij,kl}$ and    the space of parameters  for   any of these polytopes is a point. It implies that
\[
L_2/L_1 = \vee _{25}S^4.  
\]
It follows that the nontrivial homology groups for  $L_2/L_1$ are $H_{4}(L_2/L_1) = \Z ^{25}$ and $H_{0}(L_2/L_1) = \Z$. Again by the universal coefficient theorem the nontrivial homology groups for $L_2/L_1$ with $\Z _{2}$-coefficients are  $H_{4}(L_2/L_1;\Z _{2}) = \Z _{2}^{25}$ and $H_{0}(L_2/L_1;\Z _{2}) = \Z _{2}$.

\begin{lem}
There is an  induced action of the symmetric group $S_5$ on  the homology groups for $L_2/L_1$ which has two orbits .
\end{lem}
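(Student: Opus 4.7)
The plan is to show that the $S_5$-action on $G_{5,2}$ descends to $L_2/L_1$ and permutes the 25 wedge summands of $L_2/L_1 \cong \vee_{25}S^4$ according to the $S_5$-action on the set of admissible polytopes $\{K_{pq}(7)\} \cup \{K_{ij,kl}\}$, which has exactly two orbits.

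First, I would invoke Lemma~\ref{Sn-action} together with $S_5$-equivariance of the moment map to conclude that the canonical $S_5$-action on $G_{5,2}/T^5$ permutes the subsets $\tilde{\mu}^{-1}(P_\sigma)$ according to the $S_5$-action $\sigma \mapsto s(\sigma)$ on admissible polytopes. Since $s\in S_5$ preserves the type of an admissible polytope (by Lemma~\ref{Sn-action-charts}, it preserves the vertex count, and in fact the combinatorial type as recorded in our notation $K_{pq}(7)$, $K_{ij,kl}$, etc.), each of the subspaces $V_1$, $V_2$, $L_1$, $L_2\subset G_{5,2}/T^5$ is $S_5$-invariant. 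Hence $S_5$ acts on the quotient $L_2/L_1$ and on its homology, which gives the first assertion.

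Next, I would identify the induced action on $H_4(L_2/L_1)\cong \Z^{25}$ with a permutation representation on the 25 wedge summands. Each admissible polytope $P_\sigma$ of type $K_{pq}(7)$ or $K_{ij,kl}$ contributes exactly one $S^4$-summand, collapsed from $\tilde{\mu}^{-1}(P_\sigma)$ (whose space of parameters is a point) after quotienting by $L_1\cup V_1$. Because $s\in S_5$ sends $\tilde{\mu}^{-1}(P_\sigma)$ homeomorphically onto $\tilde{\mu}^{-1}(P_{s(\sigma)})$ and preserves the relevant collapsing subspaces, the induced map on $L_2/L_1$ is a permutation of the 25 wedge summands corresponding to $\sigma \mapsto s(\sigma)$.

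Finally, the orbit structure: since $S_5$ preserves the number of vertices of $P_\sigma$, the 10 pyramids $K_{pq}(7)$ and the 15 polytopes $K_{ij,kl}$ lie in distinct $S_5$-orbits, so there are at least two orbits. Within each type, transitivity is straightforward: $S_5$ acts transitively on 2-element subsets $\{p,q\}\subset\{1,\dots,5\}$ (giving the 10 pyramids), and on unordered pairs of disjoint 2-element subsets $\{\{i,j\},\{k,l\}\}\subset\{1,\dots,5\}$ (giving the 15 polytopes $K_{ij,kl}$). Hence the $S_5$-action on the basis of $H_4(L_2/L_1)$ has exactly two orbits, of sizes $10$ and $15$; the action on $H_0(L_2/L_1)=\Z$ is trivial and does not contribute a new orbit. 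There is no real obstacle beyond bookkeeping; the only subtle point worth checking carefully is that the collapsing used to obtain $L_2/L_1$ is truly $S_5$-equivariant, which follows from the equivariance of $\tilde{\mu}$ and the fact that $S_5$ preserves the stratification by admissible-polytope type.
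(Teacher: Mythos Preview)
Your proposal is correct and follows essentially the same approach as the paper: both argue that $S_5$ permutes the strata of types $K_{pq}(7)$ and $K_{ij,kl}$ separately, yielding a permutation action on the $25$ wedge summands of $L_2/L_1$ with exactly two orbits of sizes $10$ and $15$. Your version is simply more detailed in justifying the $S_5$-equivariance of the collapsing and the transitivity within each type, points the paper takes for granted.
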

\begin{proof}
The symmetric groups  $S_5$ permutes $K_{pq}(7)$,  permutes $K_{ij, kl}$ and the corresponding strata. Therefore, there is an induced action of $S_5$ on $H_{4}(L_2/L_1)$ which is given by the permutation  of  elements  which are determined  by $K_{pq}(7)$ and by the  permutation of  elements 
which are determined by  $K_{ij, kl}$. Thus,  there are two orbits for $S_5$-action, one consisting of $10$ elements  and the other one consisting of $15$  elements.
\end{proof}

\begin{lem}\label{L2}
The nontrivial homology groups for $L_2$ are $H_{4}(L_2) = \Z ^{15}$ and $H_{0}(L_2) = \Z$.
\end{lem}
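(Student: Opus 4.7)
\medskip

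\noindent\textbf{Proof plan for Lemma~\ref{L2}.} The plan is to run the long exact sequence of the pair $(L_2, L_1)$ and to show that the connecting homomorphism $\partial\colon H_4(L_2,L_1)\to H_3(L_1)$ is surjective with free kernel of rank $15$. Since $L_1\cong \vee_{10}S^3$ and $L_2/L_1\cong \vee_{25}S^4$, the only nontrivial reduced homology groups of $L_1$ and of $L_2/L_1$ are $H_3(L_1)=\Z^{10}$ and $H_4(L_2,L_1)=\Z^{25}$, so all of the exact sequence collapses to
\[
0\longrightarrow H_4(L_2)\longrightarrow \Z^{25}\stackrel{\partial}{\longrightarrow}\Z^{10}\longrightarrow H_3(L_2)\longrightarrow 0,
\]
with $H_n(L_2)=0$ for $n\neq 0,3,4$, and $H_0(L_2)=\Z$ since $L_2$ is path-connected (it is obtained from the connected space $L_1$ by attaching cells).

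\medskip

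The core step is the description of $\partial$. Choose the natural basis of $\Z^{25}$ indexed by the $10$ pyramids $K_{pq}(7)$ and the $15$ polytopes $K_{ij,kl}$, and the basis of $\Z^{10}$ indexed by the prisms $P_{pq}$. Because the space of parameters is a point for each stratum $W_\sigma$ with $P_\sigma$ equal to $K_{pq}(7)$, $K_{ij,kl}$, or $P_{ij}$, the projection $\tilde\mu^{-1}(P_\sigma)/V_1$ in $V_{21}$ is simply $P_\sigma/(\partial P_\sigma\cap\partial\Delta_{5,2})$. By Propositions~\ref{bound-7},~\ref{bound-8} and Corollary~\ref{bound-p6}, the only facets of $K_{pq}(7)$ in the interior of $\Delta_{5,2}$ is the single prism $P_{pq}$, and the only such facets of $K_{ij,kl}$ are the two prisms $P_{ij}$ and $P_{kl}$; all other facets lie in $\partial\Delta_{5,2}$ and are therefore collapsed in $V_{21}$. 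Hence the attaching map $\partial K_{pq}(7)=S^3\to L_1$ is a pinch that collapses the complement of $P_{pq}$ in $\partial K_{pq}(7)$ to the basepoint and is the identity on $P_{pq}$; composed with the projection to the $pq$-th wedge summand $P_{pq}/\partial P_{pq}\cong S^3$ it has degree $\pm 1$, and it is trivial onto every other summand. An analogous analysis for $K_{ij,kl}$ yields degree $\pm 1$ onto each of the two summands $P_{ij}$, $P_{kl}$ and zero elsewhere. Consequently, in our chosen bases
\[
\partial[K_{pq}(7)]=\pm[P_{pq}],\qquad \partial[K_{ij,kl}]=\pm[P_{ij}]\pm[P_{kl}].
\]

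\medskip

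The submatrix of $\partial$ formed by the ten columns $[K_{pq}(7)]$ is, up to signs, the $10\times 10$ identity, so $\partial$ is surjective. This forces $H_3(L_2)=\coker\partial=0$, and since $\Z^{10}$ is free the short exact sequence $0\to\ker\partial\to\Z^{25}\to\Z^{10}\to 0$ splits, giving $H_4(L_2)=\ker\partial\cong\Z^{15}$, as required. The only subtle point of the argument is the verification that the attaching maps have absolute degree one onto the relevant wedge summands; this is exactly where the hypothesis that $F_\sigma$ is a point for all three families $P_{ij}$, $K_{pq}(7)$, $K_{ij,kl}$ is used, and it is the reason why no torsion or rank deficit can arise in the boundary map beyond what the face-incidence combinatorics from Propositions~\ref{bound-7},~\ref{bound-8} and Corollary~\ref{bound-p6} already predicts.
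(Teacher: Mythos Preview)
Your proof is correct and follows essentially the same approach as the paper: both run the long exact sequence of the pair $(L_2,L_1)$ and use the fact that each $K_{pq}(7)$ has exactly the prism $P_{pq}$ as its unique interior facet to show that the boundary map $\Z^{25}\to\Z^{10}$ is surjective, yielding $H_3(L_2)=0$ and $H_4(L_2)=\Z^{15}$. Your version is slightly more explicit in spelling out the degree-$\pm 1$ argument and the (ultimately unused) contribution of the $K_{ij,kl}$ cells to $\partial$, but the underlying idea is the same.
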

\begin{proof}
 The exact homology sequence of the pair $(L_2, L_1)$ gives directly  that  $H_{1}(L_2) = H_{2}(L_2) = H_{i}(L_2)=0$ for $i\geq 5$.  In order to determine $H_{3}(L_2)$ and $H_{4}(L_2)$ we consider a cell decomposition for $L_2$. The three dimensional cells in this cell decomposition are given by  the admissible  prisms $P_l$  over six vertices, while the four-dimensional cells are given by  the admissible polytopes $K_{ij,kl}$  and $K_{pq}(7)$. A  facet of $K_{pq}(7)$  which belongs to $\stackrel{\circ}{\Delta}_{5,2}$  is some  prism $P_{l}$.  It implies  that the boundary in $L_2$  of the cell  $K_{pq}(7)$ consists of  the 
three-dimensional cell  $P_l$ and a  point. The number of the pyramids $K_{pq}(7)$ is $10$  and it is  equal to the number of the prisms $P_{l}$ and, obviously, different pyramids  $K_{pq}(7)$ have different prisms in their boundaries. In this way we deduce that $H_{3}(L_2) = 0$.  Then the exact homology sequence implies  that $H_{4}(L_2) = \Z ^{15}$. 
\end{proof}

By the universal coefficient theorem it follows:
\begin{cor}
The  nontrivial homology groups for $L_2$ with $\Z_{2}$-coefficients are $H_{4}(L_2;\Z _{2}) = \Z _{2}^{15}$ and $H_{0}(L_2;\Z _{2}) = \Z _{2}$.
\end{cor}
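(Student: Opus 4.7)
The corollary is an immediate consequence of Lemma~\ref{L2} together with the universal coefficient theorem for homology, and that is the route I would take. By Lemma~\ref{L2}, the integral homology of $L_2$ is concentrated in degrees $0$ and $4$, with $H_0(L_2;\Z)\cong\Z$ and $H_4(L_2;\Z)\cong\Z^{15}$, and in particular every $H_i(L_2;\Z)$ is free abelian. Consequently, for every $n$ the Tor term $\operatorname{Tor}(H_{n-1}(L_2;\Z),\Z _{2})$ in the UCT short exact sequence vanishes, so $H_n(L_2;\Z _{2})\cong H_n(L_2;\Z)\otimes \Z _{2}$. Substituting the known values gives $H_0(L_2;\Z _{2})\cong \Z _{2}$, $H_4(L_2;\Z _{2})\cong \Z _{2}^{15}$, and $H_i(L_2;\Z _{2})=0$ otherwise, which is the claim.

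There is essentially no obstacle to overcome, since all the topological work was already done in the proof of Lemma~\ref{L2}. As a sanity check, one can also rerun the cellular argument of that lemma directly with $\Z _{2}$-coefficients: the same cell structure (ten $3$-cells $P_l$, ten $4$-cells $K_{pq}(7)$ whose boundaries incorporate the prisms $P_l$ bijectively, and fifteen $4$-cells $K_{ij,kl}$ whose boundaries lie in $L_1$ and so are trivialized after collapse) gives a cellular chain complex whose mod $2$ reduction kills $H_3(L_2;\Z _{2})$ and leaves a free $\Z _{2}$-module of rank $15$ in degree $4$. Either way the calculation is mechanical, and the UCT route is cleaner since it bypasses re-examination of the cellular boundary maps.
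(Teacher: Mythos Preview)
Your proposal is correct and matches the paper's own argument: the corollary is stated immediately after Lemma~\ref{L2} with the justification ``By the universal coefficient theorem it follows'', which is exactly the route you take. Your additional sanity check via the mod $2$ cellular computation is a harmless bonus but not needed.
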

\begin{lem}
The symmetric group $S_5$ acts on $H_{4}(L_2)$ by  permutations and this action  is induced by the  $S_5$-action on the set $\{K_{ij,kl}\}$.
\end{lem}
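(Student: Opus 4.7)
The plan is to use the cellular description of $L_2$ already set up in the proof of Lemma~\ref{L2} and to argue that this cell structure is $S_5$-equivariant, so the induced action on $H_4$ is read off directly from the combinatorial action on the $4$-cells.

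First I would record the following cell structure on $L_2$. By Propositions~\ref{bound-8} and~\ref{bound-7}, the admissible polytopes contributing to $L_2$ are the ten prisms $P_l$, the ten pyramids $K_{pq}(7)$, and the fifteen polytopes $K_{ij,kl}$, each having a point as its space of parameters, so its orbit-space preimage is (the interior of) the polytope itself modulo the identifications coming from $V_1$. This gives $L_2$ the structure of a CW complex with one $3$-cell for every prism and one $4$-cell for every pyramid $K_{pq}(7)$ or every polytope $K_{ij,kl}$. The argument of Lemma~\ref{L2} shows that the cellular boundary sets up a bijection between the ten $4$-cells $K_{pq}(7)$ and the ten $3$-cells $P_l$ (the unique interior facet of $K_{pq}(7)$ being precisely $P_{pq}$, by Corollary~\ref{bound-p6}), so after passing to homology the fifteen classes $[K_{ij,kl}]$ form a free $\mathbb{Z}$-basis of $H_4(L_2)\cong \mathbb{Z}^{15}$.

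Next I would observe that this CW structure is $S_5$-equivariant. Indeed, by Lemma~\ref{Sn-action} and Lemma~\ref{Sn-action-charts} the $S_5$-action on $G_{5,2}/T^5$ permutes the strata and the associated admissible polytopes, preserving the number of vertices. Hence $S_5$ sends each of the three families $\{P_l\}$, $\{K_{pq}(7)\}$, $\{K_{ij,kl}\}$ to itself by a permutation, giving a cellular action of $S_5$ on $L_2$; the induced action on the cellular chain complex $C_\ast(L_2)$ permutes the basis elements of $C_3$ and of $C_4$ according to these combinatorial $S_5$-actions.

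Finally I would transfer this to homology. Since the subspace of $C_4$ spanned by the ten cells $K_{pq}(7)$ is $S_5$-invariant (it is the whole $S_5$-orbit of one such polytope) and the boundary map identifies it equivariantly with $C_3$, the quotient $C_4/\partial^{-1}(C_3) = C_4/\langle K_{pq}(7)\rangle$ receives an $S_5$-action by permutation of the fifteen elements $\{K_{ij,kl}\}$, and this quotient is precisely $H_4(L_2)$. This gives the desired identification of the $S_5$-action on $H_4(L_2)$ with the permutation representation on $\{K_{ij,kl}\}$. The only mildly delicate point, and the main obstacle, is checking that the equivariant splitting of $C_4$ into the two orbit types $\{K_{pq}(7)\}$ and $\{K_{ij,kl}\}$ really is compatible with the boundary identification of Lemma~\ref{L2}; this follows because the facet $P_{pq}$ of $K_{pq}(7)$ is $S_5$-equivariantly associated to $K_{pq}(7)$, so an element $s\in S_5$ sending $K_{pq}(7)\mapsto K_{p'q'}(7)$ automatically sends $P_{pq}\mapsto P_{p'q'}$, as required.
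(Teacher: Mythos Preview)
Your overall strategy is correct and matches what the paper implicitly relies on (the paper states this lemma without proof), but there is one genuine slip and one point you glide over.

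The slip: you assert that ``the fifteen classes $[K_{ij,kl}]$ form a free $\mathbb{Z}$-basis of $H_4(L_2)$''. This is false as written: the cells $K_{ij,kl}$ are not cycles. By Proposition~\ref{bound-8}, the interior facets of $K_{ij,kl}$ are exactly the two prisms $P_{ij}$ and $P_{kl}$, so in $L_2$ one has $\partial K_{ij,kl}=\pm P_{ij}\pm P_{kl}\neq 0$. The actual generators of $H_4(L_2)=\ker\partial_4$ are the combinations $g_{ij,kl}=K_{ij,kl}\pm K_{ij}(7)\pm K_{kl}(7)$, as the paper itself records in the proof of Proposition~\ref{V12}. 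Your final paragraph essentially repairs this: the composite $\ker\partial_4\hookrightarrow C_4\twoheadrightarrow C_4/\langle K_{pq}(7)\rangle$ is an isomorphism because $\partial_4$ restricted to $\langle K_{pq}(7)\rangle$ is bijective onto $C_3$, and it is $S_5$-equivariant because $\langle K_{pq}(7)\rangle$ is an $S_5$-submodule. Under this isomorphism $g_{ij,kl}\mapsto [K_{ij,kl}]$, which is what you want. (Your notation ``$C_4/\partial^{-1}(C_3)$'' is not what you mean---$\partial^{-1}(C_3)=C_4$---but the intent is clear.) So just delete the early claim about $[K_{ij,kl}]$ being homology classes and let the quotient argument carry the weight.

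The point you glide over: the lemma claims $S_5$ acts by honest permutations, not merely signed permutations. Your argument establishes that $S_5$ permutes the cells, but the induced action on cellular chains could a priori introduce signs if some $s\in S_5$ reverses the orientation of a $4$-cell. You should remark either that the cells inherit a coherent orientation from $\Delta_{5,2}$ (so the $S_5$-action is orientation-preserving on each orbit), or that the only subsequent use of this lemma is in the $\mathbb{Z}_2$-coefficient computations where the sign issue is moot.
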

The  union $K$  of the orbit spaces of  the $10$  strata over $\stackrel{\circ}{K_{ij}}(9)$, $1\leq i<j\leq 5$ is a dense set in $V_{21}$ and by Proposition~\ref{bound-9},  $K= V_{21}\setminus L_2$ .  It implies that $K^{\star} = V_{21}/L_2$  is an  one-point  compactification of $K$. 

 Recall that the orbit space of  each of these strata  is homeomorphic to $\stackrel{\circ}{K_{ij}(9)}\times \C P^{1}_{A}$,
where $A=\{(1:0), (0:1), (1:1)\}$. 

\begin{lem}
The one-point compactification $K_{ij}^{\star}(9)$  of the orbit space   $\stackrel{\circ}{K_{ij}(9)}\times \C P^{1}_{A}$ in $V_{21}/L_2$ is given by 
\begin{equation}\label{first}
(K_{ij}(9)\times \C P^1)/\backsimeq , \;\; \text{where}\;  \partial K_{ij}(9)\times \C P^1 \backsimeq K_{ij}(9)\times A\backsimeq \ast.
\end{equation} 
Moreover, 
\begin{equation}\label{second}
K_{ij}^{\star}(9)\cong S^6 \vee S^5\vee S^5.
\end{equation}
\end{lem}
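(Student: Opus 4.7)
The plan has two independent parts: establish the quotient description~\eqref{first} from the stratification data, then derive the homotopy equivalence~\eqref{second} via a CW argument.

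For~\eqref{first}, recall from Theorem~\ref{triv} that $W_\sigma/T^5\cong \stackrel{\circ}{K_{ij}(9)}\times \C P^1_A$, so its image in $V_{21}/L_2=(V_2/V_1)/L_2$ is the one-point compactification obtained by collapsing to a single basepoint every limit stratum lying in $V_1$ or in $L_2$. There are two directions of accumulation: across $\partial K_{ij}(9)$ and across the missing parameters $A=\{(0{:}1),(1{:}0),(1{:}1)\}$. By Proposition~\ref{bound-9} the facets of $K_{ij}(9)$ are two octahedra, three tetrahedra and three pyramids --- all lying on $\partial\Delta_{5,2}$, hence swept into $V_1$ --- together with the prism $P_{ij}$, which by Corollary~\ref{bound-p6} is an interior facet shared with polytopes that label strata of $L_2$. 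By Propositions~\ref{parametrization-9}--\ref{parametrization-8} (and consistently with Theorem~\ref{bound-univ}) the three values in $A$ correspond precisely to the three one-orbit strata with polytopes $K_{ij,kl}$ sharing $P_{ij}$, which also lie in $L_2$. Therefore $\partial K_{ij}(9)\times\C P^1\cup K_{ij}(9)\times A$ is exactly the subspace that collapses, giving~\eqref{first}.

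For~\eqref{second}, fix CW-structures: on $K_{ij}(9)\cong D^4$ take $\partial K_{ij}(9)$ as the $3$-skeleton and a single interior $4$-cell; on $\C P^1\cong S^2$ take $A$ as the $0$-skeleton, join $p_1\stackrel{\alpha}{-}p_2\stackrel{\beta}{-}p_3$ by two $1$-cells, and attach the $2$-cell via the word $\alpha\beta\bar\beta\bar\alpha$. The product CW-structure on $K_{ij}(9)\times \C P^1$ makes $\partial K_{ij}(9)\times\C P^1\cup K_{ij}(9)\times A$ a subcomplex, and the only cells outside it are the products of the interior $4$-cell with the cells of $\C P^1$ disjoint from $A$: two $5$-cells (from $\alpha$ and $\beta$) and one $6$-cell (from the $2$-cell). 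Each $5$-cell has its entire boundary in the subcomplex, so the $5$-skeleton of $K_{ij}^{\star}(9)$ is $S^5\vee S^5$.

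The main obstacle --- and the place where the argument could fail --- is to show that the attaching map $\varphi\colon S^5\to S^5\vee S^5$ of the $6$-cell is null-homotopic. Decomposing $\partial(D^4\times D^2)=(\partial D^4\times D^2)\cup(D^4\times\partial D^2)$, the first summand lies in the collapsed subcomplex, while on the second summand $\varphi$ is the composition of $\operatorname{id}_{D^4}$ with the attaching map of the $2$-cell of $\C P^1$. After identifying the three points of $A$, the $1$-skeleton $\alpha\cup\beta$ becomes $S^1\vee S^1$, and in $\pi_1(S^1\vee S^1)$ the word $\alpha\beta\bar\beta\bar\alpha$ equals the identity; smashing with $D^4/\partial D^4=S^4$ transports this triviality to $\varphi\in\pi_5(S^5\vee S^5)$. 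Hence $\varphi\simeq 0$ and $K_{ij}^{\star}(9)\simeq S^5\vee S^5\vee S^6$. As an independent backup, a Mayer--Vietoris computation for $\partial K_{ij}(9)\times\C P^1\cup K_{ij}(9)\times A$ (whose two pieces meet along $\partial K_{ij}(9)\times A\cong S^3\sqcup S^3\sqcup S^3$) yields homology $(\Z,0,\Z,0,\Z^2,\Z)$, and the long exact sequence of the pair then gives $H_*(K_{ij}^{\star}(9))=(\Z,0,0,0,0,\Z^2,\Z)$; combined with simple connectivity (the collapsed subspace is path-connected and $D^4\times S^2$ is simply connected), Whitehead's theorem upgrades any map from $S^5\vee S^5\vee S^6$ inducing this homology isomorphism to a homotopy equivalence.
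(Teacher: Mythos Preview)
Your proof is correct. For~\eqref{first} your argument matches the paper's, tracing which boundary strata land in $V_1$ or $L_2$ via the facet structure of $K_{ij}(9)$; the paper adds the explicit observation that each point of $A$ corresponds to a \emph{pair} $(K_{ij,kl}, K_{kl}(7))$ forming a polytopal decomposition of $K_{ij}(9)$, but this refinement is not needed for the collapse.

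For~\eqref{second} your route differs from the paper's. The paper observes in one line that collapsing $\partial K_{ij}(9)\times \C P^1 \cup K_{ij}(9)\times A$ yields the smash product $(K_{ij}(9)/\partial K_{ij}(9)) \wedge (\C P^1/A) \cong S^4 \wedge (S^2\vee S^1\vee S^1)$ --- equivalently $S^3\ast(S^2\vee S^1\vee S^1)$ --- and since $\Sigma^4$ distributes over wedge this is immediately $S^6\vee S^5\vee S^5$. Your CW argument is a cell-by-cell unpacking of the same fact: the null-homotopy of the attaching word $\alpha\beta\bar\beta\bar\alpha$ is precisely what makes $\C P^1/A \simeq S^2\vee S^1\vee S^1$, and your ``smashing with $D^4/\partial D^4=S^4$'' is the suspension step. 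Your approach is longer but self-contained and comes with the Mayer--Vietoris cross-check; the paper's is a one-line invocation of the smash/join formula once $\C P^1/A$ is identified.
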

\begin{proof}
The statement follows from the observation  that $K_{ij}(9)\times \C P^1$ is a compactification of     $\stackrel{\circ}{K_{ij}(9)}\times \C P^{1}_{A}$ and  $(K_{ij}(9)\times \C P^1)\setminus  (\stackrel{\circ}{K_{ij}(9)}\times \C P^{1}_{A}) =   (\partial K_{ij}(9)\times \C P^1)\cup  (K_{ij}(9)\times A)$.  
We want to emphasize that the one-point compactification $K_{ij}^{\star}(9)$ of $\stackrel{\circ}{K}_{ij}(9)\times \C P^1_{A}$ in $V_{21}/L_2$   has an explicit geometric realization. We demonstrate   it for the admissible polytope $K_{12}(9)$ and   due  to the action of the symmetric group $S_5$, it will then hold for any other polytope $K_{ij}(9)$.  It follows from Proposition~\ref{bound-9} that the boundary of the stratum  over $K_{12}(9)$ contains  the four-dimensional strata over the polytopes  $K_{12,34}, K_{12,35}, K_{12, 45}$, $K_{34}(7), K_{35}(7)$ and $K_{45}(7)$. It holds  $K_{12, 34}\cup K_{34}(7) = K_{12,35}\cup K_{35}(7)=K_{12,45}\cup K_{45}(7) = K_{12}$. Moreover, since  the stratum over $K_{12}(9)$ is parametrized by $\C P^{1}_{A}$,  it directly follows that the pairs $(K_{12,34}, K_{34}(7))$, $(K_{12,35}, K_{35}(7))$ and $(K_{12}, K_{45}(7))$ are parametrized by the points $(1:0), (0:1)$ and  $(1:1)$, respectively.  In addition any stratum over a  face of $K_{12}(9)$ is in the boundary of  the stratum over $K_{12}(9)$ which  completes the proof.  The second statement~\eqref{second} follows from the observation that $K^{\star}_{ij}(9)\cong S^3\ast  (K_{ij}(9)\times \C P^1)/(K_{ij}(9)\ast A) \cong  S^3\ast (S^2\vee S^1\vee S^1)\cong  S^6\vee S^5\vee S^5$.
\end{proof}
\begin{rem}\label{cycles}
We will further use  a  cell decomposition for $K_{ij}^{\star}(9)$ which is for our purposes more geometrical, that is $K_{ij}^{\star}(9) \cong \stackrel{\circ}{K_{ij}}(9)\times  ((S^2\vee S^1\vee S^1)\setminus \{*\})\cup \{\ast\}$, which  implies that the six-cell is $e_{ij} = \stackrel{\circ}{K_{ij}}(9)\times  (S^2\setminus \{\ast\})$, while the five-cells are $f_{ij}^{l}= \stackrel{\circ}{K_{ij}}(9)\times  p_{l}$, where   $p_{1}$ and $p_{2}$ are   non-intersecting paths  in $\C P^1$ such that  $p_{1}$ connects the points $(0:1)$ and $(1:1)$ and $p_{2}$ connects the points $(1:1)$ and $(1:0)$. Note that the cells $e_{ij}$ and $f_{ij}^{1}$, $f_{ij}^{2}$ represent  six-dimensional  and five-dimensional cycles in $K_{ij}^{\star}$.
\end{rem}
It follows from~\eqref{first} that 
\[
V_{21}/L_{2} = \cup _{1\leq i<j\leq 5} (K_{ij}(9)\times \C P^1)/\backsimeq , \;\; \text{where}\;  \partial K_{ij}(9)\times \C P^1 \backsimeq K_{ij}(9)\times A\backsimeq \ast . 
\]
Then ~\eqref{second} implies that

\begin{equation}\label{V21L2second}
V_{21}/L_{2} \cong (\vee _{10}S^6)\vee (\vee _{20}S^5).
\end{equation}
The  description of the nontrivial homology groups for $V_{21}/L_2$ directly follows from~\eqref{V21L2second}:
 \[
H_{6}(V_{21}/L_2) = \Z ^{10},\;\;  H_{5}(V_{21}/L_2) = \Z^{20}\; \text{and} \;  H_{0}(V_{21}/L_2) =\Z.
\]
\begin{rem}\label{inv}
The symmetric group $S_5$ acts on the set of  the orbit spaces $\stackrel{\circ}{K}_{ij}(9)\times \C P^{1}_{A}$ by permutations.  We assume the expression~\eqref{V21L2second} to be invariant under this action. Then there is an induced action of  $S_5$ on  the homology groups for $V_{21}/L_2$. Note that this action on $H_{6}(V_{21}/L_2)$ has one orbit, while on  $ H_{5}(V_{21}/L_2)$ it has two orbits each of them containing $10$ elements.
\end{rem}

In order to avoid an  orientation issue we compute further the  homology groups with coefficients in $\Z _{2}$.

\begin{prop}\label{V12}
The nontrivial homology groups for $V_{21}$ with $\Z _{2}$-coefficients  are:
 \[ H_{6}(V_{21};\Z _{2}) = \Z _{2}^{10},\;\; H_{5}(V_{21};\Z _{2})=\Z _{2}^{5},\;\; H_{4}(V_{21};\Z _{2}) = \Z _{2} \; \text{and}\; 
H_{0}(V_{21};\Z _{2}) = \Z_{2}.
\] 
\end{prop}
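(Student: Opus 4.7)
I would apply the long exact homology sequence of the pair $(V_{21},L_2)$ with coefficients in $\Z_2$, using as input the previously computed groups $H_*(L_2;\Z_2)$ (concentrated in degrees $0$ and $4$ with ranks $1$ and $15$ respectively, from Lemma~\ref{L2} together with the universal coefficient theorem) and the reduced homology of the wedge $V_{21}/L_2 \cong (\vee_{10}S^6)\vee(\vee_{20}S^5)$ from \eqref{V21L2second} (giving $H_n(V_{21},L_2;\Z_2)$ of rank $10$ in degree $6$, rank $20$ in degree $5$, and zero in positive degrees $\neq 5,6$). The isomorphism $H_6(V_{21};\Z_2)\cong\Z_2^{10}$ and the vanishing of $H_n(V_{21};\Z_2)$ for $n\in\{1,2,3\}$ are then immediate from the exact sequence, while $H_0(V_{21};\Z_2)=\Z_2$ follows from connectedness.

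The substance lies in the middle portion of the sequence,
\[
0 \to H_5(V_{21};\Z_2) \to \Z_2^{20} \xrightarrow{\partial_*} \Z_2^{15} \to H_4(V_{21};\Z_2) \to 0,
\]
which reduces the problem to determining the kernel and cokernel of the connecting homomorphism $\partial_*$. I would compute $\partial_*$ by representing each of the $20$ generators $[f_{ij}^l]$ of $H_5(V_{21}/L_2;\Z_2)$ as the $5$-cycle $\stackrel{\circ}{K_{ij}}(9)\times p_l$ described in Remark~\ref{cycles}, lifting it to a singular $5$-chain whose closure in $V_{21}$ has boundary supported in $L_2$. The endpoints of the path $p_l$ are two of the three punctures in $A=\{(0:1),(1:1),(1:0)\}\subset\C P^1$, which, under the parametrization of the virtual spaces of parameters in the chart $M_{12}$ given by Propositions~\ref{parametrization-8} and~\ref{parametrization-7} and Theorem~\ref{bound-univ}, correspond to specific $4$-dimensional strata over polytopes $K_{ij,kl}$ and $K_{pq}(7)$ that are facets of $K_{ij}(9)$ (Proposition~\ref{bound-9}). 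Writing these contributions modulo the image of the cellular boundary $\partial_4$ on $L_2$, where $\partial_4(K_{pq}(7))=[P_{pq}]$ and $\partial_4(K_{ij,kl})=[P_{ij}]+[P_{kl}]$ (mod~$2$), produces an explicit $\Z_2$-valued matrix for $\partial_*$ in the standard generators.

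The principal obstacle is the combinatorial bookkeeping needed to pin down this matrix and then extract its rank. My strategy would be to exploit the $S_5$-action throughout (Lemma~\ref{Sn-action}, Lemma~\ref{fund-5}): both source and target of $\partial_*$ are $\Z_2[S_5]$-modules and $\partial_*$ is equivariant, so Lemma~\ref{fund-5} reduces the explicit computation to one or two fundamental pairs $(K_{ij}(9),p_l)$. Decomposing $\Z_2^{20}$ and $\Z_2^{15}$ into $S_5$-isotypic components and computing $\partial_*$ on each component should yield $\dim\ker\partial_*=5$ and $\dim\mathrm{coker}\,\partial_*=1$, so that $H_5(V_{21};\Z_2)=\Z_2^5$ and $H_4(V_{21};\Z_2)=\Z_2$, completing the proof. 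A useful consistency check along the way is to verify via the same equivariant analysis that the one-orbit action of $S_5$ on $H_5(V_{21},L_2;\Z_2)/\!\!\sim$ is compatible with the computation on $H_4(L_2;\Z_2)$ known from Lemma~\ref{L2}.
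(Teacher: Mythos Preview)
Your approach is essentially the paper's: both use the long exact sequence of the pair $(V_{21},L_2)$, read off $H_6(V_{21};\Z_2)\cong\Z_2^{10}$ and the low-degree vanishing immediately, and reduce the remaining computation to the connecting map $\partial_*:\Z_2^{20}\to\Z_2^{15}$ evaluated on the generators $f_{ij}^l$ via the boundary strata identified in Propositions~\ref{parametrization-8}, \ref{parametrization-7} and Theorem~\ref{bound-univ}. The only methodological difference is that the paper does not use the $S_5$-equivariance or an isotypic decomposition; it simply writes out all twenty images $\partial_* f_{ij}^l$ in terms of the generators $g_{ij,kl}=\stackrel{\circ}{K}_{ij,kl}+\stackrel{\circ}{K}_{ij}(7)+\stackrel{\circ}{K}_{kl}(7)$ of $H_4(L_2;\Z_2)$ and solves the resulting linear system over $\Z_2$ by brute force. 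Your representation-theoretic shortcut is a reasonable alternative, but it is not what the paper does.

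There is, however, a numerical issue you should be alert to. The paper's own solution of that linear system yields a \emph{six}-dimensional kernel (free variables $\alpha_{12}^1,\alpha_{12}^2,\alpha_{13}^1,\alpha_{13}^2,\alpha_{14}^1,\alpha_{14}^2$), hence image of rank $14$ and cokernel $\Z_2$; the proof therefore concludes $H_5(V_{21};\Z_2)\cong\Z_2^6$ and $H_4(V_{21};\Z_2)\cong\Z_2$. The value $\Z_2^6$ is the one used consistently downstream (for instance in the exact sequence in the proof of Proposition~\ref{V2}), so the ``$\Z_2^5$'' in the displayed statement appears to be a typo. If you carry out your equivariant computation correctly you should also find $\dim\ker\partial_*=6$; your stated expectation that the answer comes out to $5$ is not what the actual linear algebra gives, and aiming for it would signal an error in your calculation rather than a confirmation of the proposition as printed.
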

\begin{proof}
Since $H_{i}(L_2)=0$ for $i\geq 5$, the exact homology sequence of the pair $(V_{21}, L_2)$ directly implies that $H_{i}(V_{21}) = 0$ for $i\geq 7$ and $H_{6}(V_{21})= \Z ^{10}$. In order to determine $H_{5}(V_{21})$  we note that  all five-dimensional  cycles  in $V_{21}$ comes from $V_{21}/L_{2}$, since $L_2$ has no five-dimensional cells. It follows from Remark~\ref{cycles} that the five dimensional cycles  in $V_{21}/L_2$   are of the form  $\sum _{1\leq i<j\leq 5}(\alpha _{ij}^{1}f_{ij}^{1}+\alpha _{ij}^{2}f_{ij}^{2})$. 
 The boundaries    $\partial _{4}^{V_{21}}f_{ij}^{l}$, $l=1,2$,  in $V_{21}$ are  four-dimensional cycles  consisting of the orbit spaces of  four-dimensional strata in  the boundary of the strata over $\stackrel{\circ}{K}_{ij}(9)$. Note that we do not need to take care about an  orientation issue since we work with $\Z _{2}$-coefficients.
It  follows from Proposition~\ref{parametrization-9}, Proposition~\ref{parametrization-8} and Proposition~\ref{parametrization-7} and Theorem~\ref{bound-univ}  that  these boundaries are as follows:
\[ 
 f_{12}^{1} \to \stackrel{\circ}{K}_{12,34}+\stackrel{\circ}{K}_{12,45}+\stackrel{\circ}{K}_{34}+\stackrel{\circ}{K}_{45}, \; f_{12}^{2} \to \stackrel{\circ}{K}_{12,45}+\stackrel{\circ}{K}_{12,35}+\stackrel{\circ}{K}_{45}+\stackrel{\circ}{K}_{35},
\]  
\[ 
f_{13}^{1} \to \stackrel{\circ}{K}_{13,24}+\stackrel{\circ}{K}_{13,45}+\stackrel{\circ}{K}_{24}+\stackrel{\circ}{K}_{45},\; f_{13}^{2} \to \stackrel{\circ}{K}_{13,45}+\stackrel{\circ}{K}_{13,25}+\stackrel{\circ}{K}_{45}+\stackrel{\circ}{K}_{25},
\]
\[ 
f_{14}^{1} \to\stackrel{\circ}{K}_{14,23}+\stackrel{\circ}{K}_{14,35}+\stackrel{\circ}{K}_{23}+\stackrel{\circ}{K}_{35},\; f_{14}^{2} \to\stackrel{\circ}{K}_{14,35}+\stackrel{\circ}{K}_{14,25}+\stackrel{\circ}{K}_{35}+\stackrel{\circ}{K}_{25},
\]
 \[f_{15}^{1} \to \stackrel{\circ}{K}_{15,23}+\stackrel{\circ}{K}_{15,34}+\stackrel{\circ}{K}_{23}+\stackrel{\circ}{K}_{34},\; f_{15}^{2} \to \stackrel{\circ}{K}_{15,34}+\stackrel{\circ}{K}_{15,24}+\stackrel{\circ}{K}_{34}+\stackrel{\circ}{K}_{24},
\] 
\[
f_{23}^{1} \to \stackrel{\circ}{K}_{15,23}+\stackrel{\circ}{K}_{23,45}+\stackrel{\circ}{K}_{15}+\stackrel{\circ}{K}_{45},\;  f_{23}^{2} \to \stackrel{\circ}{K}_{23,45}+\stackrel{\circ}{K}_{14, 23}+\stackrel{\circ}{K}_{45}+\stackrel{\circ}{K}_{14},
\]
 \[f_{24}^{1} \to \stackrel{\circ}{K}_{15, 24}+\stackrel{\circ}{K}_{24,35}+\stackrel{\circ}{K}_{15}+\stackrel{\circ}{K}_{35},\; f_{24}^{2} \to \stackrel{\circ}{K}_{24,35}+\stackrel{\circ}{K}_{13,24}+\stackrel{\circ}{K}_{35}+\stackrel{\circ}{K}_{13},
\] 
\[f_{25}^{1} \to \stackrel{\circ}{K}_{14, 25}+\stackrel{\circ}{K}_{25,34}+\stackrel{\circ}{K}_{14}+\stackrel{\circ}{K}_{34},\; f_{25}^{2} \to\stackrel{\circ}{K}_{25,34}+\stackrel{\circ}{K}_{25,13}+\stackrel{\circ}{K}_{34}+\stackrel{\circ}{K}_{13},
\]
\[ f_{34}^{1} \to \stackrel{\circ}{K}_{15, 34}+\stackrel{\circ}{K}_{12,34}+\stackrel{\circ}{K}_{15}+\stackrel{\circ}{K}_{12},\; f_{34}^{2} \to \stackrel{\circ}{K}_{12,34}+\stackrel{\circ}{K}_{25,34}+\stackrel{\circ}{K}_{12}+\stackrel{\circ}{K}_{25},
\]
\[ 
f_{35}^{1} \to\stackrel{\circ}{K}_{14,35}+\stackrel{\circ}{K}_{12,35}+\stackrel{\circ}{K}_{14}+\stackrel{\circ}{K}_{12},\; f_{35}^{2} \to \stackrel{\circ}{K}_{12,35}+\stackrel{\circ}{K}_{24,35}+\stackrel{\circ}{K}_{12}+\stackrel{\circ}{K}_{24},
\] 
\[f_{45}^{1} \to \stackrel{\circ}{K}_{23,45}+\stackrel{\circ}{K}_{12,45}+\stackrel{\circ}{K}_{23}+\stackrel{\circ}{K}_{12},\; f_{45}^{2} \to \stackrel{\circ}{K}_{12,45}+\stackrel{\circ}{K}_{13,45}+\stackrel{\circ}{K}_{12}+\stackrel{\circ}{K}_{13}.
\] 
The generators for $H_{4}(L_2)$ are by the proof of Lemma~\ref{L2} given by $g_{ij, kl} =\stackrel{\circ}{K}_{ij,kl}+\stackrel{\circ}{K}_{ij}(7)+\stackrel{\circ}{K}_{kl}(7)$. In terms of these generators the above boundaries can be written  as follows:
\[
 f_{12}^{1}\to g_{12,34}+g_{12,45},\; f_{12}^{2}\to g_{12,45}+g_{12,35}, \;
f_{13}^{1} \to g_{13,24}+g_{13,45},\; f_{13}^{2} \to g_{13,45}+g_{13,25},
\]
\[ 
f_{14}^{1} \to g_{14,23}+g_{14,35},\; f_{14}^{2} \to g_{14,35}+g_{14,25},\;
f_{15}^{1} \to g_{15,23}+g_{15,34},\; f_{15}^{2} \to g_{15,34}+g_{15,24},
\] 
\[
f_{23}^{1} \to g_{15,23}+g_{23,45},\;  f_{23}^{2} \to g_{23,45}+g_{23,14},\;
f_{24}^{1} \to g_{15,24}+g_{24,35},\; f_{24}^{2} \to g_{24,35}+g_{13,24},
\] 
\[f_{25}^{1} \to g_{14,25}+g_{25,34},\; f_{25}^{2} \to g_{25,34}+g_{25,13},\;
 f_{34}^{1} \to g_{15,34}+g_{12,34},\; f_{34}^{2} \to g_{12,34}+g_{25,34},
\]
\[ 
f_{35}^{1} \to g_{14,35}+g_{12,35},\; f_{35}^{2} \to g_{12,35}+g_{24,35},\;
f_{45}^{1} \to g_{23,45}+g_{12,45},\; f_{45}^{2} \to g_{12,45}+g_{13,45}.
\] 
Now, searching  for  five-dimensional cycles in $V_{21}$ we  consider the system: 
\begin{equation}\label{sys}
\partial _{4}^{V_{21}}(\sum \limits _{1\leq i<j\leq 5}(\alpha _{ij}^{1}f_{ij}^{1}+ \alpha _{ij}^{2}f_{ij}^{2})) = \sum \limits _{1\leq i<\leq 5}\alpha _{ij}^{1}\partial _{4}^{V_{21}}f_{ij}^{1}+ \sum\limits _{1\leq i<j\leq 5}\alpha _{ij}^{2}\partial _{4}^{V_{21}}f_{ij}^{2} = 0.
\end{equation}
Substituting the  above expressions for the boundaries in the system~\eqref{sys}  we obtain:
\[
\alpha _{12}^{1}+\alpha _{34}^{1}+\alpha_{34}^{2}=0, \; \alpha _{12}^{1}+\alpha _{12}^{2}+\alpha _{45}^{1}+\alpha _{45}^{2}=0, \; \alpha _{12}^{2}+\alpha _{35}^{1}+\alpha _{35}^{2}=0, \; \alpha _{13}^{1}+\alpha _{24}^{2}=0,
\]
\[  \alpha _{13}^{1}+\alpha _{13}^{2}+\alpha _{45}^{2}=0,\;
\alpha _{13}^{2}+\alpha _{25}^{2}=0, \; \alpha _{14}^{1}+\alpha _{23}^{2}=0, \; \alpha_{14}^{1}+\alpha _{14}^{2}+\alpha _{35}^{1}=0, 
\]
\[
 \alpha _{14}^{2}+\alpha _{25}^{1}=0, \; \alpha _{15}^{1}+\alpha _{23}^{1}=0,\; \alpha _{15}^{1}+\alpha _{15}^{2}+\alpha _{34}^{1}=0, \; \alpha _{15}^{2}+\alpha _{24}^{1}=0, 
\]
\[ \alpha _{23}^{1}+\alpha _{23}^{2}+\alpha _{45}^{1}=0, \;  \alpha _{24}^{1}+\alpha _{24}^{2}+\alpha _{35}^{2}=0, \; \alpha _{25}^{1}+\alpha _{25}^{2}+\alpha _{34}^{2}=0.
\]
 
The solution of this system is given by 
\[
\alpha _{45}^{1}=\alpha _{13}^{1}+\alpha _{13}^{2}, \; \alpha _{45}^{2}= \alpha_{12}^{1}+\alpha _{12}^{2}+\alpha _{13}^{1}+\alpha _{13}^{2}, \;    \; \alpha _{35}^{2}=\alpha _{12}^{2}+\alpha _{14}^{1}+\alpha _{14}^{2}, \;\alpha _{35}^{1} = \alpha _{14}^{1}+\alpha _{14}^{2}, 
\]
\[ \alpha _{34}^{2} = \alpha _{13}^{2}+\alpha _{14}^{2}, \; \alpha _{34}^{1}=\alpha _{12}^{1}+\alpha _{13}^{2}+\alpha _{14}^{2},\;  
\alpha _{25}^{2}= \alpha _{13}^{2}, \; \alpha _{25}^{1}=  \alpha _{14}^{2}, \; \alpha _{24}^{2}=\alpha _{13}^{1},
\]
\[ \alpha _{24}^{1}=\alpha _{12}^{2}+\alpha _{13}^{1}+\alpha _{14}^{1}+\alpha _{14}^{2},\;  
 \alpha _{23}^{2}= \alpha _{14}^{1}, \; \alpha _{23}^{1} = \alpha _{12}^{1}+\alpha _{12}^{2}+ \alpha _{13}^{1}+\alpha _{13}^{2}+\alpha _{14}^{1}, 
\]
\[
 \alpha _{15}^{2} = \alpha _{12}^{2}+\alpha _{13}^{1}+\alpha_{14}^{1}+\alpha _{14}^{2},\;
\alpha _{15}^{1} = \alpha _{12}^{1}+\alpha _{12}^{2}+\alpha _{13}^{1}+\alpha _{13}^{2}+\alpha _{14}^{1},
\]
where  $\alpha _{12}^{1}, \alpha _{12}^{2},  \alpha _{13}^{1},\alpha _{13}^{2},  \alpha _{14}^{1}, \alpha _{14}^{2}$  
are  free variables. 
Thus, the  dimension of the solution space  of this system is $6$, which implies that   $H_{5}(V_{21};\Z _{2})=\Z _{2}^6$.  It further implies that the image of the map $ H_{5}(V_{21}/L_2;\Z_{2})\to H_{4}(L_2,\;Z_{2})$ is $\Z _{2} ^{14}$.   Thus, if we now consider the exact homology sequence $ H_{5}(V_{21}/L_2; \Z_{2}) \to H_{4}(L_2;\Z_{2}) = \Z _{2}^{15}\to H_{4}(V_{21},\Z_{2})\to 0=H_{4}(V_{21}/L_2, \Z_{2})$,   we deduce that $H_{4}(V_{21}, \Z_{2})=\Z _{2}$.  Moreover, as a  generator for $H_{4}(V_{21}, \Z _{2})$   
can be taken  any of  generators $g_{ij, kl}$ for $H_{4}(L_2)$.

\end{proof}

\begin{prop}\label{V2}
The  non-trivial homology groups for $V_{2}$  with coefficients in $\Z _{2}$ are $H_{6}(V_2;\Z_{2}) = \Z _{2}^{6}$, $H_{5}(V_2;\Z_{2}) =\Z _{2}^{7}$  and $H_{0}(V_2;\Z_{2})=\Z_{2}$.
\end{prop}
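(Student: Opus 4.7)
The plan is to apply the long exact homology sequence of the pair $(V_2, V_1)$ with $\Z_2$-coefficients, using the identification $V_2/V_1 \cong V_{21}$ together with the already computed groups $H_\ast(V_1;\Z_2)$ and $H_\ast(V_{21};\Z_2)$. For degrees $n\ge 7$ and $n=1,2$ all neighbouring groups vanish, so $H_n(V_2;\Z_2)=0$ immediately; $H_0(V_2;\Z_2)=\Z_2$ because $V_2$ is path-connected. The substantive work is concentrated in the two connecting homomorphisms $\partial_6: H_6(V_{21};\Z_2)\to H_5(V_1;\Z_2)$ and $\partial_4: H_4(V_{21};\Z_2)\to H_3(V_1;\Z_2)$.

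For $\partial_6$, I would use the geometric description of the $10$ generators of $H_6(V_{21};\Z_2)$ from Remark~\ref{cycles}: the six-cell $e_{ij}=\stackrel{\circ}{K}_{ij}(9)\times(S^2\setminus\{\ast\})$, $1\le i<j\le 5$, fibered by $S^2\setminus\{\ast\}\subset \C P^1$. On the other side, $H_5(V_1;\Z_2)=\Z_2^5$ is generated by the five $5$-spheres $[\hat\mu^{-1}(O_l)]$. Appealing to Proposition~\ref{bound-9}, the only top-dimensional boundary facets of $K_{ij}(9)$ that lie in $\partial\Delta_{5,2}$ and carry a full $\C P^1$ fibre in $V_1$ are the two octahedra $O_i$ and $O_j$, so
\[
\partial_6(e_{ij}) \;=\; [\hat\mu^{-1}(O_i)] + [\hat\mu^{-1}(O_j)] \pmod 2.
\]
This is the mod-$2$ edge-vertex incidence map of the complete graph $K_5$; its rank is $5-1=4$ (the image is the hyperplane of vectors whose coordinate sum vanishes). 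Hence $\ker\partial_6 \cong \Z_2^{10-4}=\Z_2^6$ and $\operatorname{coker}\partial_6\cong \Z_2$, which yields $H_6(V_2;\Z_2)=\Z_2^6$, and, because $H_4(V_1;\Z_2)=0$ forces the map $H_5(V_{21};\Z_2)\to H_4(V_1;\Z_2)$ to be zero, the short exact sequence
\[
0\to \operatorname{coker}\partial_6 \to H_5(V_2;\Z_2)\to H_5(V_{21};\Z_2)\to 0
\]
splits in $\Z_2$-vector spaces and gives $H_5(V_2;\Z_2)=\Z_2\oplus \Z_2^{6}=\Z_2^7$, using the computation of $H_5(V_{21};\Z_2)$ carried out in the proof of Proposition~\ref{V12}.

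For $\partial_4$, the group $H_4(V_{21};\Z_2)=\Z_2$ is generated by any class $g_{ij,kl}=[\stackrel{\circ}{K}_{ij,kl}]+[\stackrel{\circ}{K}_{ij}(7)]+[\stackrel{\circ}{K}_{kl}(7)]$ from the proof of Lemma~\ref{L2}. Using Propositions~\ref{bound-8} and~\ref{bound-7}, the $3$-dimensional boundary facets of $K_{ij,kl}$ and of $K_{ij}(7)$ that lie in $\partial\Delta_{5,2}$ are the pyramids $P_\ast^{\ast\ast}$ and tetrahedra $T_\ast$; summing them mod $2$ and using that each pyramid appears once on the $K(7)$-side and once on the $K(8)$-side except at $\partial\Delta_{5,2}$, the boundary cycle is the entire sphere $S^3=\partial\Delta_{5,2}$, i.e.\ the generator of $H_3(V_1;\Z_2)=\Z_2$. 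Thus $\partial_4$ is an isomorphism $\Z_2\to\Z_2$, which forces $H_4(V_2;\Z_2)=H_3(V_2;\Z_2)=0$ from the exact sequence. The one step I expect to be the main obstacle is the bookkeeping for $\partial_4$: the $S^3$ boundary of the four-cycle $g_{ij,kl}$ must be assembled from several pieces (pyramids and tetrahedra) with correct multiplicities, and one has to verify that \emph{no} proper $3$-sub-cycle in $\partial\Delta_{5,2}$ is produced. This is where the compatibility of the boundary-facet decomposition in Propositions~\ref{bound-8} and~\ref{bound-7}, together with the observation that any unordered $\{ij,kl\}$ selects the same relative position of the remaining tetrahedron $T_m$, must be used carefully.
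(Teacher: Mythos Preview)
Your proposal is correct and follows the same route as the paper: both use the long exact sequence of $(V_2,V_1)$, compute $\partial_6(e_{ij})=S_i+S_j$ via Proposition~\ref{bound-9} (your recognition of this as the $K_5$ edge--vertex incidence map is a tidier reformulation of the paper's explicit linear system), and show $\partial_4$ is an isomorphism. Your flagged concern about the $\partial_4$ bookkeeping dissolves once you use the observation recorded just after Corollary~\ref{bound-p6}, namely that $K_{ij,kl}\cup K_{kl}(7)\cup K_{ij}(7)$ is a polytopal decomposition of $\Delta_{5,2}$; the chain $g_{ij,kl}$ therefore has boundary exactly $\partial\Delta_{5,2}$, the generator of $H_3(V_1;\Z_2)$, with no further cancellation checks needed.
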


\begin{proof}
Combining the exact homology sequence of the pair $(V_2,V_1)$ and  Lemma~\ref{V12}  we obtain  that $H_{i}(V_2;\Z_{2})=0$ for $i\geq 7$. Moreover, $V_2$ has no cells of the dimension greater or equal then $7$. The six-dimensional cycles  in $V_2$ comes from $V_{21}$ since $V_1$ has no six-dimensional cells. These cycles are, by Remark~\ref{cycles}, given by 
$e_{ij} = \stackrel{\circ}{K_{ij}}(9)\times (S^{2}\setminus \{\ast\})$ ,  where $1\leq i <j\leq 5$. Then $\partial _{5}^{V_2}(e_{ij}) =  \partial _{5}^{V_{21}}(e_{ij}) +  \partial _{5}^{V_1}(e_{ij}) = 0+  \partial _{5}^{V_1}(e_{ij})$. The five-dimensional cycles  in $V_1$ are cycles  which correspond to the five-spheres over the octahedra on $\partial \Delta _{5,2}$.  They represent a  basis for $H_{5}(V_{1};\Z_{2})$  and we denote them by $S_{i}$, $1\leq i\leq 5$.  It follows from Proposition~\ref{bound-9} that the boundary of $K_{ij}(9)$ contains exactly the octahedra $O_i$ and $O_j$. 
Note that, since we work with $\Z _{2}$-coefficients,  we do not need to take care about an  orientation. We  obtain that $\partial _{5}^{V_1}(e_{ij})$ are as follows:
\[
e_{12} \to  S_{1}+S_{2},\; e_{13} \to S_{1}+S_{3}, \;  e_{14}\to  S_{1}+S_{4}, \;  e_{15}\to S_{1}+S_{5},
\]
\[
e_{23}\to  S_{2}+S_{3},\; e_{24}\to  S_{2}+S_{4},\; e_{25}\to  S_{2}+S_{5},\;  
\]
\[
e_{34}\to  S_{3}+S_{4},\; e_{35}\to  S_{3}+S_{5},\; 
e_{45}\to  S_{4}+S_{5}.
\]  
We consider  homogeneous   system of five linear equations given by

\begin{equation}\label{cycle5}
\sum _{1\leq i<j\leq 5}\alpha _{ij}\partial _{6}e_{ij} = \sum _{1\leq i<j\leq 5}\alpha _{ij}(S_{i}+S_{j}) = 0,   
\end{equation}

and it can be directly  checked  that its solution space has the dimension $4$.  More precisely, the space of solutions of  this system is given by
\[
\alpha _{15} = \alpha _{12}+\alpha _{13}+\alpha _{14}, \;\; \alpha _{25} = \alpha _{12}+\alpha _{23}+\alpha _{24}, 
\]
\[\alpha _{35}= \alpha _{13}+\alpha_{23}+\alpha _{34}, \;\; \alpha _{45}=\alpha _{14}+\alpha _{24}+\alpha _{34}.
\]

where $\alpha _{12}, \alpha _{13}, \alpha _{14}, \alpha _{23}, \alpha _{24}$ and $\alpha _{34}$ are  free $\Z _{2}$-variables.

 It implies that $H_{6}(V_2;\Z_{2})\cong \Z_{2}^{6}$ and that the generators for $H_{6}(V_2;\Z_{2})$ are given by the cycles
\begin{equation}\label{gen}
c_{12}=e_{12}+e_{15}+e_{25}, \;\;  c_{13}=e_{13}+e_{15}+e_{35},\;\;  c_{14}=e_{14}+e_{15}+e_{45}, 
\end{equation}
\[
c_{23}=e_{23}+e_{25}+e_{35}, \;\;c_{24}=e_{24}+e_{25}+e_{45},\;\; c_{34}=e_{34}+e_{35}+e_{45}.
\]

Therefore, the quotient $H_{5}(V_1;\Z _{2})/\Im \partial _{5}(H_{6}(V_{21};\Z _{2})$ is isomorphic to  $\Z _{2}$.

Now the exact homology sequence $H_{6}(V_{21};\Z_{2})\to H_{5}(V_1;\Z_{2}) \to H_{5}(V_{2};\Z_{2})\to H_{5}(V_{21};\Z_{2}) \cong \Z _{2}^6\to 0$ implies that $H_{5}(V_{2};\Z_{2}) = \Z _{2}^7$. 
Further, the map $\Z_{2}\cong  H_{4}(V_{21};\Z_2)\to H_{3}(V_{1};\Z_{2})\cong \Z _{2}$ is an isomorphism, since the generator for  $H_{3}(V_{1};\Z_{2})$ is given by $\partial \Delta _{5,2}$, while the generator for  $ H_{4}(V_{21};\Z_2)$ is given by, for example,   $g_{12, 34}$ whose boundary is $\partial \Delta _{5,2}$. Then the exact homology sequence implies that $H_{4}(V_2, \Z _{2})=0$. The fact that  $H_{i}(V_2;\Z_{2})=0$ for $i\neq 0, 4, 5,6$  follows also directly from the exact homology sequence.
\end{proof}

\begin{cor}
The space $V_{21}$ has no torsion in homology.
\end{cor}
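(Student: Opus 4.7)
The filtration $L_1\subset L_2\subset V_{21}$ sets up three quotients whose integer homology is patently free: $L_1\simeq \bigvee_{10}S^3$, $L_2/L_1\simeq \bigvee_{25}S^4$, and, from the analysis of $V_{21}/L_2$ in \eqref{V21L2second}, $V_{21}/L_2\simeq \bigvee_{10}S^6 \vee \bigvee_{20}S^5$. Consequently the only place where torsion could be introduced into $H_*(V_{21};\Z)$ is through the connecting homomorphisms in the long exact sequences of the pairs $(L_2,L_1)$ and $(V_{21},L_2)$. The plan is to show that in each case the image of the connecting homomorphism is a direct summand of its target, equivalently that the integer matrices representing these maps have Smith normal forms with invariant factors only $0$ or $1$.

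First I would revisit the argument of Lemma~\ref{L2} over $\Z$. The cells of $L_2/L_1$ are the interiors of the $10$ pyramids $K_{pq}(7)$ and the $15$ polytopes $K_{ij,kl}$; by Proposition~\ref{bound-7} each pyramid $K_{pq}(7)$ has exactly one facet lying in $\stackrel{\circ}{\Delta}_{5,2}$, namely its base prism $P_{pq}$. Orient the prisms arbitrarily and orient each pyramid compatibly; then the connecting map $H_4(L_2/L_1)\to H_3(L_1)$ sends the class of $K_{pq}(7)$ to $\pm[P_{pq}]$ and the class of $K_{ij,kl}$ to $0$ (its facets either cancel or lie in $V_1$). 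The resulting integer matrix has a $10\times 10$ identity block and a $10\times 15$ zero block, so it is surjective with free kernel of rank $15$; this gives $H_4(L_2;\Z)=\Z^{15}$ and $H_i(L_2;\Z)=0$ otherwise, torsion-free.

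Next I would lift the boundary formulas in the proof of Proposition~\ref{V12} to integer coefficients. The cells $e_{ij}$ and $f_{ij}^l$ of $V_{21}/L_2$ are products of the interior of a convex polytope and an orientable $2$-disc (for $e_{ij}$) or $1$-disc (for $f_{ij}^l$), hence carry canonical orientations. With these orientations the integer boundary of each $f_{ij}^l$ is a signed sum of exactly four generators $g_{ij,kl}$ of $H_4(L_2;\Z)$, with coefficients $\pm 1$, reducing mod $2$ to the formulas listed in the proof of Proposition~\ref{V12}. The matrix $\partial_5^{\Z}\colon\Z^{20}\to\Z^{15}$ has entries in $\{0,\pm 1\}$ with exactly four non-zero entries per row, and its mod-$2$ rank is $14$ by the explicit computation carried out in that proof. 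The key step is to check that its rational rank is also $14$ and that its Smith normal form consists of $14$ ones and a zero; this can be done by exhibiting a $14\times 14$ submatrix with determinant $\pm 1$, e.g.\ by iteratively peeling off the variables $\alpha^l_{ij}$ in the order used to solve the linear system in the proof of Proposition~\ref{V12}. A parallel analysis is needed for $\partial_6^\Z\colon \Z^{10}\to H_5(V_1;\Z)=\Z^5$ sending $e_{ij}\mapsto \pm S_i \pm S_j$; this is (up to sign) the boundary map of the complete graph $K_5$, which is well-known to have Smith normal form $\operatorname{diag}(1,1,1,1,0,\dots,0)$.

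Once these two Smith-normal-form verifications are in hand, the long exact sequences of $(L_2,L_1)$ and $(V_{21},L_2)$ give $H_*(V_{21};\Z)$ as extensions of free abelian groups by free abelian groups (because image and cokernel of each connecting map are free), so $H_*(V_{21};\Z)$ is itself free abelian and the corollary follows. The expected obstacle is purely bookkeeping: fixing consistent orientations for all the admissible polytopes and for $\C P^1$ so that the signs in the boundary formulas are unambiguous, and then verifying the unimodularity of the two boundary matrices. As a sanity check, the resulting ranks must match the $\Z_2$-dimensions computed in Proposition~\ref{V12}, which by the universal coefficient theorem is already an equivalent reformulation of ``no $2$-torsion''; since the matrices have entries in $\{0,\pm 1\}$, absence of $2$-torsion in fact forces absence of $p$-torsion for every prime $p$, completing the argument.
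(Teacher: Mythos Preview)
Your overall strategy—reduce to the connecting maps of the filtration $L_1\subset L_2\subset V_{21}$ and check that their Smith normal forms contain only $0$'s and $1$'s—is the right one, and it is more explicit than what the paper does. But the execution has three concrete errors.

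\textbf{(i) The boundary of $K_{ij,kl}$ is not zero.} By Proposition~\ref{bound-8} the polytope $K_{ij,kl}$ has \emph{two} interior facets, the prisms $P_{ij}$ and $P_{kl}$; the remaining facets lie in $\partial\Delta_{5,2}$. Hence the connecting map $H_4(L_2/L_1)\to H_3(L_1)$ sends $[K_{ij,kl}]$ to $\pm[P_{ij}]\pm[P_{kl}]$, not to $0$. Your conclusion that $H_*(L_2)$ is free still holds (the ten pyramids alone give a surjection onto the ten prisms), but the generators of $H_4(L_2;\Z)$ are the combinations $g_{ij,kl}=K_{ij,kl}\pm K_{ij}(7)\pm K_{kl}(7)$ used in Proposition~\ref{V12}, not the bare $K_{ij,kl}$. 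Consequently each $\partial f_{ij}^{\,l}$ has exactly \emph{two} nonzero entries in the $g$-basis, not four.

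\textbf{(ii) The map $\partial_6^{\Z}\colon \Z^{10}\to H_5(V_1;\Z)$ is irrelevant here.} You are computing the homology of $V_{21}=V_2/V_1$, in which $V_1$ has been collapsed. The connecting map in degree $6$ for the pair $(V_{21},L_2)$ has target $H_5(L_2)=0$, so $H_6(V_{21})\cong\Z^{10}$ with no work. The map $e_{ij}\mapsto S_i+S_j$ arises only later, in the computation of $H_*(V_2)$ via the pair $(V_2,V_1)$; it plays no role in the present corollary.

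\textbf{(iii) The closing ``sanity check'' is based on a false claim.} It is not true that a matrix with entries in $\{0,\pm1\}$ whose cokernel has no $2$-torsion automatically has no $p$-torsion. For instance the $3\times 3$ matrix with rows $(1,1,1)$, $(1,-1,0)$, $(0,1,-1)$ has determinant $3$ and cokernel $\Z_3$. So comparing ranks with the $\Z_2$-computation does not finish the argument.

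What actually remains, once (ii) is taken into account, is a single computation: the exact sequence of $(V_{21},L_2)$ already gives $H_6(V_{21})\cong\Z^{10}$ and shows $H_5(V_{21})\hookrightarrow\Z^{20}$ is free; only $H_4(V_{21})=\operatorname{coker}\!\big(\partial_5\colon\Z^{20}\to\Z^{15}\big)$ can carry torsion. With the correct description from (i), $\partial_5$ sends each $f_{ij}^{\,l}$ to a difference $g_{ij,ab}-g_{ij,cd}$ of two basis vectors, so it is the oriented incidence matrix of a graph on the $15$ vertices $g_{ij,kl}$; such matrices are totally unimodular and hence have torsion-free cokernel. This is the clean way to close your argument.

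The paper's proof follows a different, shorter route: it reads off $H_6(V_{21})\cong\Z^{10}$ and the freeness of $H_5(V_{21})$ directly from the $(V_{21},L_2)$ sequence as above, and then handles $H_4(V_{21})$ by bringing in the pair $(V_2,V_1)$ together with the $\Z_2$-computations already established, rather than by analysing the matrix $\partial_5$ explicitly.
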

\begin{proof}
By the  universal coefficient formula we have that  $H_{i}(V_{21})=0$, $1\leq i\leq 3$, $H_{i}(V_2)=0$, $1\leq i\leq 4$, $H_{5}(V_{21})\cong \Z ^{l}\oplus \Z _{2}^q$, where $l+q=6$ and $H_{6}(V_{21})=\Z ^{l}\oplus \Z _{2}^q$, where $l+q=10$.   Then an exact homotopy sequence of the pair $(V_2, V_1)$ implies $H_{4}(V_{21})\cong \Z$, while an exact homotopy sequence of the pair $(V_{21}, L_2)$ implies that $H_{5}(V_{21})\cong \Z ^{6}$ and $H_{6}(V_{21})\cong \Z ^{10}$.
\end{proof}

\begin{thm}
The non-trivial homology groups of the space $V_{2}$ are $H_{6}(V_2)\cong \Z ^{5}$, $H_{5}(V _{2})\cong \Z ^{6}\oplus \Z _{2}$ and $H_{0}(V_2)\cong \Z$.
\end{thm}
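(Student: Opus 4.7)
The plan is to apply the long exact homology sequence of the pair $(V_2, V_1)$ in integer coefficients. The input is the previously computed $H_*(V_1)$ (with $H_5 = \Z^5$, $H_3 = H_0 = \Z$, and zero otherwise) together with the torsion-free $H_*(V_{21})$ (with $H_6 = \Z^{10}$, $H_5 = \Z^6$, $H_4 = \Z$), both of which are already in hand. Everything then reduces to identifying the two potentially nontrivial connecting homomorphisms $\partial_6 \colon H_6(V_{21}) \to H_5(V_1)$ and $\partial_4 \colon H_4(V_{21}) \to H_3(V_1)$.

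For $\partial_6$, the generators of $H_6(V_{21})$ are the cells $e_{ij} = \stackrel{\circ}{K}_{ij}(9) \times (S^2\setminus\{*\})$ from Remark~\ref{cycles}, and those of $H_5(V_1)$ are the spheres $S_i$ above the boundary octahedra $O_i$. By Proposition~\ref{bound-9}, only $O_i$ and $O_j$ occur as octahedral facets of $K_{ij}(9)$, so $\partial_6(e_{ij})$ is supported on $S_i\cup S_j$, and the $\Z_2$-reduction computed in the proof of Proposition~\ref{V2} forces each coefficient to be $\pm 1$. A direct orientation check, tracing through the collapse $\partial K_{ij}(9)\times \C P^1 \backsimeq \partial K_{ij}(9)$ used to build $K^{\star}_{ij}(9)$, gives matching signs, so $\partial_6(e_{ij}) = \pm(S_i+S_j)$. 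The resulting map $\Z^{10}\to \Z^5$, $e_{ij}\mapsto S_i+S_j$, has image equal to the index-two subgroup $\{(a_1,\dots,a_5)\in\Z^5 : \sum_i a_i \text{ is even}\}$ and free kernel of rank $10-5=5$. Hence $H_6(V_2) = \ker\partial_6 = \Z^5$, and the next exact segment $\Z^{10}\xrightarrow{\partial_6}\Z^5\to H_5(V_2)\to \Z^6\to 0$ contracts to the short exact sequence $0\to \Z_2\to H_5(V_2)\to \Z^6\to 0$, which splits since $\Z^6$ is free, delivering $H_5(V_2) = \Z^6\oplus \Z_2$.

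For $\partial_4$, the generator of $H_4(V_{21})=\Z$ is represented by the relative $4$-cycle $g_{ij,kl} = \stackrel{\circ}{K}_{ij,kl}+\stackrel{\circ}{K}_{ij}(7)+\stackrel{\circ}{K}_{kl}(7)$ from the proof of Lemma~\ref{L2}. By Propositions~\ref{bound-8}, \ref{bound-7} and Corollary~\ref{bound-p6} the internal facets $P_{ij}$, $P_{kl}$ cancel in $\partial g_{ij,kl}$, while the remaining boundary pieces (the five tetrahedra $T_m$ together with the pyramids decomposing the five octahedra) exhaust $\partial \Delta_{5,2}$ exactly once. Thus $\partial_4\colon \Z\to \Z$ sends the generator to the fundamental class of $\partial\Delta_{5,2}$ and is an isomorphism, forcing $H_4(V_2) = H_3(V_2) = 0$. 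For all remaining $i\geq 1$ with $i\notin\{5,6\}$, both neighboring terms in the long exact sequence vanish, so $H_i(V_2)=0$; finally $V_2$ is path-connected, giving $H_0(V_2)=\Z$.

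The main obstacle is pinning the sign in $\partial_6(e_{ij})$: the $\Z_2$-computation is insensitive to the choice between $\pm(S_i+S_j)$ and $\pm(S_i-S_j)$, and the latter would instead produce $H_6(V_2)=\Z^6$ together with a torsion-free $H_5(V_2)=\Z^7$, contradicting the claim. A clean way to resolve this uses $S_5$-equivariance: the transposition $(ij)$ fixes $e_{ij}$ up to a determined sign and swaps $S_i\leftrightarrow S_j$, so combining the symmetry with a single local orientation check on one fundamental pair $(e_{ij},\,S_i\cup S_j)$ suffices to settle the global sign, after which the computation above proceeds mechanically.
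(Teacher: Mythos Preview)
Your approach is essentially the same as the paper's: both run the long exact sequence of the pair $(V_2,V_1)$ with integer coefficients, reduce everything to the connecting map $\partial_6\colon H_6(V_{21})\to H_5(V_1)$ on the basis $e_{ij}\mapsto \pm S_i\pm S_j$, and identify the image as the index-$2$ ``even-sum'' sublattice of $\Z^5$. Your treatment of $\partial_4$ (explicitly cancelling the interior prisms via Propositions~\ref{bound-8}, \ref{bound-7} and Corollary~\ref{bound-p6} to see $\partial g_{ij,kl}=\pm[\partial\Delta_{5,2}]$) is actually a bit more detailed than the paper's, which simply asserts this.

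The one place where the paper is cleaner is exactly the point you flag as the main obstacle, the sign in $\partial_6(e_{ij})$. Rather than invoking $S_5$-equivariance plus a local check, the paper observes directly that $O_i$ and $O_j$ are both facets of $\Delta_{5,2}$, and since $K_{ij}(9)\subset\Delta_{5,2}$ shares these two facets with $\Delta_{5,2}$, the outward normal to $K_{ij}(9)$ along $O_i$ (resp.\ $O_j$) coincides with the outward normal to $\Delta_{5,2}$. Hence the induced boundary orientations on $O_i$ and $O_j$ from $K_{ij}(9)$ are the fixed ones coming from $\Delta_{5,2}$, and both appear with the same sign in $\partial e_{ij}$. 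This settles $\partial_6(e_{ij})=S_i+S_j$ uniformly with no symmetry bookkeeping required.
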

\begin{proof}
We  follow the proof of Proposition~\ref{V2}. Note that the boundary  $\partial _{5}^{V_1}(e_{ij})$ with integral coefficients is  the same as  with $\Z _{2}$-coefficients. It follows from the observation that this boundary is given by the  octahedra $O_i$ and $O_j$ of $K_{ij}$, while it is 
identity on the parameter space $S^{2}\setminus \{\ast\}$. Moreover, being a facet of $\Delta _{5,2}$, an orientation of the octahedra $O_{i}$ is given by an exterior normal for $\Delta _{5,2}$,  so it coincides with the induced orientation from $K_{ij}$  for any $K_{ij}$, $i\neq j$, since  $K_{ij} \subset \Delta _{5,2}$. Therefore,  the $6$-cycles in $V_2$ are given by  the solution of~\eqref{cycle5} with integer coefficients. The solution space of this system has dimension $5$ and it is given by 
\[
\alpha _{15} =- \alpha _{12}-\alpha _{13}-\alpha _{14}, \;\; \alpha _{25} = -\alpha _{12}-\alpha _{23}-\alpha _{24}, 
\]
\[\alpha _{35}= \alpha _{12}+\alpha_{14}+\alpha _{24}, \;\; \alpha _{45}=\alpha _{12}+\alpha _{13}+\alpha _{23}.
\]
\[
\alpha _{34}=-\alpha _{12}-\alpha_{13}-\alpha_{14}-\alpha_{23}-\alpha_{24},
\]
where $\alpha _{12}, \alpha_{13}, \alpha_{14}, \alpha_{23}, \alpha _{24}$ are free integer variables. Therefore, $H_{6}(V_2)\cong \Z ^{5}$ and  $\Im \partial _{5}(H_{6}(V_{21})$ is a span of $S_1+S_2, S_1+S_3, S_1+S_4, S_1+S_5, S_2+S_3$, 
which coincides with a  span of $2S_1, S_1+S_2, S_1+S_3, S_1+S_4, S_1+S_5$.  It implies that  $H_{5}(V_1)/\Im \partial _{5}(H_{6}(V_{21})\cong \Z _{2}$.  Now an exact homology sequence $H_{6}(V_{21})\to H_{5}(V_1) \to H_{5}(V_{2})\to H_{5}(V_{21}) \cong \Z^6\to 0$ implies that $H_{5}(V_{2}) = \Z^{6}\oplus \Z _{2}$, which completes the proof. 
\end{proof}

\subsection{The homology groups for $V_3 = G_{5,2}/T^5$.}
We denoted by $V_2$ the union of the orbit spaces of  the  strata  which are different from the main stratum.  As we already pointed,  $V_3/V_2$  is an one-point compactification of  $W/T^5$, which  implies 
\[
V_3/V_2 \cong  (\stackrel{\circ}{\Delta}_{5,2}\times F)^{\star} \cong (\Delta _{5,2} \times  U)/(U_1\cup U_2),
\]
\[
U = \C P^1\times \C P^1, \;\;  U_{1}= \partial \Delta _{5,2}\times \C P^1\times \C P^1,
\]
\[ U_2=(\Delta _{5,2}\times A\times \C P^1) \cup  (\Delta _{5,2}\times \C P^1\times A ) \cup (\Delta _{5,2}\times \Delta (\C P^1)).
\]
Note that $X_1= (\Delta _{5,2}\times U)/U_1\cong S^3\ast U$.  We denote by $X_2$ the projection of $U_2$ on $X_1$, that is  $X_2=U_2/(U_1\cap U_2) \cong S^{3}\ast U_2$ . We obtain that
\[
 X_1/X_2 \cong (S^3\ast U)/ (S^3\ast U_2) \cong (S^{3}\ast (U/U_2))/(S^{3}\ast \{\text{pt}\}).
\]
It follows that
\[
V_3/V_2\cong S^3\ast (U/U_2).
\]
\begin{prop}\label{V32}
The non-trivial homology groups for $V_3/V_2$ are as follows:
\[
H_{8}(V_3/V_2) = H_{0}(V_3/V_2)=\Z, \;\; H_{7}(V_3/V_2) =\Z ^{5}, \;\; H_{6}(V_3/V_2)=\Z ^{6}.
\]
\end{prop}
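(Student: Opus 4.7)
The plan is to compute $H_{\ast}(V_3/V_2)$ via the homotopy equivalence
\[
V_3/V_2 \;\cong\; S^3 \ast (U/U_2) \;\simeq\; \Sigma^{4}(U/U_2),
\]
established in the excerpt just before the proposition. Here $U=\C P^1\times \C P^1$ and, after projecting to the $U$-factor, we may take
\[
G \;=\; (A\times \C P^1)\cup (\C P^1\times A)\cup \Delta(\C P^1)\subset U,
\]
so that $U/U_2\simeq U/G$. By the suspension isomorphism it then suffices to compute $H_{\ast}(U,G)$ and use $\tilde H_{n+4}(V_3/V_2)\cong H_n(U,G)$.

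First I would determine the homotopy type of $G$. It is a union of $7$ two-spheres: three coordinate spheres $S^{2}_{1,j}=\{a_j\}\times\C P^1$, three coordinate spheres $S^{2}_{2,j}=\C P^1\times\{a_j\}$, and the diagonal $\Delta(\C P^1)$; all pairwise intersections are finite. Building $G$ inductively by attaching one sphere at a time (starting with $\Delta(\C P^1)\cup L_1$, which is a wedge of $4$ spheres because each $S^2_{1,j}$ meets $\Delta$ at a single point $p_j=(a_j,a_j)$, and then adjoining $S^{2}_{2,1},S^{2}_{2,2},S^{2}_{2,3}$, each of which meets the existing complex transversally in $3$ points), I obtain
\[
G\;\simeq\;\bigvee^{7} S^{2}\;\vee\;\bigvee^{6} S^{1},
\]
consistent with $\chi(G)=1-6+7=2$ computed by inclusion-exclusion from $\chi(L_1)+\chi(L_2)+\chi(L_3)-\chi(L_1\cap L_2)-\chi(L_1\cap L_3)-\chi(L_2\cap L_3)+\chi(L_1\cap L_2\cap L_3)=6+6+2-9-3-3+3$. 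In particular $H_0(G)=\Z$, $H_1(G)=\Z^6$, $H_2(G)=\Z^7$, with a distinguished $\Z$-basis of $H_2(G)$ given by the fundamental classes $[S^{2}_{1,j}]$, $[S^{2}_{2,j}]$, $[\Delta(\C P^1)]$.

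Next I would analyse the long exact sequence of the pair $(U,G)$. The only nontrivial step is the map $\varphi\colon H_2(G)=\Z^7\to H_2(U)=\Z^2$. Writing $H_2(U)=\Z e_1\oplus\Z e_2$ with $e_1=[\C P^1\times\mathrm{pt}]$, $e_2=[\mathrm{pt}\times\C P^1]$, we have $[S^{2}_{1,j}]\mapsto e_2$, $[S^{2}_{2,j}]\mapsto e_1$, and $[\Delta(\C P^1)]\mapsto e_1+e_2$. Thus $\varphi$ is surjective and an easy dimension count gives $\ker\varphi=\Z^5$. Combined with $H_3(U)=H_4(G)=0$, $H_1(G)\to H_1(U)=0$ zero, and the isomorphism $H_0(G)\to H_0(U)$, the long exact sequence yields
\[
H_4(U,G)=\Z,\quad H_3(U,G)=\Z^5,\quad H_2(U,G)=\Z^6,\quad H_1(U,G)=H_0(U,G)=0.
\]
Applying $\tilde H_{n+4}(V_3/V_2)\cong \tilde H_n(U/G)\cong H_n(U,G)$ gives the claimed values $H_8=\Z$, $H_7=\Z^5$, $H_6=\Z^6$, with all other reduced groups trivial (and $H_0=\Z$).

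The main obstacle is Step~1: verifying carefully that $G$ deformation retracts onto the wedge $\bigvee^7 S^2\vee\bigvee^6 S^1$, i.e.\ that the attaching of each sphere along a finite set contributes precisely the expected loops and no hidden $2$-relations. Once this homotopy type is secured, the remaining algebra (the rank computation of $\varphi$ and the long exact sequence) is routine, and the identification of $[\Delta(\C P^1)]$ as $e_1+e_2$ in $H_2(\C P^1\times\C P^1)$ is the only other point that deserves care.
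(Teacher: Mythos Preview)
Your argument is correct and follows essentially the same route as the paper's first proof: both reduce to the long exact sequence of the pair $(U,G)$ with $U=\C P^1\times\C P^1$ and $G=(A\times\C P^1)\cup(\C P^1\times A)\cup\Delta(\C P^1)$, identify the map $H_2(G)\to H_2(U)$ as the obvious surjection $\Z^7\to\Z^2$, and then suspend four times. The only difference is in how $H_\ast(G)$ is obtained: the paper applies Mayer--Vietoris twice (first to $(A\times\C P^1)\cup(\C P^1\times A)$, then adjoining the diagonal), whereas you identify the homotopy type of $G$ directly as $\bigvee^7 S^2\vee\bigvee^6 S^1$ by attaching spheres along finite sets---a slightly more geometric shortcut that yields the same Betti numbers; your inductive attachment (each horizontal sphere meeting the previously built complex in exactly three points, contributing one $S^2$ and two $S^1$'s) is sound, and the Euler-characteristic check you give confirms it.
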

\begin{proof}
Since $\bar{H}_{i}(S^{3}\ast (U/U_2) )= H_{i-1}((S^3\times (U/U_2))/(S^{3}\vee (U/U_2)))$,  it follows  that  
\begin{equation}\label{ahom}
\bar{H}_{i}(S^3\ast (U/U_2)) = H_{i-1}(S^3\times (U/U_2))/(H_{i-1}(S^3)\oplus H_{i-1}(U/U_2)).
\end{equation}

 In order to compute the homology groups for $U/U_2$, we first compute the homology groups for $U_2$.  Let us set $U_2= U_3\cup U_4$ for  $U_3=(\Delta _{5,2}\times A\times \C P^1) \cup  (\Delta _{5,2}\times \C P^1\times A)$ and $U_4=\Delta _{5,2}\times \Delta (\C P^1))$.  The homology groups for $U_3$ we compute by applying the  Mayer-Vietoris sequence to the pair $(\C P^1\times A, A\times \C P^1)$. Since $(\C P^1\times A)\cap (A\times \C P^1)=A\times A$, it follows that  $H_{i}(U_3)=H_{i}(\C P^1\times A)\oplus H_{i}(A\times \C P^1)$ for $i\geq 2$. The exact sequence $0\to H_1(U_3)\to H_{0}(A\times A)\to H_{0}(A\times \C P^1)\oplus H_{0}(\C P^1\times A)\to H_{0}(U_3)$ gives the exact sequence  $0\to H_{1}(U_3)\to \Z ^{9}\to \Z ^3\oplus \Z ^{3}\to Z$,  which  implies that $H_{1}(U_3)\cong \Z ^{4}$. Thus, the nontrivial homology groups for $U_3$ are given by
\begin{equation}
H_{i}(U_3) = \left\{
\begin{array}{cc}
\Z^3\oplus \Z^{3}, &  i=2\\
\Z ^{4}, &  i=1\\
\Z ,  &  i=0.
\end{array}\right .
\end{equation}
Now we apply the  Mayer-Vietoris sequence to the pair $(U_3,U_4)$. Since $U_3\cap U_4 = \Delta (A)$,  it follows $H_{i}(U_2)=H_{i}(U_3)\oplus H_{i}(U_4)$ for $i\geq 2$. We further obtain  the exact sequence $0\to H_{1}(U_3)\oplus H_{1}(U_4)\to  H_{1}(U_2)\to H_{0}(\Delta (A))\to H_{0}(U_3)\oplus H_{0}(U_4) \to H_{0}(U_2)$ which gives the exact sequence  $0\to \Z ^4\to H_{1}(U_2)\to \Z ^{3}\to \Z\oplus \Z\to \Z$. It follows that $H_{1}(U_2)=\Z^6$.  We obtain the nontrivial homology groups for $U_2$:
\begin{equation}
H_{i}(U_2) = \left\{
\begin{array}{cc}
\Z^3\oplus \Z^{3}\oplus \Z, &  i=2,\\
\Z ^{6}, &  i=1,\\
\Z ,  &  i=0.
\end{array}\right .
\end{equation} 
We consider now the exact homology sequence of the pair $(U, U_2)$. It immediately gives that $H_{4}(U/U_2)=\Z$. We further obtain the exact sequence $0\to H_{3}(U/U_2)\to H_{2}(U_2)\to H_{2}(U)\to H_{2}(U/U_2)\to H_{1}(U_2)\to 0$, which gives the exact sequence  $0\to H_{3}(U/U_2)\to \Z ^{7}\to \Z ^{2}\to H_{2}(U/U_2)\to \Z ^6\to 0$. Note that the map $H_{2}(U_2)\to H_{2}(U)$ is induced by the inclusion $U_2\to U$. Let us  consider a  cell decomposition for $\C P^1=S^2$ which consists of the $2$ two-dimensional cells $D_1,D_2$,  then the $3$ one-dimensional cells $I_1, I_2, I_3$  and the $3$ zero-dimensional cells given by the points from $A$. Then the seven generators for $H_{2}(U_2)$ are given by $(D_1\cup D_2)\times A$, $A\times (D_1\cup D_{2})$ and $\Delta (D_1\cup D_2)$, while the generators for $H_{2}(U)$ are given by $(D_1\cup D_2)\times \{\ast\}$ and  $\{\ast\} \times (D_1\cup D_{2})$, where $\{\ast \}$ is a fixed point from $A$. It implies that the map $H_{2}(U_2)\to H_{2}(U)$ is an  epimorphism which   implies   that $H_{3}(U/U_2)=\Z ^{5}$. The exact sequence $0\to H_{2}(U/U_2)\to \Z ^6\to 0$ implies that $H_{2}(U/U_2)=\Z ^6$. Since all three spaces $U$, $U_2$, $U/U_2$ are connected we obtain that $H_{1}(U/U_2)=0$ and $H_{0}(U/U_2)=\Z$. Altogether the nontrivial homology groups for $U/U_2$ are:
\begin{equation}
H_{i}(U/U_2) = \left\{
\begin{array}{cc}
\Z, &  i=4,\\
\Z ^{5}, &  i=3,\\
\Z ^{6},  &  i=2,\\
\Z,     & i=0.
\end{array}\right .
\end{equation}
Together with~\eqref{ahom} this proves the statement. 
\end{proof}
\begin{proof}
We provide also the proof of the previous statement which is  more geometrical.
Let 
\[
X= \Delta _{5,2}  \times \C P^1\times \C P^1
\]
and
\[
Y= (\partial \Delta _{5,2}\times \C P^1 \times \C P^1) \cup (\Delta _{5,2}\times A \times \C P^1) \cup  (\Delta_{5,2}\times \C P^1 \times A).
\]
Then
\[
X/Y \cong ( \stackrel{\circ}{\Delta}_{5,2}\times ((S^2\vee S^1\vee S^1)\setminus \{\ast\})\times ((S^2\vee S^1\vee S^1)\setminus \{\ast\}))\cup \{\ast\}.
\]
It follows that 
\[
X/Y \cong S^8\vee (\vee _{4}S^7)\vee (\vee_{4}S^6).
\]
Therefore,
\begin{equation}\label{XY}
H_{8}(X/Y) = \Z, \; H_{7}(X/Y) =\Z ^4, \; H_{6}(X/Y) = \Z ^{4},\;  H_{0}(X/Y)= \Z.
\end{equation}
Let $Z = \Delta_{5,2}\times \Delta (\C P^1\times \C P^1)$. Then  $Z\cap Y =  \Delta_{5,2}\times \Delta (A)$.  It follows that
\[
Z/(Z\cap Y) =  (\stackrel{\circ}{\Delta}_{5,2}\times \Delta (\C P^1_{A}))\cup \{\ast\},
\]
that is
\[
Z/(Z\cap Y) \cong  (\stackrel{\circ}{\Delta}_{5,2}\times ((S^2\vee S^1\vee S^1)\setminus \{\ast\}))\cup \{\ast\}.
\]
Therefore, 
\[
H_{6}(Z/(Z\cap Y)) =\Z, \;  H_{5}(Z/(Z\cap Y)) =\Z^{2}, \; H_{0}(Z/(Z\cap Y)) =\Z.
\]
Set  $C= Z/(Z\cap Y)$. 
Then $V_3/V_2 \cong (X/Y)/C$.  Using this we compute the homology groups for $V_3/V_2$.
The exact homology sequence of the pair $(X/Y, C)$ immediately gives that 
\[
H_{8}(X/U)=\Z,\; H_{0}(X/U) =0, \;  H_{i}(X/U)=0, \; i=1,2,3,4,5.
\]
We need to compute $ H_{7}((X/Y)/C)$ and  $ H_{6}((X/Y)/C)$. Let us consider the exact sequence:
\begin{equation}\label{eseq}
0=H_{7}(C)\to \Z ^{4}=H_{7}(X/Y) \to H_{7}((X/Y)/C)\to \Z = H_{6}(C)\to \Z ^{4}=H_{6}(X/Y)
\end{equation}
\[
\to H_{6}((X/Y)/C)\to \Z ^{2}=H_{5}(C)\to 0=H_{5}(X/Y).
\]

The map $\Z ^{4}=H_{7}(X/Y)\to H_{7}((X/Y)/C)$ is a  monomorphism.  The map $\Z = H_{6}(C)\to \Z ^{4}=H_{6}(X/Y)$ is induced by the   diagonal embedding  $S^2\to ((S^2\setminus \{\ast\})\times (S^2\setminus \{\ast\}))\cup \{\ast\} \cong S^4$, so it is trivial.  It follows from~\eqref{eseq} that 
\[
H_{7}((X/Y)/C) = \Z ^5.
\]
It further implies that the map $\Z ^{4}=H_{6}(X/Y)\to H_{6}((X/Y)/C)$  is a monomorphism. Since the map  $H_{6}((X/Y)/C)\to \Z ^{2}=H_{5}(C)$ is an epimorphism it follows that
\begin{equation}\label{C2}
H_{6}((X/Y)/C) = \Z ^{6}.
\end{equation}
\end{proof}

The universal coefficient theorem implies:
\begin{cor}
The nontrivial homology groups for $V_3/V_2$ with $\Z_{2}$-coefficients  are as follows:
\begin{equation}
H_{8}(V_3/V_2;\Z_{2})\cong \Z_{2}, \; H_{7}(V_3/V_2;\Z _{2})\cong \Z _{2} ^5, \; H_{6}(V_3/V_2;\Z_{2})\cong  \Z _{2} ^{6}, \; H_{0}(V_3/V_2;\Z_{2})\cong \Z _{2}.
\end{equation}
\end{cor}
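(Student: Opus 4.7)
The plan is to deduce this corollary directly from Proposition~\ref{V32} via the Universal Coefficient Theorem for homology. Recall that for any space $X$ and any $i$ one has the natural short exact sequence
\[
0 \to H_i(X) \otimes \Z_2 \to H_i(X;\Z_2) \to \mathrm{Tor}(H_{i-1}(X),\Z_2) \to 0,
\]
so the computation reduces to inspecting the integral homology of $V_3/V_2$ computed in Proposition~\ref{V32} in the relevant degrees and verifying that there is no $2$-torsion anywhere that could contribute a $\mathrm{Tor}$ term.

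First, I would record that Proposition~\ref{V32} shows the integral homology $H_\ast(V_3/V_2)$ is concentrated in degrees $0,6,7,8$ and is free in each of these degrees, with ranks $1,6,5,1$ respectively. In particular, there is no torsion in any integral homology group of $V_3/V_2$, so every $\mathrm{Tor}$ term in the universal coefficient sequence vanishes. Consequently the sequence collapses to a natural isomorphism
\[
H_i(V_3/V_2;\Z_2) \;\cong\; H_i(V_3/V_2)\otimes \Z_2
\]
for every $i$.

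Second, I would apply this isomorphism degree by degree, using $\Z\otimes\Z_2\cong\Z_2$ and $\Z^k\otimes\Z_2\cong\Z_2^k$, to obtain $H_8(V_3/V_2;\Z_2)\cong\Z_2$, $H_7(V_3/V_2;\Z_2)\cong\Z_2^5$, $H_6(V_3/V_2;\Z_2)\cong\Z_2^6$, and $H_0(V_3/V_2;\Z_2)\cong\Z_2$, with all other groups vanishing. This yields exactly the claimed list of nontrivial groups.

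There is essentially no obstacle here once Proposition~\ref{V32} is in hand; the only nontrivial point is the freeness of $H_\ast(V_3/V_2)$, which is already established in the proof of Proposition~\ref{V32} from the geometric description $V_3/V_2\cong S^3\ast(U/U_2)$ and the fact that $U/U_2$ was shown there to have free homology in every degree. Thus the corollary is a mechanical consequence of the universal coefficient theorem applied to Proposition~\ref{V32}.
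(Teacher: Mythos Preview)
Your proposal is correct and follows exactly the paper's approach: the paper simply prefaces this corollary with ``The universal coefficient theorem implies:'', and you have spelled out precisely that deduction from Proposition~\ref{V32}, noting that the integral homology there is free so all $\mathrm{Tor}$ terms vanish.
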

Combining  Proposition~\ref{V2} and Lemma~\ref{V32} we obtain the homology groups for $G_{5,2}/T^5$ with $\Z_{2}$-coefficients.
\begin{thm}\label{V3Z2}
The non-trivial homology groups for $V_3= G_{5,2}/T^5$  with $\Z _{2}$-coefficients are
\begin{equation}
H_{0}(V_3;\Z_{2}) =H_{5}(V_3, \Z _{2})=H_{6}(V_3; \Z _2) = H_{8}(V_3;\Z _{2})\cong  \Z _{2}.
\end{equation}
\end{thm}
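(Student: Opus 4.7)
The approach is to feed the outputs of Proposition~\ref{V2} and Proposition~\ref{V32} (together with its $\Z_2$-corollary) into the long exact sequence of the pair $(V_3, V_2)$ with $\Z_2$-coefficients. Because $V_2$ is a closed subcomplex of $V_3$ coming from the stratification, the pair is good, so $H_*(V_3, V_2;\Z_2) \cong \widetilde{H}_*(V_3/V_2;\Z_2)$. The nonzero entries in play are $H_0(V_2;\Z_2) = \Z_2$, $H_5(V_2;\Z_2) = \Z_2^{7}$, $H_6(V_2;\Z_2) = \Z_2^{6}$ on one side, and $\widetilde{H}_6(V_3/V_2;\Z_2) = \Z_2^{6}$, $\widetilde{H}_7(V_3/V_2;\Z_2) = \Z_2^{5}$, $\widetilde{H}_8(V_3/V_2;\Z_2) = \Z_2$ on the other.

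First I would read off the easy conclusions. Since $\widetilde{H}_i(V_3/V_2;\Z_2) = 0$ for $i \notin \{6,7,8\}$ and $H_i(V_2;\Z_2) = 0$ for $i \notin \{0,5,6\}$, the exact sequence immediately yields $H_i(V_3;\Z_2) = 0$ for $i \notin \{0,5,6,7,8\}$. The top‑degree piece $0 \to H_8(V_3;\Z_2) \to \Z_2 \to 0$ gives $H_8(V_3;\Z_2) = \Z_2$, confirming the top cell. Connectedness handles $H_0$. What remains is the segment
\[
0 \to H_7(V_3;\Z_2) \to \Z_2^{5} \xrightarrow{\partial_7} \Z_2^{6} \to H_6(V_3;\Z_2) \to \Z_2^{6} \xrightarrow{\partial_6} \Z_2^{7} \to H_5(V_3;\Z_2) \to 0.
\]

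The central task is therefore to show that both connecting homomorphisms $\partial_7$ and $\partial_6$ are \emph{injective}. Granted this, simple dimension counting in the exact sequence yields $H_7(V_3;\Z_2) = \ker\partial_7 = 0$, $H_6(V_3;\Z_2) = \operatorname{coker}\partial_7 = \Z_2^{6}/\Z_2^{5} = \Z_2$, and $H_5(V_3;\Z_2) = \operatorname{coker}\partial_6 = \Z_2^{7}/\Z_2^{6} = \Z_2$, which is exactly the asserted answer.

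To carry out the injectivity, I would use the explicit model $V_3/V_2 \cong S^3 \ast (U/U_2)$ from the proof of Proposition~\ref{V32}, together with the cellular representatives of $H_6(V_2;\Z_2)$ given in the proof of Proposition~\ref{V2} (the six classes $c_{ij}$ built from the six‑cells $e_{ij} = \stackrel{\circ}{K}_{ij}(9)\times(S^2\setminus\{\ast\})$) and the $\Z_2$‑reductions of the generators of $H_5(V_2)$ obtained in the long exact sequence of $(V_2,V_1)$. Under the join description, a generator of $\widetilde{H}_7(V_3/V_2;\Z_2)$ is represented by a chain of the form $\Delta_{5,2}\times \gamma$ with $\gamma$ a $3$‑cycle in $U/U_2$, and its connecting image is the class of $\partial\Delta_{5,2}\times \gamma$ restricted to the strata of $V_2$; similarly for $\partial_6$ using $2$‑cycles. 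Expressing these boundary cycles in the basis $\{c_{ij}\}$ (respectively in the basis for $H_5(V_2;\Z_2)$ coming from the octahedral $5$‑spheres $S_i$ and the four‑dimensional torsion cycle) reduces each injectivity to the verification that a concrete $5\times 6$ (respectively $6\times 7$) matrix over $\Z_2$ has full rank. The main obstacle is precisely this bookkeeping: matching the join generators of $V_3/V_2$ with the geometric stratum boundaries in $V_2$ and verifying nontriviality of the resulting incidence matrices. Once this linear-algebra computation is done, the theorem follows.
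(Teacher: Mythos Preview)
Your overall strategy coincides with the paper's: set up the long exact sequence of the pair $(V_3,V_2)$ with $\Z_2$-coefficients, read off the easy groups, and reduce everything to showing that the connecting maps $\partial_7\colon \Z_2^{5}\to\Z_2^{6}$ and $\partial_6\colon \Z_2^{6}\to\Z_2^{7}$ are injective. The paper does precisely this and then carries out the explicit rank computation you describe as ``bookkeeping''.

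There is, however, a genuine inaccuracy in your geometric picture of the connecting homomorphism. You write that a generator of $\widetilde H_7(V_3/V_2;\Z_2)$ is represented by $\Delta_{5,2}\times\gamma$ and that ``its connecting image is the class of $\partial\Delta_{5,2}\times\gamma$ restricted to the strata of $V_2$''. This is not how the boundary behaves. The compactification of the main stratum $W/T^5\cong\stackrel{\circ}{\Delta}_{5,2}\times F$ inside $V_3$ is governed by the parameter degeneration in $\tilde{\mathcal F}$, not merely by $\Delta_{5,2}\to\partial\Delta_{5,2}$. When a parameter point in $F$ tends to a boundary value in $\tilde{\mathcal F}$, the limit orbit lies in a stratum over an \emph{interior} four-dimensional admissible polytope $K_{ij}(9)$, not over $\partial\Delta_{5,2}$. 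Concretely, the paper takes the seven-cells $m_l^k=\stackrel{\circ}{\Delta}_{5,2}\times F_l^k$ (with $F_l^k$ a $3$-cell in $F$) and computes their six-boundaries in $V_2$ as sums of the cells $e_{ij}=\stackrel{\circ}{K}_{ij}(9)\times(S^2\setminus\{\ast\})$, e.g.\ $m_1^1\mapsto e_{15}+e_{35}+e_{23}+e_{12}$; these are then rewritten in the basis $\{c_{ij}\}$ of $H_6(V_2;\Z_2)$ from Proposition~\ref{V2}. The same mechanism applies to $\partial_6$: the boundaries of the six-cells $p_{kl}=\stackrel{\circ}{\Delta}_{5,2}\times S^1_l\times S^1_k$ are sums of the five-cells $f_{ij}^l$, again living over the interior polytopes $K_{ij}(9)$. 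If you literally computed $\partial\Delta_{5,2}\times\gamma$ you would land in $V_1$ and miss exactly the terms that make the matrices full rank.

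Once this is corrected, your plan and the paper's proof are the same. The explicit incidence matrices in the paper (for $\partial_7$ the relations \eqref{boundV361}--\eqref{boundV362} plus the diagonal class $c_\Delta$, and for $\partial_6$ the four $p_{kl}$-boundaries together with two further cycles coming from $H_5(C)$) are exactly the rank computations you anticipate.
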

\begin{proof}
The exact homology sequence of the pair $(V_3,V_2)$ directly implies  that $H_{8}(V_3;\Z_{2})\cong H_{0}(V_3;\Z_{2})\cong \Z_{2}$ and $H_{i}(V_3;\Z _{2})\cong 0$ for $i\neq 5,6,7$. 

We analyze the  map $\Z _{2}^{5}\cong H_{7}(V_{32};\Z _{2})\to H_{6}(V_2;\Z_2)\cong \Z _{2}^{6}$. The generator from $ H_{7}(V_{32})$ which maps in~\eqref{eseq} to the generator in $H_{6}(C)$, maps to an element $c_{\Delta}$  in $H_{6}(V_2)$ that  contains $e_{45}$.    Recall that the  other generators in $H_{7}(V_{32})$ are  by the proof of Proposition~\ref{V32}   given by   $m_{l}^{1} = \stackrel{\circ}{\Delta} _{5,2}\times (S^2\backslash \{\ast\})\times (S_{l}^{1}\backslash \{\ast\})$,   $m_{l}^{2} = \stackrel{\circ}{\Delta} _{5,2}\times  (S_{l}^{1}\backslash \{\ast \})\times (S^{2}\setminus \{\ast\})$, $l=1,2$. These are cells of the orbit space of the  main stratum and their six-dimensional boundaries  consist of  the $6$-cells of the other strata  whose set of parameters is in the boundary of the set of parameters  of the $3$-cells $(S^2\backslash \{\ast\})\times (S_{l}^{1}\backslash \{\ast\})$  and $(S_{l}^{1}\backslash \{\ast \})\times (S^{2}\setminus \{\ast\})$ in 
$\tilde{\mathcal{F}}$, $ l=1,2$. More precisely, since we want to consider the six-boundaries of these cells in $V_3$ we write  them in the form
\[
m_{l}^{1} = \stackrel{\circ}{\Delta}_{5,2} \times F_{l}^{1}, \;\; 
m_{l}^{2} = \stackrel{\circ}{\Delta}_{5,2}\times F_{l}^{2}, \; l=1,2,
\]
where
\[ F_{l}^{1} = \{((c_1: c_1^{'}),(\phi _{2l}:\phi_{2l}^{'}), (c_3:c_3^{'})), \; c_1\phi _{2}^{'}c_3=c_1^{'}\phi _{2}c_{3}^{'}\},
\]
\[
F_{l}^{2} =  \{((\phi _{1l}:\phi_{1l}^{'}), (c_2:c_2^{'}),  (c_3:c_3^{'})), \; \phi _{1}c_{2}^{'}c_3=\phi _{1}^{'}c_2c_{3}^{'}\},
\]
and  $(\phi _{i1}:\phi_{i1}^{'})$ is a path in $\C P^{1}_{A}$  which connects $(0:1)$ and $(1:1)$, while   $(\phi _{i2}:\phi_{i2}^{'})$ is a path in $\C P^{1}_{A}$ which connects $(1:1)$ and $(1:0)$ for $i=1,2$.

We compute   the six-dimensional boundaries  of these cells in  $V_3$. Since we work with  $\Z _{2}$-coefficients we do not need to take care  about an orientation issue.
Recall that the cells in $V_{2}$ of the form $\stackrel{\circ}{K}_{ij}(9) \times (S^2\setminus \{\ast\})$ we denoted  by $e_{ij}$.  Thus, as the six-boundaries of the cells $m_{l}^{k}$, we obtain:  
\[
m_{1}^{1} \to e_{15}+e_{35}+e_{23}+e_{12}, \;\; m_{2}^{1}\to e_{35}+e_{12}+e_{25}+e_{13},
\]
\[
m_{1}^{2}\to e_{14}+e_{34}+e_{23}+e_{12}, \;\; m_{2}^{2}\to e_{12}+e_{13}+e_{24}+e_{34}.
\]

Following   notation from  the proof of Proposition~\ref{V2} these boundaries can be written as
\begin{equation}\label{boundV361}
m_{1}^{1}\to c_{12}+c_{23}, \;\;  m_{2}^{1}\to c_{12}+c_{13}, 
\end{equation}
\begin{equation}\label{boundV362}
m_{1}^{2}\to c_{12}+c_{14}+c_{23}+c_{24}, \;\; m_{2}^{2}\to c_{12}+c_{13}+ c_{24}+c_{34}.
\end{equation}
Comparing this with~\eqref{gen} we see that these elements together with $c_{\Delta}$  give  five  of the  six  generators in $H_{6}(V_2)$. The exact homology sequence implies that $H_{7}(V_3;\Z_2) = 0$.

The six-dimensional cycles in $V_3$ comes either from $V_2$ either  from $V_3/V_2$. We have just proved that  five  of the six non-cohomologous $6$-cycles in $V_2$ are eliminated by the $7$-cells in $V_3$, which  implies that  one   cycle for $V_2$   survives after embedding in  $H_{6}(V_3)$. The four generators in $H_{6}(V_3/V_2)$ correspond to the six-cells in $V_{3}/V_2$   of the form $p_{kl} = \stackrel{\circ}{\Delta}_{5,2}\times S^{1}_{l}\times S^{1}_{k}$, $k, l=1,2$. Their boundaries in $V_3$ belong to $V_2$ and they map trivially to  $H_{5}(C)$,  after restriction to  $C$. In order to describe  the boundaries of $p_{kl}$ explicitly we consider them in the form $p_{kl}=\stackrel{\circ}{\Delta}_{5,2}\times p_{l}^{k}$, where $p_{l}^{k}=\{((\phi _{1l}:\phi _{1l}^{'}), (\phi _{2k}:\phi _{2k}^{'}), (\phi :\phi ^{'}))\in (\C P^{1}_{A})^3,  \;  \phi _{1l}\phi_{2k}^{'}\phi = \phi _{1l}^{'}\phi_{2k}\phi^{'}\}$ and $(\phi_{ij}:\phi _{ij}^{'})$ are as above. Therefore, following  notation from the proof of Proposition~\ref{V12},  we obtain that these generators map to the following five-cycles in $V_2$:
\[
\stackrel{\circ}{\Delta}_{5,2}\times S_{1}^1\times S_{1}^1 \to f_{14}^{1}+f_{23}^{1}+f_{23}^{2}+f_{15}^{1}+f_{34}^{1}+f_{35}^{1}+f_{12}^{1}+f_{12}^{2},
\]
\[
\stackrel{\circ}{\Delta}_{5,2}\times S_{1}^1\times S_{2}^{1} \to f_{14}^{2}+f_{25}^{1}+f_{34}^{2}+f_{35}^{1}+f_{12}^{1}+f_{12}^{2},
\]
\[
\stackrel{\circ}{\Delta}_{5,2}\times S_{2}^1\times S_{2}^1 \to f_{34}^{2}+f_{35}^{2}+f_{12}^{1}+f_{12}^{2}+f_{13}^{1}+f_{13}^{2}+f_{24}^{2}+f_{25}^{2},
\]
\[
\stackrel{\circ}{\Delta}_{5,2}\times S_{2}^1\times S_{1}^{1} \to f_{15}^{2}+f_{35}^{2}+f_{34}^{1}+f_{12}^{1}+f_{12}^{2}+f_{24}^{1}.
\]

These cycles represent  four generators in $H_{5}(V_2;\Z _2) = \Z _{2}^{7}$.  The other two generators in $H_{6}(V_3/V_2)$ are by~\eqref{eseq}  the generators whose boundaries give the generators for $H_{5}(C)$.  Let   $c_1$ and $c_2$  be  cycles in $V_2$  obtained  as the  boundaries of these  generators for  $H_{6}(V_3/V_2)= H_{6}((X/Y)/C)$, which   after restriction  to $C$ give the generators for $H_{5}(C)$, according to~\eqref{XY} and ~\eqref{C2}.   Since the generators for $H_{5}(C)$ are given by the cells $\stackrel{\circ}{\Delta}_{5,2}\times \Delta (S^{1}_{l}\times S^{1}_{l})$, $l=1,2$    the cycle $c_1$ must $f_{45}^{1}$ as a summand and the cycle $c_2$  must contain  $f_{45}^{2}$ as a summand, while $f_{45}^{1}+f_{45}^{2}$  can not be contained as a summand in both cycles.   Therefore, $c_1, c_2$ are linearly independent and they are  independent with the cycles obtained from $p_{kl}$, which  implies that   they eliminate two more generators in  $H_{5}(V_2;\Z _{2})$. We obtain  the exact homology sequence  
\[
0\to \Z _{2}\to H_{6}(V_3;\Z _{2})\to H_{6}(V_3/V_2;\Z _{2})=\Z _{2}^6 \to H_{5}(V_2;\Z _{2})=\Z _{2} ^{7},
\]
where the map $\Z _{2}^6\to \Z _{2}^{7}$ is a monomorphism. It implies that $H_{6}(V_3)\cong \Z _2$. We further obtain the exact sequence
\[
0\to  \Z_{2}\to H_{5}(V_3; \Z _{2})\to H_{5}(V_3/V_2;\Z _{2}) \cong 0,
\]
which implies $H_{5}(V_3; \Z _{2}) \cong \Z _{2}$.  
\end{proof}

\begin{thm}\label{V3Z}
The non-trivial integral homology of the orbit space $V_3= G_{5,2}/T^5$ are given by $H_{0}(V_3)=H_{8}(V_3)\cong \Z$ and $H_{5}(V_{3})\cong \Z _{2}$. 
\end{thm}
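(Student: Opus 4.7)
The plan is to extract $H_\ast(V_3; \Z)$ from the long exact sequence of the pair $(V_3, V_2)$ with integer coefficients, using the integral homology of $V_2$ computed in the previous theorem and of $V_3/V_2$ from Proposition~\ref{V32}, and then to pin down the torsion by combining the result with Theorem~\ref{V3Z2} via the universal coefficient theorem.

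The top and bottom are immediate: the segment $0 \to H_8(V_3) \to H_8(V_3/V_2) = \Z \to H_7(V_2) = 0$ gives $H_8(V_3) \cong \Z$, and trivial vanishing yields $H_i(V_3) = 0$ for $1 \le i \le 4$. For $H_7(V_3)$, the injection $H_7(V_3) \hookrightarrow H_7(V_3/V_2) = \Z^5$ coming from $H_7(V_2) = 0$ shows that $H_7(V_3)$ is torsion-free, while $H_7(V_3; \Z_2) = 0$ from Theorem~\ref{V3Z2} forces $H_7(V_3) \otimes \Z_2 = 0$ by UCT; hence $H_7(V_3) = 0$.

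The substantive content lies in the connecting homomorphisms
\[
\partial_7 \colon H_7(V_3/V_2) = \Z^5 \longrightarrow H_6(V_2) = \Z^5, \qquad \partial_6 \colon H_6(V_3/V_2) = \Z^6 \longrightarrow H_5(V_2) = \Z^6 \oplus \Z_2.
\]
Integer cycle representatives are the cycles $m_l^k$, the diagonal cycle whose boundary is $c_\Delta$, and the cycles $p_{kl}$, $c_1$, $c_2$ identified in the proof of Theorem~\ref{V3Z2}; their $\Z$-boundaries are given by the same formulas as there, with the $\pm$ signs restored according to the orientations of the relevant cells. The $\Z_2$-reductions of $\partial_7$ and $\partial_6$ are injective by the proof of Theorem~\ref{V3Z2}, and a primitive-vector argument then promotes this to injectivity over $\Z$. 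The pair exact sequence therefore reduces the remaining two degrees to
\[
H_6(V_3) \cong \coker(\partial_7), \qquad H_5(V_3) \cong \coker(\partial_6),
\]
each a finite abelian group.

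To identify these cokernels I would combine two inputs. First, UCT applied to Theorem~\ref{V3Z2} forces $H_6(V_3)$ to be free of $2$-torsion (since $H_7(V_3; \Z_2) = 0$) and the $2$-primary part of $H_5(V_3)$ to be a single cyclic $\Z_{2^k}$-summand with $k \ge 1$ (since $H_5(V_3; \Z_2) = \Z_2 = H_6(V_3; \Z_2)$). Second, I would write the integer matrix of $\partial_7$ in the basis $C_{12}, C_{13}, C_{14}, C_{23}, C_{24}$ of $H_6(V_2) = \Z^5$ constructed earlier; the expected outcome is $|\det \partial_7| = 1$, giving $H_6(V_3) = 0$. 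A parallel explicit computation for $\partial_6$ shows that its image meets the torsion summand $\Z_2 \subset H_5(V_2)$ trivially and has index $2$ in the ambient group, yielding $\coker(\partial_6) \cong \Z_2$. The main obstacle is the careful sign bookkeeping for the integer boundaries $\partial e_{ij} = \pm S_i \pm S_j$ in $V_1$ (irrelevant over $\Z_2$) and for the integer boundaries of $p_{kl}$, $c_1$, $c_2$ in $V_3/V_2$: these signs determine whether the integer cokernels actually collapse to the predicted groups, ruling out any odd-prime cokernel in $H_6(V_3)$ and any $\Z_4$ or odd-order summand in $H_5(V_3)$. Both exclusions are consistent with the already-established $\Z_2$-homology but do not follow from it without the explicit sign computation.
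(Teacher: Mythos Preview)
Your plan is structurally sound and follows the same long exact sequence of the pair $(V_3,V_2)$ that the paper uses, with the same inputs from Proposition~\ref{V32} and the integral computation of $H_\ast(V_2)$. The reduction to the cokernels of $\partial_7$ and $\partial_6$ is correct, and your observation that injectivity of the $\Z_2$-reduction of a map between free abelian groups forces integral injectivity is valid (indeed it shows $\det\partial_7$ is odd).

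Where you diverge from the paper is in how to handle $\partial_6$. You propose to write down the full integer matrix and compute its index. The paper instead uses a short geometric observation: the $\Z_2$-torsion generator of $H_5(V_2)$ is represented by a cell of the form $\stackrel{\circ}{O}_i\times(S^2_+\cup S^2_-)$, whereas every six-cell of $V_3/V_2$ is of type $\stackrel{\circ}{\Delta}_{5,2}\times S^1_l\times S^1_k$ or has $5$-boundary restricting to $\stackrel{\circ}{\Delta}_{5,2}\times\Delta(S^1\times S^1)$ on $C$. The boundaries of these cells in $V_2$ are built out of the five-cells $f_{ij}^l=\stackrel{\circ}{K}_{ij}(9)\times p_l$ and never involve the $\stackrel{\circ}{O}_i\times S^2$ cells. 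Hence the image of $\partial_6$ misses the torsion summand entirely, and the $\Z_2$ passes to $H_5(V_3)$. This avoids any sign bookkeeping for that particular step and is the paper's main shortcut.

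You are right, however, that neither this argument nor the UCT constraint alone excludes odd torsion in $H_6(V_3)=\operatorname{coker}(\partial_7)$ or a $\Z_{2^k}$ ($k\ge 2$) or odd summand in $H_5(V_3)$. The paper's asserted dichotomy ``either $H_5=H_6=\Z$ or $H_5=\Z_2$, $H_6=0$'' does not follow from the $\Z_2$-homology alone; strictly speaking one does need the determinant/index computations you outline (or an independent argument such as the GIT computation cited in the paper) to finish. So your proposal is in fact more careful on this point, at the cost of the sign computation you flag as the main obstacle.
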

\begin{proof}
The universal coefficient formula immediately  implies that $H_{i}(V_3)=0$, $i\neq 0,5,6,8$ and that $H_{0}(V_3)=H_{8}(V_3)\cong \Z$. Moreover, it implies that for $H_{5}(V_3)$ and $H_{6}(V_3)$ two possibilities may occur, that is either $H_{5}(V_3)=H_{6}(V_3)\cong \Z$ either $H_{5}(V_3)\cong \Z _{2}$, $H_{6}(V_3)=0$. Now an exact homology sequence of the pair $(V_3, V_2)$ produces an exact sequence 
\[
\Z ^{6}\cong H_{6}(V_3/V_2)\to H_{5}(V_2)\cong \Z^{6}\oplus \Z _{2}\to H_{5}(V_{3}).
\]
Note that the torsion element in $H_{5}(V_2)$ is represented by a cell of the form $\stackrel{\circ}{O}_i\times (S^{2}_{+}\cup S^{2}_{-})$. On the other hand  it follows from the proof of Theorem~\ref{V3Z2} that the   six-dimensional cells in $V_3/V_2$ are either  of the form $\stackrel{\circ}{\Delta} _{5,2}\times S^{1}_{l}\times S^{1}_{k}$ either the restriction of their $5$-boundaries  to $C$ is of the form   $\stackrel{\circ}{\Delta}_{5,2}\times \Delta(S^1\times S^1)$. Therefore, the cells of the form  $\stackrel{\circ}{O}_i\times (S^{2}_{+}\cup S^{2}_{-})$ do not contribute in $5$-boundaries of the six-dimensional cells from $V_3/V_2$. It implies that an image of the homomorphism 
$H_{6}(V_3/V_2)\to H_{5}(V_2)$ does not contain the group $\Z _{2}$. It further implies that the group  $H_{5}(V_{3})$  has a torsion, that is $H_{5}(V_{3})\cong \Z _{2}$, which completes the proof.
\end{proof}

\begin{rem}
In the meantime motivated by  our work the same result on homology groups for the orbit space $G_{5,2}/T^5$  is obtained in~\cite{HS} using geometric invariant theory and in particular the result that  geometric invariant theory quotients for the standard linearisation of $(\C ^{*})^{5}$-action on $G_{5,2}$  are the  del Pezzo surfaces of degree $5$. We are very greatfull to Hendrik Suess for pointing out  to us his paper which helped us  to correct some mistakes in previous calculations.
\end{rem}

\begin{cor}
The orbit space $G_{5,2}/T^5$ is a  rational homology sphere $S^8$.
\end{cor}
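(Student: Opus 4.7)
The plan is to derive this corollary as an immediate consequence of Theorem~\ref{V3Z}, which gives the full integral homology of $V_3 = G_{5,2}/T^5$. Recall that a space $X$ is called a rational homology sphere $S^n$ if $H_i(X;\Q) \cong H_i(S^n;\Q)$ for all $i$, i.e.\ $H_0(X;\Q) \cong H_n(X;\Q) \cong \Q$ and all other rational homology groups vanish.

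First I would invoke the universal coefficient theorem in the form $H_i(V_3;\Q) \cong H_i(V_3;\Z) \otimes_{\Z} \Q$, which holds because $\Q$ is a flat $\Z$-module (so the $\operatorname{Tor}$ term vanishes). Then I would substitute the groups computed in Theorem~\ref{V3Z}: the torsion-free summands $H_0(V_3) \cong \Z$ and $H_8(V_3) \cong \Z$ tensor with $\Q$ to give $\Q$, while the torsion group $H_5(V_3) \cong \Z_2$ satisfies $\Z_2 \otimes_\Z \Q = 0$ and every other integral homology group vanishes by hypothesis.

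Thus the rational homology of $V_3$ is concentrated in degrees $0$ and $8$, where it is isomorphic to $\Q$, which is exactly the rational homology of $S^8$. This completes the proof. I do not expect any serious obstacle here: the only nontrivial input is Theorem~\ref{V3Z}, and the rest is a direct application of the universal coefficient theorem applied to coefficients in the flat module $\Q$. The corollary is essentially a restatement of Theorem~\ref{V3Z} after killing the $2$-torsion in degree $5$ by rationalization.
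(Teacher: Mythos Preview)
Your proposal is correct and matches the paper's approach: the corollary is stated in the paper without proof, as an immediate consequence of Theorem~\ref{V3Z}, and your argument via the universal coefficient theorem (tensoring with the flat module $\Q$ to kill the $\Z_2$ torsion in degree $5$) is exactly the intended one-line justification.
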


\begin{cor}
The orbit space $G_{5,2}/T^5$ is homotopy equivalent to the space  obtained by attaching the disc $D^{8}$ to the  space $\Sigma ^{4}\R P^{2}$.
\end{cor}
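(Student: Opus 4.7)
\smallskip
\noindent\textbf{Proof proposal.}
The homology calculation in Theorem~\ref{V3Z} gives $H_\ast(G_{5,2}/T^5;\Z)=\Z,0,0,0,0,\Z_2,0,0,\Z$, so $G_{5,2}/T^5$ and the candidate space $X=\Sigma^4\R P^2\cup_\phi D^8$ (for any attaching map $\phi$) have isomorphic integral homology. The plan is to realize an explicit map $g\colon \Sigma^4\R P^2\to G_{5,2}/T^5$ inducing isomorphisms on $H_i$ for $i\le 6$, analyse its mapping cone, and then rotate the Puppe sequence to present $G_{5,2}/T^5$ as a single $8$-cell attached to $\Sigma^4\R P^2$.

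First I would verify that $G_{5,2}/T^5$ is simply connected, which is automatic since $G_{5,2}$ is simply connected and the quotient of a simply connected space by a compact connected group is simply connected. Combined with $H_i(G_{5,2}/T^5;\Z)=0$ for $1\le i\le 4$ from Theorem~\ref{V3Z}, the Hurewicz theorem upgrades this to $4$-connectedness and gives an isomorphism
\[
\pi_5(G_{5,2}/T^5)\;\xrightarrow{\;\cong\;}\;H_5(G_{5,2}/T^5;\Z)\;=\;\Z_2 .
\]
Pick a generator $\alpha\colon S^5\to G_{5,2}/T^5$ of $\pi_5$. Because the class $[\alpha]$ has order $2$, the composite $S^5\xrightarrow{\,2\,}S^5\xrightarrow{\alpha}G_{5,2}/T^5$ is null-homotopic, and any nullhomotopy supplies an extension $g\colon \Sigma^4\R P^2=S^5\cup_2 D^6\to G_{5,2}/T^5$ that realises the generator of $H_5$. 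By construction $g$ induces an isomorphism on $H_i$ for all $i\le 6$ (trivially for $i\ne 5$ and by choice for $i=5$), and on $H_7$ as well since both groups vanish.

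Next I would compute the mapping cone $C_g$ using the long exact sequence of the pair $(C_g,\Sigma^4\R P^2)$ or equivalently the cofibre sequence $\Sigma^4\R P^2\xrightarrow{g}G_{5,2}/T^5\to C_g$. Substituting the homology just described yields $\widetilde H_\ast(C_g;\Z)=\Z$ concentrated in degree $8$. Since $C_g$ is simply connected with the homology of $S^8$, Whitehead's theorem gives $C_g\simeq S^8$. Rotating one step in the Puppe sequence produces
\[
\Sigma^4\R P^2\;\xrightarrow{g}\;G_{5,2}/T^5\;\longrightarrow\; S^8\;\xrightarrow{\delta}\;\Sigma^5\R P^2 .
\]
Because $\Sigma^4\R P^2$ is $4$-connected and $7<2\cdot 4+1$, the Freudenthal suspension theorem identifies $\delta\in\pi_8(\Sigma^5\R P^2)$ with a class $\phi\in\pi_7(\Sigma^4\R P^2)$, and the standard Puppe-rotation identifies $G_{5,2}/T^5$ with the mapping cone of this $\phi$, i.e.\ $G_{5,2}/T^5\simeq\Sigma^4\R P^2\cup_\phi D^8$, which is the desired statement.

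The genuinely delicate point is Step~2: one must know that the extension $g$ exists, which in turn needs the $4$\nobreakdash-connectedness and hence the vanishing of $\pi_1$. The homology vanishing for $1\le i\le 4$ is already in hand from Theorem~\ref{V3Z}, so the only real input beyond the homology computation is that $\pi_1(G_{5,2}/T^5)=0$; this can be argued either from the general fact about torus quotients of simply connected spaces, or directly from the filtration $V_1\subset V_2\subset V_3$ together with the observation that all attaching cells arising in the stratification have dimension $\ge 3$ after the contractible faces of $\Delta_{5,2}$ are collapsed. Once $\pi_1=0$ is in place, the rest of the argument is standard obstruction theory plus Whitehead, and the final identification of $G_{5,2}/T^5$ with an $8$-cell attachment to $\Sigma^4\R P^2$ follows formally from the cofibre sequence analysis above.
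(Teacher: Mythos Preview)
Your argument follows the same underlying idea as the paper's proof---both use the homology computation of Theorem~\ref{V3Z} together with Hurewicz to build a minimal CW model for $G_{5,2}/T^5$---but you are considerably more explicit, in particular about simple connectedness and the construction of the map $g:\Sigma^4\R P^2\to G_{5,2}/T^5$. The paper simply asserts that the $6$-skeleton of $G_{5,2}/T^5$ is homotopy equivalent to $\Sigma^4\R P^2$ and that, since $H_7=0$, a single $8$-cell suffices; your version spells out why this is so.

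There is, however, a small gap in your final step. From the cofibre sequence $\Sigma^4\R P^2\xrightarrow{g}Y\to S^8$ (with $Y=G_{5,2}/T^5$), the Puppe rotation only tells you that $\Sigma Y$ is the cofibre of $\delta:S^8\to\Sigma^5\R P^2$; desuspending $\delta$ to $\phi:S^7\to\Sigma^4\R P^2$ via Freudenthal then yields $\Sigma C_\phi\simeq C_{\Sigma\phi}\simeq\Sigma Y$, but this alone does not give $C_\phi\simeq Y$. The phrase ``the standard Puppe-rotation identifies $G_{5,2}/T^5$ with the mapping cone of this $\phi$'' hides a genuine desuspension claim that still needs justification. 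The cleanest fix is to bypass the Puppe rotation entirely: once you know $g$ is $7$-connected (equivalently, the pair $(M_g,\Sigma^4\R P^2)$ is $7$-connected), relative Hurewicz gives $\pi_8(M_g,\Sigma^4\R P^2)\cong H_8\cong\Z$; a representative of a generator is a map of pairs $(D^8,S^7)\to(Y,\Sigma^4\R P^2)$, and the resulting map $\Sigma^4\R P^2\cup_\phi D^8\to Y$ is a homology isomorphism, hence a homotopy equivalence by Whitehead. This is precisely what the paper's terse proof is doing implicitly when it speaks of ``the six-skeleton'' and then attaches $D^8$.
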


\begin{proof}
It follows from Theorem~\ref{V3Z} that the six-skeleton for  $G_{5,2}/T^5$ is homotopy equivalent to the space obtained by attaching the disc $D^6$ to $S^5$ by a characteristic map $S^5\to S^5$ of degree $2$, so the resulting space is a $4$-suspension of $\R P^{2}$. Since $H_{7}(G_{5,2}/T^5)$ is trivial,  it follows that $G_{5,2}/T^5$ is obtained by attaching the disc $D^8$ to the space $\Sigma ^{4}\R P^2$.
\end{proof}

Let us consider now one of the five equivariant embeddings $G_{4,2}\to G_{5,2}$ and let  $X=(G_{5,2}/T^5)/(G_{4,2}/T^4)$. Since by~\cite{MMJ},  the space $G_{4,2}/T^4$ is homeomorphic to $S^5$, it follows that the space $X$ is homeomorphic to $(G_{5,2}/T^5)/S^5$. From  homology computations for $G_{5,2}/T^5$ and, in particular, the  proof of  Proposition~\ref{V2} we immediately obtain:

\begin{thm}
The nontrivial homology groups for $X$ are 
\begin{equation}
H_{0}(X) = H_{6}(X)=H_{8}(X)\cong \Z.
\end{equation}
\end{thm}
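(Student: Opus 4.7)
The plan is to apply the long exact homology sequence of the pair $(V_3, G_{4,2}/T^4)$, using that $\tilde H_i(X) \cong H_i(V_3, G_{4,2}/T^4)$ and the homeomorphism $G_{4,2}/T^4 \cong S^5$ from~\cite{MMJ}. The integral homology of $V_3 = G_{5,2}/T^5$ provided by Theorem~\ref{V3Z} is concentrated in degrees $0, 5, 6, 8$, and $H_*(S^5)$ lives in degrees $0, 5$. So for $i \notin \{0,5,6,8\}$ the exactness of the pair forces $H_i(X)=0$, while in degree $8$ the sequence reads $0 \to \Z \to H_8(X) \to 0$, giving $H_8(X) \cong \Z$.

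All remaining information is packaged in the segment
\[
0 \longrightarrow H_6(X) \longrightarrow H_5(G_{4,2}/T^4) \stackrel{\iota_*}{\longrightarrow} H_5(V_3) \longrightarrow H_5(X) \longrightarrow 0,
\]
that is $0 \to H_6(X) \to \Z \to \Z_2 \to H_5(X) \to 0$. The theorem is therefore equivalent to showing that $\iota_*$ is surjective: this yields $H_6(X) = \ker \iota_* \cong \Z$ and $H_5(X) = \coker \iota_* = 0$.

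To establish surjectivity I would identify the fundamental class of the embedded $G_{4,2}/T^4$ inside $V_3$ geometrically. The equivariant coordinate embedding $G_{4,2} \hookrightarrow G_{5,2}$ coincides with $\mu_j^{-1}(0)$ for some index $j$, and after quotienting by $T^5$ its image is $\hat\mu^{-1}(\Delta_{4,2})$, where $\Delta_{4,2}$ appears as the octahedral facet $O_j \subset \partial \Delta_{5,2}$. From the decomposition~\eqref{V1p} of $V_1$, this orbit space is precisely the five-sphere $S_j \subset V_1$ that serves as one of the five generators of $H_5(V_1) = \Z^5$ used in the proof of Proposition~\ref{V2}. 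Hence $\iota_*$ factors as the composition $H_5(G_{4,2}/T^4) = \Z \to H_5(V_1) \to H_5(V_2) \to H_5(V_3)$, sending a generator to the class $[S_j]$.

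The main difficulty is to track $[S_j]$ through the filtration $V_1 \subset V_2 \subset V_3$. By the integral version of the argument in the proof of Theorem~\ref{V3Z}, the image of $\partial\colon H_6(V_{21}) \to H_5(V_1) = \Z^5$ is spanned by $S_1+S_2, S_1+S_3, S_1+S_4, S_1+S_5$ together with $2S_1$, so in $H_5(V_2) = \Z^6 \oplus \Z_2$ the torsion summand is generated by the common nontrivial class $[S_j]$, independent of $j$. The closing lines of the proof of Theorem~\ref{V3Z} show that no six-cell of $V_3/V_2$ has an $S_j$-summand in its five-boundary, so the map $H_6(V_3/V_2) \to H_5(V_2)$ lands in the free part $\Z^6$; consequently the torsion class $[S_j]$ survives as the generator of $H_5(V_3) \cong \Z_2$. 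This makes $\iota_*$ surjective and completes the argument.
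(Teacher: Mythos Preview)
Your argument is correct and is precisely the unpacking of what the paper leaves implicit: the paper states that the result follows ``from homology computations for $G_{5,2}/T^5$ and, in particular, the proof of Proposition~\ref{V2}'', and your long exact sequence argument together with the identification of the embedded $G_{4,2}/T^4$ as one of the spheres $S_j$ (whose class generates the torsion $\Z_2\subset H_5(V_2)$ and survives to $H_5(V_3)$ by the proof of Theorem~\ref{V3Z}) is exactly that computation carried out in full.
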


Moreover, it holds
\begin{cor}\label{joinvee}
The space $X$ is homotopy equivalent either to $S^3 \ast \C P^2$ either to $S^6\vee S^8$.
\end{cor}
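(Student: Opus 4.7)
The plan is to build a minimal CW model of $X$ with cells only in dimensions $0$, $6$, and $8$, and then invoke the fact that $\pi_7(S^6)\cong \Z/2$ to reduce the classification to exactly two possibilities, which we then match with the two spaces listed.

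First I would verify that $X$ is simply connected. By Theorem~\ref{prvamain} the space $G_{5,2}/T^5$ is homotopy equivalent to $\Sigma^4\R P^2\cup_\alpha D^8$, which is a suspension with an $8$-cell attached, hence simply connected. Since the subspace $G_{4,2}/T^4\cong S^5$ is path connected and the inclusion is a cofibration (it is a sub-CW complex coming from the stratification), collapsing it yields a simply connected quotient $X$: any loop at the collapsed point lifts to a path in $G_{5,2}/T^5$ with endpoints in $S^5$, which can be closed up via a path in $S^5$ and then contracted in $G_{5,2}/T^5$.

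Next, using the theorem immediately preceding the statement, $X$ has free homology concentrated in degrees $0, 6, 8$, each of rank $1$. Since $X$ is a simply connected space with the homotopy type of a CW complex and finitely generated free homology, the minimal CW approximation theorem (or a direct cell-by-cell construction via Hurewicz) produces a CW complex $Y\simeq X$ with exactly one cell in each of the dimensions $0$, $6$, $8$. In particular, the $7$-skeleton of $Y$ is $S^6$ and $Y=S^6\cup_\phi D^8$ for some attaching map $\phi\in \pi_7(S^6)$. It is classical that $\pi_7(S^6)=\Z/2$, generated by the suspension $\Sigma^4\eta$ of the Hopf map.

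Thus only two cases arise. If $\phi=0$, then $X\simeq S^6\vee S^8$. If $\phi$ is the nontrivial element $\Sigma^4\eta$, then
\[
X\simeq S^6\cup_{\Sigma^4\eta} D^8\cong \Sigma^4(S^2\cup_\eta D^4)\cong \Sigma^4\C P^2\cong S^3\ast \C P^2,
\]
using $S^n\ast Z\simeq \Sigma^{n+1}Z$ and $\C P^2=S^2\cup_\eta D^4$. This gives the claimed dichotomy. The main technical obstacle is not the homotopy classification itself (which is immediate from $\pi_7(S^6)=\Z/2$) but producing the minimal CW model; this requires knowing $X$ is simply connected with torsion-free homology in the relevant degrees, which is exactly what the preceding theorem and the cofibration argument above provide.
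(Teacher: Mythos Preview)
Your argument is essentially the same as the paper's: build a minimal CW model for $X$ with cells only in dimensions $0,6,8$ and invoke $\pi_7(S^6)\cong\Z/2$ to reduce to two cases. There is, however, a circularity in your justification of simple connectivity: you cite Theorem~\ref{prvamain} to conclude that $G_{5,2}/T^5$ is simply connected, but that theorem is proved only at the end of the paper, and its proof passes through Theorem~\ref{mainhomot}, which in turn relies on this very corollary. The repair is easy. What you actually use is the weaker fact that $G_{5,2}/T^5$ has the form $\Sigma^4\R P^2\cup D^8$ for \emph{some} attaching map, and that is exactly the content of the corollary established just before the computation of $H_*(X)$, so it is available here without circularity. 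Alternatively, the paper sidesteps $\pi_1(G_{5,2}/T^5)$ altogether: it observes directly from the explicit cell decomposition of $G_{5,2}/T^5$ that the $2$-skeleton of $X$ is a single point, which immediately gives simple connectivity of $X$ and then $5$-connectedness via Hurewicz and the homology computation.
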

\begin{proof}
 From our description of the cell structure of the orbit space   $G_{5,2}/T^5$ it follows that the $2$-skeleton  of a   quotient of $G_{5,2}/T^5$ by the sphere $S^3$ consists of one point, so the same holds for the space $X$.   The description of the  homology groups for $X$ and the Hurewitz theorem imply  that the space $X$ is $5$-connected, so it is homotopy equivalent to a $CW$-complex $C$ whose $6$-skeleton is the sphere $S^6$. Moreover, since $H_{7}(C)=0$ and $H_{8}(C)=\Z$,  it follows that $C$ is homotopy equivalent to a  $CW$-space which is obtained by attaching the disc $D^8$ to the sphere $S^6$ by a characteristic map $f: S^7\to S^6$. The map $f$ is either homotopy equivalent to the trivial map or it is homotopy equivalent to the $4$-suspension of the Hopf map $S^3\to S^2$. Therefore, in the first case $X$ is homotopy equivalent to the wedge $S^6\vee S^8$, while in the second case $X$ is homotopy equivalent to the  $4$-suspension of $\C P^2$. The two cell complexes $S^6\vee S^8$ and the  $4$-suspension of $\C P^2$ can be differentiated  by  the fact that in the first case the action of the Steenrod operation $S_{q}^{2}$ is trivial, while in the second case it is non-trivial.  
 \end{proof}

We determine an  exact homotopy type for $X$.

\begin{thm}\label{mainhomot}
The  space  $ X= (G_{5,2}/T^5)/S^5$ is homotopy equivalent to  $S^3\ast \C P^2$. 
\end{thm}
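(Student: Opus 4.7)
By Corollary~\ref{joinvee}, the space $X$ is homotopy equivalent either to $S^{3}\ast \C P^{2}\simeq \Sigma^{4}\C P^{2}$ or to $S^{6}\vee S^{8}$, and these two alternatives are distinguished by the action of the Steenrod square $\mathrm{Sq}^{2}$ on mod-$2$ cohomology. My plan is to rule out the second possibility by constructing an explicit continuous map $\phi : X \to \partial\Delta_{5,2}\ast \C P^{2}$ and showing that it induces an isomorphism on integral homology. Since both the source and target are simply connected CW-complexes with integral homology concentrated in degrees $0,6,8$ and each group isomorphic to $\Z$, Whitehead's theorem will then upgrade $\phi$ to a homotopy equivalence; the identification $\partial\Delta_{5,2}\cong S^{3}$ closes the argument.

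To construct $\phi$, I would first apply Corollary~\ref{orbit-Thom} to identify $X=(G_{5,2}/T^{5})/(G_{4,2}/T^{4})$ with the Thom-space orbit $T(\eta_{4,3})/T^{5}$, i.e.\ with the quotient of $G_{5,2}/T^{5}$ by $\hat{\mu}^{-1}(O_{5})$ (where $O_{5}$ is the octahedral facet of $\Delta_{5,2}$ opposite the vertex indexed by $5$). Using Theorem~\ref{orbit-main}, a point of $G_{5,2}/T^{5}$ away from $\hat{\mu}^{-1}(O_{5})$ is represented by a pair $(x,c)$ with $x\in \stackrel{\circ}{P_{\sigma}}$ and $c\in \tilde{F}_{\sigma,ij}\subset \tilde{\mathcal{F}}$. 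I would send such a point to the triple $(y,t,p)\in \partial\Delta_{5,2}\times [0,1]\times \C P^{2}$, where $t(x)$ is the normalized $L^{1}$-distance from $x$ to the hyperplane $\{x_{5}=0\}\supset O_{5}$, $y(x)$ is the projection of $x$ from the centre of $O_{5}$ onto $\partial\Delta_{5,2}$, and $p(c)$ is the image of $c$ under the embedding $\tilde{F}_{\sigma,ij}\hookrightarrow \tilde{\mathcal{F}}$ followed by the blowdown $\tilde{\mathcal{F}}\to \C P^{2}$ of Corollary~\ref{CP2}. Strata whose admissible polytope lies entirely in $O_{5}$ are sent to the cone point of the join, which lets $\phi$ descend from $G_{5,2}/T^{5}$ to $X$.

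The verification that $\phi$ is a homology isomorphism would rely on the cellular model of $X$ extracted from the proof of Theorem~\ref{V3Z}. The generator of $H_{6}(X)\cong \Z$ is represented by the surviving image of one of the cycles $c_{ij}$ from~\eqref{gen}; its closure is (up to torsion) the $\C P^{1}$-family over a projective line in $\C P^{2}\subset \tilde{\mathcal{F}}$, and one checks directly that $\phi$ maps it with degree one onto $\partial\Delta_{5,2}\ast (\text{line in }\C P^{2})\simeq S^{3}\ast S^{2}=S^{6}$. The generator of $H_{8}(X)\cong \Z$ is represented by the closure of the main stratum's orbit cell $\stackrel{\circ}{\Delta}_{5,2}\times F$, and by construction $\phi$ sends it with degree one onto the top cell $\partial\Delta_{5,2}\ast \C P^{2}$. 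By Corollary~\ref{joinvee} it would already be sufficient to verify nonzero mod-$2$ degree in one of these degrees, since naturality of $\mathrm{Sq}^{2}$ together with its nontriviality on $H^{6}(\Sigma^{4}\C P^{2};\Z_{2})$ would then force $\mathrm{Sq}^{2}$ on $H^{*}(X;\Z_{2})$ to be nontrivial.

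The main obstacle will be the construction step: showing that $\phi$ is continuous across all $171$ strata and descends correctly after the collapse. By Theorem~\ref{bound-univ} the virtual parameter spaces $\tilde{F}_{\sigma,ij}$ for boundary strata are precisely the loci in $\tilde{\mathcal{F}}$ where the blowdown to $\C P^{2}$ has nontrivial exceptional fibres (most delicately the divisor over $((1{:}1),(1{:}1),(1{:}1))$, which parameterises the $K_{ij}(7)$-pyramids), and one must verify that the $\stackrel{\circ}{P_{\sigma}}\times \tilde{F}_{\sigma,ij}$-factors corresponding to these exceptional fibres are collapsed in $X$ in exactly the way dictated by the join structure of the target. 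Compatibility with the chart-change homeomorphisms $\tilde{f}_{ij,kl}$ of Theorem~\ref{mainb}, which is automatic once $\phi$ is defined through the canonical projections $g_{\sigma,ij}$ of Corollary~\ref{canonic-proj}, guarantees that the construction is independent of the chart $M_{ij}$ used to encode each stratum.
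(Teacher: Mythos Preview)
Your overall strategy coincides with the paper's: build a map $X \to \partial\Delta_{5,2}\ast\C P^{2}$ using the blowdown $\tilde{\mathcal{F}}\to\C P^{2}$ of Corollary~\ref{CP2}, check that it is an integral homology isomorphism (the paper, like you, tracks a degree-$6$ generator $c_{ij}$ through the blowdown), and apply Whitehead.

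The difference lies in how the map is made well-defined, and here your construction has a genuine gap. You flag continuity across strata as the main obstacle, but the issue is sharper than continuity: it is well-definedness on the quotient $G_{5,2}/T^5=\mathcal{E}/\!\approx$. For strata such as $K_{ij}(7)$ or $P_{ij}$ one has $F_{\sigma}$ a point but $\tilde{F}_{\sigma,12}\cong\C P^{1}$, so two representatives $(x,c_{1}),(x,c_{2})\in\mathcal{E}$ with $c_{1}\neq c_{2}$ determine the \emph{same} point of $G_{5,2}/T^{5}$ yet satisfy $p(c_{1})\neq p(c_{2})$ in $\C P^{2}$. Since $\stackrel{\circ}{K}_{ij}(7)$ and $\stackrel{\circ}{P}_{ij}$ lie in $\stackrel{\circ}{\Delta}_{5,2}$, your join parameter $t$ is strictly between $0$ and $1$ there, the join identifications collapse nothing, and $\phi$ is not well-defined as written. (Your closing remark about the chart-change maps $\tilde{f}_{ij,kl}$ addresses chart independence, not the projection $\tilde{F}_{\sigma,ij}\to F_{\sigma}$ within a stratum.) The paper resolves this by first passing to $X/C$, where $C$ is the contractible $5$-skeleton: after this collapse only cells from the $\Delta_{5,2}$ and $K_{ij}(9)$ strata survive, and precisely for these one has $\tilde{F}_{\sigma,ij}=F_{\sigma}$, so the embedding into $\tilde{\mathcal{F}}$ is unambiguous and the composite $Z\to\Delta_{5,2}\times\tilde{\mathcal{F}}\to\Delta_{5,2}\times\C P^{2}\to\partial\Delta_{5,2}\ast\C P^{2}$ is automatically well-defined. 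Thus the $5$-skeleton collapse is not merely a simplification but the device that removes your obstacle. A minor further point: the join $\partial\Delta_{5,2}\ast\C P^{2}$ has no single cone point, so the sentence about where $O_{5}$-strata are sent needs reformulation.
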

\begin{proof}
The spaces $X$ and $\partial \Delta _{5,2}\ast \C P^2$ are simply-connected CW-spaces and  have the same integral homology groups. We will prove that there exists a continuous map $h_{5} : X \to \partial \Delta _{5,2}\ast \C P^2$ which induces an  isomorphism in their homology groups and then, by the Whitehead theorem, it will follow that the map $h_5$ is homotopy equivalence.

Since the homology groups for $X$ are trivial in the dimensions less or equal then $5$, it follows that the $5$-skeleton $C$ of this space is contractible which  implies that $X/C$ is homotopy equivalent to $X$.  From the description of a cell decomposition of $G_{5,2}/T^5$,  it follows that the orbit spaces of the strata, whose admissible polytopes are different from  $\Delta _{5,2}$ and $K_{ij}(9)$,  contain only the cells of the dimension less or equal then $5$. Thus,   $X/C$ contains only  cells from  the orbit spaces of the strata  whose admissible polytopes are $\Delta _{5,2}$ or $K_{ij}(9)$.  Recall that $W/T^5\cong \stackrel{\circ}{\Delta }_{5,2}\times F\cong \stackrel{\circ}{\Delta}_{5,2}\times \tilde {F}_{12}$ and $W_{ij}(9)/T^5\cong \stackrel{\circ}{K}_{ij}(9) \times \C P^{1}_{A}\cong \stackrel{\circ}{K}_{ij}(9) \times \tilde{F}_{ ij, 12}$, where by $\tilde{F}_{12}$ and $\tilde{F}_{ij,12}$ we  denote, as previously,  the virtual spaces of parameters for these strata in the chart $M_{12}$. Note that  virtual spaces of parameters are the subspaces in $\tilde{\mathcal{F}}$. 

 Let $Z =   X \setminus C$ . It is defined the map
\[
h_{5}^{1} : Z \to  \Delta _{5,2}\times \tilde{\mathcal{F}},
\]
by the projection on $\Delta _{5,2}$ and by an embedding into $\tilde{\mathcal{F}}$. Note that the image of  $Z$ by $h_{5}^{1}$ belongs to   $\stackrel{\circ}{\Delta} _{5,2}\times \tilde{\mathcal{F}}$.  Moreover,  the virtual spaces of parameters $\tilde{F}_{12}$ and $\tilde{F}_{ ij,12}$ are open sets and their union  is not the whole  $\tilde{\mathcal{F}}$, which  implies that  the image of $Z$ is not the whole   $\tilde{\mathcal{F}}$.  Further,  it follows from Corollary~\ref{CP2}  that $\tilde{\mathcal{F}}$ is homeomorphic to a space $Y$ which is the  blowup of $\C P^{2}$ at four points. We consider the map
\[
h_{5}^{2} : \Delta _{5,2}\times \tilde{\mathcal{F}} \to \Delta _{5,2}\times \C P^2,
\]
defined by the blowdown of $\tilde{\mathcal{F}}$ at these four points. Next,  we consider the projection
\[
h_{5}^{3} : \Delta _{5,2}\times \C P^2 \to \partial \Delta _{5,2}\ast \C P^2.
\]
We obtain the map 
\begin{equation}\label{comp}
h_{5}^3\circ h_{5}^{2}\circ h_{5}^1 : Z \to \partial \Delta _{5,2}\ast \C P^2. 
\end{equation}
Denote by $D$ the  $5$-skeleton of the space $\partial \Delta _{5,2}\ast \C P^2\cong (\Delta _{5,2}\times \C P^2)/(\partial \Delta _{5,2}\times \C P^2)$.  Then $D$ is contractible and  $(\partial \Delta _{5,2}\ast \C P^2)/D$ is homotopy equivalent to $\partial \Delta _{5,2}\ast \C P^2$.  The space $D$ contains the cell $\stackrel{\circ}{\Delta} _{5,2}\times \{\ast\}$, but since $h_{5}^{1}(Z)$ is a strict subset of $\tilde{\mathcal{F}}$ we may always choose $\{\ast\} \in \C P^2$ such  that  $h_{5}^3\circ h_{5}^{2}\circ h_{5}^1(Y)\subset (\partial \Delta _{5,2}\ast \C P^2)\setminus D$.

In this way we obtain the map 
\[
h_{5} : X\cong X/C \to (\partial \Delta _{5,2}\ast \C P^2)/D \cong \partial \Delta _{5,2}\ast \C P^2,
\]
which is defined by $\eqref{comp}$ on $X\setminus C$ and $h_{5}(C)=D$. 

We prove that the map $h_{5}$ induces an isomorphism in integral homology groups. Since the spaces $X$ and  $S^3\ast \C P^2$  have the  nontrivial integral homology groups  in degrees $0,6,8$,  it is enough to prove that $h_5$ induces an  isomorphism of the homology groups in   degree $6$. From  calculations of the homology groups  for $G_{5,2}/T^5$ it follows that the  generator for $H_{6}(X)$  comes from $V_2$. More precisely, it follows from~\eqref{boundV361},~\eqref{boundV362} and~\eqref{gen} that for a generator in  $H_{6}(X)$ one can take the cycle $c_{12}= e_{12}+e_{15}+e_{25}$. Recall that  $e_{ij}$ are, by~\eqref{first} and  Remark~\ref{cycles},   the six-dimensional cycles in $K_{ij}^{*}(9)$ and they are of the form $\stackrel{\circ}{K}_{ij}(9)\times  \tilde{F}^{'}_{12,ij}$, where $\tilde{F}^{'}_{ij,12}\subset \tilde{F}_{ ij, 12}$ and   $\tilde{F}^{'}_{ij,12}\cong S^{2}\setminus\{\ast\}$. 
It follows from Theorem~\ref{bound-univ} that the virtual spaces of parameters $\tilde{F}_{24,12}$, $\tilde{F}_{25,12}$, $\tilde{F}_{ 23,12}$ and $\tilde{F}_{12, 12}$ after embedding  them into the space $Y$ map to the blowups of $\C P^2$ at four points. It implies that after blowing down  $Y$ at these four points these spaces map to the four points. The other spaces of parameters $\tilde{F}_{ij,12}$ are not affected by this blowing down. Consequently,  in the  six-dimensional homology the image of  the generator $c_{12}$ of $H_{6}(G_{5,2}/T^5)$ is given by the image of $e_{15}$. It holds  that $\tilde{F}_{15,12} = ((c_1:c_1^{'}), (0:1), (0:1))\subset \tilde{\mathcal{F}}$, which is   equal to $((c_1:c_1^{'}), (1:0))\in \C P^1\times \C P^1$ and this is further equal to $(c_1:0:c_1^{'})\in \C P^2$. It follows that the image of $e_{15}$ by the map  $h_{5}$ is $\stackrel{\circ}{\Delta _{5,2}}\times (c_1:0:c_1^{'})$, which  represents a generator for $H_{6}(\partial \Delta _{5,2}\ast \C P^{2})$. This proves the statement. 
\end{proof}

From  Theorem~\ref{Snk}  and Example~\ref{S52} it follows the result.
\begin{cor}
The space  $\Sigma (S_{5,2}/T^5)$  is homotopy equivalent to  $ S^3\ast \C P^2$.
\end{cor}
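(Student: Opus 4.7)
The proof reduces to combining Example~\ref{S52} with Theorem~\ref{mainhomot} via a standard cone-collapse argument. First I would invoke Example~\ref{S52}, which asserts that
\[
G_{5,2}/T^5 \;\simeq\; C(S_{5,2}/T^5)\cup_{\hat\pi_0} S^5,
\]
where the attached $S^5$ is identified with $G_{4,2}/T^4$ (using the homeomorphism $G_{4,2}/T^4\cong S^5$ from~\cite{MMJ}) and the attaching map is the projection $\hat\pi_0\colon S_{5,2}/T^5\to G_{4,2}/T^4$ from Theorem~\ref{Snk}.

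Next I would analyze the effect of collapsing the subspace $G_{4,2}/T^4\cong S^5$ to a point. Since this $S^5$ is glued to the base $S_{5,2}/T^5$ of the cone $C(S_{5,2}/T^5)$ via $\hat\pi_0$, sending $S^5$ to a point also crushes the image $\hat\pi_0(S_{5,2}/T^5)$, hence the whole base of the cone, to a point. Therefore
\[
(G_{5,2}/T^5)/(G_{4,2}/T^4) \;\simeq\; C(S_{5,2}/T^5)/S_{5,2}/T^5 \;=\; \Sigma(S_{5,2}/T^5),
\]
the unreduced suspension of $S_{5,2}/T^5$.

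Finally, Theorem~\ref{mainhomot} identifies the left-hand side with $S^3\ast \C P^2$, giving the desired homotopy equivalence $\Sigma(S_{5,2}/T^5)\simeq S^3\ast \C P^2$. The only mildly delicate step is the cone-collapse identification in the middle paragraph; it is straightforward once one notes that the mapping cylinder of $\hat\pi_0$ sits inside the pushout and that collapsing its target end collapses the source end as well. No further computation is needed, since all the homotopical work has already been carried out in the proof of Theorem~\ref{mainhomot}.
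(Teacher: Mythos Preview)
Your argument is correct and coincides with the paper's approach: the corollary is stated right after Theorem~\ref{mainhomot} with the one-line justification that it follows from Theorem~\ref{Snk} and Example~\ref{S52}, and you have simply made the implicit cone-collapse step explicit. The only cosmetic difference is the order in which the two contractible pieces are collapsed (you first replace $G_{4,1}/T^4\cong\Delta^3$ by a point to reach $C(S_{5,2}/T^5)\cup_{\hat\pi_0}S^5$, then collapse $S^5$; one could equally well collapse $G_{4,2}/T^4$ first and then $\Delta^3$), but the argument is the same.
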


These results enable us  also  to describe the module structure of  $H^{*}(G_{5,2}/T^5; \Z _{2})$ over the mod $2$  Steenrod algebra as well.

\begin{cor}
The Steenrod square $Sq^{i}$  acts non-trivially on $H^{*}(G_{5,2}/T^5; \Z _{2})$   if and only if $i=1,2$.
\end{cor}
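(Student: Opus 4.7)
The plan is to first use dimensional reasons to whittle the problem down to three potentially non-trivial squares, and then to handle each by transporting along a geometric map already established in the paper. Since by Theorem~\ref{V3Z2} the group $H^{*}(G_{5,2}/T^{5};\Z _{2})$ is concentrated in degrees $0,5,6,8$ (each a copy of $\Z _{2}$), instability ($Sq^{i}(x)=0$ for $i>|x|$) together with the degree-wise vanishing of the remaining targets leaves exactly three operations to analyse in positive total degree: $Sq^{1}\colon H^{5}\to H^{6}$, $Sq^{2}\colon H^{6}\to H^{8}$, and $Sq^{3}\colon H^{5}\to H^{8}$.

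The vanishing of $Sq^{3}$ on $H^{5}$ will follow at once from the Adem relation $Sq^{3}=Sq^{1}Sq^{2}$, since $Sq^{2}$ lands in $H^{7}=0$. For the non-triviality of $Sq^{1}\colon H^{5}\to H^{6}$, I would invoke Theorem~\ref{prvamain}, which identifies the six-skeleton of $G_{5,2}/T^{5}$ with $\Sigma ^{4}\R P^{2}\simeq S^{5}\cup _{2}D^{6}$. Naturality of the Steenrod squares with respect to the skeletal inclusion reduces the claim to the standard computation that on the four-fold suspension of $\R P^{2}$ the operation $Sq^{1}$ is the non-trivial mod-$2$ Bockstein between the two $\Z _{2}$-cells, which is visible directly from the degree-$2$ attaching map in the cellular cochain complex.

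For the non-triviality of $Sq^{2}\colon H^{6}\to H^{8}$, the plan is to use the quotient map $q\colon G_{5,2}/T^{5}\to (G_{5,2}/T^{5})/S^{5}$, where $S^{5}\cong G_{4,2}/T^{4}$, together with Theorem~\ref{mainhomot} asserting that the target is homotopy equivalent to $\Sigma ^{4}\C P^{2}$. The cofibre sequence $S^{5}\hookrightarrow G_{5,2}/T^{5}\to \Sigma ^{4}\C P^{2}$ and the long exact cohomology sequence, together with $H^{*}(S^{5};\Z _{2})$ being concentrated in degrees $0$ and $5$, will show that $q^{*}$ is an isomorphism on $H^{6}$ and on $H^{8}$. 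By naturality it then suffices to check non-triviality of $Sq^{2}$ on $H^{6}(\Sigma ^{4}\C P^{2};\Z _{2})$, and this follows by four-fold desuspension (since $Sq^{i}$ commutes with suspension) from $Sq^{2}(x)=x^{2}\neq 0$ for the generator $x\in H^{2}(\C P^{2};\Z _{2})$.

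The only mild obstacle in this plan is the verification that $q^{*}$ really does induce isomorphisms in degrees $6$ and $8$, but this is a direct consequence of the long-exact-sequence argument once one invokes the cohomology of $S^{5}$; everything else is either elementary dimension counting or an application of the two theorems on the homotopy type of $G_{5,2}/T^{5}$ already proved earlier in the paper.
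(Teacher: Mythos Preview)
Your proposal is correct and follows essentially the same approach as the paper: both arguments reduce to the standard $Sq^{1}$ on $\R P^{2}$ via the six-skeleton $\Sigma^{4}\R P^{2}$, and to the standard $Sq^{2}$ on $\C P^{2}$ via the quotient $(G_{5,2}/T^{5})/S^{5}\simeq \Sigma^{4}\C P^{2}$. You are in fact slightly more careful than the paper, which asserts that $Sq^{i}=0$ for $i\neq 1,2$ ``immediately'' from the degree pattern without explicitly disposing of the map $Sq^{3}\colon H^{5}\to H^{8}$; your use of the Adem relation $Sq^{3}=Sq^{1}Sq^{2}$ through the vanishing $H^{7}$ fills that gap cleanly.
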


\begin{proof}
It follows from the universal coefficient formula that the cohomology groups for $G_{5,2}/T^5$ with $\Z _{2}$-coefficients coincide with its $\Z_2$-homology groups. It immediately implies that $Sq^{i}=0$  for $i\neq 1,2$. Moreover, being stable operation we have that $Sq^{1} : H_{5}(V_3;\Z _{2})\to H_{6}(V_3;\Z _{2})$ coincides with $Sq^{1} : H^{1}(\R P^{2};\Z _{2})\to H^{2}(\R P^2; \Z_{2})$. The later one is defined by squaring, so it is non-trivial.  In the same way $Sq^{2} : H^{6}(V_3; \Z _{2})\to H^{8}(V_3;\Z _{2})$ coincides with $Sq^{2} : H^{2}(\C P^2; \Z_{2})\to H^{4}(\C P^2;\Z_2)$ which is given by squaring, so it is non-trivial.
\end{proof}

Altogether this leads to the homotopy description of the orbit space $G_{5,2}/T^5$.
\begin{thm}
The orbit space $G_{5,2}/T^5$ is homotopy equivalent  to the  space $X$  obtained by attaching the disc $D^8$ to the  space $\Sigma ^{4}\R P^2$   by the generator of the group $\pi _{7}(\Sigma ^{4}\R P^2)$.  
\end{thm}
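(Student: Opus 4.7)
The plan is to use the immediately preceding Corollary, which already reduces the problem to identifying a single attaching class $\alpha\in\pi_{7}(\Sigma^{4}\R P^{2})$: by that Corollary $G_{5,2}/T^{5}$ is homotopy equivalent to some $X=\Sigma^{4}\R P^{2}\cup_{\alpha}D^{8}$, and the homotopy type depends only on the class of $\alpha$ up to the involution $\alpha\mapsto -\alpha$ (which amounts to reversing the orientation of the attached disc). Since $\Sigma^{4}\R P^{2}$ is $4$-connected, Freudenthal's suspension theorem puts $\pi_{7}(\Sigma^{4}\R P^{2})$ in the stable range, identifying it with $\pi_{3}^{s}(\R P^{2})$. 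Applying $\pi_{3}^{s}(-)$ to the cofibration $S^{1}\xrightarrow{2}S^{1}\to\R P^{2}\to S^{2}$ yields
\[
0\longrightarrow \Z_{2}\{\eta^{2}\}\longrightarrow \pi_{3}^{s}(\R P^{2})\longrightarrow \Z_{2}\{\eta\}\longrightarrow 0,
\]
and the classical non-split extension gives $\pi_{3}^{s}(\R P^{2})\cong \Z_{4}$, with a generator $\alpha_{0}$ whose image downstairs is $\eta$.

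Next, I would distinguish the three homotopy-inequivalent classes $\alpha\in\{0,\,2\alpha_{0},\,\pm\alpha_{0}\}$ by invoking the Steenrod-square computation of the preceding Corollary, which asserts that $Sq^{2}$ acts nontrivially on $H^{\ast}(G_{5,2}/T^{5};\Z_{2})$. Let $p\colon \Sigma^{4}\R P^{2}\to \Sigma^{4}S^{2}=S^{6}$ collapse the bottom $5$-cell. The composite $p\circ\alpha\in \pi_{7}(S^{6})\cong\Z_{2}$ is exactly the image of $\alpha$ under the surjection in the exact sequence above, so $p\circ\alpha=\eta$ precisely when $\alpha=\pm\alpha_{0}$, and is null otherwise. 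Since $p$ is an isomorphism on $H^{6}$, it extends to a map of cofibers $X\to S^{6}\cup_{p\circ\alpha}D^{8}$ that is an isomorphism on $H^{6}$ and $H^{8}$; by naturality, $Sq^{2}\colon H^{6}(X;\Z_{2})\to H^{8}(X;\Z_{2})$ coincides with $Sq^{2}$ on the target. But the target is either $S^{6}\vee S^{8}$ (when $p\circ\alpha=0$, with trivial $Sq^{2}$) or $\Sigma^{4}\C P^{2}=S^{6}\cup_{\eta}D^{8}$ (when $p\circ\alpha=\eta$, where $Sq^{2}$ is nonzero on the six-dimensional generator). Combining this dichotomy with the preceding $Sq^{2}$-nontriviality forces $\alpha=\pm\alpha_{0}$, i.e.\ $\alpha$ is a generator of $\Z_{4}$, which is the desired conclusion.

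The main obstacle I expect is the precise identification of $2\alpha_{0}$ with the image of $\eta^{2}\in\pi_{3}^{s}(S^{1})$ under the bottom-cell inclusion, equivalently the fact that $p\circ(2\alpha_{0})=0$. This is a bookkeeping exercise along the Puppe sequence and is where the real technical content of the proof lies; once it is settled, the primary Steenrod square $Sq^{2}$ is a complete invariant here, precisely because the only nontrivial element it cannot detect, namely $2\alpha_{0}$, is the one that factors through the bottom cell and is therefore killed by $p$.
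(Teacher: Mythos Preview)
Your proposal is correct and follows essentially the same route as the paper. Both arguments reduce to showing that the image of the attaching class under the collapse map $p\colon \Sigma^{4}\R P^{2}\to S^{6}$ is the suspended Hopf class $\eta$, and both use the identical Puppe-sequence analysis to see that this happens precisely for the generators of $\pi_{7}(\Sigma^{4}\R P^{2})\cong\Z_{4}$. The only cosmetic difference is that the paper invokes Theorem~\ref{mainhomot} directly (the explicit identification $(G_{5,2}/T^{5})/S^{5}\simeq S^{3}\ast\C P^{2}$), whereas you invoke the $Sq^{2}$-nontriviality Corollary; since that Corollary is itself deduced in the paper from Theorem~\ref{mainhomot}, the two arguments have the same logical content.
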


\begin{proof} 
Since by~\cite{JW} it holds $\pi _{7}(\Sigma ^{4}\R P^2)=\Z _{4}$ we need to show that $G_{5,2}/T^5$ can not be obtained by attaching the disc $D^8$ to the space $\Sigma ^{4}\R P^2$ by the map $f : S^7 \to \Sigma ^{4}\R P^2$   of order  $2$. In order to verify this   we first consider the exact homotopy sequence of the pair $(\Sigma ^{4}\R P^2, S^5)$. Note that, as a consequence of the Blakers-Massey theorem, we have that  $\pi _{i}(\Sigma ^{4}\R P^2) = \pi _{i}(\Sigma ^{4}\R P^2/S^5)$ for $i\leq 7$, since $\Sigma ^{4}\R P^2$ is $4$-connected and the inclusion $S^5\to \Sigma ^{4}\R P^2$ induces an isomorphism in homotopy groups up to degree $5$. Thus, since $(\Sigma ^{4}\R P^2)/S^5$ is homotopy equivalent to $S^6$, the corresponding homotopy sequence writes as 
\[
 \Z_ 2\to \pi _{7}(\Sigma ^{4}\R P^2)\cong \Z _{4}\to  \Z_{2}\to \Z_2\to \pi _{6}(\Sigma ^{4}\R P^2)\cong \Z _{2} \to  \Z,
\] 
which gives an exact sequence
\[
\Z _{2}\to \Z _{4}\to \Z_{2}\to 0.
\]
It follows that the element of order $2$ in $\pi _{7}(\Sigma ^{4} \R P^2)$ is coming from $\pi _{7}(S^5)$. 
Now, consider a map $f: S^7 \to \Sigma ^{4}\R P^2$. It induces the map $g : S^7\to S^6$ given by the composition
$S^{7}\stackrel{f}{\to}\Sigma ^{4}\R P^2 \to \Sigma ^{4}\R P^2/S^5 \cong S^6$. It follows from  Theorem~\ref{mainhomot} and Corollary~\ref{joinvee} that the map $g$ is a fourfold suspension of the Hopf map $S^3\to S^2$ and $G_{5,2}/T^5$ is homotopy equivalent to $S^3\ast \C P^2$.   If the attaching  map $f$ decomposes through a map $S^7\to S^5 \stackrel{i}{\to} \Sigma ^{4}\R P^2$, it would  imply that $(G_{5,2}/T^5)/S^5$ is  homotopy equivalent to $S^8\vee S^6$ which is not the case. 
\end{proof}

Let $\xi _{n,k}$ and $\eta _{n,n-k}$  be   the canonical vector bundles  over the Grassmannian $G_{n,k}$ and $X_{n,k} = T(\xi _{n-1,k})$, $Y_{n,k}= T(\eta _{n-1,n-k})$ - the corresponding Thom spaces equipped with the $T^n$-action. The problem of the  description of  the orbit space $G_{n,k}/T^n$ naturally brings  the problem of the  description of  the orbit spaces for these Thom spaces. 
In the case $n=5$ and $k=2$,    Corollary~\ref{orbit-Thom} and Corollary~\ref{orbX} imply:

\begin{thm}
The orbit spaces  $X_{5,2}/T^5$ is   homotopy equivalent to  $ D^8 \cup _{f} \Sigma ^{4}\R P^2$ for the generator  $f\in \pi _{7}(\Sigma ^{4}\R P^2)$ , while the orbit space   $Y_{5,2}/T^5$  is homotopy equivalent to  $S^3\ast \C P^2$. 
\end{thm}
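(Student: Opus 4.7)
\medskip

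The plan is to assemble this result directly from the structural results already proved in the paper, using nothing beyond the identifications between Thom space orbits and relative orbit spaces. First I will apply Corollary~\ref{orbit-Thom} in the case $n=5$, $k=2$ to rewrite
\[
X_{5,2}/T^5 \;\cong\; (G_{5,2}/T^5)\big/(G_{4,1}/T^4), \qquad Y_{5,2}/T^5 \;\cong\; (G_{5,2}/T^5)\big/(G_{4,2}/T^4),
\]
which reduces the problem to understanding each quotient individually.

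For the $Y$--statement, the quotient is by $G_{4,2}/T^4$, which by the main result of~\cite{MMJ} is homeomorphic to $S^5$. Thus $Y_{5,2}/T^5$ coincides (as a topological space) with the space $X=(G_{5,2}/T^5)/S^5$ studied in Theorem~\ref{mainhomot}, and that theorem gives the homotopy equivalence $Y_{5,2}/T^5\simeq S^3\ast\C P^2$ at once. No further work is needed.

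For the $X$--statement, the quotient is by $G_{4,1}/T^4=\C P^3/T^4$, which is the $3$--simplex $\Delta^{3}$ and hence contractible. Since the inclusion $G_{4,1}/T^4\hookrightarrow G_{5,2}/T^5$ is a closed cofibration (it is induced by an equivariant smooth embedding of compact manifolds, and the CW/cellular structure already exhibited in the proof of Theorem~\ref{V3Z} makes it a cofibration), collapsing a contractible subspace gives a homotopy equivalence; this is precisely the content of Corollary~\ref{orbX}. Composing with Theorem~\ref{prvamain} yields
\[
X_{5,2}/T^5 \;\simeq\; G_{5,2}/T^5 \;\simeq\; D^8\cup_f \Sigma^{4}\R P^{2},
\]
where $f$ is a generator of $\pi_{7}(\Sigma^{4}\R P^2)=\Z_{4}$.

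There is essentially no obstacle: both statements are corollaries of already proved facts, combined with the standard observation that quotienting a reasonable space by a contractible subcomplex is a homotopy equivalence. The only point requiring a line of justification is the cofibration condition on $G_{4,1}/T^4\hookrightarrow G_{5,2}/T^5$, which follows from the equivariant embedding $G_{4,1}\hookrightarrow G_{5,2}$ and the compatibility of cell structures established in the preceding sections; this is where I would be most careful, but it is routine.
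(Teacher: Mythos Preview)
Your proof is correct and follows essentially the same route as the paper: the theorem is stated there as an immediate consequence of Corollary~\ref{orbit-Thom} and Corollary~\ref{orbX}, combined with Theorem~\ref{prvamain} for the $X_{5,2}$ part and Theorem~\ref{mainhomot} for the $Y_{5,2}$ part. Your added remark on the cofibration condition for $G_{4,1}/T^4\hookrightarrow G_{5,2}/T^5$ is a reasonable extra care that the paper leaves implicit.
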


\bibliographystyle{amsplain}

\end{document}